\documentclass[11pt]{amsart}
\usepackage[leqno]{amsmath}
\usepackage{amssymb}
\usepackage{enumerate}
\usepackage{subfigure}
\usepackage{graphicx}
\usepackage{hyperref}
\usepackage{mathtools}
\usepackage{mathrsfs}
\usepackage{mathabx}
\usepackage[all]{xy}
   \topmargin=0in
   \oddsidemargin=0in
   \evensidemargin=0in
   \textwidth=6.5in
   \textheight=8.5in

\usepackage[usenames,dvipsnames]{xcolor}

\usepackage{microtype}
\usepackage{stmaryrd}

\usepackage{comment}

\usepackage{bm}

\numberwithin{equation}{section}

\newtheoremstyle{boldremark} 
    {3pt} 
    {3pt} 
    {} 
    {} 
    {\bfseries} 
    {.} 
    {.5em} 
    {} 
    
\newtheorem{theorem}{Theorem}[section]
\newtheorem{lemma}[theorem]{Lemma}
\newtheorem{proposition}[theorem]{Proposition}

\theoremstyle{definition}
\newtheorem{defn}[theorem]{Definition}
\theoremstyle{boldremark}
\newtheorem{rem}[theorem]{Remark}

\newcommand{\C}{\mathbf{C}}
\newcommand{\D}{\mathbf{D}}
\newcommand{\E}{\mathbf{E}}

\newcommand{\N}{\mathbf{N}}
\newcommand{\Z}{\mathbf{Z}}
\newcommand{\p}{\mathbf{P}}
\newcommand{\Q}{\mathbf{Q}}
\newcommand{\R}{\mathbf{R}}

\newcommand{\Fh}{\mathfrak {h}}

\newcommand{\SLE}{{\rm SLE}}

\newcommand{\wt}{\widetilde}
\newcommand{\wh}{\widehat}

\newcommand{\giv}{\,|\,}

\newcommand{\len}{{\mathrm {len}}}

\begin{document}

\title[]{Continuity in $\xi$ and $\xi \to \infty$ tightness of the LQG metric}
\author[]{Konstantinos Kavvadias}
\address{Konstantinos Kavvadias, Department of Mathematics, Massachusetts Institute of Technology, Cambridge, MA, USA}
\email{kavva941@mit.edu}

\begin{abstract}
We show that the law of the $\gamma$-LQG metric (appropriately renormalized) is continuous in $\gamma \in (0,2)$ with respect to the local uniform topology of metrics on $\C \times \C$ whenever $\gamma$ lies on compact subsets of $(0,2)$.   Moreover we show that as $\gamma \to 0$,  the $\gamma$-LQG metric (appropriately renormalized) converges to the Euclidean metric with respect to the local uniform topology of metrics on $\C \times \C$.  More generally,  we show that the law of the LQG metric with parameter $\xi>0$ (appropriately renormalized) is tight with respect to the topology on lower semicontinuous functions on $\C \times \C$ whenever $\xi$ lies on compact subsets of $(0,\infty)$,  and any subsequential limit in law is non-trivial almost surely.  If in addition we assume that the limit satisfies the triangle inequality almost surely,  then it has the law of an LQG metric with an appropriate parameter $\xi$.  Finally we examine the limit as $\xi \to \infty$,  which is a regime that has not been studied before.  More precisely we show that if $D_h^{\xi}$ denotes the LQG metric with parameter $\xi>0$ (appropriately renormalized) associated with the whole-plane GFF $h$,  the family of metrics $(D_h^{\xi})^{1 / \xi}$ is tight as $\xi \to \infty$ and any subsequential limit is non-trivial almost surely.  If in addition we assume that the subsequential limit satisfies the triangle inequality almost surely,  then the limit is a metric almost surely.
\end{abstract}

\date{\today}
\maketitle

\setcounter{tocdepth}{1}
\tableofcontents

\parindent 0 pt
\setlength{\parskip}{0.2cm plus1mm minus1mm}

\section{Introduction}
\label{sec:intro}

\subsection{Overview}
\label{subsec:overview}
Liouville quantum gravity (LQG) surfaces were first introduced in the physics literature by Polyakov \cite{polyakov1981quantum,polyakov1996quantum}.  More precisely,  we fix $\gamma \in (0,2)$,  an open and connected set $U \subseteq \C$,  and let $h$ be the Gaussian free field (GFF) on $U$.  Then the \emph{$\gamma$-Liouville quantum gravity (LQG)} surface parameterized by $(U,h)$ is formally the random two-dimensional Riemannian manifold with metric tensor
\begin{align}\label{eqn:lqg_metric_tensor}
e^{\gamma h} (dx^2 + dy^2)
\end{align}
where $dx^2 + dy^2$ denotes the Euclidean metric tensor.

Apart from the physics literature,  the study of LQG surfaces is motivated by the description of scaling limits of certain random planar maps.  Different choices of the parameter $\gamma$ correspond to different families of measures on planar maps.  For instance,  the case $\gamma = \sqrt{\frac{8}{3}}$ corresponds to uniformly random planar maps such as uniform triangulations,  quadrangulations,  etc,  while the case $\gamma \neq \sqrt{\frac{8}{3}}$ corresponds to random planar maps weighted by the partition functions of certain statistical mechanics models on the map such as the uniform spanning tree $(\gamma = \sqrt{2})$ or the Ising model $(\gamma = \sqrt{3})$.

Note that the definition \eqref{eqn:lqg_metric_tensor} does not make literal sense since $h$ is not well-defined as a function taking pointwise values but rather as a distribution (generalized function).  However it has been shown that \eqref{eqn:lqg_metric_tensor} can be associated with a random volume form $e^{\gamma h(z)} dz$ where $dz$ denotes the Lebesgue measure via regularization procedures (see \cite{kahane1985chaos,ds2011lqgkpz,rhodes2014gaussian}).  

One can also construct a metric $D_h$ on $U$ associated with \eqref{eqn:lqg_metric_tensor}.  This metric should be considered as the candidate for the scaling limits of the graph distance on random planar maps with respect to the Gromov-Hausdorff topology.   The $\gamma$-LQG metric associated with \eqref{eqn:lqg_metric_tensor} was first constructed in the case when $\gamma = \sqrt{\frac{8}{3}}$ in \cite{ms2020qle1,ms2021qle2,ms2021qle3},  by Miller and Sheffield.  The construction of the $\sqrt{\frac{8}{3}}$-LQG metric metric is not based on a direct regularization procedure but instead it uses a metric growth process called quantum Loewer evolution (QLE) which is constructed using $\SLE_6$.  The resulting metric space for $\gamma = \sqrt{\frac{8}{3}}$ has the same law with the Brownian map constructed in \cite{lg2013bm,m2013bm},  or the Brownian disk constructed in \cite{bettinelli2017compact}.  For general $\gamma \in (0,2)$,  the LQG metric was constructed in \cite{gwynne2021existence} (see also \cite{ding2020tightness2,dubedat2020weak}) using a regularization procedure.

Let us now describe the construction of the $\gamma$-LQG metric for general $\gamma \in (0,2)$.  For $s>0$ and $z,w \in \C$,  we let 
\begin{align*}
p_s(z,w) = \frac{1}{2\pi s} \exp\left(-\frac{|z-w|^2}{2s}\right)
\end{align*}
be the transition kernel of the two-dimensional Brownian motion.  For $\epsilon > 0$,  we consider a mollified version of the GFF $h$ by 
\begin{align}\label{eqn:gff_molification}
h_{\epsilon}^*(z):=(h \ast p_{\epsilon^2 / 2})(z) = \int_U h(w) p_{\epsilon^2 / 2}(z,w)dw = (h,p_{\epsilon^2 / 2}(z,\cdot)) \quad \text{for all} \quad z \in U,
\end{align}
where we interpret the integral in the distributional sense.  Also for all $\xi>0$,  we define
\begin{align}\label{eqn:lffp_definition}
D_h^{\epsilon}(z,w):=\inf_P \left\{\int_0^1 e^{\xi h_{\epsilon}^*(P(t))} |P'(t)| dt : P \,\,\text{is a left-right crossing of} \,\,[0,1]^2\right\} \quad \text{for all} \quad z, w \in \C
\end{align}
where the infimum is over all piecewise continuously differentiable paths $P : [0,1] \to \C$ from $z$ to $w$.  The metrics $D_h^{\epsilon}$ are called \emph{$\epsilon$-Liouville first passage percolation} (LFPP).                      

We would like to renormalize $D_h^{\epsilon}$ and then take a limit as $\epsilon \to 0$ in the case that $h$ has the law of a whole-plane GFF plus a bounded continuous function for an appropriate choice of $\xi$.  Let us briefly explain heuristically what the choice of $\xi$ should be.  Recall that scaling areas by $C>0$ with respect to the volume form $e^{\gamma h(z)} dz$ associated with \eqref{eqn:lqg_metric_tensor} corresponds to replacing the field $h$ by $h+\frac{1}{\gamma} \log(C)$.  Moreover by \eqref{eqn:lffp_definition},  we obtain that replacing $h$ by $h+\frac{1}{\gamma} \log(C)$ scales distances by a factor of $C^{\frac{\xi}{\gamma}}$ and hence $\frac{\xi}{\gamma}$ should be the scaling exponent relating areas and distances.  Therefore the exponent $\frac{\gamma}{\xi}$ should be interpreted as the Hausdorff dimension of a $\gamma$-LQG surface.

It was shown in \cite{ding2020fractal,ding2019heat} that for all $\gamma \in (0,2)$,  there is an exponent $d_{\gamma} > 2$ which describes distances in various approximations of $\gamma$-LQG.  Furthermore,  it was shown in \cite{ding2020fractal} that $d_{\gamma}$ is the ball volume exponent for certain random planar maps.  Therefore the above intuitive reasoning implies that we should have that
\begin{align}\label{eqn:xi_subcritical}
\xi = \frac{\gamma}{d_{\gamma}}.
\end{align}

For $\epsilon>0$,  we let $a_{\epsilon}$ be the median of 
\begin{align}\label{eqn:median_of_lffp_definition}
\inf\left\{\int_0^1 e^{\xi h_{\epsilon}^*(P(t))} |P'(t)| dt : P \,\,\text{is a left-right crossing of} \,\,[0,1]^2\right\}
\end{align}
in the case that $h$ is a whole-plane GFF normalized such that its average on the unit circle $\partial \D$ is equal to zero,  where a left-right crossing of $[0,1]^2$ is a piecewise continuously differentiable path in $[0,1]^2$ joining the left and right boundaries of $[0,1]^2$.

One of the main results of \cite{gwynne2021existence} is the following.

\begin{theorem}(\cite[Theorem~1.1]{gwynne2021existence})
\label{thm:convergence_of_lffp}
The random metrics $a_{\epsilon}^{-1} D_h^{\epsilon}$ converge in probability as $\epsilon \to 0$ with respect to the local uniform topology in $\C \times \C$ to a random metric in $\C$ which is a.s.  determined by $h$.
\end{theorem}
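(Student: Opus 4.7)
The plan is to split the proof into two independent parts: (i) tightness of the laws of $a_\epsilon^{-1} D_h^\epsilon$ as $\epsilon \to 0$ in the local uniform topology on $\C \times \C$, and (ii) uniqueness of subsequential limits, which upgrades tightness to convergence in probability. Both steps have already been carried out in the literature (tightness in works of Ding--Dubédat--Dunlap--Falconet and Dubédat--Falconet, uniqueness in Gwynne--Miller), and I would follow that overall division.

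For tightness, I would exploit the scaling/translation covariance of the whole-plane GFF and the normalization by the median $a_\epsilon$ to reduce all estimates to a single dyadic scale. The core input is to establish, uniformly in $\epsilon$, two-sided moment bounds on the ratio of the $D_h^\epsilon$-distance across a Euclidean annulus to $a_r$ (where $r$ is the scale), plus a quantitative inradius vs.\ diameter estimate for $D_h^\epsilon$-balls. These follow from concentration of the Gaussian free field on dyadic shells, a multiplicative cascade comparison between $a_{\epsilon}$ at successive dyadic scales (obtained by subtracting off the circle average), and standard Efron--Stein / Gaussian concentration inputs applied to the LFPP functional \eqref{eqn:lffp_definition}. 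Together with an equicontinuity criterion for length metrics, these estimates give tightness in the local uniform topology on $\C\times\C$ along with the fact that any subsequential limit is a genuine length metric that induces the Euclidean topology.

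The harder part, and the main obstacle, is uniqueness of subsequential limits. The strategy is axiomatic. First one isolates the properties that any subsequential limit $D_h$ must satisfy: it is a length metric; it is local in the sense that $D_h(z,w\,;\,U)$ depends only on $h|_U$ for $z,w\in U$; it obeys the Weyl scaling rule $D_{h+f} = \int e^{\xi f}$ along paths for continuous $f$; and it is covariant under the affine maps of $\C$ in the manner dictated by the scaling exponent $\xi/\gamma$. Given two metrics $D_h$, $\widetilde{D}_h$ satisfying these axioms (with the same normalization of the median across $[0,1]^2$), define
\begin{equation*}
C^{*} = \inf\bigl\{C>0 : \p\bigl[\widetilde{D}_h(z,w) \leq C\, D_h(z,w)\ \text{for all } z,w\in\C\bigr] > 0\bigr\}.
\end{equation*}
A regularity/ergodicity argument based on the translation invariance of the whole-plane GFF modulo additive constant shows $C^{*}\in[1,\infty)$ almost surely and that the event in the definition actually has probability one.

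The goal is then to rule out $C^{*}>1$ by an improvement argument. Assuming $C^{*}>1$, Weyl scaling and the covariance axiom produce, for every mesoscopic Euclidean ball $B$, a strictly positive probability that $\widetilde{D}_h$ is strictly less than $C^{*}D_h$ by a definite multiplicative factor on crossings of $B$; by independence across widely separated scales and locations (using the near-independence of the GFF on disjoint regions, obtained by subtracting harmonic extensions), a uniformly positive density of such ``good'' mesoscopic balls is encountered by any $D_h$-geodesic. Forcing the geodesic to re-route through a chain of good balls via a percolation-style concatenation yields
\begin{equation*}
\widetilde{D}_h(z,w) \leq (C^{*} - \delta)\, D_h(z,w) \quad \text{for some deterministic } \delta>0,
\end{equation*}
contradicting the definition of $C^{*}$. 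The core difficulty lies in making this improvement step quantitative: one must control the relative cost of detours in both metrics simultaneously and show that good balls can be concatenated without paying back more than a fraction $\delta$ of what was saved. Once $C^{*}=1$ is obtained for both directions, any two subsequential limits agree almost surely, and since the full subsequential limit is measurable with respect to $h$ (by locality applied at dyadic scales), the tight family converges in probability to a limit determined by $h$.
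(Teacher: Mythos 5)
This theorem is cited from the literature rather than proved in the paper, so the relevant comparison is with the program of Ding--Dub\'edat--Dunlap--Falconet (tightness) and Gwynne--Miller (uniqueness) that establishes it. Your sketch accurately reproduces that program: the two-part split into tightness via annulus-crossing moment bounds and Efron--Stein concentration applied to the LFPP functional, followed by uniqueness of subsequential limits via the axiomatic characterization (length space, locality, Weyl scaling, coordinate change covariance) and the bi-Lipschitz-constant improvement argument exploiting near-independence of the GFF across scales. The one place you are slightly imprecise is the definition and treatment of $C^*$: in Gwynne--Miller one first establishes (in prior work) that any two metrics satisfying the axioms are a.s.\ bi-Lipschitz equivalent with a \emph{deterministic} constant, then defines the optimal constants $c_* \leq C_*$ in both directions, and the improvement argument is used to force $c_* = C_*$ (so the two metrics are deterministic scalar multiples of one another); fixing a common normalization is only what upgrades this to $\wt D_h = D_h$ at the very end. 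You gesture at "both directions" in the last sentence, but the single-constant formulation as stated is not quite the object one actually iterates on. This is a presentational rather than a substantive gap; the underlying strategy you describe is the correct one.
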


Moreover it was shown in \cite{gwynne2021existence} that the limit in Theorem~\ref{thm:convergence_of_lffp} satisfies certain list of axioms which uniquely characterize it modulo a multiplicative deterministic constant (see Section~\ref{sec:preliminaries}).

Once the limit in Theorem~\ref{thm:convergence_of_lffp} is constructed,  it is shown in \cite{gwynne2022kpz} that $d_{\gamma}$ is its Hausdorff dimension.  Note that the explicit value of $d_{\gamma}$ for general $\gamma \in (0,2)$ is not known except for the case that $\gamma = \sqrt{\frac{8}{3}}$.  In the latter case,  we have that $d_{\sqrt{\frac{8}{3}}} = 4$ and this corresponds to the almost sure Hausdorff dimension of the Brownian map.

It is shown in \cite[Proposition~1.7]{ding2020fractal} that $\gamma \to \frac{\gamma}{d_{\gamma}}$ is an increasing function in $(0,2]$.   In particular for $\xi > \frac{2}{d_2}$,  the scaling limit of LFPP metrics in \eqref{eqn:lffp_definition} does not correspond to $\gamma$-LQG for $\gamma \in (0,2)$.  Nevertheless,  the LFPP metrics make sense for all $\xi>0$.

\begin{defn}\label{def:lfpp}
We refer to LFPP with $\xi < \xi_{\text{crit}},  \xi = \xi_{\text{crit}}$ and $\xi > \xi_{\text{crit}}$ as the \emph{subcritical,}  \emph{critical,} and \emph{supercritical} phases,  respectively,  where we set $\xi_{\text{crit}}:=\frac{2}{d_2}$,
\end{defn}

LFPP for $\xi > \xi_{\text{crit}}$ corresponds to LQG with parameter $\gamma \in \C$ with $|\gamma| = 2$,  or equivalently with central charge in $(1,25)$.  We refer the reader to \cite{ding2022introduction,ding2020tightness,ding2023uniqueness} for further discussion.

It turns out that the limit in probability of $a_{\epsilon}^{-1} D_h^{\epsilon}$ as $\epsilon \to 0$ exists for all $\xi > 0$ whenever $h$ is a whole-plane GFF plus a bounded and continuous function.  However the limit is not a continuous metric for $\xi > \xi_{\text{crit}}$ but it is lower semicontinuous and takes infinite values.  This motivates the following definition of convergence of lower semicontinuous functions on $U \times U$,  where $U \subseteq \C$ is an open set.

\begin{defn}\label{def:lower_semicontinuous_topology}
We say that a sequence of lower semicontinuous functions $(f_n)$ converges to a function $f$ with respect to the lower semicontinuous topology if and only if the following hold.
\begin{enumerate}[(i)]
\item For every sequence of points $(z_n,w_n) \in U\times U$ converging to a point $(z,w) \in U \times U$,  we have that
\begin{align*}
f(z,w) \leq \liminf_{n \to \infty} f_n(z_n,w_n).
\end{align*}
\item For every point $(z,w) \in U \times U$,  there exists a sequence of points $(z_n,w_n) \in U \times U$ converging to $(z,w)$ for which 
\begin{align*}
f(z,w) = \lim_{n \to \infty} f_n(z_n,w_n).
\end{align*}
\end{enumerate}
\end{defn}

It is easy to see that if $f_n \to f$ as $n \to \infty$ in the sense of Definition~\ref{def:lower_semicontinuous_topology},  then $f$ is lower semicontinuous.  It is not hard to see (see \cite[Section~1.2]{ding2020tightness}) that the topology of the set of lower semicontinuous functions $U \times U \to \R \cup \{\pm \infty\}$ induced by the convergence in Definition~\ref{def:lower_semicontinuous_topology} is metrizable and the resulting metric space is separable and complete.

Then we have the following convergence result.

\begin{theorem}(\cite[Theorem~1.3]{ding2023uniqueness})
\label{thm:convergence_of_supercritical_lfpp}
Let $h$ be a whole-plane GFF plus a bounded continuous function. For each $\xi>0$,  the re-scaled LFPP metrics $a_{\epsilon}^{-1} D_h^{\epsilon}$ converge in probability as $\epsilon \to 0$ with respect to the topology of Definition~\ref{def:lower_semicontinuous_topology} to a limit $D_h$.  The limit $D_h$ is a random metric on $\C$,  except that it is allowed to take on infinite values.
\end{theorem}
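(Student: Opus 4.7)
The plan is to combine a tightness argument with an axiomatic uniqueness theorem to promote subsequential convergence in law to convergence in probability. Concretely, I would first establish tightness of the family $\{a_\epsilon^{-1} D_h^\epsilon\}_{\epsilon \in (0,1]}$ in the lower semicontinuous topology of Definition~\ref{def:lower_semicontinuous_topology}, then show that every subsequential limit satisfies a list of axioms that characterize a weak LQG metric with parameter $\xi$, and finally invoke an axiomatic uniqueness result together with the fact that the limit is $h$-measurable to upgrade to convergence in probability.

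For tightness, because in the supercritical regime typical distances may blow up in the limit, one cannot argue in the uniform topology. Instead I would follow the strategy of \cite{ding2020tightness}: exploit Gaussian concentration for the mollified field $h_\epsilon^*$ at dyadic scales together with the Kahane-type scaling identity relating $D_h^\epsilon$ to $D_h^\delta$ after a rescaling of coordinates and a translation of the field. This allows one to control the multiplicative ratio $a_\epsilon / a_\delta$ uniformly across logarithmic scales without a priori assuming convergence of the normalization constants. A subadditivity argument on a dyadic grid, combined with the fact (recorded in \cite[Section~1.2]{ding2020tightness}) that the lower semicontinuous topology is Polish, then reduces tightness of laws to multiplicative control of typical crossing distances of fixed Euclidean boxes, which follows from the concentration estimates.

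Every subsequential limit $D_h$ inherits, via the $\liminf$ inequality in Definition~\ref{def:lower_semicontinuous_topology}, the following properties of the prelimit $a_\epsilon^{-1} D_h^\epsilon$: measurability with respect to $h$; locality on open sets; Weyl scaling $D_{h+c} = e^{\xi c} D_h$ for constants $c \in \R$; translation and scaling covariance of the form $D_{h(r \cdot + z_0) + Q \log r}(z,w) = r D_h(r z + z_0,\, r w + z_0)$ for a suitable constant $Q = Q(\xi)$; and the triangle inequality in the lower semicontinuous sense. These are precisely the axioms of a weak LQG metric, so the axiomatic uniqueness theorem of \cite{ding2023uniqueness} forces any two subsequential limits to agree in law up to a deterministic multiplicative constant, while the normalization of $a_\epsilon$ as a median fixes this constant to $1$. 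Since the limit is measurable with respect to $h$, convergence in law of the coupled pair $(a_\epsilon^{-1} D_h^\epsilon, h)$ along every subsequence upgrades by a standard argument to convergence in probability of $a_\epsilon^{-1} D_h^\epsilon$.

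The main obstacle is the tightness step. Unlike in the subcritical case, one cannot bound $D_h^\epsilon(z,w)$ from above by a deterministic function of $|z-w|$, so all upper bounds must be probabilistic and scale-by-scale, and they must be compatible with the lower semicontinuous topology in which upper bounds on distances do not pass to limits along sequences in the same way as in the uniform topology. Controlling the ratio $a_\epsilon / a_\delta$ across dyadic scales without circular reasoning is the principal technical difficulty; it requires a Kahane-type comparison combined with percolation-style estimates on expected numbers of crossings of dyadic boxes. Once tightness is in hand, the passage through the axioms and the application of uniqueness are essentially formal.
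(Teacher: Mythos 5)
This theorem is cited verbatim from \cite{ding2023uniqueness} and is used by the paper as background, so there is no in-paper proof to compare against; nonetheless your high-level strategy (tightness in the lower semicontinuous topology, axiomatic identification of subsequential limits, and uniqueness plus $h$-measurability to upgrade to convergence in probability) does mirror how the result was actually established across \cite{ding2020tightness} and \cite{ding2023uniqueness}, so the architecture is right.

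There is, however, a genuine gap in your identification step. You claim that locality, Weyl scaling, coordinate-change covariance, $h$-measurability, and ``the triangle inequality in the lower semicontinuous sense'' are all inherited by any subsequential limit ``via the $\liminf$ inequality in Definition~\ref{def:lower_semicontinuous_topology}.'' This is false, and the very paper you are working from emphasizes the failure repeatedly: the triangle inequality is not preserved under lower semicontinuous limits, which is precisely why Theorems~\ref{thm:main_theorem_general_case_intro} and~\ref{thm:main_thm_xi_to_infty_intro} must \emph{assume} that the subsequential limit satisfies it, and why the discussion in the outline explicitly contrasts this with the subcritical locally-uniform setting where the triangle inequality does pass to the limit for free. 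In the original LFPP proof, the triangle inequality for subsequential limits is obtained in \cite[Section~5.4]{ding2020tightness} through a quantitative bi-Lipschitz comparison of $D_h^\epsilon(\text{across}\,\mathbb{A})$ and $D_h^\epsilon(\text{around}\,\mathbb{A})$ uniformly over scales and $\epsilon$, not by a soft limiting argument; the paper points out this exact distinction when explaining why that argument does not transfer to its own setting. Similarly, locality (that $D_h(\cdot,\cdot;U)$ is a.s.\ determined by $h|_U$) and measurability of the limit with respect to $h$ are not formal consequences of the topology: in a coupling obtained from Skorokhod's theorem one only gets joint convergence in law, and upgrading to determinedness requires a resampling/Efron--Stein style argument of the kind in \cite[Section~2]{Pfeffer_2024}, which this paper invokes heavily for its own identification steps. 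Without these two ingredients you cannot apply the axiomatic uniqueness theorem of \cite{ding2023uniqueness}, so the proposal as written does not close.
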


As in \cite{gwynne2021existence},  it is shown in \cite{ding2023uniqueness} that the limiting metric in Theorem~\ref{thm:convergence_of_supercritical_lfpp} satisfies a certain list of axioms which uniquely characterize it modulo a multiplicative deterministic constant (see Section~\ref{sec:preliminaries}).

\subsection{Main results}
\label{subsec:main_results}

Let us now describe the main results of the paper.  Recall that from Theorems~\ref{thm:convergence_of_lffp} and ~\ref{thm:convergence_of_supercritical_lfpp},  for all $\xi>0$,  we have that the LFPP metrics $D_h^{\epsilon}$ with parameter $\xi$ converge in probability as $\epsilon \to 0$ to some random metric $D_h^{\xi}$ whenever $h$ is a whole-plane GFF plus a bounded continuous function.  It is then natural to ask whether the family of metrics $(D_h^{\xi_n})_{n \in \N}$ (properly re-normalized) is tight or it converges in law or in probability to some non-trivial limit whenever $(\xi_n)_{n \in \N}$ is a sequence in $(0,\infty)$ converging to some $\xi \in (0,\infty)$.  We will show that the above question has an affirmative answer by treating the subcritical and critical / supercritical regimes separately.

We note that it is a much easier problem to show that the volume form associated with \eqref{eqn:lqg_metric_tensor} is continuous in $\gamma \in (0,2)$.  Indeed we have by \cite[Proposition~3.37]{berestycki2024gaussian} that
\begin{align}\label{eqn:measure_convergence}
\int_S \epsilon^{\frac{\gamma^2}{2}} e^{\gamma h_{\epsilon}(z)} dz \to \int_S e^{\gamma h(z)} dz \quad \text{in} \quad \text{L}^q \quad \text{as} \quad \epsilon \to 0
\end{align}
for all $0 < q < \min\{\frac{4}{\gamma^2} , 2\}$ and all bounded Borel sets $S \subseteq \C$.  Note also that
\begin{align}\label{eqn:gff_covariance}
\text{Cov}(h_{\epsilon}(x) ,  h_{\epsilon}(y)) = \log\left(\frac{1}{|x-y| \vee \epsilon}\right) + O(1)
\end{align}
for all $x , y \in S$ and all $\epsilon \in (0,1)$,  where the term $O(1)$ is uniform in $x,y \in S,  \epsilon \in (0,1)$.  Hence \eqref{eqn:gff_covariance} implies that that there exists a constant $C \in (0,\infty)$ depending only on $S$ such that for all $x,y \in S,  \epsilon \in (0,1)$,  and all $0<\gamma_1 < \gamma_2 < 2$ with $\gamma_2 < 2\gamma_1$,  we have that
\begin{align}\label{eqn:gff_covariance_bounds}
\text{Cov}(\gamma_2 h_{\epsilon}(x) ,  \gamma_2 h_{\epsilon}(y)) &= \text{Cov}(\gamma_1 h_{\epsilon}(x) ,  \gamma_1 h_{\epsilon}(y)) + 2\gamma_1 (\gamma_2 - \gamma_1) \text{Cov}(h_{\epsilon}(x) , h_{\epsilon}(y))\notag \\
&+ (\gamma_2 - \gamma_1)^2 \text{Cov}(h_{\epsilon}(x) ,  h_{\epsilon}(y))\notag \\
&\leq \text{Cov}(\gamma_1 h_{\epsilon}(x) ,  \gamma_1 h_{\epsilon}(y)) + C + C(\gamma_2 - \gamma_1)^2.
\end{align}
Hence combining \eqref{eqn:gff_covariance_bounds} with Kahane's convexity inequality (see \cite[Theorem~3.19]{berestycki2024gaussian}),  and using the convergence in \eqref{eqn:measure_convergence} together with Kolmogorov's continuity criterion,  we obtain that the law of $\int_S e^{\gamma h(z)} dz$ is continuous in $\gamma \in (0,2)$.  However a similar reasoning does not apply for the metrics $D_h^{\epsilon}$ defined in \eqref{eqn:lffp_definition} since they are not defined as a direct regularization of the field $h$ due to the presence of the infimum over all paths $P : [0,1] \to \C$ from $z$ to $w$ which are piecewise continuously differentiable.

We have the following result in the subcritical case.

\begin{theorem}\label{thm:main_theorem_subcritical_intro}
Let $h$ be a whole-plane GFF normalized so that $h_1(0) = 0$.  For all $\gamma \in (0,2)$,  we denote the metric $D_h^{\frac{\gamma}{d_{\gamma}}}$ by $D_h^{\gamma}$,  and let $\beta(\gamma) \in (0,\infty)$ be such that
\begin{align*}
\p\left[D_h^{\gamma}(0,1) \leq \beta(\gamma) \right] = \frac{1}{2}
\end{align*}
and set $\wh{D}_h^{\gamma}:=\beta(\gamma)^{-1} D_h^{\gamma}$. Then the following is true.  Fix $\gamma \in (0,2)$ and let $(\gamma_n)_{n \in \N}$ be a sequence in $(0,2)$ such that $\gamma_n \to \gamma$ as $n \to \infty$.  Then we have that
\begin{align*}
\wh{D}_h^{\gamma_n} \to \wh{D}_h^{\gamma} \quad \text{as} \quad n \to \infty
\end{align*}
in probability with respect to the local uniform topology in $\C \times \C$.
\end{theorem}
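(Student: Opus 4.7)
The plan is to establish tightness of $(\wh{D}_h^{\gamma_n})_{n \in \N}$ in the local uniform topology, identify any subsequential limit using the axiomatic characterization of the subcritical LQG metric from \cite{gwynne2021existence}, and then upgrade convergence in law to convergence in probability via measurability with respect to $h$.

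First I would establish uniform (in $n$) moment bounds and regularity estimates for $\wh{D}_h^{\gamma_n}$. Writing $\xi_n = \gamma_n/d_{\gamma_n}$ and $\xi = \gamma/d_\gamma$, continuity of the map $\gamma \mapsto d_\gamma$ (known in the subcritical regime) ensures $\xi_n \to \xi$. At the level of the LFPP approximations $a_\eps^{-1} D_h^\eps$, I would use Gaussian comparison techniques in the spirit of Kahane's convexity inequality: after writing $\xi_n h_\eps^* = \xi h_\eps^* + (\xi_n - \xi) h_\eps^*$, one can compare the path integrals in \eqref{eqn:lffp_definition} for parameters $\xi_n$ and $\xi$ using covariance bounds analogous to \eqref{eqn:gff_covariance_bounds}, obtaining matching two-sided bounds on moments of $D_h^{\xi_n}(z,w)$ uniform in $n$, for $|z-w|$ of order $1$. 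These transfer to the median normalization by showing that $\beta(\gamma_n)$ stays bounded away from $0$ and $\infty$. Standard inputs from the axiomatic theory (Weyl scaling, RSW-type crossing estimates for LQG, and polynomial moment bounds on distances) together with a Kolmogorov-type continuity argument then upgrade these to uniform H\"older estimates for $(z,w) \mapsto \wh{D}_h^{\gamma_n}(z,w)$ on compact subsets of $\C \times \C$, which yields tightness in the local uniform topology.

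The next step is to identify subsequential limits. By tightness, the joint law of $(h, \wh{D}_h^{\gamma_n})$ has a subsequential limit $(h, \wt{D})$ along some subsequence. I would verify that $\wt{D}$ satisfies the axioms for a weak $\xi$-LQG metric associated with $h$: the length space property, locality, Weyl scaling with exponent $\xi$, translation and scaling covariance, and measurability with respect to $h$. Each of these axioms is stable under local uniform convergence of continuous metrics, provided we use continuity of $\gamma \mapsto \xi$ to ensure that the exponent appearing in the Weyl scaling and coordinate axioms in the limit is exactly $\xi = \gamma/d_\gamma$, matching that of $D_h^\gamma$. The uniqueness theorem from \cite{gwynne2021existence} then yields that $\wt{D} = c \, D_h^\gamma$ almost surely for some deterministic $c > 0$. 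To pin down $c = \beta(\gamma)^{-1}$, one shows that the distribution function of $D_h^\gamma(0,1)$ is continuous at its median (a consequence of the absence of atoms, which itself follows from the scaling and locality axioms), so the median of $\wt{D}(0,1)$ equals the limit of the medians of $\wh{D}_h^{\gamma_n}(0,1)$, which is $1$ by construction. Therefore $\wt{D} = \wh{D}_h^\gamma$ almost surely. Since the subsequential limit is uniquely identified as a measurable function of $h$, the convergence in law of $(h, \wh{D}_h^{\gamma_n})$ to $(h, \wh{D}_h^\gamma)$ along the full sequence upgrades to convergence in probability by a standard two-subsequence coupling argument.

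The main obstacle will be obtaining the uniform moment and regularity bounds on $\wh{D}_h^{\gamma_n}$. Existing moment estimates for a single value of $\gamma$ rely on a delicate bootstrap through axiomatic scaling, RSW crossing estimates, and quantitative control on Liouville measures and distances between quantum balls. Making all implicit constants uniform for $\gamma$ in a compact subinterval of $(0,2)$ demands a careful tracking of how these estimates depend on $\gamma$, together with continuity of auxiliary exponents such as $d_\gamma$. A secondary subtlety is checking that the Weyl scaling axiom survives passage to the limit: one must show that the limiting metric is a length metric and that its length functional behaves continuously along curves, for which the uniform H\"older continuity combined with a compactness argument on near-geodesics is essential.
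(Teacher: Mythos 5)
Your high-level plan (tightness via uniform estimates, identification by the axiomatic uniqueness theorem, then upgrading to convergence in probability via measurability and \cite[Lemma~1.3]{dubedat2020weak}) matches the paper's structure, but two of the key steps you propose would not go through as described.

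The first gap is the mechanism for tightness. You propose to compare $\xi_n h_\epsilon^*$ with $\xi h_\epsilon^*$ via Kahane's convexity inequality at the level of LFPP. The paper points out explicitly (around \eqref{eqn:measure_convergence}--\eqref{eqn:gff_covariance_bounds}) that this is exactly the argument that works for the LQG \emph{measure} but fails for the LQG metric: Kahane's inequality compares moments of integrals of $e^{\gamma h_\epsilon}$ against test functions, while $D_h^\epsilon$ involves an infimum over paths, which is not a functional to which the convexity inequality applies. The paper instead gets uniform-in-$\gamma$ moment and H\"older bounds through a completely different route: a covering argument with annulus events $E_r^\gamma(z)$ that the $\wt{D}_h^\gamma$-length around $\mathbb{A}_{r,2r}(z)$ is at most $r^{\xi Q}e^{\xi h_r(z)}$, combined with the near-independence-across-scales property of the GFF (Lemma~\ref{lem:good_event_at_many_scales}), and then concatenation of paths (Proposition~\ref{prop:bound_on_lqg_distances_of_sets} and Proposition~\ref{prop:moment_lqg_diameter_bound_for_compact_sets}). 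A careful choice of $\gamma$-dependent normalizing constant $\mathfrak{p}(\gamma)$ is needed to make the constants uniform on compact subintervals of $(0,2)$; this is where the real work is, and the Gaussian-comparison route you sketch does not substitute for it.

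The second gap is the assertion that the LQG axioms are ``stable under local uniform convergence of continuous metrics.'' This is false for the crucial Axiom~\ref{it:locality} (locality / measurability with respect to $h$): if $(h^n,\wt{D}_{h^n}^{\gamma_n}) \to (h,\wt{D})$ jointly in law, nothing in the local uniform convergence forces $\wt{D}$ to be $h$-measurable. The paper devotes Subsection~\ref{subsec:locality_subcritical} to this, showing the limit is a local $\xi$-additive metric (Proposition~\ref{prop:limit_is_a_xi_additive_metric_subcritical}) and then invoking \cite[Proposition~2.5]{Pfeffer_2024}. Similarly, you omit the step of showing the subsequential limit is a genuine metric (not a pseudometric that collapses some pairs of points); the paper handles this with a $0$--$1$ law argument (Lemma~\ref{lem:crossing_positive} and Proposition~\ref{prop:annuli_crossing_always_positive}), which is also needed for your median-normalization step, since showing the medians $\beta(\gamma_n)$ stay bounded away from $0$ requires a quantitative lower bound on crossing distances uniform in $n$ (cf.\ Lemma~\ref{lem:uniform_lower_bound_across_annuli}). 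Finally, Weyl scaling does not pass to the limit for free either: the paper uses \cite[Lemma~2.8]{Pfeffer_2024} with verification of condition~\ref{it:short_distances_around_small_annuli}, which is not purely topological.
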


We note that the re-normalization in Theorem~\ref{thm:main_theorem_subcritical_intro} is similar to the re-normalization of the LFPP metrics except that we normalize with respect to the median of the LQG distance between $0$ and $1$ instead of normalizing with respect to the median of the minimum length of the left-right crossing of $[0,1]^2$.

Let us now focus on the critical / supercritical regime.   We have the following result for general $\xi>0$.

\begin{theorem}\label{thm:main_theorem_general_case_intro}
Let $h$ be a whole-plane GFF normalized so that $h_1(0) = 0$.  For all $\xi>0$,  we let $\beta(\xi) \in (0,\infty)$ be such that
\begin{align*}
\p\left[D_h^{\xi}(0,1) \leq \beta(\xi)\right] = \frac{1}{2}
\end{align*}
and set $\wh{D}_h^{\xi}:=\beta(\xi)^{-1} D_h^{\xi}$.  Then the following is true. Fix $\xi>0$ and let $(\xi_n)_{n \in \N}$ be a sequence in $(0,\infty)$ such that $\xi_n \to \xi$ as $n \to \infty$.  Then the sequence of metrics $(\wh{D}_h^{\xi_n})_{n \in \N}$ is tight with respect to the topology of Definition~\ref{def:lower_semicontinuous_topology}.  Moreover if $\wh{D}$ is any subsequential limit in law,  we have that $\wh{D}$ is non-trivial in the sense that $\wh{D}(z,w) \in (0,\infty)$ a.s.  for any fixed distinct and  deterministic points $z,w \in \C$.  If in addition we assume that $\wh{D}$ satisfies the triangle inequality a.s.,  then the following is true.  If $(\xi_{k_n})_{n \in \N}$ is a subsequence of $(\xi_n)_{n \in \N}$ such that 
\begin{align*}
\wh{D}_h^{\xi_{k_n}} \to \wh{D} \quad \text{in law} \quad \text{as} \quad n \to \infty,
\end{align*}
then we have that
\begin{align*}
\wh{D}_h^{\xi_{k_n}} \to \wh{D}_h^{\xi} \quad \text{in probability} \quad \text{as} \quad n \to \infty.
\end{align*}
\end{theorem}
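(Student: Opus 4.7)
The plan is to establish the three claims of the theorem sequentially---tightness in the l.s.c.\ topology, non-triviality of subsequential limits, and identification under the triangle inequality assumption---all relying on estimates that are uniform as $\xi_n \to \xi$ stays in a compact subset of $(0,\infty)$.

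For \emph{tightness}, the strategy is to bound $\log \wh D_h^{\xi_n}(z,w)$ uniformly in $n$ when $(z,w)$ ranges over a compact subset of $\C\times\C$. The main input is Kahane's convexity inequality applied to the Gaussian fields $\xi_n h_\epsilon^*$ and $\xi h_\epsilon^*$: a bound analogous to \eqref{eqn:gff_covariance_bounds} transfers the one-point moment estimates for $D_h^\xi$ established in \cite{gwynne2021existence,ding2023uniqueness} to $D_h^{\xi_n}$ with multiplicative constants that remain controlled as long as $\xi_n$ stays in a compact set. Together with a tightness criterion for the l.s.c.\ topology, this yields the first claim. For \emph{non-triviality}, the lower bound $\wh D(z,w)>0$ comes from a standard point-separation estimate for the LQG metric (again uniform in $\xi_n$ via the Kahane comparison), while the upper bound $\wh D(z,w)<\infty$ is extracted from the median normalization at $(0,1)$: combined with translation invariance of the whole-plane GFF (modulo additive constants) and subadditivity along a chain of unit-scale hops, one reduces $\wh D_h^{\xi_n}(z,w)$ to finitely many copies of $\wh D_h^{\xi_n}(0,1)$, each of which is tight in $n$ by the definition of $\beta(\xi_n)$.

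For the third claim, suppose $\wh D_h^{\xi_{k_n}} \to \wh D$ in law with $\wh D$ satisfying the triangle inequality a.s. By Skorokhod embedding we may assume a.s.\ l.s.c.\ convergence. I would then verify that $\wh D$ satisfies the full list of axioms characterizing the $\xi$-LQG metric from \cite{gwynne2021existence} (subcritical) or \cite{ding2023uniqueness} (critical/supercritical): locality and translation invariance pass to the limit directly since they are preserved under l.s.c.\ convergence; the Weyl scaling axiom with exponent $\xi_{k_n}$ becomes Weyl scaling at exponent $\xi$ in the limit because $\xi_{k_n}\to\xi$ and the additive perturbations are bounded continuous functions; the length-space (resp.\ l.s.c.\ length) property is preserved by the triangle inequality assumption; and the moment bounds established in the tightness step supply the tightness-across-scales axiom. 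The axiomatic uniqueness then identifies $\wh D$ in law with a deterministic multiple of $\wh D_h^\xi$, and the median normalization of both sides forces this multiple to equal $1$. Since the characterization also implies that $\wh D$ is a.s.\ determined by $h$, and each $\wh D_h^{\xi_{k_n}}$ is measurable with respect to $h$, a standard argument upgrades the joint-in-law convergence $(h,\wh D_h^{\xi_{k_n}}) \to (h, \wh D_h^\xi)$ to convergence in probability.

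The \emph{main obstacle} will be verifying that the Weyl scaling and length-space axioms pass to the limit in the supercritical regime, where the metrics can take infinite values and classical geodesics need not exist; one must instead work with almost-minimizing paths and control their behavior under l.s.c.\ convergence, which requires quantitative stability estimates tied to the moment bounds obtained in the tightness step. A secondary difficulty is obtaining moment bounds for $\wh D_h^{\xi_n}(z,w)^{-1}$ that are uniform in $n$, since one must ensure that the normalization $\beta(\xi_n)$ itself does not degenerate as $\xi_n \to \xi$; this again requires a careful Kahane-type comparison, together with a continuity argument for the law of $D_h^{\xi_n}(0,1)$ in $\xi_n$.
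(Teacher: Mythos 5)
Your tightness argument has a fundamental gap. You propose applying Kahane's convexity inequality to the fields $\xi_n h_\epsilon^*$ and $\xi h_\epsilon^*$ to transfer moment estimates from $D_h^\xi$ to $D_h^{\xi_n}$ with controlled constants. But the covariance gap between these fields is $(\xi_n^2 - \xi^2)\,\mathrm{Cov}(h_\epsilon^*(x),h_\epsilon^*(y))$, and for the GFF $\mathrm{Cov}(h_\epsilon^*(x),h_\epsilon^*(y)) \approx \log\frac{1}{|x-y|\vee\epsilon}$ diverges as $\epsilon \to 0$ (and also as $|x-y|\to 0$). Kahane's inequality requires a \emph{bounded} comparison between covariance kernels, so the naive application fails for any fixed $\xi_n \neq \xi$; the error is not a small perturbation. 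This is precisely the reason the paper explicitly remarks (just after \eqref{eqn:gff_covariance_bounds}) that ``a similar reasoning does not apply for the metrics $D_h^{\epsilon}$'' and then develops an entirely different route: the metrics are normalized so that the distance around $\mathbb{A}_{1,2}(0)$ is below $1$ with prescribed probability $\mathfrak{p}_0(\xi)$, and uniform-in-$\xi$ distance-between-sets bounds (Propositions~\ref{prop:bound_on_lqg_distances_of_sets_general} and~\ref{prop:lqg_distances_general_case}) are proved via near-independence across disjoint annular scales (Lemma~\ref{lem:good_event_at_many_scales}), not via a Gaussian comparison inequality. The subsequential limit is then constructed following \cite[Section~5.2]{ding2020tightness} by passing to a subsequence along which countably many distances between rational circles and annuli converge jointly with $h$ — which is a substantial piece of machinery your ``tightness criterion for the l.s.c.\ topology'' elides.

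Your identification step also understates what is required. Locality (Axiom~\ref{it:locality}) is a conditional-independence statement and does \emph{not} ``pass to the limit directly'' under lower-semicontinuous convergence; the paper establishes only a weak $\xi$-additivity property for the limit (Proposition~\ref{prop:limit_is_a_xi_additive_metric_general_case}) and then invokes \cite[Proposition~2.5]{Pfeffer_2024}, a nontrivial black box that upgrades this to full determination of the metric by $h$. Weyl scaling similarly does not pass to the limit for free: the paper verifies the hypotheses of \cite[Lemma~2.8]{Pfeffer_2024} (confinement of almost-geodesics to compact sets, Lemma~\ref{lem:condition_I_satisfied_general_case}, and existence of small rational annuli with controlled around-distance, Lemma~\ref{lem:condition_II_satisfied_general_case}), which is exactly the ``quantitative stability of almost-minimizing paths'' you correctly flag as the main obstacle but do not resolve. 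Finally, note the median normalization issue: the paper's working normalization is by $\alpha(\xi)$ at the annulus $\mathbb{A}_{1,2}(0)$, and converting to the median $\beta(\xi)$ of $D_h^\xi(0,1)$ requires an additional argument at the end of the proof of Theorem~\ref{thm:main_theorem_general_case_intro} showing $\wt{\beta}(\xi_{k_n})$ is bounded above and below and then identifying its limit; your claim that the median normalization directly supplies tight inverse moments of $\wh{D}_h^{\xi_n}(0,1)$ would require that separate step.
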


It remains to treat the cases where $\gamma \to 0$ and $\xi \to \infty$.  Note that in the subcritical regime,  it is shown in \cite[Theorem~1.2]{ding2020fractal} that $d_{\gamma} = 2 + o_{\gamma}(1)$ as $\gamma \to 0$.  This suggests that the metric $D_h^{\gamma}$ should converge (properly re-scaled) to the Euclidean metric as $\gamma \to 0$.  Indeed,  this is the content of our next main result.

\begin{theorem}\label{thm:main_theorem_gamma_to_zero_intro}
Let $h$ be a whole-plane GFF normalized so that $h_1(0) = 0$,  and for all $\gamma \in (0,2)$,  we let $\beta(\gamma) \in (0,\infty)$ be as in Theorem~\ref{thm:main_theorem_subcritical_intro}
and set $\wh{D}_h^{\gamma}:= \beta(\gamma)^{-1} D_h^{\gamma}$.  Then if $(\gamma_n)_{n \in \N}$ is any sequence in $(0,2)$ such that $\gamma_n \to 0$ as $n \to \infty$,  we have that $(\wh{D}_h^{\gamma_n})_{n \in \N}$ converges in probability as $n \to \infty$ with respect to the local uniform topology on $\C \times \C$ to the Euclidean metric.
\end{theorem}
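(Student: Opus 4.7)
The plan is to proceed in two stages: first establish tightness of $(\wh{D}_h^{\gamma_n})_{n \in \N}$ with respect to the local uniform topology on $\C \times \C$, and then identify every subsequential limit with the Euclidean metric. For tightness, the key input is uniform bi-Hölder estimates of $\wh{D}_h^{\gamma}$ versus the Euclidean metric with Hölder exponents tending to $1$ as $\gamma \to 0$. Such estimates should follow from the axiomatic framework of Section~\ref{sec:preliminaries} combined with the fact that $\xi = \gamma / d_\gamma \to 0$, so that the multiplicative weight $e^{\xi h_\epsilon^*}$ in \eqref{eqn:lffp_definition} becomes close to $1$. For the upper bound, one can insert the straight-line path into \eqref{eqn:lffp_definition} and use Gaussian tail bounds for the circle averages of $h$; for the lower bound, one uses Kahane's convexity inequality to compare $\xi h_\epsilon^*$ to a field of uniformly smaller covariance, together with uniform negative moment bounds for $\wh{D}_h^{\gamma}(0,1)$. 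The scaling and translation invariances of the whole-plane GFF reduce control on a general compact set to a unit-scale statement, and Arzelà--Ascoli then yields subsequential limits in law.

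For identification, let $\wh{D}$ be any subsequential limit. The Weyl scaling and coordinate-change axioms for $D_h^{\gamma}$, combined with the identity $h(r\cdot) - h_r(0) \overset{d}{=} h$ for the normalized whole-plane GFF, give
\begin{align*}
\wh{D}_h^{\gamma}(r z, r w) \overset{d}{=} r^{\xi Q(\gamma)} e^{\xi h_r(0)} \wh{D}_h^{\gamma}(z, w), \qquad Q(\gamma) = \tfrac{2}{\gamma} + \tfrac{\gamma}{2}.
\end{align*}
Since $\xi Q(\gamma) = 2/d_\gamma + \gamma^2/(2 d_\gamma) \to 1$ as $\gamma \to 0$ (using $d_\gamma \to 2$ from \cite[Theorem~1.2]{ding2020fractal}) and $\xi h_r(0) \to 0$ in probability (since $h_r(0)$ is Gaussian with variance of order $|\log r|$ while $\xi \to 0$), the limit satisfies $\wh{D}(r z, r w) \overset{d}{=} r \wh{D}(z, w)$. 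Combining this with the translation and rotation invariance of the whole-plane GFF modulo additive constants, one concludes that $\wh{D}(z, w) \overset{d}{=} |z - w| \wh{D}(0, 1)$ for any fixed $z, w \in \C$.

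The final and most delicate step is to show that $\wh{D}(0, 1) = 1$ almost surely, i.e., that $\wh{D}_h^{\gamma}(0, 1)$ concentrates at its median as $\gamma \to 0$. The strategy is to apply Kahane's convexity inequality at the LFPP approximation level, comparing $\xi h_\epsilon^*$ to a Gaussian field with identical covariance structure but scaled by a small factor: the covariance $\xi^2 \log(1/(|z-w| \vee \epsilon))$ of the relevant exponent vanishes uniformly on the unit scale as $\xi \to 0$, pinning the distribution of $\log \wh{D}_h^{\gamma}(0, 1)$ to a deterministic limit. The main obstacle is precisely this concentration estimate: $D_h^{\gamma}$ is a highly nonlinear functional of $h$ involving an infimum over paths, so one must combine Kahane-type convexity bounds at the LFPP level with the Weyl scaling axiom to bootstrap variance control of the field into variance control of $\log \wh{D}_h^{\gamma}(0, 1)$, uniformly in the regularization parameter $\epsilon$ and then in $\gamma$. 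Once determinism is established, the median-$1$ normalization forces $\wh{D}(0, 1) = 1$, hence $\wh{D}(z, w) = |z - w|$; tightness plus uniqueness of the subsequential limit upgrades convergence in distribution to convergence in probability, completing the proof.
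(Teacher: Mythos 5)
Your overall plan — tightness plus identification of the subsequential limit with the Euclidean metric — mirrors the paper's structure, and your tightness stage and the reduction via scaling and translation invariance to showing $\wh{D}(0,1)=1$ a.s.\ are both essentially correct. But the final and crucial step, proving that $\wh{D}_h^{\gamma}(0,1)$ concentrates at a deterministic constant, has a genuine gap. You propose to control the variance of $\log \wh{D}_h^{\gamma}(0,1)$ by applying Kahane's convexity inequality at the LFPP level and then ``bootstrapping.'' This is precisely the obstruction the paper itself flags in Section~\ref{subsec:main_results}: Kahane's inequality compares expectations of convex functionals of integrals of the field, which works for GMC measures, but the LFPP metric $D_h^{\epsilon}$ is an \emph{infimum} over piecewise-$C^1$ paths and hence not of that form. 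There is no known way to pass a Kahane-type covariance comparison through the infimum to obtain the concentration estimate you need, and you do not supply one; the ``main obstacle'' you name is left unresolved.

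The paper closes this step by an entirely different mechanism. First it establishes (via the Efron--Stein/locality machinery of Pfeffer, see Proposition~\ref{prop:metric_determined_by_field_gamma_to_zero}) that the subsequential limit $\wt{D}$ is a.s.\ determined by $h$, upgrading convergence in law to convergence in probability by Lemma~\ref{lem:convergence_in_prob}. Then, writing $h = \lim_m \sum_{k\le m} a_k(f_k - (f_k,\rho_{0,1}))$ for an orthonormal basis of $H_0(\C)$, it uses Weyl scaling: for any fixed $m$, $\wt{D}_{h-h_m}^{\gamma_n}$ converges to the same limit $\wt{D}$ because $\xi(\gamma_n)\to 0$ kills the bounded continuous perturbation $h_m$ in the exponent. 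Hence $\wt{D}$ is measurable with respect to $\bigcap_m \sigma(\{a_k : k \ge m\})$, which is trivial by Kolmogorov's $0$--$1$ law, so $\wt{D}$ is deterministic. Determinism plus the in-law scaling, translation, and rotation invariances then force $\wt{D}(z,w)=d|z-w|$, and the median normalization pins $d=1$. This $0$--$1$ law argument is what actually closes the identification; if you want to repair your proposal, replacing the Kahane step with this argument is the way to do it.
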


Finally we focus on the limit as $\xi \to \infty$.  Recall that it follows from  \cite[Proposition~1.8]{dubedat2020weak} that for all $r>0$ and fixed disjoint,  compact and connected sets $K_1, K_2 \subseteq \C$ which are not singletons,  we have that 
\begin{align*}
A^{-1} r^{\xi Q(\xi)} e^{\xi h_r(0)} \leq D_h^{\xi}(r K_1 ,  rK_2) \leq A r^{\xi Q(\xi)} e^{\xi h_r(0)}
\end{align*}
with superpolynomially high probability at $A \to \infty$ (at a rate which is uniform in $r$) for some continuous function $Q : (0,\infty) \to (0,\infty)$.  This suggests that in order to obtain a non-trivial limit as $\xi \to \infty$,  we need to consider the metric $D_h^{\xi}$ (properly re-normalized) and take its $\frac{1}{\xi}$th power.  This is the content of our last main result.

We introduce some notation first before stating our last main result.

For $z \in \C$ and $r>0$,  we write $B_r(z)$ for the open Euclidean ball of radius $r$ centered at $z$.  Moreover we define the open annulus
\begin{align}\label{eqn:around_annulus_definition}
\mathbb{A}_{r_1,r_2}(z):=B_{r_2}(z) \setminus \overline{B_{r_1}(z)} \quad \text{for all} \quad 0 < r_1 < r_2 < \infty.
\end{align}

Next we recall the definitions of distances across and around annuli with respect to a metric.

\begin{defn}\label{def:distances_across_and_around}
Let $d$ be a metric on $\C$ and let $A \subseteq \C$ be a Euclidean annulus.  We define $d(\text{across} \,\,  A)$ to be the $d$-distance between the inner and outer boundaries of $A$,  and we define $d(\text{around} \,\,  A)$ to be the infimum of the $d$-lengths of paths in $A$ which disconnect the inner from the outer boundaries of $A$.
\end{defn}

\begin{theorem}\label{thm:main_thm_xi_to_infty_intro}
There exists a deterministic constant $\mathfrak{p} \in (0,1)$ such that the following is true.  Let $h$ be a whole-plane GFF normalized such that $h_1(0) = 0$,  and for all $\xi \in [1,\infty)$,  we let $\alpha(\xi) \in (0,\infty)$ be such that
\begin{align*}
\p\left[D_h^{\xi}(\text{around} \,\,  \mathbb{A}_{1,2}(0)) \leq \alpha(\xi) \right] = \mathfrak{p}
\end{align*}
and set $\wt{D}_h^{\xi}:=(\alpha(\xi)^{-1} D_h^{\xi})^{1/\xi}$.  Let also $(\xi_n)_{n \in \N}$ be any sequence in $[1,\infty)$ such that $\xi_n \to \infty$ as $n \to \infty$.  Then the sequence of metrics $\{\wt{D}_h^{\xi_n}\}_{n \in \N}$ is tight with respect to the topology of Definition~\ref{def:lower_semicontinuous_topology}.  Moreover if $\wt{D}$ is any subsequential limit in law,  we have that $\wt{D}$ is non-trivial in the sense that $\wt{D}(z,w) > 0$ a.s.  for any fixed distinct and deterministic points $z,w \in \C$,  and $\wt{D} \not \equiv \infty$ a.s.  If in addition we assume that $\wt{D}$ satisfies the triangle inequality a.s. ,  then we have that $\wt{D}$ is a metric a.s.
\end{theorem}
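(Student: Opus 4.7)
The proof splits into three stages. \textbf{Tightness.} I would establish tightness of $(\wt{D}_h^{\xi_n})_{n \in \N}$ in the lower semicontinuous topology by verifying two standard criteria: (a) an upper tail bound on $\wt{D}_h^{\xi_n}(z,w)$ for fixed distinct deterministic points $z, w$, and (b) a uniform lower bound on distances across small annuli to prevent any limit from collapsing. The difficulty, and in my view the main obstacle of the whole proof, is that the existing fixed-$\xi$ tightness results have constants depending on $\xi$ in an uncontrolled way; the supercritical arguments of \cite{ding2020tightness} must be revisited with explicit tracking of the $\xi$-dependence as $\xi_n \to \infty$. The key new idea is that the $1/\xi$-th power precisely tames the dominant $e^{\xi h_r(0)}$ factor in the LQG metric scaling. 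Using Proposition~1.8 of \cite{dubedat2020weak}, which bounds $D_h^\xi(rK_1, rK_2)$ between constant multiples of $r^{\xi Q(\xi)} e^{\xi h_r(0)}$ with superpolynomial concentration, one obtains
\begin{equation*}
\wt{D}_h^{\xi}(rK_1, rK_2) = \bigl(\alpha(\xi)^{-1} D_h^{\xi}(rK_1, rK_2)\bigr)^{1/\xi} \asymp \alpha(\xi)^{-1/\xi}\, r^{Q(\xi)}\, e^{h_r(0)},
\end{equation*}
and the normalization by the median $\alpha(\xi)$ of the around-distance exactly calibrates the prefactor $\alpha(\xi)^{-1/\xi}$. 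Provided $Q(\xi)$ has controllable growth as $\xi \to \infty$, this yields uniform-in-$\xi$ tightness of $\wt{D}_h^{\xi_n}(z, w)$. Normalizing by the \emph{around}-distance rather than a point-to-point distance is crucial: in the supercritical regime point-to-point distances can be infinite with positive probability, whereas around-distances involve an infimum over paths confined to a fixed annulus and remain finite and well-behaved as $\xi$ grows. Criterion (b) follows by iterating a base around-annulus estimate over a geometric sequence of scales.

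\textbf{Non-triviality.} Let $\wt{D}$ be a subsequential weak limit along $(\xi_{k_n})$. For positivity $\wt{D}(z, w) > 0$ a.s., interpose a finite family of disjoint annuli separating $z$ from $w$; any $\wt{D}_h^{\xi_{k_n}}$-path from $z$ to $w$ must cross each annulus, so its length is at least the sum of across-distances, and these are uniformly bounded below by the estimates above. Property (i) of Definition~\ref{def:lower_semicontinuous_topology} then transfers this positive lower bound to $\wt{D}$. For $\wt{D} \not\equiv \infty$ a.s., the construction gives $\wt{D}_h^{\xi_{k_n}}(\mathrm{around}\,\mathbb{A}_{1,2}(0)) \leq 1$ with probability $\mathfrak{p} > 0$; applying property (ii) of Definition~\ref{def:lower_semicontinuous_topology} to the around-distance transports a finite around-bound to $\wt{D}$, so $\wt{D}$ is not identically $\infty$.

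\textbf{Triangle inequality implies metric.} Assume $\wt{D}$ satisfies the triangle inequality a.s. Symmetry is inherited from the symmetric approximations, positivity holds by the previous stage, and it remains to show $\wt{D}(z, w) < \infty$ for all $z, w$ a.s. The translation invariance of the whole-plane GFF modulo additive constants induces a translation-invariant distribution on the random set $\Omega = \{(z,w) \in \C^2 : \wt{D}(z,w) < \infty\}$; an ergodicity argument combined with $\wt{D} \not\equiv \infty$ implies $\Omega$ is a.s. dense in $\C^2$. The triangle inequality then spreads finiteness: for any $(z, w)$ one finds waypoints $u, v$ arbitrarily close to $z$ and $w$ with $(z, u), (u, v), (v, w) \in \Omega$, giving $\wt{D}(z, w) \leq \wt{D}(z, u) + \wt{D}(u, v) + \wt{D}(v, w) < \infty$.
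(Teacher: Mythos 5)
Your overall intuition for the tightness stage — that the $1/\xi$-th power cancels the dominant $e^{\xi h_r(0)}$ factor and the median of the around-distance calibrates $\alpha(\xi)^{-1/\xi}$ — is the right heuristic, but the execution has gaps. First, you lean on the two-sided bound from dubedat2020weak Proposition 1.8 as if it were available uniformly in $\xi$; the paper only invokes it as motivation and actually reproves the upper bound from scratch (via a covering argument using events $E_r^\xi(z)$, Proposition 9.1) with explicit uniformity over $\xi \geq \xi_0$. Second, $Q(\xi) \to 0$ as $\xi \to \infty$ is not merely a detail: in the subcritical and fixed-$\xi$ arguments the fact that $Q>0$ is bounded below is exactly what makes $\wt{D}(\text{across}\,\mathbb{A}_{r,R}(0)) \to \infty$ as $R \to \infty$, and as $\xi \to \infty$ this term degenerates. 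The paper has to introduce a genuinely new Brownian-motion excursion lemma (Lemma 9.4) to restore this estimate (Lemma 9.5), which your proposal does not anticipate.

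In the non-triviality stage you also have a sign error. Property (i) of Definition 1.4 yields $\wt{D}(z,w) \leq \liminf_n \wt{D}_{h^n}^{\xi_{k_n}}(z_n,w_n)$, i.e.\ it transfers \emph{upper} bounds to the limit, not lower bounds; a lower bound on $\wt{D}(z,w)$ requires property (ii) together with a uniform-in-a-neighborhood estimate for the approximants. Moreover $\wt{D}_h^{\xi}$ is not a length metric (only $(\wt{D}_h^\xi)^\xi$ is), so ``sum of across-distances'' does not lower-bound $\wt{D}_h^{\xi}(z,w)$; $(a_1+\cdots+a_m)^{1/\xi} \ll a_1^{1/\xi}+\cdots+a_m^{1/\xi}$. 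The paper handles this via a monotonicity remark (Remark 9.3) and a multi-scale argument (Lemma 9.7).

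The most serious gap is in your third stage. You write that positivity ``holds by the previous stage'' and that it remains to prove finiteness. Both claims are wrong. Stage 2 only establishes $\wt{D}(z,w)>0$ for each \emph{fixed} deterministic pair $(z,w)$; upgrading this to ``a.s., $\wt{D}(z,w)>0$ simultaneously for all $z\neq w$'' is precisely where the assumption that $\wt{D}$ satisfies the triangle inequality is needed, because it makes the set $\{\wt{D}(\text{across}\,\mathbb{A}_{r,2r}(0))>0\}$ amenable to a $0$--$1$ law argument (the analogue of Lemma 6.9 / Proposition 9.6; without the triangle inequality, the identity $\mathcal{Z}=\{E_1\cap E_2=\emptyset\}$ fails in the lower semicontinuous setting). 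Conversely, the finiteness claim you try to prove is not needed and in fact false: for $\xi>\xi_{\text{crit}}$ there are a.s.\ singular points where $\wt{D}(z,\cdot)\equiv\infty$, so ``metric a.s.'' in this context means the metric axioms hold allowing $\infty$ values. Your ergodicity-plus-waypoints argument therefore addresses the wrong property; in addition it has its own gap, since density of $\Omega=\{\wt{D}<\infty\}$ in $\C^2$ does not by itself produce a pair $u,v$ with $(z,u),(u,v),(v,w)$ all in $\Omega$.
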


We note that it was one of the open problems in \cite{ding2023uniqueness} whether the metric $D_h^{\xi}$,  appropriately re-scaled,  converges in some topology as $\xi \to \infty$ (see \cite[Problem~1.10]{ding2023uniqueness}).  Theorem~\ref{thm:main_thm_xi_to_infty_intro} gives a partial solution to the above question.

\subsection{Outline}
\label{subsec:outline}

Here we will give a rough outline of the content and the main ideas of the rest of the paper.

In Section~\ref{sec:preliminaries},  we will recall some preliminary facts about GFFs and LQG metrics.

In Section~\ref{sec:tightness},  we will show that the family of metrics $\wh{D}_h^{\gamma}$ in Theorem~\ref{thm:main_theorem_subcritical_intro} is tight whenever $\gamma$ lies in a compact subset of $(0,2)$.  More precisely,  for fixed $0 < \gamma_* < \gamma^* < 2$,  we will show that the family of random metrics $(\wh{D}_h^{\gamma})_{\gamma \in [\gamma_* ,  \gamma^*]}$ is tight with respect to the local uniform topology on $\C \times \C$.  This will be done in two main steps.  The first step is to prove an upper bound on the $\gamma$-LQG distance between sets which is uniform in $\gamma \in [\gamma_* ,  \gamma^*]$.  More precisely,  we will show that for any two fixed disjoint compact and connected sets $K_1 ,  K_2 \subseteq \C$ which are not singletons,  we have that
\begin{align}\label{eqn:main_bound_subcritical}
\wh{D}_h^{\gamma}(r K_1 ,  r K_2) \leq A r^{\xi Q} e^{\xi h_r(0)}
\end{align}
with polynomially high probability as $A \to \infty$,  at a rate which is uniform in $r$ and $\gamma \in [\gamma_* ,  \gamma^*]$,  where we set 
\begin{align*}
\xi = \frac{\gamma}{d_{\gamma}} \quad \text{and} \quad Q = Q\left(\frac{\gamma}{d_{\gamma}}\right) = \frac{2}{\gamma} + \frac{\gamma}{2}.
\end{align*}

In the second step,  we will use \eqref{eqn:main_bound_subcritical} to obtain a H\"older continuity estimate which is uniform in $\gamma \in [\gamma_* ,  \gamma^*]$.  In particular,  we will show that for all $r>0$ and every compact set $K \subseteq \C$,  we have that
\begin{align}\label{eqn:main_holder_cont_estimate_subcritical}
\wh{D}_h^{\gamma}(u,v) \leq \left|\frac{u-v}{r}\right|^{\chi} r^{\xi Q} e^{\xi h_r(0)} \quad \text{for all} \quad u,v \in r K \quad \text{with} \quad |u-v| \leq \epsilon r
\end{align}
with polynomially high probability as $\epsilon \to 0$ at a rate which is uniform in $r$ and $\gamma \in [\gamma_* ,  \gamma^*]$,  and the exponent $\chi$ depends only on $\gamma_*$ and $\gamma^*$.  Thus the tightness will follow from \eqref{eqn:main_holder_cont_estimate_subcritical}    combined with the Arzela-Ascoli theorem.

In Section~\ref{sec:identifying_limit_subcritical},  we will identify any subsequential limit $\wh{D}$ from Section~\ref{sec:tightness} as the limiting metric in Theorem~\ref{thm:convergence_of_lffp} (modulo a multiplicative deterministic constant) and hence complete the proof of Theorem~\ref{thm:main_theorem_subcritical_intro}.

As a first step,  we will show that $\wh{D}$ is a metric almost surely.  The argument goes roughly as follows and it is similar to the argument in \cite[Section~6.2]{ding2023tightness}.  First we note that by scaling and translating,  it suffices to show that
\begin{align*}
\p\left[\wh{D}(\text{across} \,\,  \mathbb{A}_{1,2}(0))>0 \right] = 1.
\end{align*}
We set $\mathcal{Z} = \{\wh{D}(\text{across} \,\,  \mathbb{A}_{1,2}(0)) > 0\}$ and 
\begin{align*}
E_1 = \{x \in \C : \wh{D}(x,\partial B_1(0)) = 0\} \quad \text{and} \quad E_2 = \{x \in \C : \wh{D}(x,\partial B_2(0)) = 0\}.
\end{align*}
We will show that for any fixed disjoint open sets $U_1 ,  U_2 \subseteq \C$,  we have that the events $\{E_1 \subseteq U_1\}$ and $\{E_2 \subseteq U_2\}$ are independent.  Thus combining with the fact that
\begin{align}\label{eqn:main_equality_limit_metric}
\mathcal{Z}= \{E_1 \cap E_2 = \emptyset\} = \{E_1 \cap \partial B_2(0) = \emptyset\} = \{E_2 \cap \partial B_1(0) = \emptyset\}.
\end{align}
(by the triangle inequality for $\wh{D}$) and summing over a suitable collection of possible choices of $U_1$ and $ U_2$,  we obtain that
\begin{align*}
\p[\mathcal{Z}] = \p[E_1 \cap E_2 = \emptyset] \leq \p[E_1 \cap \partial B_2(0) = \emptyset] \p[E_2 \cap \partial B_1(0) = \emptyset] = \p[\mathcal{Z}]^2
\end{align*}
and hence $\p[\mathcal{Z}] \in \{0,1\}$.  Moreover we will show that $\p[\mathcal{Z}] > 0$ and hence $\p[\mathcal{Z}] = 1$.  This will prove that $\wh{D}$ is a metric almost surely.

As a second step,  in order to identify the limit,  we will show that $\wh{D}$ satisfies the axioms of $\gamma$-LQG metrics (see Section~\ref{sec:preliminaries}).  Therefore since these axioms uniquely characterize the law of the metric (modulo a multiplicative deterministic constant) and the limiting metric in Theorem~\ref{thm:convergence_of_lffp} satisfies these axioms (see \cite[Theorem~1.2]{gwynne2021existence}),  this will complete the proof of Theorem~\ref{thm:main_theorem_subcritical_intro}.  

In Section~\ref{sec:gamma_to_zero_case},  we will prove that the metrics $\wh{D}_h^{\gamma}$ in the statement of Theorem~\ref{thm:main_theorem_gamma_to_zero_intro} are tight as $\gamma \to 0$.  In order to show this,  we will use the same argument as in Section~\ref{sec:tightness}.  However,  since the way that we choose to re-normalize the metrics $D_h^{\gamma}$ will be different,  we will obtain slightly different estimates.  We will emphasize on these differences.

In Section~\ref{sec:limit_euclidean_metric},  we will identify the limit as $\gamma \to 0$ as the Euclidean metric (modulo a global multiplicative constant) and hence complete the proof of Theorem~\ref{thm:main_theorem_gamma_to_zero_intro}. This will be achieved via a $0-1$ law type argument.  In particular,  we will write
\begin{align*}
h = \lim_{n \to \infty}\left( \sum_{m=1}^n a_m f_m \right)
\end{align*}
where the limit is considered in the space of distributions (generalized functions),  $(a_n)_{n \in \N}$ is an i.i.d.  sequence of normal Gaussians,  and $(f_n)_{n \in \N}$ is an orthonormal basis of the Sobolev space $H_0(\C)$ with respect to the Dirichlet inner product.   Then we will show that for all $m \in \N$,  the metrics $\wh{D}_{h-h_m}^{\gamma}$ converge to the same limit in law as $\gamma \to 0$.  In particular we can find a coupling of $h$ with $\wh{D}$ in the same probability space so that $\wh{D}$ is $\bigcap_{m \geq 1} \sigma\left(\{a_n : n \geq m\}\right)$-measurable.  But the latter $\sigma$-algebra is trivial by Kolmogorov's $0-1$ law and hence $\wh{D}$ has to be deterministic almost surely from which Theorem~\ref{thm:main_theorem_gamma_to_zero_intro} follows easily.

In Section~\ref{sec:tightness_general_case},  we will prove tightness of the metrics $(\wh{D}_h^{\xi})_{\xi > 0}$ in Theorem~\ref{thm:main_theorem_general_case_intro} whenever $\xi$ lies in compact subsets of $(0,\infty)$.  The argument will be similar as that of Section~\ref{sec:tightness}.  In particular we will first prove tightness of the metrics $D_h^{\xi}$ when they are re-normalized in a different way from the metrics $\wh{D}_h^{\xi}$ appearing in the statement of Theorem~\ref{thm:main_theorem_general_case_intro} and then deduce tightness of the metrics $\wh{D}_h^{\xi}$.  More precisely,   we fix a function $\mathfrak{p}_0 : (0,\infty) \to (0,1)$ and for all $\xi > 0$,  we let $\alpha(\xi) \in (0,\infty)$ be such that
\begin{align*}
\p\left[D_h^{\xi}(\text{around} \,\,\mathbb{A}_{1,2}(0)) \leq \alpha(\xi) \right] = \mathfrak{p}_0(\xi).
\end{align*}

The main idea behind our normalization is the following.  For all $\xi>0$,  if we choose $\mathfrak{p}_0(\xi) \in (0,1)$ sufficiently close to $1$ and set 
\begin{align}\label{eqn:normalization_for_general_xi}
\wt{D}_h^{\xi}:=\alpha(\xi)^{-1} D_h^{\xi},
\end{align}
then the collection of random metrics $(\wt{D}_h^{\xi})_{\xi > 0}$ is tight when indexed by compact subsets of $(0,\infty)$.  The intuitive reason for this is the following.   First we note that combining the facts that the metric $\wt{D}_h^{\xi}$ is locally determined by $h$ with the independence across scales property of $h$ (\cite[Lemma~3.1]{gwynne2020local}),  we obtain that if we choose $\mathfrak{p}_0(\xi) \in (0,1)$ sufficiently close to $1$,  then we can cover the space with very high probability by annuli $\mathbb{A}_{r,2r}(z)$ with the property that 
\begin{align}\label{eqn:upper_bound_on_distance_around}
\wt{D}_h^{\xi}(\text{around} \,\,  \mathbb{A}_{r,2r}(z)) \leq r^{\xi Q(\xi)} e^{\xi h_r(0)}.
\end{align}
Thus by concatenating paths around $\mathbb{A}_{r,2r}(z)$ satisfying \eqref{eqn:upper_bound_on_distance_around},  we obtain an upper bound with high probability of the $\wt{D}_h^{\xi}$-distance between any two fixed and deterministic compact and connected sets $K_1 ,  K_2 \subseteq \C$.

We will show that for all $r>0,  \xi^* > 0$,  and compact sets $K_1 ,  K_2 \subseteq \C$ as in \eqref{eqn:main_bound_subcritical},  we have that
\begin{align*}
\wt{D}_h^{\xi}(r K_1 ,  r K_2) \leq A r^{\xi Q(\xi)} e^{\xi h_r(0)}
\end{align*}
with polynomially high probability as $A \to \infty$,  at a rate which is uniform in $\xi \in (0,\xi^*]$ and $r$.  However since $\wt{D}_h^{\xi}$ can take infinite values,  we have that \eqref{eqn:main_holder_cont_estimate_subcritical} does not hold.  In order to obtain a subsequential limit,  we will use the same construction as in \cite[Section~5.2]{ding2020tightness}.  More precisely if $(\xi_n)_{n \in \N}$ is a sequence in $(0,\infty)$ which converges to some $\xi>0$ as $n \to \infty$,  we will choose a subsequence $(\xi_{k_n})_{n \in \N}$ along which certain countable collection of functionals of the metrics $\wt{D}_h^{\xi_{k_n}}$ converge.  This will include the $\wt{D}_h^{\xi_{k_n}}$-distances across and around annuli whose boundaries are circles with rational radii centered at rational points.  We will then use these estimates to construct a subsequential limit in law $\wt{D}$ of $(\wt{D}_h^{\xi_{k_n}})_{n \in \N}$.

In Section~\ref{lem:identifying_limit_general_case},  we will complete the proof of Theorem~\ref{thm:main_theorem_general_case_intro}.  In order to prove that the subsequential limits for general $\xi>0$ are metrics almost surely,  we will follow the same argument as in the subcritical case (Section~\ref{sec:identifying_limit_subcritical}).  However there is one major difference and that is the reason why we make the assumption that the limit satisfies the triangle inequality almost surely.  More precisely,  suppose that we define the sets $E_1$ and $E_2$ as in \eqref{eqn:main_equality_limit_metric}.  Recall that in the subcritical case,  we have that
\begin{align*}
\{E_1 \cap \partial B_2(0) = \emptyset\} = \{E_2 \cap \partial B_1(0) = \emptyset\} = \{E_1 \cap E_2 = \emptyset\}
\end{align*}
since the triangle inequality is preserved under limits with respect to the local uniform topology on $\C \times \C$.  However since  this is not always true when we consider limits with respect to the lower semicontinuous topology,  we have to assume that the limit satisfies the triangle inequality almost surely so that  \eqref{eqn:main_equality_limit_metric} holds.  The rest of the argument is then similar to the subcritical case.

One could try to use the argument in \cite[Section~5.4]{ding2020tightness} in order to prove that any subsequential limit in law $\wt{D}$ of $\wt{D}_h^{\xi_n}$ satisfies the triangle inequality a.s. 	However the aforementioned argument wouldn't work in our case for the following reason.  Recall that in order to be able to apply \cite[Lemma~5.13]{ding2020tightness},  we need to know that there exists a deterministic constant $C>0$ such that for all $z \in \C,  r>0,  n \in \N$ fixed,  we have with sufficiently high probability (uniformly in $z,r$ and $n$) that
\begin{align*}
\wt{D}_h^{\xi_n}(\text{around} \,\,  \mathbb{A}_{r,2r}(z)) \leq C \wt{D}_h^{\xi_n}(\text{across} \,\,  \mathbb{A}_{r,2r}(z)).
\end{align*}
However by the way that we have chosen to re-normalize (so that $\wt{D}_h^{\xi_n}(\text{around} \,\,  \mathbb{A}_{1,2}(0)) \leq 1$ with high probability),  it is not clear whether $\wt{D}_h^{\xi_n}(\text{across} \,\,  \mathbb{A}_{1,2}(0))$ is not too small with sufficiently high probability so that we can justify the existence of the constant $C$. If we could prove that such $C$ exists,  then the argument in \cite[Section~5.4]{ding2020tightness} would work in our setup.

Next we will show that $\wt{D}$ satisfies the axioms of LQG metrics with parameter $\xi$ (see Section~\ref{sec:preliminaries}).  As in Section~\ref{sec:identifying_limit_subcritical},  since these axioms uniquely characterize the law of the metric (modulo a multiplicative deterministic constant) and the limiting metric in Theorem~\ref{thm:convergence_of_supercritical_lfpp} satisfies these axioms (see \cite[Theorem~1.4,  Lemma~1.13 and Lemma~1.14]{ding2023uniqueness}),  this will complete the proof of Theorem~\ref{thm:main_theorem_general_case_intro}.

In Section~\ref{sec:xi_to_infty},  we will complete the proof of Theorem~\ref{thm:main_thm_xi_to_infty_intro}.  The argument will be similar to the arguments in Section~\ref{sec:tightness_general_case}.  However since we re-normalize the metrics in a different way as $\xi \to \infty$,  there will be some differences compared to the arguments in Section~\ref{sec:tightness_general_case} that we are going to emphasize on and make clear.

Finally in Section~\ref{sec:apendix},  we will prove some facts that we are going to use about LQG metrics and convergence of random variables taking values on separable and complete metric spaces.

\subsection{Notation}
\label{subsec:notation}

We write $\N = \{1,2,3,\cdots\},  \N_0 = \N \cup \{0\}$,  and denote by $\C$ and $\Q$ the set of complex and rational numbers respectively.  We also denote by $\Z$ the set of integers and by $\R$ the set of real numbers. Moreover we will denote the probability of an event $E$ by $\p[E]$ and expectation with respect to $\p$ by $\E$.  We also abbreviate \emph{almost surely }by \emph{a.s.}

If $f : (0,\infty) \to \R$ and $g : (0,\infty) \to (0,\infty)$,  we say that $f(\epsilon) = O_{\epsilon}(g(\epsilon))$ (resp.  $f(\epsilon) = o_{\epsilon}(g(\epsilon))$) as $\epsilon \to 0$ if $\frac{f(\epsilon)}{g(\epsilon)}$ remains bounded (resp.  tends to zero) as $\epsilon \to 0$.  We similarly define $O(\cdot)$ and $o(\cdot)$ errors as parameters converge to infinity.

Let $\{E^{\epsilon}\}_{\epsilon > 0}$ be a one-parameter family of events.  We say that $E^{\epsilon}$ occurs with 
\begin{itemize}
\item \emph{polynomially high probability} as $\epsilon \to 0$ if there exists $p>0$ independent from $\epsilon$ such that $\p[E^{\epsilon}] \geq 1 - O_{\epsilon}(\epsilon^p)$.
\item \emph{superpolynomially high probability} as $\epsilon \to 0$ if $\p[E^{\epsilon}] \geq 1 - O_{\epsilon}(\epsilon^p)$ for every $p>0$.
\end{itemize}
We similarly define events which occur with polynomially and superpolynomially high probability as a parameter tends to $\infty$.

We write $\mathbb{S} = (0,1)^2$ for the open Euclidean unit square.  Also for a positive integer $N \in \N$,  we set $[1,N]_{\Z}:=[1,N] \cap \Z$ and we denote by $|\cdot|$ the Euclidean metric on $\C$.

For a metric $D : \C \times \C \to \R \cup \{\pm \infty\}$ and sets $A,B \subseteq \C$,  we define the distance between $A$ and $B$ with respect to $D$ by 
\begin{align*}
D(A,B):=\inf\{D(x,y) : x \in A,  y \in B\}.
\end{align*}

Finally for an open set $D \subseteq \C$,  we will denote by $C_0^{\infty}(D)$ the space of smooth and compactly supported functions $f : \C \to \R$ whose support is a subset of $D$.

\subsection*{Acknowledgements}
We would like to thank Ewain Gwynne for helpful comments on an earlier version of the draft.

\section{Preliminaries}
\label{sec:preliminaries}

In this section,  we will first introduce the zero-boundary and the whole-plane GFF and describe some of its main properties that we are going to use.  Next we will mention some results from \cite{gwynne2021existence,ding2023uniqueness},  which uniquely characterize (modulo a multiplicative deterministic constant) the limiting metrics in Theorems~\ref{thm:convergence_of_lffp} and ~\ref{thm:convergence_of_supercritical_lfpp} via certain lists of axioms.

\subsection{Gaussian free fields}
\label{subsec:gff}

Let $D \subseteq \C$ be a simply connected domain with harmonically non-trivial boundary and let $H_0(D)$ denote the Hilbert space closure of $C_0^{\infty}(D)$ with respect to the Dirichlet inner product
\begin{align*}
(f,g)_{\nabla} = \frac{1}{2\pi} \int_D \nabla f(z) \cdot \nabla g(z) dz.
\end{align*}
The \emph{zero-boundary Gaussian free field (GFF)} $h$ is the random distribution defined by 
\begin{align}\label{eqn:gff_definition}
h = \lim_{n \to \infty} \left( \sum_{m=1}^n a_m \phi_m \right)
\end{align}
where $(\phi_n)_{n \in \N}$ is a $(\cdot ,  \cdot)_{\nabla}$-orthonormal basis of $H_0(D)$,  $(a_n)_{n \in \N}$ is a sequence of independent $\mathcal{N}(0,1)$-distributed random variables,  and the limit in \eqref{eqn:gff_definition} is considered in the space of distributions $H^{-1}(D)$,  which denotes the dual space of $H_0(D)$.  

An important property of the GFF that we are going to use is the \emph{Markov property}.  It states that if $h$ is a zero-boundary GFF on $D$ and $U \subseteq D$ is a deterministic open set,  then the law of $h$ restricted to $U$,  conditional on the values of $h$ outside of $U$ is that of a zero-boundary GFF $h_U$ on $U$ plus the harmonic extension of its values on $\partial U$ to $U$.  Also the zero-boundary part and the harmonic part are independent.

Now we discuss about the whole-plane GFF (we will mostly work with this random field throughout the paper).  Let $H_0(\C)$ denote the Hilbert space closure with respect to $(\cdot ,  \cdot)_{\nabla}$ of the set of $f \in C_0^{\infty}(\C)$ such that $\int_{\C} f(z) dz = 0$.  We define the whole-plane GFF in the same way as we defined the zero-boundary GFF but with the orthonormal basis of $H_0(\C)$ instead in the sum in \eqref{eqn:gff_definition}.  The convergence in \eqref{eqn:gff_definition} is in the space $H_{\text{loc}}^{-1}(\C)$,  where we denote by $H_{\text{loc}}^{-1}(\C)$ the set of all generalized functions $\wt{h}$ on $\C$ such that $\wt{h}|_{B_R(0)} \in H^{-1}(B_R(0))$ for all $R>0$.  Moreover we can define a topology on $H_{\text{loc}}^{-1}(\C)$ by requiring that a sequence $(\wt{h}_n)_{n \in \N}$ in $H_{\text{loc}}^{-1}(\C)$ converges to some $\wt{h} \in H_{\text{loc}}^{-1}(\C)$ as $n \to \infty$ if and only if 
\begin{align*}
\wt{h}_n|_{B_R(0)} \to \wt{h}|_{B_R(0)} \quad \text{as} \quad n \to \infty \quad \text{in} \quad H^{-1}(B_R(0)) \quad \text{for all} \quad R>0.
\end{align*}

Note that the whole-plane GFF is defined modulo additive constant.  We usually fix the normalization by requiring that $h_1(0) = 0$.  We also note that the whole-plane GFF satisfies the Markov property as well.  Namely if $h$ is a whole-plane GFF and $U \subseteq \C$ is an open set,  we have that conditional on $h|_{\C \setminus U}$,  the field $h$ can be decomposed as the sum of a zero-boundary GFF on $U$ and the harmonic extension to $U$ of the values of $h$ on $\partial U$.

Finally we note that if $h$ is a whole-plane GFF,  then it is possible to define a version of $h$ such that for all $z \in \C$ and $r>0$,  the average of $h$ on $\partial B_r(z)$ is well-defined,  and we denote this quantity by $h_r(z)$ and call it the \emph{circle average} of $h$ on $\partial B_r(z)$.  Also there exists a modification of $h$ such that $(z,r) \to h_r(z)$ is a.s.  H\"older continuous on all the compact subsets of $\C \times (0,\infty)$ (see e.g.,  \cite[Proposition~1.58]{berestycki2024gaussian}).  Moreover we have that $h$ is scale and translation invariant in the sense that for every fixed $z \in \C,r>0$,  we have that the fields $h,h(\cdot + z) - h_1(z)$ and $h(r \cdot) - h_r(0)$ all have the same law.

\subsection{LQG metrics}
\label{subsec:lqg_metrics}

In order to prove in \cite{gwynne2021existence} that the limit in Theorem~\ref{thm:convergence_of_lffp} is unique,  the authors prove that for all $\gamma \in (0,2)$,  there is a unique (up to a multiplication by a deterministic positive constant) metric satisfying certain axioms.  We begin by giving some preliminary definitions.

\begin{defn}\label{def:metric_space}
Let $(X,d)$ be a metric space with $d$ allowed to take on infinite values.
\begin{itemize}
\item A \emph{path} in $(X,d)$ is a continuous function $P : [a,b] \to X$ for some interval $[a,b]$.
\item For a curve $P : [a,b] \to X$,  the \emph{$d$-length} of $P$ is defined by 
\begin{align*}
\len(P ; d) := \sup_T \sum_{i=1}^{|T|}  d(P(t_i) ,  P(t_{i-1}))
\end{align*}
where the supremum is over all partitions $T : a = t_0 < \cdots <t_{|T|} = b$ of $[a,b]$.  Note that the $d$-length of a curve can be infinite.
\item We say that $(X,d)$ is a \emph{length space} if for all $x,y \in X ,  \epsilon>0$,  there exist a curve with $d$-length at most $d(x,y) + \epsilon$ from $x$ to $y$.  If $d(x,y) < \infty$,  a curve in $X$ from $x$ to $y$ with $d$-length exactly $d(x,y)$ is called a \emph{geodesic}.  We say that $(X,d)$ is a \emph{geodesic space} if for all $x,y \in X$,  we have that $d(x,y) < \infty$ and there exists a geodesic in $X$ from $x$ to $y$.
\item For $Y \subseteq X$,  the \emph{internal metric} of $d$ on $Y$ is defined by 
\begin{align*}
d(x,y ; Y) := \inf_{P \subseteq Y} \len(P ; d), \quad \text{for all} \quad x,y \in Y
\end{align*}
where the infimum is over all curves $P$ in $Y$ from $x$ to $y$.  We note that $d(\cdot ,  \cdot ; Y)$ is a metric on $Y$.
\item If $X \subseteq \C$,  we say that $d$ is a \emph{lower semicontinuous metric} if the function $(x,y) \to d(x,y)$ is lower semicontinuous with respect to the Euclidean topology.  We equip the set of lower semicontinuous metrics on $X$ with the topology of lower semicontinuous functions on $X \times X$.
\end{itemize}
\end{defn}

We are now ready to state the axioms of a strong $\gamma$-LQG metric.

\begin{defn}\label{def:strong_lqg_metric_subcritical}
Let $\mathcal{D}'(\C)$ be the space of distributions (generalized functions) on $\C$,  equipped with the standard weak topology.  For $\gamma \in (0,2)$,  we say that a  \emph{strong $\gamma$-Liouville quantum gravity (LQG) metric} is a measurable function $h \to D_h$ from $\mathcal{D}'(\C)$ to the space of continuous metrics on $\C$ such that the following is true whenever $h$ is a whole-plane GFF plus a continuous function.

\begin{enumerate}[(I)]
\item \label{it:length_space}
\textbf{Length space}: A.s.,  $(\C,D_h)$ is a length space, i.e.,  the $D_h$-distance between any two points of $\C$ is the infimum of the $D_h$-lengths of $D_h$-continuous paths between the two points.
\item \label{it:locality}
\textbf{Locality}: Let $U \subseteq \C$ be a deterministic open set. The $D_h$-internal metric $D_h(\cdot , \cdot ; U)$ is a.s.  determined by $h|_U$.
\item \label{it:weyl_scaling}
\textbf{Weyl scaling}: For each continuous function $f : \C \to \R$,  we define
\begin{align*}
(e^{\xi f} \cdot D_h)(z,w):=\inf_{P : z \to w} \left \{ \int_0^{\text{len}(P;D_h)} e^{\xi f(P(t))} dt \right\} \quad \text{for all} \quad z,w \in \C,
\end{align*}
where the infimum is over all continuous paths from $z$ to $w$ parameterized by $D_h$-legth.  Then we have that $e^{\xi f} \cdot D_h = D_{h+f}$ for all continuous functions $f : \C \to \R$ a.s.
\item \label{it:coordinate_change}
\textbf{Coordinate change for translation and scaling}: For each $r>0,z \in \C$ deterministic,  a.s.
\begin{align*}
D_h(ru + z ,  rv + z) = D_{h(r \cdot + z) + Q \log(r)}(u,v) \quad \text{for all} \quad u,v \in \C
\end{align*}
where $Q = \frac{2}{\gamma} + \frac{\gamma}{2}$.
\end{enumerate}
\end{defn}

\begin{theorem}(\cite[Theorem~1.2]{gwynne2021existence})
\label{thm:uniqueness_subcritical}
Fix $\gamma \in (0,2)$.  There is a $\gamma$-LQG metric $D$ such that the limiting metric in Theorem~\ref{thm:convergence_of_lffp} is a.s.  equal to $D_h$ whenever $h$ is a whole-plane GFF plus a bounded and continuous function.  Furthermore the $\gamma$-LQG metric is unique in the following sense.  If $D$ and $\wt{D}$ are two $\gamma$-LQG metrics,  then there exists a deterministic constant $C \in (0,\infty)$ such that if $h$ is a whole-plane GFF plus a continuous function,  then a.s.  $D_h = C \wt{D}_h$. 
\end{theorem}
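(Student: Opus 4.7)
The plan is to verify that the limit $D_h$ produced by Theorem~\ref{thm:convergence_of_lffp} satisfies axioms \eqref{it:length_space}--\eqref{it:coordinate_change} whenever $h$ is a whole-plane GFF plus a bounded continuous function. Each axiom should be inherited from the corresponding approximate property of the LFPP metrics $a_\epsilon^{-1} D_h^\epsilon$ under local uniform convergence. The length-space property passes to the limit because uniform limits of length metrics on $\C$ are length metrics. Locality holds because $h_\epsilon^*(z)$ depends on $h$ only through the rapidly decaying heat kernel centered at $z$, so $D_h^\epsilon(\cdot,\cdot;U)$ is essentially determined by $h$ on an $O(\epsilon)$-thickening of $U$. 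Weyl scaling follows from $(h+f)*p_{\epsilon^2/2} = h_\epsilon^* + f * p_{\epsilon^2/2}$ together with uniform convergence $f*p_{\epsilon^2/2} \to f$ on compacts. The coordinate change axiom follows from the identity $(h(r\cdot))_\epsilon^*(z) = h_{r\epsilon}^*(rz)$ combined with the scaling behavior of $a_\epsilon$, which forces the additive $Q \log r$ shift.

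\textbf{Uniqueness: reduction to a scalar constant.} Let $D$ and $\wt{D}$ be two $\gamma$-LQG metrics and let $h$ be a whole-plane GFF normalized so that $h_1(0)=0$. Consider the random ratio
\[
R = \frac{D_h(\partial B_1(0),\partial B_2(0))}{\wt{D}_h(\partial B_1(0),\partial B_2(0))}.
\]
By axiom \eqref{it:coordinate_change} and the scale-translation invariance of $h$ modulo additive constant, the law of the analogous ratio for $\mathbb{A}_{r,2r}(z)$ does not depend on $(z,r)$. The goal is to show that $R$ is a.s.\ equal to a deterministic constant $C \in (0,\infty)$. Once this is established, axioms \eqref{it:weyl_scaling}, \eqref{it:locality} and the length-space property should propagate the identity from annular distances to all pairs $u,v \in \C$ simultaneously: Weyl scaling handles additive perturbations of $h$, locality lets one patch together local comparisons, and the length-space structure closes up the collection of exceptional pairs. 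The passage from pure whole-plane GFF to whole-plane GFF plus a continuous function is then by local absolute continuity via the Markov property.

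\textbf{Determinism of $R$ and the main obstacle.} Let $c_*$ and $c^*$ be the essential infimum and essential supremum of $R$. Both are deterministic by a tail triviality argument analogous to the one sketched for Section~\ref{sec:limit_euclidean_metric}, exploiting that $R$ depends measurably on $h$ while being insensitive (via the axioms) to the low-frequency part of the GFF expansion. It remains to prove $c_* = c^*$. The approach is a bootstrap across scales: if $\p[R \leq c^* - \delta] > 0$, then by axiom \eqref{it:coordinate_change} and approximate independence of $h$ at well-separated scales, with probability tending to one there are many nested concentric annuli on which the ratio is bounded by $c^* - \delta'$. Concatenating optimal $D$-paths through these annuli and comparing with $\wt{D}$-lengths of the same paths, using axiom \eqref{it:weyl_scaling} to absorb field differences, should yield an end-to-end ratio $R \leq c^* - \delta''$ with high probability, contradicting the definition of $c^*$. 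The hard part is precisely this bootstrap step: naive concatenation fails because optimal $D$-geodesics need not traverse the cheap annuli, so one needs quantitative comparisons between $D_h$-lengths of radial and circular arcs across annuli that are uniform in scale, together with a form of confluence of geodesics ensuring that enough geodesic segments can be routed through cheap annuli to yield a multiplicative rather than merely additive savings. This is the technical heart of the uniqueness proof and is the main obstacle.
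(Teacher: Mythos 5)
This is Theorem~\ref{thm:uniqueness_subcritical}, which the paper \emph{cites} from \cite[Theorem~1.2]{gwynne2021existence} and does not prove; there is no ``paper's own proof'' to compare against, so the comparison has to be against the proof in the cited source.

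Your existence sketch is fine in outline and matches how the axioms are verified for the LFPP limit in the literature. For uniqueness, the broad architecture you describe --- deterministic optimal bi-Lipschitz constants, a multiscale independence argument, and confluence of geodesics as the key geometric input --- is indeed the shape of the Gwynne--Miller proof, and correctly flagging confluence as the hard technical heart is a real insight. But two things don't work as written.

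First, the reduction. You set $R = D_h(\partial B_1(0),\partial B_2(0))/\wt{D}_h(\partial B_1(0),\partial B_2(0))$, a single random variable, and take $c_*,c^*$ to be its essential infimum and supremum. Even if you prove $R$ is a.s.\ constant, that only pins down the ratio of \emph{annular crossing distances}, and the claimed propagation ``annular distances $\Rightarrow$ all pairs'' via Axioms~\ref{it:weyl_scaling}, \ref{it:locality} and the length-space property is not a formal consequence: a length metric is not reconstructible from boundary-to-boundary distances of round annuli alone, and Weyl scaling and locality do not convert an annular identity into a pointwise one. The actual argument works with the genuinely global quantities $c_* = \inf_{u\neq v} D_h(u,v)/\wt D_h(u,v)$ and $c^* = \sup_{u\neq v} D_h(u,v)/\wt D_h(u,v)$ (a.s.\ deterministic by a tail argument), for which $c_*=c^*$ immediately gives $D_h = C\wt D_h$ for all pairs; there is no separate propagation step.

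Second, and more seriously, the bootstrap as stated does not close logically. You conclude that ``with high probability $R \leq c^* - \delta''$'' and claim this contradicts the definition of $c^*$. It does not: $c^*$ is the \emph{essential supremum}, and $\p[R \leq c^*-\delta''] < 1$ is perfectly consistent with $\operatorname{ess\,sup} R = c^*$. To get a contradiction you need the improved bound to hold with probability \emph{one}, and passing from ``high probability'' to ``probability one'' is precisely where all the work lives. In Gwynne--Miller this is handled not by direct concatenation of cheap paths but by a subtler scheme: an ``attainment'' lemma showing that with positive probability a $\wt D_h$-geodesic locally realizes a ratio near $c^*$, combined with quantitative independence of events localized along a geodesic (this is where confluence of geodesics is used, to control how geodesics interact with small balls), which together force a strict improvement of the a.s.\ constant $c^*$, not merely a high-probability one. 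So the proposal identifies the right toolbox but misses the mechanism by which the multiscale estimate upgrades to an almost-sure statement, which is the actual crux.
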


We are going to use Theorems~\ref{thm:uniqueness_subcritical} in Section~\ref{sec:identifying_limit_subcritical} in order to identify the limiting metric in Theorem~\ref{thm:main_theorem_subcritical_intro}.

Next we review the extension of the above results for general $\xi>0$. 

Let us first briefly explain why the limiting metric in Theorem~\ref{thm:convergence_of_supercritical_lfpp} takes infinite values almost surely (see \cite[Proposition~5.22]{ding2020tightness} for a rigorous proof). First we note that it is shown in \cite[Proposition~1.1]{ding2020tightness} that for all $\xi>0$,  there exists $Q = Q(\xi) > 0$ such that
\begin{align}\label{eqn:supercritical_lfpp_exponent}
a_{\epsilon} = \epsilon^{1-\xi Q + o_{\epsilon}(1)} \quad \text{as} \quad \epsilon \to 0.
\end{align}
It is also shown that $\xi \to Q(\xi)$ is a continuous function and $Q(\xi) \to \infty$ as $\xi \to 0$ and $Q(\xi) \to 0$ as $\xi \to \infty$.  Moreover we have that $\xi_{\text{crit}}$ is the unique value of $\xi$ for which $Q(\xi) = 2$ and $Q>2$ for $\xi < \xi_{\text{crit}}$ and $Q \in (0,2)$ for $\xi > \xi_{\text{crit}}$.

Recall that for $\xi > \xi_{\text{crit}}$ we have that there exist points $z \in \C$  such that
\begin{align*}
\limsup_{\epsilon \to 0} \frac{h_{\epsilon}(z)}{\log(\epsilon^{-1})} > Q
\end{align*}
since $Q \in (0,2)$.  Thus combining with the definition of $D_h^{\epsilon}$ (see \eqref{eqn:lffp_definition}) and \eqref{eqn:supercritical_lfpp_exponent},  we obtain (at least heuristically) that if $z \in \C$ is as above,  we have that
\begin{align}\label{eqn:infinite_distance}
D_h(z,w) = \infty \quad \text{for all} \quad w \in \C \setminus \{z\}.
\end{align}
\begin{defn}\label{def:singular_points}
The points $z \in \C$ for which \eqref{eqn:infinite_distance} holds are called \emph{singular points} for the metric $D_h$.
\end{defn}

Furthermore it is shown in \cite{hu2010thick} that almost surely
\begin{align*}
\limsup_{\epsilon \to 0} \frac{h_{\epsilon}(z)}{\log(\epsilon^{-1})} \in [-2,2]
\end{align*}
which explains the lack of singular points when $\xi \in (0,\xi_{\text{crit}}]$.  We also mention that in the subcritical and critical cases,  we have that
\begin{align}\label{eqn:parameter_Q}
Q\left(\frac{\gamma}{d_{\gamma}}\right) = \frac{2}{\gamma}+\frac{\gamma}{2} \quad \text{for all} \quad \gamma \in (0,2].
\end{align}

Now we give the definition of strong LQG metrics with parameter $\xi>0$. We will present results from \cite{ding2023uniqueness}.

\begin{defn}\label{def:strong_lqg_metric_general}
Fix $\xi>0$.  A \emph{(strong) LQG metric with parameter $\xi$} is a measurable function $h \to D_h$ from $\mathcal{D}'(\C)$ to the space of lower semicontinuous metrics on $\C$ such that the following holds whenever $h$ is a whole-plane GFF plus a continuous function on $\C$.  Axioms~\ref{it:length_space},  ~\ref{it:locality} and ~\ref{it:weyl_scaling} in Definition~\ref{def:strong_lqg_metric_subcritical} are satisfied and in addition we have that
\begin{enumerate}[(I)]
\setcounter{enumi}{3}
\item 
\textbf{Scale and translation invariance} \label{it:translation_and_scale_invariance}
Let $Q$ be as in \eqref{eqn:supercritical_lfpp_exponent}.  For each fixed and deterministic $r>0$ and $z \in \C$,  a.s.
\begin{align*}
D_h(r u + z ,  rv +z) = D_{h(r \cdot + z) + Q \log(r)}(u,v) \quad \text{for all} \quad u,v \in \C.
\end{align*}
\item
\textbf{Finiteness} \label{it:finiteness}
Let $U \subseteq \C$ be a deterministic,  open,  connected set and let $K_1,K_2 \subseteq U$ be disjoint,  deterministic,  compact,  connected sets which are not singletons.  Almost surely,  $D_h(K_1,K_2 ; U) < \infty$.
\end{enumerate}
\end{defn}

Then we have the following analogue of Theorem~\ref{thm:uniqueness_subcritical} for general $\xi>0$.

\begin{theorem}(\cite[Theorem~1.8]{ding2023uniqueness})
\label{thm:uniqueness_supercritical}
Fix $\xi>0$.  Then there is an LQG metric $D$ with parameter $\xi$ such that the limiting metric of Theorem~\ref{thm:convergence_of_supercritical_lfpp} is a.s.  equal to $D_h$ whenever $h$ is a whole-plane GFF plus a bounded continuous function.  Furthermore this LQG metric is unique in the following sense.  If $D$ and $\wt{D}$ are two LQG metrics with parameter $\xi$,  then there exists a deterministic constant $C \in (0,\infty)$ such that if $h$ is a whole-plane GFF plus a continuous function,  then a.s.  $D_h = C \wt{D}_h$. 
\end{theorem}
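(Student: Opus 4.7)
The plan is to establish the two halves of the theorem in turn: first show that the limit metric produced by Theorem~\ref{thm:convergence_of_supercritical_lfpp} satisfies all the axioms of Definition~\ref{def:strong_lqg_metric_general}, and then show that those axioms pin down the law up to a deterministic multiplicative constant. For existence, most axioms pass to the limit in a fairly direct manner from corresponding properties of LFPP: Axiom~\ref{it:length_space} because each $D_h^\epsilon$ is a length metric by construction and length-metric structure survives the passage to the lower semicontinuous limit; Axiom~\ref{it:locality} because $h_\epsilon^*$ is defined through a mollifier of shrinking support, so the internal metric on a fixed open $U$ is determined by $h|_U$ in the limit; Axiom~\ref{it:weyl_scaling} because for continuous $f$ the equality $D_{h+f}^\epsilon = e^{\xi f_\epsilon^*} \cdot D_h^\epsilon$ (up to a uniform error) passes to the limit by approximating $f$ uniformly on compacts; and Axiom~\ref{it:translation_and_scale_invariance} by combining translation/scale invariance of the whole-plane GFF with the asymptotic~\eqref{eqn:supercritical_lfpp_exponent} that defines $Q(\xi)$.

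The subtle existence axiom in the supercritical regime is Axiom~\ref{it:finiteness}, since singular points of positive Hausdorff dimension do appear and $D_h(z,w) = \infty$ for such $z$. I would establish finiteness between two non-degenerate disjoint compacts $K_1,K_2$ by producing, with probability one, a chain of concentric dyadic annuli $\mathbb{A}_{r,2r}(z)$ connecting $K_1$ to $K_2$ along which the ``around'' distance is finite. The independence-across-scales property of the whole-plane GFF, together with a uniform lower bound on the probability that an annulus admits a finite-length circular crossing, makes a Borel--Cantelli / percolation argument possible. Concatenating such loops yields a finite-length path in $\C$ from $K_1$ to $K_2$, hence $D_h(K_1,K_2;U) < \infty$ a.s.

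For uniqueness, the plan is a bi-Lipschitz comparison in the spirit of \cite{gwynne2021existence}: given two strong LQG metrics $D$ and $\wt D$ with parameter $\xi$, define
\begin{equation*}
c_* = \sup\bigl\{c > 0 : \p\bigl[D_h(\text{around }\mathbb{A}_{1,2}(0)) \geq c\, \wt D_h(\text{around }\mathbb{A}_{1,2}(0))\bigr] > 0\bigr\},
\end{equation*}
and the analogous $c^*$ for the opposite inequality. A zero--one law using the independence-across-scales structure of $h$ together with Axioms~\ref{it:locality} and~\ref{it:translation_and_scale_invariance} forces $c_*$ and $c^*$ to be deterministic and the associated comparison events to hold almost surely; Axiom~\ref{it:weyl_scaling} then allows one to localize the comparison by perturbing $h$ by smooth bump functions supported in arbitrary annular regions, and a bootstrapping argument promotes the one-sided annulus comparison to a bi-Lipschitz comparison of the full metrics. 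The main obstacle, and the step that genuinely uses the supercritical flexibility, is this rigidity upgrade: because $D$ and $\wt D$ may take infinite values, all comparisons must be phrased in terms of distances across and around annuli (where Axiom~\ref{it:finiteness} guarantees finiteness) and only afterwards transferred to the full metric via the length-space axiom. Once $c_* = c^* \in (0,\infty)$ is established, setting $C = c_*$ completes the proof.
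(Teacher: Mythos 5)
This theorem is imported from \cite{ding2023uniqueness} and used as a black box; the present paper does not prove it, so there is no in-paper argument to compare your sketch against. Evaluated on its own terms, your outline captures the high-level strategy correctly (verify the axioms of Definition~\ref{def:strong_lqg_metric_general}, then establish uniqueness by comparing two candidate metrics via an optimal bi-Lipschitz constant plus a zero--one law), but two of the steps you treat as routine are in fact the technical core of the cited work, and your reasoning for them is incorrect.

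On locality (Axiom~\ref{it:locality}): you claim this passes to the limit because ``$h_\epsilon^*$ is defined through a mollifier of shrinking support.'' This is wrong on two counts. First, $h_\epsilon^*(z) = (h * p_{\epsilon^2/2})(z)$ uses the heat kernel, which is supported on all of $\C$, so $D_h^\epsilon(\cdot,\cdot;U)$ is \emph{not} determined by $h|_U$ at any positive $\epsilon$. Second, even with a compactly supported mollifier, establishing that the limiting metric is a measurable function of $h$ and locally determined by it is a deep result in its own right; in the subcritical regime it is the content of \cite{gwynne2020local}, and in the present paper it is handled for subsequential limits via the machinery of \cite{Pfeffer_2024} (Propositions~\ref{prop:locality_property_subcritical} and~\ref{prop:locality_geeneral_case}). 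One first proves the weaker property of being a local $\xi$-additive metric, then upgrades this via an Efron--Stein variance argument; nothing like a ``direct passage to the limit'' works.

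On uniqueness: defining $c_*$ and $c^*$ and showing they are deterministic by a zero--one law is the easy part. The entire difficulty of \cite{ding2023uniqueness} (and of its subcritical precursor \cite{gwynne2021existence}) is the step you compress into ``a bootstrapping argument promotes the one-sided annulus comparison to a bi-Lipschitz comparison.'' Proving $c_* = c^*$ requires showing that events where the ratio $D_h/\wt{D}_h$ is close to its optimal value occur at many scales near a single point, and then running an intricate counting/iteration scheme; in the supercritical case this is further complicated because the metrics are only lower semicontinuous and have singular points, so one cannot invoke geodesic confluence directly. As written, your proposal identifies the right quantities but omits the argument that makes the theorem true.
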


\begin{rem}
(Metrics associated with other fields).  Theorem~\ref{thm:uniqueness_supercritical} gives us a way to associate a canonical LQG metric with parameter $\xi$ with a whole-plane GFF plus a continuous function.

We can also associate metrics with GFFs defined on proper subdomains of $\C$.  In particular,  let $U \subseteq \C$ be a fixed open set and let $h$ be a whole-plane GFF.  Recall that the Markov property for $h$ (see Section~\ref{subsec:gff}) implies that we can write $h|_U = h_U^0 + \Fh_U$,  where $h_U^0$ is a zero-boundary GFF on $U$ and $\Fh_U$ is a harmonic function on $U$.  Then for each fixed $\xi>0$,  we can associate a metric $D_{h_U^0}^{\xi}$ with $h_U^0$ by defining 
\begin{align*}
D_{h_U^0}^{\xi}:=e^{-\xi \Fh_U} \cdot D_{h|_U}^{\xi},
\end{align*}
where $D_{h|_U}^{\xi}:=D_h^{\xi}(\cdot ,  \cdot ; U)$ is the internal metric on $U$ associated with $D_h^{\xi}$.  Then it is easy to see by combining Axioms~\ref{it:locality} and ~\ref{it:weyl_scaling} that $D_{h_U^0}^{\xi}$ is a measurable function of $h_U^0$.  We will call the metric $D_{h_U^0}^{\xi}$ to be the LQG metric with parameter $\xi$ for the zero-boundary GFF on $U$.
\end{rem}

\section{Tightness: Subcritical case}
\label{sec:tightness}

In this section,  we will prove that the family of metrics $\wh{D}_h^{\gamma}$ in Theorem~\ref{thm:main_theorem_subcritical_intro} is tight whenever $\gamma$ lies on fixed compact subintervals of $(0,2)$.  As explained in Subsection~\ref{subsec:outline},  this will be achieved in two main steps.  As a first step,  we will give probability estimates on the $\gamma$-LQG distances between two fixed and disjoint compact sets which are uniform in $\gamma$.  This will be carried out in Subsection~\ref{subsec:estimates_on_lqg_distances_subcritical}.  As a second step,  we will obtain a moment diameter bound which is uniform in $\gamma$. This will be done in Subsection~\ref{subsec:moment_bound_for_diameters}.  Finally in Subsection~\ref{subsec:tightness_subcritical},  we will use the estimates from Subsections~\ref{subsec:estimates_on_lqg_distances_subcritical} and ~\ref{subsec:moment_bound_for_diameters} to obtain a H\"older continuity estimate which is uniform in $\gamma$ and hence prove the main tightness result of the section.

Let us now describe the setup.  For the rest of the section,  we fix $h$ to be whole-plane GFF normalized so that $h_1(0) = 0$.  Fix $\gamma \in (0,2)$ and let $D_h^{\gamma}$ denote the metrics as in the statement of Theorem~\ref{thm:main_theorem_subcritical_intro} which is the limit in probability as $\epsilon \to 0$ of the metrics $a_{\epsilon}^{-1} D_h^{\epsilon}$ as in Theorem~\ref{thm:convergence_of_lffp}.  Fix also $\mathfrak{p} \in (0,1)$ and let $\alpha = \alpha(\gamma ,  \mathfrak{p}) \in (0,\infty)$ be such that
\begin{align}\label{eqn:main_normalization}
\p[D_h^{\gamma}(\text{around} \,\,  \mathbb{A}_{1,2}(0)) \leq \alpha] = \mathfrak{p}.
\end{align}
Note that Lemma~\ref{lem:normalization_well_defined} implies that $\alpha(\gamma,\mathfrak{p})$ is well-defined for all $\mathfrak{p} \in (0,1)$.
Eventually we will define $\mathfrak{p} : (0,2) \to (0,1)$ as a decreasing function of $\gamma$.  Moreover we define $\xi$ and $Q$ as in \eqref{eqn:xi_subcritical} and \eqref{eqn:parameter_Q} respectively.  

In order to prove our main estimates in Subsections~\ref{subsec:estimates_on_lqg_distances_subcritical} and ~\ref{subsec:moment_bound_for_diameters},  it will be more convenient to work with a different re-normalization of the metrics $D_h^{\gamma}$ which is analogous to the re-normalization that we use in Theorem~\ref{thm:main_thm_xi_to_infty_intro}.  More precisely we set
\begin{align}\label{eqn:normalization_subcritical}
\wt{D}_h^{\gamma}:=\alpha^{-1} D_h^{\gamma}
\end{align}
and for the rest of the section we will prove estimates for $\wt{D}_h^{\gamma}$.

The main result that we are going to prove is the following.

\begin{theorem}\label{thm:tightness_subcritical}
There exists a choice of the function $\mathfrak{p} : (0,2) \to (0,1)$ such that the following is true.  Fix $0<\gamma_*<\gamma^*<2$ and let $U \subseteq \C$ be an open and connected set (including $\C$).  Then the collection of laws of the random internal metrics $\wt{D}_h^{\gamma}(\cdot ,  \cdot ; U)$ on $U \times U$ for $\gamma \in [\gamma_*,\gamma^*]$ is tight with respect to the local uniform topology on $U \times U$.
\end{theorem}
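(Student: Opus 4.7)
The strategy follows the two-step plan described in Subsection~\ref{subsec:outline}: first establish a pointwise upper bound on $\wt{D}_h^\gamma$-distances between disjoint compact sets that is uniform in $\gamma\in[\gamma_*,\gamma^*]$, and then bootstrap it to a H\"older continuity estimate that is also uniform in $\gamma$. Tightness in the local uniform topology will then follow from the Arzela--Ascoli theorem applied on an exhaustion of $U$ by compact sets.

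The first task is to specify $\mathfrak{p}:(0,2)\to(0,1)$. I will choose $\mathfrak{p}(\gamma)$ close enough to $1$, depending on $\gamma$, that for every $r>0$ and $z\in\C$ the event
\[
G_r(z) \;:=\; \bigl\{\wt{D}_h^\gamma(\text{around}\,\mathbb{A}_{r,2r}(z))\leq r^{\xi Q}e^{\xi h_r(0)}\bigr\}
\]
occurs with probability at least some fixed $\mathfrak{p}_0\in(0,1)$ close to $1$, uniformly in $\gamma\in[\gamma_*,\gamma^*]$. The scaling relation (Axiom~\ref{it:coordinate_change}) and Weyl scaling (Axiom~\ref{it:weyl_scaling}) reduce the event $G_r(z)$ to the base event at scale $1$ centered at $0$ involved in~\eqref{eqn:main_normalization}, and the continuity of $\xi(\gamma)$ and $Q(\gamma)$ on $[\gamma_*,\gamma^*]$ lets a common $\mathfrak{p}_0$ be chosen. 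A percolation-across-scales argument, relying on the near-independence of the GFF between distant scales (a uniform-in-$\gamma$ adaptation of \cite[Lemma~3.1]{gwynne2020local}) and a union bound over dyadic scales, then shows that, with polynomially high probability as $A\to\infty$, every annulus $\mathbb{A}_{r,2r}(z)$ with $z$ in a prescribed bounded region carries a loop separating its two boundaries of $\wt{D}_h^\gamma$-length at most $A\,r^{\xi Q}e^{\xi h_r(0)}$, at a rate independent of $\gamma\in[\gamma_*,\gamma^*]$, $r$ and $z$.

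Step two concatenates these loops. For disjoint compact connected non-singleton sets $K_1,K_2\subseteq\C$, one chains annuli whose loops, together with short transition segments, form a path between $rK_1$ and $rK_2$, yielding
\[
\wt{D}_h^\gamma(rK_1,rK_2) \;\leq\; A\,r^{\xi Q}e^{\xi h_r(0)}
\]
with polynomially high probability as $A\to\infty$, uniformly in $r>0$ and $\gamma\in[\gamma_*,\gamma^*]$. Combining this with standard Gaussian tail bounds on circle averages (which do not depend on $\gamma$) and applying it at all dyadic scales between $|u-v|$ and $r$ then gives, via a geometric-series argument of Kolmogorov type, the H\"older estimate
\[
\wt{D}_h^\gamma(u,v) \;\leq\; \bigl|\tfrac{u-v}{r}\bigr|^{\chi} r^{\xi Q}e^{\xi h_r(0)}, \qquad u,v\in rK,\ |u-v|\leq\epsilon r,
\]
with polynomially high probability as $\epsilon\to 0$, uniformly in $r$ and in $\gamma\in[\gamma_*,\gamma^*]$, where $\chi=\chi(\gamma_*,\gamma^*)>0$ can be chosen uniform because $\xi Q-\xi^2/2$ stays bounded away from $0$ on $[\gamma_*,\gamma^*]$.

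To conclude, I fix a compact exhaustion $K_n$ of $U$ together with slightly larger compacts $K_n'\subseteq U$ containing $K_n$ in their interior. The H\"older estimate, applied on $K_n'$, provides equicontinuity in probability for $\wt{D}_h^\gamma|_{K_n\times K_n}$ and, combined with the pointwise distance bound, tightness at any single pair; moreover, with high probability every near-geodesic between points of $K_n$ stays in $K_n'$, so that the internal metrics $\wt{D}_h^\gamma(\cdot,\cdot;U)$ and the unrestricted distances agree on $K_n\times K_n$ up to a negligible error. Arzela--Ascoli then produces tightness of $\wt{D}_h^\gamma(\cdot,\cdot;U)|_{K_n\times K_n}$ in the uniform topology, uniformly in $\gamma\in[\gamma_*,\gamma^*]$, i.e.\ tightness in the local uniform topology on $U\times U$. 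The main obstacle I expect is carrying out the first step with all constants genuinely tracked in $\gamma$: this requires a quantitative uniform-in-$\gamma$ version of the independence-across-scales lemma and of the multi-scale percolation bound, together with a careful analysis of how $\xi(\gamma),Q(\gamma)$ and the normalization $\alpha(\gamma,\mathfrak{p}(\gamma))$ from~\eqref{eqn:main_normalization} enter the probability estimates, so that a single rate $\mathfrak{p}(\gamma)\to 1$ suffices uniformly for all $\gamma\in[\gamma_*,\gamma^*]$.
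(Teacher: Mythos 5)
Your proposal matches the paper's two-step strategy: a uniform-in-$\gamma$ multi-scale annulus argument bounding $\wt{D}_h^\gamma$-distances between disjoint compacts (Proposition~\ref{prop:bound_on_lqg_distances_of_sets}), a diameter moment bound feeding into a uniform H\"older estimate (Propositions~\ref{prop:moment_lqg_diameter_bound_for_compact_sets} and~\ref{prop:tightness_proposition_subcritical}), and Arzela--Ascoli. The only imprecisions are cosmetic: the uniform H\"older exponent is $(Q(\gamma)-\beta)\xi(\gamma)$ for $\beta$ slightly larger than $2$ rather than $\xi Q-\xi^2/2$, the detour through near-geodesics staying inside $K_n'$ is unnecessary since the estimates are proved directly for the internal metric, and $\mathfrak{p}(\gamma)$ has to be chosen jointly with the auxiliary parameters $q,\zeta,\wt{\zeta},M$ in the moment bound rather than merely being sufficiently close to $1$.
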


\subsection{Estimates for the distance between sets: Subcritical case}
\label{subsec:estimates_on_lqg_distances_subcritical}

The main goal of this subsection is to prove the following precise estimate which is uniform in $\gamma$.

\begin{proposition}\label{prop:bound_on_lqg_distances_of_sets}
Fix $M>1$.  Then there exists a universal constant $b \in (0,1)$ and a constant $\mathfrak{p}_0 \in (0,1)$ depending only on $M$ such that the following holds for all $\mathfrak{p} \in (\mathfrak{p}_0 , 1)$.  Let $U \subseteq \C$ be an open and connected set (including $\C$) and let $K_1,K_2 \subseteq U$ be two connected,  disjoint compact sets which are not singletons.  Then for all $\mathfrak{r} > 0$,  it holds with probability at least $1-O_A(A^{-b \sqrt{M}})$ as $A \to \infty$ at a rate that depends only on $K_1,K_2$ and $U$ and it is uniform in the choice of $\mathfrak{r}$,  that
\begin{align*}
\wt{D}_h^{\gamma}(\mathfrak{r} K_1 ,  \mathfrak{r} K_2 ; \mathfrak{r} U) \leq A \mathfrak{r}^{\xi Q} e^{\xi h_{\mathfrak{r}}(0)}.
\end{align*}
\end{proposition}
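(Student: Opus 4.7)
The strategy is the classical ``percolation of good annuli'' argument, built around a scaling identity that forces the central event to have probability exactly $\mathfrak{p}$ regardless of $\gamma$, the scale, or the center. First I will combine axiom~\ref{it:weyl_scaling} (Weyl scaling) and axiom~\ref{it:coordinate_change} (coordinate change) with the translation/scale invariance of the whole-plane GFF modulo circle averages to show the a.s.\ identity
\begin{equation*}
\wt{D}_h^{\gamma}\bigl(\text{around } \mathbb{A}_{r,2r}(z)\bigr) = r^{\xi Q}\, e^{\xi h_r(z)}\, \wt{D}_{\tilde h}^{\gamma}\bigl(\text{around } \mathbb{A}_{1,2}(0)\bigr),
\end{equation*}
where $\tilde h(\cdot) := h(r\cdot + z) - h_r(z)$ has the same law as $h$. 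Combined with the definition of $\alpha$ in \eqref{eqn:main_normalization}, this yields that the loop-event $\{\wt{D}_h^{\gamma}(\text{around } \mathbb{A}_{r,2r}(z)) \leq r^{\xi Q} e^{\xi h_r(z)}\}$ has probability exactly $\mathfrak{p}$, uniformly in $r, z, \gamma$.

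Next, fix $M > 1$ and declare $\mathbb{A}_{r, 2r}(z)$ \emph{good at base scale $\mathfrak{r}$} if the loop-event above occurs and, in addition, $h_r(z) - h_\mathfrak{r}(0)$ is controlled by a chaining/Borell--TIS bound of size proportional to $\sqrt{M}\log(\mathfrak{r}/r)$. By axiom~\ref{it:locality}, $\wt{D}_h^\gamma$ restricted to $\mathbb{A}_{r,2r}(z)$ is a function of $h|_{\mathbb{A}_{r,2r}(z)}$; combined with the near-independence across scales of the whole-plane GFF \cite[Lemma~3.1]{gwynne2020local}, this gives asymptotic independence of good-annulus events across well-separated dyadic scales. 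Choosing $\mathfrak{p}_0 = \mathfrak{p}_0(M)$ close enough to $1$, a standard multi-scale union bound then shows that with probability at least $1 - O_A(A^{-b\sqrt{M}})$, every point of $\mathfrak{r}(U\setminus V)$ (with $V$ a thin neighborhood of $K_1\cup K_2$) lies in a good annulus at some dyadic scale $r\in[r_0(A), c\mathfrak{r}]$, where $r_0(A)$ decays polynomially in $A^{-1}$.

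Since $K_1, K_2\subseteq U$ are disjoint, compact, connected and non-singleton, I can cover a chain of overlapping small Euclidean balls joining $\mathfrak{r} K_1$ to $\mathfrak{r} K_2$ inside $\mathfrak{r} U$ by a chain of overlapping good annuli. Loops around two good annuli with overlapping outer disks necessarily intersect, so concatenating them yields a continuous path from $\mathfrak{r} K_1$ to $\mathfrak{r} K_2$ whose $\wt{D}_h^\gamma$-length is bounded by the sum of the loop lengths $r^{\xi Q}e^{\xi h_r(\cdot)}$. On the good event, inserting the circle-average estimate $e^{\xi h_r(z)}\leq e^{\xi h_\mathfrak{r}(0)}(\mathfrak{r}/r)^{\xi Q-\delta}$ and summing a geometric series over the $O(\log A)$ dyadic scales used produces the desired bound $\wt{D}_h^\gamma(\mathfrak{r} K_1, \mathfrak{r} K_2; \mathfrak{r} U) \leq A\,\mathfrak{r}^{\xi Q} e^{\xi h_\mathfrak{r}(0)}$.

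The main obstacle is ensuring that \emph{every} constant entering this argument is uniform in $\gamma\in[\gamma_*,\gamma^*]$. The loop-event probability is $\gamma$-uniform by construction, and the circle-average chaining bound is $\gamma$-independent, so the exponent $b$ will indeed be universal. What requires care is (i) producing a version of the independence-across-scales input with constants depending only on $\gamma_*,\gamma^*$, and (ii) checking that the deterministic combinatorics needed for the concatenation (covering constants, exponent in $r_0(A)$) depend only on $K_1,K_2,U$ and not on $\gamma$. Absorbing those pieces into the $\gamma$-uniform framework is the most delicate step, but once the scaling identity of the first paragraph is in hand, the remaining arguments are structurally identical to those used for a fixed $\gamma$ in \cite{gwynne2021existence,ding2020tightness}.
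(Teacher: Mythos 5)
Your proposal follows the same high-level multi-scale strategy as the paper: declare an annulus \emph{good} if the loop event $\{\wt{D}_h^{\gamma}(\text{around }\mathbb{A}_{r,2r}(z))\leq r^{\xi Q}e^{\xi h_r(z)}\}$ occurs (the scaling argument you give in the first paragraph to show it has probability exactly $\mathfrak{p}$ is exactly right), invoke near-independence across scales from \cite[Lemma~3.1]{gwynne2020local} to show that good annuli cover the domain, control the circle averages, and concatenate. Steps one through three of your outline match Lemmas~\ref{lem:good_event_occurs_almost_everywhere}--\ref{lem:connecting_compact_sets_with_good_annuli} in the paper.

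However, the final accounting contains a genuine error, and it is an internal inconsistency in your own text. In your second paragraph you (correctly) say that to make the circle-average event fail with probability $\lesssim \epsilon^M$ you must allow $|h_r(z)-h_{\mathfrak r}(0)|$ to grow like $c\sqrt{M}\log(\mathfrak{r}/r)$; for $M$ large this slope $c\sqrt{M}$ exceeds $Q$, which is uniformly bounded on $[\gamma_*,\gamma^*]$. Yet in the final paragraph you insert the bound $e^{\xi h_r(z)}\leq e^{\xi h_{\mathfrak r}(0)}(\mathfrak{r}/r)^{\xi Q-\delta}$, i.e.\ slope strictly \emph{less} than $Q$, and then sum a geometric series in $r$. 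That bound is simply not available with the required $M$-dependent failure probability, and without it the terms $r^{\xi Q}e^{\xi h_r(z)}$ do not decay as $r\downarrow 0$ — with slope $>Q$ they in fact \emph{grow}, so there is no geometric series. The way out, as in the paper, is to restrict attention to the bounded window of scales $r\in[\epsilon^2\mathfrak{r},\epsilon\mathfrak{r}]$ (no sum over $r\to 0$ is needed), count the number of annuli used ($\epsilon^{-4+o_\epsilon(1)}$), multiply by $\sup_r r^{\xi Q}e^{\xi h_r(w)}\leq \epsilon^{\xi Q-\xi 2\sqrt{2}\sqrt{4+M}}\mathfrak{r}^{\xi Q}e^{\xi h_{\mathfrak r}(0)}$, and then choose $\epsilon=A^{-b/\sqrt{M}}$; since $\xi<\xi_{\mathrm{crit}}$ on $[\gamma_*,\gamma^*]$ the ratio $\frac{4-\xi Q+\xi 2\sqrt{2}\sqrt{4+M}}{\sqrt{M}}\to 2\sqrt{2}\,\xi_{\mathrm{crit}}$ as $M\to\infty$, which is how the universality of $b$ is secured. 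Your sentence ``so the exponent $b$ will indeed be universal'' papers over exactly this bookkeeping and should be replaced by this computation.
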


Let us explain the main idea of the proof of Proposition~\ref{prop:bound_on_lqg_distances_of_sets}.  We will use a similar argument as in the proof of \cite[Proposition~3.1]{dubedat2020weak} but the events that we are going to consider are different.  More precisely the main step will be to prove that with high probability (which is uniform in $\mathfrak{r}$ and $\gamma$),  we can cover the set $\mathfrak{r} U$ by Euclidean balls $B_r(w)$ for which there exists a path in $\mathbb{A}_{r,2r}(w)$ whose $\wt{D}_h^{\gamma}$-length is at most $r^{\xi Q} e^{\xi h_r(w)}$.  Then we will concatenate a collection of such paths to obtain a path from $\mathfrak{r} K_1$ to $\mathfrak{r} K_2$ whose $\wt{D}_h^{\gamma}$-length is bounded from above.  The assumption that both $K_1$ and $K_2$ are connected guarantees that some of the above paths in the annuli $\mathbb{A}_{r,2r}(w)$ intersect $\mathfrak{r} K_1$ and $\mathfrak{r} K_2$.

In order to cover $\mathfrak{r} U$ by Euclidean balls with the desired properties,  for all $z \in \C$ and $r>0$,  we let $E_r^{\gamma}(z)$ be the event that
\begin{align*}
\wt{D}_h^{\gamma}(\text{around} \,\,  \mathbb{A}_{r,2r}(z)) \leq r^{\xi Q} e^{\xi h_r(z)}.
\end{align*}
Note that our choice of re-normalization combined with the scaling properties of LQG metrics (Axiom~\ref{it:coordinate_change}) and Weyl scaling (Axiom~\ref{it:weyl_scaling})
imply that $\p[E_r^{\gamma}(z)] = \mathfrak{p}$.

We would like to have that the event $E_r^{\gamma}(z)$ occurs for sufficiently dense sets of annuli with high probability (uniformly in $\gamma$) provided we choose $\mathfrak{p} \in (0,1)$ sufficiently close to $1$.  This is the content of the following lemma.

\begin{lemma}\label{lem:good_event_occurs_almost_everywhere}
For each $M>0$,  there exists $\mathfrak{p}_0 \in (0,1)$ depending only on $M$ such that for all $\mathfrak{p} \in (\mathfrak{p}_0 ,  1)$, and all $\mathfrak{r}>0$,  it holds with probability at least $1 - \epsilon^M$ as $\epsilon \to 0$,  at a rate which is universal,   that the following is true.  For each $z \in B_{\mathfrak{r} \epsilon^{-M}}(0)$,  there exist 
\begin{align*}
r \in [\epsilon^2 \mathfrak{r} ,  \epsilon \mathfrak{r}] \cap \{2^{-k} \mathfrak{r}\}_{k \in \N},  w \in B_{\mathfrak{r} \epsilon^{-M}}(0) \cap \left(\frac{\epsilon^2 \mathfrak{r}}{4} \Z^2 \right)
\end{align*}
such that $E_r^{\gamma}(w)$ occurs and $z \in B_{\frac{\mathfrak{r} \epsilon^2}{2}}(w)$.
\end{lemma}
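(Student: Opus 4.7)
The plan is to reduce the statement to a covering argument with a union bound over a grid, where the main probabilistic input is near-independence of the events $E_r^{\gamma}(w)$ across well-separated scales.

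First, I would reduce to the case $\mathfrak{r}=1$ using scale and translation invariance (Axiom~\ref{it:coordinate_change}) and the scaling/translation invariance of the whole-plane GFF modulo additive constant: the joint law of $(r,w,\wt{D}_h^{\gamma}(\text{around}\,\,\mathbb{A}_{r,2r}(w))/(r^{\xi Q}e^{\xi h_r(w)}))$ is invariant under $(r,w)\mapsto(\mathfrak{r} r,\mathfrak{r} w)$ together with the field replacement $h\mapsto h(\mathfrak{r}\cdot)+Q\log\mathfrak{r}$. Next, set $\mathcal{G}_\epsilon=B_{\epsilon^{-M}}(0)\cap(\tfrac{\epsilon^2}{4}\Z^2)$ and $\mathcal{K}(\epsilon)=\{k\in\N:2^{-k}\in[\epsilon^2,\epsilon]\}$, so that $|\mathcal{G}_\epsilon|=O(\epsilon^{-2M-4})$ and $|\mathcal{K}(\epsilon)|\geq c\log_2(1/\epsilon)$. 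For every $z\in B_{\epsilon^{-M}}(0)$ its nearest grid point $w_z\in\mathcal{G}_\epsilon$ satisfies $|z-w_z|<\epsilon^2/2$, so it suffices to show with probability $\geq 1-\epsilon^M$ that for every $w\in\mathcal{G}_\epsilon$ there is some $k\in\mathcal{K}(\epsilon)$ with $E_{2^{-k}}^\gamma(w)$ occurring.

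Next I would estimate the failure probability at a single $w$. By the choice of $\alpha$ in \eqref{eqn:main_normalization}, together with Weyl scaling (Axiom~\ref{it:weyl_scaling}) and coordinate change (Axiom~\ref{it:coordinate_change}), after subtracting the circle average $h_{2^{-k}}(w)$ one sees that $E_{2^{-k}}^\gamma(w)$ is measurable with respect to $h-h_{2^{-k}}(w)$ restricted to $\mathbb{A}_{2^{-k},2^{-k+1}}(w)$ (via locality, Axiom~\ref{it:locality}) and has probability exactly $\mathfrak{p}$. The key ingredient is now near-independence across scales: using the Markov decomposition of the whole-plane GFF into a zero-boundary GFF on $B_{2^{-k}}(w)$ plus a harmonic function (along the lines of \cite[Lemma~3.1]{gwynne2020local}), one can show that if we choose a subsequence $k_1<k_2<\cdots$ of $\mathcal{K}(\epsilon)$ with consecutive gaps at least some universal $C$, then the events $E_{2^{-k_j}}^\gamma(w)$ can be jointly coupled with truly independent events $\tilde E_j$ each of probability $\geq \mathfrak{p}-\delta(C)$, where $\delta(C)\to 0$ as $C\to\infty$. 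Consequently
\begin{equation*}
\p\Bigl[\bigcap_{k\in\mathcal{K}(\epsilon)}(E_{2^{-k}}^\gamma(w))^c\Bigr]\leq (1-\mathfrak{p}+\delta(C))^{\lfloor|\mathcal{K}(\epsilon)|/C\rfloor}\leq \epsilon^{\beta(\mathfrak{p},C)},
\end{equation*}
where $\beta(\mathfrak{p},C)\to\infty$ as $\mathfrak{p}\to 1$ with $C$ fixed appropriately.

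Finally, given $M>0$, I would first pick $C$ large so that $\delta(C)<1/2$, then choose $\mathfrak{p}_0\in(0,1)$ close enough to $1$ that $\beta(\mathfrak{p},C)\geq 3M+5$ for every $\mathfrak{p}\in(\mathfrak{p}_0,1)$. A union bound over $\mathcal{G}_\epsilon$ then yields total failure probability at most $O(\epsilon^{-2M-4})\cdot\epsilon^{3M+5}\leq\epsilon^M$ for $\epsilon$ small. Since the Markov decomposition and coupling argument only use general GFF properties (not $\gamma$-specific ones), the threshold $\mathfrak{p}_0$ and the constants in the rate depend only on $M$ and are uniform in $\gamma\in(0,2)$, giving the claimed universal rate.

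The principal obstacle is the quantitative near-independence across scales: the harmonic correction in the Markov decomposition of $h$ on nested balls $B_{2^{-k_j}}(w)$ is not constant on $\mathbb{A}_{2^{-k_j},2^{-k_j+1}}(w)$, so one must show that the discrepancy between the true event and its decorrelated counterpart (after subtracting the harmonic function's value at $w$) can be absorbed by the small tolerance $\delta(C)$ via Gaussian tail bounds on the gradient of the harmonic extension at the center of a disk. This is essentially the content of the standard lemma cited, and it is the only step whose justification requires a careful quantitative argument.
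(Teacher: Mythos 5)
Your proposal is correct and follows essentially the same route as the paper: reduce to a grid of $O(\epsilon^{-2M-4})$ points, show at each grid point that the annulus event fails simultaneously at all $\asymp\log(1/\epsilon)$ well-separated dyadic scales with probability $\leq\epsilon^{3M+O(1)}$ using the near-independence-across-scales estimate of \cite[Lemma~3.1]{gwynne2020local} (which the paper packages as its Lemma~\ref{lem:good_event_at_many_scales}), then union-bound. The paper invokes that lemma's large-deviation form $\p[\mathcal{N}(K)<bK]\leq e^{-aK}$ rather than your direct product bound $(1-\mathfrak{p}+\delta(C))^{\lfloor K/C\rfloor}$, but the two are interchangeable for the conclusion needed here, and your identification of the harmonic-correction control as the only nontrivial step matches the paper's reliance on Lemma~\ref{lem:harmonic_function_bounded_at_many_scales}.
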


We will prove Lemma~\ref{lem:good_event_occurs_almost_everywhere} using the near -independence across disjoint annuli property of the GFF.  In particular we will use the following more precise version of \cite[Lemma~3.1]{gwynne2020local}.

\begin{lemma}\label{lem:good_event_at_many_scales}
Fix $0<s_1<s_2<1$.  Let $\{r_k\}_{k \in \N}$ be a decreasing sequence of positive numbers such that $\frac{r_{k+1}}{r_k} \leq s_1$ for all $k \in \N$,  and let $\{E_{r_k}\}_{k \in \N}$ be events such that each $E_{r_k}$ is measurable with respect to the $\sigma$-algebra generated by 
\begin{align*}
((h-h_{r_k}(0))|_{\mathbb{A}_{s_1 r_k ,  s_2 r_k}(0)}).
\end{align*}
For $K \in \N$,  we let $\mathcal{N}(K)$ be the number of $k \in [1,K]_{\Z}$ for which $E_{r_k}$ occurs.  Then for each $a>0,b \in (0,1)$,  there exists $p \in (0,1)$ depending only on $a$ and $b$ such that if 
\begin{align*}
\p[E_{r_k}] \geq p \quad \text{for all} \quad k \in \N,
\end{align*}
then
\begin{align*}
\p[\mathcal{N}(K) < bK] \leq e^{-a K} \quad \text{for all} \quad K \in \N.
\end{align*}
\end{lemma}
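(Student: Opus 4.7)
The plan is to leverage the near-independence of the whole-plane GFF across disjoint concentric annuli, combined with a Chernoff bound for independent Bernoulli variables. Since $r_{k+1}/r_k \leq s_1$ and $s_2 < 1$, we may fix $\rho \in (1, s_2^{-1/2})$, so that the thickened annuli $A_k := \mathbb{A}_{s_1 r_k/\rho,\, \rho s_2 r_k}(0)$ are pairwise disjoint. By the Markov property of the whole-plane GFF, I would decompose $h|_{A_k} = \wt h_k + \varphi_k$, where $\wt h_k$ is a zero-boundary GFF on $A_k$ and $\varphi_k$ is the harmonic extension to $A_k$ of $h|_{\partial A_k}$. Because the $A_k$ are disjoint, the zero-boundary fields $\wt h_1, \wt h_2, \dots$ are mutually independent.

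The event $E_{r_k}$ is measurable with respect to $(h - h_{r_k}(0))|_{\mathbb{A}_{s_1 r_k, s_2 r_k}(0)}$, which on the inner annulus equals $\wt h_k + (\varphi_k - h_{r_k}(0))$. The only obstruction to exact independence across $k$ therefore comes from the harmonic pieces $\varphi_k - h_{r_k}(0)$. Since $\varphi_k$ is harmonic throughout $A_k$ and the inner annulus is compactly contained in $A_k$, a standard Gaussian tail estimate together with scale invariance of the whole-plane GFF modulo constants yields, for each $L > 0$,
\[
\sup_k \p\Bigl[\sup_{z \in \mathbb{A}_{s_1 r_k, s_2 r_k}(0)} |\varphi_k(z) - h_{r_k}(0)| > L\Bigr] \leq \delta(L),
\]
with $\delta(L) \to 0$ as $L \to \infty$. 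Denote the complementary good event by $F_k^L$.

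Next I would pass from $E_{r_k}$ to a $\sigma(\wt h_k)$-measurable event via a Cameron--Martin comparison. Conditional on $\varphi_k - h_{r_k}(0)$, the event $E_{r_k}$ is a function of $\wt h_k$ alone, and on $F_k^L$ the Radon--Nikodym derivative between the laws of $\wt h_k + (\varphi_k - h_{r_k}(0))$ and $\wt h_k$, restricted to the inner annulus, is of exponential-quadratic form in the shift and hence bounded in terms of $L$ alone. A standard absolute-continuity argument then produces, for each $\eta > 0$ and each $L > 0$, a threshold $p = p(L, \eta) \in (0,1)$ and a $\sigma(\wt h_k)$-measurable event $\wt E_k$ such that $\wt E_k \cap F_k^L \subseteq E_{r_k}$ and $\p[\wt E_k] \geq 1 - \eta$, whenever $\p[E_{r_k}] \geq p$.

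Finally I would conclude via Chernoff. The $\wt E_k$ are independent Bernoullis with success probability at least $1 - \eta$, so
\[
\mathcal{N}(K) \;\geq\; \sum_{k=1}^K \one_{\wt E_k} \;-\; \sum_{k=1}^K \one_{(F_k^L)^c}.
\]
Fixing $L$ large enough that $\delta(L)$ is small, then $\eta > 0$ small enough that $1 - \eta - \delta(L) > b$, and finally $p = p(L, \eta)$, the classical exponential Chernoff bound gives $\p[\mathcal{N}(K) < bK] \leq e^{-aK}$ for the prescribed $a > 0$, after tightening the parameters further if necessary to absorb the contribution of the rare events $(F_k^L)^c$. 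The main obstacle is the Cameron--Martin step: since we have no structural information about $E_{r_k}$ beyond the lower bound $\p[E_{r_k}] \geq p$, the replacement by a $\wt h_k$-measurable event must be driven purely by absolute continuity on the good event $F_k^L$, with $p$ calibrated at the end in terms of $L$, $\eta$, $a$ and $b$.
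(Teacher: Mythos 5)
Your outline has the right ingredients — disjoint annuli, the Markov decomposition into independent zero-boundary GFFs plus harmonic remainders, absolute-continuity control on the harmonic part, and a Chernoff bound — and this is the skeleton of \cite[Lemma~3.1]{gwynne2020local}, which the paper invokes together with Lemma~\ref{lem:harmonic_function_bounded_at_many_scales}. The gap is in your Cameron--Martin step.

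You want a $\sigma(\wt h_k)$-measurable event $\wt E_k$ with $\wt E_k \cap F_k^L \subseteq E_{r_k}$ and $\p[\wt E_k] \geq 1 - \eta$ whenever $\p[E_{r_k}] \geq p$. Since $\wt E_k$ cannot see the realisation of the random shift $\Phi_k := \varphi_k - h_{r_k}(0)$ and $F_k^L$ only bounds its size, the inclusion forces $\wt E_k$ into the uncountable intersection $\bigcap_{g}\{\wt h_k : \wt h_k + g \in E_{r_k}\}$ over harmonic $g$ of sup-norm at most $L$. For a general measurable $E_{r_k}$ this set can have probability zero even when $\p[E_{r_k}]$ is arbitrarily close to $1$: in a scalar caricature with $\wt h_k$ and $\Phi_k$ independent nondegenerate Gaussians, take $E_{r_k} = \{\wt h_k + \Phi_k \notin \bigcup_{n\in\Z}[nL - \epsilon, nL + \epsilon]\}$; then $\p[E_{r_k}] \to 1$ as $\epsilon \to 0$, but every value of $\wt h_k$ can be pushed into a forbidden interval by some shift in $[-L,L]$, so the intersection is empty. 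Cameron--Martin (via H\"older against the Radon--Nikodym density) does give a uniform lower bound on $\p[\wt h_k \in E_{r_k}]$, but $\{\wt h_k \in E_{r_k}\}$ does not satisfy the required inclusion because $E_{r_k}$ need not be stable under bounded harmonic shifts. The fix, which is what \cite{gwynne2020local} does, is to keep the shift's randomness inside the argument: work with the filtration $\mathcal{F}_{r_k}:=\sigma((h - h_{r_k}(0))|_{\C \setminus B_{r_k}(0)})$, observe that $E_{r_k}$ is $\mathcal{F}_{r_{k+1}}$-measurable, and show that on $\{\mathcal{M}_{s'r_k}^{r_k} \leq M\}$ the Radon--Nikodym derivative of the conditional law given $\mathcal{F}_{r_k}$ against the marginal law has bounded positive and negative moments, so that H\"older yields $\p[E_{r_k} \giv \mathcal{F}_{r_k}] \geq p_M$ once $p$ is close enough to $1$. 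That gives a genuine binomial stochastic domination of $\mathcal{N}(K)$ without manufacturing a shift-stable proxy for $E_{r_k}$. A secondary issue: even granting your step, the bound $\mathcal{N}(K) \geq \sum_k \one_{\wt E_k} - \sum_k \one_{(F_k^L)^c}$ needs an exponential tail on $\sum_k \one_{(F_k^L)^c}$, and the $(F_k^L)^c$ are strongly correlated across scales because the harmonic remainders are all functionals of the same field; controlling that count is exactly the content of Lemma~\ref{lem:harmonic_function_bounded_at_many_scales} and requires the same filtration machinery, whereas your sketch treats it as an afterthought absorbed by tuning parameters.
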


Lemma~\ref{lem:good_event_at_many_scales} is proved by comparing the Radon-Nikodym derivatives of the laws of a whole-plane GFF and a zero-boundary GFF when restricted to concentric annuli.  Recall that for $r>0$,  the Markov property of the GFF implies that we can write 
\begin{align*}
(h-h_r(0))|_{\C \setminus B_r(0)} = h^{0,r} + \Fh^r,
\end{align*}
where $\Fh^r$ is a harmonic function on $B_r(0)$ which is determined by $(h-h_r(0))|_{\C \setminus B_r(0)}$ and $h^{0,r}$ is a zero-boundary GFF on $B_r(0)$ which is independent from  $(h-h_r(0))|_{\C \setminus B_r(0)}$ . 

For $0<r<R<\infty$,  we let 
\begin{align*}
\mathcal{M}_r^R:=\sup_{z \in B_r(0)} |\Fh^R(z) - \Fh^R(0)|.
\end{align*}
Then we have the following.

\begin{lemma}\label{lem:harmonic_function_bounded_at_many_scales}
Fix $s \in (0,1)$ and let $\{r_k\}_{k \in \N_0}$ be a decreasing sequence of positive numbers such that $\frac{r_{k+1}}{r_k} \leq s$ for all $k \in \N_0$.  For $K \in \N$ and $M>0$,  we let $\mathcal{N}_M(K)$ be the number of $k \in [1,K]_{\Z}$ for which $\mathcal{M}_{s r_k}^{r_k} \leq M$.  Then for each $a>0,b \in (0,1)$,  there exists $M>0$ depending only on $a,b$ and $s$ such that
\begin{align*}
\p[\mathcal{N}_M(K) \geq b K] \geq 1 - e^{-a K} \quad \text{for all} \quad K \in \N.
\end{align*}
\end{lemma}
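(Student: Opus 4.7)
The plan is to reduce the statement to Lemma~\ref{lem:good_event_at_many_scales} by approximating the non-annular quantity $\mathcal{M}_{sr_k}^{r_k}$ by an annular one. As a single-scale estimate, the scale and translation invariance of the whole-plane GFF (using that $h(r\,\cdot) - h_r(0)$ has the same law as $h$ for the $h_1(0)=0$ normalization) implies that the law of $\mathcal{M}_{sr}^{r}$ is independent of $r$. Moreover $(\Fh^r - \Fh^r(0))|_{\overline{B_{sr}(0)}}$ is a centered continuous Gaussian field, so the Borell-TIS inequality gives the uniform tail bound $\p[\mathcal{M}_{sr}^r > M] \leq e^{-c(s) M^2}$ for all $r>0$ and all sufficiently large $M$, with $c(s)>0$ depending only on $s$.

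The main obstacle is that $\{\mathcal{M}_{sr_k}^{r_k} \leq M\}$ is not measurable with respect to $h$ restricted to any bounded annulus around $\partial B_{r_k}(0)$, because $\Fh^{r_k}$ is determined by the whole field outside $B_{r_k}(0)$. To circumvent this, fix a large integer $n_0 = n_0(s)$, set $R_k := r_k / s^{n_0}$, and use the Markov decomposition of $h$ at scale $R_k$ to split $\Fh^{r_k} = \Fh^{r_k}_{\mathrm{near},k} + \Fh^{r_k}_{\mathrm{far},k}$, where $\Fh^{r_k}_{\mathrm{near},k}$ is the harmonic extension to $B_{r_k}(0)$ of $(h - h_{r_k}(0))|_{\mathbb{A}_{r_k, R_k}(0)}$ (taken from the inner boundary $\partial B_{r_k}(0)$) and $\Fh^{r_k}_{\mathrm{far},k}$ is harmonic on all of $B_{R_k}(0)$, determined by $h|_{\C \setminus B_{R_k}(0)}$. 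The near-field term is measurable with respect to $(h - h_{R_k}(0))|_{\mathbb{A}_{r_k, R_k}(0)}$, and by rescaling and Borell-TIS its oscillation on $B_{sr_k}(0)$ has Gaussian tails uniformly in $k$.

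For the far-field term, Cauchy-type gradient estimates for harmonic functions yield
\[
\sup_{z \in B_{sr_k}(0)} \bigl| \Fh^{r_k}_{\mathrm{far},k}(z) - \Fh^{r_k}_{\mathrm{far},k}(0) \bigr| \leq C \cdot \frac{sr_k}{R_k} \cdot \sup_{w \in B_{R_k/2}(0)} \bigl| \Fh^{r_k}_{\mathrm{far},k}(w) - \Fh^{r_k}_{\mathrm{far},k}(0) \bigr| = C s^{n_0+1} V_k,
\]
where $V_k$ has Gaussian tails uniformly in $k$ by scale invariance and Borell-TIS applied to the rescaled field $\Fh^{r_k}_{\mathrm{far},k}(R_k \,\cdot)$. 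Choosing $n_0$ large enough that $C s^{n_0+1}$ is tiny, we absorb this contribution into $M/2$ on a good event, yielding $\{\mathcal{M}_{sr_k}^{r_k} \leq M\} \supseteq F_k$ for events $F_k$ with $\p[F_k] \geq 1 - e^{-c'(s) M^2}$ and $F_k$ measurable with respect to $(h - h_{R_k}(0))$ restricted to an annulus at scale comparable to $R_k$; the residual dependence of $V_k$ on the far field is removed by iterating the same decomposition through an additional bounded number of scales, each step gaining a geometric factor $s^{n_0+1}$ so that the residual contribution is as small as desired. Applying Lemma~\ref{lem:good_event_at_many_scales} to a thinned subsequence of the $r_k$ (ensuring the required geometric separation between the retained annuli, with the thinning factor only adjusting the constants $a$ and $b$) then gives the exponential concentration bound for $\mathcal{N}_M(K)$.
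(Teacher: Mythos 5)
Your argument is circular: the final step applies Lemma~\ref{lem:good_event_at_many_scales} to the approximate annular events $F_k$, but the paper proves Lemma~\ref{lem:good_event_at_many_scales} by ``the same argument used to prove \cite[Lemma~3.1]{gwynne2020local} with \cite[Lemma~3.4]{gwynne2020local} replaced by Lemma~\ref{lem:harmonic_function_bounded_at_many_scales}'' --- that is, the annulus-decorrelation lemma you invoke is itself deduced from the very statement you are trying to prove. The same dependency is present in \cite{gwynne2020local}. The established logical order is the reverse of yours: the bound on the occupation count of $\{\mathcal{M}_{sr_k}^{r_k}\le M\}$ is the more primitive result, proved directly by exploiting the Markov-chain structure of the nested harmonic parts $\Fh^{r_k}$ (the increment $\Fh^{r_{k+1}}-\Fh^{r_k}$ on $B_{r_{k+1}}(0)$ comes from the zero-boundary GFF $h^{0,r_k}$, which is independent of $\mathcal{F}_k=\sigma\bigl((h-h_{r_k}(0))|_{\C\setminus B_{r_k}(0)}\bigr)$, while the contribution of the ``old'' harmonic part on $B_{sr_{k+1}}(0)$ is contracted by a deterministic factor via interior gradient estimates for harmonic functions). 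Lemma~\ref{lem:good_event_at_many_scales} is then a consequence, via Radon--Nikodym comparison between the whole-plane and zero-boundary GFF on annuli, with the present lemma supplying the control on the harmonic part needed to bound those Radon--Nikodym derivatives.

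There is also a gap in the reduction itself, independent of the circularity. You cannot arrange $\{\mathcal{M}_{sr_k}^{r_k}\le M\}\supseteq F_k$ with $F_k$ measurable with respect to $h$ on a bounded annulus around $\partial B_{r_k}(0)$: the harmonic part $\Fh^{r_k}$ genuinely depends on the entire field outside $B_{r_k}(0)$. Iterating the near/far split a bounded number $m_0$ of times pushes the residual far field out by $m_0 n_0$ scales and multiplies it by a small geometric factor, but that residual is small in probability, not pointwise; to upgrade $F_k$ to a subset of $\{\mathcal{M}_{sr_k}^{r_k}\le M\}$ you must intersect with an event $\{V_k^{(m_0)}\le M'\}$ at scale $R_k^{(m_0)}$, which is still non-annular and correlated across $k$. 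Handling those bad events with enough precision to preserve the exponential rate in $K$, at fixed $M$ depending only on $a,b,s$, would reintroduce exactly the dependence structure the reduction was meant to sidestep.
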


\begin{proof}
It follows from the exact same argument used to prove \cite[Lemma~3.4]{gwynne2020local}.
\end{proof}

\begin{proof}[Proof of Lemma~\ref{lem:good_event_at_many_scales}]
It follows from the same argument used to prove \cite[Lemma~3.1]{gwynne2020local} with \cite[Lemma~3.4]{gwynne2020local} replaced by Lemma~\ref{lem:harmonic_function_bounded_at_many_scales}.
\end{proof}

\begin{proof}[Proof of Lemma~\ref{lem:good_event_occurs_almost_everywhere}]
Fix $z \in \C ,  r>0$ and recall that Axioms~\ref{it:weyl_scaling} and ~\ref{it:coordinate_change} imply that 
\begin{align*}
\p[E_r^{\gamma}(z)] = \p[E_1^{\gamma}(0)] = \mathfrak \quad \text{for all} \quad \gamma \in (0,2).
\end{align*}
Also by the locality property of $D_h^{\gamma}$ (Axiom~\ref{it:locality}),  we have that the event $E_r^{\gamma}(z)$ is determined by $(h-h_{3r}(z))|_{\mathbb{A}_{r,2r}(z)}$.

We are going to apply Lemma~\ref{lem:good_event_at_many_scales} with $r_k=32^{-2k}\mathfrak{r}, s_1 = \frac{1}{3},s_2 = \frac{2}{3}, b= \frac{1}{2}$ and $\wt{E}_{r_k} = E_{r_k / 3}^{\gamma}(z)$.  Fix $a > 2 \log(2)(3M + 4)$ (depending only on $M$).  Then Lemma~\ref{lem:good_event_at_many_scales} implies that there exists $\mathfrak{p}_0 \in (0,1)$ such that for all $\mathfrak{p} \in (\mathfrak{p}_0,1)$,  we have that
\begin{align*}
\p[\mathcal{N}(K) < K/2] \leq e^{-aK} \quad \text{for all} \quad K \in \N,
\end{align*}
where $\mathcal{N}(K)$ is as in the statement of Lemma~\ref{lem:good_event_at_many_scales}.  Set $K:=\left \lfloor \frac{\log(\epsilon^{-1})}{\log(2)} \right \rfloor$ and let $\epsilon_0 \in (0,1)$ be sufficiently small (chosen in a universal way) such that $K > \frac{\log(\epsilon^{-1})}{2\log(2)}$ for all $\epsilon \in (0,\epsilon_0)$.  Then we have that
\begin{align*}
\p[\mathcal{N}(K) < K/2] \leq \exp\left(-\frac{a \log(\epsilon^{-1})}{2\log(2)} \right) = \epsilon^{\frac{a}{2\log(2)}} < \epsilon^M
\end{align*}
for all $\epsilon \in (0,\epsilon_0)$ by the choice of $a$.

Note that if $\mathcal{N}(K) > K/2$,  the event $\wt{E}_{r_k}$ occurs for at least one value of $r_k \in [3\epsilon^2 \mathfrak{r},3\epsilon \mathfrak{r}] \cap \{32^{-k} \mathfrak{r}\}_{k \in \N}$,  which implies that $E_r^{\gamma}(z)$ occurs for at least one value of $r \in [\epsilon^2 \mathfrak{r} ,  \epsilon \mathfrak{r}] \cap \{2^{-k} \mathfrak{r}] \cap \{2^{-k} \mathfrak{r}\}_{k \in \N}$.  Note also that
\begin{align*}
\left | B_{\mathfrak{r} \epsilon^{-M}}(0) \cap \left(\frac{\epsilon^2 \mathfrak{r}}{4} \Z^2 \right) \right | \asymp \epsilon^{-2M-4} \quad \text{as} \quad \epsilon \to 0
\end{align*}
at a universal rate.  Therefore the choice of $a$ implies that possibly by taking $\epsilon_0 \in (0,1)$ to be smaller (in a universal way),  we have by taking a union bound that the following is true.  For all $\epsilon \in (0,\epsilon_0)$,  off an event with probability at most $\epsilon^M$,  it holds that for all $w \in B_{\mathfrak{r} \epsilon^{-M}}(0) \cap \left(\frac{\mathfrak{r} \epsilon^2}{4} \Z^2 \right)$,  there exists $r \in [\epsilon^2 \mathfrak{r} ,  \epsilon \mathfrak{r}] \cap \{2^{-k} \mathfrak{r}\}_{k \in \N}$ such that $E_r^{\gamma}(w)$ occurs.  This completes the proof of the lemma.
\end{proof}

Now we would like to compare the circle averages of the field around any point in $U$ with the circle averages around $0$.  Then we have the following more precise version of \cite[Lemma~3.4]{dubedat2020weak}.

\begin{lemma}\label{lem:upper_bound_on_circle_averages}
Fix $\nu \geq 0$ and $R>0$.  Then there exists a constant $C \in (0,\infty)$ depending only on $\nu$ and $R$ such that for all $q>0,\mathfrak{r}>0$,  we have that
\begin{align*}
&\p\left[\sup\left\{|h_r(w)-h_{\mathfrak{r}}(0)| : w \in B_{R \mathfrak{r}}(0) \cap \left(\frac{\epsilon^{1+\nu} \mathfrak{r}}{4} \Z^2 \right) ,  r \in [\epsilon^{1+\nu} \mathfrak{r} , \epsilon \mathfrak{r}] \right\} \geq q \log(\epsilon^{-1}) \right]\\
& \leq C \epsilon^{\frac{q^2}{2(1+\sqrt{\nu})^2(1+C (\log(\epsilon^{-1}))^{-1})} - 2 -2\nu} \quad \text{for all} \quad \epsilon \in (0,1).
\end{align*}
\end{lemma}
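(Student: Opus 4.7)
The plan is a Gaussian tail plus union bound, with a two-piece chaining that naturally produces the factor $(1+\sqrt{\nu})^2$. I first use the scale invariance of the whole-plane GFF (passing from $h$ to $h(\mathfrak{r}\,\cdot)-h_{\mathfrak{r}}(0)$, which again has the law of a whole-plane GFF normalized by the unit-circle average) to reduce to $\mathfrak{r}=1$. The two ingredients I will need about circle averages of the whole-plane GFF normalized by $h_1(0)=0$ are: (i) for $|w|\leq R$ and $r\in(0,1]$, the Gaussian $h_r(w)-h_1(0)$ has variance at most $\log(r^{-1})+C_R$, which is immediate from the covariance formula for the circle-average process; and (ii) for each fixed $w$ with $|w|\leq R$, the Gaussian process $t\mapsto h_{e^{-t}}(w)$ has increments of variance $|s-t|+O_R(1)$.

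I then split each candidate pair $(w,r)$ via the triangle inequality $|h_r(w)-h_1(0)|\leq |h_\epsilon(w)-h_1(0)|+|h_r(w)-h_\epsilon(w)|$ and demand that the two pieces be at most $q_1\log(\epsilon^{-1})$ and $q_2\log(\epsilon^{-1})$ respectively, where $q_1+q_2=q$. For the first piece (which depends on $w$ only), (i) and a Gaussian tail bound give
\begin{align*}
\p\!\left[|h_\epsilon(w)-h_1(0)|\geq q_1 \log(\epsilon^{-1})\right] \leq 2\, \epsilon^{q_1^2/(2(1+C/\log\epsilon^{-1}))},
\end{align*}
after which I take a union bound over the $O_R(\epsilon^{-2-2\nu})$ grid points $w$. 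For the second piece I fix $w$ and apply a Gaussian supremum bound (reflection principle or Borell-TIS) to the process in (ii) on the time interval $[\log\epsilon^{-1},(1+\nu)\log\epsilon^{-1}]$, obtaining
\begin{align*}
\p\!\left[\sup_{r \in [\epsilon^{1+\nu},\epsilon]}|h_r(w)-h_\epsilon(w)|\geq q_2 \log(\epsilon^{-1})\right] \leq C\, \epsilon^{q_2^2/(2\nu(1+C/\log\epsilon^{-1}))},
\end{align*}
followed again by a union bound over $w$.

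Finally I balance the two exponents. Setting $q_1^2 = q_2^2/\nu$ together with $q_1+q_2=q$ forces $q_1=q/(1+\sqrt{\nu})$ and a common exponent $q^2/(2(1+\sqrt{\nu})^2)$; the union-bound cost $\epsilon^{-2-2\nu}$ then produces the $-2-2\nu$ term in the statement, while all $O(1)$ prefactors arising from the Brownian-sup constant, the variance corrections, and the covariance perturbations are absorbed into the single constant $C$ and the factor $(1+C(\log\epsilon^{-1})^{-1})$. The only even mildly subtle point is to justify (ii) uniformly for $w$ in a bounded region; this is standard from the covariance formula for circle averages of the whole-plane GFF (when the circles are well separated from $0$ the increments have exactly Brownian covariance and the $O_R(1)$ error accounts for the remaining scales), and since I only need variance bounds rather than an exact Brownian-motion identification, the uniform bound follows immediately.
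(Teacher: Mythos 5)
Your proof is correct and takes essentially the same route as the paper's: split $|h_r(w)-h_{\mathfrak{r}}(0)|$ at the scale $r=\epsilon\mathfrak{r}$ via the triangle inequality, bound the macroscopic piece by a Gaussian tail using the variance estimate $\operatorname{Var}(h_{\epsilon\mathfrak{r}}(w)-h_{\mathfrak{r}}(0))\leq\log(\epsilon^{-1})+C$, bound the radial piece by a Brownian supremum estimate, balance the thresholds so the two exponents agree at $q^2/(2(1+\sqrt{\nu})^2)$, and take a union bound over the $O(\epsilon^{-2-2\nu})$ grid points. The only quibble is in your step (ii) and its justification at the end: for a fixed $w$ the radial circle-average process $t\mapsto h_{e^{-t}}(w)$ is \emph{exactly} a standard Brownian motion (the normalization $h_1(0)=0$ is a constant shift and does not perturb increments, and there is no error depending on $|w|$ or on whether the circles are "well separated from $0$"); your $O_R(1)$ hedge is harmless because you only need an upper variance bound, but the claimed mechanism for the error term is mistaken, and the paper exploits the exact Brownian property to get the slightly cleaner bound $2\epsilon^{s^2/(2\nu)}$ for the radial piece with no $(1+C/\log\epsilon^{-1})$ correction.
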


\begin{proof}
Fix $s \in [0,q)$ (to be chosen later in the proof) and note that \cite[Theorem~1.59]{berestycki2024gaussian} implies that for all $w \in B_{R \mathfrak{r}}(0)$,  the random variable $t \to h_{e^{-t} \epsilon \mathfrak{r}}(w) - h_{\epsilon \mathfrak{r}}(w)$ has the law of a standard Brownian motion.   Hence the standard Gaussian tail bound implies that
\begin{align*}
&\p\left[ \sup_{\epsilon^{1+\nu} \leq r \leq  \epsilon \mathfrak{r}} |h_r(w) - h_{\epsilon \mathfrak{r}}(w)| \geq s \log(\epsilon^{-1}) \right] = \p\left[ \sup_{0 \leq t \leq \nu \log(\epsilon^{-1})} |B_t| \geq s \log(\epsilon^{-1})\right]\\
&=\p\left[ \sup_{0 \leq t \leq 1} |B_t| \geq s \sqrt{\log(\epsilon^{-1})} / \sqrt{\nu} \right] = 2 \p\left[ B_1 \geq s \sqrt{\log(\epsilon^{-1})} / \sqrt{\nu} \right]\\
&\leq 2 \exp\left(-\frac{s^2 \log(\epsilon^{-1})}{2\nu} \right) = 2 \epsilon^{\frac{s^2}{2\nu}} \quad \text{for all} \quad \epsilon \in (0,1),
\end{align*}
where $B$ is a standard Brownian motion.  Moreover the random variables $h_{\epsilon \mathfrak{r}}(w) - h_{\mathfrak{r}}(0)$ for $w \in B_{R \mathfrak{r}}(0)$ are centered Gaussians and there exists a constant $C>0$ depending only on $R$ such that
\begin{align*}
\text{Var}(h_{\epsilon \mathfrak{r}}(w) - h_{\mathfrak{r}}(0)) \leq \log(\epsilon^{-1}) + C \quad \text{for all} \quad w \in B_{R \mathfrak{r}}(0) ,  \mathfrak{r}>0.
\end{align*}
Thus by applying the Gaussian tail bound again,  we obtain that 
\begin{align*}
&\p\left[ |h_{\epsilon \mathfrak{r}}(w) - h_{\mathfrak{r}}(0)| \geq (q-s) \log(\epsilon^{-1})\right] = \p\left[ \frac{|h_{\epsilon \mathfrak{r}}(w)-h_{\mathfrak{r}}(0)|}{\sqrt{\text{Var}(h_{\epsilon \mathfrak{r}}(w)-h_{\mathfrak{r}}(0))}} \geq \frac{(q-s) \log(\epsilon^{-1})}{\sqrt{\text{Var}(h_{\epsilon \mathfrak{r}}(w)-h_{\mathfrak{r}}(0))}} \right]\\
&\leq 2 \exp\left(-\frac{(q-s)^2 \log(\epsilon^{-1})^2}{2\text{Var}(h_{\epsilon \mathfrak{r}}(w)-h_{\mathfrak{r}}(0))} \right) \leq 2 \exp\left(-\frac{(q-s)^2 \log(\epsilon^{-1})^2}{2(\log(\epsilon^{-1}) + C)}\right)\\
&=2 \epsilon^{\frac{(q-s)^2}{2(1+C (\log(\epsilon^{-1}))^{-1})}} \quad \text{for all} \quad \epsilon \in (0,1).
\end{align*}

Combining the above estimates and setting $s = \frac{q\sqrt{\nu}}{1+\sqrt{\nu}}$,  we obtain that for all $w \in B_{R \mathfrak{r}}(0)$ and all $\epsilon \in (0,1)$,  we have that
\begin{align*}
\p\left[\sup_{\epsilon^{1+\nu}\mathfrak{r} \leq r \leq \epsilon \mathfrak{r}} |h_r(w) - h_{\mathfrak{r}}(0)| \geq q \log(\epsilon^{-1})\right] &\leq \p\left[ |h_{\epsilon \mathfrak{r}}(w) - h_{\mathfrak{r}}(0)| \geq (q-s) \log(\epsilon^{-1})\right]\\
&+\p\left[ \sup_{\epsilon^{1+\nu} \mathfrak{r} \leq r \leq \epsilon \mathfrak{r}} |h_r(w) - h_{\epsilon \mathfrak{r}}(w)| \geq s \log(\epsilon^{-1})\right]\\
&\leq 2 \epsilon^{\frac{s^2}{2\nu}} + 2 \epsilon^{\frac{(q-s)^2}{2(1+C (\log(\epsilon^{-1}))^{-1})}} \leq 4 \epsilon^{\frac{q^2}{2(1+\sqrt{\nu})^2(1+C (\log(\epsilon^{-1}))^{-1})}}.
\end{align*}

Finally we have that
\begin{align*}
\left| B_{R \mathfrak{r}}(0) \cap \left(\frac{\epsilon^{1+\nu} \mathfrak{r}}{4} \Z^2 \right) \right| \lesssim \epsilon^{-2-2\nu} \quad \text{for all} \quad \epsilon \in (0,1),
\end{align*}
where the implicit constant depends only on $\nu$ and $R$.  Therefore the proof of the lemma is complete by taking a union bound over all $w \in B_{R \mathfrak{r}}(0) \cap \left(\frac{\epsilon^{1+\nu} \mathfrak{r}}{4} \Z^2 \right)$.
\end{proof}

Recall that our goal is to construct paths connecting $\mathfrak{r} K_1$ to $\mathfrak{r} K_2$ by concatenating the paths in the definition of the events $E_r^{\gamma}(z)$.  The next lemma ensures that we can construct such paths.

\begin{lemma}\label{lem:connecting_compact_sets_with_good_annuli}
Let $U \subseteq \C$ be a bounded and open set and let $K_1,K_2 \subseteq U$ be connected,  disjoint compact sets which are not singletons.  Fix $M>1$ and $\mathfrak{r}>0$ and for $\epsilon \in (0,1)$,  we let $F_{\mathfrak{r}}^{\epsilon}$ be the event in the statement of Lemma~\ref{lem:good_event_occurs_almost_everywhere}.  For $w \in B_{\epsilon \mathfrak{r}}(\mathfrak{r} U) \cap \left(\frac{\epsilon^2 \mathfrak{r}}{4} \Z^2 \right)$ and $r \in [\epsilon^2 \mathfrak{r} , \epsilon \mathfrak{r}] \cap \{2^{-k} \mathfrak{r}\}_{k \in \N}$ such that $E_r^{\gamma}(w)$ occurs,  we let $P_{w,r}$ be a path disconnecting $\partial B_r(w)$ from $\partial B_{2r}(w)$ such that
\begin{align*}
 \len(P_{w,r} ; \wt{D}_h^{\gamma}) \leq 1
\end{align*}
where we choose $P_{w,r}$ in some arbitrary but fixed way which is measurable with respect to $\wt{D}_h^{\gamma}$.  Let also $\mathcal{B}_{w,r}$ denote the region bounded by $P_{w,r}$.  Then on the event $F_{\mathfrak{r}}^{\epsilon}$,  the union of the paths $P_{w,r}$ for $w \in B_{\epsilon \mathfrak{r}}(\mathfrak{r} U) \cap \left(\frac{\epsilon^2 \mathfrak{r}}{4} \Z^2 \right)$ and $r \in [\epsilon^2 \mathfrak{r} ,  \epsilon \mathfrak{r}] \cap \{2^{-k} \mathfrak{r}\}_{k \in \N}$ such that $E_r^{\gamma}(w)$ occurs contains a path from $\mathfrak{r} K_1$ to $\mathfrak{r} K_2$ which is contained in $\mathfrak{r} U$.
\end{lemma}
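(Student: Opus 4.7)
The path will be constructed by chaining together arcs of the loops $P_{w_i,r_i}$ along a continuous curve in $\mathfrak{r} U$ joining $\mathfrak{r} K_1$ to $\mathfrak{r} K_2$. Since $U$ is open and connected and $K_1, K_2 \subseteq U$ are disjoint compact non-singletons, first fix a simple continuous curve $\eta \colon [0,1] \to \mathfrak{r} U$ with $\eta(0) \in \partial(\mathfrak{r} K_1)$, $\eta(1) \in \partial(\mathfrak{r} K_2)$, and $\eta((0,1)) \cap (\mathfrak{r} K_1 \cup \mathfrak{r} K_2) = \emptyset$, arranged so that its $4\epsilon\mathfrak{r}$-neighborhood lies in $\mathfrak{r} U$ (possible for $\epsilon$ small, depending on $U, K_1, K_2$, since $\eta$ is compact in the open set $\mathfrak{r} U$). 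Take $\epsilon$ additionally small enough that $\mathfrak{r}\overline{U} \subseteq B_{\mathfrak{r}\epsilon^{-M}}(0)$ (using boundedness of $U$). On the event $F_{\mathfrak{r}}^\epsilon$, Lemma~\ref{lem:good_event_occurs_almost_everywhere} provides for each $t \in [0,1]$ a pair $(w,r)$ with $E_r^\gamma(w)$ occurring and $\eta(t) \in B_{\mathfrak{r}\epsilon^2/2}(w)$. Using compactness of $\eta([0,1])$, extract a finite increasing sequence $0 = t_0 < t_1 < \cdots < t_N = 1$ and corresponding pairs $(w_i, r_i)$ such that the balls $B_{\mathfrak{r}\epsilon^2/2}(w_i)$ cover $\eta([0,1])$ in order with consecutive balls overlapping.

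Set $W := \bigcup_{i=0}^N \overline{\mathcal{B}_{w_i, r_i}}$. Since $r_i \geq \epsilon^2 \mathfrak{r}$ gives $B_{\mathfrak{r}\epsilon^2/2}(w_i) \subseteq B_{r_i}(w_i) \subseteq \mathcal{B}_{w_i, r_i}$, consecutive closed regions overlap, so $W$ is compact and connected, contains $\eta([0,1])$, lies inside the $2\epsilon\mathfrak{r}$-neighborhood of $\eta$ (hence in $\mathfrak{r} U$ by the choice of $\eta$), and satisfies $\partial W \subseteq \bigcup_i P_{w_i, r_i}$. For $\epsilon$ small enough depending on $K_1, K_2$, we have $\diam(\mathfrak{r} K_j) > 4\epsilon\mathfrak{r}$, so $\mathfrak{r} K_j \not\subseteq W$; since $\mathfrak{r} K_j$ is connected and contains a point in the interior of $W$ (namely $\eta(0)$ or $\eta(1)$, which lies in the open ball $B_{\mathfrak{r}\epsilon^2/2}(w_0) \subseteq W^\circ$ or its counterpart at $w_N$) as well as points outside $W$, it must meet $\partial W$.

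The main obstacle is upgrading these intersection points into a single connected arc in $\bigcup_i P_{w_i, r_i}$ from $\mathfrak{r} K_1$ to $\mathfrak{r} K_2$. For this, let $\Omega_\infty$ be the unbounded component of $\C \setminus W$; its boundary $\partial \Omega_\infty$ is connected (the ``filled hull'' $\C \setminus \Omega_\infty$ is simply connected as a subset of the Riemann sphere) and contained in $\bigcup_i P_{w_i, r_i}$. The delicate step is to show that $\mathfrak{r} K_j \cap \partial \Omega_\infty \neq \emptyset$, i.e., that $\mathfrak{r} K_j$ is not trapped only in bounded ``hole'' components of $\C \setminus W$. This is ensured by our choice of $\eta$: since $\eta$ is a simple arc, its $2\epsilon\mathfrak{r}$-neighborhood (which contains $W$) is topologically a disk, so every point of $\C$ outside this neighborhood lies in $\Omega_\infty$. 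By the diameter argument $\mathfrak{r} K_j$ contains points outside this neighborhood, and those points lie in $\Omega_\infty$; since $\mathfrak{r} K_j$ is connected and has a point in $W^\circ$, it must cross $\partial \Omega_\infty$. An arc of $\partial \Omega_\infty$ between an intersection point on $\mathfrak{r} K_1$ and one on $\mathfrak{r} K_2$ then provides the desired path, contained in $\bigcup_i P_{w_i, r_i}$ and in $\mathfrak{r} U$.
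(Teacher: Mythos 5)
Your strategy---thread a simple arc $\eta$ from $\mathfrak{r}K_1$ to $\mathfrak{r}K_2$, cover it by a finite chain of annular loops on $F_{\mathfrak{r}}^\epsilon$, and join the compacta through the outer boundary of the filled union---is the right one, and is essentially the argument of \cite[Lemma~3.5]{dubedat2020weak}, which the paper's proof simply invokes. However, two steps do not hold as written. First, the small neighborhood of an arbitrary simple \emph{continuous} arc need not be simply connected: a wild arc that nearly self-intersects at all scales has $\delta$-neighborhoods of nontrivial topology for \emph{every} $\delta>0$. You must take $\eta$ polygonal (possible since $\mathfrak{r}U$ is open and connected), for which a nerve argument shows that the $\delta$-neighborhood is a topological disk once $\delta$ is small. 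Second, and more seriously, the ``diameter argument'' for $\mathfrak{r}K_j\not\subseteq W$ (and for $\mathfrak{r}K_j$ not being contained in a small tube around $\eta$) is incorrect: $W$ and the tube have Euclidean diameter at least $\diam(\eta)$, which is of order $1$, not order $\epsilon$, so $\diam(\mathfrak{r}K_j)>4\epsilon\mathfrak{r}$ does not prevent $\mathfrak{r}K_j$ from lying entirely inside the tube (think of a long thin $K_1$ running alongside $\eta$). One repair: pick $q_j\in\mathfrak{r}K_j$ with $|q_1-\eta(0)|=\diam(\mathfrak{r}K_1)$, $|q_2-\eta(1)|=\diam(\mathfrak{r}K_2)$, and additionally arrange $\eta$ to avoid $\overline{B_\delta(q_1)}\cup\overline{B_\delta(q_2)}$ for a fixed $\delta>0$ depending only on $K_1,K_2$ (possible since removing two small closed disks from the open connected $\mathfrak{r}U$ leaves it connected); then for $\epsilon$ small, $q_j$ lies outside the tube, which contains the filled hull $\C\setminus\Omega_\infty$, so $q_j\in\Omega_\infty$ and the boundary-crossing argument for $\mathfrak{r}K_j\cap\partial\Omega_\infty\neq\emptyset$ goes through. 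A third, minor point: a connected compactum need not be arc-connected, so ``an arc of $\partial\Omega_\infty$'' is not justified as stated; instead take an arc in the connected component of $\bigcup_i P_{w_i,r_i}$ containing $\partial\Omega_\infty$, which is arc-connected because a finite union of Peano continua is locally connected (Sierpi\'nski's property-$S$ characterization), hence each of its components is a Peano continuum.
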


\begin{proof}
It follows from the same argument used to prove \cite[Lemma~3.5]{dubedat2020weak}.
\end{proof}

Finally we are ready to prove Proposition~\ref{prop:bound_on_lqg_distances_of_sets}.

\begin{proof}[Proof of Proposition~\ref{prop:bound_on_lqg_distances_of_sets}]
Let $\mathfrak{p}_0 \in (0,1)$ be the constant in the statement of Lemma~\ref{lem:good_event_occurs_almost_everywhere} and fix $\mathfrak{p} \in (\mathfrak{p}_0 , 1)$.  Suppose first that $U$ is bounded.  Then Lemma~\ref{lem:upper_bound_on_circle_averages} applied with $q = 2 \sqrt{2} \sqrt{4+M},\nu =1$ and $R>0$ such that $U \subseteq B_R(0)$ implies that there exists a constant $C>1$ depending only on $R$ such that for all $\epsilon \in (0,1)$,  we have off an event with probability at most $C \epsilon^{\frac{4+M}{1+C (\log(\epsilon^{-1}))^{-1}}}$ that
\begin{align}\label{eqn:bound_on_circle_averages}
\sup\left\{|h_r(w)-h_{\mathfrak{r}}(0)| : w \in B_{R \mathfrak{r}}(0) \cap \left(\frac{\epsilon^{2} \mathfrak{r}}{4} \Z^2 \right) ,  r \in [\epsilon^{1+\nu} \mathfrak{r} , \epsilon \mathfrak{r}] \right\} \leq 2\sqrt{2} \sqrt{M+4}\log(\epsilon^{-1}).
\end{align}

Let $F_{\mathfrak{r}}^{\epsilon}$ be the event of Lemmas~\ref{lem:good_event_occurs_almost_everywhere} and ~\ref{lem:connecting_compact_sets_with_good_annuli}. Then combining \eqref{eqn:bound_on_circle_averages} with Lemmas~\ref{lem:good_event_occurs_almost_everywhere} and ~\ref{lem:connecting_compact_sets_with_good_annuli},  we obtain that off an event with probability at most $\epsilon^M$ as $\epsilon \to 0$ at a rate depending only on $K_1,K_2$ and $U$,  we have that \eqref{eqn:bound_on_circle_averages} occurs and the union of the paths $P_{w,r}$ for $w \in B_{\epsilon \mathfrak{r}}(\mathfrak{r} U) \cap \left(\frac{\epsilon^2 \mathfrak{r}}{4} \Z^2\right)$ and $r \in [\epsilon^2 \mathfrak{r} ,  \epsilon \mathfrak{r}] \cap \{2^{-k} \mathfrak{r}\}_{k \in \N}$ such that $E_r^{\gamma}(w)$ occurs contains a path from $\mathfrak{r} K_1$ to $\mathfrak{r} K_2$ which is contained in $\mathfrak{r} U$.  From now on,  we assume that we are working on this event.  Note that the number of such paths is at most $\epsilon^{-4-o_{\epsilon}(1)}$ as $\epsilon \to 0$ (at a rate which depends only on $K_1,K_2$ and $U$) and so combining with the triangle inequality we obtain that
\begin{align*}
\wt{D}_h^{\gamma}(\mathfrak{r} K_1 ,  \mathfrak{r} K_2 ; \mathfrak{r} U)&\leq \epsilon^{-4-o_{\epsilon}(1)} \sup\left\{\len(P_{w,r} ; \wt{D}_h^{\gamma}) : w \in B_{\epsilon \mathfrak{r}}(\mathfrak{r} U) \cap \left(\frac{\epsilon^2 \mathfrak{r}}{4} \Z^2\right) ,  r \in [\epsilon^2 \mathfrak{r} ,  \epsilon \mathfrak{r}] \cap \{2^{-k} \mathfrak{r}\}_{k \in \N} \right\}\\
&\leq \epsilon^{-4 - o_{\epsilon}(1)} \sup\left\{r^{\xi Q} e^{\xi h_r(w)} : w \in B_{\epsilon \mathfrak{r}}(\mathfrak{r} U) \cap \left(\frac{\epsilon^2 \mathfrak{r}}{4} \Z^2\right) ,  r \in [\epsilon^2 \mathfrak{r} ,  \epsilon \mathfrak{r}] \cap \{2^{-k} \mathfrak{r}\}_{k \in \N} \right\}\\
&\leq \epsilon^{-4+\xi Q -\xi 2 \sqrt{2}\sqrt{4+M} -o_{\epsilon}(1)} \mathfrak{r}^{\xi Q} e^{\xi h_{\mathfrak{r}}(0)}.
\end{align*}

Given $A>1$,  we choose $\epsilon = A^{-\frac{b}{\sqrt{M}}}$ and note that
\begin{align*}
\epsilon^{-4+\xi Q -\xi 2 \sqrt{2}\sqrt{4+M} -o_{\epsilon}(1)} \leq A
\end{align*}
if and only if
\begin{align*}
\frac{4-\xi Q + \xi 2 \sqrt{2}\sqrt{4+M}}{\sqrt{M}} < b^{-1}.
\end{align*}
Therefore since $\xi \in (0,\xi_{\text{crit}})$ for all $\gamma \in (0,2)$ and 
\begin{align*}
\frac{4+\xi_{\text{crit}} 2 \sqrt{2}\sqrt{4+M}}{\sqrt{M}} \to 2 \sqrt{2} \xi_{\text{crit}} \quad \text{as} \quad M \to \infty,
\end{align*}
we obtain combining with \eqref{eqn:bound_on_circle_averages} and since $\epsilon^M = A^{-b \sqrt{M}}$ that by choosing a universal constant $b \in (0,1)$ such that
\begin{align}\label{eqn:choice_of_constant_b}
\frac{4+\xi_{\text{crit}} 2 \sqrt{2}\sqrt{4+M}}{\sqrt{M}} < b^{-1} \quad \text{for all} \quad M>1,
\end{align}
we have that
\begin{align*}
\p[\wt{D}_h^{\gamma}(\mathfrak{r} K_1,\mathfrak{r} K_2 ; \mathfrak{r} U) \leq A \mathfrak{r} ^{\xi Q} e^{\xi h_{\mathfrak{r}}(0)}] \geq 1 - O_A(A^{-b \sqrt{M}}) \quad \text{as} \quad M \to \infty,
\end{align*}
at a rate which depends only on $K_1,K_2$ and $U$.  

Finally if $V$ is a possibly unbounded subset of $\C$ with $U \subseteq V$,  then we have that
\begin{align}\label{eqn:general_case}
\wt{D}_h^{\gamma}(\mathfrak{r} K_1 ,  \mathfrak{r} K_2) \leq \wt{D}_h^{\gamma}(\mathfrak{r} K_1 ,  \mathfrak{r} K_2 ; \mathfrak{r} V) \leq \wt{D}_h^{\gamma}(\mathfrak{r} K_1 ,  \mathfrak{r} K_2 ; \mathfrak{r} U).
\end{align}
Therefore using \eqref{eqn:general_case},  we obtain the claim in the statement of the lemma with $U$ possibly unbounded.  This completes the proof of the proposition.
\end{proof}

\subsection{Moment bound for diameters: Subcritical case}
\label{subsec:moment_bound_for_diameters}

The main goal of this subsection is to prove the following moment bound.

\begin{proposition}\label{prop:moment_lqg_diameter_bound_for_compact_sets}
There exists a choice of the function $\mathfrak{p} : (0,2) \to (0,1)$ in \eqref{eqn:main_normalization} such that the following is true.  Fix $0 < \gamma_* < \gamma^* < 2$.  Let $U \subseteq \C$ be an open and connected set and let $K \subseteq U$ be a compact and connected set with more than a point. Then there exists a constant $a  > 2$ depending only on $\gamma_*$ and $\gamma^*$ such that the following is true.  Fix $\beta \in (2,a)$ and set $p = \frac{\beta}{\xi}$.  Then we have that
\begin{align*}
\E\left[\left(\mathfrak{r}^{-\xi(\gamma) Q(\gamma)} e^{-\xi(\gamma) h_{\mathfrak{r}}(0)} \sup_{z,w \in \mathfrak{r} K} \wt{D}_h^{\gamma}(z,w ;  \mathfrak{r} U)\right)^p \right] \lesssim 1
\end{align*}
for all $\gamma \in [\gamma_*,\gamma^*]$ and all $\mathfrak{r}>0$,  where the implicit constant depends only on $K,U,\beta,\gamma_*$ and $\gamma^*$.
\end{proposition}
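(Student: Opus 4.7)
\emph{Plan.} The strategy is to convert the polynomial tail estimate of Proposition~\ref{prop:bound_on_lqg_distances_of_sets} into a moment bound on the diameter via a dyadic chaining argument, keeping every constant uniform in $\gamma \in [\gamma_*,\gamma^*]$. As a first step, combining scale invariance (Axiom~\ref{it:coordinate_change}) with Weyl scaling (Axiom~\ref{it:weyl_scaling}), one verifies the distributional identity
\[
\mathfrak{r}^{-\xi Q} e^{-\xi h_{\mathfrak{r}}(0)} \wt{D}_h^{\gamma}(\mathfrak{r} z,\mathfrak{r} w; \mathfrak{r} U) \stackrel{d}{=} \wt{D}_{\tilde h}^{\gamma}(z,w; U),
\]
where $\tilde h(\cdot) := h(\mathfrak{r}\,\cdot) - h_{\mathfrak{r}}(0)$ is again a whole-plane GFF with $\tilde h_1(0)=0$. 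Since $\alpha$ is deterministic, this reduces the claim to showing $\E\big[\big(\sup_{z,w\in K} \wt{D}_h^{\gamma}(z,w; U)\big)^{p}\big] \lesssim 1$ uniformly in $\gamma$.

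For the chaining, fix a compact set $K' \subseteq U$ containing an open neighborhood of $K$, set $\delta_n := 2^{-n}$, and let $\mathcal{X}_n := (\delta_n \Z^2) \cap K'$, so $|\mathcal{X}_n| \lesssim 4^n$. Using translation invariance of the whole-plane GFF modulo additive constant, a translated form of Proposition~\ref{prop:bound_on_lqg_distances_of_sets} yields, for each $n \geq 0$, $x \in \mathcal{X}_n$, and threshold $A_n := 2^{\alpha n}$ with $\alpha>0$ small,
\[
\wt{D}_h^{\gamma}(\partial B_{\delta_n}(x), \partial B_{2\delta_n}(x); U) \leq A_n \delta_n^{\xi Q} e^{\xi h_{\delta_n}(x)}
\]
off an event of probability $O(A_n^{-b\sqrt{M}})$. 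Taking $M$ large enough that $\alpha b \sqrt{M} > 2$ and a union bound over $\mathcal{X}_n$ and $n \geq 0$, one obtains a good event $G$, of probability bounded below uniformly in $\gamma$, on which these estimates hold simultaneously. On $G$, concatenating paths around successively finer dyadic annuli exactly as in Lemma~\ref{lem:connecting_compact_sets_with_good_annuli}, every pair $z,w \in K$ can be joined by a path of $\wt{D}_h^\gamma$-length at most
\[
C \sum_{n \geq 0} A_n \delta_n^{\xi Q} \max_{x \in \mathcal{X}_n} e^{\xi h_{\delta_n}(x)}.
\]

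Applying Minkowski's inequality, the Gaussian moment estimate $\E[e^{p\xi h_{\delta_n}(x)}] \lesssim 2^{np^2\xi^2/2}$, and $|\mathcal{X}_n| \lesssim 4^n$, each summand is controlled in $L^p$ by $2^{n(\alpha + 2/p + p\xi^2/2 - \xi Q)}$. Setting $\beta := p\xi$, summability reduces to $\beta^2/2 - \beta Q + 2 < 0$, i.e., $\beta \in (Q - \sqrt{Q^2-4}, Q + \sqrt{Q^2-4})$; since the endpoints of this interval have product $4$, it always contains $2$. Because $Q(\gamma) = \tfrac{2}{\gamma} + \tfrac{\gamma}{2}$ is continuous and strictly larger than $2$ on $(0,2)$, one may take $a := \inf_{\gamma \in [\gamma_*,\gamma^*]}\big(Q(\gamma) + \sqrt{Q(\gamma)^2 - 4}\big) > 2$, and the bound holds for every $\beta \in (2,a)$. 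The contribution from $G^c$ is absorbed by combining the polynomial smallness of $\p[G^c]$ with any crude polynomial a.s.\ bound on the diameter.

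The main technical obstacle I anticipate is ensuring that every constant in the chaining---the universal exponent $b$ from Proposition~\ref{prop:bound_on_lqg_distances_of_sets}, the tuning parameters $\alpha, M$, and the geometric constants depending on $K$ and $U$---can be chosen uniformly in $\gamma \in [\gamma_*, \gamma^*]$; this is possible because $b$ is universal, $\alpha$ and $M$ depend only on $\gamma_*,\gamma^*$, and $Q(\gamma)$ varies continuously on the compact interval. The upper bound $\beta < a$ is intrinsic to this approach: it is exactly the threshold at which the exponential growth of the Gaussian factor $e^{\xi h_{\delta_n}}$ across scales begins to overwhelm the scaling decay $\delta_n^{\xi Q}$.
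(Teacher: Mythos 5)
Your computational core is right and it recovers essentially the same threshold as the paper: the Gaussian moment estimate $\E[e^{p\xi h_{\delta_n}(x)}] \lesssim 2^{np^2\xi^2/2}$ together with the dyadic mesh count $|\mathcal{X}_n| \lesssim 4^n$ yields the summability condition $\tfrac{\beta^2}{2} - Q\beta + 2 < 0$, i.e.\ $\beta \in (Q-\sqrt{Q^2-4},\, Q+\sqrt{Q^2-4})$, which always contains $2$ and gives $a = Q(\gamma^*)+\sqrt{Q(\gamma^*)^2-4}$ after minimizing over the compact $\gamma$-interval. This is the same quadratic that drives the paper's Lemma~\ref{lem:circle_averages_uniform_bound} and the chain of conditions \eqref{eqn:conditions_for_the_function_q}--\eqref{eqn:condition_for_the_function_M}. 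The rescaling reduction to $\mathfrak{r}=1$ via Axioms~\ref{it:weyl_scaling} and~\ref{it:coordinate_change}, the translated form of Proposition~\ref{prop:bound_on_lqg_distances_of_sets}, and the Minkowski estimate on $W := C\sum_n A_n\delta_n^{\xi Q}\max_{x\in\mathcal{X}_n}e^{\xi h_{\delta_n}(x)}$ are all correct as written.

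The gap is in the final step, where you write \emph{``the contribution from $G^c$ is absorbed by combining the polynomial smallness of $\p[G^c]$ with any crude polynomial a.s.\ bound on the diameter.''} There is no a.s.\ bound on $\sup_{z,w\in K}\wt D_h^\gamma(z,w;U)$: it is a genuinely unbounded random variable, and a finite $p$-th moment for it (uniform in $\gamma$) is exactly what the proposition asserts, so you cannot invoke it. Moreover, with the fixed thresholds $A_n=2^{\alpha n}$ the union bound only gives $\p[G^c]\lesssim \sum_n 2^{n(2-\alpha b\sqrt{M})}$, which is a fixed constant (not polynomially small in any free parameter), so there is no ``polynomial smallness'' of $G^c$ available. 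Having $\E[\text{diam}^p\,\mathbf{1}_G]\le\E[W^p]<\infty$ says nothing about $\E[\text{diam}^p\,\mathbf{1}_{G^c}]$, and a single good event of probability close to $1$ cannot close a moment bound without some a~priori integrability on the complement.

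This is exactly why the paper does not use a single good event. In Lemma~\ref{lem:upper_bound_on_internal_diamters_of_spuares} the cutoff scale $N_C = \lfloor\log_2(C^\zeta)\rfloor+1$ is made to depend on the target level $C$, producing a genuine tail bound $\p[\text{diam}>C]\lesssim C^{-q}$ with $q>p$ that can then be integrated in Proposition~\ref{prop:moment_lqg_diameter_bound_for_square}, and then transferred to general $K$ by a finite cover of squares plus H\"older (Proposition~\ref{prop:moment_lqg_diameter_bound_for_compact_sets}). Your argument can be repaired in the same spirit: replace $A_n = 2^{\alpha n}$ by $A_n = C\,2^{\alpha n}$, so $\p[G(C)^c]\lesssim C^{-b\sqrt{M}}$, observe that on $G(C)$ the diameter is at most $C\cdot W_\alpha$ with $\|W_\alpha\|_{p_1}<\infty$ for some $p_1 > p$ (using the full open range of admissible $\beta$), obtain $\p[\text{diam}>T]\lesssim C^{-b\sqrt{M}} + (C/T)^{p_1}\E[W_\alpha^{p_1}]$, optimize $C = T^\theta$ with $\theta$ small and $M$ large so that both exponents beat $-p$, and integrate. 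As written, however, the argument does not yet establish the stated moment bound.
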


We will deduce Proposition~\ref{prop:moment_lqg_diameter_bound_for_compact_sets} from the following variant.

\begin{proposition}\label{prop:moment_lqg_diameter_bound_for_square}
There exists a choice of the function $\mathfrak{p} : (0,2) \to (0,1)$ in \eqref{eqn:main_normalization} such that the following is true. 
Fix $0 < \gamma_* < \gamma^* < 2$.  Then there exists a constant $a = a(\gamma_* ,  \gamma^*) > 2$ depending only on $\gamma_*$ and $\gamma^*$ such that the following is true.  Fix $\beta \in (2,a)$ and set $p = \frac{\beta}{\xi}$.  Then we have that
\begin{align*}
\E\left[\left(\mathfrak{r}^{-\xi(\gamma) Q(\gamma)} e^{-\xi(\gamma) h_{\mathfrak{r}}(0)} \sup_{z,w \in \mathfrak{r} \mathbb{S}} \wt{D}_h^{\gamma}(z,w ;  \mathfrak{r} \mathbb{S})\right)^p \right] \lesssim 1
\end{align*}
for all $\gamma \in [\gamma_*,\gamma^*]$ and all $\mathfrak{r}>0$,  where the implicit constant depends only on $\beta,\gamma_*$ and $\gamma^*$.
\end{proposition}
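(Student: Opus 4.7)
The strategy is to deduce Proposition~\ref{prop:moment_lqg_diameter_bound_for_square} from Proposition~\ref{prop:bound_on_lqg_distances_of_sets} via a scaling reduction followed by a dyadic chaining argument around a single base point. Since the whole-plane GFF normalized by $h_1(0) = 0$ satisfies $h(\mathfrak r \,\cdot\,) - h_{\mathfrak r}(0) \stackrel{d}{=} h$ for every $\mathfrak r > 0$, and since the normalizing constant $\alpha(\gamma,\mathfrak p)$ from~\eqref{eqn:normalization_subcritical} depends only on $\gamma$, combining the Weyl-scaling Axiom~\ref{it:weyl_scaling} with the coordinate-change Axiom~\ref{it:coordinate_change} shows that $\mathfrak r^{-\xi Q} e^{-\xi h_{\mathfrak r}(0)} \sup_{z,w\in\mathfrak r\mathbb S}\wt D_h^\gamma(z,w;\mathfrak r\mathbb S)$ has the same distribution for every $\mathfrak r > 0$. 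It therefore suffices to bound the $p$th moment at $\mathfrak r = 1$.

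For the chaining, fix $z_0 = (\tfrac12,\tfrac12)\in\mathbb S$ and $R > 0$ with $\mathbb S \subset B_R(z_0)$. For each $n\ge 0$ I apply Proposition~\ref{prop:bound_on_lqg_distances_of_sets} with $\mathfrak r = 2^{-n}R$ and the reference configuration $K_1 = \partial B_1(0)$, $K_2 = \partial B_{1/2}(0)$, $U = \mathbb A_{1/4,2}(0)$, translated so that the centre is $z_0$; this yields random quantities $A_n$ with $\p[A_n > A] \leq O_A(A^{-b\sqrt M})$ uniformly in $n$ and $\gamma\in[\gamma_*,\gamma^*]$, such that the internal $\wt D_h^\gamma$-distance between $\partial B_{2^{-n-1}R}(z_0)$ and $\partial B_{2^{-n}R}(z_0)$ is at most $A_n (2^{-n}R)^{\xi Q} e^{\xi h_{2^{-n}R}(z_0)}$. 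Gluing these radial crossings to annular loops provided by the good events $E_{2^{-n}R}^\gamma(z_0)$ of Lemma~\ref{lem:good_event_occurs_almost_everywhere} (which satisfy the same upper bound up to a constant), I build a continuous path from $z_0$ to every $z \in B_R(z_0)$ of $\wt D_h^\gamma$-length at most $C\sum_{n\ge 0} A_n (2^{-n}R)^{\xi Q} e^{\xi h_{2^{-n}R}(z_0)}$, and a triangle inequality transfers this bound (with an extra factor $2$) to $\sup_{z,w\in\mathbb S}\wt D_h^\gamma(z,w;\mathbb S)$. The main technical difficulty in this step is to assemble the radial crossings and the annular loops into a single continuous path terminating at each target point $z$, with length controlled by the above sum uniformly in $\gamma \in [\gamma_*,\gamma^*]$ and simultaneously across all scales $n$, so that no union bound over target points is incurred.

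For the moment estimate I invoke Minkowski's inequality on the chained bound. Choosing $M = M(p)$ large enough that $b\sqrt M > p$ ensures $\sup_n \|A_n\|_p < \infty$ uniformly in $\gamma\in[\gamma_*,\gamma^*]$. The Gaussian identity $\|e^{\xi X}\|_p = e^{p\xi^2 v/2}$ for $X \sim \mathcal N(0,v)$, combined with $\mathrm{Var}(h_{2^{-n}R}(z_0)) = n\log 2 + O(1)$, gives $\|e^{\xi h_{2^{-n}R}(z_0)}\|_p \leq C\cdot 2^{np\xi^2/2}$. The scale-$n$ summand is thus $\lesssim 2^{n(p\xi^2/2 - \xi Q)}$, and the geometric series converges precisely when $p\xi < 2Q(\gamma) = 4/\gamma + \gamma$. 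Since $\gamma \mapsto 4/\gamma + \gamma$ is strictly decreasing on $(0,2)$ and exceeds $4$ at $\gamma = \gamma^* < 2$, taking $a := 4/\gamma^* + \gamma^* > 2$ (depending only on $\gamma_*$ and $\gamma^*$) gives a range $\beta \in (2,a)$ on which the moment bound holds uniformly in $\gamma\in[\gamma_*,\gamma^*]$, as required.
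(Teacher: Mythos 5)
Your proposal has a genuine gap precisely at the step you yourself flag as the ``main technical difficulty,'' and the gap is not merely a deferred technicality --- it is fatal to the single-centre chaining. The union of the radial crossings of $\mathbb A_{2^{-n-1}R,2^{-n}R}(z_0)$ with the annular loops from Lemma~\ref{lem:good_event_occurs_almost_everywhere} is a countable union of curves centred at $z_0$; it connects $z_0$ to $\partial B_R(z_0)$ through every scale, but it has Lebesgue measure zero and does not reach an arbitrary $z\in B_R(z_0)\setminus\{z_0\}$. To land on $z$ you must chain down the scales toward $z$, using annuli centred near $z$ rather than near $z_0$, and there is no way to do this simultaneously for all $z$ without a union bound over a sufficiently dense grid of centres. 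This is exactly what the paper does: it covers $\mathfrak r\mathbb S$ by the dyadic rectangles $\mathcal R_{\mathfrak r}^n$ at every scale, obtains short crossings $P_R$ for all of them at once via Proposition~\ref{prop:bound_on_lqg_distances_of_sets}, controls all the associated circle averages simultaneously with Lemma~\ref{lem:circle_averages_uniform_bound}, and then chains through the hierarchy of dyadic children so that for every $z$ the crossing sets $X_{S_n(z)}$ of the squares $S_n(z)\ni z$ form a chain converging to $z$. The price of the union bound over $\sim 2^{2n}$ rectangles and grid points at scale $n$ is absorbed into conditions~\eqref{eqn:conditions_for_the_function_q}--\eqref{eqn:condition_for_the_function_M}, and this is why the paper's admissible exponent $a$ is below the unconstrained $2Q(\gamma^*)$ you obtain --- your exponent is spuriously large precisely because the cost of reaching all target points has not been paid.

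A secondary, fixable issue: you choose $M = M(p)$ with $p=\beta/\xi$, which makes the function $\mathfrak p(\gamma)$ (hence $\alpha(\gamma)$ and the metric $\wt D_h^\gamma$ itself) depend on $\beta$. But $\mathfrak p$ must be fixed once, before $\beta < a$ is specified, since the statement asserts the existence of a single function $\mathfrak p:(0,2)\to(0,1)$ working for all $\beta$ in the admissible range. The fix is to choose $M(\gamma)$ large enough that $b\sqrt{M(\gamma)}$ exceeds the intended upper bound on $p$ --- a $\gamma$-only condition, as in the paper's~\eqref{eqn:condition_for_the_function_M}. Finally, your Minkowski step factorizes $\bigl\|A_n\,e^{\xi h_{2^{-n}R}(z_0)}\bigr\|_p = \|A_n\|_p\,\bigl\|e^{\xi h_{2^{-n}R}(z_0)}\bigr\|_p$; this is valid only because the normalized crossing quantity is independent of the circle average $h_{2^{-n}R}(z_0)$ by the Markov property of the GFF together with Axioms~\ref{it:locality} and~\ref{it:weyl_scaling}, which deserves to be said explicitly. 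The scaling reduction to $\mathfrak r = 1$ and the Gaussian moment calculation at a fixed centre are otherwise correct, but the argument does not close without a covering of $\mathbb S$ by scales at all centres.
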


Next we fix $q \in (2,Q),  \zeta \in (0,Q-q) \cap (0,1)$ and $\wt{\zeta} \in (0,\zeta)$.  We will choose these parameters later as functions of $\gamma \in (0,2)$.  In order to prove Proposition~\ref{prop:moment_lqg_diameter_bound_for_square},  we will use Proposition~\ref{prop:bound_on_lqg_distances_of_sets} combined with a union bound to construct paths between the two shorter sides of rectangles of the form either $2^{-n} \mathfrak{r} \times 2^{-n-1} \mathfrak{r}$ or $2^{-n-1} \mathfrak{r} \times 2^{-n} \mathfrak{r}$ contained in $\mathfrak{r} \mathbb{S}$.  The next lemma ensures that the circle averages of $h$ on the above rectangles are not too large.

\begin{lemma}\label{lem:circle_averages_uniform_bound}
Fix $\mathfrak{r}>0,C>1$ and set 
\begin{align*}
E_{\mathfrak{r}}^n:=\left\{\sup\left\{ |h_{2^{-n} \mathfrak{r}}(w) - h_{\mathfrak{r}}(0)| : w \in B_{\mathfrak{r}}(0) \cap (2^{-n-1} \mathfrak{r} \Z^2) \right\} \leq \log(C 2^{qn}) \right\} \quad \text{for all} \quad n \in \N.
\end{align*}
Then there exists a universal constant $\wt{C}>1$ such that
\begin{align*}
&\p[E_{\mathfrak{r}}^n,  \text{for all} \,\,n \in \N \,\,\text{with} \,\,2^n \geq C^{\zeta}]\\
&\geq 1 - \frac{\wt{C}\wt{\zeta}^{-1} \log(C)}{\log(2)} \exp\left(\left(-\frac{q+\sqrt{q^2-4}}{1+\wt{C} \zeta^{-1} \log(C)^{-1}} + \frac{\wt{\zeta} (q+\zeta^{-1})^2}{2} + 2\wt{\zeta} \wt{C} + 2\wt{C} \log(C)^{-1}\right)\log(C)\right)\\
&-\frac{\wt{C}}{1-2^{2-\frac{q^2}{2(1+\wt{C} \wt{\zeta})}}} \exp\left(\left(\left(\frac{2}{1+\wt{C}\wt{\zeta}}\right) \left(\frac{4-q^2}{2\wt{\zeta}}\right) + \frac{2 \wt{C}}{1+\wt{C} \wt{\zeta}} \right)\log(C)\right).
\end{align*}
\end{lemma}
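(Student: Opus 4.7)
The plan is to parallel the proof of Lemma~\ref{lem:upper_bound_on_circle_averages}, upgraded to handle the fact that we now need to take a union bound over infinitely many dyadic scales $n$ with $2^n \geq C^{\zeta}$ (rather than over $r$ in a bounded interval). For each such $n$ and each $w \in B_{\mathfrak{r}}(0) \cap (2^{-n-1}\mathfrak{r}\Z^2)$, I would bound $\p[|h_{2^{-n}\mathfrak{r}}(w) - h_{\mathfrak{r}}(0)| > \log(C 2^{qn})]$ and then union bound over both $w$ (cardinality $\lesssim 2^{2n}$) and $n$.

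The per-point tail bound uses the decomposition
\[
h_{2^{-n}\mathfrak{r}}(w) - h_{\mathfrak{r}}(0) = \bigl[h_{2^{-n}\mathfrak{r}}(w) - h_{r_*}(w)\bigr] + \bigl[h_{r_*}(w) - h_{\mathfrak{r}}(0)\bigr]
\]
with intermediate scale $r_* = \mathfrak{r} \cdot 2^{-\lfloor \wt{\zeta}\log(C)/\log(2)\rfloor}$. By the Brownian motion structure of $t \mapsto h_{e^{-t}\mathfrak{r}}(w) - h_{\mathfrak{r}}(w)$, the first bracket is a centered Gaussian with variance $(n - \lfloor \wt{\zeta}\log C/\log 2\rfloor)\log 2 + O(1)$; by the covariance formula \eqref{eqn:gff_covariance} together with $|w| \leq \mathfrak{r}$, the second bracket is a centered Gaussian with variance at most $\wt{\zeta}\log(C) + \wt{C}$ uniformly in $w$. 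Splitting the threshold $\log(C) + qn \log 2$ as $s_1 + s_2$ between the two pieces and applying the standard Gaussian tail bound to each yields a single-scale estimate whose exponents are polynomial in $\log C$; choosing $s_1, s_2$ optimally produces the explicit expressions in the lemma.

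Summing over $n$ with $2^n \geq C^{\zeta}$ separates into two dominant contributions. At the boundary scale $n_0 = \lceil \zeta \log C/\log 2 \rceil$, the volume factor $2^{2n_0} = C^{2\zeta}$ balances against the Gaussian tail to produce the first main exponential, whose coefficient $-(q+\sqrt{q^2-4})$ emerges from optimizing the split (the quantity $(q+\zeta^{-1})$ appearing in the stated bound is the ratio of the threshold $(1+q\zeta)\log C$ at the boundary scale to the volume exponent $\zeta$). For $n > n_0$, the effective exponent decays linearly at rate $q^2/(2(1+\wt{C}\wt{\zeta})) - 2$, which is strictly positive precisely because $q > 2$; the geometric tail then sums to a multiple of $1/(1 - 2^{2 - q^2/(2(1+\wt{C}\wt{\zeta}))})$ and contributes the second main exponential. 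The logarithmic prefactor $\wt{C}\wt{\zeta}^{-1}\log(C)/\log(2)$ in front of the first main term counts the number of scales $n$ that contribute there.

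The main obstacle is bookkeeping: the $O(1)$ constants in the covariance formula \eqref{eqn:gff_covariance} propagate into both the denominators $1 + \wt{C}\zeta^{-1}\log(C)^{-1}$ and $1 + \wt{C}\wt{\zeta}$ of the exponents and into the additive corrections $2\wt{\zeta}\wt{C}$ and $2\wt{C}\log(C)^{-1}$. Once the Gaussian tail estimates are set up, the actual optimization of $s_1 + s_2$ producing the constant $q+\sqrt{q^2-4}$ is a one-variable calculation and presents no conceptual difficulty. Note that, because we only evaluate at the fixed dyadic scale $r = 2^{-n}\mathfrak{r}$ rather than over a continuous range, we do not need any reflection-principle bound for the Brownian sup (unlike in Lemma~\ref{lem:upper_bound_on_circle_averages}).
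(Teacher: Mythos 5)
Your high-level plan — per-scale Gaussian tail bounds, union bound over lattice points (cardinality $\lesssim 2^{2n}$) and over scales $n$, with a two-regime analysis — is the right shape, and your observation that the $\nu=0$ case of Lemma~\ref{lem:upper_bound_on_circle_averages} collapses the Brownian-sup piece is correct.  However, the specific mechanism you propose, decomposing $h_{2^{-n}\mathfrak{r}}(w)-h_{\mathfrak{r}}(0)$ across an intermediate scale $r_*\approx C^{-\wt\zeta}\mathfrak{r}$ and splitting the threshold between the two pieces, does not reproduce the lemma, and the paper does not do this.  The random variable in question is already a single centered Gaussian whose variance is $n\log 2+O(1)$, so the direct tail bound is optimal; splitting it into two (approximately independent) pieces with variances $V_1,V_2$ and applying the union bound with thresholds $t_1+t_2=T$ yields at best $\exp\left(-T^2/\bigl(2(\sqrt{V_1}+\sqrt{V_2})^2\bigr)\right)$, which is strictly weaker than $\exp\left(-T^2/\bigl(2(V_1+V_2)\bigr)\right)$ by AM-GM.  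So the claim that "choosing $s_1,s_2$ optimally produces the explicit expressions in the lemma" is not correct: the optimal split inflates the effective variance by the cross term $2\sqrt{V_1V_2}$, and no choice of split recovers the stated constants.

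The device the paper actually uses is different and is absent from your sketch: it fixes a partition $\zeta=a_0<\cdots<a_N=\wt\zeta^{-1}$ of mesh at most $\wt\zeta$, and for scales $n$ with $2^n\in[C^{a_{k-1}},C^{a_k}]$ it applies Lemma~\ref{lem:upper_bound_on_circle_averages} with $q$ replaced by $q+\tfrac{1}{a_k}$, which absorbs the $\log C$ part of the threshold $\log(C2^{qn})$ into a modified $q$ (valid because $\log C\leq n\log2/a_{k-1}$ on that block).  The constant $-(q+\sqrt{q^2-4})$ then arises as $\max_{a>0}\left(2a-\tfrac{(qa+1)^2}{2a}\right)$, attained at $a=(q^2-4)^{-1/2}$; this is an optimization over which block (i.e., which range of scales) gives the worst bound, not over how one splits a single threshold.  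The correction terms $\tfrac{\wt\zeta(q+\zeta^{-1})^2}{2}$, $2\wt\zeta\wt C$, $2\wt C\log(C)^{-1}$ come from replacing $a_k$ by $a_{k-1}$ and from the $O(1)$ variance error, and the prefactor $\wt C\wt\zeta^{-1}\log(C)/\log(2)$ counts the total number of scales across all $N\leq\wt\zeta^{-2}$ blocks.  The second exponential in the bound is the contribution of Case 2 ($2^n\geq C^{1/\wt\zeta}$), where the threshold $\log C+qn\log 2$ is dominated by $qn\log 2$ and a pure geometric sum over $n$ suffices.  Your sketch conflates the boundary scale $2^{n_0}=C^\zeta$ with the crossover $2^n=C^{1/\wt\zeta}$ between the two regimes, and without the block partition the $\wt\zeta$-dependence of the stated bound cannot be obtained.
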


\begin{proof}
We start by fixing a partition of $[\zeta ,  \wt{\zeta}^{-1}]$, 
$\zeta = a_0 < \cdots <a_N = \wt{\zeta}^{-1}$
with 
\begin{align*}
\max_{k=1,\cdots,N} (a_k - a_{k-1}) \leq \wt{\zeta}.
\end{align*}
Fix $n \in \N$ such that $2^n \geq C^{\zeta}$.  Then we have that either $2^n \leq C^{1 / \wt{\zeta}}$ or $2^n > C^{1/ \wt{\zeta}}$.  We will bound from above the probability of the event $(E_{\mathfrak{r}}^n)^c$ by dealing with the two cases separately.

\emph{Case 1.  $2^n \leq C^{1 / \wt{\zeta}}$.}
In that case,  there exists $k \in [1,N]_{\Z}$ such that $2^n \in [C^{a_{k-1}},C^{a_k}]$.  Then Lemma~\ref{lem:upper_bound_on_circle_averages} applied with $\epsilon = 2^{-n},  \nu = 0,R=1$ and $q$ replaced by $q+\frac{1}{a_k}$ implies that there exists a universal constant $\wt{C}>1$ such that
\begin{align*}
&\p[(E_{\mathfrak{r}}^n)^c] \leq \p\left[\sup\left\{|h_{2^{-n}\mathfrak{r}}(w)-h_{\mathfrak{r}}(0)| : w \in B_{\mathfrak{r}}(0) \cap \left(2^{-n-1} \mathfrak{r} \Z^2 \right)  \right\} > \left(q+\frac{1}{a_k}\right) \log(2^n)\right]\\
&\leq \wt{C} 2^{-n\left(\frac{(q+1/a_k)^2}{2(1+\wt{C} (n\log(2))^{-1})}-2\right)} \leq \wt{C} C^{-a_{k-1} \left(\frac{(q+1/a_k)^2}{2(1+\wt{C} (n\log(2))^{-1})} - 2\right)} \leq \wt{C} C^{-a_{k-1} \left(\frac{(q+1/a_k)^2}{2(1+\wt{C} (a_{k-1} \log(C))^{-1})}-2\right)}.
\end{align*}

Note that
\begin{align*}
\frac{|a_{k-1}-a_k| (q+1/a_k)^2}{2(1+\wt{C} (a_{k-1} \log(C))^{-1})} \leq \frac{\wt{\zeta} (q+\zeta^{-1})^2}{2}.
\end{align*}
It follows that
\begin{align*}
\p[(E_{\mathfrak{r}}^n)^c] \leq \wt{C} C^{2a_k - \frac{(qa_k+1)^2}{2a_k (1+\wt{C} (a_{k-1} \log(C))^{-1})} + \frac{\wt{\zeta} (q+\zeta^{-1})^2}{2}} \quad \text{for all} \quad C>1.
\end{align*}
Since the number of values of $n \in \N$ such that $C^{a_{k-1}} \leq 2^n \leq C^{a_k}$ is at most $\frac{\wt{\zeta} \log(C)}{\log(2)}$,  we obtain by taking a union bound that
\begin{align*}
\p[E_{\mathfrak{r}}^n \,\,\text{for all} \,\,n \in \N \,\,\text{with} \,\,C^{a_{k-1}} \leq 2^n \leq C^{a_k}] \geq 1 - \frac{\wt{C} \wt{\zeta} \log(C)}{\log(2)} C^{2a_k - \frac{(qa_k + 1)^2}{2a_k (1+\wt{C} (a_{k-1} \log(C))^{-1})} + \frac{\wt{\zeta} (q+\zeta^{-1})^2}{2}}.
\end{align*}
Note that
\begin{align*}
\left |2 a_k - \frac{2a_k}{1+\wt{C} (a_{k-1} \log(C))^{-1}} \right | &= \frac{2a_k \wt{C} (a_{k-1} \log(C))^{-1}}{1+\wt{C} (a_{k-1} \log(C))^{-1}} \\
&= \frac{2(a_k - a_{k-1}) \wt{C} (a_{k-1} \log(C))^{-1}}{1+ \wt{C} (a_{k-1} \log(C))^{-1}} + \frac{2 a_{k-1} \wt{C} (a_{k-1} \log(C))^{-1}}{1+\wt{C} (a_{k-1} \log(C))^{-1}}\\
&\leq 2 \wt{\zeta} \wt{C} + \frac{2\wt{C}}{\log(C)}.
\end{align*}
Thus we have that
\begin{align*}
&\p[E_{\mathfrak{r}}^n \,\,\text{for all} \,\,n \in \N \,\,\text{with} \,\,C^{a_{k-1}} \leq 2^n \leq C^{a_k}] \\
&\geq 1 - \frac{\wt{C} \wt{\zeta} \log(C)}{\log(2)} \exp\left(\left(\frac{2a_k - \frac{(qa_k + 1)^2}{2a_k}}{1+ \wt{C} (\zeta \log(C))^{-1}} + \frac{\wt{\zeta} (q+\zeta^{-1})^2}{2} + 2\wt{\zeta} \wt{C} + \frac{2\wt{C}}{\log(C)} \right) \log(C) \right).
\end{align*}

Since the quantity $2a - \frac{(qa+1)^2}{2a}$ is maximized over all $a>0$ when $a = (q^2 - 4)^{-1/2}$,  in which case it is equal to $-(q+\sqrt{q^2-4})$,  we obtain that
\begin{align*}
&\p[E_{\mathfrak{r}}^n \,\,\text{for all} \,\,n \in \N \,\,\text{with} \,\,C^{a_{k-1}} \leq 2^n \leq C^{a_k}] \\
&\geq 1 - \frac{\wt{C} \wt{\zeta} \log(C)}{\log(2)} \exp\left(\left(\frac{-(q+\sqrt{q^2-4})}{1+ \wt{C} (\zeta \log(C))^{-1}} + \frac{\wt{\zeta} (q+\zeta^{-1})^2}{2} + 2\wt{\zeta} \wt{C} + \frac{2\wt{C}}{\log(C)} \right) \log(C) \right).
\end{align*}
Moreover it holds that $N \leq \wt{\zeta}^{-2}$ and so a union bound gives that
\begin{align}\label{eqn:lower_bound_case_1}
&\p[E_{\mathfrak{r}}^n \,\,\text{for all} \,\,n \in \N \,\,\text{with} \,\,C^{\zeta} \leq 2^n \leq C^{1/\wt{\zeta}}]\notag \\
&\geq 1 - \frac{\wt{C} \wt{\zeta}^{-1} \log(C)}{\log(2)} \exp\left(\left(\frac{-(q+\sqrt{q^2-4})}{1+ \wt{C} (\zeta \log(C))^{-1}} + \frac{\wt{\zeta} (q+\zeta^{-1})^2}{2} + 2\wt{\zeta} \wt{C} + \frac{2\wt{C}}{\log(C)} \right) \log(C) \right).
\end{align}

\emph{Case 2.  $2^n \geq C^{1 / \wt{\zeta}}$.}
Lemma~~\ref{lem:upper_bound_on_circle_averages} applied with $\epsilon = 2^{-n}$ and $\nu = 0$ implies that 
\begin{align*}
&\p[(E_{\mathfrak{r}}^n)^c] \leq \p\left[\sup\left\{|h_{2^{-n}\mathfrak{r}}(w)-h_{\mathfrak{r}}(0)| : w \in B_{\mathfrak{r}}(0) \cap \left(2^{-n-1} \mathfrak{r} \Z^2 \right)  \right\} > q \log(2^n)\right]\\
&\leq \wt{C} 2^{-n\left(\frac{q^2}{2(1+\wt{C} (n \log(2))^{-1})} -2\right)} \leq \wt{C} 2^{-n\left(\frac{q^2}{2(1+\wt{C} \wt{\zeta} \log(C)^{-1})}-2\right)}.
\end{align*}
Summing over all $n \geq C^{1 / \wt{\zeta}}$ gives that
\begin{align*}
\p[E_{\mathfrak{r}}^n \,\,\text{for all} \,\,  n \in \N \,\,\text{with} \,\,2^n \geq C^{1 / \wt{\zeta}}] &\geq 1 - \frac{\wt{C}}{1 - 2^{2-\frac{q^2}{2(1+\wt{C} \wt{\zeta})}}} \exp\left(\left(-\frac{q^2}{2\wt{\zeta} (1 + \wt{C} \wt{\zeta} \log(C)^{-1})} + \frac{2}{\wt{\zeta}}\right) \log(C) \right)\\
&\geq 1 - \frac{\wt{C}}{1 - 2^{2-\frac{q^2}{2(1+\wt{C} \wt{\zeta})}}} \exp\left(\left(-\frac{q^2}{2\wt{\zeta} (1 + \wt{C} \wt{\zeta})} + \frac{2}{\wt{\zeta}}\right) \log(C) \right).
\end{align*}

Note that
\begin{align*}
-\frac{q^2}{2\wt{\zeta} (1+\wt{C} \wt{\zeta})} + \frac{2}{\wt{\zeta}} = \frac{1}{1+\wt{C} \wt{\zeta}} \left(\frac{4-q^2}{2\wt{\zeta}}\right) + \frac{2\wt{C}}{1+\wt{C} \wt{\zeta}}.
\end{align*}
Therefore we obtain that
\begin{align}\label{eqn:lower_bound_case_2}
\p[E_{\mathfrak{r}}^n \,\,\text{for all} \,\,  n \in \N \,\,\text{with} \,\,2^n \geq C^{1 / \wt{\zeta}}] \geq 1-\frac{\wt{C}}{1-2^{2-\frac{q^2}{2(1+\wt{C} \wt{\zeta})}}} \exp\left(\left(\left(\frac{2}{1+\wt{C}\wt{\zeta}}\right) \left(\frac{4-q^2}{2\wt{\zeta}}\right) + \frac{2 \wt{C}}{1+\wt{C} \wt{\zeta}} \right)\log(C)\right).
\end{align}
Hence the proof of the lemma is complete by combining ~\eqref{eqn:lower_bound_case_1} with ~\eqref{eqn:lower_bound_case_2}.
\end{proof}

Next we will use Proposition~\ref{prop:bound_on_lqg_distances_of_sets} and Lemma~\ref{lem:circle_averages_uniform_bound} to construct short paths across rectangles with high probability.  We will follow the same notation as in the proof of \cite[Proposition~3.10]{dubedat2020weak}.  For $n \in \N_0 ,  \mathfrak{r}>0$,  we let $\mathcal{R}_{\mathfrak{r}}^n$ denote the set of open $2^{-n} \mathfrak{r} \times 2^{-n-1} \mathfrak{r}$ or $2^{-n-1} \mathfrak{r} \times 2^{-n} \mathfrak{r}$ rectangles $R \subseteq \mathfrak{r} \mathbb{S}$,  where recall that $\mathbb{S} = (0,1)^2$.  For $R \in \mathcal{R}_{\mathfrak{r}}^n$,  we let $w_R$ be the bottom-left corner of $R$ and set $N_C:=\lfloor \log_2(C^{\zeta}) \rfloor +1$ for $C>1$.  Then applying Proposition~\ref{prop:bound_on_lqg_distances_of_sets} with $2^{-n} \mathfrak{r}$ in place of $\mathfrak{r}$ and with $A = 2^{\zeta \xi n}$ and then taking a union bound,  we obtain that for some universal constants $b$ and $\wt{C}$,  we have that off an event with probability at most
\begin{align*}
\wt{C} \sum_{n \geq N_C} 2^{-\zeta \xi b \sqrt{M} n} \leq \left(\frac{\wt{C}}{1-2^{-\zeta \xi b \sqrt{M}}}\right) C^{-\zeta^2 \xi b \sqrt{M}}
\end{align*}
the following holds.  For all $n \geq N_C$ and all $R \in \mathcal{R}_{\mathfrak{r}}^n$,  the distance between the two shorter sides of $R$ with respect to the metric $\wt{D}_h^{\gamma}(\cdot,\cdot ;  R)$ is at most
\begin{align*}
2^{\zeta \xi n} (2^{-n} \mathfrak{r})^{\xi Q} e^{\xi h_{2^{-n} \mathfrak{r}}(w_R)}.
\end{align*}
Hence combining with Lemma~\ref{lem:circle_averages_uniform_bound},  we obtain that off an event with probability at most
\begin{align*}
&\left(\frac{\wt{C}\wt{\zeta}^{-1} \log(C)}{\log(2)}\right) \exp\left(\left(-\frac{q+\sqrt{q^2-4}}{1+\wt{C} \zeta^{-1} \log(C)^{-1}} + \frac{\wt{\zeta} (q+\zeta^{-1})^2}{2} + 2\wt{\zeta} \wt{C} + 2\wt{C} \log(C)^{-1}\right)\log(C)\right)\\
&+\left(\frac{\wt{C}}{1-2^{2-\frac{q^2}{2(1+\wt{C} \wt{\zeta})}}}\right) \exp\left(\left(\left(\frac{2}{1+\wt{C}\wt{\zeta}}\right) \left(\frac{4-q^2}{2\wt{\zeta}}\right) + \frac{2 \wt{C}}{1+\wt{C} \wt{\zeta}} \right)\log(C)\right) + \left(\frac{\wt{C}}{1-2^{-\zeta \xi b \sqrt{M}}}\right) C^{-\zeta^2 \xi b \sqrt{M}}
\end{align*}
the following holds.  For all $n \geq N_C$ and all $R \in \mathcal{R}_{\mathfrak{r}}^n$,  there is a path $P_R$ in $R$ between the two shorter sides of $R$ with $\wt{D}_h^{\gamma}$-length at most 
\begin{align*}
C^{\xi} 2^{(q+\zeta) \xi n} (2^{-n} \mathfrak{r})^{\xi Q} e^{\xi h_{\mathfrak{r}}(0)} = C^{\xi} 2^{-(Q-q-\zeta) \xi n} \mathfrak{r}^{\xi Q} e^{\xi h_{\mathfrak{r}}(0)}.
\end{align*}
Henceforth we assume that such paths $P_R$ exist.

Next we have the following lemma which gives a precise estimate on the tail probability of $\sup_{z ,  w \in \mathfrak{r} \mathbb{S}} \wt{D}_h^{\gamma}(z,w ; \mathfrak{r} \mathbb{S})$.

\begin{lemma}\label{lem:upper_bound_on_internal_diamters_of_spuares}
Let $\wt{C},b$ be the universal constants as in the statements of Proposition~\ref{prop:bound_on_lqg_distances_of_sets} and Lemma~\ref{lem:circle_averages_uniform_bound}.  Then we have that
\begin{align*}
&\p\left[(1-2^{-(Q-q-\zeta) \xi}) \mathfrak{r}^{\xi Q} e^{-\xi h_{\mathfrak{r}}(0)} \sup_{z ,w  \in \mathfrak{r} \mathbb{S}} \wt{D}_h^{\gamma}(z,w ; \mathfrak{r} \mathbb{S}) > C\right]\\
&\leq \left(\frac{\wt{C} \log(C)}{\log(2)\wt{\zeta}(\zeta + \xi)}\right) \exp\left(\left(-\frac{q+\sqrt{q^2-4}}{1+\wt{C}(\zeta + \xi) \zeta^{-1} \log(C)^{-1}} + \frac{\wt{\zeta} (q+\zeta^{-1})^2}{2} + 2\wt{\zeta} \wt{C} + 2\wt{C}(\zeta + \xi) \log(C)^{-1}\right)\frac{\log(C)}{\zeta + \xi}\right)\\
&+\left(\frac{\wt{C}}{1-2^{2-\frac{q^2}{2(1+\wt{C} \wt{\zeta})}}}\right) \exp\left(\left(\left(\frac{2}{1+\wt{C}\wt{\zeta}}\right) \left(\frac{4-q^2}{2\wt{\zeta}}\right) + \frac{2 \wt{C}}{1+\wt{C} \wt{\zeta}} \right)\frac{\log(C)}{\zeta + \xi}\right) + \left(\frac{\wt{C}}{1-2^{-\zeta \xi b \sqrt{M}}}\right) C^{-\frac{\zeta^2 \xi b \sqrt{M}}{\zeta+\xi}}
\end{align*}
for all $C>1$.
\end{lemma}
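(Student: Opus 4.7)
The plan is to turn the pathwise bound established in the paragraph immediately before the lemma into the stated tail bound by a multiscale chaining argument of the sort used in the proof of \cite[Proposition~3.10]{dubedat2020weak}. Write $C' := C^{1/(\zeta + \xi)}$. The preceding discussion shows that, off the union of three events whose probabilities are bounded by the three displayed quantities there (with $C'$ in place of $C$), every $R \in \mathcal{R}_{\mathfrak r}^n$ with $n \geq N_{C'}$ admits a short-side crossing $P_R$ of $\wt D_h^{\gamma}$-length at most $(C')^{\xi} 2^{-(Q - q - \zeta) \xi n} \mathfrak r^{\xi Q} e^{\xi h_{\mathfrak r}(0)}$. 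It thus suffices to show that on this event
\begin{align*}
\sup_{z, w \in \mathfrak r \mathbb S} \wt D_h^{\gamma}(z, w ; \mathfrak r \mathbb S) \leq \frac{c \, C}{1 - 2^{-(Q - q - \zeta) \xi}}\, \mathfrak r^{\xi Q} e^{\xi h_{\mathfrak r}(0)}
\end{align*}
for a universal constant $c$, which can then be absorbed into the universal constant $\wt C$ in the statement.

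First I would construct, for any $z, w \in \mathfrak r \mathbb S$, a path in $\mathfrak r \mathbb S$ from $z$ to $w$ by concatenating the $P_R$'s. For each $x \in \{z, w\}$ and each $n \geq N_{C'}$, fix nested rectangles $R_n(x) \in \mathcal R_{\mathfrak r}^n$ containing $x$ whose orientations alternate between horizontal and vertical; a short topological argument then forces $P_{R_n(x)}$ and $P_{R_{n+1}(x)}$ to intersect, since one is a short-side crossing of a rectangle contained in the one crossed by the other in the opposite direction. Concatenating these produces a $\wt D_h^{\gamma}$-path from $x$ into $R_{N_{C'}}(x)$. At the coarsest scale I then chain together $O(2^{N_{C'}})$ crossings drawn from $\mathcal R_{\mathfrak r}^{N_{C'}}$ to connect $R_{N_{C'}}(z)$ to $R_{N_{C'}}(w)$ inside $\mathfrak r \mathbb S$. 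The fine-scale contribution to the total length is the geometric sum
\begin{align*}
2 \sum_{n > N_{C'}} (C')^{\xi} 2^{-(Q - q - \zeta) \xi n} \mathfrak r^{\xi Q} e^{\xi h_{\mathfrak r}(0)} \leq \frac{2 (C')^{\xi}}{1 - 2^{-(Q - q - \zeta) \xi}}\, \mathfrak r^{\xi Q} e^{\xi h_{\mathfrak r}(0)},
\end{align*}
while the coarse-scale contribution, using $2^{N_{C'}} \leq 2 (C')^{\zeta}$ and $2^{-(Q - q - \zeta) \xi N_{C'}} \leq 1$, is at most a universal constant times $(C')^{\zeta + \xi} \mathfrak r^{\xi Q} e^{\xi h_{\mathfrak r}(0)}$. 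Adding the two and recalling $(C')^{\zeta + \xi} = C$ gives the display above.

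It remains to match the three error terms. The substitution $C' = C^{1/(\zeta + \xi)}$ satisfies $\log C' = \log C / (\zeta + \xi)$, so each of the three bounds from the preceding paragraph translates precisely into the one in the statement: the $\wt \zeta^{-1} \log C'$ prefactor becomes $\log C / (\wt \zeta (\zeta + \xi))$, each $\log C'$ inside an exponential becomes $\log C / (\zeta + \xi)$, and the polynomial tail $(C')^{-\zeta^2 \xi b \sqrt M}$ becomes $C^{-\zeta^2 \xi b \sqrt M / (\zeta + \xi)}$. A union bound over the three sources of failure then yields the lemma. The main obstacle is the chaining step, and in particular verifying that consecutive nested short-side crossings always admit a common point of intersection; alternating the orientation of the rectangles across scales and using the fact that any short-side crossing of a rectangle must meet every short-side crossing of a sub-rectangle of the opposite orientation handles this cleanly, so no new ideas beyond the DubĂ©dat--Gwynne chaining scheme are required.
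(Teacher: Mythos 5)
Your argument is correct and follows essentially the same route as the paper's proof: chain the short-side crossings $P_R$ across dyadic scales, bound the fine-scale contribution by a geometric series, absorb the coarse-scale chaining into a factor of $(C')^{\zeta}$, and finish by the reparametrization $C \mapsto C^{1/(\zeta+\xi)}$. The only cosmetic difference is that the paper organizes the chaining via the unions $X_S$ of the four crossings inside each dyadic square $S$ (with $X_S \cap X_{S'} \neq \emptyset$ for dyadic children $S'$) rather than via alternating-orientation nested rectangles, but the topological intersection argument is the same.
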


\begin{proof}
Suppose that we have the same setup as in the paragraph just before the statement of the lemma.  We also assume that we are working on the event that the paths $P_R$ constructed above exist.  The main idea of the proof is to string together the paths $P_R$ in order to bound from above the distance between any two fixed points in $\mathfrak{r} \mathbb{S}$.  This is very similar to the main idea of the proof of \cite[Proposition~3.10]{dubedat2020weak}.  Let us now proceed with the details of the proof.

As in \cite[Proposition~3.10]{dubedat2020weak},  for each square $S \subseteq \mathfrak{r} \mathbb{S}$ with side length $2^{-n} \mathfrak{r}$ and corners in $2^{-n} \mathfrak{r} \mathbb{S}$,  there are exactly four rectangles in $\mathcal{R}_{\mathfrak{r}}^n$ which are contained in $S$.  If $n \geq N_C$,  we let $X_S$ be the union of the paths $P_R$ for these four rectangles.  If $S'$ is one of the four dyadic children of $S$,  then we have that $X_S \cap X_{S'} \neq \emptyset$.  Hence every point of $X_S$ can be joined to $X_{S'}$ by a path in $S$ with $\wt{D}_h^{\gamma}$-length at most
\begin{align*}
4C^{\xi} 2^{-(Q-q-\zeta)\xi n} \mathfrak{r}^{\xi Q} e^{\xi h_{\mathfrak{r}}(0)}.
\end{align*}

Since $\wt{D}_h^{\gamma}$ is a continuous function on $\C \times \C$,  we obtain that if $S_n(z)$ for $n \in \N_0$ denotes the square of side length $2^{-n} \mathfrak{r}$ with corners in $2^{-n} \mathfrak{r} \Z^2$ which contains $z$,  then the $\wt{D}_h^{\gamma}$-diameter of $S_n(z)$ tends to zero as $n \to \infty$.  Therefore we have that
\begin{align*}
\sup_{w \in S_{N_C}(z)} \wt{D}_h^{\gamma}(z,w ; \mathfrak{r} \mathbb{S})&\leq  4C^{\xi} \mathfrak{r}^{\xi Q} e^{\xi h_{\mathfrak{r}}(0)} \sum_{n \geq N_C} 2^{-(Q-q-\zeta)\xi n} \leq 4\left(\frac{C^{-\zeta \xi (Q-q-\zeta)}}{1-2^{-(Q-q-\zeta)\xi}}\right) C^{\xi} \mathfrak{r}^{\xi Q} e^{\xi h_{\mathfrak{r}}(0)}\\
&\leq 4\left(\frac{C^{\xi}}{1-2^{-(Q-q-\zeta)\xi}}\right) \mathfrak{r}^{\xi Q} e^{\xi h_{\mathfrak{r}}(0)}.
\end{align*}
Since the above holds for all $z \in \mathfrak{r} \mathbb{S}$,  we obtain that for all $n \geq N_C$ and each $2^{-n} \mathfrak{r} \times 2^{-n} \mathfrak{r}$ square $S \subseteq \mathfrak{r} \mathbb{S}$ with corners in $2^{-n} \mathfrak{r} \Z^2$ has $\wt{D}_h^{\gamma}$-diameter at most 
\begin{align*}
4\left(\frac{C^{\xi}}{1-2^{-(Q-q-\zeta)\xi}}\right) \mathfrak{r}^{\xi Q} e^{\xi h_{\mathfrak{r}}(0)}.
\end{align*}
Note that $2^{N_C} \leq 2 C^{\zeta}$ and so combining with the triangle inequality we obtain that
\begin{align*}
\sup_{z,w \in \mathfrak{r} \mathbb{S}} \wt{D}_h^{\gamma}(z,w ; \mathfrak{r} \mathbb{S}) \leq \left(\frac{8C^{\xi + \zeta}}{1-2^{-(Q-q-\zeta)\xi}}\right) \mathfrak{r}^{\xi Q} e^{\xi h_{\mathfrak{r}}(0)}.
\end{align*}
The proof of the lemma is then complete by replacing $C$ with $C^{\frac{1}{\xi + \zeta}}$.
\end{proof}

Henceforth we assume that the constants $\wt{C},b$ are as in Lemma~\ref{lem:upper_bound_on_internal_diamters_of_spuares}.  Now we choose continuous functions $q,\zeta,\wt{\zeta},M : (0,2) \to \R_+$ satisfying the following properties.  First we note that since $Q(\gamma) + \sqrt{Q(\gamma)^2 - 4} = \frac{4}{\gamma}$ for all $\gamma \in (0,2)$,  and $Q$ is a decreasing function in $\gamma$,  we can choose a continuous and decreasing function $q : (0,2) \to \R_+$ such that
\begin{align}\label{eqn:conditions_for_the_function_q}
q(\gamma) < Q(\gamma) \quad \text{and} \quad q(\gamma) + \sqrt{q(\gamma)^2 - 4} > 2 \quad \text{for all} \quad \gamma \in (0,2).
\end{align}
Also since $\xi : (0,2) \to \R_+$ is a continuous and increasing function (see \cite[Proposition~1.7]{ding2020fractal}),  we can choose a continuous and increasing  function $\zeta : (0,2) \to \R_+$ such that
\begin{align}\label{eqn:conditions_for_the_function_zeta}
0<\zeta(\gamma) < Q(\gamma) - q(\gamma) \quad,   \frac{q(\gamma) + \sqrt{q(\gamma)^2 - 4}}{\xi(\gamma) + \zeta(\gamma)} > \frac{2}{\xi(\gamma)} \quad \text{and} \quad \frac{q(\gamma)^2}{2(1+\wt{C} \zeta(\gamma))} > 2 \quad \text{for all} \quad \gamma \in (0,2).
\end{align}
Moreover we choose a continuous and increasing function $\wt{\zeta} : (0,2) \to \R_+$ such that
\begin{align}\label{eqn:condition_1_for_the_function_wt{zeta}}
\left(q(\gamma) + \sqrt{q(\gamma)^2 - 4} - \frac{\wt{\zeta}(\gamma) (q(\gamma) + 1)^2}{2} - 2\wt{C} \wt{\zeta}(\gamma)\right) > 2\left(1+\frac{\zeta(\gamma)}{\xi(\gamma)}\right) \quad \text{for all} \quad \gamma \in (0,2)
\end{align}
and
\begin{align}\label{eqn:condition_2_for_the_function_wt{zeta}}
\left(\frac{1}{1+\wt{C} \wt{\zeta}(\gamma)}\right) \left(\frac{q(\gamma)^2 - 4}{2 \wt{\zeta}(\gamma)} - \frac{2\wt{C}}{1+\wt{C} \wt{\zeta}(\gamma)}\right) >2\left(1+\frac{\zeta(\gamma)}{\xi(\gamma)}\right) \quad \text{for all} \quad \gamma \in (0,2). 
\end{align}
Finally we choose a continuous and decreasing function $M : (0,2) \to \R_+$ such that 
\begin{align}\label{eqn:condition_for_the_function_M}
\frac{\zeta(\gamma)^2 \xi(\gamma) b \sqrt{M(\gamma)}}{\xi(\gamma) + \zeta(\gamma)} > \frac{2}{\xi(\gamma)} \quad \text{for all} \quad \gamma \in (0,2).
\end{align}

Now we are ready to choose our normalization constant $\mathfrak{p}$ in \eqref{eqn:main_normalization} as a decreasing function in $\gamma \in (0,2)$.  More precisely,   for all $\gamma \in (0,2)$,  we let $\mathfrak{p}(\gamma)$ be the minimum value of $\mathfrak{p} \in (0,1)$ so that the statement of Lemma~\ref{lem:good_event_occurs_almost_everywhere} holds with $M = M(\gamma)$.  Note that the proof of Lemma~\ref{lem:good_event_occurs_almost_everywhere} (see also Lemma~\ref{lem:good_event_at_many_scales}) implies that $\mathfrak{p}(\gamma)$ can be chosen in a way which is continuous in $\gamma$ and since $M(\gamma)$ is decreasing in $\gamma$,  we obtain that the function $\mathfrak{p}(\gamma)$ is decreasing in $\gamma$.

From now on we will assume that $D_h^{\gamma}$ is normalized as in \eqref{eqn:main_normalization} with $\mathfrak{p} = \mathfrak{p}(\gamma)$.

\begin{proof}[Proof of Proposition~\ref{prop:moment_lqg_diameter_bound_for_square}]
First we note that it follows from \eqref{eqn:conditions_for_the_function_q} - \eqref{eqn:condition_for_the_function_M} combined with Lemma~\ref{lem:upper_bound_on_internal_diamters_of_spuares} and the continuity in $\gamma$ of the functions $q,\zeta,\wt{\zeta},M$ and $\mathfrak{p}$ that there exist constants $a>2,C_0>1$ depending only on $\gamma_*$ and $\gamma^*$ such that
\begin{align*}
\p\left[\mathfrak{r}^{-\xi(\gamma) Q(\gamma)} e^{-\xi(\gamma) h_{\mathfrak{r}}(0)} \sup_{z,w \in \mathfrak{r} \mathbb{S}} \wt{D}_h^{\gamma}(z,w ; \mathfrak{r} \mathbb{S} > C \right] \leq \wt{C} C^{-\frac{a}{\xi(\gamma)}} \quad \text{for all} \quad \gamma \in [\gamma_* ,  \gamma^*],\,\,C \geq C_0.
\end{align*}
Thus it follows that for fixed $\beta \in (2,a)$ and $p = \frac{\beta}{\xi}$,
\begin{align*}
&\E\left[\left(\mathfrak{r}^{-\xi(\gamma) Q(\gamma)} e^{-\xi(\gamma) h_{\mathfrak{r}}(0)} \sup_{z,w \in \mathfrak{r} \mathbb{S}} \wt{D}_h^{\gamma}(z,w ; \mathfrak{r} \mathbb{S} \right)^p \right] \\
&\leq C_0 + \int_{C_0}^{\infty} C^{p-1} \p\left[\mathfrak{r}^{-\xi(\gamma) Q(\gamma)} e^{-\xi(\gamma) h_{\mathfrak{r}}(0)} \sup_{z,w \in \mathfrak{r} \mathbb{S}} \wt{D}_h^{\gamma}(z,w ; \mathfrak{r} \mathbb{S}) > C \right]dC\\
&\leq C_0 + \wt{C} \int_{C_0}^{\infty} C^{p-\frac{a}{\xi(\gamma)}-1} dC \lesssim 1,
\end{align*}
where the implicit constant depends only on $\beta,\gamma_*$ and $\gamma^*$.  This completes the proof of the lemma.
\end{proof}

\begin{proof}[Proof of Proposition~\ref{prop:moment_lqg_diameter_bound_for_compact_sets}]
We will follow the argument in the proof of \cite[Proposition~3.9]{dubedat2020weak}.  More precisely,  since $K$ is compact and connected,  we cover $K$ by finitely many Euclidean squares $S_1,\cdots,S_N$ which are contained in $U$,  chosen in a way which depends only on $K$ and $U$,  and such that $S_j \cap S_{j+1} \neq \emptyset$ for all $1 \leq j \leq N-1$.  For $n=1,\cdots,N$,  we let $u_n$ be the bottom left corner of $S_n$ and let $\rho_n$ be the side length of $S_n$.  Let $a>2$ be the constant of Proposition~\ref{prop:moment_lqg_diameter_bound_for_square} and fix $\beta \in (2,a)$. Set  $p = \frac{\beta}{\xi(\gamma)}$ and let $q>1$ be such that $\beta q < a$ and $q$ depends only on $\beta$ and $a$.  Let also $q'>1$ be such that $\frac{1}{q} + \frac{1}{q'} = 1$.

Then the choice of the $S_j$s implies that
\begin{align*}
\sup_{z,w \in \mathfrak{r} K} \wt{D}_h^{\gamma}(z,w ; \mathfrak{r} U) \leq \sum_{n=1}^N \sup_{z,w \in \mathfrak{r} S_n} \wt{D}_h^{\gamma}(z,w ; \mathfrak{r} S_n)
\end{align*}
which implies that
\begin{align}\label{eqn:basic_inequality_1}
\E\left[\left(\mathfrak{r}^{-\xi(\gamma) Q(\gamma)} e^{-\xi(\gamma) h_{\mathfrak{r}}(0)} \sup_{z,w \in \mathfrak{r} K} \wt{D}_h^{\gamma}(z,w ; \mathfrak{r} U) \right)^p\right] \leq N^p \sum_{n=1}^N \E\left[\left(\mathfrak{r}^{-\xi(\gamma) Q(\gamma)} e^{-\xi(\gamma) h_{\mathfrak{r}}(0)} \sup_{z,w \in \mathfrak{r} S_n} \wt{D}_h^{\gamma}(z,w ; \mathfrak{r} S_n) \right)^p\right].
\end{align}
Note that H\"older's inequality implies that for each $1\leq n \leq N$,  we have that
\begin{align}\label{eqn:basic_inequality_2}
&\E\left[\left(\mathfrak{r}^{-\xi(\gamma) Q(\gamma)} e^{-\xi(\gamma) h_{\mathfrak{r}}(0)} \sup_{z,w \in \mathfrak{r} S_n} \wt{D}_h^{\gamma}(z,w ; \mathfrak{r} S_n) \right)^p\right]\notag\\
&\leq \rho_n^{p \xi(\gamma) Q(\gamma)} \E\left[\exp\left(q' p \xi(\gamma)(h_{\rho_n \mathfrak{r}}(\mathfrak{r} u_n) - h_{\mathfrak{r}}(0))\right)\right]^{1/q'} \E\left[\left((\mathfrak{r} \rho_n)^{-\xi(\gamma) Q(\gamma)} e^{-\xi(\gamma) h_{\rho_n \mathfrak{r}}(\mathfrak{r} u_n)} \sup_{z,w \in \mathfrak{r} S_n} \wt{D}_h^{\gamma}(z,w ; \mathfrak{r} S_n) \right)^{p q} \right]^{1/q}.
\end{align}
Also the scale invariance of the whole-plane GFF implies that
\begin{align*}
\E\left[\exp\left(q' p \xi(\gamma) (h_{\mathfrak{r} \rho_n}(\mathfrak{r} u_n) - h_{\mathfrak{r}}(0))\right) \right] = \E\left[\exp\left(q' p \xi(\gamma) h_{\rho_n}(u_n)\right)\right] = \exp\left(\frac{(q')^2 p^2 \xi(\gamma)^2}{2} \text{Var}(h_{\rho_n}(u_n))\right).
\end{align*}

Note that $p^2 \xi(\gamma)^2 = \beta^2$ and $\text{Var}(h_{\rho_n}(u_n)) \lesssim 1$,  where the implicit constant depends only on $K$ and $U$.  Hence the proof of the proposition is complete by combining Proposition~\ref{prop:moment_lqg_diameter_bound_for_square} with \eqref{eqn:basic_inequality_1} and summing over all $n = 1,\cdots,N$ in \eqref{eqn:basic_inequality_2}.
\end{proof}

\subsection{Proving tightness: Subcritical case}
\label{subsec:tightness_subcritical}

In this subsection,  we are going to complete the proof of Theorem~\ref{thm:tightness_subcritical}.  For the rest of the section,  we will assume that the metrics $\wt{D}_h^{\gamma}$ are as in \eqref{eqn:normalization_subcritical},  where the function $\mathfrak{p} : (0,2) \to (0,1)$ is chosen according to Proposition~\ref{prop:moment_lqg_diameter_bound_for_compact_sets}.  

Theorem~\ref{thm:tightness_subcritical} will be a consequence of the following proposition.

\begin{proposition}\label{prop:tightness_proposition_subcritical}
Fix $0<\gamma_* < \gamma^* < 2$ and $\chi \in (0,(Q(\gamma^*)-2) \xi(\gamma_*))$.  Fix also an open and connected set $U \subseteq \C$ and a   compact and connected set $K \subseteq U$ with more than one point.  Then we have that
\begin{align*}
\sup_{\gamma \in [\gamma_*,\gamma^*]} \p\left[\sup_{z,w \in K} \frac{\wt{D}_h^{\gamma}(z,w ; U)}{|z-w|^{\chi}} \geq R \right] \to 0 \quad \text{as} \quad R \to \infty.
\end{align*}
\end{proposition}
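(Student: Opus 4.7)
The plan is a dyadic chaining argument in the spirit of the Kolmogorov--Chentsov continuity theorem, driven by the uniform moment bound of Proposition~\ref{prop:moment_lqg_diameter_bound_for_compact_sets}. After covering $K$ by finitely many translates of a fixed square whose mild enlargements still lie in $U$ (with the cover depending only on $K$ and $U$), it suffices to handle the case where $K$ is a single such square. For each integer $n\geq n_0$, let $\mathcal{D}_n$ denote the dyadic squares $S$ of side length $2^{-n}$ meeting $K$, and for each $S\in\mathcal{D}_n$ let $S^+$ be a concentric dilated square contained in $U$; write $w_S$ for the corner of $S$. I would apply Proposition~\ref{prop:moment_lqg_diameter_bound_for_compact_sets} at scale $\mathfrak{r}=2^{-n}$ (after translation), obtaining, for $\beta\in(2,a)$, $q\in(1,a/\beta)$, and $p=\beta/\xi$,
\begin{align*}
\E\!\left[\!\left((2^{-n})^{-\xi Q}e^{-\xi h_{2^{-n}}(w_S)}\sup_{z,w\in S}\wt{D}_h^{\gamma}(z,w;S^+)\right)^{\!pq}\right]\leq C,
\end{align*}
uniformly in $n$, $S$, and $\gamma\in[\gamma_*,\gamma^*]$.

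Next I would apply H\"older's inequality with conjugate exponents $q,q'$ to decouple the exponential of the circle average from the rescaled diameter: since $h_{2^{-n}}(w_S)$ is centered Gaussian with variance $n\log 2+O(1)$, we have $\E[e^{pq'\xi h_{2^{-n}}(w_S)}]\lesssim 2^{n(pq'\xi)^2/2}$, and combining with Markov's inequality yields
\begin{align*}
\p\!\left[\sup_{z,w\in S}\wt{D}_h^{\gamma}(z,w;S^+)>R\cdot 2^{-n\chi}\right]\leq CR^{-p}\,2^{-n\alpha_n},
\end{align*}
where $\alpha_n\to\alpha:=\beta(Q-\chi/\xi)-\beta^2 q'/2$ as $n\to\infty$. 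Union bounding over the $\lesssim 2^{2n}$ squares in $\mathcal{D}_n$ and summing over $n\geq n_0$ produces a total bound of order $R^{-p}$ provided $\alpha>2$. The hypothesis $\chi<(Q(\gamma^*)-2)\xi(\gamma_*)\leq(Q(\gamma)-2)\xi(\gamma)$ (which uses that $Q$ is decreasing and $\xi$ is increasing on $(0,2)$) ensures $Q-\chi/\xi>2$ uniformly on $[\gamma_*,\gamma^*]$, so choosing $\beta$ slightly above $2$ and $q$ sufficiently large within $\beta q<a$ makes $\alpha>2$ feasible; continuity of $Q$ and $\xi$ on $[\gamma_*,\gamma^*]$ then lets the parameter choice be continuous, hence uniform, in $\gamma$.

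On the complement of this rare event, every $S\in\mathcal{D}_n$ has $\wt{D}_h^{\gamma}$-diameter in $S^+$ at most $R\cdot 2^{-n\chi}$ for all $n\geq n_0$. For $z,w\in K$ with $|z-w|\leq 2^{-n_0}$, I would chain $z$ to $w$ through a telescoping sequence of nested dyadic squares of decreasing side lengths $2^{-n},2^{-n-1},\dots$ (with $2^{-n}\asymp|z-w|$), bounding $\wt{D}_h^{\gamma}(z,w;U)$ by the geometric series $\sum_{k\geq n}R\cdot 2^{-k\chi}\lesssim R|z-w|^{\chi}$; for $|z-w|>2^{-n_0}$ the same estimate holds trivially after absorbing a finite factor from the diameter bound of Proposition~\ref{prop:moment_lqg_diameter_bound_for_compact_sets}. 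Letting $R\to\infty$ gives the claimed uniform vanishing of probabilities on $[\gamma_*,\gamma^*]$.

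The main obstacle will be the joint choice of $(\beta,q)$ so that $\alpha>2$ holds uniformly in $\gamma\in[\gamma_*,\gamma^*]$: the H\"older decoupling inevitably introduces a correction $\beta^2 q'/2$ which is only small when $q'$ is close to $1$, forcing $q$ large and thereby constraining $\beta<a/q$. It is precisely the assumption $\chi<(Q(\gamma^*)-2)\xi(\gamma_*)$ that makes the feasibility region non-empty uniformly in $\gamma$, and continuity of $(\beta(\gamma),q(\gamma))$ ensures the constants can be taken uniform over the range $\gamma\in[\gamma_*,\gamma^*]$.
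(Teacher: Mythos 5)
Your overall strategy (dyadic squares, moment bound, union bound, chaining) mirrors the paper's, but the Hölder decoupling step introduces a gap that cannot be closed for $\chi$ near its upper bound.

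The problem is quantitative. After Markov's inequality your per-scale exponent is $\alpha=\beta(Q-\chi/\xi)-\beta^2q'/2$, with the feasibility constraints $\beta q<a$ and $q'=q/(q-1)>1$. Writing $A:=Q-\chi/\xi>2$ and $\mu:=\beta q$, the quantity to maximize is $\mu A/q-\mu^2/(2q(q-1))$, and optimizing over $q$ (at $q=1+\mu/2$) gives $\max_q\{\mu A/q-\mu^2/(2q(q-1))-2q\}+2q = \mu(A-2)-2+2q$, so the best achievable bound is $\alpha-2=\mu(A-2)-2$. You therefore need $\mu>2/(A-2)$, subject to $\mu<a$ where $a$ is the fixed constant from Proposition~\ref{prop:moment_lqg_diameter_bound_for_compact_sets}. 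When $\chi$ is close to $(Q(\gamma^*)-2)\xi(\gamma_*)$, $A$ is close to $2$, so $2/(A-2)$ exceeds $a$ and the feasibility region is empty. The hypothesis on $\chi$ only guarantees $A>2$; it does not repair this, contrary to your concluding paragraph. Since $a$ is not claimed to be arbitrarily large, the argument as written fails for all $\chi$ in a nontrivial sub-interval near the allowed maximum.

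The missing idea — and the one the paper uses in Lemma~\ref{lem:lqg_diameter_upper_bound_internal_metric_on_balls} — is that no Hölder inequality is needed at this step. The circle-average increment $h_{2\epsilon\mathfrak{r}}(z)-h_{\mathfrak{r}}(z)$ is independent of $(h-h_{2\epsilon\mathfrak{r}}(z))|_{B_{2\epsilon\mathfrak{r}}(z)}$ by the Markov property, and Axioms~\ref{it:locality} and~\ref{it:weyl_scaling} make the recentered internal diameter $e^{-\xi h_{2\epsilon\mathfrak{r}}(z)}\wt{D}_h^{\gamma}(\cdot,\cdot;B_{2\epsilon\mathfrak{r}}(z))$ a measurable function of the latter alone. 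So the $p$-th moment factors exactly, giving $\E[(e^{-\xi h_\mathfrak{r}(z)}\mathrm{diam})^p]\lesssim\epsilon^{p\xi Q-p^2\xi^2/2}$ and hence failure probability $\lesssim\epsilon^{\beta^2/2}$ — which exceeds $2$ for any $\beta>2$, with no Hölder penalty and no demand on $a$ beyond $a>2$. In your setup you would also need to enlarge the circle radius so that $S^+\subseteq B_r(w_S)$, but once the independence is invoked instead of Hölder, the remainder of your chaining argument goes through.
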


Proposition~\ref{prop:tightness_proposition_subcritical} will follow from the following lemma which gives a H\"older continuity estimate for $\wt{D}_h^{\gamma}$-distances.

\begin{lemma}\label{lem:lqg_diameter_upper_bound_internal_metric_on_points}
Fix $0<\gamma_* < \gamma^* < 2$ and a compact set $K \subseteq \C$.  Let $a >2$ be the constant in the statement of Proposition~\ref{prop:moment_lqg_diameter_bound_for_compact_sets}.  Fix $\beta \in (2,a \wedge Q(\gamma^*))$ and set $\chi(\gamma):=(Q(\gamma)-\beta)\xi(\gamma)$.  Then for all $\mathfrak{r}>0$ and all $\gamma \in [\gamma_*,\gamma^*]$,   it holds with polynomially high probability as $\epsilon \to 0$,  at a rate depending only on $a,\beta,\gamma_*,\gamma^*$ and $K$,  that
\begin{align*}
\mathfrak{r}^{-\xi(\gamma) Q(\gamma)} e^{\xi(\gamma) h_{\mathfrak{r}}(0)} \wt{D}_h^{\gamma}(u,v ; B_{2|u-v|}(u)) \leq \left |\frac{u-v}{\mathfrak{r}}\right |^{\chi(\gamma)}
\end{align*}
for all $u,v \in \mathfrak{r} K$ with $|u-v| \leq \epsilon \mathfrak{r}$.
\end{lemma}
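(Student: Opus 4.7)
The plan is to derive the Hölder-type bound by applying Proposition~\ref{prop:moment_lqg_diameter_bound_for_compact_sets} (the moment bound for internal diameters) at scale $|u-v|$, converting it to a tail bound via Markov's inequality, and then performing a dyadic union bound over scales and lattice positions. For each fixed $u \in \C$ and $\rho > 0$, the scale and translation invariance of the whole-plane GFF combined with Axioms~\ref{it:weyl_scaling} and \ref{it:coordinate_change} imply that the rescaled diameter
\[
\rho^{-\xi Q} e^{-\xi h_\rho(u)} \sup_{z, w \in \overline{B_\rho(u)}} \wt{D}_h^{\gamma}(z, w; B_{2\rho}(u))
\]
has the same law as its analogue at $u = 0,\rho = 1$. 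Proposition~\ref{prop:moment_lqg_diameter_bound_for_compact_sets} applied with $K = \overline{B_1(0)}$, $U = B_2(0)$, $\mathfrak{r} = 1$ then shows that this quantity has finite $p$-th moment for $p = \beta/\xi$, and since $\wt{D}_h^{\gamma}(u, v; B_{2|u-v|}(u))$ is bounded above by the sup at scale $\rho = |u-v|$, Markov's inequality gives a pointwise tail bound with exponent $p$. Choosing the Markov threshold so that the target $(|u-v|/\mathfrak{r})^{\chi} \mathfrak{r}^{\xi Q} e^{\xi h_\mathfrak{r}(0)}$ emerges, a direct computation using $\chi = (Q - \beta)\xi$ shows that the failure probability for a fixed pair is bounded by a constant multiple of $(|u-v|/\mathfrak{r})^{\beta^2} e^{\beta(h_{|u-v|}(u) - h_\mathfrak{r}(0))}$.

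To promote this pointwise bound to a uniform statement, I would discretize over dyadic scales $\rho_k = 2^{-k}\mathfrak{r}$ for $k \geq \lceil \log_2(\epsilon^{-1})\rceil$ and a lattice $\mathcal{L}_k$ of spacing $\rho_k/C$ covering a bounded enlargement of $\mathfrak{r} K$. For each $u^* \in \mathcal{L}_k$, I would apply Proposition~\ref{prop:moment_lqg_diameter_bound_for_compact_sets} centered at $u^*$ at scale $\rho_k$ with sufficiently large $K' = \overline{B_N(0)}$ and $U' = B_{N+1}(0)$, bounding the diameter $\sup_{z, w \in \overline{B_{N\rho_k}(u^*)}} \wt{D}_h^{\gamma}(z, w; B_{(N+1)\rho_k}(u^*))$ in a way that controls $\wt{D}_h^{\gamma}(u, v; B_{2|u-v|}(u))$ for every $(u, v)$ with $u$ within $\rho_k/C$ of $u^*$ and $|u-v| \in [\rho_{k+1}, \rho_k]$. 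Gaussian-tail estimates on the circle averages, via Lemma~\ref{lem:upper_bound_on_circle_averages}, yield a uniform bound $\sup_{k, u^*}|h_{\rho_k}(u^*) - h_\mathfrak{r}(0)| \leq c' \log(\epsilon^{-1})$ with polynomially high probability in $\epsilon$ and $c' > 0$ as small as desired. A union bound over all $(u^*, k)$ — with $\lesssim 2^{2k}$ lattice points at scale $k$ and per-site failure probability $\lesssim 2^{-k(\beta^2 - O(c'))}$ — then yields a total failure probability of order $\epsilon^{\beta^2 - 2 - O(c')}$, which is a positive power of $\epsilon$ since $\beta > 2$ forces $\beta^2 > 4$; this gives the claim with polynomially high probability at a rate depending only on $a, \beta, \gamma_*, \gamma^*, K$.

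The principal technical obstacle lies in the scale-matching step: the quantity to be bounded, $\wt{D}_h^{\gamma}(u, v; B_{2|u-v|}(u))$, is the internal metric on a ball whose radius depends continuously on $(u, v)$, while the moment control at discrete $(u^*, \rho_k)$ produces a bound on the internal metric on a fixed ball $B_{(N+1)\rho_k}(u^*)$. Handling this requires verifying geometric inclusions such as $u, v \in \overline{B_{N\rho_k}(u^*)}$ and $B_{2|u-v|}(u) \subseteq B_{(N+1)\rho_k}(u^*)$, and then arguing — either by a path-concatenation in the spirit of the proof of Proposition~\ref{prop:bound_on_lqg_distances_of_sets} (stringing together short paths in good dyadic annuli contained in $B_{2|u-v|}(u)$), or by selecting $N$ large enough and using a direct set-containment to route the controlled internal metric at scale $\rho_k$ into an upper bound on the continuum quantity up to a harmless multiplicative constant — that the discrete bounds indeed dominate the continuum target. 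Once this is in place, the Hölder exponent $\chi(\gamma) = (Q(\gamma) - \beta)\xi(\gamma)$ emerges directly from the exponent in the threshold choice, and the uniformity in $\gamma \in [\gamma_*, \gamma^*]$ is automatic since the moment constant from Proposition~\ref{prop:moment_lqg_diameter_bound_for_compact_sets} is itself uniform on compact subintervals of $(0, 2)$.
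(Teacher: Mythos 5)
The core of your plan is sound, but the circle-average step and the resulting exponent bookkeeping do not work as described, and the gap is not cosmetic: it prevents the argument from covering the full range $\beta \in (2, a\wedge Q(\gamma^*))$.

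Concretely, the claim that Lemma~\ref{lem:upper_bound_on_circle_averages} yields $\sup_{k,\,u^*}|h_{\rho_k}(u^*)-h_{\mathfrak r}(0)| \leq c'\log(\epsilon^{-1})$ with $c'>0$ ``as small as desired,'' uniformly over all scales $k \geq \lceil\log_2(\epsilon^{-1})\rceil$, is false. The variance of $h_{\rho_k}(u^*)-h_{\mathfrak r}(0)$ is of order $k\log 2$, and the lattice $\mathcal L_k$ at spacing $\rho_k/C$ covering $\mathfrak r K$ has of order $2^{2k}$ points, so any deterministic envelope $L_k$ on $\sup_{u^*\in\mathcal L_k}|h_{\rho_k}(u^*)-h_{\mathfrak r}(0)|$ that is attained with geometrically small failure probability in $k$ must itself grow linearly, $L_k \approx q\,k\log 2$ with $q>2$; a fixed $c'\log(\epsilon^{-1})$ cannot hold down to $k\to\infty$. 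Once you substitute the correct envelope $L_k = q k\log 2$ into your Markov bound, the factor $e^{\beta(h_{\rho_k}(u^*)-h_{\mathfrak r}(0))}\leq 2^{q\beta k}$ erodes your per-site exponent from $\beta^2$ to $\beta^2-q\beta$; the union bound over $\sim 2^{2k}$ lattice points then requires $\beta^2-q\beta>2$, which combined with $q>2$ forces $\beta>1+\sqrt 3$. Your construction therefore does not establish the lemma for $\beta\in(2,1+\sqrt 3)$.

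The paper avoids this entirely via Lemma~\ref{lem:lqg_diameter_upper_bound_internal_metric_on_balls}: there, one decomposes $h_{\mathfrak r}(z)= h_{2\epsilon\mathfrak r}(z)-\bigl(h_{2\epsilon\mathfrak r}(z)-h_{\mathfrak r}(z)\bigr)$ and uses the GFF Markov property together with Axioms~\ref{it:locality} and~\ref{it:weyl_scaling} to observe that the circle-average increment $h_{2\epsilon\mathfrak r}(z)-h_{\mathfrak r}(z)$ is \emph{independent} of the recentered internal metric $e^{-\xi h_{2\epsilon\mathfrak r}(z)}D_h(\cdot,\cdot;B_{2\epsilon\mathfrak r}(z))$. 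Its $p$-th exponential moment $\asymp\epsilon^{-p^2\xi^2/2}$ is then absorbed into the moment estimate rather than charged via a separate Gaussian tail, and the Markov inequality produces a per-ball failure probability with exponent $\frac{\beta^2}{2}-\delta$. Since $\frac{\beta^2}{2}>2$ for every $\beta>2$, the subsequent dyadic union bound (the argument of \cite[Lemma~3.20]{dubedat2020weak}, which is what the paper's proof of this lemma invokes) closes with no restriction on $\beta$ beyond $\beta>2$. You would need to insert this independence decomposition into your first paragraph---effectively reproving Lemma~\ref{lem:lqg_diameter_upper_bound_internal_metric_on_balls}---before the dyadic union bound in your second paragraph can work for the full stated range of $\beta$.
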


Before we give the proof of Lemma~\ref{lem:lqg_diameter_upper_bound_internal_metric_on_points},  we state and prove the following lemma about tail probabilities of $\wt{D}_h^{\gamma}$-diameters of small Euclidean balls.

\begin{lemma}\label{lem:lqg_diameter_upper_bound_internal_metric_on_balls}
Fix $0<\gamma_* < \gamma^*<2$ and let $a >2$ be the constant in the statement of Proposition~\ref{prop:moment_lqg_diameter_bound_for_compact_sets}.  Let $K \subseteq \C$ be a compact set and fix $\beta \in (2,a \wedge Q(\gamma^*))$ and $\delta \in (0,\frac{\beta^2}{2}-2)$.  Then there exists a constant $C>0$ depending only on $\gamma_*,\gamma^*,\beta,\delta$ and $K$ such that
\begin{align*}
\p\left[\sup_{u,v \in B_{\epsilon \mathfrak{r}}(z)} \wt{D}_h^{\gamma}(u,v ; B_{2\epsilon \mathfrak{r}}(z)) \leq \epsilon^{(Q(\gamma)-\beta)\xi(\gamma)} \mathfrak{r}^{\xi(\gamma) Q(\gamma)} e^{\xi(\gamma) h_{\mathfrak{r}}(0)}\right] \geq 1 - C \epsilon^{\frac{\beta^2}{2}-\delta}
\end{align*}
for all $\epsilon \in (0,1),\mathfrak{r}>0,z \in \mathfrak{r} K$ and all $\gamma \in [\gamma_*,\gamma^*]$.
\end{lemma}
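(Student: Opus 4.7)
The plan is to reduce the statement to the unit scale, apply the moment bound of Proposition~\ref{prop:moment_lqg_diameter_bound_for_compact_sets}, and then convert the moment bound into the claimed polynomial tail bound using Markov's inequality combined with a Cameron--Martin/Girsanov-type computation that controls the discrepancy between normalizing at $h_\mathfrak{r}(0)$ versus $h_{\epsilon\mathfrak{r}}(z)$.

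First I would use the coordinate change (Axiom~\ref{it:coordinate_change} of Definition~\ref{def:strong_lqg_metric_subcritical}) together with Weyl scaling (Axiom~\ref{it:weyl_scaling}) to rescale by $\mathfrak{r}$. Since $h'(\cdot) := h(\mathfrak{r} \cdot) - h_{\mathfrak{r}}(0)$ has the same law as $h$ and satisfies
\begin{align*}
\sup_{u,v \in B_{\epsilon\mathfrak{r}}(z)} \wt{D}_h^\gamma(u,v; B_{2\epsilon\mathfrak{r}}(z)) = \mathfrak{r}^{\xi Q} e^{\xi h_\mathfrak{r}(0)} \sup_{u',v' \in B_\epsilon(z/\mathfrak{r})} \wt{D}_{h'}^\gamma(u',v'; B_{2\epsilon}(z/\mathfrak{r})),
\end{align*}
we may assume $\mathfrak{r} = 1$ and $z \in K$, and then the claim reduces to
\begin{align*}
\P\Bigl[\sup_{u,v \in B_\epsilon(z)} \wt{D}_h^\gamma(u,v; B_{2\epsilon}(z)) > \epsilon^{(Q-\beta)\xi}\Bigr] \leq C\epsilon^{\beta^2/2-\delta}.
\end{align*}
Next, translating by $z$ (via $\hat h(\cdot) = h(\cdot + z) - h_1(z)$) reduces $\hat h$ to a whole-plane GFF normalized at the origin, so applying Proposition~\ref{prop:moment_lqg_diameter_bound_for_compact_sets} with $\mathfrak{r}' = \epsilon$, $K' = \overline{B_1(0)}$, $U' = B_2(0)$ and translating back gives, for any $p \in (2/\xi, a/\xi)$,
\begin{align*}
\E[Y^p] \lesssim 1, \qquad Y := \epsilon^{-\xi Q} e^{-\xi h_\epsilon(z)} \sup_{u,v \in B_\epsilon(z)} \wt{D}_h^\gamma(u,v; B_{2\epsilon}(z)),
\end{align*}
with implicit constant uniform in $z \in K$ and $\gamma \in [\gamma_*,\gamma^*]$.

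The event we want is precisely $\{Y e^{\xi h_\epsilon(z)} > \epsilon^{-\beta\xi}\}$, so the heart of the argument is to estimate the mixed moment $\E[Y^p e^{p\xi h_\epsilon(z)}]$. The crucial observation is that $Y$ is invariant under adding a constant to $h$, and the Cameron--Martin representative $g_L$ of the linear functional $L(h) = h_\epsilon(z)$ is approximately constant on $B_{2\epsilon}(z)$: indeed for $w \in B_{2\epsilon}(z)$ one has $g_L(w) = \log\epsilon^{-1} + O(1)$, with the $O(1)$ variation uniform in $\epsilon$, $z \in K$. By the Girsanov identity
\begin{align*}
\E[Y^p e^{p\xi h_\epsilon(z)}] = e^{p^2\xi^2\,\mathrm{Var}(h_\epsilon(z))/2}\,\E[Y^p \circ (h + p\xi g_L)],
\end{align*}
and because the shift $p\xi g_L$ is constant up to an $O(p\xi)$ error on $B_{2\epsilon}(z)$ (so it changes $Y$ by at most a multiplicative factor $e^{O(p\xi^2)}$, using Weyl scaling for bounded perturbations), this mixed moment is bounded by $C_p\,\epsilon^{-p^2\xi^2/2}$ with $C_p$ independent of $\epsilon$, $z$, $\mathfrak{r}$.

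Finally, Markov's inequality with $T = \epsilon^{-\beta\xi}$ yields
\begin{align*}
\P[Y e^{\xi h_\epsilon(z)} > \epsilon^{-\beta\xi}] \leq T^{-p}\,\E[(Y e^{\xi h_\epsilon(z)})^p] \lesssim \epsilon^{p\beta\xi - p^2\xi^2/2},
\end{align*}
and maximizing the exponent $p\beta\xi - p^2\xi^2/2$ over $p$ gives the optimizer $p^* = \beta/\xi$, which is admissible because $\beta \in (2, a \wedge Q(\gamma^*))$ gives $p^* \in (2/\xi, a/\xi)$. The resulting exponent equals $\beta^2/2$, and absorbing the $O(1)$ corrections (from both the variance $\mathrm{Var}(h_\epsilon(z)) = \log\epsilon^{-1} + O(1)$ and the non-constant part of $g_L$) into the slack $\delta$ yields the claimed bound $C\epsilon^{\beta^2/2 - \delta}$.

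The main obstacle is the Girsanov step: while the formal Cameron--Martin shift is standard for Gaussian fields, the delicate point is that $Y$ involves the non-linear LQG metric, so one must carefully verify via the Weyl-scaling axiom and the locality of $\wt{D}_h^\gamma$ on $B_{2\epsilon}(z)$ that the perturbation of $Y$ under $h \mapsto h + p\xi g_L$ is genuinely controlled by a bounded multiplicative factor depending only on the sup-norm of the non-constant part of $g_L$ on $B_{2\epsilon}(z)$. Any naive attempt that replaces the Girsanov computation by a direct union bound over levels of $h_\epsilon(z)$ (splitting $\{Y e^{\xi h_\epsilon(z)} > \epsilon^{-\beta\xi}\} \subseteq \{Y > \epsilon^{-\beta\xi}e^{-\xi s}\} \cup \{h_\epsilon(z) > s\}$) only yields an exponent strictly less than $\beta^2/2$, so exploiting the approximate independence of $Y$ from $h_\epsilon(z)$ via Cameron--Martin is essential.
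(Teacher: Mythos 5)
Your proposal is correct and it reaches the target bound by a genuinely different route than the paper. The paper decomposes $h_{\mathfrak{r}}(z) = h_{2\epsilon\mathfrak{r}}(z) - \left(h_{2\epsilon\mathfrak{r}}(z) - h_{\mathfrak{r}}(z)\right)$ and exploits the \emph{exact} independence of the radial increment $h_{2\epsilon\mathfrak{r}}(z)-h_{\mathfrak{r}}(z)$ from the field $(h-h_{2\epsilon\mathfrak{r}}(z))|_{B_{2\epsilon\mathfrak{r}}(z)}$, so the $p$-th moment factorizes exactly; combined with Markov's inequality this yields the tail bound \eqref{eqn:tail_bound_without_recentering} with exponent exactly $\beta^2/2$ but normalized by $e^{\xi h_{\mathfrak{r}}(z)}$. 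The $\delta$-loss in the paper then enters only in the final recentering step, where the circle average $h_{\mathfrak{r}}(z)$ is replaced by $h_{\mathfrak{r}}(0)$ via a Gaussian tail bound on $|h_{\mathfrak{r}}(z)-h_{\mathfrak{r}}(0)|$ at level $\theta\log\epsilon^{-1}$. You instead reduce to unit scale, rewrite the event as $\{Y e^{\xi h_\epsilon(z)} > \epsilon^{-\beta\xi}\}$, and compute the mixed moment $\E[Y^p e^{p\xi h_\epsilon(z)}]$ via a Cameron--Martin tilt, using that $Y$ is local to $B_{2\epsilon}(z)$, constant-shift invariant, and that the Cameron--Martin representative $g_L$ has uniformly bounded oscillation on $B_{2\epsilon}(z)$. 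This avoids the paper's recentering step entirely. Both arguments rest on the same moment bound (Proposition~\ref{prop:moment_lqg_diameter_bound_for_compact_sets}) and the same decoupling intuition; yours replaces the exact radial independence (which only holds at radius $2\epsilon\mathfrak{r}$) with an approximate decoupling that works directly at radius $\epsilon$.

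One small remark: the corrections you identify (the $O(1)$ in $\mathrm{Var}(h_\epsilon(z))$ and the bounded oscillation of $g_L$) are all \emph{multiplicative} $O(1)$ factors, so they are absorbed into the constant $C$ rather than into the exponent; your argument, carried through carefully, actually gives the clean exponent $\beta^2/2$ with no $\delta$-loss (a slightly stronger conclusion than stated, implying the claim trivially). In the paper the $\delta$ is genuinely necessary because the Gaussian tail bound on $h_{\mathfrak{r}}(z)-h_{\mathfrak{r}}(0)$ degrades the exponent by a factor $(1+\theta\xi s^{-1})^{-1}$.
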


\begin{proof}
We will follow the same argument as in the proof of \cite[Lemma~3.19]{dubedat2020weak}.  Set $p = \frac{\beta}{\xi(\gamma)}$ and $s = \xi(\gamma) Q(\gamma) - p \xi(\gamma)^2 = (Q(\gamma)-\beta) \xi(\gamma)$.  Fix also $\mathfrak{r} > 0$.

First we note that $h_{2\epsilon \mathfrak{r}}(z)-h_{\mathfrak{r}}(z)$ is a centered Gaussian random variable with variance $\log(\epsilon^{-1})-\log(2)$ which is independent from $(h-h_{2\epsilon \mathfrak{r}}(z))|_{B_{2\epsilon \mathfrak{r}}(z)}$.  Hence Axioms~\ref{it:locality} and ~\ref{it:weyl_scaling} imply that the random variable $h_{2\epsilon \mathfrak{r}}(z)-h_{\mathfrak{r}}(z)$ is also independent from the internal metric
\begin{align*}
D_{h-h_{2\epsilon \mathfrak{r}}(z)}(u,v ; B_{2\epsilon \mathfrak{r}}(z)) = e^{-\xi(\gamma) h_{2\epsilon \mathfrak{r}}(z)} D_h(u,v ; B_{2\epsilon \mathfrak{r}}(z)).
\end{align*}
Therefore combining with Proposition~\ref{prop:moment_lqg_diameter_bound_for_compact_sets},  we obtain that
\begin{align*}
\E\left[\left(\mathfrak{r}^{-\xi(\gamma)Q(\gamma)} e^{-\xi(\gamma) h_{\mathfrak{r}}(z)} \sup_{u,v \in B_{\epsilon \mathfrak{r}}(z)} \wt{D}_h^{\gamma}(u,v ; B_{2\epsilon \mathfrak{r}}(z))\right)^p\right] \leq C \epsilon^{p\xi(\gamma) Q(\gamma) - \frac{\xi(\gamma)^2 p^2}{2}},
\end{align*}
for all $\epsilon \in (0,1),\gamma \in [\gamma_*,\gamma^*]$ and all $z \in \mathfrak{r} K$ where $C<\infty$ depends only on $\beta,\gamma_*$ and $\gamma^*$.  Thus combining with Markov's inequality and since $p\xi(\gamma) Q(\gamma) -\frac{p^2 \xi(\gamma)^2}{2} - ps = \frac{\beta^2}{2}$,  we obtain that
\begin{align}\label{eqn:tail_bound_without_recentering}
\p\left[\sup_{u,v \in B_{\epsilon \mathfrak{r}}(z)} \wt{D}_h^{\gamma}(u,v ; B_{2\epsilon \mathfrak{r}}(z)) \geq \epsilon^{s} \mathfrak{r}^{\xi(\gamma) Q(\gamma)} e^{\xi(\gamma) h_{\mathfrak{r}}(z)}\right] \leq C \epsilon^{\frac{\beta^2}{2}} \quad \text{for all} \quad \epsilon \in (0,1).
\end{align}

Fix $\theta \in (0,1)$ sufficiently small (to be chosen and depending only on $\beta,\delta,\gamma_*$ and $\gamma^*$).  Note that the random variables $h_{\mathfrak{r}}(z)-h_{\mathfrak{r}}(0)$ for $z \in \mathfrak{r} K$ are Gaussians and there exists a constant $A<\infty$ depending only on $K$ such that $\text{Var}(h_{\mathfrak{r}}(z)-h_{\mathfrak{r}}(0)) \leq A$ for all $z \in \mathfrak{r} K$.  Hence the standard Gaussian tail bound implies that
\begin{align}\label{eqn:gaussian_tail_bound}
\p[|h_{\mathfrak{r}}(z)-h_{\mathfrak{r}}(0)\ \geq \theta \log(\epsilon^{-1})] \leq 2 \exp\left(-\frac{\theta^2 \log(\epsilon^{-1})^2}{2A}\right) \quad \text{for all} \quad \epsilon \in (0,1),  z \in \mathfrak{r} K.
\end{align}

Hence combining \eqref{eqn:tail_bound_without_recentering} with \eqref{eqn:gaussian_tail_bound},  we obtain that
\begin{align*}
&\p\left[\sup_{u,v \in B_{\epsilon \mathfrak{r}}(z)} \wt{D}_h^{\gamma}(u,v ; B_{2\epsilon \mathfrak{r}}(z)) \geq \epsilon^{s} \mathfrak{r}^{\xi(\gamma) Q(\gamma)} e^{\xi(\gamma) h_{\mathfrak{r}}(0)}\right] \\
&\leq 2 \exp\left(-\frac{\theta^2 \log(\epsilon^{-1})^2}{2A}\right) +\p\left[\sup_{u,v \in B_{\epsilon \mathfrak{r}}(z)} \wt{D}_h^{\gamma}(u,v ; B_{2\epsilon \mathfrak{r}}(z)) \geq \epsilon^{s+\theta \xi(\gamma)} \mathfrak{r}^{\xi(\gamma) Q(\gamma)} e^{\xi(\gamma) h_{\mathfrak{r}}(z)}\right]\\
&\leq 2 \exp\left(-\frac{\theta^2 \log(\epsilon^{-1})^2}{2A}\right) + C \epsilon^{\frac{\beta^2}{2(1+\theta \xi(\gamma) s^{-1})}} \quad \text{for all} \quad \epsilon \in (0,1) ,  z \in \mathfrak{r} K.
\end{align*}

Note that $\xi(\gamma) s^{-1} \leq \frac{\xi(\gamma^*)}{\xi(\gamma^*) (Q(\gamma^*) - \beta)}$ for all $\gamma \in [\gamma_*,\gamma^*]$,  and so the proof of the lemma is complete by choosing $\theta \in (0,1)$ sufficiently small (depending only only on $\beta,\delta,\gamma_*$ and $\gamma^*$) such that
\begin{align*}
\frac{\beta^2}{2(1+\theta \xi(\gamma)s^{-1})} > \frac{\beta^2}{2}-\delta.
\end{align*}
\end{proof}

\begin{proof}[Proof of Lemma~\ref{lem:lqg_diameter_upper_bound_internal_metric_on_points}]
It follows from the same argument used to prove \cite[Lemma~3.20]{dubedat2020weak} using Lemma~\ref{lem:lqg_diameter_upper_bound_internal_metric_on_balls} in place of \cite[Lemma~3.19]{dubedat2020weak}.
\end{proof}

\begin{proof}[Proof of Proposition~\ref{prop:tightness_proposition_subcritical}]
Let $a>2$ be the constant in Lemma~\ref{lem:lqg_diameter_upper_bound_internal_metric_on_points} and let $\beta \in (2,a \wedge Q(\gamma^*))$ be such that 
\begin{align*}
\chi < (Q(\gamma^*) - \beta) \xi(\gamma_*) \leq (Q(\gamma)-\beta) \xi(\gamma) \quad \text{for all} \quad \gamma \in [\gamma_*,\gamma^*].
\end{align*}

Fix $\delta \in (0,1)$.  Then Lemma~\ref{lem:lqg_diameter_upper_bound_internal_metric_on_points} implies that there exists $\epsilon \in (0,1)$ depending only on $\delta,\beta,K,U,\gamma_*$ and $\gamma^*$ such that
\begin{align}\label{eqn:holder_constant_upper_bound}
\inf_{\gamma \in [\gamma_*,\gamma^*]} \p\left[\wt{D}_h^{\gamma}(z,w ; U) \leq |z-w|^{\chi} \,\,\text{for all}\,\,z,w \in K \,\,\text{with}\,\,|z-w| \leq \epsilon \right] \geq 1 - \frac{\delta}{2}.
\end{align}
Moreover Proposition~\ref{prop:moment_lqg_diameter_bound_for_compact_sets} combined with Markov's inequality imply that that there exists $R>0$ depending only on $\delta,\beta,K,U,\gamma_*$ and $\gamma^*$ such that
\begin{align}\label{eqn:diameter_upper_bound}
\sup_{\gamma \in [\gamma_*,\gamma^*]} \p\left[\sup_{z,w \in K} \wt{D}_h^{\gamma}(z,w ; U) \geq R \right] \leq \frac{\delta}{2}.
\end{align}

Therefore combining \eqref{eqn:holder_constant_upper_bound} with \eqref{eqn:diameter_upper_bound},  we obtain that for all $\gamma \in [\gamma_*,\gamma^*]$,  we have that the following holds off an event with probability at most $\delta$:
\begin{align*}
&\wt{D}_h^{\gamma}(z,w ; U) \leq |z-w|^{\chi} \quad \text{for all}\quad z,w \in K \quad \text{with} \quad |z-w| \leq \epsilon,\\
&\wt{D}_h^{\gamma}(z,w ; U) \leq R \quad \text{for all} \quad z,w \in K.
\end{align*}
In particular,  for all $\gamma \in [\gamma_*,\gamma^*]$,  off an event with probability at most $\delta$ we have that
\begin{align*}
\wt{D}_h^{\gamma}(z,w ; U) \leq R \epsilon^{-\chi} |z-w|^{\chi} \quad \text{for all} \quad z,w \in K.
\end{align*}
This completes the proof of the proposition.
\end{proof}

\begin{proof}[Proof of Theorem~\ref{thm:tightness_subcritical}]
Fix $\epsilon \in (0,1)$ and let $(K_n)_{n \in \N}$ be a sequence of compact and connected subsets of $U$ such that $K_n \subseteq K_{n+1}$ for all $n \in \N$ and $U = \bigcup_{n \in \N} K_n$.  Then Proposition~\ref{prop:tightness_proposition_subcritical} implies that for fixed $\chi \in (0,(Q(\gamma^*)-2) \xi(\gamma_*))$,  we have that for all $n \in \N$ there exists $R_n >0$ depending only on $\epsilon,\chi,n,U,\gamma_*$ and $\gamma^*$ such that
\begin{align}\label{eqn:tail_bound_locally_uniformly}
\sup_{\gamma \in [\gamma_*,\gamma^*]} \p\left[\sup_{z,w \in K_n} \frac{\wt{D}_h^{\gamma}(z,w ; U)}{|z-w|^{\chi}} \geq R_n \right] \leq \frac{\epsilon}{2^n}.
\end{align}

Let $\mathcal{K}$ denote the set of all continuous functions $f$ on $U \times U \to \R$ such that
\begin{align*}
\sup_{z,w \in K_n} \frac{|f(z)-f(w)|}{|z-w|^{\chi}} \leq R_n \quad \text{for all} \quad n \in \N.
\end{align*}
Then clearly the set $\mathcal{K}$ is compact with respect to the local uniform topology on $U \times U$ by the Arzela-Ascoli theorem.  Also \eqref{eqn:tail_bound_locally_uniformly} implies that
\begin{align*}
\p[\wt{D}_h^{\gamma}(\cdot ,  \cdot ; U) \in \mathcal{K}] \geq 1-\epsilon \quad \text{for all} \quad \gamma \in [\gamma_*,\gamma^*].
\end{align*}
This completes the proof of the theorem.
\end{proof}

\section{Identifying the limit: Subcritical case}
\label{sec:identifying_limit_subcritical}

In this section,  we are going to identify the law of any possible subsequential limit in Theorem~\ref{thm:tightness_subcritical} as the limiting metric in Theorem~\ref{thm:convergence_of_lffp} (modulo a multiplicative deterministic constant) for an appropriate parameter $\gamma \in (0,2)$ and hence complete the proof of Theorem~\ref{thm:main_theorem_subcritical_intro}.  As explained in Subsection~\ref{subsec:outline},  this will be done in two main steps.  First we will prove that any subsequential limit is a metric a.s.  (Subsection~\ref{subsec:limit_is_a_metric_subcritical}).  Next we will show in Subsections~\ref{subsec:limit_geodesic_and_complete_metric}-\ref{subsec:locality_subcritical} that any subsequential limit satisfies the axioms of LQG metrics (Definition~\ref{def:strong_lqg_metric_subcritical}).  Finally we will complete the proof of Theorem~\ref{thm:main_theorem_subcritical_intro} in Subsection~\ref{subsec:limit_is_the_gamma_lqg_metric} by combining with Theorem~\ref{thm:uniqueness_subcritical}.

For the rest of the section,  we fix $\gamma \in (0,2)$ and let $(\gamma_n)_{n \in \N}$ be a sequence in $(0,2)$ such that $\gamma_n \to \gamma$ as $n \to \infty$.  Note that Theorem~\ref{thm:tightness_subcritical} combined with Skorokhod's representation theorem imply that we can find a subsequence $(\gamma_{k_n})_{n \in \N}$ and a coupling of random fields $(h^n)_{n \in \N},h$ on the same probability space such that $h^n \to h$ as $n \to \infty$ in $H^{-1}_{\text{loc}}(\C)$ a.s.,  and there exists a random function $\wt{D} : \C \times \C \to \R$ on the same probability space such that a.s.
\begin{align*}
\wt{D}_{h^n}^{\gamma_{k_n}} \to \wt{D} \quad \text{as} \quad n \to \infty
\end{align*}
with respect to the local uniform topology on $\C \times \C$.  Henceforth we assume that we are working with that coupling.  Also in order to make the notation easier,  we will assume without loss of generality that $\gamma_n = \gamma_{k_n}$ and we will denote the pair $(h^n ,  \wt{D}_{h^n}^{\gamma_n})$ by $(h,\wt{D}_h^n)$.

\subsection{The limit is a metric a.s.}
\label{subsec:limit_is_a_metric_subcritical}

In this subsection,  we will prove that $\wt{D}$ is a metric a.s.  In particular we will show the following.

\begin{proposition}\label{prop:limit_is_a_metric}
$\wt{D}$ is a.s.  a metric which induces the Euclidean topology.
\end{proposition}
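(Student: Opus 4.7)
Since $\wt{D}$ is the local uniform limit of continuous metrics $\wt{D}_h^n$ on $\C \times \C$, it is automatic that $\wt{D}$ is continuous with respect to the Euclidean topology, symmetric, vanishes on the diagonal, and satisfies the triangle inequality. Hence $\wt{D}$ is a priori a continuous pseudometric, and the only thing to rule out is $\wt{D}(z,w) = 0$ for distinct $z,w$. Moreover, once we know that $\wt{D}(z,w) > 0$ for every pair of distinct deterministic points $z,w \in \Q[i]$ (simultaneously, a.s.), continuity together with the fact that $\Q[i]^2$ is dense in $\C^2$ forces $\wt{D}(z,w) > 0$ for all distinct $z,w$ a.s. The two topologies then coincide on compacts because continuity gives one direction, and positivity off the diagonal combined with $\wt{D}$-continuity gives the other via a standard compactness argument.

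The plan is therefore to prove that for each fixed $z \in \C$ and $0 < r_1 < r_2 < \infty$, one has
\begin{align*}
\p\!\left[\wt{D}(\text{across}\ \mathbb{A}_{r_1,r_2}(z)) > 0\right] = 1,
\end{align*}
and then deduce the main claim by choosing, for each pair of distinct points, a separating annulus with rational data. By translation and scaling of the GFF and the fact that this scaling commutes with $\wt{D}_h^n$ up to a deterministic multiplicative constant (by Axiom~\ref{it:coordinate_change}, passed to the limit), it suffices to treat $z=0$, $r_1=1$, $r_2=2$. Let $\mathcal{Z} = \{\wt{D}(\text{across}\ \mathbb{A}_{1,2}(0)) > 0\}$ and set
\begin{align*}
E_1 = \{x \in \C : \wt{D}(x, \partial B_1(0)) = 0\}, \qquad E_2 = \{x \in \C : \wt{D}(x, \partial B_2(0)) = 0\}.
\end{align*}
By the triangle inequality for $\wt{D}$ we get $\mathcal{Z} = \{E_1 \cap E_2 = \emptyset\} = \{E_1 \cap \partial B_2(0) = \emptyset\} = \{E_2 \cap \partial B_1(0) = \emptyset\}$, exactly as indicated in Subsection~\ref{subsec:outline}.

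The crux is a $0/1$-law for $\p[\mathcal{Z}]$. Following the sketch in the outline, I would show that for any disjoint open sets $U_1, U_2 \subseteq \C$ with $\partial B_1(0) \subseteq U_1$ and $\partial B_2(0) \subseteq U_2$, the events $\{E_1 \subseteq U_1\}$ and $\{E_2 \subseteq U_2\}$ are independent. The strategy is first to approximate: the event that $E_i$ is contained in a prescribed open set depends only on the restriction of $\wt{D}$ to a small tubular neighborhood of $\partial B_i(0)$, hence (modulo a small error) only on $h$ restricted to such a neighborhood, by locality (Axiom~\ref{it:locality}) for the approximants $\wt{D}_h^n$ and its passage to the limit. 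Then one uses the Markov decomposition of the whole-plane GFF to write $h$ as a sum of two zero-boundary GFFs in disjoint thin annular neighborhoods of $\partial B_1(0)$ and $\partial B_2(0)$, plus a smooth harmonic piece; conditionally on the harmonic piece the two zero-boundary fields are independent, and the harmonic contribution is absorbed by Weyl scaling (Axiom~\ref{it:weyl_scaling}). Summing over a countable collection of pairs $(U_1, U_2)$ that cover all ways for $E_1, E_2$ to be disjoint gives
\begin{align*}
\p[\mathcal{Z}] \;=\; \p[E_1 \cap E_2 = \emptyset] \;\leq\; \p[E_1 \cap \partial B_2(0) = \emptyset]\,\p[E_2 \cap \partial B_1(0) = \emptyset] \;=\; \p[\mathcal{Z}]^2,
\end{align*}
forcing $\p[\mathcal{Z}] \in \{0,1\}$.

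Finally I would show $\p[\mathcal{Z}] > 0$. For this I would combine the uniform-in-$\gamma$ upper bound on the distance around $\mathbb{A}_{r,2r}(w)$ from Proposition~\ref{prop:bound_on_lqg_distances_of_sets} with a uniform-in-$\gamma$ lower bound on the distance across $\mathbb{A}_{1,2}(0)$, coming from Weyl scaling applied to a favorable circle-average event on concentric subannuli. Concretely, on the event that $h_r(0) - h_{2r}(0) \leq -A$ for a chain of geometrically decreasing radii inside $\mathbb{A}_{1,2}(0)$, Weyl scaling forces $\wt{D}_h^n(\text{across}\ \mathbb{A}_{1,2}(0))$ to be bounded below by a constant independent of $n$; passing to the limit yields $\p[\mathcal{Z}] > 0$, and combined with the $0/1$ law $\p[\mathcal{Z}]=1$. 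The hardest step is the independence claim, as it requires carefully quantifying how locality survives the subsequential limit with respect to the local uniform topology; this is where the proof will have to closely mirror Section~6.2 of \cite{ding2023tightness}.
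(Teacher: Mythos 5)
Your high-level structure (reduce to positivity of crossing distances, prove a $0$--$1$ law via independence, then establish $\p[\mathcal{Z}]>0$) matches the paper's approach and the outline in Subsection~\ref{subsec:outline}. However, two steps are genuinely gapped.

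First, the reduction to rational data at the start is not correct as written. You claim that for a continuous pseudometric, positivity of $\wt{D}(z,w)$ on a dense set of pairs forces positivity everywhere; this is false (consider $d(z,w)=|f(z)-f(w)|$ for a continuous $f$ that is injective on $\Q[i]$ but not globally injective). More to the point, you also need to pass from ``$\wt{D}(\text{across}\ A)>0$ for every rational annulus $A$'' to ``$\wt{D}(z,w)>0$ for separated $z,w$,'' and this step is not free for a pseudometric: you need to know that the distance-zero set $\{x : \wt{D}(x,z)=0\}$ (or the set $\{x:\wt{D}(x,\partial B_r(0))=0\}$) is connected, which is Lemma~\ref{lem:connectivity_lemma} and requires the length property of the approximants $\wt{D}_h^{\gamma_n}$ together with a completeness statement like Lemma~\ref{lem:metric_unbounded}. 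Your proposal never invokes any length-space input, so this bridge is missing.

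Second, the positivity argument $\p[\mathcal{Z}]>0$ as sketched does not work. A ``chain of geometrically decreasing radii inside $\mathbb{A}_{1,2}(0)$'' is not well-defined since the radii in that annulus are bounded between $1$ and $2$, and in any case conditioning on $h_r(0)-h_{2r}(0)\le -A$ makes circle averages small and hence, via Weyl scaling, makes LQG distances \emph{smaller}, not larger, so it cannot give a lower bound on the crossing distance. The paper's argument (Lemmas~\ref{lem:uniform_lower_bound_between_points} and~\ref{lem:uniform_lower_bound_across_annuli}) is substantially different: it crucially exploits the choice of normalization, which guarantees $\p[\wt{D}_h^{\gamma_n}(\text{around}\ \mathbb{A}_{1,2}(0))>1]=1-\mathfrak{p}(\gamma_n)>0$ uniformly in $n$, then subdivides the boundary circle into finitely many short arcs to locate a pair of nearby points at positive distance with positive probability, and finally upgrades this to the full crossing event via discretizing the two boundary circles and applying an FKG/positive-associativity inequality for the white-noise regularization $\Phi_{0,k}$ together with the coupling comparison of \cite[Lemma~4.10]{ding2020tightness}. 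None of these ingredients appear in your sketch, and there is no obvious shortcut around the FKG step, because one needs to lower-bound a probability of an intersection of events over the product of two discretized circles.

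The independence/Markov step for the $0$--$1$ law is sketched correctly and matches the paper (Lemma~\ref{lem:crossing_positive}); the gaps are concentrated in the reduction to annuli and in the proof of positivity.
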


Clearly the function $\wt{D}$ satisfies the triangle inequality a.s.  Hence it remains to show that 
\begin{align*}
\wt{D}(z,w) >0 \quad \text{for all} \quad z,w \in \C,  z \neq w \quad \text{a.s.}
\end{align*}

In order to prove Proposition~\ref{prop:limit_is_a_metric},  we will follow the argument in \cite[Section~6.2]{ding2023tightness}.  As in \cite{ding2023tightness},  the main ingredient of the proof of Proposition~\ref{prop:limit_is_a_metric} is the following proposition which is the analogue of \cite[Proposition~6.4]{ding2023tightness}.

\begin{proposition}\label{prop:annuli_crossing_always_positive}
It holds that
\begin{align*}
\lim_{\epsilon \to 0} \liminf_{n \to \infty} \left(\p\left[\wt{D}_h^{\gamma_n}(\text{across} \,\,  \mathbb{A}_{r_1,r_2}(0)) > \epsilon \right] \right) = 1 \quad \text{for all} \quad 0<r_1 < r_2 < \infty.
\end{align*}
\end{proposition}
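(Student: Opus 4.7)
The plan is to pass to the subsequential limit $\wt{D}$ via the Skorokhod coupling and reduce to a nondegeneracy statement for $\wt{D}$ across an annulus, then establish nondegeneracy by combining scale invariance with the near-independence across scales of the GFF.

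First I would reduce to a fixed unit-sized annulus. By Axioms~\ref{it:coordinate_change} and \ref{it:weyl_scaling},
\begin{align*}
\wt{D}_h^{\gamma_n}(\text{across } \mathbb{A}_{r_1, r_2}(0)) \stackrel{d}{=} r_1^{\xi_n Q_n} e^{\xi_n h_{r_1}(0)} \wt{D}_{h^{(r_1)}}^{\gamma_n}(\text{across } \mathbb{A}_{1, r_2/r_1}(0))
\end{align*}
with $h^{(r_1)}(\cdot) := h(r_1\cdot) - h_{r_1}(0) \stackrel{d}{=} h$, and the prefactor is a.s. positive and tight away from $0$ (since $\xi_n \to \xi$ and $h_{r_1}(0)$ is a fixed Gaussian), so it suffices to prove the statement with $r_1 = 1$ and fixed $R := r_2/r_1 > 1$. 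In the Skorokhod coupling we have $\wt{D}_h^{\gamma_n} \to \wt{D}$ locally uniformly on $\C \times \C$ a.s., and the infimum over the compact set $\partial B_1(0) \times \partial B_R(0)$ is a continuous functional in the local uniform topology, so $\wt{D}_h^{\gamma_n}(\text{across } \mathbb{A}_{1,R}(0)) \to \wt{D}(\text{across } \mathbb{A}_{1,R}(0))$ a.s. By the Portmanteau theorem it is enough to show $\wt{D}(\text{across } \mathbb{A}_{1,R}(0)) > 0$ a.s.

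For this nondegeneracy I would argue by contradiction. Assume $\p[\wt{D}(\text{across } \mathbb{A}_{1,R}(0)) = 0] > 0$. The scale/Weyl covariance passes to $\wt{D}$ in the limit with parameters $\xi, Q$, so for some $q > 0$ and every $r > 0, z \in \C$ and small $\epsilon > 0$, the event
\begin{align*}
\left\{\wt{D}(\text{across } \mathbb{A}_{r, rR}(z)) \leq \epsilon\, r^{\xi Q} e^{\xi h_r(z)} \right\}
\end{align*}
has probability at least $q$. Applying Lemma~\ref{lem:good_event_at_many_scales} to these events at dyadic scales $r_k = R^{-k}$, the GFF's near-independence across disjoint annuli gives with positive probability the simultaneous occurrence of short crossings at a positive density of scales. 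Concatenating the resulting short crossings with around-paths whose $\wt{D}$-length is controlled by the analogue for $\wt{D}$ of the normalization bound on $\wt{D}_h^{\gamma_n}(\text{around } \mathbb{A}_{1,2}(0))$, and summing the contributions via $\xi Q > \xi^2/2$ together with Gaussian estimates for the circle averages $h_{r_k}(0)$, one constructs with positive probability a $\wt{D}$-path of arbitrarily small length between two fixed deterministic points. This forces the distribution of $\wt{D}_h^{\gamma_n}(z,w)$ to accumulate mass at $0$ along the sequence, contradicting a uniform-in-$n$ lower tail bound on $\wt{D}_h^{\gamma_n}(z,w)$.

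The main obstacle is executing the final concatenation so as to yield a concrete contradiction rather than a mere restatement of the degeneracy. This requires a uniform-in-$n$ lower tail bound on $\wt{D}_h^{\gamma_n}$ between fixed points, which is not directly supplied by the upper moment estimates of Section~\ref{sec:tightness} and must be produced by combining the normalization with further multiscale control, essentially an RSW-type comparison between around- and across-distances at the level of the renormalized metrics $\wt{D}_h^{\gamma_n}$ that is robust as $\gamma_n \to \gamma$.
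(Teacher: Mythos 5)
Your reduction to showing $\wt{D}(\text{across}\,\,\mathbb{A}_{1,R}(0))>0$ a.s.\ via the Skorokhod coupling is correct and matches the paper's framing. However, your proof of that statement has a genuine gap, and you partially acknowledge it yourself. You argue by contradiction: if the crossing distance is zero with positive probability, then by scale covariance and near-independence one would construct arbitrarily short $\wt{D}$-paths between fixed points, contradicting ``a uniform-in-$n$ lower tail bound.'' This plan conflates two things. If you \emph{could} prove the uniform lower tail bound on $\wt{D}_h^{\gamma_n}$ between fixed points, you would already have $\p[\wt{D}(\text{across}\,\,\mathbb{A}_{1,R}(0))>0]>0$ directly from Portmanteau, and no contradiction argument would be needed for that step. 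But positive probability is not the same as probability one, and your concatenation scheme --- short crossings at a positive density of scales plus controlled around-paths --- does not deliver the needed boost: short crossings of $\mathbb{A}_{r_k,r_kR}(0)$ concatenated with around-paths produce a short path from some inner circle to an outer circle, which is not in contradiction with anything, since the around-path contributions at small scales are anyway typically summable.

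The ingredient you are missing is a $0$--$1$ law, which is the crux of the paper's argument. Writing $\mathcal{Z}=\{\wt{D}(\text{across}\,\,\mathbb{A}_{r,2r}(0))>0\}$ and
\begin{align*}
E_1=\{x:\wt{D}(x,\partial B_r(0))=0\},\qquad E_2=\{x:\wt{D}(x,\partial B_{2r}(0))=0\},
\end{align*}
the paper observes $\mathcal{Z}=\{E_1\cap E_2=\emptyset\}=\{E_1\cap\partial B_{2r}(0)=\emptyset\}=\{E_2\cap\partial B_r(0)=\emptyset\}$, and then shows the events $\{E_1\subseteq U_1\}$, $\{E_2\subseteq U_2\}$ are independent for disjoint dyadic domains $U_1,U_2$ by decoupling the internal metrics on $U_1$, $U_2$ via the domain Markov property of the GFF and the locality of $\wt{D}_h^{\gamma_n}$. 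Summing over such domains yields $\p[\mathcal{Z}]\le\p[\mathcal{Z}]^2$, hence $\p[\mathcal{Z}]\in\{0,1\}$; combined with positivity (which the paper establishes uniformly in $n$ via Lemma~\ref{lem:uniform_lower_bound_between_points} and the positive association of the LFPP approximations in Lemma~\ref{lem:uniform_lower_bound_across_annuli}, not via an RSW comparison), this gives $\p[\mathcal{Z}]=1$. You should supply both the positivity step and the $0$--$1$ law; the contradiction-via-short-paths route does not close without effectively reproving them.
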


Now we focus on proving Proposition~\ref{prop:annuli_crossing_always_positive}.  The proof of Proposition~\ref{prop:annuli_crossing_always_positive} will follow from combining Lemmas~\ref{lem:uniform_lower_bound_across_annuli} and ~\ref{lem:crossing_positive}.  Lemma~\ref{lem:uniform_lower_bound_across_annuli} shows that the crossing distance of a fixed annulus with respect to $\wt{D}_h^{\gamma_n}$ remains bounded away from zero with positive probability which is uniform in $n \in \N$.  In Lemma~\ref{lem:crossing_positive},  we will carry out the $0-1$ law argument described in Subsection~\ref{subsec:outline} by proving that $\p[\wt{D}(\text{across} \,\,  A) > 0] \in \{0,1\}$ for a fixed Euclidean annulus $A$.

Before stating Lemma~\ref{lem:uniform_lower_bound_across_annuli},  we state the following lemma which shows that the $\wt{D}_h^{\gamma_n}$-distance between any two fixed points on the inner and outer boundary respectively of a fixed annulus is bounded away from zero with positive probability (uniformly in $n \in \N$).

\begin{lemma}\label{lem:uniform_lower_bound_between_points}
There exists some universal constant $r_0 \in (0,1)$ such that for all $r \in (0,r_0)$,  there exists a constant $c>0$ depending only on $r$ and the sequence $(\gamma_n)$ such that
\begin{align*}
\liminf_{n \to \infty} \inf_{x \in \partial B_r(0),  y \in \partial B_{2r}(0)}\left( \p\left[\wt{D}_h^{\gamma_n}(x,y ; B_{3r}(0)) > c\right]\right)>0.
\end{align*}
\end{lemma}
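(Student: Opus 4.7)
The plan is to rescale the problem to unit size using the scaling axioms and then establish a uniform unit-scale positivity by contradicting the definition of the normalizing constant $\alpha(\gamma_n,\mathfrak{p}(\gamma_n))$.

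First, by the coordinate change axiom (Axiom~\ref{it:coordinate_change}) and Weyl scaling (Axiom~\ref{it:weyl_scaling}), for $x=ru$ and $y=rv$ with $u\in\partial B_1(0)$ and $v\in\partial B_2(0)$, one has the pathwise identity
\begin{align*}
\wt{D}_h^{\gamma_n}(x,y;B_{3r}(0))=r^{\xi_n Q_n}e^{\xi_n h_r(0)}\wt{D}_{\tilde h}^{\gamma_n}(u,v;B_3(0)),
\end{align*}
where $\tilde h(w):=h(rw)-h_r(0)$ is a whole-plane GFF with $\tilde h_1(0)=0$. Since $\gamma_n\to\gamma\in(0,2)$ lies eventually in a compact subinterval $[\gamma_*,\gamma^*]\subset(0,2)$, the deterministic factor $r^{\xi_n Q_n}$ is bounded below by a constant depending only on $r$ and the sequence. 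On the event $\{h_r(0)\geq-1\}$, which has probability bounded below depending only on $r$ by the Gaussian tail of $h_r(0)\sim\mathcal{N}(0,\log r^{-1})$, the factor $e^{\xi_n h_r(0)}$ is bounded below uniformly in $n$. Joint positive probability with $\{\wt{D}_{\tilde h}^{\gamma_n}(u,v;B_3(0))\geq c\}$ will be handled using that $(\tilde h,h_r(0))$ is jointly Gaussian, so that conditioning on $h_r(0)\in[-1,0]$ perturbs the law of $\tilde h$ only by a bounded Radon--Nikodym derivative on the relevant $\sigma$-algebra.

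Thus the key step is to establish, for some $c>0$, $p>0$ depending only on the sequence,
\begin{align*}
\liminf_{n\to\infty}\inf_{u\in\partial B_1(0),\,v\in\partial B_2(0)}\p[\wt{D}_{\tilde h}^{\gamma_n}(u,v;B_3(0))\geq c]\geq p.
\end{align*}
I will argue by contradiction: suppose there exist a subsequence $n_k\to\infty$ and points $u_{n_k}\to u_\infty$, $v_{n_k}\to v_\infty$ along which $\wt{D}_{\tilde h}^{\gamma_{n_k}}(u_{n_k},v_{n_k};B_3(0))\to 0$ in probability. Using the translation covariance derived from Axiom~\ref{it:coordinate_change} combined with tightness of the multiplicative prefactor $e^{\xi_n \tilde h_1(z)}$, the vanishing in probability propagates to any finite family of translated pairs. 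Chaining via the triangle inequality then gives $\wt{D}_{\tilde h}^{\gamma_{n_k}}(z,z';\mathbb{A}_{1,2}(0))\to 0$ in probability for consecutive grid points $z,z'$ on a fixed finite grid along a circle of radius $3/2$. Consequently the around-distance is bounded above by a vanishing sum, yielding $\wt{D}_{\tilde h}^{\gamma_{n_k}}(\text{around}\,\,\mathbb{A}_{1,2}(0))\to 0$ in probability. This contradicts the normalization, which ensures $\p[\wt{D}_{\tilde h}^{\gamma_{n_k}}(\text{around}\,\,\mathbb{A}_{1,2}(0))>1]=1-\mathfrak{p}(\gamma_{n_k})$ stays bounded below by $1-\sup_n\mathfrak{p}(\gamma_n)>0$, using continuity of $\mathfrak{p}$ in $\gamma$.

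The main obstacle is the chaining-and-covering step: bootstrapping vanishing at a single pair into vanishing of the around-distance. This relies on the translation covariance together with the uniform H\"older-continuity from Proposition~\ref{prop:tightness_proposition_subcritical}, which lets one interpolate between grid points along an around-path staying inside $\mathbb{A}_{1,2}(0)$ while controlling the total contribution. Ensuring the relevant internal distances (i.e., within $\mathbb{A}_{1,2}(0)$ rather than $B_3(0)$) also vanish will use the upper bound from Proposition~\ref{prop:bound_on_lqg_distances_of_sets} to guarantee that short paths can be found inside the prescribed annular region.
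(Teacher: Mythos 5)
Your high-level strategy differs from the paper's: after rescaling to unit size via the coordinate-change and Weyl-scaling axioms, you argue by contradiction, targeting the normalization $\p[\wt{D}_h^{\gamma_n}(\text{around}\,\,\mathbb{A}_{1,2}(0)) > 1] = 1 - \mathfrak{p}(\gamma_n)$ directly. The paper instead works forward: it picks $N$ closely spaced points on $\partial B_{3/2}(0)$, observes that a loop obtained by concatenating near-geodesics between consecutive points has length at least the around-distance, applies pigeonhole on the event $\{$around$\,>1\}$ to produce a deterministic pair with $\p[\wt{D}_h^{\gamma_n}(x_{j_n},x_{j_n+1};B_{2\delta}(x_{j_n}))>1/N]\geq q/N$, and then chains together $J\leq C(r)$ rotated/translated/scaled copies of an arbitrary target pair to transfer the bound.

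Your key chaining step, however, has a genuine gap. You claim that translation covariance together with Hölder continuity bootstraps the assumed vanishing of $\wt{D}_{\tilde h}^{\gamma_{n_k}}(u_{n_k},v_{n_k};B_3(0))$ (with $|u_{n_k}-v_{n_k}|\geq 1$) into vanishing of the around-distance. But translated copies of the pair $(u_{n_k},v_{n_k})$ stay at separation $\geq 1$, and pairs at unit separation cannot serve as consecutive vertices of a short chain confined to $\mathbb{A}_{1,2}(0)$ that disconnects $\partial B_1(0)$ from $\partial B_2(0)$. The triangle inequality propagates smallness from short pairs to long ones, not vice versa. Hölder continuity from Proposition~\ref{prop:tightness_proposition_subcritical} gives a uniform-in-$n$ \emph{upper} bound on nearby-pair distances, not a vanishing one: summing $N\approx 1/\delta$ segments each bounded by $C\delta^{\chi}$ gives roughly $C\delta^{\chi-1}$, which diverges as $\delta\to 0$ since $\chi<1$. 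Likewise Proposition~\ref{prop:bound_on_lqg_distances_of_sets} gives a fixed high-probability upper bound, not a vanishing one. So none of the tools you invoke produce a vanishing around-distance and hence no contradiction.

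The tool that does the job is full scale covariance (Axiom~\ref{it:coordinate_change}), which you use in the first rescaling paragraph but drop in the chaining step. Scaling the vanishing pair $(u_{n_k},v_{n_k})$ by a small factor $s$, together with rotation and translation, produces pairs at separation $s|u_{n_k}-v_{n_k}|$ whose enclosing $3s$-balls fit inside $\mathbb{A}_{1,2}(0)$. The resulting prefactor $s^{\xi_n Q_n}e^{\xi_n h_s(\cdot)}$ is tight uniformly in $n$ along the compact $\gamma$-range, so vanishing in probability is preserved; concatenating near-minimal paths between these scaled pairs then gives a loop in $\mathbb{A}_{1,2}(0)$ with vanishing $\wt{D}$-length and yields the contradiction. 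Two further small points: the negation of the claim only gives, for each fixed $c$, a subsequence (possibly depending on $c$) along which $\p[\wt{D}>c]\to 0$, so extracting a single subsequence with convergence to $0$ in probability requires a diagonalization; and the rotational covariance you need is not contained in Axiom~\ref{it:coordinate_change} but follows from the rotational invariance of the whole-plane GFF and must be invoked separately. With these repairs your contradiction approach would close, but as written it does not.
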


\begin{proof}
\emph{Step 1.  Outline and setup.}
Let us first describe the strategy of the proof of the lemma. First in Step 2,  we will show that there exist points $\wt{x} ,  \wt{y} \in B_{\frac{3r}{2}}(0)$ and constants $c>0,p \in (0,1)$ such that
\begin{align}\label{eqn:lower_tail_bound_for_fixed_points}
\p\left[\wt{D}_h^{\gamma_n}(\wt{x},\wt{y}; B_{3r}(\wt{x})) > c \right] \geq p \quad \text{for all} \quad n \in \N.
\end{align}
Next in Step 3,  we will argue as in the proof of \cite[Lemma~6.5]{ding2023tightness}.  More precisely,  we will argue that there exists a sequence of points $\wt{x} = z_1,\cdots,z_J = \wt{y}$ and $y_1,\cdots,y_J$ such that for all $1 \leq i \leq J$,  the triple $(z_i,z_{i+1},B_{3r}(y_i))$ can be obtained from the triple $(x,y,B_{3r}(0))$ through translation and rotation.  Since 
\begin{align*}
\wt{D}_h^{\gamma_n}(\wt{x},\wt{y} ; B_{3r}(\wt{x})) \leq \sum_{i=1}^{J-1} \wt{D}_h^{\gamma_n}(z_i,z_{i+1} ; B_{3r}(y_i)),
\end{align*}
the proof will be complete by combining with \eqref{eqn:lower_tail_bound_for_fixed_points} and the translation and scaling properties of $\wt{D}_h^{\gamma_n}$ (Axiom~\ref{it:coordinate_change}).
 
\emph{Step 2.  $\wt{D}_h^{\gamma_n}(\wt{x},\wt{y} ; B_{3r}(\wt{x})) > c$ for some points $\wt{x},\wt{y}$ with positive probability uniformly in $n$.}
Fix $\delta \in (0,1)$ sufficiently small (to be chosen later in the proof in a universal way) and let $x_1,\cdots,x_N$ be distinct points on $\partial B_{\frac{3}{2}}(0)$ ordered in the clockwise way such that $\frac{\delta}{3} < |x_j-x_{j+1}| < \frac{\delta}{2}$ for all $1 \leq j \leq N$,  where $x_{N+1} = x_1$. Note that
\begin{align*}
\p\left[\wt{D}_h^{\gamma_n}(\text{around} \,\,\mathbb{A}_{1,2}(0)) > 1 \right] = 1-\mathfrak{p}(\gamma_n) \quad \text{for all} \quad n \in \N,
\end{align*}
where recall that the function $\mathfrak{p} : (0,2) \to (0,1)$ has been defined in Subsection~\ref{subsec:moment_bound_for_diameters}.  Since $\mathfrak{p}_0$ is a decreasing function,  we can choose $q \in (0,1)$ such that
\begin{align*}
\p\left[\wt{D}_h^{\gamma_n}(\text{around} \,\,\mathbb{A}_{1,2}(0)) > 1 \right] \geq q \quad \text{for all} \quad n \in \N.
\end{align*}

Fix $n \in \N$ and for all $1 \leq j \leq N$,  we let $P_{j,n}$ be a path in $\mathbb{A}_{1,2}(0)$ from $x_j$ to $x_{j+1}$ with minimal $\wt{D}_h^{\gamma_n}(\cdot,\cdot ; B_{2|x_j - x_{j+1}|}(x_j))$-length.  Let $P_n$ denote the concatenation of the paths $P_{1,n},\cdots,P_{N,n}$.  Then we can choose $\delta \in (0,1)$ sufficiently small (in a universal way) such that $P_n$ is a path disconnecting $\partial B_1(0)$ from $\partial B_2(0)$.  Suppose that the event $\{\wt{D}_h^{\gamma_n}(\text{around} \,\, \mathbb{A}_{1,2}(0)) > 1\}$ occurs.  Then we have that
\begin{align*}
1 < \len(P_n ; \wt{D}_h^{\gamma_n}(\cdot,\cdot ; \mathbb{A}_{1,2}(0))) \leq \sum_{j=1}^N \len(P_{j,N} ; \wt{D}_h^{\gamma_n}(\cdot,\cdot ; \mathbb{A}_{1,2}(0))
\end{align*}
and so there exists $1 \leq j_n \leq N$ such that
\begin{align*}
\len(P_{j_n,n} ; \wt{D}_h^{\gamma_n}(\cdot , \cdot ; \mathbb{A}_{1,2}(0))) > \frac{1}{N}.
\end{align*}
In particular we have that
\begin{align*}
\wt{D}_h^{\gamma_n}(x_{j_n} ,  x_{j_n + 1} ; \mathbb{A}_{1,2}(0)) > \frac{1}{N}.
\end{align*}
Combining with the previous paragraph,  we obtain that for all $n \in \N$,  there exist deterministic points $x_{j_n},x_{j_n + 1} \in \partial B_{\frac{3}{2}}(0)$ and $1 \leq j_n \leq N$ such that
\begin{align*}
\p\left[\wt{D}_h^{\gamma_n}(x_{j_n},x_{j_n+1} ; \mathbb{A}_{1,2}(0)) > \frac{1}{N}\right] \geq \frac{q}{N}.
\end{align*}
Note that possibly by taking $\delta \in (0,1)$ to be smaller (in a universal way) we can assume that $B_{2|x_{j_n} - x_{j_n+1}|}(x_{j_n}) \subseteq \mathbb{A}_{1,2}(0)$.  It follows that
\begin{align*}
\p\left[\wt{D}_h^{\gamma_n}(x_{j_n},x_{j_n+1} ; B_{2|x_{j_n}-x_{j_n+1}|}(x_{j_n})) > \frac{1}{N} \right] \geq q \quad \text{for all} \quad n \in \N.
\end{align*}

\emph{Step 3.  Conclusion of the proof.}
Note that Axiom~\ref{it:coordinate_change} combined with Step 2 imply that
\begin{align*}
\p\left[\wt{D}_h^{\gamma_n}(0,|x_{j_n}-x_{j_n+1}| ; B_{2|x_{j_n}-x_{j_n+1}|}(0)) > \frac{1}{N} \right] \geq \frac{q}{N} \quad \text{for all} \quad n \in \N.
\end{align*}
Set $d_n:=|x_{j_n}-x_{j_n+1}|>0$ and let $r_0 \in (0,1)$ be sufficiently small (to be chosen in a universal way).  Fix $r \in (0,r_0)$.  Then arguing as in the proof of \cite[Lemma~6.5]{ding2023tightness} and for $r_0 \in (0,1)$ sufficiently small (in a universal way),  we obtain that for fixed $x \in \partial B_r(0)$ and $y \in \partial B_{2r}(0)$,  there exists a sequence of points $0=\wt{z}_1,\cdots,\wt{z}_J = 1$ and a constant $C<\infty$,  depending only on $r$,  such $|\wt{z}_i - \wt{z}_{i+1}| = \frac{|x-y|}{d_n}$ for all $1 \leq i \leq J-1$,  and $J \leq C$.  Moreover if for all $i$ we let $\wt{x}_i$ be the point such that the triple $(\wt{z}_i,\wt{z}_{i+1},B_{3r / d_n}(\wt{x}_i))$ can be obtained from the triple $(\frac{x}{d_n},\frac{y}{d_n} , B_{3r / d_n}(0))$ through translation and rotation,  then we can have that $B_{3r / d_n}(\wt{x}_i) \subseteq B_2(0)$. 

By translating by $x_{j_n}$,  rotating and then scaling by $d_n$,  we obtain that we can find points $x_{j_n} = z_1,\cdots,z_J = x_{j_n+1}$ such that $|z_i - z_{i+1}| = |x-y|$ for all $1 \leq i \leq J-1$.  We can also define for all $i$ the point $y_i$ such that the triple $(z_i,z_{i+1},B_{3r}(y_i))$ can be obtained from the triple $(x,y,B_{3r}(0))$ through translation and rotation.  Moreover possibly by taking $r_0 \in (0,1)$ to be smaller (in a universal way),  we can assume that $B_{3r}(y_i) \subseteq B_{2|x_{j_n}-x_{j_n+1}|}(x_{j_n})$.  Therefore we have that
\begin{align*}
\wt{D}_h^{\gamma_n}(x_{j_n},x_{j_n+1} ; B_{2|x_{j_n}-x_{j_n+1}|}(x_{j_n})) \leq \sum_{i=1}^{J-1} \wt{D}_h^{\gamma_n}(z_i,z_{i+1} ; B_{3r}(y_i)).
\end{align*}
Hence combining with Step 2 and the translation and scaling properties of $\wt{D}_h^{\gamma_n}$ (Axiom~\ref{it:coordinate_change}) and since $J \leq C$,  we obtain that
\begin{align*}
\p\left[\wt{D}_h^{\gamma_n}(x,y ; B_{3r}(0)) > \frac{1}{CN} \right] \geq \frac{q}{CN} \quad \text{for all} \quad n \in \N.
\end{align*}
This completes the proof of the lemma.
\end{proof}

\begin{lemma}\label{lem:uniform_lower_bound_across_annuli}
Let $r_0 \in (0,1)$ be the constant of Lemma~\ref{lem:uniform_lower_bound_between_points}.  Then for all $r \in (0,r_0)$,  there exists a constant $c>0$ depending only on $r$ and the sequence $(\gamma_n)$ such that
\begin{align*}
\liminf_{n \to \infty} \p\left[\wt{D}_h^{\gamma_n}(\partial B_r(0) ,  \partial B_{2r}(0)) \geq c \right] > 0.
\end{align*}
\end{lemma}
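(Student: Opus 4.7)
The plan is to combine the pointwise lower bound from Lemma~\ref{lem:uniform_lower_bound_between_points} with the uniform H\"older continuity of $\wt{D}_h^{\gamma_n}$ from Proposition~\ref{prop:tightness_proposition_subcritical}. Given $\delta>0$, the latter furnishes an exponent $\chi>0$ depending only on the sequence $(\gamma_n)$ and a constant $R=R(\delta)$ such that, uniformly in $n$, with probability at least $1-\delta$ one has $\wt{D}_h^{\gamma_n}(z,w; B_{3r}(0))\leq R|z-w|^{\chi}$ for all $z,w\in \overline{\mathbb{A}_{r,2r}(0)}$. Let $c,p_0>0$ be the constants from Lemma~\ref{lem:uniform_lower_bound_between_points}, choose $\eta>0$ with $2R\eta^{\chi}<c/2$, and fix a finite $\eta$-net $\{(x_k,y_k)\}_{k=1}^{K}\subset \partial B_r(0)\times \partial B_{2r}(0)$: every pair on these boundaries lies within Euclidean distance $\eta$ of some $(x_k,y_k)$. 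By Lemma~\ref{lem:uniform_lower_bound_between_points}, each marginal event $A_k:=\{\wt{D}_h^{\gamma_n}(x_k,y_k; B_{3r}(0))>c\}$ has probability at least $p_0$, uniformly in $n$.

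On $\bigcap_k A_k$ intersected with the H\"older event, the triangle inequality together with the H\"older bound forces $\wt{D}_h^{\gamma_n}(x,y; B_{3r}(0))\geq c/2$ for every $(x,y)\in \partial B_r(0)\times \partial B_{2r}(0)$, and therefore $\wt{D}_h^{\gamma_n}(\partial B_r(0), \partial B_{2r}(0))\geq c/2$ on this event. Thus it suffices to bound $\p[\bigcap_k A_k]$ below uniformly in $n$. For this I plan to invoke a positive-association inequality for the Gaussian free field: since by Weyl scaling (Axiom~(\ref{it:weyl_scaling})) the metric $\wt{D}_h^{\gamma_n}$ is pointwise monotone in $h$ in the additive sense, and the whole-plane GFF has non-negative covariance (given by the Green's function), a Pitt-type correlation inequality should give $\p[\bigcap_k A_k]\geq \prod_k \p[A_k]\geq p_0^{K}>0$.

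The main obstacle is closing the parameters: $p_0^{K}$ decays exponentially in the net size $K$, while $K$ grows as $\eta$ shrinks, and $\eta$ must shrink as $\delta\to 0$ since $R(\delta)$ blows up. Thus a naive FKG-plus-union-bound will not suffice. I anticipate the actual proof instead proceeds by a more delicate scheme patterned on Step~3 of the proof of Lemma~\ref{lem:uniform_lower_bound_between_points}: reduce control of the full infimum over the boundaries to joint control of only $O(1)$ many well-chosen pairs of points, by exhibiting a uniformly bounded collection of translations and rotations that maps a single ``bad'' pair to a cover of all pairs, combined with a quantitative local $\wt{D}_h^{\gamma_n}$-diameter bound on each small Euclidean ball (analogous to Lemma~\ref{lem:lqg_diameter_upper_bound_internal_metric_on_balls}) which is uniform in $n$ with high probability. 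In that scheme the number of jointly controlled events is a constant $K_0=K_0(r)$, and $p_0^{K_0}>0$ gives the desired uniform lower bound on the crossing probability.
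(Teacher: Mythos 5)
Your strategy---pointwise lower bound via Lemma~\ref{lem:uniform_lower_bound_between_points}, ball-diameter control via tightness, then a finite covering and a positive-association bound---is indeed the paper's approach, but both of your technical steps differ from the paper's in ways that matter, and your circularity concern is resolved more simply than your closing speculation suggests.

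The paper does not intersect with a separate H\"older event. Let $A$ and $c$ be the constants from Lemma~\ref{lem:uniform_lower_bound_between_points}. Using Theorem~\ref{thm:tightness_subcritical}, one fixes $m$ so that the $\wt{D}_h^{\gamma_n}(\cdot,\cdot;B_{3r}(0))$-diameter of each ball $B_{r2^{-m}}(z)$, $z\in \partial B_r(0)\cup\partial B_{2r}(0)$, exceeds $c/4$ with probability at most $A/4$, uniformly in $n$. The triangle inequality then gives directly, for each pair $(x,y)\in\partial B_r(0)\times\partial B_{2r}(0)$,
\begin{align*}
\p\left[\wt{D}_h^{\gamma_n}\left(B_{r 2^{-m}}(x) ,  B_{r 2^{-m}}(y) ; B_{3r}(0)\right) > \tfrac{c}{2}\right] > \tfrac{A}{2} \quad\text{for all}\quad n,
\end{align*}
and it is these ball-to-ball distance events, which already encode the diameter control, that go into the correlation inequality. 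The net size $K=|X|\cdot|Y|$ is then a fixed function of $r$ and $m$, and the final bound $\tfrac{1}{2}(A/4)^{K}>0$ follows; there is no shrinking $\delta$, since the failure probability $A/4$ is fixed at the outset rather than tending to zero. Folding the diameter control into each individual event, instead of intersecting with a global H\"older event, is exactly what removes the circular dependency you correctly flagged.

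Your Pitt-type inequality is also not applied as you describe. The whole-plane GFF is defined only modulo additive constant, its covariance does not have a definite sign on all of $\C\times\C$, and $\wt{D}_h^{\gamma_n}$ is a limit of regularized metrics rather than a monotone functional of a finite Gaussian vector with nonnegative covariance. The paper instead works at the level of the LFPP approximants: it introduces the white-noise mollification $\Phi_{0,k}$, which has the positive-association property by \cite[Theorem~2.3]{dubedat2020liouville}, applies positive association to the LFPP distance events for the metric $D_{0,k}$ built from $\Phi_{0,k}$, uses the coupling of \cite[Lemma~4.10]{ding2020tightness} bounding $\sup_{B_2(0)}|\Phi_{0,k}-h_{\epsilon_k}^*|$ to transfer the lower bound to the corresponding events defined via $h_{\epsilon_k}^*$, and finally passes to the limit $k\to\infty$ via Theorem~\ref{thm:convergence_of_lffp}. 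The alternative scheme you sketch at the end, reducing to $O(1)$ many pairs via translations and rotations, is not used.
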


Before proceeding with the proof of Lemma~\ref{lem:uniform_lower_bound_across_annuli},  let us first describe the setup and the main idea of the proof strategy.

Fix $r \in (0,r_0)$ and set $X:=\partial B_r(0) \cap (r 2^{-m}) \Z^2$ and $Y:=\partial B_{2r}(0) \cap (r 2^{-m}) \Z^2$ for $m \in \N$.  Then we can choose $m$ sufficiently large (depending only on $r$) such that the families of balls $\{B_{r 2^{-m}}(x)\}_{x \in X}$ and $\{B_{r 2^{-m}}(y)\}_{y \in Y}$ cover $\partial B_r(0)$ and $\partial B_{2r}(0)$ respectively and they are contained in $B_{3r}(0)$.  It follows that for all $c>0$,  we have that
\begin{align}\label{eqn:lower_bound_by_covering_the_boundary}
\p\left[\wt{D}_h^{\gamma_n}(\partial B_r(0) ,  \partial B_{2r}(0)) \geq c\right] \geq \p\left[\bigcap_{x \in X ,  y \in Y} \left\{\wt{D}_h^{\gamma_n}(B_{r2^{-m}}(x) ,  B_{r 2^{-m}}(y) ; B_{3r}(0)) \geq c\right\} \right].
\end{align}
Hence it suffices to give a lower bound on the probability in the right hand side of \eqref{eqn:lower_bound_by_covering_the_boundary}.  

Fix $n \in \N$ and recall the definitions of ~\eqref{eqn:gff_molification},  ~\eqref{eqn:lffp_definition} and ~\eqref{eqn:median_of_lffp_definition}.
Moreover we let $W$ be a space-time white noise on $\C \times [0,1]$ and set
\begin{align*}
\Phi_{0,k}(z):=\int_{2^{-2K}}^1 \int_{\C} p_{t/2}(z-u) W(du,dt) \quad \text{for all} \quad z \in \C,  k \in \N.
\end{align*}
Then we define $D_{0,k}$ in the same way that we defined $D_h^{\epsilon_k}$ but with $\epsilon_k = 2^{-k}$ and the field $h_{\epsilon_k}^*$ replaced by $\Phi_{0,k}$.

\begin{proof}[Proof of Lemma~
\ref{lem:uniform_lower_bound_across_annuli}.]
\emph{Step 1.  Outline.}
Suppose that we have the same setup as just after the statement of the lemma.  Fix $r \in (0,r_0)$ and let $c>0$ be the constant in the statement of Lemma~\ref{lem:uniform_lower_bound_between_points}.  As explained above,  it suffices to give a lower bound for the probability on the right hand side of \eqref{eqn:lower_bound_by_covering_the_boundary}.  We will complete the proof in two steps.  First in Step 2,  we will give a lower bound for the probability
\begin{align*}
\p\left[\wt{D}_h^{\gamma_n}(B_{r 2^{-m}}(x) ,  B_{r 2^{-m}}(y) ; B_{3r}(0)) > c /2 \right]
\end{align*}
which is uniform in $(x,y) \in X \times Y$ and $n \in \N$.  Next in Step 3,  we will complete the proof of the lemma by combining the convergence of $D_h^{\epsilon_k}$ and $D_{0,k}$ to $\wt{D}_h^{\gamma_n}$ (when properly rescaled) with the positive associativity property of $\Phi_{0,k}$ (see e.g.,  \cite[Section~2]{ding2023tightness}).

\emph{Step 2.  Lower bound on the probability that $\wt{D}_h^{\gamma_n}(B_{r 2^{-m}}(x) ,  B_{r 2^{-m}}(y) ; B_{3r}(0)) > c /2$.}
Define the constant
\begin{align*}
A:=\inf_{n \in \N} \left\{ \inf_{x \in \partial B_r(0),  y \in \partial B_{2r}(0)}\left( \p\left[\wt{D}_h^{\gamma_n}(x,y ; B_{3r}(0)) > c\right]\right)\right\}
\end{align*}
and note that Lemma~\ref{lem:uniform_lower_bound_between_points} implies that $A>0$.  Note also that Theorem~\ref{thm:tightness_subcritical} implies that the sequence of metrics $\{\wt{D}_h^{\gamma_n}(\cdot,\cdot ; U)\}_{n \in \N}$ is tight for every open set $U \subseteq \C$ with respect to the local uniform topology on $U \times U$. Therefore combining with Axiom~\ref{it:coordinate_change} we obtain that there exists $m \in \N$ such that
\begin{align}\label{eqn:diameter_upper_prob_bound}
\p\left[\text{Diam}_{\wt{D}_h^{\gamma_n}(\cdot,\cdot ; B_{3r}(0))}(B_{r 2^{-m}}(z)) > \frac{c}{4}\right] < \frac{A}{4} \quad \text{for all} \quad z \in \partial B_r(0) \cup \partial B_{3r}(0) ,  n \in \N.
\end{align}

Fix $x \in \partial B_r(0),  y \in \partial B_{2r}(0)$.  Note that the triangle inequality implies that
\begin{align*}
\wt{D}_h^{\gamma_n}(B_{r 2^{-m}}(x) ,  B_{r 2^{-m}}(y) ; B_{3r}(0))&\geq \wt{D}_h^{\gamma_n}(x,y ; B_{3r}(0)) - \text{Diam}_{\wt{D}_h^{\gamma_n}(\cdot,\cdot ; B_{3r}(0))}(B_{r 2^{-m}}(x))\\
&-\text{Diam}_{\wt{D}_h^{\gamma_n}(\cdot,\cdot ; B_{3r}(0))}(B_{r 2^{-m}}(y)).
\end{align*}
Hence combining with \eqref{eqn:diameter_upper_prob_bound} we obtain that
\begin{align}\label{eqn:point_distance_lower_bound_prob}
\p \left[\wt{D}_h^{\gamma_n}(B_{r2^{-m}}(x) ,  B_{r 2^{-m}}(y) ; B_{3r}(0)) > \frac{c}{2}\right] > \frac{A}{2} \quad \text{for all} \quad n \in \N,  x \in \partial B_r(0),  y \in \partial B_{2r}(0).
\end{align}

\emph{Step 3.  Conclusion of the proof.}
Fix $M>1$ sufficiently large (to be chosen independently of $n$) and recall the normalizing constant $\alpha(\gamma_n)$ introduced in \eqref{eqn:main_normalization}.  Then Theorem~\ref{thm:convergence_of_lffp} implies that
\begin{align}\label{eqn:limiting_prob}
&\p\left[\bigcap_{x \in X ,  y \in Y} \left\{\wt{D}_h^{\gamma_n}(B_{r2^{-m}}(x) ,  B_{r 2^{-m}}(y) ; B_{3r}(0)) \geq \frac{c e^{-2\xi(\gamma_n) M}}{2} \right\}\right]\notag\\
&\leq \limsup_{k \to \infty} \p\left[\bigcap_{x \in X ,  y \in Y} \left\{D_h^{\epsilon_k}(B_{r2^{-m}}(x) ,  B_{r 2^{-m}}(y) ; B_{3r}(0)) \geq \frac{c e^{-2\xi(\gamma_n) M} a_{\epsilon_k} \alpha(\gamma_n)}{2} \right\} \right].
\end{align}
Also \cite[Lemma~4.10]{ding2020tightness} implies that there exist universal constants $c_0,c_1 \in (0,\infty)$ and a coupling between $h$ and $W$ such that
\begin{align}\label{eqn:comparison_between_regularizations}
\p\left[\sup_{z \in B_2(0)} |\Phi_{0,k}(z) - h_{\epsilon_k}^*(z)| \geq x \right] \leq c_0 e^{-c_1 x^2} + o_{\epsilon_k}(1) \quad \text{for all} \quad x>0,k \in \N,
\end{align}
where the $o_{\epsilon_k}(1)$ is deterministic and tends to zero as $k \to \infty$.  Hence under the above coupling,  we have that
\begin{align}\label{eqn:intersection_ineq_between_regularizations}
&\p\left[\bigcap_{x \in X ,  y \in Y} \left\{D_h^{\epsilon_k}(B_{r2^{-m}}(x) ,  B_{r 2^{-m}}(y) ; B_{3r}(0)) \geq \frac{c e^{-2\xi(\gamma_n) M} a_{\epsilon_k} \alpha(\gamma_n)}{2} \right\} \right]\notag\\
&\geq \p\left[\bigcap_{x \in X ,  y \in Y} \left\{D_{0,k}(B_{r2^{-m}}(x) ,  B_{r 2^{-m}}(y) ; B_{3r}(0)) \geq \frac{c e^{-\xi(\gamma_n) M} a_{\epsilon_k} \alpha(\gamma_n)}{2} \right\} \right]\notag\\
&-\p\left[ \sup_{z \in B_2(0)} |\Phi_{0,k}(z) - h_{\epsilon_k}^*(z)| \geq M\right].
\end{align}

The positive associativity property of $\Phi_{0,k}$ combined with \cite[Theorem~2.3]{dubedat2020liouville} imply that
\begin{align}\label{eqn:positive_associativity}
&\p\left[\bigcap_{x \in X ,  y \in Y} \left\{D_{0,k}(B_{r2^{-m}}(x) ,  B_{r 2^{-m}}(y) ; B_{3r}(0)) \geq \frac{c e^{-\xi(\gamma_n) M} a_{\epsilon_k} \alpha(\gamma_n)}{2} \right\} \right]\notag\\
&\geq \prod_{x \in X ,  y \in Y} \p\left[ D_{0,k}(B_{r2^{-m}}(x) ,  B_{r 2^{-m}}(y) ; B_{3r}(0)) \geq \frac{c e^{-\xi(\gamma_n) M} a_{\epsilon_k} \alpha(\gamma_n)}{2} \right].
\end{align}
Note that for fixed $(x,y) \in X \times Y$,  we have that
\begin{align*}
&\p\left[ D_{0,k}(B_{r2^{-m}}(x) ,  B_{r 2^{-m}}(y) ; B_{3r}(0)) \geq \frac{c e^{-\xi(\gamma_n) M} a_{\epsilon_k} \alpha(\gamma_n)}{2} \right] \\
&\geq \p\left[ D_h^{\epsilon_k}(B_{r2^{-m}}(x) ,  B_{r 2^{-m}}(y) ; B_{3r}(0)) \geq \frac{c a_{\epsilon_k} \alpha(\gamma_n)}{2} \right]-\p\left[ \sup_{z \in B_2(0)} |\Phi_{0,k}(z) - h_{\epsilon_k}^*(z)| \geq M\right].
\end{align*}
Therefore combining with \eqref{eqn:intersection_ineq_between_regularizations} and \eqref{eqn:positive_associativity},  we obtain that

\begin{align}\label{eqn:lower_bound_of_intersection_prob}
&\p\left[\bigcap_{x \in X ,  y \in Y} \left\{D_h^{\epsilon_k}(B_{r2^{-m}}(x) ,  B_{r 2^{-m}}(y) ; B_{3r}(0)) \geq \frac{c e^{-2\xi(\gamma_n) M} a_{\epsilon_k} \alpha(\gamma_n)}{2} \right\} \right] \notag\\
&\geq \prod_{x \in X ,  y \in Y} \left(\p\left[ D_h^{\epsilon_k}(B_{r2^{-m}}(x) ,  B_{r 2^{-m}}(y) ; B_{3r}(0)) \geq \frac{c a_{\epsilon_k} \alpha(\gamma_n)}{2} \right]-\p\left[ \sup_{z \in B_2(0)} |\Phi_{0,k}(z) - h_{\epsilon_k}^*(z)| \geq M\right]\right)\notag\\
&-\p\left[ \sup_{z \in B_2(0)} |\Phi_{0,k}(z) - h_{\epsilon_k}^*(z)| \geq M\right].
\end{align}

Note that
\begin{align*}
\liminf_{k \to \infty} \p\left[D_h^{\epsilon_k}(B_{r 2^{-m}}(x) ,  B_{r 2^{-m}}(y) ; B_{3r}(0)) > \frac{c a_{\epsilon_k} \alpha(\gamma_n)}{2} \right] \geq \p\left[\wt{D}_h^{\gamma_n}(B_{r 2^{-m}}(x) , B_{r 2^{-m}}(y) ; B_{3r}(0)) > \frac{c}{2}\right]
\end{align*}
as $k \to \infty$ for all $(x,y) \in (X,Y)$ and so we can choose $M>1$ sufficiently large (independent of $n$ and $(x,y)$) and $k_0 \in \N$ such that
\begin{align*}
&\p\left[D_h^{\epsilon_k}(B_{r 2^{-m}}(x) ,  B_{r 2^{-m}}(y) ; B_{3r}(0)) > \frac{c a_{\epsilon_k} \alpha(\gamma_n)}{2} \right]  - \p\left[ \sup_{z \in B_2(0)} |\Phi_{0,k}(z) - h_{\epsilon_k}^*(z)| \geq M\right] \geq \frac{A}{4}\\
&\p\left[ \sup_{z \in B_2(0)} |\Phi_{0,k}(z) - h_{\epsilon_k}^*(z)| \geq M\right] \leq \frac{1}{2} \left(\frac{A}{4}\right)^{|X \times Y|}
\end{align*}
for all $k \geq k_0$. 

Therefore combining with \eqref{eqn:limiting_prob} and \eqref{eqn:lower_bound_of_intersection_prob},  we obtain that
\begin{align*}
\p\left[\bigcap_{x \in X ,  y \in Y} \left\{\wt{D}_h^{\gamma_n}(B_{r2^{-m}}(x) ,  B_{r 2^{-m}}(y) ; B_{3r}(0)) \geq \frac{c e^{-2\xi(\gamma_n) M}}{2} \right\}\right] \geq \frac{1}{2} \left(\frac{A}{4}\right)^{|X| \times |Y|} \quad \text{for all} \quad n \in \N.
\end{align*}
Hence the proof of the lemma is complete by combining with \eqref{eqn:lower_bound_by_covering_the_boundary}.
\end{proof}

Next we show that the $\wt{D}$-distance between any compact set (with respect to the Euclidean distance) and $\infty$ is infinite a.s.

\begin{lemma}\label{lem:metric_unbounded}
For all $r,T>0$,  it holds that
\begin{align*}
\lim_{R \to \infty} \liminf_{n \to \infty} \left(\p\left[ \wt{D}_h^{\gamma_n}(\text{across} \,\,  \mathbb{A}_{r,R}(0)) > T \right] \right) = 1.
\end{align*}
\end{lemma}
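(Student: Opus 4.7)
The plan is to lower bound $\wt{D}_h^{\gamma_n}(\text{across } \mathbb{A}_{r,R}(0))$ by a sum of internal crossing distances over $\Theta(\log R)$ pairwise disjoint concentric ratio-$2$ sub-annuli of $\mathbb{A}_{r,R}(0)$, each of which contributes at least $c \rho_k^{\xi(\gamma_n) Q(\gamma_n)} e^{\xi(\gamma_n) h_{\rho_k}(0)}$ with uniform positive probability via Lemma~\ref{lem:uniform_lower_bound_across_annuli}, and to then use the fact that the deterministic prefactors are polynomially large in $R$ at the largest scales to force the sum to exceed $T$ with probability tending to one as $R \to \infty$.

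Concretely, I would fix $r_* \in (0, r_0 \wedge \tfrac12)$ with $r_0$ from Lemma~\ref{lem:uniform_lower_bound_across_annuli} and set $\rho_k := \tfrac{R}{2r_*}\, r_*^k$ for $k = 0, 1, \ldots, K$, with $K = \Theta(\log(R/r))$ chosen so that the ratio-$2$ annuli $A_k := \mathbb{A}_{r_*\rho_k, 2r_*\rho_k}(0)$ are pairwise disjoint subsets of $\mathbb{A}_{r,R}(0)$. Any path from $\partial B_r(0)$ to $\partial B_R(0)$ must contain a subpath crossing each $A_k$, whence
\begin{align*}
\wt{D}_h^{\gamma_n}(\text{across } \mathbb{A}_{r,R}(0)) \geq \sum_{k=0}^K \wt{D}_h^{\gamma_n}(\partial B_{r_*\rho_k}(0), \partial B_{2r_*\rho_k}(0); A_k).
\end{align*}
Using the Weyl scaling (Axiom~\ref{it:weyl_scaling}) and coordinate-change (Axiom~\ref{it:coordinate_change}) properties together with the distributional identity $h^{(\rho_k)} := h(\rho_k \cdot) - h_{\rho_k}(0) \stackrel{d}{=} h$, the $k$th summand equals $\rho_k^{\xi(\gamma_n)Q(\gamma_n)} e^{\xi(\gamma_n) h_{\rho_k}(0)}$ times an internal crossing distance of $\mathbb{A}_{r_*, 2r_*}(0)$ under a field with the same law as $h$. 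I then let $E_k$ be the event that this last factor is at least the constant $c = c(r_*) > 0$ furnished by Lemma~\ref{lem:uniform_lower_bound_across_annuli}. By locality (Axiom~\ref{it:locality}), $E_k$ is measurable with respect to $(h - h_{\rho_k}(0))|_{A_k}$; and since internal crossing distances dominate global ones, $\liminf_n \p[E_k] \geq p_* > 0$ uniformly in $k$.

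Applying Lemma~\ref{lem:good_event_at_many_scales} to the events $E_k$ at the scales $\rho_k$ then yields constants $a > 0$ and $b \in (0, 1)$ depending only on $p_*$ such that $|\mathcal{S}| \geq bK$ with probability at least $1 - e^{-aK}$, where $\mathcal{S} := \{k : E_k \text{ occurs}\}$. On this high-probability event the crossing distance is bounded below by $c \sum_{k \in \mathcal{S}} \rho_k^{\xi Q} e^{\xi h_{\rho_k}(0)}$, and it remains to show that this sum exceeds $T$ with probability tending to $1$ as $R \to \infty$. For $k$ small the deterministic factor $\rho_k^{\xi Q}$ is polynomially large in $R$, and $h_{\rho_k}(0)$ is a centered Gaussian with variance of order $|\log \rho_k|$, so $\p[\rho_k^{\xi Q} e^{\xi h_{\rho_k}(0)} \geq T/c] \to 1$ as $\rho_k \to \infty$ by a standard Gaussian tail estimate. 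The expected main obstacle is controlling the joint law of the $E_k$'s and the correlated circle averages $h_{\rho_k}(0)$; I would resolve this by exploiting the independence of the angular part of the whole-plane GFF from the radial circle-average process $r \mapsto h_r(0)$, conditioning on the latter, and then invoking the near-independence across scales of the $E_k$'s to conclude that, with probability tending to $1$ as $R \to \infty$, at least one $k \in \mathcal{S}$ with $\rho_k$ sufficiently large satisfies $\rho_k^{\xi Q} e^{\xi h_{\rho_k}(0)} \geq T/c$.
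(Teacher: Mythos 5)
The first half of your proposal — decomposing the crossing of $\mathbb{A}_{r,R}(0)$ into $\Theta(\log R)$ disjoint ratio-$2$ annuli, rescaling each with Axioms~\ref{it:weyl_scaling} and~\ref{it:coordinate_change}, invoking Lemma~\ref{lem:uniform_lower_bound_across_annuli} for a uniform positive crossing probability, and then applying a near-independence-across-scales argument (Lemma~\ref{lem:good_event_at_many_scales}, or \cite[Lemma~3.1]{gwynne2020local} as the paper does after pulling back the scales to the origin via $z \mapsto -1/z$) — is exactly the strategy of the paper.

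The final step is where you diverge, and where the gap is. You aim to show that \emph{at least one} $k \in \mathcal{S}$ has a large deterministic-plus-Gaussian prefactor $\rho_k^{\xi Q} e^{\xi h_{\rho_k}(0)} \geq T/c$, and you propose to handle the joint law of $\{E_k\}$ and $\{h_{\rho_k}(0)\}$ by conditioning on the radial process and declaring the $E_k$'s functions of the angular field. This is not correct: $E_k$ is measurable with respect to $(h - h_{\rho_k}(0))|_{A_k}$, and this restricted field contains the radial increments $h_s(0) - h_{\rho_k}(0)$ for $s$ in the range of $A_k$, which are \emph{not} independent of $h_{\rho_k}(0)$ (for Brownian motion, $B_s - B_t$ with $s<t$ is correlated with $B_t$). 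So after conditioning on the radial process the conditional probability of $E_k$ is no longer uniformly bounded below, and the "near-independence of the $E_k$'s'' you want to invoke would now be under a tilted measure you haven't controlled. The proposal as written does not close this.

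The paper avoids the issue entirely with a cleaner observation you should adopt: since $B_t := h_{e^t}(0)$ is a standard Brownian motion and $Q>0$, the process $B_t + Qt$ has positive drift, so $\inf_{t\geq 0}(B_t + Qt)$ is a.s.\ finite. Hence with probability at least $1-\delta$ (uniformly in $n$) there is $M=M(\delta)<\infty$ with $\rho_k^{\xi Q}e^{\xi h_{\rho_k}(0)} \geq \exp(\xi \inf_{t\geq 0}(B_t+Qt)) \geq e^{-M}$ for \emph{every} $k$ in the relevant range simultaneously. No term needs to be large; you just need the $\Theta(\log R)$ occurring events to contribute $\geq ce^{-M}$ each, whence the sum is $\geq c\,a_2 e^{-M}\log_2 R \to \infty$. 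This sidesteps any joint-law issue between $\mathcal{S}$ and the circle averages, since the two high-probability events (many $E_k$'s; infimum bounded below) are simply intersected by a union bound.
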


\begin{proof}
First we will show that there exists a constant $c>0$ such that
\begin{align}\label{eqn:lower_scaling_bound}
\p\left[\wt{D}_h^{\gamma_n}(\text{across} \,\,  \mathbb{A}_{2^k ,  2^{k+1}}(0)) \geq c 2^{\xi(\gamma_n) Q(\gamma_n) k } \exp(\xi(\gamma_n) h_{2^{k-1}}(0))\right] \geq c \quad \text{for all} \quad n\in \N, k \in \N_0.
\end{align}
Note that Axioms~\ref{it:weyl_scaling} and \ref{it:coordinate_change} imply that
\begin{align*}
\p\left[\wt{D}_h^{\gamma_n}(\text{across} \,\,   \mathbb{A}_{2^k ,  2^{k+1}}(0)) \geq c 2^{\xi(\gamma_n) Q(\gamma_n) k } \exp(\xi(\gamma_n) h_{2^k}(0))\right] =  \p\left[\wt{D}_h^{\gamma_n}(\text{across} \,\,  \mathbb{A}_{1,2}(0)) > c 2^{\xi(\gamma_n) Q(\gamma_n)}\right].
\end{align*}

Let $r_0 \in (0,1)$ be the constant in Lemma~\ref{lem:uniform_lower_bound_across_annuli} and fix $s \in (0,r_0)$.  Then Lemma~\ref{lem:uniform_lower_bound_across_annuli} implies that there exists a constant $\wt{c}>0$ independent of $n$ such that
\begin{align*}
\p\left[\wt{D}_h^{\gamma_n}(\text{across}\,\,  \mathbb{A}_{s,2s}(0)) > \wt{c}\right] \geq \wt{c} \quad \text{for all} \quad n \in \N.
\end{align*}
Combining with Axioms~\ref{it:weyl_scaling} and \ref{it:coordinate_change},  we obtain that
\begin{align}\label{eqn:crossing_prob_lower_bound}
\p\left[\wt{D}_h^{\gamma_n}(\partial B_s(0),\partial B_{2s}(0)) > \wt{c}\right] = \p\left[\wt{D}_h^{\gamma_n}(\partial B_1(0),\partial B_2(0)) > \wt{c} s^{\xi(\gamma_n) Q(\gamma_n)} e^{\xi(\gamma_n) h_s(0)}\right] \geq \wt{c} 
 \quad \text{for all} \quad n \in \N.
\end{align}

Note that $h_s(0) = B_{-\log(s)}$ where $B$ is a standard Brownian motion.  Since 
\begin{align*}
&0<\inf_{n \in \N} \xi(\gamma_n) \leq \sup_{n \in \N} \xi(\gamma_n) < \infty,\\
&0 < \inf_{n \in \N} \xi(\gamma_n) Q(\gamma_n) \leq \sup_{n \in \N} \xi(\gamma_n) Q(\gamma_n) < \infty,
\end{align*}
\eqref{eqn:lower_scaling_bound} follows from combining with \eqref{eqn:crossing_prob_lower_bound}.

Next we fix $0<r<R<\infty$ and note that
\begin{align}\label{eqn:lower_bound_by_summing}
\wt{D}_h^{\gamma_n}(\partial B_r(0),\partial B_R(0)) \geq \sum_{k=\lfloor \log_2(r)\rfloor + 1}^{\lfloor \log_2(R) \rfloor - 1} \wt{D}_h^{\gamma_n}(\partial B_{2^k}(0),\partial B_{2^{k+1}}(0)).
\end{align}
We set $\wt{h}:=h \circ \phi$ where $\phi(z) = -\frac{1}{z}$ for all $z \in \C \cup \{\infty\}$ and note that $\wt{h}$ also has the law of a whole-plane GFF normalized so that $\wt{h}_1(0) = 0$.  Moreover for all $k \in \N$,  we consider the event 
\begin{align*}
E_k = \left\{\wt{D}_h^{\gamma_n}(\partial B_{2^k}(0) ,  \partial B_{2^{k+1}}(0)) \geq c 2^{\xi(\gamma_n) Q(\gamma_n)k} \exp(\xi(\gamma_n) h_{2^{k-1}}(0))\right \},
\end{align*}
where $c>0$ is an in \eqref{eqn:lower_scaling_bound}.  Then the locality property of $\wt{D}_h^{\gamma_n}$ (Axiom~\ref{it:locality}) combined with Weyl scaling (Axiom~\ref{it:weyl_scaling}) imply that $E_k$ is determined by 
\begin{align*}
(\wt{h} - \wt{h}_{2^{-k+1}}(0))|_{\mathbb{A}_{2^{-k-1},2^{-k}}(0)} \quad \text{for all} \quad k \in \N.
\end{align*}
Thus since by \eqref{eqn:lower_scaling_bound} we have that
\begin{align*}
\p[E_k] = \p[E_0] \geq c \quad \text{for all} \quad k \in \N_0,
\end{align*}
we have by \cite[Lemma~3.1]{gwynne2020local} that there exist constants $a_1,a_3>0, a_2 \in (0,1)$ depending only on $r$ such that
\begin{align}\label{eqn:event_happens_at_many_scales}
\p[\mathcal{N}(R) < a_2 \log_2(R)] \leq a_3 \exp(-a_1 \log_2(R)) \quad \text{for all} \quad R>r,
\end{align}
where $\mathcal{N}(R)$ denotes the number of $\lfloor \log_2(r) \rfloor +1 \leq k \leq \lfloor \log_2(R) \rfloor - 1$ for which $E_k$ occurs.  Moreover set $B_t:=h_{e^t}(0)$ for all $t \geq 0$ and note that $B$ has the law of a standard Brownian motion.

Fix $p \in (0,1)$ and $T>0$.  Then there exists $M>0$ such that
\begin{align*}
\p\left[\exp\left(\xi(\gamma_n) \inf_{t \geq 0} (B_t + Q(\gamma_n) t)\right) \geq e^{-M}\right] \geq 1 - \frac{1-p}{2} \quad \text{for all}\quad  n \in \N.
\end{align*}
Also \eqref{eqn:event_happens_at_many_scales} implies that there exists $R_0 > 1$ large enough (independent of $n$) such that $R \geq R_0$ implies that
\begin{align*}
\p\left[\mathcal{N}(R) \geq a_2 \log_2(R)\right] \geq 1 - \frac{1-p}{2}.
\end{align*}

Therefore for fixed $R \geq R_0$,  we have off an event with probability at most $1-p$ that
\begin{align*}
\mathcal{N}(R) \geq a_2 \log_2(R) \quad \text{and} \quad \exp\left(\xi(\gamma_n) \inf_{t \geq 0} (B_t + Q(\gamma_n) t) \right) \geq e^{-M}.
\end{align*}
Hence possibly by taking $R_0$ to be larger,  we obtain combining with \eqref{eqn:event_happens_at_many_scales} that off an event with probability at most $1-p$,  we have that
\begin{align*}
\wt{D}_h^{\gamma_n}(\text{across} \,\,   \mathbb{A}_{r,R}(0)) \geq c a_2 \log_2(R) e^{-M}> T.
\end{align*}
This completes the proof of the lemma.
\end{proof}

Before stating and proving Lemma~\ref{lem:crossing_positive},  we mention the following lemma that we are going to use in the proof of Lemma~\ref{lem:crossing_positive}.

\begin{lemma}\label{lem:connectivity_lemma}
Fix $r>0$.  Then the set $\{x \in \C : \wt{D}(x,\partial B_r(0)) = 0\}$ is a.s.  a closed,  bounded and connected set.
\end{lemma}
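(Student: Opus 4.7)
The plan is to exploit three facts about $\wt{D}$: it is the local uniform limit of the continuous metrics $\wt{D}_h^{\gamma_n}$ and hence itself continuous on $\C \times \C$; the triangle inequality is preserved under local uniform limits; and each pre-limit metric $\wt{D}_h^{\gamma_n}$ is a length space by Axiom~\ref{it:length_space}. Closedness of $E_r := \{x : \wt{D}(x, \partial B_r(0)) = 0\}$ is immediate: the map $x \mapsto \wt{D}(x, \partial B_r(0)) = \min_{y \in \partial B_r(0)} \wt{D}(x,y)$ is continuous, so $E_r$ is the preimage of $\{0\}$.

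For boundedness, I would first observe that the length space property of each $\wt{D}_h^{\gamma_n}$ forces $\rho \mapsto \wt{D}_h^{\gamma_n}(\partial B_r(0), \partial B_\rho(0))$ to be non-decreasing, since any path joining $\partial B_r(0)$ to $\partial B_\rho(0)$ must cross every intermediate circle $\partial B_{r'}(0)$; this monotonicity persists in the limit. Applying Lemma~\ref{lem:metric_unbounded} along the dyadic subsequence $R_k = 2^k$, together with a Borel--Cantelli argument using the local uniform convergence, then yields $\wt{D}(\partial B_r(0), \partial B_\rho(0)) \to \infty$ almost surely as $\rho \to \infty$. On this full-probability event, if $x \in E_r$ had arbitrarily large modulus, the triangle inequality for $\wt{D}$ combined with $\wt{D}(x, \partial B_{|x|}(0)) = 0$ (since $x \in \partial B_{|x|}(0)$) would force $\wt{D}(\partial B_r(0), \partial B_{|x|}(0)) = 0$, contradicting the divergence above.

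Connectedness is the main obstacle. Suppose, toward a contradiction, that $E_r = A \sqcup B$ is a disconnection into disjoint nonempty (relatively) closed sets. Since $\partial B_r(0)$ is connected and contained in $E_r$, we may assume $\partial B_r(0) \subseteq A$, and then $B$ is nonempty. Since $E_r$ is compact, I would choose a small Euclidean open neighborhood $U$ of $B$ whose closure is disjoint from $A$, so that $\partial U$ is compact and disjoint from $E_r$. By continuity of $\wt{D}$ and compactness, $c := \min_{y \in \partial U} \wt{D}(y, \partial B_r(0)) > 0$, and local uniform convergence on the compact set $\partial U \times \partial B_r(0)$ transfers this to $\min_{y \in \partial U} \wt{D}_h^{\gamma_n}(y, \partial B_r(0)) \geq c/2$ for all sufficiently large $n$.

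Now fix $x \in B$. Using the length space axiom for $\wt{D}_h^{\gamma_n}$ together with the compactness of $\partial B_r(0)$, take a near-minimizing curve $P_n$ from $x$ to a closest point $y^*_n \in \partial B_r(0)$, with $\wt{D}_h^{\gamma_n}$-length at most $\wt{D}_h^{\gamma_n}(x, \partial B_r(0)) + 1/n$. Since $x \in U$ and $y^*_n \notin U$, the curve $P_n$ must cross $\partial U$; the subcurve of $P_n$ from the crossing point to $y^*_n$ then has $\wt{D}_h^{\gamma_n}$-length at least $c/2$ for large $n$. Yet the total length of $P_n$ tends to $\wt{D}(x, \partial B_r(0)) = 0$ by local uniform convergence, which is a contradiction. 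The delicate point is precisely this last step: we must convert the purely topological fact that $\partial U$ Euclidean-separates $x$ from $\partial B_r(0)$ into a quantitative lower bound on pre-limit path lengths, which is where the length space axiom for the pre-limits (not a priori available for $\wt{D}$) is essential.
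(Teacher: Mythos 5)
Your proof is correct and follows essentially the same route the paper takes via its citation of \cite[Lemma~6.8]{ding2023tightness}: closedness from continuity of $\wt{D}$, boundedness from Lemma~\ref{lem:metric_unbounded}, and connectedness from the length-space property of the pre-limit metrics $\wt{D}_h^{\gamma_n}$ (Axiom~\ref{it:length_space}) combined with local uniform convergence. Your connectedness argument in particular correctly isolates the key quantitative step: the separating set $\partial U$ is a compact set disjoint from $E_r$, hence at positive $\wt{D}$-distance from $\partial B_r(0)$, and this lower bound transfers to $\wt{D}_h^{\gamma_n}$ for large $n$ uniformly on $\partial U\times\partial B_r(0)$, contradicting the vanishing of near-geodesic lengths from $x\in B$ to $\partial B_r(0)$.

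One small wording fix: the Borel--Cantelli lemma is not the right tool for your boundedness step, and invoking it is misleading since Lemma~\ref{lem:metric_unbounded} only gives probabilities tending to $1$ rather than a summable tail. What you actually need, and what you already set up, is the monotonicity in $\rho$ of $\wt{D}_h^{\gamma_n}(\partial B_r(0),\partial B_\rho(0))$ (inherited by $\wt{D}$). Combined with the observation that $\{\wt{D}_h^{\gamma_n}(\text{across}\,\mathbb{A}_{r,R}(0))>T\ \text{i.o.}\}\subseteq\{\wt{D}(\text{across}\,\mathbb{A}_{r,R}(0))\geq T\}$ and a reverse-Fatou bound, this gives $\p[\wt{D}(\text{across}\,\mathbb{A}_{r,R}(0))\geq T]\to 1$ as $R\to\infty$, and then monotonicity of the limiting random variable in $R$ together with letting $T\to\infty$ yields the a.s.\ divergence directly. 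This is indeed the argument the paper uses later to establish \eqref{eqn:unbounded_metric}.
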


\begin{proof}
Note that $\wt{D}$ is a.s.  continuous and $\wt{D}_h^{\gamma_n}$ is a length metric for all $n \in \N$ by Axiom~\ref{it:length_space}.  Therefore the proof of the lemma is complete by following the same argument as in the proof of \cite[Lemma~6.8]{ding2023tightness} and using Lemma~\ref{lem:metric_unbounded} in place of \cite[Lemma~6.7]{ding2023tightness}.
\end{proof}

\begin{lemma}\label{lem:crossing_positive}
Fix $r>0$ and set $\mathcal{Z} := \{\wt{D}(\text{across} \,\,\mathbb{A}_{r,2r}(0)) > 0\}$.  Then we have that 
\begin{align*}
\p[\mathcal{Z}] \in \{0,1\}.
\end{align*}
\end{lemma}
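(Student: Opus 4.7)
The plan is to execute the $0$-$1$ law argument sketched in Subsection~\ref{subsec:outline}. Define
\begin{align*}
E_1 := \{x \in \C : \wt{D}(x,\partial B_r(0)) = 0\}, \quad E_2 := \{x \in \C : \wt{D}(x,\partial B_{2r}(0)) = 0\},
\end{align*}
which by Lemma~\ref{lem:connectivity_lemma} are a.s.\ closed, bounded, connected sets containing $\partial B_r(0)$ and $\partial B_{2r}(0)$ respectively. Using the triangle inequality and a.s.\ continuity of $\wt{D}$, I would first establish
\begin{align*}
\mathcal{Z} = \{E_1 \cap E_2 = \emptyset\} = \{E_1 \cap \partial B_{2r}(0) = \emptyset\} = \{E_2 \cap \partial B_r(0) = \emptyset\};
\end{align*}
for instance, if $y \in E_1 \cap \partial B_{2r}(0)$ then $\wt{D}(\partial B_r(0),\partial B_{2r}(0)) \le \wt{D}(\partial B_r(0),y) = 0$, which by continuity forces $\partial B_r(0) \subseteq E_2$.

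The key step will be to show that for any two disjoint deterministic open sets $U_1,U_2 \subseteq \C$, the events $\{E_1 \subseteq U_1\}$ and $\{E_2 \subseteq U_2\}$ are independent. Since each $E_i$ is connected and contains $\partial B_{r_i}(0)$ (with $r_1=r$, $r_2=2r$), truncating paths at their first exit from $\overline{U_1}$ identifies $\{E_1 \subseteq U_1\}$ with the event $\{\wt{D}(\partial B_r(0),\partial U_1) > 0\}$, which depends only on the internal metric $\wt{D}(\cdot,\cdot;U_1)$; passing Axiom~\ref{it:locality} through the a.s.\ local uniform convergence $\wt{D}_h^{\gamma_n} \to \wt{D}$ shows that this internal metric is a measurable function of $h|_{U_1}$. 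Using the Markov decomposition $h|_{U_1} = h_{U_1}^0 + \Fh_{U_1}$ together with Axiom~\ref{it:weyl_scaling}, the harmonic function $\Fh_{U_1}$ is bounded on any compact set containing the possible $E_1$, and multiplying the metric by the positive bounded factor $e^{\xi \Fh_{U_1}}$ preserves the zero-set; therefore $\{E_1 \subseteq U_1\}$ is measurable with respect to $\sigma(h_{U_1}^0)$. The analogous statement holds for $\{E_2 \subseteq U_2\}$, and since $h_{U_1}^0$ and $h_{U_2}^0$ are independent for disjoint $U_1,U_2$, the desired independence follows.

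To conclude, on $\mathcal{Z}$ the compact sets $E_1,E_2$ are disjoint and hence can be separated by disjoint open neighborhoods drawn from the countable basis of finite unions of Euclidean balls with rational centers and radii. Arranging an increasing sequence $(U_1^n,U_2^n)$ of such pairs with $U_1^n \cap U_2^n = \emptyset$, $U_1^n \cap \partial B_{2r}(0) = \emptyset$, $U_2^n \cap \partial B_r(0) = \emptyset$, and $U_1^n \supseteq \partial B_r(0)$, $U_2^n \supseteq \partial B_{2r}(0)$, I would verify
\begin{align*}
\mathcal{Z} = \bigcup_{n \in \N} \{E_1 \subseteq U_1^n,\, E_2 \subseteq U_2^n\}
\end{align*}
as an increasing union. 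Since $\{E_1 \subseteq U_1^n\} \subseteq \{E_1 \cap \partial B_{2r}(0) = \emptyset\} = \mathcal{Z}$ and similarly $\{E_2 \subseteq U_2^n\} \subseteq \mathcal{Z}$, the independence from the previous step yields
\begin{align*}
\p[\mathcal{Z}] = \lim_{n \to \infty} \p[E_1 \subseteq U_1^n]\,\p[E_2 \subseteq U_2^n] \leq \p[\mathcal{Z}]^2,
\end{align*}
which forces $\p[\mathcal{Z}] \in \{0,1\}$.

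The main obstacle will be the independence step: one must rigorously realize $\{E_1 \subseteq U_1\}$ as a $\sigma(h_{U_1}^0)$-measurable event, despite $\wt{D}$ being defined globally on $\C$. This demands both a careful path-truncation argument to reduce to the internal metric on $U_1$ and the observation that the zero-set of the internal metric is invariant under the bounded continuous Weyl scaling by the harmonic extension $\Fh_{U_1}$, so that the dependence on the field outside $U_1$ is eliminated.
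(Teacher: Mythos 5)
Your overall architecture — define $E_1,E_2$, reduce $\mathcal{Z}$ to $\{E_1\cap E_2=\emptyset\}$, prove independence of $\{E_1\subseteq U_1\}$ and $\{E_2\subseteq U_2\}$, sum over a countable collection — matches the paper, and the triangle-inequality reductions and the observation that $\{E_1\subseteq U_1\}$ is invariant under the bounded Weyl scaling by $\Fh_{U_1}$ are both correct. However, the independence step has a genuine gap: you assert that ``passing Axiom~\ref{it:locality} through the a.s.\ local uniform convergence $\wt{D}_h^{\gamma_n}\to\wt{D}$ shows that this internal metric is a measurable function of $h|_{U_1}$.'' This is precisely the kind of statement that does \emph{not} pass through limits. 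In the Skorokhod coupling, $\wt{D}_{h^n}^{\gamma_n}$ is a.s.\ determined by $h^n$, but $(h^n,\wt{D}_{h^n}^{\gamma_n})\to(h,\wt{D})$ a.s.\ says nothing about $\wt{D}$ being $\sigma(h)$-measurable — establishing that $\wt{D}$ is determined by $h$ (Axiom~\ref{it:locality}) is the content of Proposition~\ref{prop:locality_property_subcritical} in Section~\ref{subsec:locality_subcritical}, which comes \emph{after} this lemma and uses the full Pfeffer machinery. Invoking it here is circular. In addition, local uniform convergence of $\wt{D}_h^{\gamma_n}\to\wt{D}$ does not automatically give convergence of the internal metrics $\wt{D}_h^{\gamma_n}(\cdot,\cdot;U_1)\to\wt{D}(\cdot,\cdot;U_1)$, since internal length is not continuous in the uniform topology.

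The paper sidesteps both issues by never claiming that the limiting internal metric is determined by the field. Instead it enlarges the Skorokhod coupling to include a.s.\ limits $\wt{D}_{U_1},\wt{D}_{U_2}$ of the internal metrics $\wt{D}_h^{\gamma_n}(\cdot,\cdot;U_i)$ \emph{and} a.s.\ limits $\wt{D}_{0,U_1},\wt{D}_{0,U_2}$ of the zero-boundary metrics $\wt{D}_{h_{U_i}^0}^{\gamma_n}$; independence of the latter pair is preserved under convergence in law, and the bi-Lipschitz comparison $\wt{D}_h^{\gamma_n}(\cdot,\cdot;U_i)\asymp\wt{D}_{h_{U_i}^0}^{\gamma_n}$ (via Weyl scaling with the random harmonic extension) passes to the limit and shows the zero-sets $\wh{E}_i$ defined from $\wt{D}_{U_i}$ coincide with those defined from $\wt{D}_{0,U_i}$, which are independent. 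That substitution — independence inherited in the coupling rather than measurability with respect to the field — is the missing ingredient in your proposal. One further minor point: at the end you describe an ``increasing sequence'' of disjoint pairs $(U_1^n,U_2^n)$, but if $U_1^n\uparrow$ and $U_2^n\uparrow$ with $U_1^n\cap U_2^n=\emptyset$ then neither can exhaust the needed separations for all $\omega$; the argument should instead sum over all admissible pairs from a fixed countable collection (the $(r,j)$-dyadic domains in the paper) and then let $j\to\infty$.
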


Let us first describe the setup of the proof of the lemma.  We will use the same notation as in the proof of \cite[Lemma~6.9]{ding2023tightness}. 

We consider the sets
\begin{align*}
&E_1 = \{z \in \C : \wt{D}(z,\partial B_r(0)) = 0\},\\
&E_2 = \{z \in \C : \wt{D}(z,\partial B_{2r}(0)) = 0\}
\end{align*}
and note that the triangle inequality implies that
\begin{align*}
\mathcal{Z} = \{E_1 \cap E_2 = \emptyset\} = \{E_1 \cap \partial B_{2r}(0) = \emptyset\} = \{E_2 \cap \partial B_r(0) = \emptyset\}.
\end{align*}

As in the proof of \cite[Lemma~6.9]{ding2023tightness},  the main idea is to show that the events $\{E_1 \subseteq U_1\}$ and $\{E_2 \subseteq U_2\}$ are independent for any fixed bounded and open sets $U_1 ,  U_2 \subseteq \C$ such that 
\begin{align*}
\text{dist}(U_1,U_2) > 0 \quad \text{and} \quad \partial B_r(0) \subseteq U_1,  \partial B_{2r}(0) \subseteq U_2.
\end{align*}

In the proof of Lemma~\ref{lem:crossing_positive},  we are going to consider $U_1,U_2$ to be $(r,j)$-dyadic domains,  i.e.,  they are open,  bounded and connected sets which can be expressed as the union of Euclidean balls with radius $2^{-j} r$ and centered at points on $2^{-j} r \Z^2$.  Then summing over all such sets $U_1,U_2$,  it follows that 
\begin{align*}
\p[\mathcal{Z}] = \p[E_1 \cap E_2 = \emptyset] \leq \p[E_1 \cap \partial B_{2r}(0) = \emptyset] \p[E_2 \cap \partial B_r(0) = \emptyset] = \p[\mathcal{Z}]^2
\end{align*}
and hence $\p[\mathcal{Z}] \in \{0,1\}$.

\begin{proof}[Proof of Lemma~\ref{lem:crossing_positive}.]
The argument is essentially the same as that of the proof of \cite[Lemma~6.9]{ding2023tightness} but we choose to repeat its main parts so that it is clear to the reader which properties of the metrics $\wt{D}_h^{\gamma_n}$ are used.

First we note that Theorem~\ref{thm:tightness_subcritical} implies that for any $(r,j)$-dyadic domain $U$,  we have that the sequence of metrics $\{\wt{D}_h^{\gamma_n}(\cdot,\cdot ; U)\}_{n \in \N}$ is tight with respect to the local uniform topology on $U$.  Thus since the collection of such domains is countable,  we obtain by combining with Skorokhod's representation theorem that we can find a subsequence $(\gamma_{k_n})$ of $(\gamma_n)$ and a coupling of the metrics $\{\wt{D}_h^{\gamma_{k_n}}(\cdot,\cdot ; U)\}_{n \in \N}$ for all $(r,j)$-dyadic domains $U$,  for all $j \in \N$,  such that 
\begin{align*}
\wt{D}_h^{\gamma_{k_n}}(\cdot ,  \cdot ; U) \to \wt{D}_U \quad \text{as} \quad n \to \infty
\end{align*}
with respect to the local uniform topology on $U$ for some pseudo-metric $\wt{D}_U$ on $U$,  for any such domain $U$.  Without loss if generality,  we can assume that $\gamma_{k_n} = \gamma_n$ for all $n \in \N$ and note that $\wt{D} = \wt{D}_{\C}$.

For all $j \in \N$,  we let $E_1^j$ be the open set which contains all the Euclidean balls of the form $B_{r 2^{-j}}(x)$ for $x \in (r 2^{-j}) \Z^2$ with $\text{dist}(x,E_1) < r 2^{-j}$.  Note that Lemma~\ref{lem:connectivity_lemma} implies that $E_1$ is bounded and connected and so $E_1^j$ is $(r,j)$-dyadic.  
We define the sets $E_2^j$ similarly with $E_2$ in place of $E_1$. As in the proof of \cite[Lemma~6.9]{ding2023tightness},  the main idea is to show that
\begin{align}\label{eqn:equality_of_dyadic_sets}
\p[E_1^j = U_1 ,  E_2^j = U_2] = \p[E_1^j = U_1] \p[E_2^j = U_2]
\end{align}
for any two disjoint $(r,j)$-dyadic domains $U_1$ and $U_2$ with 
\begin{align*}
\text{dist}(U_1,U_2) > r 2^{-j},\quad \partial B_r(0) \subseteq U_1 ,  \partial B_{2r}(0) \subseteq U_2,
\end{align*}
then sum \eqref{eqn:equality_of_dyadic_sets} over all such domains $U_1$ and $U_2$ and then take $j \to \infty$.  

In order to show \eqref{eqn:equality_of_dyadic_sets},  we will follow an argument which is similar to the argument in Step 3 of the proof of  \cite[Lemma~6.9]{ding2023tightness}.  Let $U_1,U_2$ be sets as in \eqref{eqn:equality_of_dyadic_sets} and let $\wh{E}_1$ (resp.  $\wh{E}_2$) be the closure of the connected component of $\{x \in U_1 : \wt{D}_{U_1}(x,\partial B_r(0)) = 0\}$ (resp.  $\{x \in U_2 : \wt{D}_{U_2}(x,\partial B_{2r}(0)) = 0\}$) containing $\partial B_r(0)$ (resp.  $\partial B_{2r}(0)$).  Then both $\wh{E}_1$ and $\wh{E}_2$ are closed,  bounded and connected sets.  Similarly we define sets $\wh{E}_1^j$ and $\wh{E}_2^j$ in the same way that we defined $E_1^j$ and $E_2^j$ but with $\wh{E}_1$ and $\wh{E}_2$ in place of $E_1$ and $E_2$ respectively.  Also for any $(r,j)$-dyadic domain $U$,  we fix a deterministic smooth and compactly supported function $\rho_U$ such that the support of $\rho_U$ is contained in $\C \setminus \overline{U}$.  Then the Markov property of the whole-plane GFF implies that $h-(h,\rho_U)$ can be decomposed as
\begin{align*}
h-(h,\rho_U) = h_U^0  + \Fh_U,
\end{align*}
where $h_U^0$ is a zero-boundary GFF on $U$ and $\Fh_U$ is a harminic function on $U$.  Moreover we have that $h_U^0$ and $\Fh_U$ are independent and $(h-(h,\rho_U))|_{\C \setminus \overline{U}}$ is a measurable function of $\Fh_U$.  Arguing as in the proof of Theorem~\ref{thm:tightness_subcritical},  we obtain that the family of random metrics $\{\wt{D}_{h_U^0}^{\gamma_n}(\cdot,\cdot)\}_{n \in \N}$ is tight for any $(r,j)$-dyadic domain $U$,  and so we can further assume using Skorokhod's representation theorem that we can a coupling such that 
\begin{align*}
\wt{D}_{h_U^0}^{\gamma_n}(\cdot,\cdot) \to \wt{D}_{0,U} \quad \text{as} \quad n \to \infty
\end{align*}
a.s.  to some random pseudo-metric $\wt{D}_{0,U}$ on $U$ with respect to the local uniform topology on $U$.

Note that the metrics $\wt{D}_{h_{U_1}^0}^{\gamma_n}$ and $\wt{D}_{h_{U_2}^0}^{\gamma_n}$ are independent for all $n$,  since $U_1 \cap U_2 = \emptyset$.  Thus we obtain that the metrics $\wt{D}_{0,U_1}$ and $\wt{D}_{0,U_2}$ are independent as well.  We define the sets $\wh{E}_1^0$ and $\wh{E}_2^0$ in the same way that we defined the sets $\wh{E}_1$ and $\wh{E}_2$ but with the metrics $\wt{D}_{0,U_1}$ and $\wt{D}_{0,U_2}$ in place of $\wt{D}_{U_1}$ and $\wt{D}_{U_2}$ respectively.  We also define the sets $\wh{E}_1^{0,j}$ and $\wh{E}_2^{0,j}$ as the $(r,j)$-dyadic domains associated with $\wh{E}_1^0$ and $\wh{E}_2^0$ respectively.  Note that Axiom~\ref{it:weyl_scaling} implies that $\wt{D}_h^{\gamma_n}(\cdot,\cdot ; U_1)$ can be bounded both above and below by $\wt{D}_{h_{U_1}^0}^{\gamma_n}$ up to random constants.  By the joint a.s.  convergence of $\{\wt{D}_h^{\gamma_n}(\cdot,\cdot ; U_1)\}_{n \in \N}$ to $\wt{D}_{U_1}$ and $\{\wt{D}_{h_{U_1}^0}^{\gamma_n}\}_{n \in \N}$ to $\wt{D}_{0,U_1}$,  we obtain that $\wt{D}_{U_1}$ can also be bounded from above and below by $\wt{D}_{0,U_1}$ up to random constants.  This implies that $\wh{E}_1 = \wh{E}_1^0$ and similarly we have that $\wh{E}_2 = \wh{E}_2^0$.  Therefore we obtain that $\wh{E}_1^j = \wh{E}_1^{0,j}$ and $\wh{E}_2^j = \wh{E}_2^{0,j}$ which implies that $\wh{E}_1^j$ and $\wh{E}_2^j$ are independent.  It follows that
\begin{align}\label{eqn:equality_of_dyadic_sets_2}
\p[\wh{E}_1^j = U_1 ,  \wh{E}_2^j = U_2] = \p[\wh{E}_1^j = U_1] \p[\wh{E}_2^j = U_2].
\end{align}
Therefore by arguing as in Step 3 of the proof of \cite[Lemma~6.9]{ding2023tightness} and using \eqref{eqn:equality_of_dyadic_sets_2},  we obtain that \eqref{eqn:equality_of_dyadic_sets} holds.

Finally the proof of the lemma is complete by summing over all domains $U_1,U_2$  in \eqref{eqn:equality_of_dyadic_sets} and taking $j \to \infty$,  and then using the $0-1$ law argument as in Step 4 of the proof of \cite[Lemma~6.9]{ding2023tightness}.
\end{proof}

\begin{proof}[Proof of Proposition~\ref{prop:annuli_crossing_always_positive}]
It follows from the same argument used to prove \cite[Proposition~6.4]{ding2023tightness} where we use Lemmas~\ref{lem:uniform_lower_bound_across_annuli} and ~\ref{lem:crossing_positive} in place of \cite[Lemma~6.6]{ding2023tightness} and \cite[Lemma~6.9]{ding2023tightness} respectively.
\end{proof}

\begin{proof}[Proof of Proposition~\ref{prop:limit_is_a_metric}]
The fact that $\wt{D}$ is a metric a.s.  follows from the analogous argument used in the proof of \cite[Theorem~1.2]{ding2023tightness} where we use Proposition~\ref{prop:annuli_crossing_always_positive} in place of \cite[Proposition~6.4]{ding2023tightness}.  Moreover we note that the proof of Theorem~\ref{thm:tightness_subcritical} implies that $\wt{D}$ is locally H\"older continuous a.s.  (see also Proposition~\ref{prop:tightness_proposition_subcritical}).  Hence following exactly the same argument as in the proof of \cite[Theorem~1.2]{ding2023tightness} and using again Proposition~\ref{prop:annuli_crossing_always_positive} in place of \cite[Proposition~6.4]{ding2023tightness},  we obtain that $\wt{D}$ induces the Euclidean topology a.s.  This completes the proof of the proposition.
\end{proof}

\subsection{The limit is a geodesic and complete metric.}
\label{subsec:limit_geodesic_and_complete_metric}

In this subsection,  we will show that it is a.s.  the case that $\wt{D}$ is a complete and geodesic metric.  We will start by proving the former.  This is the content of the following lemma.

\begin{lemma}\label{lem:complete_metric}
$\wt{D}$ is a complete metric a.s.
\end{lemma}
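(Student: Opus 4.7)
The plan is to combine Lemma~\ref{lem:metric_unbounded} with the a.s.\ local uniform convergence $\wt{D}_h^{\gamma_n} \to \wt{D}$ fixed at the start of the section, and with Proposition~\ref{prop:limit_is_a_metric}, to reduce completeness of $\wt{D}$ to showing that every $\wt{D}$-Cauchy sequence is Euclidean-bounded.

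First I will verify that a.s.\ local uniform convergence of metrics on $\C \times \C$ yields convergence of distances between any two fixed compact sets: for $K_1,K_2 \subseteq \C$ compact, uniform convergence on $K_1 \times K_2$ together with continuity and compactness (which give attainment of the infima) implies $\wt{D}_h^{\gamma_n}(K_1,K_2) \to \wt{D}(K_1,K_2)$ a.s. Specializing to $K_1 = \partial B_r(0)$ and $K_2 = \partial B_R(0)$ and applying Fatou's lemma to the complementary indicators yields
\begin{align*}
\p\left[\wt{D}(\text{across}\,\,\mathbb{A}_{r,R}(0)) \geq T\right] \geq \liminf_{n \to \infty} \p\left[\wt{D}_h^{\gamma_n}(\text{across}\,\,\mathbb{A}_{r,R}(0)) > T\right],
\end{align*}
whose right side tends to $1$ as $R \to \infty$ by Lemma~\ref{lem:metric_unbounded}. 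Choosing $T \in \N$ and $R$ along a countable sequence, I conclude that a.s.\ $\wt{D}(\text{across}\,\,\mathbb{A}_{1,R}(0)) \to \infty$ as $R \to \infty$.

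Next I will transfer this to a point-to-point bound. Since each $\wt{D}_h^{\gamma_n}$ is a length metric (Axiom~\ref{it:length_space}), any continuous path from $0$ to a point $z$ with $|z|>R$ has $\wt{D}_h^{\gamma_n}$-length at least $\wt{D}_h^{\gamma_n}(\text{across}\,\,\mathbb{A}_{1,R}(0))$, and hence $\wt{D}_h^{\gamma_n}(0,z) \geq \wt{D}_h^{\gamma_n}(\text{across}\,\,\mathbb{A}_{1,R}(0))$; letting $n \to \infty$ along the coupling, the same inequality holds for $\wt{D}$. Therefore $\inf_{|z|>R} \wt{D}(0,z) \to \infty$ as $R \to \infty$ a.s., so $\wt{D}(0,z) \to \infty$ as $|z| \to \infty$ a.s.

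Completeness then follows at once: any $\wt{D}$-Cauchy sequence $(x_n)$ satisfies $\sup_n \wt{D}(0,x_n) < \infty$, and hence $\sup_n |x_n| < \infty$ by the previous paragraph. Bolzano--Weierstrass furnishes a Euclidean convergent subsequence, which by Proposition~\ref{prop:limit_is_a_metric} (equivalence of topologies) also converges in $\wt{D}$; being Cauchy, the entire sequence converges to the same limit. The only nontrivial step is the transfer of the tail estimate of Lemma~\ref{lem:metric_unbounded} from the approximating metrics to $\wt{D}$, which is the Fatou-type step above; no further obstacles are anticipated.
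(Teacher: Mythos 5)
Your proposal is correct and follows essentially the same route as the paper's proof: both hinge on Lemma~\ref{lem:metric_unbounded} together with the a.s.\ local uniform convergence of $\wt{D}_h^{\gamma_n}$ to $\wt{D}$, use the length-space property of the $\wt{D}_h^{\gamma_n}$ to pass from a crossing-distance lower bound to a point-to-point lower bound, conclude that $\inf_{|z|>R}\wt{D}(z,B_r(0))\to\infty$ as $R\to\infty$ a.s., and then finish with Bolzano--Weierstrass and the topological equivalence from Proposition~\ref{prop:limit_is_a_metric}. The one place you should be careful: your intermediate claim that a.s.\ $\wt{D}(\text{across}\,\mathbb{A}_{1,R}(0))\to\infty$ along a countable sequence is not immediate from the limit of probabilities alone (the crossing distance for $\wt{D}$ is not obviously monotone in $R$ since $\wt{D}$ is not yet known to be a length metric), so either invoke Borel--Cantelli to extract the a.s.\ subsequence or, as the paper does, skip that intermediate step and work directly with the manifestly monotone quantity $\inf_{z\in\C\setminus B_R(0),\,w\in B_r(0)}\wt{D}(z,w)$, which also absorbs the transfer from crossing to point-to-point distances in a single step.
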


\begin{proof}
First we will show that it is a.s.  the case that any $\wt{D}$-bounded set is also Euclidean bounded.  In particular,  we will show that for any $r>0$ we have that
\begin{align}\label{eqn:unbounded_metric}
\inf_{z \in \C \setminus B_R(0),  w \in B_r(0)} \wt{D}(z,w) \to \infty \quad \text{as} \quad R \to \infty \quad \text{a.s.}
\end{align}
Indeed fix $\epsilon \in (0,1)$ and $T>0$. Then Lemma~\ref{lem:metric_unbounded} implies that there exists $R>r$ such that
\begin{align*}
\liminf_{n \to \infty} \p\left[\wt{D}_h^{\gamma_n}(\text{across} \,\,\mathbb{A}_{r,R}(0)) > T \right] \geq 1-\epsilon
\end{align*}
which implies that
\begin{align*}
\p\left[\inf_{z \in \C \setminus B_R(0) ,  w \in B_r(0)} \wt{D}(z,w) \geq T\right] \geq \limsup_{n \to \infty} \p\left[\wt{D}_h^{\gamma_n}(\text{across} \,\,\mathbb{A}_{r,R}(0)) > T \right] \geq 1-\epsilon.
\end{align*}
Therefore~ \eqref{eqn:unbounded_metric} follows since $\epsilon,T$ were arbitrary.

Next let $(z_n)_{n \in \N}$ be a $\wt{D}$-Cauchy sequence.  Then there exists  $R>0$ such that $\wt{D}(z_1,z_n) \leq R$ for all $n \in \N$.  Also there exists $r>0$ such that $z_1 \in B_r(0)$.  Thus combining with \eqref{eqn:unbounded_metric},  we obtain that it is a.s.  the case that there exists (random) $A \in (0,\infty)$ such that $\{z_n\}_{n \in \N} \subseteq B_A(0)$.  Fix any subsequence $\{z_{k_n}\}_{n \in \N}$.  Then since $(z_n)$ is bounded with respect to the Euclidean topology,  there exists a further subsequence $\{z_{k_n'}\}_{n \in \N}$ and $z \in \overline{B_A(0)}$ such that $z_{k_n'} \to z$ as $n \to \infty$.  Since $\wt{D}$ induces the Euclidean topology a.s.  by Proposition~\ref{prop:limit_is_a_metric},  we obtain that
\begin{align*}
\wt{D}(z_{k_n'},z) \to 0 \quad \text{as} \quad n \to \infty \quad \text{a.s.}
\end{align*}
Therefore since the subsequence $(z_{k_n})$ was arbitrary,  we obtain that $(z_n)$ is $\wt{D}$-convergent.  This completes the proof of the lemma combining with Proposition~\ref{prop:limit_is_a_metric}.
\end{proof}

Next we prove that $\wt{D}$ is a geodesic metric a.s.

\begin{lemma}\label{lem:geodesic_metric}
$\wt{D}$ is a geodesic metric a.s.
\end{lemma}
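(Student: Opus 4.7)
The approach is to construct a $\wt{D}$-geodesic between any two prescribed points as a limit of near-minimizers for the $\wt{D}_h^{\gamma_n}$-length, exploiting the local uniform convergence $\wt{D}_h^{\gamma_n} \to \wt{D}$ and the fact that each $\wt{D}_h^{\gamma_n}$ is a length metric. Fix $z, w \in \C$, set $L := \wt{D}(z, w)$ and $L_n := \wt{D}_h^{\gamma_n}(z, w)$, and note that $L_n \to L$ by local uniform convergence. By Axiom~\ref{it:length_space}, one can choose a continuous path $P_n : [0, 1] \to \C$ from $z$ to $w$ with $\len(P_n ; \wt{D}_h^{\gamma_n}) \leq L_n + 1/n$, parameterized proportionally to $\wt{D}_h^{\gamma_n}$-arclength, so that
\begin{align*}
\wt{D}_h^{\gamma_n}(P_n(s), P_n(t)) \leq (L_n + 1/n)|s - t| \quad \text{for all} \quad s, t \in [0, 1].
\end{align*}

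The first step is to confine the $P_n$'s to a common Euclidean-compact set for large $n$. Pick $r > 0$ with $z, w \in B_r(0)$.  The proof of Lemma~\ref{lem:complete_metric} shows that a.s.\ $\wt{D}(\text{across} \,\, \mathbb{A}_{r,R}(0)) \to \infty$ as $R \to \infty$, so we may choose a (random) $R > r$ with $\wt{D}(\text{across} \,\, \mathbb{A}_{r,R}(0)) > L + 2$.  Since the functional $d \mapsto d(\text{across} \,\, \mathbb{A}_{r,R}(0))$ is continuous under local uniform convergence of $d$ on the compact set $\partial B_r(0) \times \partial B_R(0)$, we obtain $\wt{D}_h^{\gamma_n}(\text{across} \,\, \mathbb{A}_{r,R}(0)) > L + 1$ for all sufficiently large $n$.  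Because $\len(P_n ; \wt{D}_h^{\gamma_n}) \leq L + 1$ eventually, no subpath of $P_n$ can realize a crossing of $\mathbb{A}_{r,R}(0)$, hence $P_n \subseteq \overline{B_R(0)}$ for large $n$.

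The second step is to apply the Arzela-Ascoli theorem.  Local uniform convergence gives $\sup_{x, y \in \overline{B_R(0)}} |\wt{D}_h^{\gamma_n}(x, y) - \wt{D}(x, y)| \to 0$, so the Lipschitz bound above upgrades to
\begin{align*}
\wt{D}(P_n(s), P_n(t)) \leq L|s - t| + o_n(1),
\end{align*}
uniformly in $s, t \in [0, 1]$.  By Proposition~\ref{prop:limit_is_a_metric}, $\wt{D}$ induces the Euclidean topology, so the identity map $(\overline{B_R(0)}, \wt{D}) \to (\overline{B_R(0)}, |\cdot|)$ is uniformly continuous on this compact set.  This transfers the $\wt{D}$-equicontinuity of $(P_n)$ into Euclidean equicontinuity; combined with the uniform Euclidean boundedness from Step 1, Arzela-Ascoli extracts a continuous subsequential uniform limit $P : [0, 1] \to \overline{B_R(0)}$ with $P(0) = z$ and $P(1) = w$.

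Finally, passing to the limit $n \to \infty$ in the displayed Lipschitz estimate and using continuity of $\wt{D}$ (Proposition~\ref{prop:limit_is_a_metric}) yields $\wt{D}(P(s), P(t)) \leq L|s - t|$ for all $s, t \in [0, 1]$, so $\len(P ; \wt{D}) \leq L$.  The reverse inequality $\wt{D}(z, w) \leq \len(P ; \wt{D})$ is automatic, giving $\len(P ; \wt{D}) = \wt{D}(z, w)$; hence $P$ is a $\wt{D}$-geodesic.  Since the a.s.\ event on which the construction runs (complete metric inducing the Euclidean topology, local uniform convergence, unbounded across-annulus distances) does not depend on $(z, w)$, this shows $\wt{D}$ is a.s.\ a geodesic metric.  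The main obstacle is Step 1: forcing a common Euclidean confinement of all the $P_n$'s requires upgrading the uniform-in-$n$ tail estimate of Lemma~\ref{lem:metric_unbounded} to an almost sure statement about $\wt{D}$ via the Skorokhod coupling (as already carried out inside the proof of Lemma~\ref{lem:complete_metric}) and then transferring it back to the approximating metrics through continuity of the across-annulus functional under local uniform convergence.
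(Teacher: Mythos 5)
Your proof is correct but takes a genuinely different route from the paper's. The paper appeals to the midpoint criterion: since Lemma~\ref{lem:complete_metric} already gives completeness, by \cite[Theorem~2.4.16]{burago2001course} it suffices to produce a single midpoint $x$ satisfying $\wt{D}(z,x) = \wt{D}(x,w) = \wt{D}(z,w)/2$ for each fixed pair $z,w$. The paper does this by taking exact midpoints $x_n$ for the length metrics $\wt{D}_h^{\gamma_n}$, confining them to a common ball $\overline{B_R(0)}$ via \eqref{eqn:unbounded_metric} (along a subsequence), extracting a convergent subsequence $x_n \to x$, and passing to the limit. You instead construct the whole geodesic directly as an Arzela-Ascoli limit of parameterized almost-geodesics $P_n$, using the confinement bound, the uniform Lipschitz bound $\wt{D}_h^{\gamma_n}(P_n(s),P_n(t)) \leq (L_n + 1/n)|s-t|$, and the fact that $\wt{D}$ induces the Euclidean topology to transfer $\wt{D}$-equicontinuity to Euclidean equicontinuity. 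What the paper's route buys is brevity (one point $x_n$ instead of a family of paths, no Arzela-Ascoli, no uniform continuity of the identity map) at the cost of invoking the external midpoint theorem; what your route buys is self-containedness (no need for the Burago-Burago-Ivanov criterion, and in fact your argument does not use completeness at all) at the cost of a longer equicontinuity argument. Both hinge on the same confinement mechanism from the proof of Lemma~\ref{lem:complete_metric}, and both correctly handle the quantifier ``for all $z,w$'' by observing that the a.s.\ good event is independent of the pair.
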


\begin{proof}
First we note that since $\wt{D}$ is a complete metric a.s.  (Lemma~\ref{lem:complete_metric}),  we obtain by \cite[Theorem~2.4.16]{burago2001course} that it suffices to show that it is a.s.  the case that the following is true for all $z,w \in \C$ distinct points.  There exists $x \in \C$ such that
\begin{align}\label{eqn:mid_point_property}
\wt{D}(z,x) = \frac{\wt{D}(z,w)}{2} = \wt{D}(x,w).
\end{align}

From now on,  we focus on proving \eqref{eqn:mid_point_property}.  Fix distinct points $z,w \in \C$ and let $r>0$ be such that $\{z,w\} \subseteq B_r(0)$.  Then it is a.s.  the case that there exists $T>0$ large such that
\begin{align*}
\wt{D}(u,v) \leq T \quad \text{for all} \quad u,v \in B_r(0).
\end{align*}
Moreover \eqref{eqn:unbounded_metric} in the proof of Lemma~\ref{lem:complete_metric} implies that it is a.s.  the case that there exists $R>r$ such that
\begin{align}\label{eqn:liminf_ineq}
\wt{D}_h^{\gamma_{k_n}}(\text{across} \,\,\mathbb{A}_{r,R}(0)) > 2T \quad \text{for all} \quad n \in \N,
\end{align}
for some subsequence $(\gamma_{k_n})$ of $(\gamma_n)$.  

Since $\wt{D}_h^{\gamma_{k_n}}$ is a length metric,  we obtain that there exists $x_n \in \C$ such that
\begin{align}\label{eqn:mid_point_for_sequence}
\wt{D}_h^{\gamma_{k_n}}(z,x_n) = \frac{\wt{D}_h^{\gamma_{k_n}}(z,w)}{2} = \wt{D}_h^{\gamma_{k_n}}(x_n,w).
\end{align}
Also there a.s.  exists $n_0 \in \N$ such that
\begin{align*}
|\wt{D}_h^{\gamma_{k_n}}(z,w) - \wt{D}(z,w)| < T \quad \text{for all} \quad n \geq n_0.
\end{align*}
In particular we have that $\wt{D}_h^{\gamma_{k_n}}(z,x_n) < T$ for all $n \geq n_0$ and so \eqref{eqn:liminf_ineq} implies that $x_n \in \overline{B_R(0)}$ for all $n \geq n_0$.  Hence possibly by passing into a subsequence,  we can assume that there exists $x \in \overline{B_R(0)}$ such that $x_n \to x$ as $n \to \infty$.  Therefore we obtain that
\begin{align*}
\wt{D}_h^{\gamma_{k_n}}(z,x_n) \to \wt{D}(z,x) \quad \text{and} \quad \wt{D}_h^{\gamma_{k_n}}(w,x_n) \to \wt{D}(w,x) \quad \text{as} \quad n \to \infty.
\end{align*}
Therefore combining with \eqref{eqn:mid_point_for_sequence},  we obtain that \eqref{eqn:mid_point_property} holds and so this completes the proof of the lemma.
\end{proof}

\begin{proposition}\label{prop:complete_and_geodesic_metric}
It is a.s.  the case that $\wt{D}$ is a complete and geodesic metric.
\end{proposition}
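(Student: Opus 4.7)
The proposition is essentially the conjunction of the two preceding lemmas, Lemma~\ref{lem:complete_metric} and Lemma~\ref{lem:geodesic_metric}, so my plan is simply to combine them on an event of full probability. Concretely, I would take the intersection of the two almost sure events produced by these lemmas (together with the full-measure event from Proposition~\ref{prop:limit_is_a_metric} on which $\wt{D}$ is a metric inducing the Euclidean topology); on this intersection $\wt{D}$ is simultaneously a complete metric and a geodesic metric, which is exactly the statement.

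Let me briefly recall why the two ingredients hold, since the whole proof of Proposition~\ref{prop:complete_and_geodesic_metric} reduces to their conjunction. For completeness, the key input is Lemma~\ref{lem:metric_unbounded}, which gives $\wt{D}_h^{\gamma_n}(\mathrm{across}\,\mathbb{A}_{r,R}(0)) \to \infty$ in probability as $R \to \infty$ (uniformly in $n$), passing in the limit to the statement that every $\wt{D}$-bounded set is Euclidean bounded. Any $\wt{D}$-Cauchy sequence is then contained in some Euclidean ball, a subsequence converges in the Euclidean topology to some $z$, and since $\wt{D}$ induces the Euclidean topology (Proposition~\ref{prop:limit_is_a_metric}), the subsequence converges to $z$ in $\wt{D}$; Cauchy plus convergent subsequence gives convergence.

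For the geodesic property, the criterion I would use is that a complete metric space in which every pair of points admits a midpoint is a geodesic space (cf. \cite[Theorem~2.4.16]{burago2001course}). Since completeness is already in hand, it suffices to produce midpoints. Given $z,w \in \C$ one extracts midpoints $x_n$ for the length metrics $\wt{D}_h^{\gamma_n}$; the ``no escape to infinity'' bound from Lemma~\ref{lem:metric_unbounded} applied with $T$ larger than the (random but finite) $\wt{D}$-diameter of a ball containing $z,w$ confines the $x_n$ to a Euclidean compact set, so a subsequential Euclidean limit $x$ exists, and the local uniform convergence $\wt{D}_h^{\gamma_n} \to \wt{D}$ lets one pass to the limit in the midpoint identity.

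There is essentially no new obstacle at the level of Proposition~\ref{prop:complete_and_geodesic_metric} itself; the only thing to verify is that the two almost sure events from Lemmas~\ref{lem:complete_metric} and \ref{lem:geodesic_metric} can be intersected (which is immediate). The genuine work, i.e. the uniform-in-$n$ lower bound on crossing distances of large annuli that prevents mass escape to infinity, has already been carried out in Lemma~\ref{lem:metric_unbounded}, so here I would just write a one-line proof citing the two lemmas.
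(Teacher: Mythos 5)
Your proposal is correct and is exactly what the paper does: Proposition~\ref{prop:complete_and_geodesic_metric} is proved by simply combining Lemmas~\ref{lem:complete_metric} and~\ref{lem:geodesic_metric} on an event of full probability. The recap you give of the underlying ingredients (Lemma~\ref{lem:metric_unbounded} for no escape to infinity, the midpoint criterion from~\cite[Theorem~2.4.16]{burago2001course}, and Proposition~\ref{prop:limit_is_a_metric} for the Euclidean topology) also matches the proofs of those two lemmas in the paper.
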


\begin{proof}
It follows from combining Lemmas~\ref{lem:complete_metric} and ~\ref{lem:geodesic_metric}.
\end{proof}

\subsection{The limit satisfies Weyl scaling}
\label{subsec:metric_satisfies_weyl_scaling}

In this subsection,  we will prove that $\wt{D}$ satisfies Weyl scaling (Axiom~\ref{it:weyl_scaling}).  More precisely,  we will prove the following.

\begin{proposition}\label{prop:weyl_scaling}
The following is true almost surely.  Let $f: \C \to \R$ be a bounded and continuous function.  The we have that 
\begin{align*}
\wt{D}_{h+f}^{\gamma_n} \to e^{\xi(\gamma) f} \cdot \wt{D} \quad \text{as} \quad n \to \infty
\end{align*}
with respect to the local uniform topology on $\C \times \C$.
\end{proposition}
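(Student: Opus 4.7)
The plan is to apply Axiom~(III) from Definition~\ref{def:strong_lqg_metric_subcritical} to each $\gamma_n$-LQG metric, obtaining $\wt{D}_{h+f}^{\gamma_n} = e^{\xi(\gamma_n) f} \cdot \wt{D}_h^{\gamma_n}$ almost surely, and then to show that the weighted-length operation $D \mapsto e^{\xi f} \cdot D$ is continuous under local uniform convergence of the metric (allowing also $\xi \in \R$ to vary). The sandwich
\begin{equation*}
e^{-\xi(\gamma_n) \|f\|_\infty} \wt{D}_h^{\gamma_n} \;\leq\; \wt{D}_{h+f}^{\gamma_n} \;\leq\; e^{\xi(\gamma_n) \|f\|_\infty} \wt{D}_h^{\gamma_n},
\end{equation*}
combined with the local uniform convergence $\wt{D}_h^{\gamma_n} \to \wt{D}$ and the uniform H\"older estimates of Section~\ref{sec:tightness}, imply that $(\wt{D}_{h+f}^{\gamma_n})$ is equicontinuous on compacts. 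By Arzela-Ascoli, any subsequence has a further subsequence converging locally uniformly to a limit $\wt{D}^f$, and it suffices to identify $\wt{D}^f = e^{\xi(\gamma) f} \cdot \wt{D}$.

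For the upper bound $\wt{D}^f \leq e^{\xi(\gamma) f} \cdot \wt{D}$, fix $\epsilon > 0$ and $z, w \in \C$. Since $\wt{D}$ is a length metric (Proposition~\ref{prop:complete_and_geodesic_metric}), one can choose points $z = z_0, z_1, \ldots, z_k = w$ along a near-geodesic for $e^{\xi(\gamma) f} \cdot \wt{D}$, with Euclidean increments small enough (using uniform continuity of $f$ on the enclosing compact) that
\begin{equation*}
\sum_{i=0}^{k-1} e^{\xi(\gamma) f(z_i)} \wt{D}(z_i, z_{i+1}) \leq (e^{\xi(\gamma) f} \cdot \wt{D})(z, w) + \epsilon.
\end{equation*}
Since near-geodesics for $\wt{D}_h^{\gamma_n}$ between Euclidean-close points stay in small Euclidean balls (as $\wt{D}_h^{\gamma_n}$ induces the Euclidean topology), Axiom~(III) applied piece by piece gives $\wt{D}_{h+f}^{\gamma_n}(z_i, z_{i+1}) \leq e^{\xi(\gamma_n)(f(z_i) + \omega_i)} \wt{D}_h^{\gamma_n}(z_i, z_{i+1})$ where $\omega_i$ is controlled by the modulus of continuity of $f$. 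Summing via the triangle inequality and passing to the limit using $\wt{D}_h^{\gamma_n} \to \wt{D}$ and $\xi(\gamma_n) \to \xi(\gamma)$ yields $\limsup_n \wt{D}_{h+f}^{\gamma_n}(z, w) \leq (e^{\xi(\gamma) f} \cdot \wt{D})(z,w) + 2\epsilon$.

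For the lower bound, let $P_n$ be a near-geodesic for $\wt{D}_{h+f}^{\gamma_n}$ from $z$ to $w$. The sandwich gives $\len(P_n; \wt{D}_h^{\gamma_n}) \leq e^{\xi(\gamma_n) \|f\|_\infty} \wt{D}_{h+f}^{\gamma_n}(z, w) + o(1)$, which is uniformly bounded, so the $P_n$ stay in a common compact set $K$. Parameterizing $P_n$ by $\wt{D}_h^{\gamma_n}$-length rescaled to $[0,1]$ and using that $\wt{D}_h^{\gamma_n}|_{K \times K} \to \wt{D}|_{K \times K}$ uniformly (with $\wt{D}$ inducing the Euclidean topology on $K$) yields equicontinuity of the reparameterized paths; Arzela-Ascoli extracts a subsequential uniform limit $P : [0,1] \to \C$ from $z$ to $w$, with finite $\wt{D}$-length by lower semicontinuity of length under local uniform convergence of the metric. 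Discretizing $P$ into fine pieces $z = z_0, \ldots, z_k = w$ and applying the reverse estimate $\wt{D}_{h+f}^{\gamma_n}(z_i, z_{i+1}) \geq e^{\xi(\gamma_n)(f(z_i) - \omega_i)} \wt{D}_h^{\gamma_n}(z_i, z_{i+1})$, then summing and letting $n \to \infty$ followed by the mesh to zero, gives $(e^{\xi(\gamma) f} \cdot \wt{D})(z, w) \leq \liminf_n \wt{D}_{h+f}^{\gamma_n}(z, w)$. Uniformity on compacts of $\C \times \C$ then follows from pointwise convergence combined with the equicontinuity. The main obstacle is the lower bound: specifically, verifying equicontinuity of the near-geodesics $P_n$ and the validity of the Riemann-sum discretization argument in the limiting metric, which requires carefully coupling the local uniform convergence of metrics with the lower-semicontinuity of weighted lengths.
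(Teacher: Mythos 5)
Your argument takes a genuinely different route from the paper's. The paper invokes the black-box Lemma~2.8 of [Pfeffer~2024] (Lemma~\ref{lem:conditions_for_weyl_scaling} in the text), reducing the problem to verifying two abstract hypotheses: (i) almost-geodesics for $\wt{D}_h^{\gamma_n}$ and $e^{\xi(\gamma_n) f}\cdot\wt{D}_h^{\gamma_n}$ between points of a fixed compact set stay in a larger compact set, and (ii) the existence of rational annuli $A^m$ shrinking to each point along which $\wt{D}_{h}^{\gamma_n}(\text{around}\,A^m)\to 0$. The first condition is established via the divergence of crossing distances (Lemma~\ref{lem:condition_I_satisfied}), and the second by a near-independence across scales argument comparing distances around and across annuli (Lemma~\ref{lem:condition_II_satisfied}). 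That lemma only gives convergence in the lower semicontinuous topology; the paper then upgrades it to local uniform convergence via the sandwich, the H\"older estimates, and Arzela--Ascoli---the same upgrade step you use.

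What you propose instead is a hands-on two-sided bound that bypasses the Pfeffer lemma entirely: an upper bound by discretizing an almost-geodesic of $e^{\xi(\gamma)f}\cdot\wt{D}$ and applying Weyl scaling piece by piece, and a lower bound by extracting a uniform subsequential limit $P$ of reparameterized almost-geodesics $P_n$ and pushing a Riemann-sum argument through. The scaffold is sound, and the hard spots you flag (equicontinuity of the reparameterized $P_n$, lower semicontinuity of $\wt{D}$-length under simultaneous convergence of metrics and paths, and convergence of the weighted sums to $\int e^{\xi(\gamma)f(P(t))}\,dL_P(t)$) are exactly the right ones; they can all be handled with the ingredients already available (Propositions~\ref{prop:limit_is_a_metric}, \ref{prop:complete_and_geodesic_metric}, \ref{prop:tightness_proposition_subcritical}). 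One caveat on precision: the displayed ``reverse estimate'' $\wt{D}_{h+f}^{\gamma_n}(z_i,z_{i+1})\geq e^{\xi(\gamma_n)(f(z_i)-\omega_i)}\wt{D}_h^{\gamma_n}(z_i,z_{i+1})$ is false in general, because a $\wt{D}_{h+f}^{\gamma_n}$-almost-geodesic between $z_i$ and $z_{i+1}$ need not stay where $f$ is near $f(z_i)$. The correct statement, which suffices for your argument, is
\begin{align*}
\len\bigl(P_n|_{[t_i,t_{i+1}]};\,\wt{D}_{h+f}^{\gamma_n}\bigr) \geq e^{\xi(\gamma_n)(f(z_i)-\omega_i)}\,\len\bigl(P_n|_{[t_i,t_{i+1}]};\,\wt{D}_h^{\gamma_n}\bigr) \geq e^{\xi(\gamma_n)(f(z_i)-\omega_i)}\,\wt{D}_h^{\gamma_n}\bigl(P_n(t_i),P_n(t_{i+1})\bigr),
\end{align*}
where $\omega_i$ is the oscillation of $f$ on the range of $P_n|_{[t_i,t_{i+1}]}$, which is small by the equicontinuity you already established. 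Trade-offs: the paper's route is shorter and modular (it offloads the geodesic bookkeeping to [Pfeffer~2024], and reuses the same machinery for the supercritical case where the topology is only lower semicontinuous), while yours is self-contained and more elementary but requires the finer path-level analysis to be written out, and relies crucially on the subcritical setting where limits are continuous metrics and induce the Euclidean topology---it would not transfer as directly to Section~\ref{subsec:weyl_scaling_general_case}.
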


We will use the results from \cite[Section~2]{Pfeffer_2024} in order to prove Proposition~\ref{prop:weyl_scaling}.  In order to describe the results from  \cite{Pfeffer_2024} that we are going to use for the proof of Proposition~\ref{prop:weyl_scaling},  we start with a preliminary definition.

\begin{defn}
We say that a \emph{rational circle} is a circle of the form $O = \partial B_r(u)$ for $r \in \Q \cap [0,\infty)$ and $u \in \Q^2$.  We view a point in $\Q^2$ as a rational circle with radius $0$.  A \emph{rational annulus} is the closure of the region $A$ between two non-intersecting rational circles with positive radii and the same center.  We also say that a rational annulus or a rational circle with positive radius surrounds $z \in \C$ if it does not contain $z$ and it disconnects it from $\infty$.
\end{defn}

The main ingredient of the proof of Proposition~\ref{prop:weyl_scaling} ia the following result  from \cite{Pfeffer_2024}.

\begin{lemma}(\cite[Lemma~2.8]{Pfeffer_2024})
\label{lem:conditions_for_weyl_scaling}
Let $\{D_n\}_{n \in \N}$ be a sequence of metrics in $\C$ that converges to a metric $D_{\infty}$ in the lower semicontinuous topology.  Suppose that the metric space $(\C ,  D_n)$ is a length space for all $n \in \N$,  and that the identity map from $(\C,D_{\infty})$ to $(\C,|\cdot|)$ is continuous.  Let $(f_n)_{n \in \N}$ be a sequence of continuous functions converging uniformly on compact subsets to some continuous function $f_{\infty}$.  Finally suppose that the following two properties hold for some fixed compact sets $K \subseteq K' \subseteq K'' \subseteq  \C$.
\begin{enumerate}
\item \label{it:boundness_of_almost_geodesic_paths}
Let $n \in \N$ and suppose that $P$ is a path in $K$ (resp.  $K'$) between some pair of points $z,w$ such that the $D_n$-length of $P$ is less than $D_n(z,w)+1$,  or the $e^{f_n} \cdot D_n$-length of $P$ is less than $e^{f_n} \cdot D_n(z,w) + 1$.  Then $P$ is contained in $K'$ (resp.  $K''$).
\item \label{it:short_distances_around_small_annuli}
For each nonsingular point $z \in K'$, i.e.,  there exists $w \in \C \setminus \{z\}$ such that $D_{\infty}(z,w) < \infty$,  there exist rational circles $O^m$ surrounding $z$ whose radii tend to zero as $m \to \infty$,  such that the following is true.  If $\wh{O}^m$ denotes the circle with the same center as $O^m$ and twice the radius,  and $A^m$ the annulus bounded by $O^m$ and $\wh{O}^m$,  then
\begin{align*}
\lim_{m \to \infty} \lim_{n \to \infty} (D_n(\text{around} \,\,A^m)) = 0.
\end{align*}
\end{enumerate}

Then the metrics $e^{f_n} \cdot D_n$ restricted to $K$ converge to $e^{f_{\infty}} \cdot D_{\infty}$ restricted to $K$ in the lower semicontinuous topology.
\end{lemma}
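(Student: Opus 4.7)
The plan is to verify both axioms of Definition~\ref{def:lower_semicontinuous_topology} for the sequence $e^{f_n} \cdot D_n$ restricted to $K$, exploiting in a complementary way the two hypotheses of the lemma: condition~\ref{it:boundness_of_almost_geodesic_paths} will control the paths entering the liminf inequality, while condition~\ref{it:short_distances_around_small_annuli} will allow the explicit construction of an approximating sequence for the upper bound. A preliminary observation, used throughout, is that $f_n \to f_\infty$ uniformly on the compact set $K''$, so $|f_n|$ is uniformly bounded on $K''$ and estimates for $D_n$ transfer to estimates for $e^{f_n} \cdot D_n$ up to a multiplicative factor bounded in $n$.

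For the liminf part, I would fix $(z_n, w_n) \to (z, w)$ in $K$ in the Euclidean sense and, using that $(\C, D_n)$ is a length space, choose paths $P_n$ from $z_n$ to $w_n$ whose $e^{f_n} \cdot D_n$-length is within $1/n$ of $(e^{f_n} \cdot D_n)(z_n, w_n)$. By condition~\ref{it:boundness_of_almost_geodesic_paths}, each $P_n$ lies in $K''$. Parameterizing $P_n$ by $D_n$-arclength and using that $D_\infty$ dominates the Euclidean metric (via continuity of the identity $(\C, D_\infty) \to (\C, |\cdot|)$ together with the liminf property $D_\infty \leq \liminf D_n$) gives equicontinuity of the $P_n$ in the Euclidean topology. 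Extracting a Euclidean-uniform subsequential limit $P : [0, L] \to K''$, I would then partition $[0, L]$ into a mesh and apply the liminf property of $D_n \to D_\infty$ at each pair of consecutive points to obtain
\begin{align*}
\text{len}(P; D_\infty) \leq \liminf_{n \to \infty} \text{len}(P_n; D_n),
\end{align*}
and multiplying by $e^{f_n}$, using uniform convergence on $K''$ and refining the mesh, yields $(e^{f_\infty} \cdot D_\infty)(z, w) \leq \liminf_n (e^{f_n} \cdot D_n)(z_n, w_n)$.

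For the existence of a realizing sequence, fix $(z, w) \in K \times K$. If $(e^{f_\infty} \cdot D_\infty)(z, w) = \infty$, the already-proved liminf inequality forces $(e^{f_n} \cdot D_n)(z, w) \to \infty$ along $z_n = z$, $w_n = w$, and this choice works. Otherwise $z$ and $w$ are nonsingular for $D_\infty$. Pick $\epsilon > 0$ and a path $P$ in $(\C, D_\infty)$ from $z$ to $w$ whose $e^{f_\infty} \cdot D_\infty$-length is at most $(e^{f_\infty} \cdot D_\infty)(z, w) + \epsilon$; since $P$ lies in a compact subset of $K'$ on which $D_\infty$-balls are Euclidean-bounded, I can subdivide $P$ into finitely many points $z = x_0, x_1, \dots, x_N = w$ whose consecutive $D_\infty$-distances are each at most $\epsilon / N$. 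Applying hypothesis~\ref{it:short_distances_around_small_annuli} at each $x_j$ furnishes a rational circle $O^{m_j}_{x_j}$ and annulus $A^{m_j}_{x_j}$ of small radius with $\lim_{m \to \infty} \lim_{n \to \infty} D_n(\text{around}\,\, A^{m_j}_{x_j}) = 0$. Using the length-space property of $D_n$, I would connect near-$x_j$ points across neighboring annuli by paths going around, thereby concatenating into a path from $z$ to $w$ whose $D_n$-length approximates $\sum_j D_\infty(x_j, x_{j+1})$ plus an error that vanishes first as $n \to \infty$ and then as the annuli shrink. Multiplying by $e^{f_n}$ and using uniform convergence on $K''$ gives $(e^{f_n} \cdot D_n)(z, w) \to (e^{f_\infty} \cdot D_\infty)(z, w)$ along $z_n = z$, $w_n = w$ in this case.

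The main obstacle is the construction in the upper bound step: LSC-convergence of $D_n$ to $D_\infty$ does not provide uniform control on $D_n(x, y)$ for arbitrary points $x, y$ close in the Euclidean sense, and general length-space paths cannot be produced from mere pointwise approximations. Hypothesis~\ref{it:short_distances_around_small_annuli} is precisely the substitute: the rational circles give countably many privileged scales at which $D_n$-distances around annuli shrink to zero, and the loops around $A^{m_j}_{x_j}$ serve as the ``connectors'' that let the concatenation proceed without accumulating uncontrolled error. The delicate bookkeeping is to ensure that, as $N$ grows with $\epsilon$ and the annuli shrink with $m \to \infty$, the total error in the $e^{f_n} \cdot D_n$-length of the concatenated path remains $o(1)$ in the correct order of limits ($n \to \infty$ first, then $m \to \infty$, then $\epsilon \to 0$).
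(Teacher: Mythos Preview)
The paper does not contain its own proof of this lemma: it is quoted with the citation \cite[Lemma~2.8]{Pfeffer_2024} and used as a black box, so there is nothing in the paper to compare your attempt against.

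Your two-part strategy is the natural one and correctly identifies the roles of the two hypotheses: condition~\ref{it:boundness_of_almost_geodesic_paths} traps almost-geodesics in $K''$ so that an Arzel\`a--Ascoli limit can be extracted for the liminf inequality, while condition~\ref{it:short_distances_around_small_annuli} provides the connectors needed to turn a $D_\infty$-almost-geodesic into a competitor for $D_n$ in the limsup direction. Two points are worth sharpening.

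For the liminf half, the equicontinuity step deserves one more line. Parameterizing $P_n$ by $D_n$-arclength gives $D_n(P_n(s),P_n(t)) \le |s-t|$; if equicontinuity failed one could find $s_n,t_n$ with $|s_n-t_n|\to 0$ but $|P_n(s_n)-P_n(t_n)|\ge\epsilon$, pass to Euclidean limits $x,y$ with $|x-y|\ge\epsilon$, and the liminf property of $D_n\to D_\infty$ would force $D_\infty(x,y)=0$, contradicting that $D_\infty$ is a metric. You allude to this but do not write it.

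For the limsup half, your claim that the realizing sequence can be taken as $z_n=z$, $w_n=w$ is slightly too strong and does not follow from your construction. What the argument actually produces is this: for each segment $(x_j,x_{j+1})$ you invoke axiom~(ii) of Definition~\ref{def:lower_semicontinuous_topology} for $D_n\to D_\infty$ to obtain approximating pairs $(u_n^j,v_n^j)\to(x_j,x_{j+1})$ with $D_n(u_n^j,v_n^j)\to D_\infty(x_j,x_{j+1})$; for $n$ large both $v_n^{j-1}$ and $u_n^j$ lie inside the inner circle of the annulus at $x_j$, so the $D_n$-almost-geodesics for consecutive segments both cross the loop around $A^{m_j}_{x_j}$ and can be spliced there. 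The concatenated path therefore runs from $u_n^0$ to $v_n^{N-1}$, not from $z$ to $w$. Setting $z_n=u_n^0\to z$ and $w_n=v_n^{N-1}\to w$ is exactly what the lower semicontinuous topology requires, so this is a cosmetic fix, but as written your last sentence overclaims.
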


The main idea of the proof of Proposition~\ref{prop:weyl_scaling} is the following.  Let $f : \C \to \R$ be a continuous function.  First we will show that the conditions of Lemma~\ref{lem:conditions_for_weyl_scaling} are satisfied with $D_n = \wt{D}_h^{\gamma_n},  f_n = \xi(\gamma_n) f,  f_{\infty} = \xi(\gamma) f$ for all $n \in \N$,  and $D_{\infty} = \wt{D}$.  Since $\wt{D}_{h+f}^{\gamma_n} = e^{\xi(\gamma_n) f} \cdot \wt{D}_h^{\gamma_n}$ for all $n$ by Axiom~\ref{it:weyl_scaling},  Lemma~\ref{lem:conditions_for_weyl_scaling} implies that 
\begin{align*}
\wt{D}_{h+f}^{\gamma_n} \to e^{\xi(\gamma) f} \cdot \wt{D} \quad \text{as} \quad n \to \infty
\end{align*}
with respect to the lower semicontinuous topology a.s.  

Next we will show that the sequence $(\wt{D}_{h+f}^{\gamma_n})_{n \in \N}$ is equicontinuous when restricted to compact sets and hence combining the previous paragraph with the Arzela-Ascoli theorem,  we obtain that
\begin{align*}
\wt{D}_{h+f}^{\gamma_n} \to e^{\xi(\gamma) f} \cdot \wt{D} \quad \text{as} \quad n \to \infty
\end{align*}
with respect to the local uniform topology a.s.

We start by proving that condition~\ref{it:boundness_of_almost_geodesic_paths} of Lemma~\ref{lem:conditions_for_weyl_scaling} is satisfied.  This is the content of the following lemma.

\begin{lemma}\label{lem:condition_I_satisfied}
The following holds a.s.  Let $f : \C \to \R$ be a continuous function and suppose that we have the setup described just after the statement of Lemma~\ref{lem:conditions_for_weyl_scaling} with $D_n = \wt{D}_{h+f}^{\gamma_n} ,  f_n = \xi(\gamma_n) f,  f_{\infty} = \xi(\gamma) f$ and $D_{\infty} = \wt{D}$. Then for every compact set $K \subseteq \C$,  there exist compact sets $K',K'' \subseteq \C$ such that $K \subseteq K' \subseteq K''$ and condition~\ref{it:boundness_of_almost_geodesic_paths} holds.
\end{lemma}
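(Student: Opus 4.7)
The strategy is to take $K'$ and $K''$ to be sufficiently large closed Euclidean balls and to rule out the possibility that an almost-geodesic escapes them via a double-crossing argument: such an escape forces the path to cross a large Euclidean annulus twice, and if we pick the annulus so its across-distance dwarfs the diameter of the inner compact set (possibly multiplied by a bounded Weyl factor), this contradicts the almost-geodesic length bound.

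Concretely, fix $R_0>0$ with $K\subseteq \overline{B_{R_0}(0)}$. By the assumed a.s.\ local uniform convergence $\wt{D}_h^{\gamma_n}\to\wt{D}$ and the fact that $\wt{D}$ induces the Euclidean topology (Proposition~\ref{prop:limit_is_a_metric}), the limiting diameter $\mathrm{diam}_{\wt{D}}(\overline{B_{R_0}(0)})$ is finite, each individual $\wt{D}_h^{\gamma_n}$ is a continuous metric, and hence $T:=\sup_n\mathrm{diam}_{\wt{D}_h^{\gamma_n}}(\overline{B_{R_0}(0)})$ is a.s.\ finite.  Fix any $R_1>R_0$ and set $F_1:=\sup_{z\in\overline{B_{R_1}(0)}}|f(z)|$, which is finite by continuity of $f$; since $\{\gamma_n\}$ lies in a compact subinterval of $(0,2)$, $\xi^*:=\sup_n\xi(\gamma_n)<\infty$.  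The aim is to choose $R_1$ so large (depending on $\omega$ and $f$) that
\begin{align*}
\wt{D}_h^{\gamma_n}(\mathrm{across}\,\mathbb{A}_{R_0,R_1}(0))\;>\;e^{2\xi^*F_1}(T+1)\qquad\text{for every } n\in\N.
\end{align*}
This is achievable because $\wt{D}$ is a.s.\ complete (Lemma~\ref{lem:complete_metric}), so the $\wt{D}$-across-distance of $\mathbb{A}_{R_0,R_1}(0)$ tends to infinity as $R_1\to\infty$; for each fixed $R_1$, local uniform convergence on the compact annulus $\overline{\mathbb{A}_{R_0,R_1}(0)}$ gives $\wt{D}_h^{\gamma_n}(\mathrm{across}\,\mathbb{A}_{R_0,R_1}(0))\to\wt{D}(\mathrm{across}\,\mathbb{A}_{R_0,R_1}(0))$, so the inequality holds for all large $n$, and the remaining finitely many $n$ are handled individually since each $\wt{D}_h^{\gamma_n}$ is a.s.\ a complete continuous LQG metric (whose across-distances also blow up with $R_1$).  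Setting $K':=\overline{B_{R_1}(0)}$, any $\wt{D}_h^{\gamma_n}$-almost-geodesic from $z\in K$ to $w\in K$ that left $K'$ would traverse $\mathbb{A}_{R_0,R_1}(0)$ at least twice, producing length $\geq 2\wt{D}_h^{\gamma_n}(\mathrm{across}\,\mathbb{A}_{R_0,R_1}(0))>T+1$, contradicting the bound $\wt{D}_h^{\gamma_n}(z,w)+1\le T+1$.  For an $e^{\xi(\gamma_n)f}\cdot\wt{D}_h^{\gamma_n}$-almost-geodesic the two crossings occur inside $\overline{B_{R_1}(0)}$ where $|\xi(\gamma_n)f|\leq\xi^*F_1$, so the $e^{\xi(\gamma_n)f}$-weighted length of these sub-crossings is at least $e^{-\xi^*F_1}\cdot 2\wt{D}_h^{\gamma_n}(\mathrm{across}\,\mathbb{A}_{R_0,R_1}(0))$, while the full weighted length is at most $e^{\xi^*F_1}(T+1)$, and the displayed inequality again gives a contradiction.

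The compact set $K''$ is then obtained by repeating the argument with $K'$ in place of $K$: bound $\sup_n\mathrm{diam}_{\wt{D}_h^{\gamma_n}}(K')\le T'$ and pick $R_2>R_1$ so large that the above inequality holds with $(R_0,R_1,T,F_1)$ replaced by $(R_1,R_2,T',F_2)$, where $F_2:=\sup_{z\in\overline{B_{R_2}(0)}}|f(z)|$; then set $K'':=\overline{B_{R_2}(0)}$.  The main obstacle is obtaining the crucial across-distance inequality \emph{uniformly} in $n$: as $R_1$ grows, so does $F_1$, but completeness of $\wt{D}$ ensures the across-distance grows without bound in $R_1$, while $F_1$ grows only as dictated by the fixed continuous function $f$, so the balance can be closed; combining the convergence $\wt{D}_h^{\gamma_n}\to\wt{D}$ on compact annuli with the completeness of the finitely many individual metrics $\wt{D}_h^{\gamma_n}$ is the essential step that upgrades per-$n$ finiteness to an $n$-uniform bound.
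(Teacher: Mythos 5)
Your strategy — choose large balls $K'=\overline{B_{R_1}(0)}$, $K''=\overline{B_{R_2}(0)}$ and rule out escaping almost-geodesics by forcing a costly annulus crossing — is essentially the same route the paper takes (the paper phrases the contradiction via the quantity $\wt{D}(z,B_M(0))$ rather than the across-distance of an explicit annulus, but these are interchangeable by the observation \eqref{eqn:unbounded_metric}). However, there is a real gap in the way you try to close the estimate for the Weyl-weighted metric. You set $F_1 := \sup_{\overline{B_{R_1}(0)}}|f|$ and want to choose $R_1$ so that
\begin{align*}
\wt{D}_h^{\gamma_n}\bigl(\text{across}\,\mathbb{A}_{R_0,R_1}(0)\bigr) \;>\; e^{2\xi^*F_1}(T+1) \quad \text{for all } n,
\end{align*}
and you assert this "balance can be closed'' because the left side grows without bound in $R_1$. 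But $F_1$ also grows in $R_1$, at a rate governed by the fixed but arbitrary continuous $f$, and nothing controls how $e^{2\xi^*F_1}$ compares to the growth of the across-distance; for example $f(z)=e^{|z|^2}$ makes the right side grow far faster than any a.s.\ rate of growth of the across-distance. So the inequality you want cannot in general be satisfied, and this step does not go through as written.

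The way to fix it — and what the paper's proof in effect does — is to decouple the Weyl factor from the radius you are choosing: first determine $K'=\overline{B_{\wt M}(0)}$ using only the unweighted metric (which requires no control on $f$), then fix $A:=\sup_n\sup_{\overline{B_{\wt M}(0)}}\xi(\gamma_n)|f|<\infty$, and only afterwards pick $K''=\overline{B_{\wh M}(0)}$ so that the threshold $2e^A(2+3Re^A/2)$ (a fixed number, since $A$ is fixed) is beaten by $\wt D(z,B_M(0))$ for all $z\notin B_{\wh M}(0)$. The chain of inequalities used to show a weighted almost-geodesic stays in $B_{\wh M}(0)$ then only involves the constant $A$, not a ball-dependent sup of $|f|$. (For completeness: the comparison bound for the initial segment $P|_{[0,t]}$ still requires a lower bound on $\xi_n f$ over $\overline{B_{\wh M}(0)}$, and making this fully rigorous — as opposed to the stated $A$ over $B_{\wt M}(0)$ — does rely on $f$ being bounded; but this is precisely the hypothesis that Proposition~\ref{prop:weyl_scaling} carries, and with $f$ bounded the constant is simply $\xi^*\sup_\C|f|$, making the entire balancing issue disappear.) In short, your construction is correct once $f$ is assumed bounded, but then $F_1$ should be a fixed constant rather than an $R_1$-dependent quantity; as written, the "balance can be closed'' claim is the missing justification.
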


\begin{proof}
Fix any compact set $K \subseteq \C$ and let $M>0$ be such that $K \subseteq B_M(0)$ and set
\begin{align*}
R:=\sup_{z,w \in B_M(0)} \wt{D}(z,w) < \infty.
\end{align*}
Note that \eqref{eqn:unbounded_metric} in the proof of Lemma~\ref{lem:complete_metric} implies that
\begin{align*}
\wt{D}(B_M(0) ,  \C \setminus B_{\wt{M}}(0)) \to \infty \quad \text{as} \quad \wt{M} \to \infty \quad \text{a.s.}
\end{align*}
and so there a.s.  exists $\wt{M} > M$ such that for all $z \in \C$ such that $\wt{D}(z,B_M(0)) < 2R +2$,  we have that $z \in B_{\wt{M}}(0)$.  Also there exists $n_0 \in \N$ such that
\begin{align*}
|\wt{D}_h^{\gamma_n}(z,w) - \wt{D}(z,w)| \leq \frac{R}{2} \quad \text{for all} \quad z,w \in B_{\wt{M}}(0) ,  n \geq n_0,
\end{align*}
which implies that
\begin{align*}
\wt{D}_h^{\gamma_n}(z,w) \leq \frac{3R}{2} \quad \text{for all} \quad z,w \in B_{\wt{M}}(0) ,  n \geq n_0.
\end{align*}

Fix $n \geq n_0$ and $z,w \in K$,  and let $P$ be a path from $z$ to $w$ such that the $\wt{D}_h^{\gamma_n}$-length of $P$ is at most $\wt{D}_h^{\gamma_n}(z,w) + 1$.  We will show that $P \subseteq B_{\wt{M}}(0)$.  Suppose that this doesn't hold.  Then there exists $u \in P \cap \partial B_{\wt{M}}(0)$.  Also we have that $\wt{D}_h^{\gamma_n}(z,u$ is at most the $\wt{D}_h^{\gamma_n}$-length of $P$ and so 
\begin{align*}
\wt{D}_h^{\gamma_n}(z,u) \leq \wt{D}_h^{\gamma_n}(z,w) + 1 \leq \frac{3R}{2} +1 
\end{align*}
which implies that
\begin{align*}
\wt{D}(z,u) \leq \frac{R}{2} + \wt{D}_h^{\gamma_n}(z,u) < 2R + 2.
\end{align*}
It follows that $u \in B_{\wt{M}}(0)$ but that's a contradiction since we have assumed that $u \in \partial B_{\wt{M}}(0)$.  Therefore we obtain that $P \subseteq B_{\wt{M}}(0)$ for any such path $P$.

Next we note that there exists (random) $A \in (0,\infty)$ such that 
\begin{align*}
\sup_{z \in B_{\wt{M}}(0)} \xi(\gamma_n)|f(z)| \leq A \quad \text{for all} \quad n \in \N.
\end{align*}
Again \eqref{eqn:unbounded_metric} in the proof of Lemma~\ref{lem:complete_metric} implies that it is a.s.  the case that there exists $\wh{M} > \wt{M}$ such that if $z \in \C$ is such that
\begin{align}\label{eqn:main_bound}
\wt{D}(z,B_M(0)) < 2e^A(2+3R e^A  / 2),
\end{align}
then $z \in B_{\wh{M}}(0)$.  Also possibly by taking $n_0$ to be larger,  we can assume that 
\begin{align*}
|\wt{D}_h^{\gamma_n}(z,w) - \wt{D}(z,w)| < \frac{R}{2} \quad \text{for all} \quad n \geq n_0 ,  z,w \in B_{\wh{M}}(0).
\end{align*}

Fix $n \geq n_0$ and $z,w \in K$.  Let $P$ be a path from $z$ to $w$ such that the $\wt{D}_{h+f}^{\gamma_n}$-length of $P$ is at most $\wt{D}_{h+f}^{\gamma_n}(z,w) + 1$.  Let also $Q$ be a path from $z$ to $w$ so that its $\wt{D}_h^{\gamma_n}$-length is at most $\wt{D}_h^{\gamma_n}(z,w) + 1$.  Then the choice of $\wt{M}$ implies that $Q \subseteq B_{\wt{M}}(0)$ and so the $\wt{D}_{h+f}^{\gamma_n}$-length of $Q$ is at most $e^A \wt{D}_h^{\gamma_n}(z,w)$ which implies that
\begin{align*}
\wt{D}_{h+f}^{\gamma_n}(z,w) \leq e^A \wt{D}_h^{\gamma_n}(z,w).
\end{align*}
Therefore the $\wt{D}_{h+f}^{\gamma_n}$-length of $P$ is at most $e^A \wt{D}_h^{\gamma_n}(z,w) + 1$.

Finally we will show that $P \subseteq B_{\wh{M}}(0)$.  Suppose that this doesn't hold and we parameterize $P$ by $[0,1]$ so that $P(0) = z$ and $P(1) = w$.  Then we set
\begin{align*}
t = \inf\{s \in [0,1] : P(s) \notin B_{\wh{M}}(0)\} ,\,\,u = P(t)
\end{align*}
and note that $u \in \partial B_{\wh{M}}(0)$.  Then the $\wt{D}_h^{\gamma_n}$-length of $P|_{[0,t]}$ is at most $e^A$ times the $\wt{D}_{h+f}^{\gamma_n}$-length of $P|_{[0,t]}$.  In particular we have that
\begin{align*}
\wt{D}_h^{\gamma_n}(z,u) \leq e^A (e^A \wt{D}_h^{\gamma_n}(z,w) + 1) \leq e^A(1+ 3R e^A / 2)
\end{align*}
and 
\begin{align*}
\wt{D}(z,u) \leq \wt{D}_h^{\gamma_n}(z,u) + \frac{R}{2},
\end{align*}
which implies that
\begin{align*}
\wt{D}(u,B_M(0)) \leq \wt{D}(z,u) < 2e^A (1+ 3R e^A / 2).
\end{align*}
Thus the choice of $\wh{M}$ in \eqref{eqn:main_bound} implies that $u \in B_{\wh{M}}(0)$ but that's a contradiction since the definition of $u$ implies that $u \in \partial B_{\wh{M}}(0)$.  Therefore we have that $P \subseteq B_{\wh{M}}(0)$.  It follows that condition~\ref{it:boundness_of_almost_geodesic_paths} holds with $K' = B_{\wt{M}}(0)$ and $K'' = B_{\wh{M}}(0)$.  This completes the proof of the lemma.
\end{proof}

Next we prove that condition~\ref{it:short_distances_around_small_annuli} holds as well.   This will be achieved in two main steps.  As a first step,  we will show that for every fixed Euclidean annulus $A$,  we have that the sequence of random variables $\{\wt{D}_h^{\gamma_n}(\text{around} \,\,A)\}_{n \in \N}$ is tight.  This will allow us to assume that possibly by considering a subsequence of $(\gamma_n)$,  we have that there exists a coupling of $\{\wt{D}_h^{\gamma_n}\}_{n \in \N}$ and $\wt{D}$ such that $\wt{D}_h^{\gamma_n}(\text{around} \,\,  A)$ converges a.s.  as $n \to \infty$ to some random variable $D(A)$ for every rational annulus $A$.  Also we will show that $\wt{D}(\text{around} \,\,A) < \infty$ for every rational annuls $A$ a.s.  As a second step,  we will show that for every bounded open set $U \subseteq \C$,  there exists a deterministic constant $C>1$ depending only on $(\gamma_n)$ and $U$ such that it is a.s.  the case that the following is true.  There exists $K_0 \in \N$ such that for all $K \geq K_0$ and all $u \in (2^{-2K} \Z^2) \cap U$,  there exists $k \in [K/2 , K]_{\Z}$ such that
\begin{align*}
D(\mathbb{A}_{2^{-k},2^{-k+1}}(u)) \leq C \wt{D}(\partial B_{2^{-k}}(u),\partial B_{2^{-k+1}}(u)).
\end{align*}
Then we will use the above property to construct rational annuli $A^m$ as in condition~\ref{it:short_distances_around_small_annuli}.

We start with proving the first step in the following lemma.

\begin{lemma}\label{lem:tightness_around_annuli}
Fix $z \in \C,  0 < r_1 < r_2 < \infty$.  Then the sequence of random variables 
\begin{align*}
\{\wt{D}_h^{\gamma_n}(\text{around} \,\,  \mathbb{A}_{r_1,r_2}(z))\}_{n \in \N}
\end{align*}
is tight.  Also we have that $\wt{D}(\text{around} \,\,\mathbb{A}_{r_1,r_2}(z)) < \infty$ a.s.
\end{lemma}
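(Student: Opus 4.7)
The plan is to establish tightness by constructing, on a high-probability event uniform in $n$, a closed loop in $\mathbb{A}_{r_1,r_2}(z)$ with bounded $\wt{D}_h^{\gamma_n}$-length whose image disconnects $\partial B_{r_1}(z)$ from $\partial B_{r_2}(z)$. By scaling and translation (Axiom~\ref{it:coordinate_change}), it is enough to handle $z=0$, $r_1=1$; the general case differs only by the multiplicative factor $r_1^{\xi(\gamma_n)Q(\gamma_n)} e^{\xi(\gamma_n) h_{r_1}(z)}$, which is tight uniformly in $n$ since $\xi(\gamma_n)$, $Q(\gamma_n)$ converge and $h_{r_1}(z)$ is a fixed Gaussian with finite variance.

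For a large integer $N$, I would place $N$ equally spaced points $p_1,\dots,p_N$ on the circle $\partial B_{(1+r_2)/2}(0) \subset \mathbb{A}_{1,r_2}(0)$ and set $K_i := \overline{B_\delta(p_i)}$ for a small $\delta>0$, so that the $K_i$ are pairwise disjoint compact connected non-singleton subsets of $\mathbb{A}_{1,r_2}(0)$. I then choose open simply connected ``sector'' regions $U_i \subseteq \mathbb{A}_{1,r_2}(0)$ containing $K_i \cup K_{i+1}$ and disjoint from the other $K_j$, together with narrow simply connected neighborhoods $V_i \subseteq \mathbb{A}_{1,r_2}(0)$ of $K_i$ of angular width less than $\pi/N$. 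Proposition~\ref{prop:bound_on_lqg_distances_of_sets} (applied with $\mathfrak{r}=1$, using the normalization $h_1(0)=0$) gives that $\wt{D}_h^{\gamma_n}(K_i,K_{i+1};U_i) \leq A$ with polynomially high probability as $A \to \infty$, uniformly in $n$, while Proposition~\ref{prop:moment_lqg_diameter_bound_for_compact_sets} yields uniform-in-$n$ moment bounds (and hence tail bounds) on the $\wt{D}_h^{\gamma_n}(\cdot,\cdot;V_i)$-diameter of each $K_i$.

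Using that $\wt{D}_h^{\gamma_n}$ is a length metric (Axiom~\ref{it:length_space}), for each $i$ I pick a path $P_i$ in $U_i$ from some $a_i \in K_i$ to some $b_i \in K_{i+1}$ of length within $1/N$ of $\wt{D}_h^{\gamma_n}(K_i,K_{i+1};U_i)$, and a path $C_i$ in $V_{i+1}$ from $b_i$ to $a_{i+1}$ of length within $1/N$ of $\wt{D}_h^{\gamma_n}(b_i,a_{i+1};V_{i+1})$. The concatenation $\Gamma_n := P_1 C_1 P_2 C_2 \cdots P_N C_N$ is a closed loop in $\mathbb{A}_{1,r_2}(0)$. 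Since each $U_i$ and $V_i$ is simply connected in $\C \setminus \{0\}$, a branch of $\arg$ is well-defined on each piece, so $P_i$ contributes an angular increment close to $2\pi/N$ and $C_i$ one less than $\pi/N$ in absolute value. Choosing $\delta$ small relative to $1/N$ forces the total angular increment of $\Gamma_n$ to lie in $(\pi,3\pi)$, so the winding number of $\Gamma_n$ around $0$ is exactly $1$ and $\Gamma_n$ disconnects $\partial B_1(0)$ from $\partial B_{r_2}(0)$. This gives the uniform bound $\wt{D}_h^{\gamma_n}(\text{around}\,\mathbb{A}_{1,r_2}(0)) \leq \sum_i(\len(P_i)+\len(C_i))$, which is tight.

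The finiteness of $\wt{D}(\text{around}\,\mathbb{A}_{r_1,r_2}(z))$ will then follow by lower semicontinuity. Given tightness, pick near-minimizing loops $\Gamma_n$ of uniformly bounded $\wt{D}_h^{\gamma_n}$-length along a probability-$\geq 1-\epsilon$ event, reparameterize each by proportional LQG arc-length on $S^1$, and extract a Euclidean-uniformly convergent subsequence from compactness of the closed annulus and the equicontinuity induced by the local uniform convergence $\wt{D}_h^{\gamma_n} \to \wt{D}$. The limiting loop has finite $\wt{D}$-length by length lower semicontinuity under joint uniform convergence of metric and curve, and it retains winding number $1$ around $0$ (a continuous invariant for loops avoiding $0$ under uniform convergence), hence still disconnects. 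The main obstacle will be controlling the winding number of $\Gamma_n$ in the construction: a priori the LQG-shortest paths $P_i$ could take macroscopic detours, but confining each $P_i$ to the simply connected sector $U_i$ and each $C_i$ to a narrow simply connected $V_{i+1}$ forces a well-defined angular increment on each piece and reduces the topological issue to elementary bookkeeping.
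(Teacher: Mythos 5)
Your proof is correct, but it takes a genuinely different route from the paper's. Both proofs build the required loop by stitching together $N$ short paths near the middle circle $\partial B_{(r_1+r_2)/2}(z)$, but they control those paths in different ways. The paper combines Proposition~\ref{prop:tightness_proposition_subcritical} (the uniform-in-$\gamma$ H\"older bound) with Proposition~\ref{prop:limit_is_a_metric} and the a.s.\ local uniform convergence $\wt{D}_h^{\gamma_n} \to \wt{D}$ to show that, with probability at least $p$ uniformly in $n$, a $\wt{D}_h^{\gamma_n}$-geodesic between any two points on the middle circle at Euclidean distance at most $\delta$ has $\wt{D}_h^{\gamma_n}$-length at most $1$ \emph{and} is confined to a small Euclidean ball. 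The disconnection of $\partial B_{r_1}(z)$ from $\partial B_{r_2}(z)$ is then automatic because the concatenated loop is a small Euclidean perturbation of the middle circle; no explicit topological bookkeeping is needed. You instead control the paths by working with internal metrics $\wt{D}_h^{\gamma_n}(\cdot,\cdot;U_i)$ on prescribed simply connected sectors $U_i$ and narrow neighborhoods $V_i$, bound their lengths via Propositions~\ref{prop:bound_on_lqg_distances_of_sets} and~\ref{prop:moment_lqg_diameter_bound_for_compact_sets}, and then verify disconnection by tracking angular increments and the winding number. Your decomposition avoids the ``geodesics do not escape'' step (which in the paper relies on relating $\wt{D}$ to the Euclidean topology and on the uniform H\"older estimate), at the cost of the winding-number accounting; conversely the paper's argument is shorter because that machinery is already available at this point in Section~\ref{sec:identifying_limit_subcritical}. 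For the finiteness of $\wt{D}(\text{around}\,\mathbb{A}_{r_1,r_2}(z))$ both arguments proceed the same way: extract a Euclidean-uniformly convergent subsequence of the approximating loops via Arzel\`a--Ascoli (equicontinuity coming from the local uniform convergence of metrics and the fact that $\wt{D}$ induces the Euclidean topology), and conclude by lower semicontinuity of length under joint uniform convergence of metrics and curves.
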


\begin{proof}
First we will show the part of the statement of the lemma related to tightness.  Fix $p \in (0,1)$.  Then combining the fact that $\wt{D}$ is a metric which induces the Euclidean topology a.s.  (Proposition~\ref{prop:limit_is_a_metric}) with Proposition~\ref{prop:tightness_proposition_subcritical},  we obtain that there exists a deterministic constant $\delta \in (0,\frac{r_2-r_1}{100})$ such that the following holds.  For all $n \in \N$,  we have the following with probability at least $p$.  For all $x,y \in \partial B_{\frac{r_1+r_2}{2}}(z)$ such that $|x-y| \leq \delta$,  we have that if $P$ is a $\wt{D}_h^{\gamma_n}$-geodesic from $x$ to $y$,  then we have that $P \subseteq B_{\frac{r_2-r_1}{100}}(x)$.  Moreover we have that the $\wt{D}_h^{\gamma_n}$-length of $P$ is at most $1$.

Fix $n \in \N$ and suppose that the event of the previous paragraph occurs.  Let $x_1,\cdots,x_N$ be deterministic points on $\partial B_{\frac{r_1+r_2}{2}}(z)$ ordered in the clockwise direction such that $|x_j-x_{j+1}| \leq \delta$ for all $1 \leq j \leq N$,  where we set $x_{N+1} = x_1$.  For all $1 \leq j \leq N$,  we let $P_{j,n}$ be a $\wt{D}_h^{\gamma_n}$-geodesic from $x_j$ to $x_{j+1}$ and let $P^n$ denote the path given by concatenating the paths $P_{1,n},\cdots,P_{N,n}$.  Then since 
\begin{align*}
P_{j,n} \subseteq B_{\frac{r_1+r_2}{100}}(x_j)\quad  \text{for all} \quad 1 \leq j \leq N,
\end{align*}
we obtain that $P^n \subseteq \mathbb{A}_{r_1,r_2}(z)$ and $P^n$ disconnects $\partial B_{r_1}(z)$ from $\partial B_{r_2}(z)$.  Also since the $\wt{D}_h^{\gamma_n}$-length of $P_{j,n}$ is at most $1$,  we obtain that
\begin{align*}
\len(P^n ;  \wt{D}_h^{\gamma_n}) \leq \sum_{j=1}^N \len(P_{j,n} ; \wt{D}_h^{\gamma_n}) \leq N.
\end{align*}
In particular we have that
\begin{align*}
\wt{D}_h^{\gamma_n}(\text{around} \,\,\mathbb{A}_{r_1,r_2}(z)) \leq N
\end{align*}
with probability at least $p$,  for all $n \in \N$.  This proves the tightness of $\{\wt{D}_h^{\gamma_n}(\text{around} \,\,\mathbb{A}_{r_1,r_2}(z))\}_{n \in \N}$.

Finally using a similar argument as in the above paragraphs and combining with the fact that $\wt{D}_h^{\gamma_n}$ converges to $\wt{D}$ as $n \to \infty$ locally uniformly,  we obtain that 
\begin{align*}
\wt{D}(\text{around}\,\,\mathbb{A}_{r_1,r_2}(z)) < \infty \quad \text{a.s.}
\end{align*}
This completes the proof of the lemma.
\end{proof}

Now we prove that condition~\ref{it:short_distances_around_small_annuli} holds as well.

\begin{lemma}\label{lem:condition_II_satisfied}
Suppose that we have the setup of Lemma~\ref{lem:condition_I_satisfied}.  Then it is a.s.  the case that the following is true.  Fix $K \subseteq \C$ compact.  Then for each $z \in K$,  we can find a sequence of rational annuli $(A^m)_{m \in \N}$ as in condition~\ref{it:short_distances_around_small_annuli} such that
\begin{align*}
\lim_{m \to \infty} \lim_{n \to \infty} (\wt{D}_h^{\gamma_n}(\text{around} \,\,A^m)) = 0.
\end{align*}
\end{lemma}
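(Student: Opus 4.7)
The plan is to first extract a coupling in which around-distances over the countable family of all rational annuli converge simultaneously. By Lemma~\ref{lem:tightness_around_annuli}, each random variable $\wt{D}_h^{\gamma_n}(\text{around}\,\, A)$ is tight in $n$, so applying Skorokhod's representation and a diagonal argument across the countable collection of rational annuli, I pass to a further subsequence (relabeled as $(\gamma_n)$) for which
\begin{align*}
\wt{D}_h^{\gamma_n}(\text{around}\,\, A) \longrightarrow D(A) \quad \text{a.s. as } n \to \infty
\end{align*}
simultaneously for every rational annulus $A$, with $D(A) < \infty$ a.s. Once this is done, producing the sequence $(A^m)$ in the statement of the lemma amounts to showing that, a.s., every point $z$ in an arbitrary compact set admits rational annuli $A^m$ surrounding it, with radii tending to $0$, along which $D(A^m) \to 0$.

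The core estimate is a uniform around-vs-across comparison: there exist deterministic constants $C > 0$ and $p_0 \in (0,1)$ such that for every $u \in \C$ and every $r > 0$,
\begin{align*}
\p\bigl[D(\mathbb{A}_{r,2r}(u)) \le C\, \wt{D}(\partial B_r(u),\partial B_{2r}(u))\bigr] \ge p_0.
\end{align*}
By the scale and translation invariance encoded in Axioms~\ref{it:weyl_scaling} and \ref{it:coordinate_change}, it suffices to prove this for the single annulus $\mathbb{A}_{1,2}(0)$. Lemma~\ref{lem:uniform_lower_bound_across_annuli} gives a uniform positive probability that $\wt{D}_h^{\gamma_n}(\partial B_1(0),\partial B_2(0))$ is bounded below by a fixed constant, which passes to $\wt{D}$ under the coupling via lower semicontinuity. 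Proposition~\ref{prop:moment_lqg_diameter_bound_for_square} combined with Lemma~\ref{lem:tightness_around_annuli} gives the complementary uniform upper bound on $\wt{D}_h^{\gamma_n}(\text{around}\,\, \mathbb{A}_{1,2}(0))$, which passes to $D(\mathbb{A}_{1,2}(0))$ by the a.s. convergence supplied by the coupling. Intersecting these two events and choosing $C$ larger than the ratio of the two constants produces the required $p_0$.

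Next I upgrade this one-scale estimate to "happens at many scales around every dyadic point". Fix a bounded open $U$ containing the compact set $K$ of the lemma. The event
\begin{align*}
G^u_k := \bigl\{D(\mathbb{A}_{2^{-k},2^{-k+1}}(u)) \le C\, \wt{D}(\partial B_{2^{-k}}(u),\partial B_{2^{-k+1}}(u))\bigr\}
\end{align*}
depends, after centering by the circle average $h_{2^{-k+2}}(u)$, only on the restriction of $h$ to a fixed-shape annulus at scale $2^{-k}$ around $u$; the invariance axioms then make Lemma~\ref{lem:good_event_at_many_scales} applicable to the sequence of events $\{G^u_k\}_k$ for each fixed $u$. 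This yields, for every $b \in (0,1)$ and every $a>0$, an exponential-in-$K$ tail bound for the event that fewer than a $b$-fraction of scales $k \in [K/2,K]_{\Z}$ satisfy $G^u_k$. A union bound over $u \in (2^{-2K}\Z^2) \cap U$, whose cardinality is polynomial in $2^{K}$, together with Borel--Cantelli, shows that a.s. there exists $K_0 = K_0(\omega)$ such that for every $K \ge K_0$ and every $u \in (2^{-2K}\Z^2) \cap U$ there is some $k_{u,K} \in [K/2,K]_{\Z}$ on which $G^u_{k_{u,K}}$ occurs.

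To conclude, given $z \in K$ and $m \ge K_0(\omega)$, pick $u_m \in (2^{-2m}\Z^2) \cap U$ with $|u_m - z| \le 2^{-2m}$ and set $A^m := \mathbb{A}_{2^{-k_{u_m,m}},\,2^{-k_{u_m,m}+1}}(u_m)$. For $m$ large enough $A^m$ is a rational annulus surrounding $z$, its radii tend to $0$, and the defining inclusion $G^{u_m}_{k_{u_m,m}}$ gives
\begin{align*}
D(A^m) \le C\, \wt{D}(\partial B_{2^{-k_{u_m,m}}}(u_m),\partial B_{2^{-k_{u_m,m}}+1}(u_m)) \longrightarrow 0
\end{align*}
as $m \to \infty$, where the limit uses that $\wt{D}$ induces the Euclidean topology by Proposition~\ref{prop:limit_is_a_metric}. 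Since $D(A^m) = \lim_{n\to\infty} \wt{D}_h^{\gamma_n}(\text{around}\,\, A^m)$ under the coupling, the double-limit statement follows. The principal difficulty is the uniform around-vs-across comparison in the second step; the subtle point is that the two quantities are genuinely different, so the constant $C$ must be extracted simultaneously from the uniform positive lower bound for the across-distance and the uniform upper bound for the around-distance, and then transported through the limit under the chosen coupling.
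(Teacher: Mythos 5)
Your high-level strategy — Skorokhod coupling over rational annuli, a one-scale around-versus-across comparison event, Lemma~\ref{lem:good_event_at_many_scales}, then a union bound and Borel--Cantelli — matches the paper's, but there is a genuine gap in your one-scale estimate. You deduce only that $p_0 > 0$ from Lemma~\ref{lem:uniform_lower_bound_across_annuli}, and then assert that this yields, ``for every $b \in (0,1)$ and every $a > 0$, an exponential-in-$K$ tail bound.'' That assertion is false: Lemma~\ref{lem:good_event_at_many_scales} says that for each $(a,b)$ there is a threshold $p(a,b)$, and the conclusion holds only if $\p[E_{r_k}] \geq p(a,b)$. Since you need $a$ large to beat the union bound over $(2^{-2K}\Z^2) \cap U$ (a lattice with $\asymp 2^{4K}$ points) and to invoke Borel--Cantelli, you need $p_0$ close to $1$, and Lemma~\ref{lem:uniform_lower_bound_across_annuli} does not provide that. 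The way the paper circumvents this is to invoke Proposition~\ref{prop:limit_is_a_metric}: since $\wt{D}(\text{across}\,\mathbb{A}_{1,2}(0)) > 0$ a.s., the probability $\p[\wt{D}(\text{across}\,\mathbb{A}_{1,2}(0)) \geq C^{-1}]$ tends to $1$ as $C \to \infty$; and the around-bound has probability exactly $\mathfrak{p}(\gamma_n) \geq \mathfrak{p}(\gamma_*)$ by the normalization \eqref{eqn:main_normalization}, a quantity already engineered in Section~\ref{sec:tightness} to be as close to $1$ as needed. Intersecting gives $\p[E_k^n(u;C)] \geq \mathfrak{p}(\gamma_*) - \epsilon$ with $\epsilon$ arbitrarily small, which is what actually powers the good-at-many-scales machinery. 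Your lower bound for across needs to be replaced by the a.s.\ positivity of $\wt{D}(\text{across})$ (and the resulting continuity of its CDF near $0$), not by the uniform positive-probability statement of Lemma~\ref{lem:uniform_lower_bound_across_annuli}.

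A secondary but worth-noting difference: you define the events $G_k^u$ directly in terms of the limiting quantities $D(\cdot)$ and $\wt{D}$. The paper instead defines $E_k^n(u;C)$ in terms of the pre-limit metrics $\wt{D}_h^{\gamma_n}$, applies Lemma~\ref{lem:good_event_at_many_scales} for each fixed $n$ (where Axiom~\ref{it:locality} guarantees the required measurability with respect to the local $\sigma$-algebras), and only passes $n \to \infty$ at the end. Your route requires knowing that the limit $\wt{D}$ is still locally determined by $h$ in the coupling, which is precisely Proposition~\ref{prop:locality_property_subcritical} and is proved \emph{after} this lemma; while one can argue that a.s.\ limits of locally-measurable random variables remain appropriately measurable, the paper's pre-limit formulation avoids this circularity entirely and is the cleaner choice.
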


\begin{proof}
\emph{Step 1.  Setup.}
First we note that Lemma~\ref{lem:tightness_around_annuli} combined with Skorokhod's representation theorem imply that possibly by considering a subsequence of $(\gamma_n)$,  we can assume that we can couple the random variables $\{\wt{D}_h^{\gamma_n}\}_{n \in \N},  \wt{D}$ in a common probability space such that
\begin{align*}
\wt{D}_h^{\gamma_n} \to \wt{D} \quad \text{as} \quad n \to \infty
\end{align*}
with respect to the local uniform topology a.s. ,  and for every rational annulus $A$,  there exists a random variable $D(A)$ defined in the same probability space such that
\begin{align*}
\wt{D}_h^{\gamma_n}(\text{around} \,\,  A) \to D(A) \quad \text{as} \quad n \to \infty \quad \text{a.s.}
\end{align*}

In Step 2,  we will construct the annuli $(A^m)$ as in condition~\ref{it:short_distances_around_small_annuli} while in Step 3 we will complete the proof of the lemma.

\emph{Step 2.  Constructing the annuli $(A^m)$.}
Now we construct the required rational annuli.  Set
\begin{align*}
\gamma_* :=\sup\{\gamma_n : n \in \N\} \in (0,2),
\end{align*}
and for $C>1,u \in \C,k \in \N_0$ and $n \in \N$,  we let
\begin{align*}
E_k^n(u ; C):=\left\{\wt{D}_h^{\gamma_n}(\text{around} \,\,\mathbb{A}_{2^{-k},2^{-k+1}}(u) ) \leq C \wt{D}_h^{\gamma_n}(\text{across}\,\,\mathbb{A}_{2^{-k},2^{-k+1}}(u))\right\}.
\end{align*}
Fix $\epsilon \in (0,1) ,  C>1$ and note that our choice of normalization for $D_h^{\gamma_n}$ (see \eqref{eqn:main_normalization} and \eqref{eqn:normalization_subcritical}) combined with Axioms~\ref{it:weyl_scaling} and ~\ref{it:coordinate_change} imply that
\begin{align}\label{eqn:scaling_distance_around_annulus}
&\p\left[\wt{D}_h^{\gamma_n}(\text{around} \,\,\mathbb{A}_{2^{-k},2^{-k+1}}(u) ) \leq 2^{\xi(\gamma_n) Q(\gamma_n) k } \exp(\xi(\gamma_n) h_{2^{-k}}(u))\right]\notag \\
&= \p\left[\wt{D}_h^{\gamma_n}(\text{around}\,\,\mathbb{A}_{1,2}(0)) \leq 1 \right] \geq \mathfrak{p}(\gamma_*) \quad \text{for all} \quad k \in \N_0,n \in \N,  u \in \C.
\end{align}
Similarly we have that
\begin{align}\label{eqn:scaling_distance_across_annulus}
&\p\left[\wt{D}_h^{\gamma_n}(\text{across}\,\,\mathbb{A}_{2^{-k},2^{-k+1}}(u)) \geq C^{-1} 2^{-\xi(\gamma_n) Q(\gamma_n) k} \exp(\xi(\gamma_n) h_{2^{-k}}(u))\right] \notag\\
&=\p\left[\wt{D}_h^{\gamma_n}(\text{across}\,\,\mathbb{A}_{1,2}(0)) \geq C^{-1}\right] \quad \text{for all} \quad k \in \N_0,n \in \N, u \in \C.
\end{align}

Since 
\begin{align*}
\p\left[\wt{D}_h^{\gamma_n}(\text{across}\,\,\mathbb{A}_{1,2}(0)) \geq C^{-1}\right] \to \p\left[\wt{D}(\text{across}\,\,\mathbb{A}_{1,2}(0)) \geq C^{-1}\right] \quad \text{as} \quad n \to \infty,
\end{align*}
and $\wt{D}(\text{across} \,\,\mathbb{A}_{1,2}(0))>0$ a.s.,  we obtain by combining \eqref{eqn:scaling_distance_around_annulus} with \eqref{eqn:scaling_distance_across_annulus} that there exists $C>1$ such that
\begin{align*}
\p[E_k^n(u;C)] \geq \mathfrak{p}(\gamma_*) - \epsilon \quad \text{for all} \quad k \in \N_0,n \in \N,  u \in \C.
\end{align*}
Therefore by taking $\epsilon \in (0,1)$ sufficiently small and arguing as in the proof of Lemma~\ref{lem:good_event_occurs_almost_everywhere} by applying Lemma~\ref{lem:good_event_at_many_scales} (see also the proof of \cite[Lemma~5.14]{ding2020tightness}),  we obtain that
\begin{align}\label{eqn:event_occurs_at_many_scales}
\p\left[E_k^n(u;C) \,\,\text{occurs for at least one} \,\,k \in [M/2 ,  M]_{\Z} \right] \geq 1- O_M(2^{-3M}) \quad \text{for all} \quad n,M \in \N,  u \in \C,
\end{align}
where the implicit constants are independent of $u,M$,  and $n$.

Fix $U \subseteq \C$ an open bounded set such that $K \subseteq U$.  Then taking $n \to \infty$ in \eqref{eqn:event_occurs_at_many_scales} for fixed $u \in U$ and $M \in \N$,  and then taking a union bound over all $u \in (2^{-2M} \Z^2) \cap U$ and then applying the Borel-Cantelli lemma by summing over all $M \in \N$, we obtain that the following is true a.s.  There exists $M_0 \in \N$ such that for all $M \geq M_0$ and all $u \in (2^{-2M} \Z^2) \cap U$,  there exists $k \in [M/2 ,  M]_{\Z}$ such that
\begin{align}\label{eqn:rational_annuli_inequality}
D(\mathbb{A}_{2^{-k},2^{-k+1}}(u)) \leq C \wt{D}(\text{across} \,\,\mathbb{A}_{2^{-k},2^{-k+1}}(u)).
\end{align}
Therefore we obtain that the following holds a.s.  For all $z \in U$,  there exists a sequence of positive integers $(k_m)_{m \in \N}$ such that $k_m \to \infty$ as $m \to \infty$,  and a sequence of points $(u_m)_{m \in \N}$ such that $u_m \in (2^{-2k_m} \Z^2) \cap U$ and $z \in B_{2^{-k_m}}(u_m)$,  for all $m \in \N$,  and \eqref{eqn:rational_annuli_inequality} holds for some $k \in [k_m / 2,k_m]_{\Z}$.  We then set 
\begin{align*}
A^m: = \mathbb{A}_{2^{-k},2^{-k+1}}(u_m)\quad \text{for all} \quad m \in \N.
\end{align*}

\emph{Step 3.  Conclusion of the proof.}
Fix $z \in U$ and let $(A^m)_{m \in \N}$ be the sequence of rational annuli as in Step 2.  Then possibly by considering a sufficiently sparse subsequence,  we can assume without loss of generality that $A^m$ disconnects $A^{m+1}$ from $\infty$ for all $m \in \N$.  Moreover by taking $m \to \infty$ in \eqref{eqn:rational_annuli_inequality} and since $\wt{D}$ is a finite metric a.s.,  we obtain that condition~\ref{it:short_distances_around_small_annuli} holds for all $z \in U$ a.s.  This completes the proof of the lemma since $K$ was arbitrary.
\end{proof}

Now we are ready to prove Proposition~\ref{prop:weyl_scaling}.

\begin{proof}[Proof of Proposition~\ref{prop:weyl_scaling}]
First we note that combining Lemmas~\ref{lem:conditions_for_weyl_scaling},  ~\ref{lem:condition_I_satisfied} and ~\ref{lem:condition_II_satisfied},  we obtain that the following is true a.s.  For every continuous function $f : \C \to \R$,  we have that
\begin{align}\label{eqn:lower_semicontinuous_convergence}
\wt{D}_{h+f}^{\gamma_n} \to e^{\xi(\gamma) f} \cdot \wt{D} \quad \text{as} \quad n \to \infty
\end{align}
with respect to the lower semicontinuous topology.

Next as explained just after the statement of Lemma~\ref{lem:conditions_for_weyl_scaling},  we show that it is a.s.  the case that for any continuous function $f : \C \to \R$,  we have that the sequence $(\wt{D}_{h+f}^{\gamma_n})_{n \in \N}$ is equicontinuous when restricted to compact sets.  To show the latter claim,  we let $\chi>0$ be a constant as in the statement of Proposition~\ref{prop:tightness_proposition_subcritical} and fix a compact set $K \subseteq \C$.  Let also $K' \subseteq \C$ be a compact set such that $K \subseteq K'$ so that condition~\ref{it:boundness_of_almost_geodesic_paths} in Lemma~\ref{lem:conditions_for_weyl_scaling} is satisfied.  Then applying Proposition~\ref{prop:tightness_proposition_subcritical} and arguing as in the proof of Theorem~\ref{thm:tightness_subcritical},  we obtain that it is a.s.  the case that there exists $C \in (0,\infty)$ such that
\begin{align*}
\wt{D}_h^{\gamma_n}(z,w) \leq C |z-w|^{\chi} \quad \text{for all} \quad z,w \in K',  n \in \N.
\end{align*}

Fix $n \in \N,  z,w \in K$,  and let $P$ be a $\wt{D}_h^{\gamma_n}$-geodesic from $z$ to $w$.  Then condition~\ref{it:boundness_of_almost_geodesic_paths} implies that $P \subseteq K'$ and so the $\wt{D}_{h+f}^{\gamma_n}$-length of $P$ (since we also have that $\wt{D}_{h+f}^{\gamma_n} = e^{\xi(\gamma_n) f} \cdot \wt{D}_h^{\gamma_n}$ by Axiom~\ref{it:weyl_scaling}) is at most $C e^A |z-w|^{\chi}$,  where $A \in (0,\infty)$ is such that
\begin{align*}
\xi(\gamma_n) \sup_{z \in K'} |f(z)| \leq A \quad \text{for all} \quad n \in \N.
\end{align*}
It follows that $(\wt{D}_{h+f}^{\gamma_n})$ restricted to $K$ is equicontinuous.

Finally we can conclude the proof of the proposition as follows.  Fix any subsequence $(\wt{D}_{h+f}^{\gamma_{k_n}})_{n \in \N}$.  Then by local equicontinuity and applying the Arzela-Ascoli theorem,  we obtain that we can find a further subsequence $(\wt{D}_{h+f}^{\gamma_{k_n'}})_{n \in \N}$ and pseudometric $\wh{D}$ such that
\begin{align*}
\wt{D}_{h+f}^{\gamma_{k_n'}}  \to \wh{D} \quad \text{as} \quad n \to \infty
\end{align*}
with respect to the local uniform topology on $\C \times \C$.  Hence combining with \eqref{eqn:lower_semicontinuous_convergence},  we obtain that $\wh{D} = e^{\xi(\gamma) f}  \cdot \wt{D}$ a.s.  Therefore since the subsequence $(\wt{D}_{h+f}^{\gamma_{k_n}})_{n \in \N}$ was arbitrary,  we obtain that
\begin{align*}
\wt{D}_{h+f}^{\gamma_n} \to \wt{D} \quad \text{as} \quad n \to \infty
\end{align*}
with respect to the local uniform topology.  This completes the proof of the proposition.
\end{proof}

\subsection{The limit satisfies Axiom~\ref{it:locality}}
\label{subsec:locality_subcritical}

Now suppose that we have the coupling $(h,\wt{D})$ described in the previous sections.  The main goal of this subsection is to prove the following.

\begin{proposition}\label{prop:locality_property_subcritical}
The metric $\wt{D}$ in the coupling $(h,\wt{D})$ described above satisfies Axiom~\ref{it:locality}.
\end{proposition}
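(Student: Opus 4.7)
The plan is to combine the prelimit locality (Axiom~\ref{it:locality} for each $\wt D^{\gamma_n}$) with the tightness result of Section~\ref{sec:tightness} via a two-independent-copies coupling argument in the same spirit as Step~3 of the proof of Lemma~\ref{lem:crossing_positive}. Fix a deterministic open set $U \subseteq \C$. For each $n \in \N$, I would place on a common probability space a pair of whole-plane GFFs $(h^n, \tilde h^n)$, each normalized so that its circle average on $\partial B_1(0)$ vanishes, coupled so that $h^n|_U = \tilde h^n|_U$ a.s.\ while $h^n|_{\C \setminus U}$ and $\tilde h^n|_{\C \setminus U}$ are conditionally independent given $h^n|_U$. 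Such a coupling exists via the Markov decomposition of the GFF: decompose $h^n = (h^n)^0_U + \Fh^n_U + h^n|_{\C \setminus U}$ where $(h^n)^0_U$ is a zero-boundary GFF on $U$ and $\Fh^n_U$ is the harmonic extension of the boundary trace, and resample $h^n|_{\C \setminus U}$ from its conditional law given the boundary data of $h^n|_{\partial U}$.

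By Axiom~\ref{it:locality} applied to each prelimit metric, the internal metrics $\wt D_{h^n}^{\gamma_n}(\cdot, \cdot; U)$ and $\wt D_{\tilde h^n}^{\gamma_n}(\cdot, \cdot; U)$ are determined by $h^n|_U = \tilde h^n|_U$, hence a.s.
\[
\wt D_{h^n}^{\gamma_n}(\cdot, \cdot; U) = \wt D_{\tilde h^n}^{\gamma_n}(\cdot, \cdot; U).
\]
Moreover, since $\wt D_{h^n}^{\gamma_n}$ and $\wt D_{\tilde h^n}^{\gamma_n}$ are deterministic measurable functions of $h^n$ and $\tilde h^n$ respectively and the pair $(h^n, \tilde h^n)$ is conditionally independent given $h^n|_U$, the pair of prelimit metrics is also conditionally independent given $h^n|_U$. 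Applying Theorem~\ref{thm:tightness_subcritical} to the joint sequence $(h^n, \tilde h^n, \wt D_{h^n}^{\gamma_n}, \wt D_{\tilde h^n}^{\gamma_n})$ and invoking Skorokhod's representation theorem, I pass to a further subsequence along which the quadruple converges almost surely (jointly) to a limit $(h, \tilde h, \wt D, \tilde{\wt D})$ in the local uniform topology on $\C \times \C$; each marginal $(h, \wt D)$ and $(\tilde h, \tilde{\wt D})$ has the same joint law as the coupling fixed at the start of Section~\ref{sec:identifying_limit_subcritical}.

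In the limit, continuity of the internal metric under locally uniform convergence gives $\wt D(\cdot, \cdot; U) = \tilde{\wt D}(\cdot, \cdot; U)$ a.s.\ and $h|_U = \tilde h|_U$ a.s. Assuming conditional independence of $(h, \wt D)$ and $(\tilde h, \tilde{\wt D})$ given $h|_U$ is preserved in the limit (addressed below), two conditionally independent random variables with the same conditional distribution that are a.s.\ equal must be conditionally a.s.\ constant. Hence $\wt D(\cdot, \cdot; U)$ is $\sigma(h|_U)$-measurable, which is precisely Axiom~\ref{it:locality}.

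The main technical obstacle I anticipate is preserving the conditional independence through the subsequential weak limit, since $h|_U$ takes values in the infinite-dimensional space of distributions on $U$. The standard remedy is to test the prelimit factorization
\[
\E\bigl[F(\wt D_{h^n}^{\gamma_n})\, G(\wt D_{\tilde h^n}^{\gamma_n})\, \Phi(h^n|_U)\bigr] = \E\bigl[\E[F(\wt D_{h^n}^{\gamma_n}) \mid h^n|_U]\, \E[G(\wt D_{\tilde h^n}^{\gamma_n}) \mid h^n|_U]\, \Phi(h^n|_U)\bigr]
\]
against bounded continuous functionals $F, G$ of the metrics and $\Phi$ of $h^n|_U$, and pass to the limit using weak convergence of the quadruple, concluding conditional independence in the limit. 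Uniform integrability follows from the moment estimates in Section~\ref{sec:tightness}, while the infinite-dimensional conditioning is handled by approximating $\Phi$ by bounded continuous functions of finitely many pairings of $h^n|_U$ with a dense collection of test functions in $C_0^\infty(U)$, which is permitted by the continuity of such pairings under convergence in $H^{-1}_{\text{loc}}$. Once this is established, the conclusion of Proposition~\ref{prop:locality_property_subcritical} follows as outlined above.
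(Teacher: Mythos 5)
Your proposal takes a genuinely different route from the paper. The paper does not attempt to prove Axiom~\ref{it:locality} directly via a two-copies coupling; instead it proves only the weaker ``local $\xi$-additive metric'' property (Definitions~\ref{def:local_metric} and~\ref{def:xi_additive_metric}, a conditional-independence statement) via Lemma~\ref{lem:version_of_xi_additivity} following \cite[Lemma~2.13]{Pfeffer_2024}, and then invokes the substantial bootstrap theorem \cite[Proposition~2.5]{Pfeffer_2024}, which upgrades from conditional independence to measurability using translation invariance, tightness across scales, and an Efron--Stein type argument. The very existence of that machinery is a warning sign: if measurability of subsequential limits followed from a direct two-copies argument, \cite[Proposition~2.5]{Pfeffer_2024} would be unnecessary.

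The concrete gap in your proof is exactly the step you flag as ``the main technical obstacle.'' Conditional independence does not survive weak limits, and your proposed remedy does not close the gap. On the left side of your factorization, weak convergence of the quadruple together with bounded continuous $F,G,\Phi$ does give convergence. But the right side involves the conditional expectations $\E\bigl[F(\wt D_{h^n}^{\gamma_n}) \mid h^n|_U\bigr]$ and $\E\bigl[G(\wt D_{\tilde h^n}^{\gamma_n}) \mid h^n|_U\bigr]$, which are \emph{not} bounded continuous functionals of the data; they depend on the whole conditional law of the metric given the field restriction, and those conditional laws change with $n$ (the law of $(h^n, \wt D_{h^n}^{\gamma_n})$ depends on $\gamma_n$) and need not converge. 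Your suggested approximation of $\Phi$ by functions of finitely many pairings with test functions does not help, because conditional independence given $h^n|_U$ does \emph{not} imply conditional independence given the coarser $\sigma$-algebra generated by finitely many pairings (coarsening the conditioning $\sigma$-algebra destroys conditional independence in general). Without the conditional independence in the limit, the observation that ``two conditionally independent copies that are a.s.\ equal must be a.s.\ constant given the conditioning'' --- which is correct --- cannot be applied. A secondary issue is that the internal metric $D \mapsto D(\cdot,\cdot;U)$ is not a continuous functional of $D$ with respect to the local uniform topology (the infimum over paths in $U$ is not continuous), so the claim $\wt D(\cdot,\cdot;U)=\tilde{\wt D}(\cdot,\cdot;U)$ in the limit requires coupling the internal metrics directly via Theorem~\ref{thm:tightness_subcritical} rather than inferring it from convergence of the ambient metrics.
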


In order to prove Proposition~\ref{prop:locality_property_subcritical},  we will follow the strategy introduced in \cite[Section~2]{Pfeffer_2024}.  Let us start with reviewing some definitions and key results from \cite[Section~2]{Pfeffer_2024} that we are going to use.

First we recall the definitions of local and $\xi$-additive metrics for $h$ which are similar to \cite[Definitions~1.2 and ~1.5]{gwynne2020local}.  Since the method introduced in this subsection will be applied in order to obtain similar results for the supercritical LQG metric (see Subsection~\ref{subsec:locality_general_case}),  we will present these definitions with the assumption that the metrics are continuous removed.

\begin{defn}(\cite[Definition~2.2]{Pfeffer_2024})
\label{def:local_metric}
Let $U \subseteq \C$ be a connected open set,  and let $(h,D)$ be a coupling of $h$ with a random length metric $D$ on $U$.  We say that $D$ is a \emph{local metric} for $h$ if,  for any open set $V \subseteq U$,  the internal metric $D(\cdot,\cdot;V)$ is conditionally independent from the pair 
\begin{align*}
(h,D(\cdot,\cdot ; U \setminus \overline{V}))
\end{align*}
given $h|_{\overline{V}}$.
\end{defn}

\begin{defn}(\cite[Definition~2.3]{Pfeffer_2024})
\label{def:xi_additive_metric}
Let $U \subseteq \C$ be a connected open set,  and let $(h,D)$ be a coupling of $h$ with a random length metric $D$ on $U$ which is local for $h$.   We say that $D$ is \emph{$\xi$-additive} for $h$ if,  for each $z \in U$ and each $r>0$ such that $B_r(z) \subseteq U$,  the metric $e^{-\xi h_r(z)} D$ is local for $h-h_r(z)$.
\end{defn}

As in \cite[Section~2]{Pfeffer_2024},  instead of proving Axiom~\ref{it:locality} directly,  we will prove first a weaker locality property for the subsequential limiting metric $\wt{D}$.  In particular,  we will show the following.

\begin{proposition}\label{prop:limit_is_a_xi_additive_metric_subcritical}
The metric $\wt{D}$ is a $\xi(\gamma)$-additive local metric for $h$ in the coupling $(h,\wt{D})$ described above.
\end{proposition}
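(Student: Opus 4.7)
The strategy is to pass the locality and Weyl scaling properties of each pre-limit strong $\gamma_n$-LQG metric $\wt{D}_h^n$ to $\wt{D}$ via the almost sure locally uniform convergence $\wt{D}_h^n \to \wt{D}$. The key technical ingredient is convergence of internal metrics on deterministic open subsets, which I anticipate to be the principal obstacle: locally uniform convergence of the full metrics does not automatically yield convergence of the internal metrics on open (rather than compact) sets, since near-optimal paths can in principle drift toward the boundary.

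I would first show that for any deterministic open set $V\subseteq \C$, almost surely $\wt{D}_h^n(\cdot,\cdot;V) \to \wt{D}(\cdot,\cdot;V)$ locally uniformly on $V\times V$. The upper bound $\limsup_n \wt{D}_h^n(x,y;V) \le \wt{D}(x,y;V)$ follows by approximating the limit by the $\wt{D}$-length of a near-geodesic in $V$ between $x$ and $y$; using the geodesic-completeness from Proposition~\ref{prop:complete_and_geodesic_metric}, this path can be chosen to lie in a compact subset of $V$ on which $\wt{D}_h^n \to \wt{D}$ uniformly, allowing us to compare lengths. For the lower bound, I would extract a subsequential limit of near-optimal $\wt{D}_h^n$-paths using the uniform H\"older continuity from Proposition~\ref{prop:tightness_proposition_subcritical} together with Arzela-Ascoli, and rule out excursions of the limiting path to $\partial V$ by invoking the lower bound on distances across thin annuli provided by Lemma~\ref{lem:uniform_lower_bound_across_annuli}, which forces any path visiting $\partial V$ to accumulate extra length.

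Granted this convergence, locality of $\wt{D}$ is essentially immediate: Axiom~\ref{it:locality} applied to each strong LQG metric $\wt{D}_h^n$ gives that $\wt{D}_h^n(\cdot,\cdot;V)$ is $\sigma(h|_V)$-measurable, so its almost sure limit $\wt{D}(\cdot,\cdot;V)$ is also $\sigma(h|_V)$-measurable, and hence $\sigma(h|_{\overline V})$-measurable; the conditional independence required by Definition~\ref{def:local_metric} then holds trivially. For $\xi(\gamma)$-additivity, fix $z\in\C$ and $r>0$ and set $\tilde h := h - h_r(z)$. Applying Axiom~\ref{it:weyl_scaling} realization-wise with the (random) constant function $-h_r(z)$ identifies $\wt{D}_{\tilde h}^n$ with $e^{-\xi(\gamma_n) h_r(z)}\wt{D}_h^n$; as a strong $\gamma_n$-LQG metric for $\tilde h$, it satisfies Axiom~\ref{it:locality}, so $\wt{D}_{\tilde h}^n(\cdot,\cdot;W)$ is $\sigma(\tilde h|_W)$-measurable for every open $W\subseteq\C$. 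Combining the internal-metric convergence of Step~1 applied realization-wise to $\tilde h$ with $\xi(\gamma_n)\to \xi(\gamma)$ gives $\wt{D}_{\tilde h}^n(\cdot,\cdot;W) \to e^{-\xi(\gamma) h_r(z)}\wt{D}(\cdot,\cdot;W)$ locally uniformly almost surely, so the limit is $\sigma(\tilde h|_W)$-measurable. This yields the locality of $e^{-\xi(\gamma) h_r(z)}\wt{D}$ relative to $\tilde h$ and hence the required $\xi(\gamma)$-additivity of $\wt{D}$ for $h$.
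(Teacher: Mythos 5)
Your step $2$ contains a genuine gap. You write that since each $\wt{D}_h^n(\cdot,\cdot;V)$ is $\sigma(h|_V)$-measurable by Axiom~\ref{it:locality}, its almost sure limit $\wt{D}(\cdot,\cdot;V)$ is also $\sigma(h|_V)$-measurable, and conclude that the conditional independence in Definition~\ref{def:local_metric} ``holds trivially.'' But in the Skorokhod coupling used here, $\wt{D}_h^n$ is shorthand for $\wt{D}_{h^n}^{\gamma_n}$, which is a measurable function of $h^n|_V$, not of $h|_V$; the fields $h^n$ and $h$ are distinct random objects with $h^n\to h$ in $H^{-1}_{\loc}(\C)$ a.s. An a.s.\ limit of $\sigma(h^n|_V)$-measurable random variables has no reason to be $\sigma(h|_V)$-measurable, because both the measurable maps and their arguments vary with $n$. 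This is precisely why the notion of local metric (Definition~\ref{def:local_metric}) is formulated as a conditional independence property rather than a measurability property: conditional independence can be passed through limits of coupled sequences via approximation by countably many test-function pairings $(h^n,\phi)\to(h,\phi)$, whereas measurability cannot. If your argument were valid, $\wt{D}$ would already satisfy Axiom~\ref{it:locality} and the subsequent invocation of Proposition~\ref{prop:main_prop_from_pfeffer_paper} in the proof of Proposition~\ref{prop:locality_property_subcritical} would be vacuous. The same gap recurs in your $\xi(\gamma)$-additivity argument, where you again assert that the limit is $\sigma(\tilde h|_W)$-measurable.

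The paper's proof avoids this trap entirely by proving the conditional-independence statement directly: Lemma~\ref{lem:version_of_xi_additivity}, whose proof is the argument of \cite[Lemma~2.13]{Pfeffer_2024}, establishes that $\wt{D}(\cdot,\cdot;V)$ is conditionally independent of $(\wh{h},e^{-\xi(\gamma)h_r(z)}\wt{D}(\cdot,\cdot;\C\setminus\overline V))$ given $\wh{h}|_{\overline V}$ when $B_r(z)\subseteq V$, and the case $B_r(z)\not\subseteq V$ is handled by \cite[Lemma~2.19]{dubedat2020weak}. Your step $1$ (a.s.\ locally uniform convergence of internal metrics via Proposition~\ref{prop:complete_and_geodesic_metric}, Proposition~\ref{prop:tightness_proposition_subcritical}, and Arzel\`a--Ascoli) is plausible and is indeed an ingredient used implicitly in that machinery, but it does not by itself yield the measurability you claim.
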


Then Proposition~\ref{prop:locality_property_subcritical} will follow from combining Proposition~\ref{prop:limit_is_a_xi_additive_metric_subcritical} with the following result from \cite[Section~2]{Pfeffer_2024}.

\begin{proposition}(\cite[Proposition~2.5]{Pfeffer_2024})
\label{prop:main_prop_from_pfeffer_paper}
Let $U \subseteq \C$ be an open and connected set,  and let $h$ be a whole-plane GFF normalized so that $h_1(0) = 0$.  Let $(h,D)$ be a coupling of $h$ with a random lower semicontinuous metric $D$ on $U$.  Suppose that $D$ is a local $\xi$-additive metric for $h$ that satisfies Axioms~\ref{it:length_space} and 
\begin{enumerate}[(I)]
\item 
\textbf{Translation invariance} 
\label{it:translation_invariance}
For each deterministic $z \in \C$,  a.s.  $D_{h(\cdot +z)} = D_h(\cdot + z,  \cdot + z)$.
\item
\textbf{Tightness across scales} 
\label{it:tightness_across_and-around_annuli}
For each $r>0$,  there exists a deterministic constant $c_r>0$ such that if $A$ is a fixed annulus,  the laws of the distances $c_r^{-1} e^{-\xi h_r(0)} D_h(\text{across}\,\,rA)$ and $c_r^{-1} e^{-\xi h_r(0)} D_h(\text{around}\,\,rA)$ and their inverses for $r>0$ are tight.  Moreover,  there exists $\Lambda > 1$ such that for each $\delta \in (0,1)$,  we have that
\begin{align*}
\Lambda^{-1} \delta^{\Lambda} \leq \frac{c_{\delta r}}{c_r} \leq \Lambda \delta^{-\Lambda} \quad \text{for all} \quad r>0.
\end{align*}
\end{enumerate}

with $c_r = r^{\xi Q + o_r(1)}$ as $r \to 0,  r \in \Q$ for some $Q>0$.  Moreover,  suppose that $D$ is a complete and geodesic metric on the set $U \setminus \{\text{singluar points}\}$.

Then $D$ satisfies Axiom~\ref{it:locality}.  In particular,  $D$ is almost surely determined by $h$.
\end{proposition}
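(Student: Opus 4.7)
The goal is to upgrade the $\xi$-additive locality of $D$---which only asserts that $D(\cdot,\cdot;V)$ is conditionally independent of the complementary field and metric given $h|_{\overline{V}}$, possibly involving extra randomness---to genuine $\sigma(h|_U)$-measurability. The natural two-step approach is (a) show first that $D$ is a.s.\ determined by $h$ (there is no extra randomness), and then (b) localize this measurability to $h|_U$ using the Markov property of the GFF together with $\xi$-additivity.

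For step (a), I would consider two copies $D$ and $D'$ of the random metric, coupled to the same $h$ so that they are conditionally i.i.d.\ given $h$, each marginally satisfying the hypotheses. It suffices to prove $D = D'$ a.s.\ on $U \setminus \{\text{singular points}\}$. Using the $\xi$-additive local property, for every rational ball $B_r(z) \subseteq U$ the conditional law of the normalized internal metric $c_r^{-1} e^{-\xi h_r(z)} D(\cdot,\cdot; B_r(z))$ given $h|_{\overline{B_r(z)}}$ depends on $r$ and $z$ only through the tightness constants, and similarly for $D'$. Applying the independence-across-scales property of the GFF (Lemma~\ref{lem:good_event_at_many_scales}) to the nested rational annuli $\mathbb{A}_{2^{-k-1},2^{-k}}(z)$ around a fixed non-singular point, the assumed tightness of Axiom~\ref{it:tightness_across_and-around_annuli} together with the polynomial asymptotic $c_r = r^{\xi Q + o_r(1)}$ furnishes infinitely many scales at which $D$- and $D'$-distances across and around the annulus agree up to a fixed multiplicative constant, with probability 1 by Borel--Cantelli. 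A confluence/coalescence argument (analogous to the one used for LQG geodesics) then shows that on such common good annuli any $D$- and $D'$-geodesic between a fixed pair of non-singular points $z,w$ must pass through a common pinch region, forcing $D(z,w) = D'(z,w)$. Countable density of rational pairs and lower semicontinuity yield $D = D'$ on all of $U\setminus\{\text{singular}\}$, while the singular-point set is measurably determined by $h$ via its characterization in terms of the thick points of the field.

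For step (b), decompose $h = h_U^0 + \mathfrak{h}_U$ on $U$ using the Markov property, where $h_U^0$ is a zero-boundary GFF on $U$ and $\mathfrak{h}_U$ is the harmonic extension of $h|_{\partial U}$, hence measurable with respect to $h|_{\overline{U}}$. Exhausting $U$ by open sets $V_n \uparrow U$ with $\overline{V_n} \subseteq U$, iterative application of the $\xi$-additive locality to each $V_n$ combined with step (a) shows that $D(\cdot,\cdot;V_n)$ is $\sigma(h|_{\overline{V_n}})$-measurable. The length space and geodesic assumptions ensure the monotone convergence $D(\cdot,\cdot;V_n) \searrow D(\cdot,\cdot;U)$ pointwise on $U\setminus\{\text{singular}\}$, and since $\bigcup_n \overline{V_n} \subseteq U$, the limit is $\sigma(h|_U)$-measurable, which is Axiom~\ref{it:locality}.

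The main obstacle will be the coalescence step in (a). In the absence of an a priori uniform H\"older estimate for $D$ (which is how tightness is upgraded to measurability in the subcritical arguments of Section~\ref{sec:tightness}), one must argue purely from the two-sided tail tightness of annulus-crossing distances in Axiom~\ref{it:tightness_across_and-around_annuli}. The delicate point is obtaining a matching \emph{lower} bound on across-distances simultaneously with the upper bound on around-distances, uniformly in scale, so that the coalescence events carry positive independent-at-scales probability---this is exactly where the sharp scaling $c_r = r^{\xi Q + o_r(1)}$ in both directions is needed. A further subtlety specific to the application in the supercritical phase will be ensuring that the singular set, which for $\xi > \xi_{\text{crit}}$ may accumulate at the endpoints of geodesics, does not obstruct the countable dense approximation; this is handled by restricting all of the above probabilistic constructions to $h$-measurable subsets of $U$ on which distances to $\partial U$ and to the singular set stay bounded below.
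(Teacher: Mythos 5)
The overall two-step strategy---first show $D$ is a.s.\ determined by $h$, then use the $\xi$-additive local property together with the Markov property to localize to $h|_{\overline V}$---is the right shape, and step (b) as you sketch it is essentially correct: if $D(\cdot,\cdot;V)$ is conditionally independent of $h$ given $h|_{\overline V}$ and is also a.s.\ a function of $h$, then it is a.s.\ a function of $h|_{\overline V}$, and the monotone approximation $D(\cdot,\cdot;V_n) \downarrow D(\cdot,\cdot;U)$ works because $D$ is a length metric on the non-singular set.

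The gap is in step (a), specifically in the coalescence argument. Coupling $D$ and $D'$ to be conditionally i.i.d.\ given $h$ and using independence across scales plus the two-sided tightness of Axiom~\ref{it:tightness_across_and-around_annuli} will at best produce, at infinitely many scales, annuli around $z$ on which the $D$- and $D'$-distances across/around agree \emph{up to a fixed deterministic multiplicative constant}. That is precisely the bi-Lipschitz statement of \cite[Lemma~2.28]{Pfeffer_2024} (reproduced here as Lemma~\ref{lem:bi_lipschitz_equivalence_gamma_to_zero}), and it does not imply $D(z,w) = D'(z,w)$. Forcing the two geodesics through a common ``pinch region'' does not help: a shared sub-arc still has a $D$-length and a $D'$-length that may differ by the same bounded factor, and there is no mechanism for cancelling that factor as you iterate over scales. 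Confluence of geodesics is a tool for proving metric-space statements, not for showing that a metric has no extra randomness beyond $h$.

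The argument that actually closes this gap in \cite[Section~2.5]{Pfeffer_2024}, and which is echoed in Lemmas~\ref{lem:bi_lipschitz_equivalence_gamma_to_zero}--\ref{lem:metric_locally_determined_gamma_to_zero} and the surrounding discussion of the $\gamma\to 0$ case, is quantitative rather than geometric: one introduces a uniformly shifted grid of $\epsilon$-squares $\mathcal{C}_\theta^\epsilon$, shows that the family of internal metrics $\{D(\cdot,\cdot;S\cap U): S \in \mathcal{C}_\theta^\epsilon\}$ is conditionally i.i.d.\ given $(h,\theta)$ and determines $D$, and then applies the Efron--Stein inequality
\begin{align*}
\operatorname{Var}\bigl[D(O,O') \giv h,\theta\bigr] \leq \frac{1}{2} \sum_{S\in\mathcal{C}_\theta^\epsilon} \E\bigl[(D^S(O,O') - D(O,O'))^2 \giv h,\theta\bigr],
\end{align*}
where $D^S$ is obtained by resampling only the square $S$. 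The bi-Lipschitz bound with \emph{deterministic} constant is the key input that controls $|D^S(O,O') - D(O,O')|$, and one then shows the right-hand side tends to $0$ as $\epsilon \to 0$ using the scaling $c_r = r^{\xi Q + o_r(1)}$, forcing the conditional variance to vanish. That variance-vanishing step is the ``determined by $h$'' conclusion, and it has no analogue in your proposal. You should replace the coalescence step with this Efron--Stein argument.
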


Let us now prove Proposition~\ref{prop:limit_is_a_xi_additive_metric_subcritical}.  We start by stating a version of $\xi(\gamma)$-additivity as in Definitions~\ref{def:local_metric} and ~\ref{def:xi_additive_metric} in the case that $B_r(z) \subseteq V$.  See also \cite[Lemma~2.13]{Pfeffer_2024}.

\begin{lemma}\label{lem:version_of_xi_additivity}
Let $r>0$ and $z \in \C$,  and let $V$ be an open subset of $\C$ that contains $B_r(z)$ and such that $\overline{V} \neq \C$.  Set $\wh{h}:=h - h_r(z)$.  Then the internal metric $\wt{D}(\cdot,\cdot ; V)$ is conditionally independent from the pair $(\wh{h} , e^{-\xi(\gamma) h_r(z)} \wt{D}(\cdot ,  \cdot ; \C \setminus \overline{V}))$ given $\wh{h}|_{\overline{V}}$.
\end{lemma}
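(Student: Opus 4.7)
The plan is to transfer the $\xi(\gamma_n)$-additivity enjoyed by the prelimit metrics $\wt{D}_h^{\gamma_n}$ (an immediate consequence of Axioms~\ref{it:locality} and~\ref{it:weyl_scaling}) to the limit $\wt{D}$ by passing $n\to\infty$, and then to extract the stated conditional independence from the Markov property of the whole-plane GFF together with the hypothesis $B_r(z)\subseteq V$. For each $n\in\N$, Axiom~\ref{it:weyl_scaling} applied to the (random) constant $-h_r(z)$ gives
\begin{align*}
\wt{D}_{\wh{h}}^{\gamma_n}\;=\;\wt{D}_{h-h_r(z)}^{\gamma_n}\;=\;e^{-\xi(\gamma_n)h_r(z)}\wt{D}_h^{\gamma_n},
\end{align*}
and Axiom~\ref{it:locality} applied to the field $\wh{h}$ (which is a whole-plane GFF plus a continuous function) then implies that $\wt{D}_{\wh{h}}^{\gamma_n}(\cdot,\cdot;V)$ is a.s.\ a measurable function of $\wh{h}|_V$ while $\wt{D}_{\wh{h}}^{\gamma_n}(\cdot,\cdot;\C\setminus\overline{V})$ is a.s.\ a measurable function of $\wh{h}|_{\C\setminus\overline{V}}$.

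To pass to the limit I would invoke Proposition~\ref{prop:weyl_scaling} with the (random) constant function $-h_r(z)$, which gives
\begin{align*}
\wt{D}_{\wh{h}}^{\gamma_n}\;\longrightarrow\;e^{-\xi(\gamma)h_r(z)}\cdot\wt{D}\quad\text{as }n\to\infty
\end{align*}
in the local uniform topology a.s., the convergence $\xi(\gamma_n)\to\xi(\gamma)$ absorbing the mismatch in exponents. The crucial technical step is to upgrade this to convergence of internal metrics, namely that $\wt{D}_{\wh{h}}^{\gamma_n}(\cdot,\cdot;V)$ converges a.s.\ pointwise to $(e^{-\xi(\gamma)h_r(z)}\wt{D})(\cdot,\cdot;V)=e^{-\xi(\gamma)h_r(z)}\wt{D}(\cdot,\cdot;V)$, and analogously on $\C\setminus\overline{V}$ (the prefactor is a constant, so it commutes with the internal-metric operation). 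Passing $\wh{h}|_V$-measurability through the limit then gives that $e^{-\xi(\gamma)h_r(z)}\wt{D}(\cdot,\cdot;V)$ is $\wh{h}|_V$-measurable, and likewise $e^{-\xi(\gamma)h_r(z)}\wt{D}(\cdot,\cdot;\C\setminus\overline{V})$ is $\wh{h}|_{\C\setminus\overline{V}}$-measurable.

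The claimed conditional independence then follows by combining these measurability assertions with the Markov property of $\wh{h}$: conditionally on $\wh{h}|_{\overline{V}}$, the restriction $\wh{h}|_{\C\setminus\overline{V}}$ has the law of a zero-boundary GFF on $\C\setminus\overline{V}$ plus the harmonic extension of $\wh{h}|_{\partial V}$ (which is contained in the conditioning), and hence is conditionally independent of $\wh{h}|_V$. The hypothesis $B_r(z)\subseteq V$ enters to ensure that the factors $e^{\pm\xi(\gamma)h_r(z)}$ on the two sides of the independence claim align correctly, so that after cancellation the assertion reduces to the conditional independence of an $\wh{h}|_V$-measurable quantity and a pair of $\wh{h}|_{\C\setminus\overline{V}}$-measurable quantities given $\wh{h}|_{\overline{V}}$.

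The main obstacle I anticipate is the upgrade from local uniform convergence of the metrics to convergence of the internal metrics on $V$ and on $\C\setminus\overline{V}$. Local uniform convergence of length metrics does not in general imply convergence of internal metrics on a prescribed open set, because a near-geodesic realizing the internal distance for $\wt{D}_{\wh{h}}^{\gamma_n}$ might escape $V$ in the prelimit even though the limiting near-geodesic remains inside. To overcome this, I would combine the uniform-in-$n$ H\"older continuity from Proposition~\ref{prop:tightness_proposition_subcritical}, the fact that $\wt{D}$ is a continuous geodesic metric inducing the Euclidean topology (Propositions~\ref{prop:limit_is_a_metric} and~\ref{prop:complete_and_geodesic_metric}), and the stay-inside-a-compact-set argument used to verify condition~\ref{it:boundness_of_almost_geodesic_paths} in Lemma~\ref{lem:condition_I_satisfied}, to show that nearly optimal paths for the prelimit internal metrics eventually remain in $V$ and realize lengths matching those of limiting near-geodesics.
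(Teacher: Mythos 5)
Your proof has a genuine gap at the central step: the claim that, because each prelimit internal metric $\wt{D}_{\wh{h}^n}^{\gamma_n}(\cdot,\cdot;V)$ is $\sigma(\wh{h}^n|_V)$-measurable and the sequence converges a.s., the limit $e^{-\xi(\gamma)h_r(z)}\wt{D}(\cdot,\cdot;V)$ is $\sigma(\wh{h}|_V)$-measurable. Measurability does not pass through a.s.\ limits when the generating $\sigma$-algebras change with $n$. Here there is no inclusion between $\sigma(\wh{h}^n|_V)$ and $\sigma(\wh{h}|_V)$, so nothing forces the limit to be a function of $\wh{h}|_V$. Concretely, in the Skorokhod coupling the pair $(h^n,\wt{D}_{h^n}^{\gamma_n})$ carries extra randomness beyond $h$, and that extra randomness can survive the limit.

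This is not a technical inconvenience but the essential content of the problem: if your measurability claim were true, it would prove Axiom~\ref{it:locality} for $\wt{D}$ outright, rendering Propositions~\ref{prop:limit_is_a_xi_additive_metric_subcritical},~\ref{prop:locality_property_subcritical} and the entire Efron--Stein machinery behind Proposition~\ref{prop:main_prop_from_pfeffer_paper} superfluous. The lemma asserts only \emph{conditional independence}, which is strictly weaker than measurability with respect to $\wh{h}|_{\overline{V}}$, and it is precisely this weaker statement that can be passed through the limit. The correct route (the one the paper defers to, from \cite[Lemma~2.13]{Pfeffer_2024}) exploits that the degenerate conditional-independence structure of the triples $(\wt{D}_{\wh{h}^n}^{\gamma_n}(\cdot,\cdot;V),\,\wh{h}^n,\,\wt{D}_{\wh{h}^n}^{\gamma_n}(\cdot,\cdot;\C\setminus\overline{V}))$ given $\wh{h}^n|_{\overline{V}}$ is a property of joint laws, and shows carefully that this property --- but not the full measurability --- survives the passage to the limiting triple. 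Your remaining discussion (that the real obstacle is the convergence of internal metrics) misidentifies the difficulty: that convergence can be obtained with the equicontinuity and stay-in-a-compact-set estimates you cite, but the subsequent inference from measurability of the approximants to measurability of the limit is what fails.
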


\begin{proof}
It follows from the exact same argument used to prove \cite[Lemma~2.13]{Pfeffer_2024}.
\end{proof}

\begin{proof}[Proof of Proposition~\ref{prop:limit_is_a_xi_additive_metric_subcritical}]
First we note that the $\xi(\gamma)$-additivity property is obviously true when $\overline{V} = \C$.  For other choices of $V$,  Lemma~\ref{lem:version_of_xi_additivity} implies that $\wt{D}$ is a $\xi(\gamma)$-additive local metric for $h$ in the case that $B_r(z) \subseteq V$.  Then the general case follows from \cite[Lemma~2.19]{dubedat2020weak}.  This completes the proof of the proposition.
\end{proof}

\begin{proof}[Proof of Proposition~\ref{prop:locality_property_subcritical}]
First we note that Proposition~\ref{prop:main_prop_from_pfeffer_paper} implies that it suffices to check that the conditions in the statement of Proposition~\ref{prop:main_prop_from_pfeffer_paper} are satisfied for the metric $\wt{D}$ with $U = \C$.  

Proposition~\ref{prop:complete_and_geodesic_metric} implies that $\wt{D}$ is a complete and geodesic metric on $\C$.  Note that this implies that $\wt{D}$ satisfies Axiom~\ref{it:length_space}.  Also Proposition~\ref{prop:limit_is_a_xi_additive_metric_subcritical} implies that $\wt{D}$ is a $\xi(\gamma)$-additive local metric for $h$.  Hence since $\wt{D}$ does not have singular points a.s.,  it suffices to show that $\wt{D}$ satisfies conditions~\ref{it:translation_invariance} and ~\ref{it:tightness_across_and-around_annuli} with $c_r = r^{\xi(\gamma) Q(\gamma)}$ in order to complete the proof of the proposition.

To prove condition~\ref{it:tightness_across_and-around_annuli},  we note that the random metrics $\wt{D}_h^{\gamma_n}$ and $r^{-\xi(\gamma_n) Q(\gamma_n)} e^{-\xi(\gamma_n) h_r(0)} \wt{D}_h^{\gamma_n}(r\cdot ,  r \cdot)$ have the same law for all $r>0,n \in \N$.  Moreover the a.s.  convergence of $\wt{D}_h^{\gamma_n}$ to $\wt{D}$ as $n \to \infty$ with respect to the local uniform topology combined with the fact that
\begin{align*}
\xi(\gamma_n) \to \xi(\gamma) \quad \text{and} \quad Q(\gamma_n) \to Q(\gamma) \quad \text{as} \quad n \to \infty
\end{align*}
and the local uniform convergence of $(z,r) \to h_r(z)$ as $n \to \infty$ when viewed as a function in $\C \times (0,\infty)$  in the coupling of $\{(h,\wt{D}_h^{\gamma_n}) ,  (h,\wt{D})\}_{n \in \N}$,  we obtain that
\begin{align*}
r^{-\xi(\gamma_n) Q(\gamma_n)} e^{-\xi(\gamma_n) h_r(0)} \wt{D}_h^{\gamma_n}(r \cdot ,  r \cdot) \to r^{-\xi(\gamma) Q(\gamma)} e^{-\xi(\gamma) h_r(0)} \wt{D}(r \cdot ,  r \cdot) \quad \text{as} \quad n \to \infty
\end{align*}
with respect to the local uniform topology on $\C \times \C$ a.s.,  for all $r >0$.  In particular,  we have that the random metrics $\wt{D}$ and $r^{-\xi(\gamma) Q(\gamma)} e^{-\xi(\gamma) h_r(0)} \wt{D}(r \cdot ,  r \cdot)$ have the same law for all $r>0$.  Hence if $A = \mathbb{A}_{r_1,r_2}(z)$ for $z \in \C ,  0 < r_1 < r_2 < \infty$ is a fixed annulus,  in order to prove that the random variables 
\begin{align*}
r^{-\xi(\gamma) Q(\gamma)} e^{-\xi(\gamma) h_r(0)} \wt{D}(\text{around} \,\,  r A),\quad r^{-\xi(\gamma) Q(\gamma)} e^{-\xi(\gamma) h_r(0)} \wt{D}(\text{across} \,\,  r A)
\end{align*}
and their inverses are tight in $r>0$,  it suffices to prove that 
\begin{align*}
\wt{D}(\text{across}\,\,A) \in (0,\infty)\quad \text{and}
 \quad  \wt{D}(\text{around} \,\, A) \in (0,\infty) \quad \text{a.s.}
\end{align*}

Note that Proposition~\ref{prop:limit_is_a_metric} implies that $\wt{D}(\text{across} \,\,A) \in (0,\infty)$ a.s.  Moreover Lemma~\ref{lem:tightness_around_annuli} implies that $\wt{D}(\text{around} \,\,A) < \infty$ a.s.  Therefore it suffices to prove that $\wt{D}(\text{around} \,\,A) > 0$ a.s.  To prove the latter,  we fix $u,v \in \partial B_{\frac{r_1+r_2}{2}}(z)$ such that $\overline{B_{\frac{r_2-r_1}{2}}(u)} \cap \overline{B_{\frac{r_2-r_1}{2}}(v)} = \emptyset$.   Note that any path in $A$ which disconnects $\partial B_{r_1}(z)$ from $\partial B_{r_2}(z)$ has to intersect both $B_{\frac{r_2-r_1}{2}}(u)$ and $B_{\frac{r_2-r_1}{2}}(v)$.  Thus combining with Proposition~\ref{prop:limit_is_a_metric},  we obtain that
\begin{align*}
\wt{D}(\text{around} \,\,A) \geq \wt{D}\left(B_{\frac{r_2-r_1}{2}}(u) ,  B_{\frac{r_2-r_1}{2}}(v)\right) > 0 \quad \text{a.s.}
\end{align*}
It follows that $\wt{D}$ satisfies condition~\ref{it:tightness_across_and-around_annuli}.

Finally since the metrics $\wt{D}_h^{\gamma_n}(\cdot + z,  \cdot +z)$ and $\wt{D}_{h(\cdot + z)}^{\gamma_n}$ have the same law for all $n \in \N,  z \in \C$,  we obtain by taking $n \to \infty$ and arguing as before that $\wt{D}$ satisfies condition~\ref{it:translation_invariance}.  This completes the proof of the proposition.
\end{proof}

\subsection{The limit is the $\gamma$-LQG metric}
\label{subsec:limit_is_the_gamma_lqg_metric}

Now we are ready to prove Theorem~\ref{thm:main_theorem_subcritical_intro}.  The main input in the proof of Theorem~\ref{thm:main_theorem_subcritical_intro} is the following proposition.

\begin{proposition}\label{prop:main_prop_subcritical}
Suppose that we have the same setup as in the statement of Theorem~\ref{thm:main_theorem_subcritical_intro}.  Then there exists a $\gamma$-LQG metric $h \to \wh{D}_h$ with some fixed normalization so that the following is true.  Let $(\gamma_n)_{n \in \N}$ be a sequence in $(0,2)$ such that $\gamma_n \to \gamma$ as $n \to \infty$.  Then we have that
\begin{align*}
\wh{D}_h^{\gamma_n} \to \wh{D}_h \quad \text{as} \quad n \to \infty
\end{align*}
in probability with respect to the local uniform topology in $\C \times \C$.
\end{proposition}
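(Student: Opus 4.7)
The plan is to combine the tightness result of Theorem~\ref{thm:tightness_subcritical} with the axiomatic characterization of Theorem~\ref{thm:uniqueness_subcritical}, using the propositions established earlier in Section~\ref{sec:identifying_limit_subcritical}. First I would show that $\wh{D}_h^\gamma$ (normalized so that the median of the distance between $0$ and $1$ is $1$) differs from $\wt{D}_h^\gamma$ (normalized by the median of the distance around $\mathbb{A}_{1,2}(0)$) only by a deterministic multiplicative constant depending on $\gamma$, and that this constant is uniformly bounded above and below when $\gamma$ ranges over a compact subinterval of $(0,2)$. This reduction uses the tightness of $\wt{D}_h^\gamma$ across $\gamma \in [\gamma_*,\gamma^*]$ together with the lower bound on $\wt{D}_h^\gamma(0,1)$ provided by the arguments of Subsection~\ref{subsec:limit_is_a_metric_subcritical} (namely Lemma~\ref{lem:uniform_lower_bound_between_points} applied via scaling).

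Next, given any sequence $\gamma_n \to \gamma$, I would argue that it suffices to prove that every subsequence admits a further subsequence along which $\wh{D}_h^{\gamma_n}$ converges in probability to the canonical $\gamma$-LQG metric $\wh{D}_h$ (with the normalization dictated by the median of the distance between $0$ and $1$). By Theorem~\ref{thm:tightness_subcritical} and Skorokhod's representation theorem, after passing to a subsequence there is a coupling in which $h^n \to h$ in $H^{-1}_{\loc}(\C)$ and $\wt{D}_h^{\gamma_n} \to \wt{D}$ locally uniformly on $\C \times \C$ almost surely. By Propositions~\ref{prop:limit_is_a_metric}, ~\ref{prop:complete_and_geodesic_metric}, ~\ref{prop:weyl_scaling}, and ~\ref{prop:locality_property_subcritical}, the limiting object $\wt{D}$ is a complete geodesic metric inducing the Euclidean topology which is a measurable function of $h$ and satisfies Axioms~\ref{it:length_space}, \ref{it:locality}, and \ref{it:weyl_scaling} of Definition~\ref{def:strong_lqg_metric_subcritical}.

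The remaining axiom to verify is the coordinate change Axiom~\ref{it:coordinate_change} with the correct exponent $Q(\gamma) = 2/\gamma + \gamma/2$. This follows by passing to the limit in the identity
\begin{align*}
\wt{D}_h^{\gamma_n}(ru+z,rv+z) = \wt{D}_{h(r\cdot +z)+Q(\gamma_n)\log r}^{\gamma_n}(u,v),
\end{align*}
valid for each $n$, using the continuity $Q(\gamma_n)\to Q(\gamma)$, $\xi(\gamma_n)\to\xi(\gamma)$, together with Proposition~\ref{prop:weyl_scaling} to handle the additive perturbation by $Q(\gamma_n)\log r$ (which is bounded and continuous). Once this is in place, $\wt{D}$ is a strong $\gamma$-LQG metric in the sense of Definition~\ref{def:strong_lqg_metric_subcritical}, so Theorem~\ref{thm:uniqueness_subcritical} produces a deterministic constant $C\in(0,\infty)$ with $\wt{D} = C D_h$ a.s., where $D_h$ is the canonical limit of Theorem~\ref{thm:convergence_of_lffp}.

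Finally, I would transfer this to the normalization $\wh{D}_h^\gamma$. The multiplicative constant relating $\wh{D}_h^{\gamma_n}$ to $\wt{D}_h^{\gamma_n}$ converges to the analogous constant relating $\wh{D}_h$ to $C D_h$, because the median of $\wt{D}_h^{\gamma_n}(0,1)$ converges to the median of $\wt{D}(0,1) = C D_h(0,1)$ (using that $\wt{D}(0,1)\in(0,\infty)$ a.s.\ has a continuous distribution, which can be extracted from the $\xi(\gamma)$-additivity together with a standard absolute continuity argument for the GFF). Consequently the subsequential limit of $\wh{D}_h^{\gamma_n}$ is almost surely equal to $\wh{D}_h$, and in particular it is the same along every subsequence. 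Since the deterministic identification of the limit is independent of the subsequence, the full sequence $\wh{D}_h^{\gamma_n}$ converges to $\wh{D}_h$ in probability with respect to the local uniform topology on $\C\times\C$. I expect the main obstacle to be the verification of the coordinate change axiom with the correct exponent $Q(\gamma)$, since passing additive perturbations of the field through the limit requires the full strength of the Weyl scaling result from Subsection~\ref{subsec:metric_satisfies_weyl_scaling}.
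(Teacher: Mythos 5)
Your proposal is correct and follows essentially the same route as the paper: pass to a Skorokhod coupling along a subsequence, invoke Propositions~\ref{prop:limit_is_a_metric}, \ref{prop:complete_and_geodesic_metric}, \ref{prop:weyl_scaling}, \ref{prop:locality_property_subcritical} to verify Axioms~\ref{it:length_space}--\ref{it:weyl_scaling}, deduce Axiom~\ref{it:coordinate_change} from the identity at level $\gamma_n$ via Weyl scaling and the continuity of $\xi,Q$, apply Theorem~\ref{thm:uniqueness_subcritical}, and transfer to the $\wh{D}$-normalization using the continuity of the law of the distance from $0$ to $1$ (Lemma~\ref{lem:normalization_well_defined}). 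The only place you are slightly less explicit than the paper is the final passage from subsequence-independent convergence in law to convergence in probability, for which the paper invokes Lemma~\ref{lem:convergence_in_prob} together with the fact that $\wh{D}_h^{\gamma_n}$ and $\wh{D}_h$ are all a.s.\ determined by $h$ via Axiom~\ref{it:locality}.
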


\begin{proof}
Fix $\gamma \in (0,2)$ and let $(\gamma_n)_{n \in \N}$ be a sequence in $(0,2)$ such that $\gamma_n \to \gamma$ as $n \to \infty$.  Fix also a subsequence $(\gamma_{k_n})_{n \in \N}$ of $(\gamma_n)$.  Then having the setup of Theorem~\ref{thm:tightness_subcritical},  we obtain that Theorem~\ref{thm:tightness_subcritical} combined with Skorohhod's representation theorem imply that we can find a further subsequence $(\gamma_{k_n'})$ and a coupling of $\{(h^n ,  \wt{D}_{h^n}^{\gamma_n})\}_{n \in \N}$,  where $h_n$ is a whole-plane GFF normalized such that its average on $\partial B_1(0)$ is equal to zero,  with a pair $(h,\wt{D})$ such that a.s.  we have that 
\begin{align*}
h^n \to h \quad \text{in} \quad H_{\text{loc}}^{-1}(\C) \quad \text{as} \quad n \to \infty
\end{align*}
and 
\begin{align*}
\wt{D}_{h^n}^{\gamma_{k_n'}} \to \wt{D} \quad \text{as} \quad n \to \infty
\end{align*}
with respect to the local uniform topology in $\C \times \C$.

Propositions~\ref{prop:limit_is_a_metric},  ~\ref{prop:complete_and_geodesic_metric} and ~\ref{prop:locality_property_subcritical} imply that $\wt{D}$ is a metric a.s.  which satisfies Axioms~\ref{it:length_space} and ~\ref{it:locality}.    Moreover Proposition~\ref{prop:weyl_scaling} implies that the following is true a.s.  For every continuous and bounded function $f : \C \to \R$, we have that
\begin{align*}
\wt{D}_{h^n + f}^{\gamma_{k_n'}} \to e^{\xi(\gamma)f} \cdot \wt{D} \quad \text{as} \quad n \to \infty
\end{align*}
with respect to the local uniform topology in $\C \times \C$.

We set $\wt{D}_h:=\wt{D}$ and $\wt{D}_{h+f}:=e^{\xi(\gamma)f} \cdot \wt{D}$.  Then it is easy to see that for every fixed bounded and continuous function $f: \C \to \R$,  the metric $\wt{D}_{h+f}$ satisfies Axioms~\ref{it:length_space} and  ~\ref{it:weyl_scaling} since the same is true for $\wt{D}$.  Moreover, arguing as in the proof of \cite[Lemma~2.6]{Pfeffer_2024},  we obtain that $\wt{D}_{h+f}$ satisfies Axiom~\ref{it:locality} as well.  Furthermore by Axiom~\ref{it:coordinate_change},  we have for fixed $z \in \C,  r>0$ that 
\begin{align*}
\wt{D}_{h^n+f}^{\gamma_{k_n}'}(r\cdot + z ,  r \cdot +z) = \wt{D}_{(h^n+f)(r\cdot +z) + Q \log(r)}^{\gamma_{k_n}'} \quad \text{for all} \quad n \in \N \quad \text{a.s.}
\end{align*}
Since 
\begin{align*}
&\wt{D}_{h^n+f}^{\gamma_{k_n}'}(r \cdot +z,  r\cdot +z) \to \wt{D}_{h+f}(r \cdot + z,  r \cdot +z) \quad \text{as} \quad n \to \infty,\\
&\wt{D}_{(h^n+f)(r \cdot +z) + Q \log(r)}^{\gamma_{k_n}'} \to \wt{D}_{(h+f)(r \cdot + z) + Q \log(r)} \quad \text{as} \quad n \to \infty
\end{align*}
with respect to the local uniform topology on $\C \times \C$ a.s.,  we obtain that $\wt{D}_{h+f}$ satisfies Axiom~\ref{it:coordinate_change}.

To extend the definition of $\wt{D}_{h+f}$ in the case that $f : \C \to \R$ is an unbounded continuous function,  we argue as follows.  For each open and bounded set $V \subseteq \C$,  we set $\wt{D}_{h+f}^V :=\wt{D}_{h+ \phi_V f}(\cdot,\cdot ; V)$,  where $\phi_V$ is a fixed smooth compactly supported function such that $\phi_V \equiv 1$ on $V$.  Then Axiom~\ref{it:locality} implies that $\wt{D}_{h+f}^V$ is a.s.  determined by $(h+\phi_V f)|_V = (h+f)|_V$,  since $\phi_V f$ is a bounded and continuous function.  We defined the metric $\wt{D}_{h+f}$ as the length metric with the property that for any continuous path $P$ in $\C$,  we have that the $\wt{D}_{h+f}$-length of $P$ is equal to its $\wt{D}_{h+f}^V$-length,  for any bounded and open set $V \subseteq \C$ such that $P \subseteq V$.Then it is easy to see that $\wt{D}_{h+f}$ satisfies Axioms~\ref{it:length_space},  \ref{it:locality},\ref{it:weyl_scaling} and \ref{it:coordinate_change}.   It follows that $h \to \wt{D}_h$ defines a $\gamma$-LQG metric with some fixed normalization (see Theorems~\ref{thm:uniqueness_subcritical}).

Finally we choose a normalization for $h \to \wt{D}_h$ which is independent from $(\gamma_n)$.  More precisely,  we let $\wt{\beta}(\gamma) \in (0,\infty)$ be such that
\begin{align*}
\p\left[\wt{D}_h(0,1) \leq \wt{\beta}(\gamma) \right] = \frac{1}{2}
\end{align*}
in the case that $h$ is a whole-plane GFF normalized such that $h_1(0) = 0$.
Note that Lemma~\ref{lem:normalization_well_defined} implies that $\wt{\beta}(\gamma)$ is well-defined.  Since $\wt{D}_{h^n}^{\gamma_{k_n'}} \to \wt{D}_h$ as $n \to \infty$ with respect to the local uniform topology in $\C \times \C$,  and 
\begin{align*}
\p\left[\wt{D}_h(0,1) = \wt{\beta}(\gamma) \right] = 0
\end{align*}
by Lemma~\ref{lem:normalization_well_defined},  we obtain that $\wt{\beta}(\gamma_{k_n'}) \to \wt{\beta}(\gamma)$ as $n \to \infty$,  where $\wt{\beta}(\gamma_{k_n'})$ is such that
\begin{align*}
\p\left[\wt{D}_{h^n}^{\gamma_{k_n'}}(0,1) \leq \wt{\beta}(\gamma_{k_n'})\right] = \frac{1}{2}.
\end{align*}
Again we have by Lemma~\ref{lem:normalization_well_defined} that $\wt{\beta}(\gamma_{k_n'})$ is well-defined for all $n$.
Note that $\wh{D}_{h^n}^{\gamma_{k_n'}} = \wt{\beta}(\gamma_{k_n'})^{-1} \wt{D}_{h^n}^{\gamma_{k_n'}}$ and so $\wh{D}_{h^n}^{\gamma_{k_n'}} \to \wh{D}_h$ as $n \to \infty$ locally uniformly,  where $\wh{D}_h:=\wt{\beta}(\gamma)^{-1} \wt{D}_h$.  Note that
\begin{align*}
\p\left[\wh{D}_h(0,1) \leq 1\right] = \frac{1}{2}.
\end{align*}
Therefore since the subsequence $(\gamma_{k_n})_{n \in \N}$ was arbitrary,  we obtain that
\begin{align*}
\wh{D}_h^{\gamma_n} \to \wh{D}_h \quad \text{in law} \quad \text{as} \quad n \to \infty
\end{align*}
with respect to the local uniform topology in $\C \times \C$.  Hence since the random variables $\wh{D}_h^{\gamma_n}$ and $\wh{D}_h$ are a.s.  determined by $h$ for all $n \in \N$,  by Axiom~\ref{it:locality},  we obtain by combining with Lemma~\ref{lem:convergence_in_prob} that $\wh{D}_h^{\gamma_n} \to \wh{D}_h$ as $n \to \infty$ in probability.  This completes the proof of the theorem.
\end{proof}

\begin{proof}[Proof of Theorem~\ref{thm:main_theorem_subcritical_intro}]
It follows from combining Proposition~\ref{prop:main_prop_subcritical} with Theorem~\ref{thm:uniqueness_subcritical}.
\end{proof}

\section{Tightness: $\gamma \to 0$ case}
\label{sec:gamma_to_zero_case}

In this section,  we will show that the family of metrics $\wt{D}_h^{\gamma}$ in Theorem~\ref{thm:main_theorem_gamma_to_zero_intro} is tight as $\gamma \to 0$ with respect to the local uniform topology on $\C \times \C$.  We will follow the same argument as in Section~\ref{sec:tightness} and obtain analogous statements.  We will re-normalize the metrics $D_h^{\gamma}$ as in \eqref{eqn:main_normalization} but now the constant $\mathfrak{p}$ will not depend on $\gamma$.  Therefore the statements of lemmas and propositions will be different from those in Section~\ref{sec:tightness}.

Let us start with defining our re-normalization of the metrics $D_h^{\gamma}$.  Fix $c \in (0,\frac{1}{2})$ and let $b \in (0,1)$ be a universal constant such that $2 \sqrt{2} \xi_{\text{crit}} < b^{-1}$.  We choose $M \in (1,\infty)$ sufficiently large such that
\begin{align*}
\frac{4 + \xi_{\text{crit}} 2 \sqrt{2} \sqrt{4 + M}}{\sqrt{M}} < b^{-1}.
\end{align*}
Note that the above choices of $b$ and $M$ allows us to apply Proposition~\ref{prop:bound_on_lqg_distances_of_sets} (see \eqref{eqn:choice_of_constant_b} in the proof of Proposition~\ref{prop:bound_on_lqg_distances_of_sets}).  Let $\mathfrak{p}_0 \in (0,1)$ be the constant in the statement  of Proposition~\ref{prop:bound_on_lqg_distances_of_sets} corresponding to the above choice of $M$.  Set also $\zeta = \zeta(\gamma) = \frac{c}{\xi(\gamma)}$ for all $\gamma \in (0,2)$.  Furthermore,  for all $\gamma \in (0,2)$,  we let $\alpha(\gamma) \in (0,\infty)$ be such that
\begin{align}\label{eqn:main_normalization_gamma_to_zero}
\p\left[D_h^{\gamma}(\text{around} \,\,  \mathbb{A}_{1,2}(0)) \leq \alpha(\gamma) \right] = \mathfrak{p}_0
\end{align}
and set 
\begin{align}\label{eqn:normalization_gamma_to_zero}
\wt{D}_h^{\gamma}:=\alpha(\gamma)^{-1} D_h^{\gamma}.
\end{align}
Recall that Lemma~\ref{lem:normalization_well_defined} implies that $\alpha(\gamma)$ is well-defined.

The main goal of the section is to prove the following analogue of Theorem~\ref{thm:tightness_subcritical}.

\begin{theorem}\label{thm:tightness_gamma_to_zero}
There exists $\gamma_0 \in (0,2)$ sufficiently small such that the following is true.  Let $U \subseteq \C$ be an open and connected set. Then the collection of laws of the random metrics $\wt{D}_h^{\gamma}(\cdot ,  \cdot ; U)$ on $U \times U$ for $\gamma \in (0,\gamma_0)$ is tight with respect to the local uniform topology on $U \times U$.
\end{theorem}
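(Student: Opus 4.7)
The plan is to follow the same two-step strategy used in Section~\ref{sec:tightness} for the subcritical case, namely (i) prove a uniform-in-$\gamma$ upper bound on $\wt{D}_h^{\gamma}$-distances between compact sets, and (ii) use this to derive a Hölder continuity estimate with Hölder exponent $\chi>0$ independent of $\gamma \in (0,\gamma_0)$, then conclude by Arzelà-Ascoli as in the proof of Theorem~\ref{thm:tightness_subcritical}. The main difference from Section~\ref{sec:tightness} is that the normalizing probability $\mathfrak{p}_0$ in \eqref{eqn:main_normalization_gamma_to_zero} is fixed rather than tuned as a function of $\gamma$, so the parameters $q,\zeta,\wt{\zeta},M$ must be chosen with care to produce uniform bounds as $\gamma\to 0$, in particular exploiting the fact that $\xi(\gamma)\to 0$ and $Q(\gamma)\to\infty$.

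For Step (i), I would verify that Proposition~\ref{prop:bound_on_lqg_distances_of_sets} applies verbatim with the universal constants $b$ and $M$ and the threshold $\mathfrak{p}_0$ fixed at the start of the section: its statement already allows any $\mathfrak{p}\in(\mathfrak{p}_0,1)$, and the proof only uses that $\xi(\gamma)\le\xi_{\rm crit}$, which holds uniformly. This yields, for all $\gamma\in(0,\gamma_0)$, disjoint compact connected non-singleton $K_1,K_2\subseteq U$, and all $\mathfrak{r}>0$,
\begin{equation*}
\p\bigl[\wt{D}_h^{\gamma}(\mathfrak{r}K_1,\mathfrak{r}K_2;\mathfrak{r}U) \le A\mathfrak{r}^{\xi Q}e^{\xi h_{\mathfrak{r}}(0)}\bigr] \ge 1 - O_A(A^{-b\sqrt{M}})
\end{equation*}
with implicit rate depending only on $K_1,K_2,U$.

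For Step (ii), I would revisit the proof of Proposition~\ref{prop:moment_lqg_diameter_bound_for_square}, keeping the choice $\zeta(\gamma)=c/\xi(\gamma)$ so that the exponent $\zeta\xi=c$ that governs both the annulus-covering tail in Proposition~\ref{prop:bound_on_lqg_distances_of_sets} and the circle-average estimate of Lemma~\ref{lem:circle_averages_uniform_bound} becomes a constant. Using $Q(\gamma)\sim 2/\gamma$ and $\xi(\gamma)\sim\gamma/d_\gamma$ as $\gamma\to 0$, one chooses $q\in(2,3)$ and $\wt{\zeta}$ small but fixed, and $M$ large but fixed, so that the three competing exponents in Lemma~\ref{lem:upper_bound_on_internal_diamters_of_spuares} all exceed $2/\xi(\gamma)+\eta$ for some $\eta>0$ uniformly in small $\gamma$; this is the analogue of \eqref{eqn:conditions_for_the_function_q}-\eqref{eqn:condition_for_the_function_M}, with the key observation that the constraint $\zeta<Q-q$ and the ``$q+\sqrt{q^2-4}$'' bound become easier as $Q\to\infty$, while the new constraint $(\zeta^2\xi b\sqrt{M})/(\zeta+\xi)\,>\,2/\xi$ reduces to $c^2 b\sqrt{M}/(c+\xi^2)>2$, which is satisfied for small enough $c$ and large enough $M$ uniformly. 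This yields a tail bound $\p[\ldots > C]\le\wt{C}C^{-a/\xi(\gamma)}$ for some $a>2$ independent of $\gamma\in(0,\gamma_0)$, and therefore a uniform $L^{p}$ moment bound on the rescaled diameter with $p=\beta/\xi(\gamma)$ for any fixed $\beta\in(2,a)$, matching the statement of Proposition~\ref{prop:moment_lqg_diameter_bound_for_compact_sets}.

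Step (iii) and the conclusion then follow the same template as Subsection~\ref{subsec:tightness_subcritical}: one combines the moment bound with Markov's inequality to obtain the analogue of Lemma~\ref{lem:lqg_diameter_upper_bound_internal_metric_on_balls}, then a chaining/union bound argument as in Lemma~\ref{lem:lqg_diameter_upper_bound_internal_metric_on_points} produces a Hölder estimate with exponent $\chi=(Q(\gamma)-\beta)\xi(\gamma)$; since $Q(\gamma)\xi(\gamma)\to 1$ (because $\xi=\gamma/d_\gamma$ and $Q=2/\gamma+\gamma/2$, so $\xi Q\to 1$ as $\gamma\to 0$) while $\beta\xi(\gamma)\to 0$, the exponent $\chi(\gamma)\to 1$, so in particular $\chi(\gamma)$ is bounded below by some positive constant for $\gamma\in(0,\gamma_0)$, and the Arzelà-Ascoli argument of the proof of Theorem~\ref{thm:tightness_subcritical} goes through. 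The main obstacle I expect is the careful verification in Step (ii) that the various $\gamma$-dependent exponents in the analogue of Lemma~\ref{lem:upper_bound_on_internal_diamters_of_spuares} can all be made to simultaneously exceed $2/\xi(\gamma)$ by a fixed amount as $\gamma\to 0$; this is where the scaling $\zeta\sim c/\xi$ is delicate and where the choice of $\gamma_0$ gets fixed.
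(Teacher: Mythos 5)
Your high-level plan — a uniform distance bound, then a Hölder estimate, then Arzelà-Ascoli — is the same as the paper's, and you correctly identify the scaling $\zeta(\gamma)=c/\xi(\gamma)$ and the importance of $Q(\gamma)\to\infty$, $\xi(\gamma)\to 0$. However, Step (ii) has a genuine gap: the claimed reduction of the constraint $\frac{\zeta^2\xi b\sqrt{M}}{\zeta+\xi}>\frac{2}{\xi}$ to ``$c^2 b\sqrt{M}/(c+\xi^2)>2$'' is an algebraic slip. Substituting $\zeta=c/\xi$ gives $\frac{\zeta^2\xi b\sqrt{M}}{\zeta+\xi}=\frac{c^2 b\sqrt{M}}{c+\xi^2}$, which stays \emph{bounded} as $\xi\to 0$, while the right-hand side $2/\xi$ blows up. Thus the inequality cannot be satisfied uniformly for small $\gamma$ with fixed $c,M$, and the strategy of proving an $L^p$ moment bound with $p=\beta/\xi(\gamma)\to\infty$ (as in Proposition~\ref{prop:moment_lqg_diameter_bound_for_square}) breaks down. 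The same problem occurs with the case-1 circle-average exponent: with fixed $q\in(2,3)$ one has $\frac{q+\sqrt{q^2-4}}{\zeta+\xi}\sim\frac{(q+\sqrt{q^2-4})\xi}{c}\to 0$, far short of the required $2/\xi$.

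The paper's actual argument is structurally different in two ways that resolve this. First, in the analogue of the circle-average estimate (Lemma~\ref{lem:circle_average_uniform_bound_gamma_to_zero_case}) it takes $q=Q(\gamma)/2\to\infty$ rather than a fixed $q\in(2,3)$; this makes the tail decay like $\exp(-c_1 Q(\gamma)^2\log(C)^2)$, which is super-polynomial in $C$ and hence has all moments for free. Second, Lemma~\ref{lem:moment_lqg_diameter_bound_for_compact_sets_gamma_to_zero} only asserts a moment bound for a \emph{fixed} range $p\in(0,c_1\sqrt{M})$, not $p\sim 1/\xi$, and Lemma~\ref{lem:upper_bound_internal_metric_on_points_gamma_to_zero} targets a fixed Hölder exponent $s\in(0,1)$ rather than the $\gamma$-dependent $\chi(\gamma)=(Q(\gamma)-\beta)\xi(\gamma)$. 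Tightness only requires some positive uniform Hölder exponent, so fixing $s$ (and then choosing $M$ and $\gamma_0$ depending on $s$) suffices. Your observation that $\xi(\gamma)Q(\gamma)\to 1$ so that the natural exponent tends to $1$ is correct and motivates Theorem~\ref{thm:main_theorem_gamma_to_zero_intro}, but it is not the mechanism the tightness proof relies on.
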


The main ingredient in the proof of Theorem~\ref{thm:tightness_gamma_to_zero} is the following analogue of Proposition~\ref{prop:tightness_proposition_subcritical}.

\begin{proposition}\label{prop:tightness_proposition_gamma_to_zero}
Fix $s \in (0,1)$.  Then there exists a constant $\gamma_0 \in (0,2)$ depending only on $s$
such that for every open and connected set $U \subseteq \C$ and every compact and connected set $K \subseteq U$,  we have that
\begin{align*}
\sup_{\gamma \in (0,\gamma_0)} \p\left[ \sup_{z,w \in K} \frac{\wt{D}_h^{\gamma}(z,w ; U)}{|z-w|^s} \geq R \right] \to 0 \quad \text{as} \quad R \to \infty.
\end{align*}
\end{proposition}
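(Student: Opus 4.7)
The plan is to follow the same two-step strategy as in Proposition~\ref{prop:tightness_proposition_subcritical}: first establish a moment bound on diameters analogous to Proposition~\ref{prop:moment_lqg_diameter_bound_for_compact_sets}, then derive from it a H\"older continuity estimate analogous to Lemma~\ref{lem:lqg_diameter_upper_bound_internal_metric_on_points}, both uniform in $\gamma \in (0,\gamma_0)$ for some $\gamma_0 = \gamma_0(s)$, and finally apply Markov's inequality and a union bound exactly as in the proof of Proposition~\ref{prop:tightness_proposition_subcritical}. The core difference from Section~\ref{sec:tightness} is that the re-normalization parameter $\mathfrak{p}_0$ is now fixed (instead of $\mathfrak{p}(\gamma) \to 1$ as $\gamma \to 2$), so the auxiliary quantities $q(\gamma),\zeta(\gamma),\wt{\zeta}(\gamma),M$ from Subsection~\ref{subsec:moment_bound_for_diameters} have to be chosen differently in order for conditions \eqref{eqn:conditions_for_the_function_q}--\eqref{eqn:condition_for_the_function_M} to hold uniformly in $\gamma \in (0,\gamma_0)$.

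The key point is that as $\gamma \to 0$ one has $\xi(\gamma) \to 0$, $Q(\gamma) \to \infty$, and $\xi(\gamma) Q(\gamma) \to 1$ since $d_{\gamma} \to 2$. With the choice $\zeta(\gamma) = c/\xi(\gamma)$ built into the setup of this section and with $M$ as fixed there, I would take $q(\gamma)$ growing like $1/\xi(\gamma)$ with a coefficient strictly smaller than $1 - c$ (so that $q + \zeta < Q$ in \eqref{eqn:conditions_for_the_function_zeta}) and $\wt{\zeta}$ a small constant. With these choices one can verify directly that the tail bound produced by the analogue of Lemma~\ref{lem:upper_bound_on_internal_diamters_of_spuares} decays at a $\gamma$-uniform rate on $C$. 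Proposition~\ref{prop:bound_on_lqg_distances_of_sets} applies since the setup already guarantees $\mathfrak{p}_0 > \mathfrak{p}_0(M)$, so the analogues of Lemmas~\ref{lem:good_event_occurs_almost_everywhere}--\ref{lem:connecting_compact_sets_with_good_annuli} are immediate. The outcome is, for some $a > 2$ independent of $\gamma \in (0,\gamma_0)$ and any $\beta \in (2,a)$, a uniform bound on the $p$-th moment (with $p = \beta/\xi(\gamma)$) of
\[
\mathfrak{r}^{-\xi(\gamma) Q(\gamma)} e^{-\xi(\gamma) h_{\mathfrak{r}}(0)} \sup_{z,w \in \mathfrak{r}\mathbb{S}} \wt{D}_h^\gamma(z,w;\mathfrak{r}\mathbb{S}),
\]
which upgrades to the analogue of Proposition~\ref{prop:moment_lqg_diameter_bound_for_compact_sets} through the same deterministic covering argument.

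Given this moment bound, the analogue of Lemma~\ref{lem:lqg_diameter_upper_bound_internal_metric_on_points} yields a H\"older continuity estimate with exponent $(Q(\gamma) - \beta)\xi(\gamma) = \xi(\gamma)Q(\gamma) - \beta \xi(\gamma)$. Since $\xi(\gamma)Q(\gamma) \to 1$ and $\beta \xi(\gamma) \to 0$ as $\gamma \to 0$ for any fixed $\beta$, for any $s \in (0,1)$ one can find $\gamma_0 = \gamma_0(s)$ so that this exponent exceeds $s$ for all $\gamma \in (0,\gamma_0)$. Combining the H\"older estimate with the diameter moment bound and Markov's inequality then gives the claim exactly as in the proof of Proposition~\ref{prop:tightness_proposition_subcritical}.

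The main obstacle is the first step, namely calibrating the parameters so that the various tail estimates underlying Proposition~\ref{prop:moment_lqg_diameter_bound_for_square} all survive the limit $\gamma \to 0$ under the fixed normalization $\mathfrak{p}_0$. The scaling $\zeta \sim 1/\xi(\gamma)$ is essential here: with a constant $\zeta$, the third contribution in the analogue of Lemma~\ref{lem:upper_bound_on_internal_diamters_of_spuares} contributes a rate on $C$ of the form $\zeta^2 \xi b\sqrt{M}/(\zeta + \xi)$, which would tend to zero as $\xi \to 0$ and destroy the bound. The choice $\zeta(\gamma) = c/\xi(\gamma)$ makes this rate a strictly positive $\gamma$-uniform constant, which is precisely what is needed to extract moments of the required order $\beta/\xi(\gamma)$ and to support a H\"older exponent approaching $1$.
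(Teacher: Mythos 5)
You have correctly identified the high-level strategy — reuse the two-step scheme of Section~3, fix the normalization $\mathfrak{p}_0$, take $\zeta = c/\xi(\gamma)$ so that the exponent $\zeta^2\xi b\sqrt M/(\zeta+\xi)$ stays bounded away from zero, and exploit $\xi(\gamma)Q(\gamma)\to 1$ to push the H\"older exponent up to any $s<1$. These are exactly the ideas the paper uses. However, there is a concrete error in the intermediate step.

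The claim that one obtains, uniformly in $\gamma\in(0,\gamma_0)$, a bound on the moment of order $p = \beta/\xi(\gamma)$ of the recentered diameter (with $\beta$ fixed in $(2,a)$) cannot hold: as $\gamma\to 0$ one has $\xi(\gamma)\to 0$ and hence $p\to\infty$, while the tail bound that the argument produces (the analogue of Lemma~\ref{lem:upper_bound_on_internal_diamters_of_spuares}, namely Lemma~\ref{lem:upper_bound_on_internal_diamters_of_spuares_gamma_to_zero}) decays only like $C^{-c_1\sqrt M}$ plus a super-polynomial term. A pure power-law tail with a $\gamma$-independent exponent $c_1\sqrt M$ supports moments only up to order $c_1\sqrt M$, so $\E[X^p]\lesssim 1$ fails once $\beta/\xi(\gamma)$ exceeds that threshold, i.e.\ for all sufficiently small $\gamma$. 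Transplanting the subcritical normalization $p = \beta/\xi$ into the $\gamma\to 0$ regime is therefore not legitimate. Relatedly, the claim that the conditions \eqref{eqn:conditions_for_the_function_q}--\eqref{eqn:condition_for_the_function_M} can be made to hold with $\zeta(\gamma) = c/\xi(\gamma)$ by different choices of the other parameters is false: condition \eqref{eqn:conditions_for_the_function_zeta} requires $\xi(q+\sqrt{q^2-4}) > 2(\xi+\zeta)$, and with $\zeta = c/\xi$ the right-hand side blows up like $2c/\xi$ while the left-hand side stays bounded (it equals $\xi(q+\sqrt{q^2-4}) \approx 2q_0$ when $q\approx q_0/\xi$), so \eqref{eqn:conditions_for_the_function_zeta} eventually fails and the Section~3 machinery cannot be reused as a black box.

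The paper sidesteps both problems by fixing the moment order $p$ (not $\beta$) in a finite window $(0,c_1\sqrt M)$ and re-deriving a cleaner circle-average estimate (Lemma~\ref{lem:circle_average_uniform_bound_gamma_to_zero_case}, with $q = Q(\gamma)/2$ and no $\wt\zeta$ parameter) rather than appealing to Lemma~\ref{lem:circle_averages_uniform_bound}. With $p$ and the target exponent $s<1$ fixed, the Markov exponent $p\xi Q - p^2\xi^2/2 - ps$ tends to $p(1-s)>0$, and one can make $p(1-s)$ as large as needed (depending on $s$) by enlarging $M$, which is what makes the union bound in the analogue of Lemma~\ref{lem:lqg_diameter_upper_bound_internal_metric_on_points} go through uniformly in $\gamma$. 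Your final conclusion about the H\"older exponent approaching $1$ is correct, but the route via $p = \beta/\xi$ does not get you there; you should fix $p$ and $s$ independently.
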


\begin{proof}[Proof of Theorem~\ref{thm:tightness_gamma_to_zero} assuming Proposition~\ref{prop:tightness_proposition_gamma_to_zero}]
It follows from the argument in the proof of Theorem~\ref{thm:tightness_subcritical} where we use Proposition~\ref{prop:tightness_proposition_gamma_to_zero} in place of Proposition~\ref{prop:tightness_proposition_subcritical}.
\end{proof}

Therefore it remains to prove Proposition~\ref{prop:tightness_proposition_gamma_to_zero}.  In order to prove Proposition~\ref{prop:tightness_proposition_gamma_to_zero},  we use  Lemmas~\ref{lem:upper_bound_internal_metric_on_points_gamma_to_zero} and \ref{lem:moment_lqg_diameter_bound_for_compact_sets_gamma_to_zero} which are versions of Lemma~\ref{lem:lqg_diameter_upper_bound_internal_metric_on_points} and Proposition~\ref{prop:moment_lqg_diameter_bound_for_compact_sets} respectively.

\begin{lemma}\label{lem:upper_bound_internal_metric_on_points_gamma_to_zero}
Fix $s \in (0,1)$.  Then we can choose $M>1$ sufficiently large and $\gamma_0 \in (0,2)$ sufficiently small (both depending only on $s$) such that the following is true for all $\gamma \in (0,\gamma_0)$.  Fix $\mathfrak{r}>0$ and let $K \subseteq \C$ be a compact set.  Then it holds with polynomially high probability as $\epsilon \to 0$,  at a rate depending only on $M,s$ and $K$,  that
\begin{align*}
\mathfrak{r}^{-\xi(\gamma) Q(\gamma)} e^{-\xi(\gamma) h_{\mathfrak{r}}(0)} \wt{D}_h^{\gamma}(u,v ; B_{2|u-v|}(u)) \leq \left|\frac{u-v}{\mathfrak{r}}\right|^s
\end{align*}
for all $u,v \in K$ with $|u-v| \leq \epsilon \mathfrak{r}$.
\end{lemma}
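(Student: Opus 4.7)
The plan is to follow the argument used for Lemma~\ref{lem:lqg_diameter_upper_bound_internal_metric_on_points}, substituting the $\gamma\to 0$ moment bound Lemma~\ref{lem:moment_lqg_diameter_bound_for_compact_sets_gamma_to_zero} for Proposition~\ref{prop:moment_lqg_diameter_bound_for_compact_sets}. The key quantitative observation is that in the subcritical regime $\xi(\gamma)=\gamma/d_{\gamma}$ and $Q(\gamma)=2/\gamma+\gamma/2$, we have $\xi(\gamma)Q(\gamma)=2/d_{\gamma}+o(1)\to 1$ as $\gamma\to 0$, while $\xi(\gamma)\to 0$ and $Q(\gamma)\to\infty$. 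Consequently, if we set $\beta=\beta(\gamma,s):=Q(\gamma)-s/\xi(\gamma)$, then $(Q(\gamma)-\beta)\xi(\gamma)=s$, and since $s<1$, we have $\beta\xi(\gamma)=\xi(\gamma)Q(\gamma)-s\to 1-s>0$, so $\beta(\gamma,s)\to\infty$ as $\gamma\to 0$. In particular, for $\gamma$ sufficiently small (depending only on $s$), the value of $\beta$ will lie in any required admissible window $(2,a(\gamma))$ coming from Lemma~\ref{lem:moment_lqg_diameter_bound_for_compact_sets_gamma_to_zero}.

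The first step is to establish the analogue of Lemma~\ref{lem:lqg_diameter_upper_bound_internal_metric_on_balls}: for $\gamma\in(0,\gamma_0)$, $z\in\mathfrak{r}K$, and $\epsilon\in(0,1)$, I would prove a bound of the form
\[
\p\Bigl[\sup_{u,v\in B_{\epsilon\mathfrak{r}}(z)}\wt D_h^{\gamma}(u,v;B_{2\epsilon\mathfrak{r}}(z))\geq \epsilon^{s}\mathfrak{r}^{\xi(\gamma)Q(\gamma)}e^{\xi(\gamma) h_{\mathfrak{r}}(0)}\Bigr]\leq C\epsilon^{\beta^2/2-\delta}.
\]
The proof is identical to that of Lemma~\ref{lem:lqg_diameter_upper_bound_internal_metric_on_balls}: write the internal diameter via $h-h_{2\epsilon\mathfrak{r}}(z)$ using Axioms~\ref{it:locality} and~\ref{it:weyl_scaling}, apply Markov's inequality to the moment of order $p=\beta/\xi(\gamma)$ whose uniform control in $\gamma$ is precisely the content of Lemma~\ref{lem:moment_lqg_diameter_bound_for_compact_sets_gamma_to_zero}, and absorb the centering shift $h_{\mathfrak{r}}(z)-h_{\mathfrak{r}}(0)$ (whose variance is bounded uniformly for $z$ in a compact set) into an arbitrarily small loss $\delta>0$ via the standard Gaussian tail bound for circle averages.

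The second step is the chaining argument of \cite[Lemma~3.20]{dubedat2020weak}, which is what is invoked in the proof of Lemma~\ref{lem:lqg_diameter_upper_bound_internal_metric_on_points}. One covers $\mathfrak{r}K$ by $O(\epsilon^{-2})$ Euclidean balls of radius $\epsilon\mathfrak{r}$ together with their dyadic subdivisions, and upgrades the pointwise diameter estimate of the previous paragraph to a uniform-in-$(u,v)$ statement for all $u,v\in\mathfrak{r}K$ with $|u-v|\leq\epsilon\mathfrak{r}$. The union bound costs a factor of $\epsilon^{-2-o_\epsilon(1)}$, so the final rate is $\epsilon^{\beta^2/2-2-\delta'}$, which is polynomially high in $\epsilon$ as long as $\beta^2/2>2$. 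Since $\beta(\gamma,s)\to\infty$ as $\gamma\to 0$, this holds with plenty of room for all $\gamma\in(0,\gamma_0)$ after possibly shrinking $\gamma_0$, and the rate depends only on $M$, $s$, and $K$.

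The main obstacle, and the reason the lemma requires $\gamma$ to be small rather than just bounded away from $2$, is the need for the admissible exponent $a(\gamma)$ produced by Lemma~\ref{lem:moment_lqg_diameter_bound_for_compact_sets_gamma_to_zero} to grow fast enough to accommodate $\beta(\gamma,s)=Q(\gamma)-s/\xi(\gamma)\to\infty$. This is intrinsically a statement about the normalization~\eqref{eqn:main_normalization_gamma_to_zero}: the choice of the $\gamma$-independent probability $\mathfrak{p}_0$ and the $\gamma$-independent constants $M$, $b$ introduced just before Theorem~\ref{thm:tightness_gamma_to_zero} must make all constants in the proof of the $\gamma\to 0$ moment bound independent of $\gamma$ for $\gamma$ small, so that the window $(2,a(\gamma))$ expands unboundedly. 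Once that compatibility is set up, the remaining steps are exact transcriptions of the subcritical argument, and the choice $\gamma_0=\gamma_0(s)$ is then determined by the condition $\beta(\gamma,s)\in(2,a(\gamma))$ together with $\beta^2/2>2$.
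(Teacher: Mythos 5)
There is a genuine gap, and it lies in your reparameterization via $\beta$. You set $\beta(\gamma,s)=Q(\gamma)-s/\xi(\gamma)$ so that $(Q(\gamma)-\beta)\xi(\gamma)=s$, observe that $\beta\to\infty$ as $\gamma\to 0$, and then want to apply Lemma~\ref{lem:moment_lqg_diameter_bound_for_compact_sets_gamma_to_zero} with moment order $p=\beta/\xi(\gamma)$. But since $\beta$ is bounded below and $\xi(\gamma)\to 0$, in fact $p=\beta/\xi(\gamma)\asymp(1-s)/\xi(\gamma)^2\to\infty$. The moment bound of Lemma~\ref{lem:moment_lqg_diameter_bound_for_compact_sets_gamma_to_zero} is stated only for $p\in(0,c_1\sqrt{M})$ with $c_1$ universal and $M$ fixed; it does \emph{not} give an ``admissible window $(2,a(\gamma))$'' that expands as $\gamma\to 0$, and no uniform-in-$\gamma$ control is available for moments of unbounded order. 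Consequently the claimed tail exponent $\epsilon^{\beta^2/2-\delta}$ with $\beta\to\infty$ cannot be reached by this route. The point you identify as ``the main obstacle'' (the window expanding unboundedly) is precisely the thing the paper does \emph{not} do, because it cannot.

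The correct parameterization, which is what the paper's Lemma~\ref{lem:diameter_upper_bound_internal_metric_on_balls_gamma_to_zero} implements, fixes a moment order $p\in(0,c_1\sqrt{M})$ independent of $\gamma$. Markov's inequality at that fixed order gives a tail bound with exponent $p(\xi(\gamma)Q(\gamma)-s)-p^2\xi(\gamma)^2/2$ (up to the small loss from re-centering). As $\gamma\to 0$ this exponent tends to $p(1-s)$, which is the rate in the paper's statement; it is bounded, not divergent. Choosing $M$ large enough (depending on $s$) that $c_1\sqrt{M}>2/(1-s)$, one can pick $p>2/(1-s)$, and then shrink $\gamma_0$ so that $\xi Q-p\xi^2/2>s$ and the exponent stays above $2$, which is exactly what the chaining step needs. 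So the second and third paragraphs of your argument (the Markov-plus-chaining scheme) are the right skeleton, but the exponent $\beta^2/2$ must be replaced by $p(1-s)$ with $p$ fixed, and the role of $M$ is to make $c_1\sqrt{M}$ large enough rather than to make some $a(\gamma)$ accommodate a diverging $\beta$.
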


\begin{lemma}
\label{lem:moment_lqg_diameter_bound_for_compact_sets_gamma_to_zero}
There exist universal constants $\gamma_0 \in (0,2),c_1>0$ such that the following is true.
Fix $p \in (0,c_1 \sqrt{M})$,  let $U \subseteq \C$ be an open and connected set and let $K \subseteq U$ be a compact and connected set with more than a point.  Then for all $\gamma \in (0,\gamma_0)$ and all $\mathfrak{r} > 0$,  we have that
\begin{align*}
\E\left[\left(\mathfrak{r}^{-\xi(\gamma) Q(\gamma)} e^{-\xi(\gamma) h_{\mathfrak{r}}(0)} \sup_{z ,w  \in \mathfrak{r} K} \wt{D}_h^{\gamma}(z,w ;  \mathfrak{r} U) \right)^p \right] \lesssim 1,
\end{align*}
where the implicit constant depends only on $p,M,K$,  and $U$.
\end{lemma}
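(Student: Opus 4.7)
The plan is to mirror the two-step strategy of Proposition~\ref{prop:moment_lqg_diameter_bound_for_compact_sets}: first establish the moment bound when $K = \overline{\mathbb{S}}$ and $U = \mathbb{S}$, and then extend to arbitrary compact connected $K \subseteq U$ by a covering argument. The essential difference from the subcritical setting is that, because the normalization constant $\mathfrak{p}_0$ in~\eqref{eqn:main_normalization_gamma_to_zero} does not depend on $\gamma$, the auxiliary parameters $q, \zeta, \tilde{\zeta}$ entering Lemma~\ref{lem:upper_bound_on_internal_diamters_of_spuares} must be chosen so that all bounds hold uniformly as $\gamma \to 0$.

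For the square case, I apply Lemma~\ref{lem:upper_bound_on_internal_diamters_of_spuares} with the prescribed $\zeta(\gamma) = c/\xi(\gamma)$, together with $q(\gamma) = Q(\gamma) - \zeta(\gamma) - \delta$ for a small fixed $\delta > 0$ and $\tilde{\zeta}(\gamma)$ tending to $0$ with $\gamma$ (for instance $\tilde{\zeta}(\gamma) = \xi(\gamma)$). The admissibility conditions $q(\gamma) > 2$ and $\zeta(\gamma) \in (0, Q(\gamma) - q(\gamma))$ are satisfied for $\gamma \in (0, \gamma_0)$ with $\gamma_0$ small, because $\xi(\gamma) Q(\gamma) \to 1$ as $\gamma \to 0$ and hence $q(\gamma) \sim (1-c)/\xi(\gamma)$. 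The identity $\xi \zeta = c$ built into the setup then controls each of the three terms in Lemma~\ref{lem:upper_bound_on_internal_diamters_of_spuares} in the limit: the third term's exponent is $\zeta^2\xi b\sqrt{M}/(\zeta+\xi) = c^2 b\sqrt{M}/(c+\xi^2) \to cb\sqrt{M}$; the first term's exponent $(q+\sqrt{q^2-4})/(\zeta+\xi)$ remains bounded away from $0$, its limit being $2(1-c)/c + O(\delta)$ since $(q+\sqrt{q^2-4})\xi \to 2$; and the second term contributes super-polynomial decay in $C$ because $q^2/(\tilde{\zeta}(\zeta+\xi)) \to \infty$.

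Choosing $c_1 > 0$ universally (depending only on $c$ and $b$) so that $c_1 \sqrt{M}$ is dominated by each of the three rates above, I obtain a uniform tail bound
\[
\p\left[\mathfrak{r}^{-\xi(\gamma) Q(\gamma)} e^{-\xi(\gamma) h_{\mathfrak{r}}(0)} \sup_{z,w \in \mathfrak{r}\mathbb{S}} \wt{D}_h^{\gamma}(z,w; \mathfrak{r}\mathbb{S}) > C\right] \lesssim C^{-c_1 \sqrt{M}}
\]
for $C > 1$, $\gamma \in (0, \gamma_0)$, and $\mathfrak{r} > 0$, with implicit constant depending only on $c, M, \delta$. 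Integrating against $pC^{p-1}\,dC$ yields the desired moment bound for the square case for every $p \in (0, c_1\sqrt{M})$. I then extend to an arbitrary compact connected $K \subseteq U$ by the covering argument of the proof of Proposition~\ref{prop:moment_lqg_diameter_bound_for_compact_sets}: cover $K$ by finitely many squares $S_1, \dots, S_N \subseteq U$ (depending only on $K, U$) with $S_n \cap S_{n+1} \neq \emptyset$, use subadditivity
\[
\sup_{z,w \in \mathfrak{r} K} \wt{D}_h^{\gamma}(z,w; \mathfrak{r} U) \leq \sum_{n=1}^N \sup_{z,w \in \mathfrak{r} S_n} \wt{D}_h^{\gamma}(z,w; \mathfrak{r} S_n),
\]
and split each term via H\"older into the (rescaled) square diameter and the Gaussian moment generating function of the centered circle-average difference $h_{\rho_n \mathfrak{r}}(\mathfrak{r} u_n) - h_{\mathfrak{r}}(0)$. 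Uniformity of the Gaussian factor as $\gamma \to 0$ is immediate since $p\xi(\gamma)$ stays bounded (in fact tends to $0$).

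The main obstacle is verifying that both the first and third term-rates in Lemma~\ref{lem:upper_bound_on_internal_diamters_of_spuares} remain at least $c_1\sqrt{M}$ uniformly in $\gamma$. The third term is handled directly by $\xi\zeta = c$; the first term requires $(q+\sqrt{q^2-4})\xi$ to be bounded below by a strictly positive constant as $\gamma \to 0$, which follows from $q\xi \to 1-c$ (using $\xi Q \to 1$) combined with $q + \sqrt{q^2-4} \sim 2q$. The limit value $2(1-c)/c$ of the first-term rate, together with the setup constraint $c \in (0, 1/2)$, is what determines the admissible universal choice of $c_1$.
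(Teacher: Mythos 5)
Your overall strategy (square case plus covering argument) is indeed the same as the paper's, but there is a genuine gap in the way you propose to handle the square case, and it stems precisely from your choice of $q$.

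You set $q(\gamma) = Q(\gamma) - \zeta(\gamma) - \delta$ with $\delta>0$ fixed. Then $Q - q - \zeta = \delta$, so the prefactor on the left side of Lemma~\ref{lem:upper_bound_on_internal_diamters_of_spuares} is
\begin{align*}
1 - 2^{-(Q-q-\zeta)\xi} = 1 - 2^{-\delta \xi(\gamma)},
\end{align*}
which tends to $0$ as $\gamma \to 0$. Rearranging the tail bound for $\sup_{z,w}\wt{D}_h^{\gamma}$ itself forces you to shift $C \mapsto (1-2^{-\delta\xi})C$, and since $\log\bigl(1/(1-2^{-\delta\xi})\bigr) \to \infty$, the resulting moment bound picks up a factor $\bigl(1-2^{-\delta\xi}\bigr)^{-p}$ that blows up as $\gamma \to 0$. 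So the implicit constant in the final display is not uniform in $\gamma$, which is exactly the thing this lemma needs to deliver. The paper sidesteps this by taking $q(\gamma) = Q(\gamma)/2$ in Lemma~\ref{lem:circle_average_uniform_bound_gamma_to_zero_case}, so that $(Q - q - \zeta)\xi \to \tfrac12 - c > 0$ and the prefactor stays bounded away from zero. A fix in your setup would be to let $\delta$ scale like $\delta/\xi(\gamma)$, but then you are effectively redoing the paper's parameter choice.

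There is a secondary issue worth flagging explicitly: $\zeta(\gamma) = c/\xi(\gamma) \to \infty$, which violates the standing hypothesis $\zeta \in (0,1)$ stated just before Lemma~\ref{lem:circle_averages_uniform_bound}. So Lemma~\ref{lem:upper_bound_on_internal_diamters_of_spuares} cannot be invoked off the shelf; the argument would have to be re-derived for the range $\zeta > 1$. This is precisely why the paper proves the dedicated Lemmas~\ref{lem:circle_average_uniform_bound_gamma_to_zero_case} and~\ref{lem:upper_bound_on_internal_diamters_of_spuares_gamma_to_zero} for the $\gamma \to 0$ regime (using a coarser partition $\{[C^{k\zeta},C^{(k+1)\zeta}]\}_{k\in\N}$ adapted to large $\zeta$, and getting a Gaussian-type term $\exp(-c_1 Q(\gamma)^2\log(C)^2)$ rather than a polynomial one for the circle-average failure). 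Finally, a smaller point: your stated first-term rate $(q+\sqrt{q^2-4})/(\zeta+\xi) \to 2(1-c)/c$ neglects the positive correction $\tilde\zeta(q+\zeta^{-1})^2/(2(\zeta+\xi))$, which in the limit contributes $(1-c)^2/(2c)$ and must be subtracted; the net rate is still positive, but the computation as written is incomplete.
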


\begin{proof}[Proof of Proposition~\ref{prop:tightness_proposition_gamma_to_zero} assuming Lemmas~\ref{lem:upper_bound_internal_metric_on_points_gamma_to_zero} and ~\ref{lem:moment_lqg_diameter_bound_for_compact_sets_gamma_to_zero}]
It follows from the argument used to prove Proposition~\ref{prop:tightness_proposition_subcritical} with Lemma~\ref{lem:upper_bound_internal_metric_on_points_gamma_to_zero} in place of Lemma~\ref{lem:lqg_diameter_upper_bound_internal_metric_on_points}
and Lemma~\ref{lem:moment_lqg_diameter_bound_for_compact_sets_gamma_to_zero} in place of Proposition~\ref{prop:moment_lqg_diameter_bound_for_compact_sets}.
\end{proof}

We start by proving Lemma~\ref{lem:moment_lqg_diameter_bound_for_compact_sets_gamma_to_zero}.  The proof of the latter will follow from Lemmas~\ref{lem:circle_average_uniform_bound_gamma_to_zero_case} and ~\ref{lem:upper_bound_on_internal_diamters_of_spuares_gamma_to_zero}.  Lemma~\ref{lem:circle_average_uniform_bound_gamma_to_zero_case} is the analogue of Lemma~\ref{lem:circle_averages_uniform_bound}, while Lemma~\ref{lem:upper_bound_on_internal_diamters_of_spuares_gamma_to_zero} is the analogue of Lemma~\ref{lem:upper_bound_on_internal_diamters_of_spuares}.  We start with stating and proving Lemma~\ref{lem:circle_average_uniform_bound_gamma_to_zero_case}.

\begin{lemma}\label{lem:circle_average_uniform_bound_gamma_to_zero_case}
There exist universal constants $\wt{C}>1,  \gamma_0 \in (0,2)$ such that the following is true for all $\mathfrak{r}>0$ and all $n \in \N$.  For $C>1$,  we let $E_{\mathfrak{r}}^n$ be the event as in the statement of Lemma~\ref{lem:circle_averages_uniform_bound} with $\frac{Q(\gamma)}{2}$ in place of $q$.  Then for all $\gamma \in (0,\gamma_0)$ and all $C>1$,  we have that
\begin{align*}
&\p[(E_{\mathfrak{r}}^n)^c ] \\
&\leq \frac{\wt{C}}{\log(2)\left(1-C^{-\zeta \left(\frac{Q(\gamma)^2}{8(1 + \wt{C} \log(2)^{-1})} - 2\right)}\right)} \exp\left(-\frac{\zeta^2 \log(C)^2}{\log(2)} \left(\frac{Q(\gamma)^2}{8(1+ \wt{C} \log(2)^{-1})} - 2 \right) + \log(\zeta) +  \log(\log(C))\right).
\end{align*}
\end{lemma}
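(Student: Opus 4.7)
The plan is to mirror the strategy of Lemma~\ref{lem:circle_averages_uniform_bound} but exploit the new phenomenon that as $\gamma \to 0$ the parameter $Q(\gamma) = \frac{2}{\gamma} + \frac{\gamma}{2}$ diverges, so that replacing $q$ by $Q(\gamma)/2$ makes the Gaussian exponent $q^2/2$ as large as we like by taking $\gamma_0$ small. Concretely, the key input will be Lemma~\ref{lem:upper_bound_on_circle_averages} applied with $\nu=0$, $R=1$, and $\epsilon = 2^{-n}$, together with the observation
\[
\log\bigl(C\cdot 2^{Q(\gamma)n/2}\bigr) \;=\; q_n \log(2^n), \qquad q_n := \frac{Q(\gamma)}{2} + \frac{\log C}{n\log 2}\;\ge\; \frac{Q(\gamma)}{2}.
\]

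First, I would fix $n\in\N$ with $2^n \ge C^{\zeta}$ and apply Lemma~\ref{lem:upper_bound_on_circle_averages} with $q=q_n$. This gives a universal constant $\wt C>1$ such that
\[
\p[(E_\mathfrak{r}^n)^c] \;\le\; \wt C \cdot 2^{-n\bigl(\frac{q_n^2}{2(1+\wt C(n\log 2)^{-1})} - 2\bigr)}.
\]
Using $q_n \ge Q(\gamma)/2$ and the monotone bound $1+\wt C(n\log 2)^{-1} \le 1+\wt C/\log 2$ for $n\ge 1$, this simplifies to
\[
\p[(E_\mathfrak{r}^n)^c] \;\le\; \wt C \cdot 2^{-nA}, \qquad A := \frac{Q(\gamma)^2}{8(1+\wt C/\log 2)} - 2.
\]
Since $Q(\gamma) \to \infty$ as $\gamma \to 0$, I can choose $\gamma_0 \in (0,2)$ small enough that $A>0$ for every $\gamma \in (0,\gamma_0)$; this is the only place where the hypothesis $\gamma$ small enters, and it is the reason the proof would not go through uniformly in $\gamma\in(0,2)$.

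Next, I would aggregate the pointwise estimates over all $n$ with $2^n\ge C^{\zeta}$ by a geometric series, in analogy with Case 2 in the proof of Lemma~\ref{lem:circle_averages_uniform_bound}. Writing $n^* := \lceil \zeta \log C / \log 2\rceil$, this yields
\[
\p\Bigl[\bigcup_{n:\,2^n\ge C^{\zeta}} (E_\mathfrak{r}^n)^c\Bigr] \;\le\; \frac{\wt C\cdot 2^{-n^* A}}{1-2^{-A}} \;=\; \frac{\wt C\cdot C^{-\zeta A}}{1-2^{-A}},
\]
which gives the desired qualitative bound (and which is the bound that will actually be used downstream in the analogues of Lemmas~\ref{lem:upper_bound_on_internal_diamters_of_spuares} and \ref{lem:moment_lqg_diameter_bound_for_compact_sets_gamma_to_zero}).

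The main obstacle is not the probabilistic input, which is routine, but matching the precise algebraic form of the stated right-hand side, in particular the factor $(1-C^{-\zeta A})^{-1}$ and the combined exponential $\exp\bigl(-\zeta^2(\log C)^2 A/\log 2 + \log\zeta + \log\log C\bigr)$. To reproduce the $\zeta^2(\log C)^2$ scaling I expect a finer dyadic decomposition of the range of $n$ is needed (in the spirit of Case~1 in the proof of Lemma~\ref{lem:circle_averages_uniform_bound}, where the parameter $\wt\zeta$ produced the quadratic combinatorial factor): partitioning $\{n:2^n\ge C^{\zeta}\}$ into blocks $\{2^n\in[C^{a_{k-1}},C^{a_k}]\}$ of width $\wt\zeta\log C/\log 2$ and optimising the bound on each block, then summing, produces the extra $\log\zeta+\log\log C$ combinatorial term and reorganises the geometric ratio from $2^{-A}$ to $C^{-\zeta A}$. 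The pointwise bound above is what feeds into each block, so the probabilistic content is complete and what remains is a careful but mechanical accounting of constants.
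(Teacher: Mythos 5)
Your proposal is correct and takes essentially the same route as the paper, differing only in bookkeeping. The paper partitions $\{n : 2^n \ge C^{\zeta}\}$ into blocks $\{n : 2^n \in [C^{k\zeta}, C^{(k+1)\zeta}]\}$ indexed by $k$, applies Lemma~\ref{lem:upper_bound_on_circle_averages} with $q_k = \frac{Q(\gamma)}{2} + \frac{1}{(k+1)\zeta}$ (so that $q_k \log(2^n) \le \log(C\,2^{Q(\gamma)n/2})$ on each block), discards the correction term via $q_k \ge Q(\gamma)/2$, and then sums the per-block union bounds over $k \ge 1$. Your direct choice $q_n = \frac{Q(\gamma)}{2} + \frac{\log C}{n\log 2}$, combined with $q_n \ge Q(\gamma)/2$ and the crude bound $1+\wt C (n\log 2)^{-1} \le 1 + \wt C/\log 2$, accomplishes the same thing without the intermediate block index. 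Both arguments choose $\gamma_0$ by the same criterion, namely $A := \frac{Q(\gamma)^2}{8(1+\wt C \log(2)^{-1})} - 2 > 0$, which is precisely where $Q(\gamma)\to\infty$ enters; you correctly identify this as the sole role of the hypothesis $\gamma<\gamma_0$.

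Regarding your concern about matching the precise algebraic form: the unease is well-placed, but it points to an apparent inconsistency in the paper rather than a gap in your argument. Tracing the paper's own proof, the per-block bound is $\frac{\wt C}{\log 2}\exp\bigl(-k\zeta A \log C + \log\zeta + \log\log C\bigr)$, and summing this over $k\ge 1$ produces a leading exponent $-\zeta A \log C$ together with the prefactor $(1-C^{-\zeta A})^{-1}$; it does \emph{not} produce $-\zeta^2(\log C)^2 A/\log 2$. No finer dyadic decomposition within blocks would change this, since the geometric parameter is $k$ and the per-block combinatorial count is $\asymp \zeta\log C/\log 2$, which only contributes the additive $\log\zeta + \log\log C$. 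So the quantity $\zeta^2(\log C)^2/\log 2$ in the stated right-hand side appears to be a typo for $\zeta\log C$, and your bound $\wt C\, C^{-\zeta A}/(1-2^{-A})$ is of the same order as what the paper's own proof actually delivers. Either form gives polynomial decay in $C$ with exponent $\to\infty$ as $\gamma\to 0$, which is all that is used in Lemmas~\ref{lem:upper_bound_on_internal_diamters_of_spuares_gamma_to_zero} and \ref{lem:moment_lqg_diameter_bound_for_compact_sets_gamma_to_zero}. The probabilistic content of your argument is complete.
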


\begin{proof}
Fix $k \in \N$ and suppose that $2^n \in [C^{k \zeta} ,  C^{(k+1) \zeta}]$.  Then Lemma~\ref{lem:upper_bound_on_circle_averages} applied with $\epsilon = 2^{-n},\nu =0 ,R=1$ and $q = \frac{Q(\gamma)}{2} + \frac{1}{(k+1) \zeta}$ implies that there exists a universal constant $\wt{C}>1$ such that
\begin{align*}
&\p\left[(E_{\mathfrak{r}}^n)^c\right] \leq \p\left[\sup\{|h_{2^{-n} \mathfrak{r}}(w) - h_{\mathfrak{r}}(0)| : w \in B_{\mathfrak{r}}(0) \cap (2^{-n-1} \mathfrak{r} \Z^2) \} > \left(\frac{Q(\gamma)}{2} + \frac{1}{(k+1) \zeta}\right) \log(2^n)\right]\\
&\leq \wt{C} \exp\left(-k \zeta \left(\frac{(\frac{Q(\gamma)}{2} + \frac{1}{(k+1) \zeta})^2}{2(1+\wt{C} \log(2)^{-1})} - 2 \right) \log(C) \right) \leq \wt{C} \exp\left(-k \zeta \left(\frac{Q(\gamma)^2}{8(1+\wt{C} \log(2)^{-1})} - 2\right) \log(C) \right) \\
&\text{for all} \quad C>1.
\end{align*}

Hence summing over all $n \in \N$ such that $C^{k\zeta} \leq 2^n \leq C^{(k+1)\zeta}$ implies that 
\begin{align*}
&\p\left[E_{\mathfrak{r}}^n,  \text{for all} \,\, n \in \N \,\,\text{with}\,\,C^{k \zeta} \leq 2^n \leq C^{(k+1) \zeta}\right] \\
&\geq 1 - \frac{\wt{C}}{\log(2)} \exp\left(-k \zeta \left(\frac{Q(\gamma)^2}{8(1+\wt{C} \log(2)^{-1})} - 2\right) \log(C) + \log(\zeta) + \log(\log(C))\right).
\end{align*}
Recall that $Q(\gamma) \to \infty$ as $\gamma \to 0$ and so we can choose $\gamma_0 \in (0,2)$ such that
\begin{align*}
\frac{Q(\gamma)^2}{8 ( 1+ \wt{C} \log(2)^{-1})} - 2 > 0 \quad \text{for all} \quad \gamma \in (0,\gamma_0).
\end{align*}
Therefore summing over all $k \in \N$ and taking a union bound completes the proof of the lemma.
\end{proof}

Next we assume that the same setup just after the proof of Lemma~\ref{lem:circle_averages_uniform_bound}.  In particular,  we let again $\mathcal{R}_{\mathfrak{r}}^n$ be the set of open $2^{-n} \mathfrak{r} \times 2^{-n-1} \mathfrak{r}$ or $2^{-n-1} \mathfrak{r} \times 2^{-n} \mathfrak{r}$ rectangles $R \subseteq \mathfrak{r} \mathbb{S}$ with corners in $2^{-n-1} \mathfrak{r} \Z^2$,  and let $N_C = \lfloor \log_2(C^{\zeta})\rfloor +1$ for $C>1$.  

Similarly,  by applying Proposition~\ref{prop:bound_on_lqg_distances_of_sets} with $A = 2^{\zeta \xi(\gamma) n} = 2^{c n}$ gives that off an event with probability at least
\begin{align*}
1 - \wt{C} \sum_{n \geq N_C} 2^{-\zeta \xi(\gamma) b \sqrt{M} n } \geq 1 - \frac{\wt{C}}{1 - 2^{-c b \sqrt{M}}} C^{-c b \zeta \sqrt{M}},
\end{align*}
we have that for all $n \geq N_C,  R \in \mathcal{R}_{\mathfrak{r}}^n$,  the distance between the two shorter sides of $R$ with respect to $\wt{D}_h^{\gamma}(\cdot ,  \cdot ; R)$ is at most 
\begin{align*}
2^{\zeta \xi(\gamma) n} (2^{-n} \mathfrak{r})^{\xi(\gamma) Q(\gamma)} e^{\xi(\gamma) h_{2^{-n} \mathfrak{r}}(w_R)},
\end{align*}
where $w_R$ denotes the bottom-left corner of $R$.

\begin{lemma}\label{lem:upper_bound_on_internal_diamters_of_spuares_gamma_to_zero}
There exist universal constants $c_1,c_1>0$ and $\gamma_0 \in (0,2)$,  such that for all $\gamma \in (0,\gamma_0) ,  \mathfrak{r}>0$ and $C \geq 2$,  we have that
\begin{align*}
\p\left[\mathfrak{r}^{-\xi(\gamma) Q(\gamma)} e^{-\xi(\gamma) h_{\mathfrak{r}}(0)} \sup_{z ,w  \in \mathfrak{r} \mathbb{S}} \wt{D}_h^{\gamma}(z,w ;  \mathfrak{r} \mathbb{S}) > C\right] \leq c_0 C^{-c_1 \sqrt{M}} + c_0 \exp(-c_1 Q(\gamma)^2 \log(C)^2).
\end{align*}
\end{lemma}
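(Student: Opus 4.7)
The plan is to adapt the chaining argument from Lemma~\ref{lem:upper_bound_on_internal_diamters_of_spuares} to the present re-normalization, where $\zeta=\zeta(\gamma)=c/\xi(\gamma)$ and the role of the ``exponent'' $q$ is played by $Q(\gamma)/2$.  We first set up good events exactly as in the paragraph preceding the statement of the lemma.  Let $G_1(C)$ be the event, in the notation of Lemma~\ref{lem:circle_average_uniform_bound_gamma_to_zero_case}, that $E_{\mathfrak{r}}^n$ occurs for all $n \in \N$ with $2^n \geq C^{\zeta}$; and let $G_2(C)$ be the event that for every $n \geq N_C$ and every $R\in\mathcal{R}_\mathfrak{r}^n$ there is a path $P_R$ in $R$ between the two shorter sides with $\wt{D}_h^{\gamma}(\cdot,\cdot;R)$-length at most $2^{\zeta\xi(\gamma)n}(2^{-n}\mathfrak{r})^{\xi(\gamma)Q(\gamma)}e^{\xi(\gamma)h_{2^{-n}\mathfrak{r}}(w_R)}$.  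The probability of $G_2(C)^c$ is at most $\tfrac{\wt{C}}{1-2^{-cb\sqrt{M}}}\,C^{-cb\zeta\sqrt{M}}$ by the derivation immediately preceding the statement, while the probability of $G_1(C)^c$ is controlled by Lemma~\ref{lem:circle_average_uniform_bound_gamma_to_zero_case} applied with $q=Q(\gamma)/2$.

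On the event $G_1(C) \cap G_2(C)$, for every $R \in \mathcal{R}_\mathfrak{r}^n$ with $n \geq N_C$ we may replace the circle average $h_{2^{-n}\mathfrak{r}}(w_R)$ by $h_\mathfrak{r}(0) + \log(C 2^{(Q(\gamma)/2)n})$ to conclude that the $\wt{D}_h^{\gamma}$-length of $P_R$ is at most $C^{\xi(\gamma)} 2^{-(Q(\gamma)/2-\zeta)\xi(\gamma)n}\mathfrak{r}^{\xi(\gamma)Q(\gamma)}e^{\xi(\gamma)h_\mathfrak{r}(0)}$.  I will then string these paths together exactly as in the proof of Lemma~\ref{lem:upper_bound_on_internal_diamters_of_spuares}: for each dyadic square at scale $n \geq N_C$ the four surrounding rectangle-paths have a common intersection, summing the geometric series in $n$ bounds the $\wt{D}_h^{\gamma}$-diameter of any such square by $4(1-2^{-(Q(\gamma)/2-\zeta)\xi(\gamma)})^{-1}C^{\xi(\gamma)}\mathfrak{r}^{\xi(\gamma)Q(\gamma)}e^{\xi(\gamma)h_\mathfrak{r}(0)}$, and combining with $2^{N_C}\leq 2C^{\zeta}$ via the triangle inequality gives
\[
\mathfrak{r}^{-\xi(\gamma)Q(\gamma)} e^{-\xi(\gamma)h_\mathfrak{r}(0)} \sup_{z,w\in \mathfrak{r}\mathbb{S}} \wt{D}_h^{\gamma}(z,w;\mathfrak{r}\mathbb{S}) \leq \frac{8\, C^{\xi(\gamma)+\zeta}}{1-2^{-(Q(\gamma)/2-\zeta)\xi(\gamma)}}.
\]
Choose $\gamma_0$ small enough that $Q(\gamma)\xi(\gamma)/2 > c$ (so this denominator is bounded below by a universal constant) and that Lemma~\ref{lem:circle_average_uniform_bound_gamma_to_zero_case} gives a positive exponent.

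To finish, I will replace $C$ by $C^{1/(\xi(\gamma)+\zeta)}$ and track how the two error terms transform.  Since $\zeta\xi(\gamma)=c$, the ratio $\zeta/(\xi(\gamma)+\zeta) = c/(\xi(\gamma)^2+c)$ tends to $1$ as $\gamma \to 0$, so it is bounded below by a universal constant on $(0,\gamma_0)$; hence the $G_2$ error becomes $O(C^{-c_1\sqrt{M}})$ with $c_1$ universal.  For the $G_1$ error, the exponent in Lemma~\ref{lem:circle_average_uniform_bound_gamma_to_zero_case} after the substitution is $\zeta^2 Q(\gamma)^2/(\xi(\gamma)+\zeta)^2$ times a factor bounded below by a universal constant once $Q(\gamma)$ is sufficiently large; since $\zeta Q(\gamma)/(\xi(\gamma)+\zeta) = Q(\gamma)\cdot c/(\xi(\gamma)^2+c)$ is comparable to $Q(\gamma)$ for $\gamma$ small, this yields $O(\exp(-c_1 Q(\gamma)^2 \log(C)^2))$.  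The polynomial logarithmic prefactors $\log(\zeta)+\log\log(C)$ in Lemma~\ref{lem:circle_average_uniform_bound_gamma_to_zero_case} are absorbed into $c_0$ and $c_1$ after possibly shrinking $c_1$.

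The only delicate point is the arithmetic of the second step: one needs the denominator $1-2^{-(Q(\gamma)/2-\zeta)\xi(\gamma)}$ and the factor $\zeta/(\xi(\gamma)+\zeta)$ both to stay bounded away from $0$ uniformly in $\gamma \in (0,\gamma_0)$, and the exponent $Q(\gamma)^2/8(1+\wt{C}/\log 2)-2$ of Lemma~\ref{lem:circle_average_uniform_bound_gamma_to_zero_case} to be comparable to $Q(\gamma)^2$; each of these holds once $\gamma_0$ is chosen sufficiently small depending only on the universal constants $c$, $b$, $M$, and $\wt{C}$ already fixed.  I expect the main obstacle to be only bookkeeping: every remaining ``constant'' in the final inequality must genuinely be universal rather than $\gamma$-dependent, which forces one to verify the $\gamma \to 0$ limits of each of the ratios above and to combine them into the single clean bound claimed.
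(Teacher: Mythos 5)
Your proposal is correct and follows essentially the same route as the paper: the same chaining construction, the same intermediate bound $8C^{\xi(\gamma)+\zeta}\big/\big(1-2^{-(Q(\gamma)/2-\zeta)\xi(\gamma)}\big)$, the same substitution $C \mapsto C^{1/(\xi(\gamma)+\zeta)}$, and the same reliance on $\xi(\gamma)Q(\gamma)\to 1$ and $\zeta/(\xi(\gamma)+\zeta)\to 1$ to make the final constants universal. The only cosmetic difference is that you deduce $\zeta < Q(\gamma)/2$ directly from $\xi(\gamma)Q(\gamma)\to 1$ and $c<1/2$, whereas the paper goes through the explicit bound $d_\gamma \le 2+\gamma^2/2+\sqrt{2}\gamma$; both give the same conclusion.
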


\begin{proof}
Let $\gamma_0 \in (0,2)$ be the constant in Lemma~\ref{lem:circle_average_uniform_bound_gamma_to_zero_case} and fix  $\gamma \in (0,\gamma_0)$.  Suppose that we have the setup described just before the statement of the lemma.  Then combining with Lemma~\ref{lem:circle_average_uniform_bound_gamma_to_zero_case},  we obtain that off an event with probability at most
\begin{align*}
&\frac{\wt{C}}{1 - 2^{-c b \sqrt{M}}} C^{-c b \zeta \sqrt{M}}\\
&+\frac{\wt{C}}{\log(2)\left(1-C^{-\zeta \left(\frac{Q(\gamma)^2}{8(1 + \wt{C} \log(2)^{-1})} - 2\right)}\right)} \exp\left(-\frac{\zeta^2 \log(C)^2}{\log(2)} \left(\frac{Q(\gamma)^2}{8(1+ \wt{C} \log(2)^{-1})} - 2 \right) + \log(\zeta) +  \log(\log(C))\right),
\end{align*}
the following holds.  For all $n \geq N_C$ and all $R \in \mathcal{R}_{\mathfrak{r}}^n$,  there exists a path $P_R$ in $R$ between the two shorter sides of $R$ with $\wt{D}_h^{\gamma}$-length at most 
\begin{align*}
C^{\xi(\gamma)} 2^{-(\frac{Q(\gamma)}{2} - \zeta)\xi(\gamma) n} e^{\xi(\gamma) h_{\mathfrak{r}}(0)}.
\end{align*}
Henceforth we assume that such a path exists.  Recall that \cite[Theorem~1.2]{ding2020fractal} implies that
\begin{align*}
d_{\gamma} \leq 2 + \frac{\gamma^2}{2} + \sqrt{2} \gamma \quad \text{for all} \quad \gamma \in \left(0,\sqrt{\frac{8}{3}}\right].
\end{align*}
Hence since 
\begin{align*}
\zeta = \frac{cd_{\gamma}}{\gamma} \leq \frac{2c}{\gamma} + c \sqrt{2} + \frac{c\sqrt{\gamma}}{2}
\end{align*}
and $c \in (0,\frac{1}{2})$,  we obtain that possibly by taking $\gamma_0 \in (0,2)$ to be smaller,  we have that $\zeta < \frac{Q(\gamma)}{2}$ for all $\gamma \in (0,\gamma_0)$.

Arguing as in the proof of Lemma~\ref{lem:upper_bound_on_internal_diamters_of_spuares},  we obtain that the $\wt{D}_h^{\gamma}(\cdot ,  \cdot ; \mathfrak{r} \mathbb{S})$-diameter of $\mathfrak{r} \mathbb{S}$ is at most 
\begin{align*}
\frac{8 C^{\xi(\gamma) + \zeta}}{1 - 2^{-(\frac{Q(\gamma)}{2} - \zeta) \xi(\gamma)}}  \mathfrak{r}^{\xi(\gamma) Q(\gamma)} e^{\xi(\gamma) h_{\mathfrak{r}}(0)}.
\end{align*}

Note that $Q(\gamma) \to \infty$ and $\zeta \to \infty$ as $\gamma \to 0$.  Thus there exists $\gamma_0 \in (0,2)$ and universal constants $\wt{c} ,  \wt{b} > 0$ such that for all $\gamma \in (0,\gamma_0)$ and all $C \geq 2$,  we have with probability at least 
\begin{align*}
1 - \wt{C} C^{-c b \zeta \sqrt{M}} - \wt{C} \exp(-\zeta^2 Q(\gamma)^2 \wt{b} \log(C)^2)
\end{align*}
that the $\wt{D}_h^{\gamma}(\cdot ,  \cdot ;  \mathfrak{r} \mathbb{S})$-diameter of $\mathfrak{r} \mathbb{S}$ is at most
\begin{align*}
\frac{8 C^{\xi(\gamma) + \zeta}}{1-2^{-(\frac{Q(\gamma)}{2} - \zeta) \xi(\gamma)}} \mathfrak{r}^{\xi(\gamma) Q(\gamma)} e^{\xi(\gamma) h_{\mathfrak{r}}(0)}.
\end{align*}
Therefore replacing $C$ with $C^{\frac{1}{\xi(\gamma) + \zeta}}$ gives that
\begin{align*}
&\p\left[\left(1-2^{-(\frac{Q(\gamma)}{2} - \zeta) \xi(\gamma)}\right)\mathfrak{r}^{\xi(\gamma) Q(\gamma)} e^{-\xi(\gamma) h_{\mathfrak{r}}(0)} \sup_{z ,w  \in \mathfrak{r} \mathbb{S}} \wt{D}_h^{\gamma}(z,w ;  \mathfrak{r} \mathbb{S}) > C\right]\\
&\leq \wt{C} C^{-\frac{c b \zeta \sqrt{M}}{\xi(\gamma) + \zeta}} + \wt{C} \exp\left(-\zeta^2 Q(\gamma)^2 \wt{b} (\xi(\gamma) + \zeta)^{-2} \log(C)^2\right)
\end{align*}
for all $C \geq 2 ,  \mathfrak{r}>0$ and $\gamma \in (0,\gamma_0)$.  

Since $\frac{\zeta}{\xi(\gamma) + \zeta} = \frac{c}{\xi(\gamma)^2 + c}$ and $\xi(\gamma) Q(\gamma) \to 1$ as $\gamma \to 0$ (see \cite[Theorem~1.2]{ding2020fractal}),  we obtain that there exist universal constants $c_0,c_1 >0$ and $\gamma_0 \in (0,2)$ such that for all $\gamma \in (0,\gamma_0),  \mathfrak{r}>0$ and all $C \geq 2$,  we have that
\begin{align*}
\p\left[\mathfrak{r}^{-\xi(\gamma) Q(\gamma)} e^{-\xi(\gamma) h_{\mathfrak{r}}(0)} \sup_{z ,w  \in \mathfrak{r} \mathbb{S}} \wt{D}_h^{\gamma}(z,w ;  \mathfrak{r} \mathbb{S}) > C\right] \leq c_0 C^{-c_1 \sqrt{M}} + c_0 \exp(-c_1 Q(\gamma)^2 \log(C)^2).
\end{align*}
This completes the proof of the lemma.
\end{proof}

\begin{proof}[Proof of Lemma~\ref{lem:moment_lqg_diameter_bound_for_compact_sets_gamma_to_zero}]
It follows from the same argument used to prove Proposition~\ref{prop:moment_lqg_diameter_bound_for_compact_sets} except for using Lemmas~\ref{lem:circle_average_uniform_bound_gamma_to_zero_case} and ~\ref{lem:upper_bound_on_internal_diamters_of_spuares_gamma_to_zero} in place of  Lemmas~\ref{lem:circle_averages_uniform_bound} and ~\ref{lem:upper_bound_on_internal_diamters_of_spuares}.
\end{proof}

Next we prove Lemma~\ref{lem:upper_bound_internal_metric_on_points_gamma_to_zero}.  The latter is a consequence of the following lemma.

\begin{lemma}
\label{lem:diameter_upper_bound_internal_metric_on_balls_gamma_to_zero}
Fix $p \in (0,c_1 \sqrt{M}),  s \in (0,1)$ and let $K \subseteq \C$ be a compact set.  Then there exist constants $\gamma_0 \in (0,2)$ depending only on $p,M$ and $s$ and $C \in (0,\infty)$ depending on $p,M,s$ and $K$ such that
\begin{align*}
\p\left[\sup_{u,v \in B_{\epsilon \mathfrak{r}}(z)} \wt{D}_h^{\gamma}(u,v ; B_{2\epsilon \mathfrak{r}}(z)) \leq \mathfrak{r}^{\xi(\gamma) Q(\gamma)} \epsilon^s e^{\xi(\gamma) h_{\mathfrak{r}}(0)}\right] \leq C \epsilon^{p(1-s)}
\end{align*}

for all $\epsilon \in (0,1),\gamma \in (0,\gamma_0)$ and all $z \in K$.
\end{lemma}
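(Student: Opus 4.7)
The plan is to exploit scale and Weyl-scaling invariance to reduce the event to a Gaussian-tail question coupled with a polynomial lower-tail bound for the diameter at unit scale. First, applying Axioms~\ref{it:coordinate_change} and~\ref{it:weyl_scaling} to the rescaled map $h'(x) := h(2\epsilon \mathfrak{r} x + z) - h_{2\epsilon \mathfrak{r}}(z)$ (which is again a whole-plane GFF normalized on $\partial B_1(0)$) yields the equality
\begin{align*}
\sup_{u,v \in B_{\epsilon \mathfrak{r}}(z)} \wt{D}_h^{\gamma}(u,v; B_{2\epsilon \mathfrak{r}}(z)) = (2\epsilon \mathfrak{r})^{\xi(\gamma) Q(\gamma)} e^{\xi(\gamma) h_{2\epsilon \mathfrak{r}}(z)} \mathcal{D}_{\epsilon, z},
\end{align*}
where $\mathcal{D}_{\epsilon, z}$ is measurable with respect to $(h - h_{2\epsilon \mathfrak{r}}(z))|_{B_{2\epsilon \mathfrak{r}}(z)}$ and therefore independent of the Gaussian increment $W := h_{2\epsilon \mathfrak{r}}(z) - h_\mathfrak{r}(z)$ (with variance $\log(\epsilon^{-1}) - \log 2$), and has the same law as the $\epsilon$-independent random variable $\mathcal{D}_0 := \sup_{u',v' \in B_{1/2}(0)} \wt{D}_h^\gamma(u',v'; B_1(0))$. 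Writing $V := h_\mathfrak{r}(z) - h_\mathfrak{r}(0)$, a centered Gaussian of variance bounded uniformly in $z \in K$, the event in the statement becomes $\{\mathcal{D}_{\epsilon, z}\, e^{\xi(\gamma)(W+V)} \leq 2^{-\xi(\gamma) Q(\gamma)} \epsilon^{s - \xi(\gamma) Q(\gamma)}\}$.

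I would then split this event on the sizes of $\mathcal{D}_{\epsilon, z}$ and of $|V|$: on the sub-event $\{\mathcal{D}_{\epsilon, z} \geq \epsilon^\eta\} \cap \{|V| \leq \theta \log(\epsilon^{-1})\}$ for small constants $\eta, \theta > 0$ to be chosen, the remaining constraint forces $W \leq -\tfrac{\xi(\gamma) Q(\gamma) - s - \eta - \xi(\gamma) \theta}{\xi(\gamma)} \log(\epsilon^{-1}) + O(1)$. Because $W$ is independent of $\mathcal{D}_{\epsilon, z}$, the standard Gaussian tail bound controls this contribution by $\epsilon^{(Q(\gamma) - s/\xi(\gamma) - \eta/\xi(\gamma) - \theta)^2/2}$. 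Since $\xi(\gamma) Q(\gamma) \to 1$ and $\xi(\gamma) \to 0$ as $\gamma \to 0$, the quantity $Q(\gamma) - s/\xi(\gamma) = (\xi(\gamma)Q(\gamma) - s)/\xi(\gamma)$ tends to $+\infty$, so this exponent dominates $p(1-s)$ once $\gamma_0$ is taken sufficiently small depending on $p, s, M$. The complementary event $\{|V| > \theta \log(\epsilon^{-1})\}$ contributes super-polynomially small probability by the Gaussian tail of $V$.

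The main obstacle is the complementary event $\{\mathcal{D}_{\epsilon, z} < \epsilon^\eta\}$, which requires a polynomial lower-tail estimate $\p[\mathcal{D}_0 \leq t] \leq C t^{\alpha_0}$ for small $t$, uniformly in $\gamma \in (0, \gamma_0)$, for some $\alpha_0 > 0$. Such a bound is not immediate from Lemma~\ref{lem:moment_lqg_diameter_bound_for_compact_sets_gamma_to_zero}, so my plan for obtaining it is to first lower-bound $\mathcal{D}_0 \geq \wt{D}_h^\gamma(u_0, v_0; B_1(0))$ for fixed distinct $u_0, v_0 \in B_{1/2}(0)$, decompose this distance as a sum of $\wt{D}_h^\gamma$-crossing distances across the dyadic concentric annuli $\mathbb{A}_{2^{-k-1}, 2^{-k}}(u_0)$, and then apply Axioms~\ref{it:weyl_scaling}--\ref{it:coordinate_change} together with the near-independence across disjoint annuli from Lemma~\ref{lem:good_event_at_many_scales} to promote a fixed-probability lower bound on one rescaled crossing distance (which comes from the normalization convention~\eqref{eqn:main_normalization_gamma_to_zero} via the standard across/around-comparison, using that the diameter of an annulus is bounded below by its crossing distance) into a polynomial lower-tail bound at many scales simultaneously. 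Choosing $\eta$ so that $\eta \alpha_0 \geq p(1-s)$ then combines with the Gaussian estimate of the previous paragraph to yield the desired bound $C \epsilon^{p(1-s)}$.
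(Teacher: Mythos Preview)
There is a typo in the printed statement: comparing with the analogous Lemma~\ref{lem:lqg_diameter_upper_bound_internal_metric_on_balls} (whose conclusion reads $\p[\cdot]\geq 1-C\epsilon^{\beta^2/2-\delta}$) and with how the present lemma is used to deduce the H\"older bound in Lemma~\ref{lem:upper_bound_internal_metric_on_points_gamma_to_zero}, the intended conclusion is the \emph{upper}-tail estimate
\[
\p\Bigl[\sup_{u,v \in B_{\epsilon \mathfrak{r}}(z)} \wt{D}_h^{\gamma}(u,v ; B_{2\epsilon \mathfrak{r}}(z)) > \mathfrak{r}^{\xi(\gamma) Q(\gamma)} \epsilon^s e^{\xi(\gamma) h_{\mathfrak{r}}(0)}\Bigr] \leq C \epsilon^{p(1-s)}.
\]
The literal statement is in fact false for small $\epsilon$: since $s<1$ and $\xi(\gamma)Q(\gamma)\to 1$, the threshold $\epsilon^s$ is large compared with the typical rescaled diameter (of order $\epsilon^{\xi Q}$), so the event $\{\text{diam}\leq\epsilon^s\cdots\}$ has probability tending to $1$, not to $0$.

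You have taken the statement at face value and built a lower-tail argument, and this is exactly where your chain of implications breaks. On $\{\mathcal{D}_{\epsilon,z}\geq\epsilon^\eta\}\cap\{|V|\leq\theta\log(\epsilon^{-1})\}$, the inequality $\mathcal{D}_{\epsilon,z}\,e^{\xi(W+V)}\leq 2^{-\xi Q}\epsilon^{s-\xi Q}$ only yields $e^{\xi W}\leq 2^{-\xi Q}\epsilon^{s-\xi Q-\eta-\xi\theta}$, i.e.
\[
W \;\leq\; \frac{\xi Q - s + \eta + \xi\theta}{\xi}\,\log(\epsilon^{-1}) + O(1),
\]
with a \emph{positive} coefficient in front of $\log(\epsilon^{-1})$; no Gaussian tail is forced. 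Your claimed constraint $W\leq -\tfrac{\xi Q - s - \eta - \xi\theta}{\xi}\log(\epsilon^{-1})$ has the sign reversed. Consequently the whole lower-tail apparatus you set up---in particular the polynomial lower-tail bound on $\mathcal{D}_0$ via crossing distances across dyadic annuli---is aimed at the wrong target and is not needed.

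For the intended upper-tail statement, the paper's proof is the short argument of Lemma~\ref{lem:lqg_diameter_upper_bound_internal_metric_on_balls}, with Lemma~\ref{lem:moment_lqg_diameter_bound_for_compact_sets_gamma_to_zero} supplying the moment bound in place of Proposition~\ref{prop:moment_lqg_diameter_bound_for_compact_sets}. Your scaling identity together with the independence of $W$ from $\mathcal{D}_{\epsilon,z}$ and $\E[\mathcal{D}_{\epsilon,z}^p]\lesssim 1$ give
\[
\E\Bigl[\bigl(\mathfrak{r}^{-\xi Q} e^{-\xi h_{\mathfrak{r}}(z)} \sup_{u,v\in B_{\epsilon\mathfrak{r}}(z)}\wt{D}_h^\gamma(u,v; B_{2\epsilon\mathfrak{r}}(z))\bigr)^p\Bigr]\lesssim \epsilon^{\,p\,\xi Q - p^2\xi^2/2},
\]
and Markov's inequality at level $\epsilon^s$ produces the exponent $p(\xi Q - s) - \tfrac{p^2\xi^2}{2}$, which tends to $p(1-s)$ as $\gamma\to 0$. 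Recentering from $h_\mathfrak{r}(z)$ to $h_\mathfrak{r}(0)$ costs only the Gaussian tail of $V$, exactly as in that earlier proof. No lower-tail information on $\mathcal{D}_0$ enters.
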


\begin{proof}[Proof of Lemma~\ref{lem:diameter_upper_bound_internal_metric_on_balls_gamma_to_zero}]
It follows from the same argument used to prove Lemma~\ref{lem:lqg_diameter_upper_bound_internal_metric_on_balls} except for using Lemma~\ref{lem:moment_lqg_diameter_bound_for_compact_sets_gamma_to_zero} in place of Proposition~\ref{prop:moment_lqg_diameter_bound_for_compact_sets}.
\end{proof}

\begin{proof}[Proof of Lemma~\ref{lem:upper_bound_internal_metric_on_points_gamma_to_zero}]
It follows from the argument used to prove Lemma~\ref{lem:lqg_diameter_upper_bound_internal_metric_on_points} with Lemma~\ref{lem:diameter_upper_bound_internal_metric_on_balls_gamma_to_zero} in place of Lemma~\ref{lem:lqg_diameter_upper_bound_internal_metric_on_balls}.
\end{proof}

\section{Identifying the limit: $\gamma \to 0$ case}
\label{sec:limit_euclidean_metric}

In this section,  we will complete the proof of Theorem~\ref{thm:main_theorem_gamma_to_zero_intro} by showing that any subsequential limit coming from Theorem~\ref{thm:tightness_gamma_to_zero} is a.s.  equal to the Euclidean metric.  Thus combining with Lemma~\ref{lem:convergence_in_prob} we will complete the proof of Theorem~\ref{thm:main_theorem_gamma_to_zero_intro}.

Let us now describe the setup for the rest of the section.  Let $(\gamma_n)_{n \in \N}$ be a sequence in $(0,2)$ such that $\gamma_n \to 0$ as $n \to \infty$.  Then Theorem~\ref{thm:tightness_gamma_to_zero} combined with Skorokhod's representation theorem imply that we can find a subsequence $(\gamma_{k_n})$ and a coupling of random fields $(h^n)_{n \in \N},  h$ in the same probability space such that $h^n \to h$ as $n \to \infty$ on $H_{\text{loc}}^{-1}(\C)$ a.s.,  and there exists a random function $\wt{D}$ on the same probability space such that 
\begin{align*}
\wt{D}_{h^n}^{\gamma_{k_n}} \to \wt{D} \quad \text{as} \quad n \to \infty
\end{align*}
with respect to the local uniform topology on $\C \times \C$ a.s.  In order to make the notation easier,  we will assume that $\gamma_{k_n} = \gamma_n$ for all $n$ and we will denote the pair $(h^n ,  \wt{D}_{h^n}^{\gamma_n})$ by $(h,\wt{D}_h^{\gamma_n})$.

The main step in completing the proof of Theorem~\ref{thm:main_theorem_gamma_to_zero_intro} is the following proposition which states that $\wt{D}$ is deterministic a.s.

\begin{proposition}\label{prop:metric_deterministic}
$\wt{D}$ is a deterministic metric a.s.
\end{proposition}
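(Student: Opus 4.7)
The plan is a Kolmogorov zero-one law argument that exploits the fact that $\xi(\gamma_n) \to 0$, which renders the Weyl scaling factor trivial in the limit. I fix an orthonormal basis $(f_m)_{m \in \N}$ of $H_0(\C)$ with respect to the Dirichlet inner product consisting of smooth compactly supported functions, and write $h = \sum_{m \geq 1} a_m f_m$ with $(a_m)$ an i.i.d.\ sequence of standard Gaussians and the sum converging in $H^{-1}_{\loc}(\C)$. For each $m \in \N$, I set $h_{(m)} := \sum_{k=1}^m a_k f_k$, which is a random smooth compactly supported function, and $h^{(m)} := h - h_{(m)}$. Weyl scaling (Axiom~\ref{it:weyl_scaling}) then yields
\begin{align*}
\wt{D}_{h^{(m)}}^{\gamma_n} \;=\; e^{-\xi(\gamma_n) h_{(m)}} \cdot \wt{D}_h^{\gamma_n} \qquad \text{for all } n, m \in \N,
\end{align*}
and by the locality axiom (Axiom~\ref{it:locality}) together with the uniqueness of the $\gamma$-LQG metric (Theorem~\ref{thm:uniqueness_subcritical}), the metric $\wt{D}_{h^{(m)}}^{\gamma_n}$ is a measurable function of $h^{(m)}$, and hence $\sigma(a_k : k > m)$-measurable.

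The key step will be to show that for each fixed $m$, we have $\wt{D}_{h^{(m)}}^{\gamma_n} \to \wt{D}$ a.s.\ locally uniformly as $n \to \infty$. Since $h_{(m)}$ is continuous with compact support, hence bounded on all of $\C$, and $\xi(\gamma_n) \to 0$, the function $\xi(\gamma_n) h_{(m)}$ tends to $0$ uniformly on $\C$. Combined with the given a.s.\ local uniform convergence $\wt{D}_h^{\gamma_n} \to \wt{D}$, a direct length-of-path argument (testing both $\wt{D}_h^{\gamma_n}$-geodesics and $\wt{D}$-geodesics) will show that $e^{-\xi(\gamma_n) h_{(m)}} \cdot \wt{D}_h^{\gamma_n}$ converges locally uniformly to $\wt{D}$. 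If needed, one passes to a further Skorokhod coupling to ensure joint a.s.\ convergence of all the $\wt{D}_{h^{(m)}}^{\gamma_n}$; since the limiting metric in law is uniquely determined along the given subsequence, this does not alter the law of $\wt{D}$.

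Consequently $\wt{D}$ is the a.s.\ local uniform limit of $\sigma(a_k : k > m)$-measurable random variables, hence $\sigma(a_k : k > m)$-measurable modulo null sets, for every $m \in \N$. Therefore $\wt{D}$ is measurable with respect to the tail $\sigma$-algebra $\mathcal{T} := \bigcap_{m \geq 1} \sigma(a_k : k > m)$, which is trivial by Kolmogorov's zero-one law; hence $\wt{D}$ equals a deterministic metric a.s. The fact that the deterministic limit is a genuine metric rather than merely a pseudometric can be obtained from the non-degeneracy arguments of Subsection~\ref{subsec:limit_is_a_metric_subcritical} adapted to this setting, or inherited from the subsequent identification of the limit as the Euclidean metric in the remainder of Section~\ref{sec:limit_euclidean_metric}. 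The main technical obstacle will be verifying that the Weyl scaling operation is continuous in the local uniform topology when the scaling factor tends to $1$ uniformly; this relies on the length-space structure of $\wt{D}_h^{\gamma_n}$ (Axiom~\ref{it:length_space}) together with the H\"older continuity estimates established in Section~\ref{sec:gamma_to_zero_case}, which guarantee that $\wt{D}$ is itself a continuous length metric.
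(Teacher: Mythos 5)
Your tail-$\sigma$-algebra argument is close to the paper's, but it skips a crucial preliminary step, and without it the measurability claims do not hold.

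The difficulty is the following. After Skorokhod's representation theorem, the a.s.\ convergence $\wt{D}_{h^n}^{\gamma_{k_n}} \to \wt{D}$ involves a \emph{sequence of different fields} $h^n$, with $h^n \to h$ in $H^{-1}_{\loc}(\C)$; it is \emph{not} of the form $\wt{D}_{h}^{\gamma_{k_n}} \to \wt{D}$ with a single fixed $h$. (The paper's notational shorthand, denoting $(h^n, \wt{D}_{h^n}^{\gamma_n})$ by $(h,\wt{D}_h^{\gamma_n})$, is exactly what makes this easy to miss.) Your proof decomposes the limit field $h = \sum_m a_m f_m$ and applies Weyl scaling to write $\wt{D}_{h^{(m)}}^{\gamma_n} = e^{-\xi(\gamma_n)h_{(m)}} \cdot \wt{D}_h^{\gamma_n}$; but the coupling gives you no control over $\wt{D}_h^{\gamma_n}$, only over $\wt{D}_{h^n}^{\gamma_n}$. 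If you instead decompose each $h^n$, the coefficients $a^{(n)}_k$ vary with $n$, so the resulting $\sigma$-algebras $\sigma(a^{(n)}_k : k > m)$ are different for each $n$ and you cannot conclude that $\wt{D}$ is measurable with respect to a fixed tail $\sigma$-algebra. The remark that ``one passes to a further Skorokhod coupling'' does not repair this, because the measurability of $\wt{D}$ with respect to $\sigma(h)$ is exactly the thing in question and is not preserved by passing to yet another coupling.

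The missing ingredient is precisely Proposition~\ref{prop:metric_determined_by_field_gamma_to_zero}: one must first prove that $\wt{D}$ is a.s.\ determined by $h$ in the coupling $(h,\wt{D})$. This is a substantive locality result, established via the Efron--Stein machinery of \cite[Section~2.5]{Pfeffer_2024} (Lemmas~\ref{lem:bi_lipschitz_equivalence_gamma_to_zero} and~\ref{lem:metric_locally_determined_gamma_to_zero}), not something one gets for free from the uniqueness theorem, since that theorem concerns the LQG metric at a fixed $\gamma$ rather than the subsequential limit $\wt{D}$. Once determinacy is known, Lemma~\ref{lem:convergence_in_prob} upgrades the convergence in law to convergence in probability of $\wt{D}_h^{\gamma_n} \to \wt{D}$ with the \emph{same} field $h$; one then extracts a deterministic subsequence along which convergence holds a.s., and only at that point is your tail-$\sigma$-algebra argument (decomposing this fixed $h$ and applying Weyl scaling with $\xi(\gamma_n)\to 0$) legitimate and identical to the paper's concluding step. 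You should insert this determinacy/convergence-in-probability step before invoking Kolmogorov's zero--one law.
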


Theorem~\ref{thm:main_theorem_gamma_to_zero_intro} will follow from combining Proposition~\ref{prop:metric_deterministic} with the scaling and translation properties of $\wt{D}$.

Let us now focus on proving Proposition~\ref{prop:metric_deterministic}.  Its proof has two main steps.  The first step is to prove that $\wt{D}$ is a.s.  determined by $h$ in the coupling $(h,\wt{D})$.  This is the content of the next proposition.  The second step is to apply a $0-1$ law type argument as explained in Subsection~\ref{subsec:outline}.

\begin{proposition}\label{prop:metric_determined_by_field_gamma_to_zero}
$\wt{D}$ is a.s.  determined by $h$ in the coupling $(h,\wt{D})$.
\end{proposition}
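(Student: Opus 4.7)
The plan is to show that $\wt{D}$ satisfies Axiom~\ref{it:locality}, from which the desired conclusion follows by taking $U = \C$: the full metric $\wt{D} = \wt{D}(\cdot,\cdot;\C)$ is then a.s. $\sigma(h)$-measurable. The strategy will closely mirror Section~\ref{subsec:locality_subcritical}. I will first establish the weaker property that $\wt{D}$ is a local metric for $h$ in the sense of Definition~\ref{def:local_metric}, and then upgrade this to Axiom~\ref{it:locality} via Proposition~\ref{prop:main_prop_from_pfeffer_paper}.

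For the weak locality step, the key input is that for every $n$, $\wt{D}_h^{\gamma_n}$ satisfies Axiom~\ref{it:locality} by Theorem~\ref{thm:uniqueness_subcritical}; in particular, for any open $V \subseteq \C$, the internal metric $\wt{D}_h^{\gamma_n}(\cdot,\cdot;V)$ is $\sigma(h|_V)$-measurable, and hence conditionally independent from $(h, \wt{D}_h^{\gamma_n}(\cdot,\cdot;\C\setminus\overline{V}))$ given $h|_{\overline{V}}$. I would then pass this conditional independence to the limit: by Theorem~\ref{thm:tightness_gamma_to_zero}, the triples of the field together with the two internal metrics are jointly tight, so after extracting a further subsequence and applying Skorokhod's theorem one obtains joint a.s. convergence, and conditional independence is preserved under such weak limits. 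This yields that $\wt{D}$ is a local metric for $h$.

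To upgrade weak locality to Axiom~\ref{it:locality} via Proposition~\ref{prop:main_prop_from_pfeffer_paper}, I would verify the remaining hypotheses: $\wt{D}$ is a complete geodesic length space by arguments analogous to Section~\ref{subsec:limit_geodesic_and_complete_metric} (using the local uniform convergence $\wt{D}_h^{\gamma_n} \to \wt{D}$ and tightness across scales); translation invariance is inherited directly from $\wt{D}_h^{\gamma_n}$; and tightness across scales follows from the estimates of Section~\ref{sec:gamma_to_zero_case}, with scaling constants $c_r$ behaving like $r^{\xi(\gamma_n) Q(\gamma_n)}$. Note that $\xi(\gamma_n) Q(\gamma_n) = 2/d_{\gamma_n} + \gamma_n^2/(2 d_{\gamma_n}) \to 1$ as $n \to \infty$, so in the limit the effective scaling is $c_r = r$, consistent with the expected Euclidean behavior. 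The $\xi(\gamma_n)$-additivity of $\wt{D}_h^{\gamma_n}$ degenerates in the limit (since $\xi(\gamma_n) \to 0$) to plain $0$-additivity, which reduces to the weak locality already established.

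The hard part will be that Proposition~\ref{prop:main_prop_from_pfeffer_paper} is formulated for a fixed $\xi > 0$, whereas here the natural limiting parameter is $\xi = 0$: passing Axiom~\ref{it:weyl_scaling} to the limit gives only the trivial scaling $\wt{D}_{h+f} = \wt{D}_h$ for bounded continuous $f$. To circumvent this, I would inspect Pfeffer's proof in~\cite{Pfeffer_2024} and either verify that the argument remains valid in the degenerate case $\xi = 0$ (with the corresponding trivialization of Weyl scaling), or develop a bespoke variant tailored to the $\gamma \to 0$ scaling regime. Once this is handled, Axiom~\ref{it:locality} for $\wt{D}$ follows, and taking $U = \C$ completes the proof.
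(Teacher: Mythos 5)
Your overall strategy matches the paper's exactly: Lemma~\ref{lem:metric_properties_gamma_to_zero} records precisely the list of verified hypotheses you name (length and complete, degenerate Weyl scaling, translation/scaling invariance with tightness constants $c_r = r$, weak locality), and your computation that $\xi(\gamma_n)Q(\gamma_n) \to 1$ so that the effective scaling is $c_r = r$ in the limit, together with the observation that $\xi(\gamma_n)$-additivity degenerates to ordinary locality, are exactly the remarks the paper makes. You also correctly diagnose the real obstacle: Proposition~\ref{prop:main_prop_from_pfeffer_paper} cannot be invoked as a black box at $\xi = 0$. The paper's resolution confirms your proposed plan — it does not cite the conclusion of Pfeffer's Proposition~2.5 but re-runs its proof, after first restating Pfeffer's bi-Lipschitz--to--conditional--mean Lemma~2.28 and his local decomposition Lemma~2.29 in the present degenerate setting as Lemmas~\ref{lem:bi_lipschitz_equivalence_gamma_to_zero} and~\ref{lem:metric_locally_determined_gamma_to_zero}, and then running the Efron--Stein concentration argument to show that the conditional variance of $\wt{D}(O,O')$ given $h$ vanishes. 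So the step you flag as ``the hard part'' is indeed the content of the proof, and the ``inspect Pfeffer's proof and verify it goes through'' route is the one the paper takes.

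The one place your sketch has a genuine gap is the justification of weak locality. You write that ``conditional independence is preserved under such weak limits,'' but this is not a valid general principle, and the difficulty is real here: after applying Skorokhod's theorem, the conditioning $\sigma$-algebras $\sigma(h^n|_{\overline V})$ vary with $n$ and do not coincide with $\sigma(h|_{\overline V})$, so the conditional independence enjoyed by $\wt{D}_{h^n}^{\gamma_n}(\cdot,\cdot;V)$ does not abstractly transfer to the limit. The paper instead invokes the argument of Pfeffer's Lemma~2.13 (compare Lemma~\ref{lem:version_of_xi_additivity} in the subcritical section): one uses the Markov decomposition of the GFF on $V$ into a zero-boundary part plus a harmonic part, exploits locality and Weyl scaling of the approximating metrics to sandwich $\wt{D}_{h^n}^{\gamma_n}(\cdot,\cdot;V)$ by functionals of the zero-boundary field, and deduces the conditional independence statement in the limit from that explicit structure rather than from an abstract limit theorem for conditional independence.
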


The proof of Proposition~\ref{prop:metric_determined_by_field_gamma_to_zero} essentially follows from the argument in \cite[Section~2.5]{Pfeffer_2024}.  We won't give detailed proofs of the relevant statements since the proofs are the same.  However we choose to include the statements from \cite[Section~2.5]{Pfeffer_2024} that we need in order to make the proof clear.  We start with the following lemma which  states some basic properties of $\wt{D}$ that we are going to use.  Its proof follows from the exact same arguments used in Section~\ref{sec:identifying_limit_subcritical} and so we won't give a detailed proof.

\begin{lemma}\label{lem:metric_properties_gamma_to_zero}
The following conditions hold
\begin{enumerate}
\item \label{it:length_and_complet_gamma_to_zero}
$\wt{D}$ is a length and complete metric a.s.
\item \label{it:weyl_scaling_gamma_to_zero}
It is a.s.  the case that for any bounded and continuous function $f : \C \to \R$,  we have that
\begin{align*}
e^{\xi(\gamma_n)f} \cdot \wt{D}_h^{\gamma_n} \to \wt{D} \quad \text{as} \quad n \to \infty
\end{align*}
locally uniformly.
\item \label{it:translation_and_scaling_gamma_to_zero}
For all $z \in \C ,  r >0$,  we have that the metrics $\wt{D},  r^{-1} \wt{D}(r \cdot ,  r \cdot)$ and $\wt{D}(\cdot + z ,  \cdot + z)$ have the same law.   Also $\wt{D}$ satisfies condition~\ref{it:tightness_across_and-around_annuli} with $c_r = r$ for all $>0$.
\item \label{it:local_metric_gamma_to_zero}
$\wt{D}$ is a local metric for $h$ in the coupling $(h,\wt{D})$ in the sense of Definition~\ref{def:local_metric}.
\end{enumerate}
\end{lemma}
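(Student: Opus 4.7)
The plan is to carry out four arguments, all modeled on the corresponding portions of Section~\ref{sec:identifying_limit_subcritical} but accounting for the fact that here $\gamma_n \to 0$ rather than converging to a fixed $\gamma \in (0,2)$.

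First, for~\ref{it:length_and_complet_gamma_to_zero}, I would prove that $\wt{D}$ is a.s.\ a metric inducing the Euclidean topology, following Subsection~\ref{subsec:limit_is_a_metric_subcritical}. The core input is an analogue of Proposition~\ref{prop:annuli_crossing_always_positive}: for every fixed Euclidean annulus, the $\wt{D}_h^{\gamma_n}$-crossing distance is bounded below away from zero with probability uniform in $n$. This is established via (i) a uniform-in-$n$ lower bound on the $\wt{D}_h^{\gamma_n}$-distance between points on concentric circles (analogue of Lemma~\ref{lem:uniform_lower_bound_between_points}), which only uses the choice of normalization~\eqref{eqn:main_normalization_gamma_to_zero} with $\mathfrak{p}_0$ fixed and independent of $\gamma$; and (ii) the $0$-$1$ law of Lemma~\ref{lem:crossing_positive}, which uses the Markov property of the GFF, Weyl scaling and the tightness Theorem~\ref{thm:tightness_gamma_to_zero}. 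The Euclidean unboundedness of $\wt{D}$-distances (analogue of Lemma~\ref{lem:metric_unbounded}) then follows from the event-at-many-scales argument applied to the analogue of~\eqref{eqn:lower_scaling_bound}. Completeness and the length space property follow exactly as in Lemmas~\ref{lem:complete_metric} and~\ref{lem:geodesic_metric} from the midpoint characterization and local uniform convergence.

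Second, for~\ref{it:weyl_scaling_gamma_to_zero}, I would invoke Lemma~\ref{lem:conditions_for_weyl_scaling} with $D_n = \wt{D}_h^{\gamma_n}$, $f_n = \xi(\gamma_n) f$ and $f_\infty \equiv 0$ (using $\xi(\gamma_n) \to 0$ and boundedness of $f$). Condition~\ref{it:boundness_of_almost_geodesic_paths} is checked via the argument of Lemma~\ref{lem:condition_I_satisfied}, which requires only the Euclidean unboundedness from part~\ref{it:length_and_complet_gamma_to_zero} and local uniform convergence. Condition~\ref{it:short_distances_around_small_annuli} is obtained as in Lemma~\ref{lem:condition_II_satisfied} by combining tightness of $\{\wt{D}_h^{\gamma_n}(\text{around}\,\,A)\}$ for rational annuli $A$ (proved exactly as in Lemma~\ref{lem:tightness_around_annuli}) with an application of Lemma~\ref{lem:good_event_at_many_scales}. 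Finally, the local uniform equicontinuity afforded by Proposition~\ref{prop:tightness_proposition_gamma_to_zero} upgrades the resulting lower semicontinuous convergence to local uniform convergence via Arzela-Ascoli, as in the proof of Proposition~\ref{prop:weyl_scaling}.

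Third, parts~\ref{it:translation_and_scaling_gamma_to_zero} and~\ref{it:local_metric_gamma_to_zero} can be handled together. Translation invariance of $\wt{D}$ follows by passing to the limit in the identity in law of $\wt{D}_h^{\gamma_n}(\cdot+z,\cdot+z)$ and $\wt{D}_{h(\cdot+z)}^{\gamma_n}$. For scaling, Axiom~\ref{it:coordinate_change} gives that $\wt{D}_h^{\gamma_n}(ru,rv)$ equals in law $r^{\xi(\gamma_n) Q(\gamma_n)} e^{\xi(\gamma_n) h_r(0)} \wt{D}_h^{\gamma_n}(u,v)$. Since $\xi(\gamma_n)Q(\gamma_n) = (2+\gamma_n^2/2)/d_{\gamma_n} \to 1$ as $\gamma_n \to 0$ (using $d_{\gamma_n} \to 2$ by \cite[Theorem~1.2]{ding2020fractal}) and $\xi(\gamma_n) h_r(0) \to 0$ in probability, passing to the limit yields the scaling identity with constant $c_r = r$. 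Condition~\ref{it:tightness_across_and-around_annuli} then reduces to showing $\wt{D}(\text{across}\,\,\mathbb{A}_{1,2}(0))$ and $\wt{D}(\text{around}\,\,\mathbb{A}_{1,2}(0))$ lie in $(0,\infty)$ a.s., which follow from parts~\ref{it:length_and_complet_gamma_to_zero} and~\ref{it:weyl_scaling_gamma_to_zero} together with Lemma~\ref{lem:tightness_around_annuli}. For part~\ref{it:local_metric_gamma_to_zero}, locality of $\wt{D}_h^{\gamma_n}$ for each $n$ (Axiom~\ref{it:locality}) combined with the Markov property of $h$ yields the conditional independence defining a local metric; passing to the limit in this relation (using joint local uniform convergence of $\wt{D}_h^{\gamma_n}(\cdot,\cdot;V)$ for open $V$) preserves the property, exactly as in the proof of Proposition~\ref{prop:limit_is_a_xi_additive_metric_subcritical}.

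The main obstacle I anticipate is verifying condition~\ref{it:short_distances_around_small_annuli} uniformly in $n$ as $\gamma_n \to 0$, because the event-at-many-scales argument requires the relevant events $E_k^n(u;C)$ to have probability bounded below uniformly in both $k$ and $n$. The uniform choice of $\mathfrak{p}_0$ in~\eqref{eqn:main_normalization_gamma_to_zero} controls the distance-around side, while the distance-across side needs the lower bound $\p[\wt{D}_h^{\gamma_n}(\text{across}\,\,\mathbb{A}_{1,2}(0)) \geq C^{-1}] \geq \mathfrak{p}_0 - \epsilon$ uniformly, which is obtained from the a.s.\ positivity of $\wt{D}(\text{across}\,\,\mathbb{A}_{1,2}(0))$ established in part~\ref{it:length_and_complet_gamma_to_zero}. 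Once this uniformity is secured the remaining steps are routine adaptations of Section~\ref{sec:identifying_limit_subcritical}.
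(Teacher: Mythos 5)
Your proposal follows the same path as the paper's proof: the paper simply cites that each of the four parts follows by the same arguments as Proposition~\ref{prop:complete_and_geodesic_metric}, Proposition~\ref{prop:weyl_scaling} (with $\xi(\gamma_n)\to 0$), Proposition~\ref{prop:locality_property_subcritical}, and Lemma~\ref{lem:version_of_xi_additivity} respectively, and your expansion spells out precisely how those arguments transfer, including the key observation that $\xi(\gamma_n)Q(\gamma_n)\to 1$ gives $c_r=r$ in condition~\ref{it:tightness_across_and-around_annuli}. The approach and reliance on the same auxiliary results match the paper's.
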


\begin{proof}
\eqref{it:length_and_complet_gamma_to_zero} follows from the exact same argument used to prove Proposition~\ref{prop:complete_and_geodesic_metric} while \eqref{it:weyl_scaling_gamma_to_zero} follows from the same argument used to prove Proposition~\ref{prop:weyl_scaling} combined with the fact that $\xi(\gamma_n) \to 0$ as $n \to \infty$.  Moreover by arguing as in the proof of Proposition~\ref{prop:locality_property_subcritical},  we obtain that \eqref{it:translation_and_scaling_gamma_to_zero} holds and the same argument as in \cite[Lemma~2.13]{Pfeffer_2024} implies that \eqref{it:local_metric_gamma_to_zero} holds as well.  This completes the proof of the lemma.
\end{proof}

Next we state a bi-Lipschitz equivalence result about metrics coupled with $h$ on the same probability space.  See also \cite[Lemma~5.1]{gwynne2020local} and \cite[Lemma~2.28]{Pfeffer_2024}.

\begin{lemma}\label{lem:bi_lipschitz_equivalence_gamma_to_zero}
Let $U \subseteq \C$ be an open and connected set,  and let $(h,D)$ be a coupling of $h$ with a random lower semicontinuous metric $D$ on $U$.  Suppose that $D$  satisfies properties \eqref{it:length_and_complet_gamma_to_zero},  \eqref{it:translation_and_scaling_gamma_to_zero} and \eqref{it:local_metric_gamma_to_zero}.  Then there exists a deterministic constant $C \in (0,\infty)$ such that for any fixed open set $V \subseteq U$ and rational circles $O,O' \subseteq V$,  a.s.
\begin{align*}
C^{-1} \E[D(O,O' ; V) \giv h] \leq D(O,O' ; V) \leq C \E[D(O,O' ; V) \giv h]
\end{align*}
and for any fixed $q>0$,
\begin{align*}
\E[D(O,O' ; V)^q \giv h] < \infty.
\end{align*}
Moreover,  if the pairs $(h,D)$ and $(h,D')$ have the same law,  then a.s.  the metrics $D,D'$ are bi-Lipschitz equivalent with Lipschitz constant $C^2$.
\end{lemma}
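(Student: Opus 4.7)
The proof will follow the strategy of \cite[Lemma~5.1]{gwynne2020local} and \cite[Lemma~2.28]{Pfeffer_2024}, which establish analogous bi-Lipschitz equivalence results for local metrics coupled with the GFF. Fix $V, O, O'$ as in the statement.

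First I would establish that $\E[D(O,O';V)^q \mid h] < \infty$ a.s.\ for any fixed $q > 0$. By the length space property \eqref{it:length_and_complet_gamma_to_zero}, $D(O,O';V)$ is bounded above by the $D$-length of any path from $O$ to $O'$ inside $V$. Constructing such a path by concatenating loops around a chain of dyadic annuli covering a region between $O$ and $O'$, the resulting length is a sum of quantities of the form $D(\text{around}\,\,\mathbb{A}_{r,2r}(z))$. By the tightness across scales in condition~\eqref{it:tightness_across_and-around_annuli} with $c_r = r$, each summand has all finite moments, and summing gives the claim both marginally and conditionally on $h$.

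The core of the argument is showing that $D(O,O';V)$ concentrates around its conditional expectation $\E[D(O,O';V) \mid h]$ up to a universal multiplicative constant. Via the Markov property of the GFF, I would decompose $h$ on a neighborhood of $V$ as $h = \mathfrak{h} + \sum_i h_i^0$, where $\{S_i\}$ is a fine dyadic partition, each $h_i^0$ is a zero-boundary GFF on $S_i$, and $\mathfrak{h}$ is harmonic on $\bigcup_i S_i$ and determined by the boundary data. The pieces $\{h_i^0\}$ are conditionally independent given $\mathfrak{h}$. By the locality property~\eqref{it:local_metric_gamma_to_zero}, resampling $h_i^0$ affects $D(O,O';V)$ only through paths entering $S_i$; the length space property bounds the effect of this resampling by the $D$-diameter of a neighborhood of $S_i$, which is controlled in $L^q$ by tightness across scales \eqref{it:translation_and_scaling_gamma_to_zero}. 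Applying the Efron--Stein inequality on this independent decomposition yields a variance bound for $D(O,O';V)$ conditional on $\mathfrak{h}$. Refining the partition and using scale invariance to ensure the bounds do not deteriorate, one obtains that the conditional variance given $h$ is comparable to the conditional expectation squared, which yields the two-sided bound by a universal constant $C$ and, together with Step~1, also finiteness of $\E[D(O,O';V)^q \mid h]$ with all moments.

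For the bi-Lipschitz comparison: if $(h,D)$ and $(h,D')$ have the same law, then $\E[D(O,O';V) \mid h]$ and $\E[D'(O,O';V) \mid h]$ are equal a.s.\ as they are the same measurable functional of $h$. The first bound then yields $C^{-2} D'(O,O';V) \leq D(O,O';V) \leq C^2 D'(O,O';V)$ simultaneously for all rational circles $O, O' \subseteq V$ with $V$ ranging over a countable basis of open sets. By the lower semicontinuity of $D$ and $D'$ together with the density of rational circles, this extends to all pairs of points in $U$. The main obstacle will be making the concentration step in the third paragraph yield a constant $C$ that is genuinely universal, i.e., independent of the specific $V, O, O'$ and of the partition scale: this requires carefully exploiting the assumption $c_r = r$ in \eqref{it:tightness_across_and-around_annuli} and the exact scale invariance in \eqref{it:translation_and_scaling_gamma_to_zero}, so that the Efron--Stein bounds at each scale align and do not accumulate under refinement.
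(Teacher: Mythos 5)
Your Step~1 (finiteness of conditional moments via a path built from loops around dyadic annuli) and your final paragraph (deducing the bi-Lipschitz comparison from the two-sided bound plus equality of the conditional expectations) are both reasonable and in the right spirit.

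The genuine gap is in your core step. The Efron--Stein inequality only controls the conditional \emph{variance} of $D(O,O';V)$ given $h$ (or given the harmonic part $\mathfrak h$). Even a bound of the form $\operatorname{Var}[D(O,O';V)\mid h] \leq C\,\E[D(O,O';V)\mid h]^2$ only yields, via Chebyshev, that $D(O,O';V)$ lies within a constant multiple of its conditional mean with bounded conditional probability --- it cannot produce the \emph{almost sure} two-sided bound $C^{-1}\,\E[D(O,O';V)\mid h] \leq D(O,O';V) \leq C\,\E[D(O,O';V)\mid h]$, which is an assertion about the support of the conditional law, not its second moment. No amount of refining the partition fixes this: $L^2$ concentration is simply a weaker statement than what the lemma claims. (There is also a smaller set-up issue: if you condition on the full $h$, the zero-boundary pieces $h_i^0$ are determined, so there is nothing left to resample with Efron--Stein; the Efron--Stein step in \cite[Section~2.5]{Pfeffer_2024} is applied to the internal metrics $D(\cdot,\cdot;S\cap U)$, which remain random given $h$, and it is used to prove measurability of $D$ with respect to $h$ in Proposition~\ref{prop:metric_determined_by_field_gamma_to_zero}, not to prove this lemma.)

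The actual argument (\cite[Lemma~2.28]{Pfeffer_2024}, following \cite[Lemma~5.1]{gwynne2020local}) takes a different route. One couples \emph{two conditionally independent copies} $D$, $D'$ of the metric given $h$, then shows a.s.\ that $D(O,O';V) \leq C\,D'(O,O';V)$ by a splicing/percolation argument: cover the region between $O$ and $O'$ by a fine net of annuli, use locality~\eqref{it:local_metric_gamma_to_zero} and the independence-across-scales property (Lemma~\ref{lem:good_event_at_many_scales}) together with tightness~\eqref{it:translation_and_scaling_gamma_to_zero} to ensure that a.s.\ each point admits a scale at which the $D$-distance around and the $D'$-distance across the surrounding annulus are comparable (after normalizing by $c_r e^{\xi h_r}$), and then concatenate $D$-short loops to beat a $D'$-geodesic. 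This gives the a.s.\ deterministic-constant comparison between the two independent copies; integrating one of them out against the conditional law given $h$ produces the claimed a.s.\ bounds relating $D(O,O';V)$ to $\E[D(O,O';V)\mid h]$. You should replace your Efron--Stein step with this coupling argument.
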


\begin{proof}
It follows from the same argument used to prove \cite[Lemma~2.28]{Pfeffer_2024}.
\end{proof}

As in \cite[Section~2.5]{Pfeffer_2024},  we would like to express $\wt{D}$ as a function of a collection of random variables which are conditionally independent given $h$.  This is the content of the following lemma.

\begin{lemma}\label{lem:metric_locally_determined_gamma_to_zero}
Suppose that we have the setup of Lemma~\ref{lem:bi_lipschitz_equivalence_gamma_to_zero}.  Let $\theta$ be sampled uniformly from Lebesgue measure on $[0,1]^2$,  and let $\mathcal{C}_{\theta}^{\epsilon}$ be the set of open $\epsilon \times \epsilon$ squares which intersect $U$ with vertices in $\epsilon(\Z^2 + \theta)$.  Then we have that
\begin{enumerate}
\item For any fixed $\epsilon > 0$ and any path $P$ in $U$ with finite $D$-length chosen in a manner depending only on $h$ and $D$,  a.s.
\begin{align*}
\len(P ; D) = \sum_{S \in \mathcal{C}_{\theta}^{\epsilon}} \len(P \cap S ; D).
\end{align*}
\item The metric $D$ is a.s.  determined by $h,\theta$ and the set of internal metrics 
\begin{align*}
\{D(\cdot ,  \cdot ; S \cap U): S \in \mathcal{C}_{\theta}^{\epsilon}\}.
\end{align*}
\item The internal metrics $\{D(\cdot ,  \cdot ; S \cap U) : S \in \mathcal{C}_{\theta}^{\epsilon}\}$ are conditionally independent given $h$ and $\theta$.
\end{enumerate}
\end{lemma}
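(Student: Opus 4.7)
The plan is to establish the three assertions in order, along the lines of \cite[Section~2.5]{Pfeffer_2024}. For (1), I would first reparametrize $P$ by $D$-length, so that $\len(P|_J;D)=|J|$ for every subinterval $J$ of the parameter interval $[0,\len(P;D)]$. Since $P$ is chosen in an $(h,D)$-measurable fashion and $\theta$ is uniform on $[0,1]^2$ independent of $(h,D)$, Fubini's theorem applies to the product of Lebesgue measure on the parameter interval and the law of $\theta$: for each fixed $t$, the point $P(t)$ lies on some grid line of $\CC_\theta^\epsilon$ only for $\theta$ in a set of measure zero in $[0,1]^2$ (a union of two arithmetic progressions modulo $1$), so a.s.\ the set of parameter times at which $P$ meets the grid has Lebesgue measure zero, and consequently its $D$-length is zero. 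Decomposing the complementary part of $P$ into its excursions inside individual squares then yields $\len(P;D)=\sum_{S\in\CC_\theta^\epsilon}\len(P\cap S;D)$.

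For (2), the first step is to observe that for any continuous path $Q:[a,b]\to S\cap U$ contained in a single square $S\in\CC_\theta^\epsilon$, one has $\len(Q;D)=\len(Q;D(\cdot,\cdot;S\cap U))$: the inequality ``$\le$'' is immediate from $D\le D(\cdot,\cdot;S\cap U)$ on $S\cap U$, and the reverse inequality follows from the fact that $D(Q(t_{i-1}),Q(t_i);S\cap U)$ is bounded by the $D$-length of $Q|_{[t_{i-1},t_i]}$ for any partition, upon taking the supremum. Combined with (1), the $D$-length of any $(h,D)$-measurable path of finite $D$-length is therefore a deterministic functional of $h$, $\theta$ and $\{D(\cdot,\cdot;S\cap U)\}_{S\in\CC_\theta^\epsilon}$. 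Since $D$ is a length metric, $D(x,y)$ is the decreasing limit of the $D$-lengths of a canonical sequence of $(h,D)$-measurable almost-geodesic paths from $x$ to $y$, so $D$ is determined by $h$, $\theta$, and the internal metrics.

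For (3), after conditioning on $\theta$, the collection $\CC_\theta^\epsilon=\{S_i\}_{i\in\N}$ is a deterministic enumeration of pairwise disjoint open squares, and in particular $S_j\subseteq \C\setminus\ol{S_i}$ for $j\ne i$. The locality hypothesis gives that for each fixed $i$, $D(\cdot,\cdot;S_i\cap U)$ is conditionally independent of the pair $(h,D(\cdot,\cdot;U\setminus\ol{S_i}))$ given $h|_{\ol{S_i}}$; since $h|_{\ol{S_i}}$ is a function of $h$, this upgrades to conditional independence of $D(\cdot,\cdot;S_i\cap U)$ and $D(\cdot,\cdot;U\setminus\ol{S_i})$ given $h$. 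Because each $D(\cdot,\cdot;S_j\cap U)$ for $j\ne i$ is a function of $D(\cdot,\cdot;U\setminus\ol{S_i})$, a straightforward induction on the number of squares, followed by a monotone limit for the countable family, yields the joint conditional independence of $\{D(\cdot,\cdot;S\cap U)\}_{S\in\CC_\theta^\epsilon}$ given $(h,\theta)$. The main subtlety is in part (1): the hypothesis that $P$ is chosen depending only on $h$ and $D$ is precisely what is needed so that the independent uniform randomness in $\theta$ can be exploited via Fubini---without this, $P$ could conspire with the grid and spend positive $D$-length on grid lines, invalidating the decomposition.
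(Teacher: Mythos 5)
Your proof follows the same route as \cite[Lemma~2.29]{Pfeffer_2024}, which is exactly what the paper cites. Parts (1) and (3) are sound as written: the Fubini argument for (1) exploits precisely the independence of $\theta$ from $(h,D,P)$ guaranteed by the measurability hypothesis, and the induction in (3) correctly upgrades the one-square locality hypothesis to joint conditional independence of the countable family.

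In part (2), however, there is a circularity you should not gloss over. You appeal to a ``canonical sequence of $(h,D)$-measurable almost-geodesic paths,'' and while part (1) shows that the $D$-length of each such path is a function of $(h,\theta,\{D(\cdot,\cdot;S\cap U)\}_S)$ \emph{once the path is given}, the paths themselves are selected using $D$, so nothing yet shows $D$ is recoverable from $(h,\theta,\{D(\cdot,\cdot;S\cap U)\}_S)$ alone. The obvious repair---taking the infimum of $\sum_S \len(P\cap S;D(\cdot,\cdot;S\cap U))$ over \emph{all} continuous paths $P$ from $x$ to $y$---does not work: a path that enters the grid near $x$, runs along a grid line, and exits near $y$ has vanishing internal-metric length in every open square but strictly positive $D$-length, so this infimum can fall strictly below $D(x,y)$. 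The step you need is to restrict the infimum to a countable \emph{deterministic} family of paths (piecewise-linear paths with rational vertices, say), dense enough to realize the length-metric infimum by the continuity/Euclidean-topology properties of $D$; part (1) then applies a.s.\ simultaneously to all members of this family, and the reconstruction of $D$ no longer needs $D$ as input. Related to this: your sketch never invokes Lemma~\ref{lem:bi_lipschitz_equivalence_gamma_to_zero}, which the paper explicitly says replaces \cite[Lemma~2.28]{Pfeffer_2024} in Pfeffer's argument for this lemma, so it is worth checking which step of Pfeffer's proof requires that moment/bi-Lipschitz input and whether your elided reconstruction step is where it enters.
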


\begin{proof}
It follows from the same argument used to prove \cite[Lemma~2.29]{Pfeffer_2024} but with Lemma~\ref{lem:bi_lipschitz_equivalence_gamma_to_zero} in place of \cite[Lemma~2.28]{Pfeffer_2024}.
\end{proof}

As explained in \cite[Section~2.5]{Pfeffer_2024},  Lemmas~\ref{lem:bi_lipschitz_equivalence_gamma_to_zero} and ~\ref{lem:metric_locally_determined_gamma_to_zero} allow us to apply the Efron-Stein inequality and write
\begin{align*}
\text{Var}[D(O,O') \giv h,\theta] \leq \frac{1}{2} \sum_{S \in \mathcal{C}_{\theta}^{\epsilon}} \E[(D^S(O,O') - D(O,O'))^2 \giv h,\theta]
\end{align*}
in the setting of Lemma ~\ref{lem:metric_locally_determined_gamma_to_zero},  where $D^S$ denotes the metric obtained by $D$ by resampling $D(\cdot ,  \cdot ; S \cap U)$ from its conditional law given $(h,\theta)$.

\begin{proof}[Proof of Proposition~\ref{prop:metric_determined_by_field_gamma_to_zero}]
It follows from the argument in the proof of \cite[Proposition~2.5]{Pfeffer_2024} except that we use Lemma~\ref{lem:metric_locally_determined_gamma_to_zero} in place of \cite[Lemma~2.29]{Pfeffer_2024}.
\end{proof}

\begin{proof}[Proof of Proposition~\ref{prop:metric_deterministic}]
Suppose that we have the same setup as in Proposition~\ref{prop:metric_determined_by_field_gamma_to_zero}.  Then combining Proposition~\ref{prop:metric_determined_by_field_gamma_to_zero} with Lemma~\ref{lem:convergence_in_prob},  we obtain that $\wt{D}_h^{\gamma_n} \to \wt{D}$ as $n \to \infty$ in probability,  where the metrics $\wt{D}_h^{\gamma_n}$ are sampled using the same field $h$.  Hence there exists a deterministic subsequence $(\gamma_{k_n})_{n \in \N}$ such that
\begin{align}\label{eqn:almost_sure_convergence_gamma_to_zero}
\wt{D}_h^{\gamma_{k_n}} \to \wt{D} \quad \text{as} \quad n \to \infty \quad \text{a.s.}
\end{align}
with respect to the local uniform topology on $\C \times \C$.

Recall that $h$ can be sampled as follows.  Let $(f_n)_{n \in \N}$ be an orthonormal basis of $H_0(\C)$ with respect to the Dirichlet inner product $(\cdot ,  \cdot)_{\nabla}$ and let $\rho_{0,1}$ denote the uniform probability measure on $\partial \D$.  Then the field $h$ can be sampled as the almost sure limit in $H_{\text{loc}}^{-1}(\C)$ as $n \to \infty$ of 
\begin{align*}
h_n = \sum_{m=1}^n a_m (f_m - (f_m ,  \rho_{0,1})),
\end{align*}
where $(a_m)_{m \in \N}$ is a sequence of i.i.d.  random variables whose law is given by $\mathcal{N}(0,1)$.

Let $\mathcal{F}_n$ denote the $\sigma$-algebra generated by $\{a_m : m \geq n\}$ and set $\mathcal{F} = \cap_{n \geq 1} \mathcal{F}_n$.  Note that Kolmogorov's $0-1$ law implies that $\mathcal{F}$ is trivial.  Fix $m \in \N$.  Then \eqref{eqn:almost_sure_convergence_gamma_to_zero} combined with \eqref{it:weyl_scaling_gamma_to_zero} in Lemma~\ref{lem:metric_properties_gamma_to_zero} imply that
\begin{align*}
\wt{D}_{h-h_m}^{\gamma_{k_n}} \to \wt{D} \quad \text{as} \quad n \to \infty \quad \text{a.s.}
\end{align*}
with respect to the local uniform topology.  Note that $h-h_m$ is $\mathcal{F}_m$-measurable and so $\wt{D}_{h-h_m}^{\gamma_{k_n}}$ is $\mathcal{F}_m$-measurable for all $n \in \N$ by Axiom~\ref{it:locality}.  Therefore we obtain that $\wt{D}$ is $\mathcal{F}_m$-measurable for all $m \in \N$.  In particular,  the metric $\wt{D}$ is $\mathcal{F}$-measurable and so $\wt{D}$ is deterministic a.s.  since $\mathcal{F}$ is trivial.  This completes the proof of the proposition.
\end{proof}

\begin{proof}[Proof of Theorem~\ref{thm:main_theorem_gamma_to_zero_intro}]
First we note that Proposition~\ref{prop:metric_deterministic} implies that there exists a deterministic constant $d \geq 0$ such that $\wt{D}(0,1) = d$ a.s.  Fix $z , w \in \C$ distinct and deterministic points.  Similarly we have that there exists a deterministic constant $\wt{d} \geq 0$ such that $\wt{D}(z,w) = \wt{d}$ a.s.  Moreover condition~\eqref{it:translation_and_scaling_gamma_to_zero} in Lemma~\ref{lem:metric_properties_gamma_to_zero} implies that the random variables $\wt{D}(z,w)$ and $|z-w| \wt{D}(0,1)$ have the same law and hence we have that $\wt{d} = |z-w| d$.  It follows that 
\begin{align*}
\wt{D}(z,w) = d |z-w| \quad \text{for all} \quad z ,w  \in \Q^2 \quad \text{a.s.}
\end{align*}
and so by continuity,  we obtain that 
\begin{align*}
\wt{D}(z,w) = d |z-w| \quad \text{for all} \quad z ,w  \in \C \quad \text{a.s.}
\end{align*}
Therefore it suffices to show that $d = 1$.  But this follows since
\begin{align*}
\p\left[\wt{D}_h^{\gamma_n}(0,1) \leq 1 \right] = \frac{1}{2} \quad \text{for all} \quad n \in \N,
\end{align*}
and 
\begin{align*}
\p\left[|\wt{D}_h^{\gamma_n}(0,1) - d| > \epsilon \right] \to 0 \quad \text{as} \quad n \to \infty, \quad \text{for all} \quad \epsilon>0.
\end{align*}
This completes the proof of the theorem.
\end{proof}

\section{Tightness: General case}
\label{sec:tightness_general_case}

The main goal of this section is to prove that the family of metrics $\wt{D}_h^{\xi}$ in Theorem~\ref{thm:main_theorem_general_case_intro} is tight with respect to the topology of lower semicontinuous functions on $\C \times \C$ whenever $\xi$ lies on compact subsets of $(0,\infty)$.  As explained in Subsection~\ref{subsec:outline},  we will first show a probability estimate on the $\wt{D}_h^{\xi}$-distances between any two fixed compact sets which is uniform in $\xi$ whenever $\xi$ lies on compact subsets of $(0,\infty)$ (see Propositions~\ref{prop:bound_on_lqg_distances_of_sets_general} and ~\ref{prop:lqg_distances_general_case}) and it is analogous to Proposition~\ref{prop:bound_on_lqg_distances_of_sets}.  Next we will combine Propositions~\ref{prop:bound_on_lqg_distances_of_sets_general} and ~\ref{prop:lqg_distances_general_case} with the arguments in \cite[Section~5.2]{ding2020tightness} to construct subsequential limits.

Let us now define the function $\mathfrak{p}_0$ in the statement of Theorem~\ref{thm:main_theorem_general_case_intro}.  For the rest of the section,  we let $h$ be a whole-plane GFF normalized such that $h_1(0) = 0$ and let $D_h^{\xi}$ be the limiting metric in Theorem~\ref{thm:convergence_of_supercritical_lfpp}.  Set $\zeta(\xi) = \frac{Q(\xi)}{4}$ for all $\xi>0$.  We also let $b : (0,\infty) \to (0,\infty)$ be a continuous function and let $M : (0,\infty) \to (0,\infty)$ be a continuous and non-decreasing function such that
\begin{align}\label{eqn:condition_for_b}
b(\xi) < \frac{1}{2\sqrt{2} \xi}
\end{align}
and
\begin{align}\label{eqn:condition_for_M}
\frac{4 - \xi Q(\xi) + \xi 2 \sqrt{2} \sqrt{4 + M(\xi)}}{\sqrt{M(\xi)}} < \frac{1}{b(\xi)},\quad \xi \zeta(\xi) b(\xi) \sqrt{M(\xi)} > 1,\quad \text{and} \quad b(\xi) \sqrt{M(\xi)} > 1
\end{align}
for all $\xi > 0$.

For all $\xi>0$,  we let $\mathfrak{p}_0(\xi) \in (0,1)$ be the minimum number in $(0,1)$ such that the statement of Lemma~\ref{lem:good_event_occurs_almost_everywhere} holds with $M = M(\xi)$.  We then let $\alpha(\xi) \in (0,\infty)$ be such that
\begin{align}\label{eqn:main_normalization_general_case}
\p\left[D_h^{\xi}(\text{around} \,\,  \mathbb{A}_{1,2}(0)) \leq \alpha(\xi) \right] = \mathfrak{p}_0(\xi).
\end{align}
Note that Lemma~\ref{lem:normalization_well_defined} implies that $\alpha(\xi)$ is well-defined for all $\xi>0$.  We then set 
\begin{align}\label{eqn:normalization_general_case}
\wt{D}_h^{\xi}:=\alpha(\xi)^{-1} D_h^{\xi}.
\end{align}

We start with the following analogue of Proposition~\ref{prop:bound_on_lqg_distances_of_sets}.

\begin{proposition}\label{prop:bound_on_lqg_distances_of_sets_general}
Let $U \subseteq \C$ be an open and connected set and let $K_1,K_2 \subseteq U$ be connected,  disjoint compact sets which are not singletons.  Then for all $\mathfrak{r}>0$,  it holds with probability at least $1-O_A(A^{-b(\xi) \sqrt{M(\xi)}})$ as $A \to \infty$,  at a rate which is uniform in the choice of $\mathfrak{r}$ and $\xi$ and depends only on $K_1,K_2$ and $U$,  that
\begin{align*}
\wt{D}_h^{\xi}(\mathfrak{r} K_1 ,  \mathfrak{r} K_2 ; \mathfrak{r} U) \leq A \mathfrak{r}^{\xi Q(\xi)} e^{\xi h_{\mathfrak{r}}(0)}.
\end{align*}
\end{proposition}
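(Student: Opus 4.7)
The plan is to follow the same strategy as in the proof of Proposition~\ref{prop:bound_on_lqg_distances_of_sets}, carefully tracking the dependence on $\xi$ rather than on $\gamma$. The only substantive differences are that $\wt{D}_h^\xi$ may take infinite values when $\xi > \xi_{\mathrm{crit}}$, and that the exponents $b(\xi), M(\xi)$ are now $\xi$-dependent quantities constrained by \eqref{eqn:condition_for_b} and \eqref{eqn:condition_for_M}. Most of the machinery (Lemmas~\ref{lem:good_event_occurs_almost_everywhere}, \ref{lem:good_event_at_many_scales}, \ref{lem:upper_bound_on_circle_averages}, and \ref{lem:connecting_compact_sets_with_good_annuli}) applies without any modification since it uses only the scale and translation invariance (Axiom~\ref{it:translation_and_scale_invariance}), Weyl scaling (Axiom~\ref{it:weyl_scaling}) and locality (Axiom~\ref{it:locality}), all of which hold for the general LQG metric by Definition~\ref{def:strong_lqg_metric_general}.

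First I would introduce the good event
\begin{align*}
E_r^\xi(z) := \bigl\{\wt{D}_h^\xi(\text{around}\,\,\mathbb{A}_{r,2r}(z)) \leq r^{\xi Q(\xi)} e^{\xi h_r(z)}\bigr\}.
\end{align*}
By the normalization \eqref{eqn:main_normalization_general_case}-\eqref{eqn:normalization_general_case} together with Axioms~\ref{it:weyl_scaling} and \ref{it:translation_and_scale_invariance} one sees $\p[E_r^\xi(z)] = \mathfrak{p}_0(\xi)$; moreover $E_r^\xi(z)$ is determined by $(h - h_{3r}(z))|_{\mathbb{A}_{r,2r}(z)}$ by locality. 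Having chosen $\mathfrak{p}_0(\xi)$ to be the minimum value in $(0,1)$ for which Lemma~\ref{lem:good_event_occurs_almost_everywhere} holds with $M = M(\xi)$, we obtain: off an event of probability $\leq \epsilon^{M(\xi)}$, every $z \in B_{\mathfrak{r}\epsilon^{-M(\xi)}}(0)$ is contained in some ball $B_{\mathfrak{r}\epsilon^2/2}(w)$ with $w \in B_{\mathfrak{r}\epsilon^{-M(\xi)}}(0) \cap (\epsilon^2 \mathfrak{r}/4)\Z^2$ and $r \in [\epsilon^2\mathfrak{r},\epsilon\mathfrak{r}] \cap \{2^{-k}\mathfrak{r}\}_{k\in\N}$ such that $E_r^\xi(w)$ occurs.

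Next, I would invoke Lemma~\ref{lem:upper_bound_on_circle_averages} with $q = 2\sqrt{2}\sqrt{4+M(\xi)}$, $\nu = 1$ and $R>0$ chosen so that $U \subseteq B_R(0)$ (assuming $U$ bounded; the general case follows from the monotonicity inequality \eqref{eqn:general_case} at the end). This yields that, off an event of probability $\lesssim \epsilon^{4+M(\xi)/(1+C(\log\epsilon^{-1})^{-1})}$,
\begin{align*}
\sup\bigl\{|h_r(w)-h_\mathfrak{r}(0)| : w \in B_{R\mathfrak{r}}(0) \cap (\epsilon^2\mathfrak{r}/4)\Z^2,\, r \in [\epsilon^2\mathfrak{r},\epsilon\mathfrak{r}]\bigr\} \leq 2\sqrt{2}\sqrt{M(\xi)+4}\log(\epsilon^{-1}).
\end{align*}
Combining these two events with Lemma~\ref{lem:connecting_compact_sets_with_good_annuli}, there exists a path from $\mathfrak{r} K_1$ to $\mathfrak{r} K_2$ inside $\mathfrak{r} U$ obtained by concatenating at most $\epsilon^{-4-o_\epsilon(1)}$ of the paths $P_{w,r}$, each of $\wt{D}_h^\xi$-length at most $r^{\xi Q(\xi)} e^{\xi h_r(w)}$. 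The triangle inequality and the circle-average bound then give
\begin{align*}
\wt{D}_h^\xi(\mathfrak{r} K_1,\mathfrak{r} K_2 ; \mathfrak{r} U) \leq \epsilon^{-4+\xi Q(\xi) - \xi 2\sqrt{2}\sqrt{4+M(\xi)} - o_\epsilon(1)} \mathfrak{r}^{\xi Q(\xi)} e^{\xi h_\mathfrak{r}(0)}.
\end{align*}

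The concluding step is to set $\epsilon = A^{-b(\xi)/\sqrt{M(\xi)}}$. The first inequality in \eqref{eqn:condition_for_M} guarantees that $\epsilon^{-4+\xi Q(\xi) - \xi 2\sqrt{2}\sqrt{4+M(\xi)} - o_\epsilon(1)} \leq A$ once $A$ is large, while the remaining conditions in \eqref{eqn:condition_for_b}-\eqref{eqn:condition_for_M} (needed in analogous applications later in Section~\ref{sec:tightness_general_case}) ensure that the Lemma~\ref{lem:upper_bound_on_circle_averages} error and the Lemma~\ref{lem:good_event_occurs_almost_everywhere} error $\epsilon^{M(\xi)} = A^{-b(\xi)\sqrt{M(\xi)}}$ dominate, so that the overall failure probability is $O_A(A^{-b(\xi)\sqrt{M(\xi)}})$. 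Finally, the case of unbounded $U$ is handled via the trivial inequality \eqref{eqn:general_case}. The main conceptual point is simply that the constants $b(\xi),M(\xi)$ have been chosen precisely so that the same optimization used in the subcritical case \eqref{eqn:choice_of_constant_b} remains valid for every $\xi>0$; no genuinely new obstacle appears beyond bookkeeping, since the metric being lower semicontinuous rather than continuous does not affect the path concatenation argument.
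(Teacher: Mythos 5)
Your proposal is correct and takes essentially the same approach as the paper, which gives only a one-line proof here (``It follows from the exact same argument used to prove Proposition~\ref{prop:bound_on_lqg_distances_of_sets}''); your write-up faithfully expands that reference, correctly identifying that the only change is that $b,M,\mathfrak{p}_0$ become $\xi$-dependent, that the first inequality in \eqref{eqn:condition_for_M} is exactly what is needed to replace \eqref{eqn:choice_of_constant_b}, and that the lower semicontinuity of $\wt{D}_h^{\xi}$ does not interfere with the concatenation argument since it only uses upper bounds on lengths of paths around annuli.
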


\begin{proof}
It follows from the exact same argument used to prove Proposition~\ref{prop:bound_on_lqg_distances_of_sets}.
\end{proof}

Next we prove a version of Proposition~\ref{prop:bound_on_lqg_distances_of_sets_general} which does not require the sets $K_1,K_2$ not to be singletons.

\begin{proposition}\label{prop:lqg_distances_general_case}
Fix $\xi^* \in (0,\infty)$ and let $U \subseteq \C$ be an open set,  and let $K_1,K_2 \subseteq U$ be connected,  disjoint compact sets that are allowed to be singletons.  Then for all $\epsilon \in (0,1)$,  there exists a constant $C \in (0,\infty)$ depending only on $\epsilon,\xi^* ,  K_1 ,  K_2$ and $U$,  such that
\begin{align*}
\p\left[\wt{D}_h^{\xi}(\mathfrak{r} K_1 ,  \mathfrak{r} K_2 ; \mathfrak{r} U) \leq C \mathfrak{r}^{\xi Q(\xi)} e^{\xi h_{\mathfrak{r}}(0)}\right] \geq 1 - \epsilon \quad \text{for all} \quad \mathfrak{r}>0,  \xi \in (0,\xi^*].
\end{align*}
\end{proposition}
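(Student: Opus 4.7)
The plan is to deduce Proposition~\ref{prop:lqg_distances_general_case} from Proposition~\ref{prop:bound_on_lqg_distances_of_sets_general} by handling singleton endpoints separately. If both $K_1$ and $K_2$ are non-singletons, Proposition~\ref{prop:bound_on_lqg_distances_of_sets_general} applies directly: by \eqref{eqn:condition_for_M} one has $b(\xi)\sqrt{M(\xi)}>1$ for all $\xi$, and the implicit constant in the rate $O_A(A^{-b(\xi)\sqrt{M(\xi)}})$ is uniform in $\xi$, so choosing $C=C(\epsilon,\xi^*,K_1,K_2,U)$ large enough yields probability at least $1-\epsilon$ uniformly in $\xi\in(0,\xi^*]$.

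Suppose instead $K_1=\{z_1\}$ is a singleton (the case when $K_2$ is a singleton, or when both are, is handled symmetrically by applying the argument at each singleton endpoint). Pick $s_0>0$ small enough that $\overline{B_{2s_0}(z_1)}\subseteq U$ and $\overline{B_{2s_0}(z_1)}\cap K_2=\emptyset$, and set $K_1':=\overline{B_{s_0}(z_1)}$.  Applying the non-singleton case to $(K_1',K_2)$ gives, with probability at least $1-\epsilon/2$,
\begin{align*}
\wt{D}_h^{\xi}(\mathfrak{r} K_1',\mathfrak{r} K_2;\mathfrak{r} U)\le C_1\mathfrak{r}^{\xi Q(\xi)}e^{\xi h_{\mathfrak{r}}(0)},
\end{align*}
so by the triangle inequality (applied through a point $x^{\ast}\in\mathfrak{r}\partial B_{s_0}(z_1)$ that realizes the set distance) it suffices to control the ``last mile'' distance $\wt{D}_h^{\xi}(\mathfrak{r} z_1,\mathfrak{r}\partial B_{s_0}(z_1);\mathfrak{r} B_{2s_0}(z_1))$.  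By translation invariance and Axioms~\ref{it:coordinate_change} and~\ref{it:weyl_scaling}, this reduces to showing that the family of random variables $\wt{D}_h^{\xi}(0,\partial B_{s_0}(0);B_{2s_0}(0))$ is tight as $\xi$ ranges over $(0,\xi^*]$.

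The plan for this tightness is to apply Proposition~\ref{prop:bound_on_lqg_distances_of_sets_general} scale-by-scale to the pairs of non-singleton sets $(\overline{B_{r_{k+1}}(0)},\partial B_{r_k}(0))$ inside $\mathbb{A}_{r_{k+1}/2,2r_k}(0)$, at dyadic radii $r_k=s_0\,2^{-k}$. By translation and the scaling axiom, each such estimate rescales to the same deterministic problem at scale $1$, so Proposition~\ref{prop:bound_on_lqg_distances_of_sets_general} provides a single constant $A$ uniform in $k$ and $\xi\in(0,\xi^*]$ with
\begin{align*}
\wt{D}_h^{\xi}(\overline{B_{r_{k+1}}(0)},\partial B_{r_k}(0);\mathbb{A}_{r_{k+1}/2,2r_k}(0))\le A\, r_k^{\xi Q(\xi)}e^{\xi h_{r_k}(0)}
\end{align*}
with probability at least $1-O(A^{-b(\xi)\sqrt{M(\xi)}})$.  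Since \eqref{eqn:condition_for_M} ensures $b(\xi)\sqrt{M(\xi)}>1$, the sum of failure probabilities over $k$ is finite, so by Borel--Cantelli (with $A=A(\epsilon,\xi^*)$) all these events hold simultaneously with probability at least $1-\epsilon/2$ uniformly in $\xi$. Combining these scale-by-scale bounds via telescoping through the nested circles and using lower semicontinuity of $\wt{D}_h^{\xi}$, together with the Gaussian tail bound $h_{r_k}(0)=O(\sqrt{k})$ with high probability and the fact that $\xi Q(\xi)$ is bounded below on $(0,\xi^*]$ so the series $\sum_k r_k^{\xi Q(\xi)}e^{\xi h_{r_k}(0)}$ is tight in $\xi$, one obtains the desired uniform bound.

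The main obstacle is the chaining step in the supercritical regime: Proposition~\ref{prop:bound_on_lqg_distances_of_sets_general} only controls set-to-set distances, but the endpoints realizing the bound between $\overline{B_{r_{k+1}}(0)}$ and $\partial B_{r_k}(0)$ are not controlled, so joining them to the endpoints at the adjacent scale requires additional input. In the supercritical case singular points of $\wt{D}_h^{\xi}$ could \emph{a priori} separate the chosen endpoints on adjacent circles and make the intermediate pointwise distances infinite. Resolving this demands combining the set-to-set bounds with additional ``crossing'' good annulus events at many intermediate scales, in the spirit of \cite[Section~5.2]{ding2020tightness}, together with a $\liminf$-argument exploiting the lower semicontinuity of $\wt{D}_h^{\xi}$ to pass from paths between shrinking balls around $0$ to a genuine path emanating from $0$ itself.
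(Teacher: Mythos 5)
You have correctly identified the central difficulty—namely that Proposition~\ref{prop:bound_on_lqg_distances_of_sets_general} only bounds set-to-set distances, so chaining the bounds for nested circles $\partial B_{r_k}(0)$ does not directly yield a bound on the distance from the point $0$ to a fixed circle, since the realizing endpoints at adjacent scales need not match up and could, in the supercritical regime, be separated by singular points. But having diagnosed the gap, the proposal does not actually close it; it gestures at ``additional crossing good annulus events'' and ``a $\liminf$-argument'' without supplying the mechanism, so as written the argument is incomplete.

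The paper's proof resolves exactly this issue, in a way worth comparing. Rather than chaining set-to-set distances between nested circles, it controls (with recentering via Lemma~\ref{lem:upper_bound_on_circle_averages}) both the ``across'' distance of $\mathbb{A}_{2^{-n}\mathfrak{r},\,2^{-n+2}\mathfrak{r}}(\mathfrak{r}z)$ and the ``around'' distance of $\mathbb{A}_{2^{-n}\mathfrak{r},\,2^{-n+1}\mathfrak{r}}(\mathfrak{r}z)$ for all $n\geq N$, off a single event of probability $O(2^{-aN})$. The point of the four-to-one aspect ratio of the ``across'' annuli is that every path realizing the across-distance at scale $n$ necessarily intersects every loop realizing the around-distance at both scale $n$ and scale $n-1$; concatenating across-paths and around-loops inward therefore produces a genuine path from $\partial B_{2^{-N}\mathfrak{r}}(\mathfrak{r}z)$ into any neighborhood of $\mathfrak{r}z$, with total length bounded by a convergent geometric series in $n$, entirely without having to match endpoints across scales. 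The final step, passing from ``arbitrarily small neighborhood of $\mathfrak{r}z$'' to $\mathfrak{r}z$ itself, is not a bare lower-semicontinuity argument but invokes \cite[Lemma~3.2]{Pfeffer_2024}, which says $\text{diam}(X;\wt{D}_h^{\xi})=\text{diam}(\overline{X};\wt{D}_h^{\xi})$ for Borel sets $X$. Neither the across/around geometric interlocking nor this closure lemma appears in your proposal, and both are essential; without them the chaining step you flagged remains unresolved. A smaller stylistic remark: the paper does not separate into singleton and non-singleton cases at all—it directly fixes $z\in K_1$, $w\in K_2$, bounds the two ``point-to-circle'' pieces by the argument above, and bounds the ``circle-to-circle'' piece by Proposition~\ref{prop:bound_on_lqg_distances_of_sets_general}—so your case split, while not wrong, is unnecessary.
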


\begin{proof}
Fix $\xi^* > 0,  \mathfrak{r} > 0$ and $\xi \in (0,\xi^*]$.   Then combining Proposition~\ref{prop:bound_on_lqg_distances_of_sets_general} with Axioms~\ref{it:weyl_scaling} and ~\ref{it:translation_and_scale_invariance},  we obtain that for all $z \in K_1$,  we have off an event with probability at most $O_n(2^{-\xi \zeta(\xi) b(\xi) \sqrt{M(\xi)}n})$ as $n \to \infty$ (at a universal rate) that
\begin{align}\label{eqn:non_centered_upper_bounds_on_annuli_distances}
&(2^{-n}\mathfrak{r})^{-\xi Q(\xi)} e^{-\xi h_{2^{-n}\mathfrak{r}}(\mathfrak{r}z)} \wt{D}_h^{\xi}(\text{across} \,\,  \mathbb{A}_{2^{-n} \mathfrak{r} ,  2^{-n+2} \mathfrak{r}}( \mathfrak{r}z)) \leq 2^{\xi \zeta(\xi) n},\notag\\
&(2^{-n}\mathfrak{r})^{-\xi Q(\xi)} e^{-\xi h_{2^{-n}\mathfrak{r}}(\mathfrak{r}z)} \wt{D}_h^{\xi}(\text{around} \,\,  \mathbb{A}_{2^{-n} \mathfrak{r} ,  2^{-n+1} \mathfrak{r}}(\mathfrak{r}z)) \leq 2^{\xi \zeta(\xi) n}.
\end{align}
Moreover Lemma~\ref{lem:upper_bound_on_circle_averages} implies that
\begin{align}\label{eqn:recentering_prob_bound}
\p\left[|h_{2^{-n} \mathfrak{r}}(\mathfrak{r} z) - h_{\mathfrak{r}}(0)| \leq \zeta(\xi) n \log(2)\right] \geq 1 - O_n(2^{-\zeta(\xi)^2 n / 8}) \quad \text{as} \quad n \to \infty
\end{align}
at a rate which depends only on $K_1$ and $\xi^*$.  Hence combining \eqref{eqn:non_centered_upper_bounds_on_annuli_distances} with \eqref{eqn:recentering_prob_bound},  we obtain that there exist constants $\wt{C} \in (0,\infty),  n_0 \in \N$ depending only on $K_1$ and $\xi^*$ such that for all $z \in K_1$ and all $n \geq n_0$,  we have with probability at least 
\begin{align*}
1 - \wt{C} 2^{-\xi \zeta(\xi) b(\xi) \sqrt{M(\xi)} n} - \wt{C} 2^{-\zeta(\xi)^2 n /8}
\end{align*}
that
\begin{align}\label{eqn:centered_prob_bound_fixed_scale}
&\mathfrak{r}^{-\xi Q(\xi)} e^{-\xi h_{\mathfrak{r}}(0)} \wt{D}_h^{\xi}(\text{across} \,\,  \mathbb{A}_{2^{-n} \mathfrak{r} ,  2^{-n+2} \mathfrak{r}}( \mathfrak{r}z)) \leq 2^{-\xi (Q(\xi) - 2 \zeta(\xi))n},\notag\\
&\mathfrak{r}^{-\xi Q(\xi)} e^{-\xi h_{\mathfrak{r}}(0)} \wt{D}_h^{\xi}(\text{around} \,\,  \mathbb{A}_{2^{-n} \mathfrak{r} ,  2^{-n+1} \mathfrak{r}}(\mathfrak{r}z)) \leq 2^{-\xi (Q(\xi) - 2\zeta(\xi)) n}.
\end{align}
Note that for all $N \in \N$,  we have that
\begin{align*}
1 - \wt{C} \sum_{n \geq N} 2^{-\zeta(\xi)^2 n / 8} - \wt{C} \sum_{n \geq N} 2^{-\xi \zeta(\xi) b(\xi) \sqrt{M(\xi)}n} =1 - \frac{\wt{C} 2^{-\zeta(\xi)^2 N / 8}}{1-2^{-\zeta(\xi)^2 / 8}} -\frac{\wt{C} 2^{-\xi b(\xi) \zeta(\xi) \sqrt{M(\xi)}N}}{1 - 2^{-\xi \zeta(\xi) b(\xi) \sqrt{M(\xi)}}}.
\end{align*}
Recall that $Q(\xi) \to \infty$ as $\xi \to 0$.  Hence since $\xi \zeta(\xi) b(\xi) \sqrt{M(\xi)} > 1$ by \eqref{eqn:condition_for_M},  we obtain by combining with \eqref{eqn:centered_prob_bound_fixed_scale} and summing over all $n \in \N ,  n \geq N$ that there exist constants $a,A>0$ depending only on $\xi^*$ and $K_1$ such that the following holds for all $\xi \in (0,\xi^*],  \mathfrak{r}>0,N \geq n_0$ and all $z \in K_1$.  Off an event with probability at most $A 2^{-aN}$,  we have that
\begin{align}\label{eqn:centered_prob_bound_fixed_scale_uniform}
&\mathfrak{r}^{-\xi Q(\xi)} e^{-\xi h_{\mathfrak{r}}(0)} \wt{D}_h^{\xi}(\text{across} \,\,  \mathbb{A}_{2^{-n} \mathfrak{r} ,  2^{-n+2} \mathfrak{r}}( \mathfrak{r}z)) \leq 2^{-\xi (Q(\xi) - 2 \zeta(\xi))n},\notag\\
&\mathfrak{r}^{-\xi Q(\xi)} e^{-\xi h_{\mathfrak{r}}(0)} \wt{D}_h^{\xi}(\text{around} \,\,  \mathbb{A}_{2^{-n} \mathfrak{r} ,  2^{-n+1} \mathfrak{r}}(\mathfrak{r}z)) \leq 2^{-\xi (Q(\xi) - 2\zeta(\xi)) n}.
\end{align}
for all $n \geq N$.  Henceforth we fix $z \in K_1$ and we assume that the above event occurs.

Note that every path across the annulus $\mathbb{A}_{2^{-n} \mathfrak{r} ,  2^{-n+2} \mathfrak{r}}(\mathfrak{r} z)$ intersects every path around $\mathbb{A}_{2^{-n+1} \mathfrak{r} ,  2^{-n+2} \mathfrak{r}}(\mathfrak{r} z)$ and every path around $\mathbb{A}_{2^{-n} \mathfrak{r} ,  2^{-n+1} \mathfrak{r}}(\mathfrak{r} z)$.  Thus by concatenating paths across the annuli 
$\mathbb{A}_{2^{-n} \mathfrak{r} ,  2^{-n+2} \mathfrak{r}}(\mathfrak{r} z)$ and paths around the annuli $\mathbb{A}_{2^{-n} \mathfrak{r} ,  2^{-n+1} \mathfrak{r}}(\mathfrak{r} z)$,  we obtain a path from $\partial B_{2^{-N} \mathfrak{r}}(\mathfrak{r} z)$ to any arbitrarily small Euclidean neighborhood of $\mathfrak{r} z$.  Also \cite[Lemma~3.2]{Pfeffer_2024} implies that it is a.s.  the case that
\begin{align*}
\text{diam}(X ; \wt{D}_h^{\xi}) = \text{diam}(\overline{X} ; \wt{D}_h^{\xi}) \quad \text{for every Borel set} \quad X \subseteq \C.
\end{align*}
Therefore we obtain that
\begin{align*}
&\mathfrak{r}^{-\xi Q(\xi)} e^{-\xi h_{\mathfrak{r}}(0)} \wt{D}_h^{\xi}(\mathfrak{r} z ,  \partial B_{2^{-N} \mathfrak{r}}(\mathfrak{r} z)) \\
&\leq \mathfrak{r}^{-\xi Q(\xi)} e^{-\xi h_{\mathfrak{r}}(0)} \left(\sum_{n \geq N} \left(\wt{D}_h^{\xi}(\text{around} \,\,  \mathbb{A}_{2^{-n} \mathfrak{r} ,  2^{-n+1} \mathfrak{r}}(\mathfrak{r} z)) + \wt{D}_h^{\xi}(\text{across} \,\,  \mathbb{A}_{2^{-n} \mathfrak{r} , 2^{-n+2} \mathfrak{r}}(\mathfrak{r} z)) \right)\right)\\
&\leq 2 \sum_{n \geq N} 2^{-\xi (Q(\xi) - 2 \zeta(\xi))n} =2 \frac{2^{- \xi Q(\xi) N / 2}}{1 - 2^{-\xi Q(\xi) / 2}}.
\end{align*}
Note that \cite[Theorem~1.2]{ding2020fractal} implies that $d_{\gamma} =2 + o_{\gamma}(1)$ as $\gamma \to 0$ and so $\xi Q(\xi) \to 1$ as $\xi \to 0$ since $Q(\xi(\gamma)) = \frac{2}{\gamma} + \frac{\gamma}{2}$ for all $\gamma \in (0,2)$.  Therefore combining with\eqref{eqn:centered_prob_bound_fixed_scale_uniform},  we obtain that possibly by taking $a>0$ to be smaller and $A$ to be larger (depending only on $\xi^*$ and $K_1$),  we have off an event with probability at most $A 2^{-a N}$ that
\begin{align}\label{eqn:pointwise_distance_in_K_1}
\mathfrak{r}^{-\xi Q(\xi)} e^{-\xi h_{\mathfrak{r}}(0)} \wt{D}_h^{\xi}(\mathfrak{r} z ,  \partial B_{2^{-N} \mathfrak{r}}(\mathfrak{r} z)) \leq A 2^{-aN}.
\end{align}

Next we fix $w \in K_2$.  Then arguing as in \eqref{eqn:pointwise_distance_in_K_1},  we obtain that possibly by taking $A$ to be larger and $0<a<1$ to be smaller (depending only on $\xi^*,  K_1$ and $K_2$),  we have that
\begin{align}\label{eqn:pointwise_distance_in_K_2}
\mathfrak{r}^{-\xi Q(\xi)} e^{-\xi h_{\mathfrak{r}}(0)} \wt{D}_h^{\xi}(\mathfrak{r} w ,  \partial B_{2^{-N} \mathfrak{r}}(\mathfrak{r} w)) \leq A 2^{-a N}
\end{align}
off an event with probability at most $A 2^{-aN}$.  Moreover since $b(\xi) \sqrt{M(\xi)} > 1$,  Proposition~\ref{prop:bound_on_lqg_distances_of_sets_general} implies that there exists $B \in (0,\infty)$ sufficiently large (depending only on $z,w,K_1,K_2,\xi^*,U$ and $N$) such that off an event with probability at most $A 2^{-aN}$,  we have that
\begin{align}\label{eqn:lqg_distance_between_circles}
\mathfrak{r}^{-\xi Q(\xi)} e^{-\xi h_{\mathfrak{r}}(0)} \wt{D}_h^{\xi}(\partial B_{2^{-N} \mathfrak{r}}(\mathfrak{r} z) ,  \partial B_{2^{-N} \mathfrak{r}}(\mathfrak{r} w) ; \mathfrak{r} U) \leq B.
\end{align}

Therefore combining \eqref{eqn:pointwise_distance_in_K_1},  \eqref{eqn:pointwise_distance_in_K_2} and \eqref{eqn:lqg_distance_between_circles},  we obtain that off an event with probability at most $3A 2^{-aN}$,  we have that
\begin{align*}
\mathfrak{r}^{-\xi Q(\xi)} e^{-\xi h_{\mathfrak{r}}(0)} \wt{D}_h^{\xi}(\mathfrak{r} K_1 ,  \mathfrak{r} K_2 ;  \mathfrak{r} U) \leq B + 2A 2^{-aN}.
\end{align*}
Hence the proof of the proposition is complete by taking $N \in \N$ sufficiently large (depending only on $\xi^*,  K_1,K_2$ and $U$) such that $1-3A2^{-aN} > 1-\epsilon$. 
\end{proof}

Now we will use Propositions~\ref{prop:bound_on_lqg_distances_of_sets_general} and ~\ref{prop:lqg_distances_general_case} to construct subsequential limits.
Fix $\xi>0$ and let $(\xi_n)_{n \in \N}$ be a sequence in $(0,\infty)$ such that $\xi_n \to \xi$ as $n \to \infty$.  We will use the same construction as in \cite[Section~5.2]{ding2020tightness} in order to construct a subsequential limit in law of the sequence $(\wt{D}_h^{\xi_n})_{n \in \N}$ with respect to the lower semicontinuous topology of functions on $\C \times \C$.  First we note that by countably many applications of Proposition~\ref{prop:lqg_distances_general_case},  we can find a subsequence $(\xi_{k_n})$ of $(\xi_n)$ and a coupling of the GFF $h$ with random variables $D(O_1,O_2)$ for rational circles $O_1,O_2$,  random variables $D(\text{around} \,\,  A)$ for rational annuli $A$ for which the following is true.  Let $h^n$ be a random variable with the same law as $h$.  Then we have the following joint convergence in law as $n \to \infty$:
\begin{align*}
&\wt{D}_{h^n}^{\xi_{k_n}}(O_1,O_2) \to D(O_1,O_2) \quad \text{for all} \quad \text{rational circles} \quad O_1,O_2,\\
&\wt{D}_h^{\xi_{k_n}}(\text{around} \,\,A) \to D(\text{around} \,\,A) \quad \text{for all rational annuli} \quad A,
\end{align*}
and the functions $(z,r) \to h^n_r(z)$ defined on $\C \times (0,\infty)$ converge to $(z,r) \to h_r(z)$ with respect to the local uniform topology.  By Skorokhod's representation theorem,  we can find a coupling of $\{(h_n ,  \wt{D}_{h^n}^{\xi_{k_n}})\}_{n \in \N}$ with the field $h$ and the random variables $D(O_1,O_2)$ and $D(\text{around} \,\,A)$ such that the above convergences occur a.s.

Now we define the function $\wt{D} : \C \times \C \to \R_+ \cup \{\pm \infty\}$ by 
\begin{align}\label{eqn:definition_of_supercritical_metric}
\lim_{O_z \downarrow z ,  O_w \downarrow w} D(O_z,O_w)
\end{align}
if $z \neq w$ and $0$ if $z = w$,  where the limit is over any sequence of rational circles with positive radii $O_z$ surrounding $z$ and $O_w$ surrounding $w$ whose radii shrink to zero.

\begin{proposition}\label{prop:lower_semicontinuity}
In the above coupling,  we have that
\begin{align*}
\wt{D}_{h^n}^{\xi_{k_n}} \to \wt{D} \quad \text{as} \quad n \to \infty
\end{align*}
with respect to the topology on lower semicontinuous functions.  
\end{proposition}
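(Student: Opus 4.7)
The plan is to verify the two defining conditions of Definition~\ref{def:lower_semicontinuous_topology} for $\wt{D}$ and the sequence $\wt{D}_{h^n}^{\xi_{k_n}}$ in the given coupling. The main inputs are the simultaneous a.s.\ convergences $\wt{D}_{h^n}^{\xi_{k_n}}(O_1,O_2)\to D(O_1,O_2)$ over the countable collection of rational circle pairs, together with the fact that each $\wt{D}_{h^n}^{\xi_{k_n}}$ is a lower semicontinuous length metric by Axiom~\ref{it:length_space}. As a preliminary observation I would record a monotonicity that makes the limit in~\eqref{eqn:definition_of_supercritical_metric} well defined: if $O_z'$ lies strictly inside the disk bounded by $O_z$, any continuous path from a point of $O_z'$ to a point of $O_w$ must cross $O_z$, so the length-space property yields $\wt{D}_{h^n}^{\xi_{k_n}}(O_z',O_w)\geq \wt{D}_{h^n}^{\xi_{k_n}}(O_z,O_w)$; this inequality passes to the a.s.\ limit, so $D(O_z,O_w)$ is monotone in nested circles and the limit in~\eqref{eqn:definition_of_supercritical_metric} exists and equals the supremum over pairs of rational circles surrounding $z,w$ with disjoint bounded closed disks.

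For condition (i), I would fix $(z,w)$ and $(z_n,w_n)\to(z,w)$. The case $z=w$ is trivial since $\wt{D}(z,w)=0$. For $z\neq w$, pick rational circles $O_z,O_w$ surrounding $z,w$ with disjoint bounded closed disks. For $n$ large, $z_n$ and $w_n$ lie in the respective open disks, and the same length-space argument gives $\wt{D}_{h^n}^{\xi_{k_n}}(z_n,w_n)\geq \wt{D}_{h^n}^{\xi_{k_n}}(O_z,O_w)$. Taking $\liminf_n$ and using the a.s.\ convergence yields $\liminf_n \wt{D}_{h^n}^{\xi_{k_n}}(z_n,w_n)\geq D(O_z,O_w)$; supping over nested shrinking circles gives $\liminf_n \wt{D}_{h^n}^{\xi_{k_n}}(z_n,w_n)\geq \wt{D}(z,w)$.

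For condition (ii), I would distinguish cases. The case $z=w$ is handled by the constant sequence. If $\wt{D}(z,w)=\infty$, condition (i) applied to $(z_n,w_n)=(z,w)$ forces $\wt{D}_{h^n}^{\xi_{k_n}}(z,w)\to\infty$, so the constant sequence again works. Otherwise $z\neq w$ and $\wt{D}(z,w)<\infty$; fix nested rational circles $O_z^k\downarrow z$, $O_w^k\downarrow w$ with disjoint bounded closed disks and $D(O_z^k,O_w^k)\uparrow \wt{D}(z,w)$. For each $k,n$, lower semicontinuity of $\wt{D}_{h^n}^{\xi_{k_n}}$ together with compactness of $O_z^k\times O_w^k$ produce a pair $(z_n^k,w_n^k)\in O_z^k\times O_w^k$ with $\wt{D}_{h^n}^{\xi_{k_n}}(z_n^k,w_n^k)=\wt{D}_{h^n}^{\xi_{k_n}}(O_z^k,O_w^k)$. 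A standard diagonal extraction then produces indices $k(n)\to\infty$ for which $\wt{D}_{h^n}^{\xi_{k_n}}(O_z^{k(n)},O_w^{k(n)})\to \wt{D}(z,w)$; setting $(z_n,w_n):=(z_n^{k(n)},w_n^{k(n)})$ gives $(z_n,w_n)\to(z,w)$ with $\limsup_n \wt{D}_{h^n}^{\xi_{k_n}}(z_n,w_n)\leq \wt{D}(z,w)$, and (i) applied to this same sequence forces the limit to equal $\wt{D}(z,w)$.

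The main technical point is the diagonal construction in~(ii): one would choose $k(n)$ as the largest $k\leq \log n$ (say) for which $|\wt{D}_{h^m}^{\xi_{k_m}}(O_z^k,O_w^k)-D(O_z^k,O_w^k)|\leq 1/k$ for all $m\geq n$, and verify via the a.s.\ pointwise convergence at each fixed $k$ that $k(n)\to\infty$ a.s.\ along the realization at hand. Everything else reduces to length-space, lower-semicontinuity, and compactness of the rational circles, all already in place.
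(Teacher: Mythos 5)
Your proof is correct and reconstructs the argument of \cite[Proposition~5.6]{ding2020tightness}, which is exactly what the paper cites for this result: verify the two conditions of Definition~\ref{def:lower_semicontinuous_topology} using nested-circle monotonicity (from the length-space Axiom~\ref{it:length_space}), lower semicontinuity together with compactness of circle pairs to extract minimizers, and a diagonal extraction along the countable family of rational circles, with condition~(i) supplying the matching lower bound in~(ii). This is the same approach as the paper's proof.
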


\begin{proof}
It follows from the exact same argument used to prove \cite[Proposition~5.6]{ding2020tightness}.
\end{proof}

Finally we mention the following consequence of Propositions~\ref{prop:lqg_distances_general_case} and ~\ref{prop:lower_semicontinuity}.

\begin{lemma}\label{lem:metric_finite_at_fixed_points}
For all $z ,  w \in \C$ fixed,  we have that $\wt{D}(z,w) < \infty$ a.s.
\end{lemma}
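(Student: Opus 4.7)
The plan is to obtain the bound directly from Proposition~\ref{prop:lqg_distances_general_case} applied to the singleton sets $K_1 = \{z\}$, $K_2 = \{w\}$, combined with the lower semicontinuous convergence from Proposition~\ref{prop:lower_semicontinuity}. The case $z = w$ is immediate from the definition \eqref{eqn:definition_of_supercritical_metric}, so we may assume $z \neq w$.

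First, fix $\epsilon > 0$ and an open, connected, bounded set $U \subseteq \C$ containing both $z$ and $w$. Choose $\xi^* \in (0,\infty)$ with $\xi^* > \sup_n \xi_{k_n}$ (which is finite since $\xi_{k_n} \to \xi \in (0,\infty)$). Proposition~\ref{prop:lqg_distances_general_case} then supplies a constant $C = C(\epsilon, \xi^*, z, w, U) \in (0,\infty)$ such that, taking $\mathfrak{r} = 1$ and using $h^n_1(0) = 0$,
\begin{align*}
\p\bigl[\wt{D}_{h^n}^{\xi_{k_n}}(z, w) \leq C\bigr] \;\geq\; \p\bigl[\wt{D}_{h^n}^{\xi_{k_n}}(z, w \,;\, U) \leq C\bigr] \;\geq\; 1 - \epsilon \quad \text{for all } n \in \N.
\end{align*}

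The second step is to pass this uniform bound to the limiting metric $\wt{D}$. By the reverse Fatou lemma applied to the events $A_n = \{\wt{D}_{h^n}^{\xi_{k_n}}(z, w) \leq C\}$,
\begin{align*}
\p\bigl[\limsup_n A_n\bigr] \;\geq\; \limsup_n \p[A_n] \;\geq\; 1 - \epsilon.
\end{align*}
On the event $\limsup_n A_n$, we have $\wt{D}_{h^n}^{\xi_{k_n}}(z, w) \leq C$ for infinitely many $n$, hence $\liminf_n \wt{D}_{h^n}^{\xi_{k_n}}(z, w) \leq C$. Combining this with Proposition~\ref{prop:lower_semicontinuity} and property (i) of Definition~\ref{def:lower_semicontinuous_topology} applied to the constant sequence $(z_n, w_n) = (z, w)$, we obtain $\wt{D}(z, w) \leq \liminf_n \wt{D}_{h^n}^{\xi_{k_n}}(z, w) \leq C$ almost surely on $\limsup_n A_n$. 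Therefore $\p[\wt{D}(z, w) \leq C] \geq 1 - \epsilon$, and in particular $\p[\wt{D}(z, w) < \infty] \geq 1 - \epsilon$. Letting $\epsilon \to 0$ concludes the proof.

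There is no real obstacle here beyond checking that the hypotheses of Proposition~\ref{prop:lqg_distances_general_case} are met uniformly in $n$; the crucial point is that this proposition allows $K_1$ and $K_2$ to be singletons and that the bound is uniform in $\xi$ on compact subsets of $(0,\infty)$, which is exactly what is needed to combine with the lower semicontinuous convergence.
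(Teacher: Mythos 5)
Your proof is correct and fills in exactly the two ingredients the paper's one-line proof cites: Proposition~\ref{prop:lqg_distances_general_case} (with singleton sets, uniformly in $\xi$ on a compact interval containing the $\xi_{k_n}$) and Proposition~\ref{prop:lower_semicontinuity} (via property (i) of Definition~\ref{def:lower_semicontinuous_topology} applied to the constant sequence). The reverse-Fatou step and the observation that $h_1(0)=0$ kills the normalizing factor at $\mathfrak{r}=1$ are precisely the right glue; this matches the paper's intended argument.
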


\begin{proof}
It follows from combining Propositions~\ref{prop:lqg_distances_general_case} and ~\ref{prop:lower_semicontinuity}.  
\end{proof}

\begin{rem}\label{rem:tightness_of_internal_metrics_general_case}
We note that Proposition~\ref{prop:lqg_distances_general_case} implies that for every open and connected set $U \subseteq \C$,  rational circles $O_1,O_2 \subseteq U$ and rational annuli $A \subseteq U$,  we have that the collections of random variables $\{\wt{D}_{h^n}^{\xi_n}(O_1,O_2 ; U)\}_{n \in \N}$ and $\{\wt{D}_{h^n}^{\xi_n}(\text{around} \,\,  A)\}_{n \in \N}$ are tight.  Therefore by Skorokhod's representation theorem we can find a subsequence $(\xi_{k_n})_{n \in \N}$ and a coupling on the same probability space of $\{(h^n ,  \wt{D}_{h^n}^{\xi_{k_n}}(\cdot ,  \cdot ; U))\}_{n \in \N}$ with a random filed $h$ and random variables $D(O_1 ,  O_2 ; U)$ and $D(\text{around} \,\,  A)$,  where $O_1,O_2$ and $A$ are as above,  such that a.s.  as $n \to \infty$ \begin{align*}
&\wt{D}_{h^n}^{\xi_{k_n}}(O_1,O_2 ; U) \to D(O_1,O_2 ; U) \quad \text{for all} \quad \text{rational circles} \quad O_1,O_2 \subseteq U,\\
&\wt{D}_{h^n}^{\xi_{k_n}}(\text{around} \,\,A) \to D(\text{around} \,\,A) \quad \text{for all rational annuli} \quad A \subseteq U,
\end{align*}
and the functions $(z,r) \to h^n_r(z)$ converge to $(z,r) \to h_r(z)$ with respect to the local uniform topology.  Hence we can define a limiting lower semicontinuous function $\wt{D}(\cdot ,  \cdot ; U)$ as in \eqref{eqn:definition_of_supercritical_metric} with $D(O_z ,  O_w ; U)$ in place of $D(O_z , O_w)$.  Then the argument in the proof of Proposition~\ref{prop:lower_semicontinuity} implies that
\begin{align*}
\wt{D}_{h^n}^{\xi_{k_n}}(\cdot ,  \cdot ;U) \to \wt{D}(\cdot ,  \cdot ; U) \quad \text{as} \quad n \to \infty
\end{align*}
with respect to the lower semicontinuous topology of functions on $U$.  In particular the collection of random variables $\{\wt{D}_{h^n}^{\xi_n}(\cdot ,  \cdot ;U)\}_{n \in \N}$ is tight with respect to the lower semicontinuous topology of functions on $U$. 
\end{rem}

\section{Identifying the limit: General case}
\label{lem:identifying_limit_general_case}

In this section,  we are going to complete the proof of Theorem~\ref{thm:main_theorem_general_case_intro}.  In particular we will show that the limit $\wt{D}$ in Proposition~\ref{prop:lower_semicontinuity} is non-trivial a.s.  Moreover we will show that if we make the further assumption that $\wt{D}$ satisfies the triangle inequality a.s.,  then $\wt{D}$ is a metric a.s.  This will be done in Subsection~\ref{subsec:limit_is_a_metric_general_case}.  Next (assuming that $\wt{D}$ satisfies the triangle inequality a.s.) we will show in Subsections~\ref{subsec:limit_complete_and_geodesic_metric_general_case} - \ref{subsec:locality_general_case} that $\wt{D}$ satisfies the axioms of an  LQG metric with parameter $\xi$ (Definition~\ref{def:strong_lqg_metric_general}).  Finally we will complete the proof of Theorem~\ref{thm:main_theorem_general_case_intro} in Subsection~\ref{subsec:limit_xi_lqg_metric_general_case} by combining with Theorem~\ref{thm:uniqueness_supercritical}.

For the rest of the section,  we fix $\xi>0$ and suppose that we have the same setup as in Section~\ref{sec:tightness_general_case} and let $\wt{D}$ be the limiting metric in Proposition~\ref{prop:lower_semicontinuity}. 

\subsection{The limit is a metric: General case}
\label{subsec:limit_is_a_metric_general_case}

The main purpose of this subsection is to prove the following. 

\begin{proposition}\label{prop:limit_is_a_metric_general_case}
Let $\wt{D}$ be the limit in the statement of Proposition~\ref{prop:lower_semicontinuity}.  Then for every distinct and deterministic points $z,w \in \C$,  we have that $\wt{D}(z,w) > 0$ a.s.  Moreover if in addition we assume that $\wt{D}$ satisfies the triangle inequality a.s.,  then we have that $\wt{D}$ is a metric a.s.
\end{proposition}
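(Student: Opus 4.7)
The plan is to closely follow the strategy of Subsection~\ref{subsec:limit_is_a_metric_subcritical}, adapting the arguments to the lower semicontinuous topology. The first claim (non-triviality at fixed points) does not require the triangle inequality, while the second claim ($\wt{D}$ being a metric a.s.) does.

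For the first claim, I will prove the analog of Proposition~\ref{prop:annuli_crossing_always_positive}:
\begin{align*}
\lim_{\epsilon \to 0} \liminf_{n \to \infty} \p\left[\wt{D}_h^{\xi_n}(\text{across}\,\,\mathbb{A}_{r_1,r_2}(0)) > \epsilon\right] = 1
\end{align*}
for every fixed $0 < r_1 < r_2 < \infty$. The approach mirrors Lemmas~\ref{lem:uniform_lower_bound_between_points} and ~\ref{lem:uniform_lower_bound_across_annuli}: first establish a uniform-in-$n$ lower bound in probability for the $\wt{D}_h^{\xi_n}$-distance between a specific pair of points on $\partial B_r(0)$ and $\partial B_{2r}(0)$, using that $\p[\wt{D}_h^{\xi_n}(\text{around}\,\,\mathbb{A}_{1,2}(0)) > 1] = 1 - \mathfrak{p}_0(\xi_n) \geq q > 0$ for $q$ independent of $n$ (valid since $\mathfrak{p}_0$ is continuous, $\xi_n \to \xi$, and $\mathfrak{p}_0(\xi) < 1$); then upgrade to set-to-set distances using Theorem~\ref{thm:convergence_of_supercritical_lfpp} in place of Theorem~\ref{thm:convergence_of_lffp} together with the positive associativity of the white noise regularization $\Phi_{0,k}$, exactly as in Lemma~\ref{lem:uniform_lower_bound_across_annuli}. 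Since $\wt{D}(z,w) \geq D(O_z, O_w)$ for any rational circles surrounding $z$ and $w$ (by the monotonicity implicit in \eqref{eqn:definition_of_supercritical_metric}), and since the uniform lower bound yields $D(O_z, O_w) > 0$ a.s., the first claim follows.

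For the second claim, the triangle inequality assumption is used to restore the identity
\begin{align*}
\{\wt{D}(\text{across}\,\,\mathbb{A}_{r_1,r_2}(z)) > 0\} = \{E_1 \cap E_2 = \emptyset\} = \{E_1 \cap \partial B_{r_2}(z) = \emptyset\} = \{E_2 \cap \partial B_{r_1}(z) = \emptyset\},
\end{align*}
where $E_i := \{x : \wt{D}(x, \partial B_{r_i}(z)) = 0\}$; the middle equality can fail for general LSC limits but holds when $\wt{D}$ satisfies the triangle inequality. Granted this, the $0$-$1$ law of Lemma~\ref{lem:crossing_positive} can be reproduced: by Remark~\ref{rem:tightness_of_internal_metrics_general_case} and Skorokhod's representation applied to the countable collection of $(r,j)$-dyadic domains $U$, extract along a subsequence simultaneous a.s. limits $\wt{D}_U$ of $\wt{D}_{h^n}^{\xi_{k_n}}(\cdot,\cdot;U)$ and $\wt{D}_{0,U}$ of the corresponding zero-boundary metrics $\wt{D}_{h_U^0}^{\xi_{k_n}}$. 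Disjointness of $U_1, U_2$ yields the independence of $\wt{D}_{0,U_1}$ and $\wt{D}_{0,U_2}$; Weyl scaling (Axiom~\ref{it:weyl_scaling}) applied to the Markov decomposition $h|_U = h_U^0 + \Fh_U$ gives the bi-Lipschitz comparison between $\wt{D}_U$ and $\wt{D}_{0,U}$. Summing over disjoint $(r,j)$-dyadic domains and sending $j \to \infty$ yields $\p[\wt{D}(\text{across}\,\,\mathbb{A}_{r_1,r_2}(z)) > 0] \in \{0,1\}$, and the first claim then forces the probability to be one. A countable union over rational annuli, following \cite[Theorem~1.2]{ding2023tightness}, gives $\wt{D}(x,y) > 0$ for all distinct $x,y$ simultaneously a.s.

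The main expected obstacle is the loss of continuity in the LSC topology: unlike the subcritical setting, one cannot directly exchange $\lim_n \wt{D}_{h^n}^{\xi_n}(\text{across}\,\,A)$ with $\wt{D}(\text{across}\,\,A)$, since across-distances are only lower semicontinuous under LSC convergence. The workaround is to reformulate all relevant events in terms of distances between rational circles $D(O_1,O_2;U)$ and around-distances of rational annuli $D(\text{around}\,\,A)$, which do converge a.s. in our coupling by Remark~\ref{rem:tightness_of_internal_metrics_general_case}. This reformulation is precisely where the triangle inequality assumption enters, enabling the identity above that encodes the $0$-$1$ law as a statement about such convergent quantities.
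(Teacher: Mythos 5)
Your treatment of the second claim matches the paper's approach (the $0$-$1$ law argument with the triangle-inequality hypothesis restoring the identity $\mathcal{Z} = \{E_1\cap E_2 = \emptyset\}$ via Lemmas~\ref{lem:metric_unbounded_general_case}, \ref{lem:connectivity_lemma_general_case} and \ref{lem:crossing_positive_general_case}).

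However, your argument for the first claim has a genuine gap. You write that you will ``prove the analog of Proposition~\ref{prop:annuli_crossing_always_positive},'' i.e.\ that $\liminf_n\p[\wt{D}_{h^n}^{\xi_{k_n}}(\text{across}\,\mathbb{A}_{r_1,r_2}(0))>\epsilon]\to 1$, and then claim ``the uniform lower bound yields $D(O_z,O_w)>0$ a.s.'' But the paper's Proposition~\ref{prop:annuli_crossing_always_positive_general_case} \emph{explicitly assumes} that $\wt{D}$ satisfies the triangle inequality, because the $0$-$1$ law in Lemma~\ref{lem:crossing_positive_general_case} is the only mechanism in that chain for upgrading a positive-probability lower bound to a probability-one statement, and that lemma needs $\mathcal{Z}=\{E_1\cap E_2=\emptyset\}$, which fails for general LSC limits. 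The ``uniform lower bound'' you obtain from the analog of Lemmas~\ref{lem:uniform_lower_bound_between_points} and \ref{lem:uniform_lower_bound_across_annuli} only gives $\liminf_n\p[\wt{D}_{h^n}^{\xi_{k_n}}(\partial B_r,\partial B_{2r})>c]>0$, which does not yield $D(O_z,O_w)>0$ a.s. So your chain of inferences for the first claim is circular: it secretly relies on the very triangle-inequality hypothesis the first claim is meant to avoid.

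The paper handles the first claim differently, via Lemma~\ref{lem:metric_non_trivial}. The key idea is a ``many scales'' argument: using Lemma~\ref{lem:uniform_lower_bound_across_annuli_general_case} to get the events $E_{r_k}^n:=\{\wt{D}_{h^n}^{\xi_{k_n}}(\text{across}\,\mathbb{A}_{s_1 r_k,s_2 r_k}(z))>c\,r_k^{\xi_{k_n}Q(\xi_{k_n})}e^{\xi_{k_n}h^n_{r_k}(z)}\}$ to hold with probability $\geq p$ uniformly in $n,k$, then applying \cite[Lemma~3.1]{gwynne2020local} (near-independence across concentric annuli) to get $\p[\mathcal{N}^n(K)\geq bK]\geq 1-\wt{c}e^{-aK}$ uniformly in $n$, and finally using lower semicontinuity: on the event $\{\mathcal{N}^n(K)\geq bK\}$ (along a subsequence in $n$), for the approximating sequences $z_m\to z$, $w_m\to w$ realizing $\wt{D}(z,w)=\lim_m\wt{D}_{h^{\lambda_m}}^{\xi_{\lambda_m}}(z_m,w_m)$, any crossing of an annulus $\mathbb{A}_{s_1 r_k,s_2 r_k}(z)$ is forced, yielding $\wt{D}(z,w)\geq c\,r_k^{\xi Q(\xi)}\exp\{\xi\min_k h_{r_k}(z)\}>0$. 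Letting $K\to\infty$ closes the gap. This direct, quantitative route needs no $0$-$1$ law and hence no triangle inequality. Your proposal is missing this mechanism.
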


The proof of Proposition~\ref{prop:limit_is_a_metric_general_case} essential follows from the same argument as in the proof of Proposition~\ref{prop:limit_is_a_metric} but there are some subtleties because the metrics converge with respect to a weaker topology,  i.e.,  the topology on lower semicontinuous functions.  We are going to highlight the main differences and give the relevant explanations.

As in Subsection~\ref{subsec:limit_is_a_metric_subcritical},  the main component in the proof of Proposition~\ref{prop:limit_is_a_metric_general_case} is the following proposition.

\begin{proposition}\label{prop:annuli_crossing_always_positive_general_case}
Suppose that we have the same setup as in Proposition~\ref{prop:limit_is_a_metric_general_case} and assume that $\wt{D}$ satisfies the triangle inequality a.s.  Then we have that
\begin{align*}
\lim_{\epsilon \to 0} \liminf_{n \to \infty} \left(\p\left[\wt{D}_{h^n}^{\xi_{k_n}}(\text{across} \,\ \mathbb{A}_{r_1,r_2}(0)) > \epsilon \right] \right) = 1 \quad \text{for all} \quad 0<r_1 < r_2 < \infty.
\end{align*}
\end{proposition}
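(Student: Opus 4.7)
The plan is to mirror the proof of Proposition~\ref{prop:annuli_crossing_always_positive} from the subcritical case, which combined a uniform-in-$n$ positive lower bound on crossing probabilities (Lemma~\ref{lem:uniform_lower_bound_across_annuli}) with a $0$-$1$ law for the limit (Lemma~\ref{lem:crossing_positive}). The triangle-inequality hypothesis on $\wt{D}$ enters precisely in the $0$-$1$ law step, since there the proof relies on the identity $\{E_1\cap E_2=\emptyset\}=\{E_1\cap\partial B_{2r}(0)=\emptyset\}=\{E_2\cap\partial B_r(0)=\emptyset\}$.

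First I would prove an analogue of Lemma~\ref{lem:uniform_lower_bound_between_points}: for some universal $r_0\in(0,1)$ and every $r\in(0,r_0)$, there is $c=c(r,(\xi_n))>0$ with
\begin{align*}
\liminf_{n\to\infty}\inf_{x\in\partial B_r(0),\,y\in\partial B_{2r}(0)}\p\bigl[\wt{D}_{h^n}^{\xi_{k_n}}(x,y;B_{3r}(0))>c\bigr]>0.
\end{align*}
The argument is identical to Lemma~\ref{lem:uniform_lower_bound_between_points}: place $N$ equally spaced points on $\partial B_{3/2}(0)$, use that $\wt{D}_h^{\xi_{k_n}}$ is a length metric on its non-singular set (Axiom~\ref{it:length_space}), concatenate near-geodesics between consecutive points, and note that by our choice of normalization \eqref{eqn:main_normalization_general_case} we have $\p[\wt{D}_{h^n}^{\xi_{k_n}}(\text{around}\,\mathbb{A}_{1,2}(0))>1]\ge 1-\mathfrak{p}_0(\xi_{k_n})$, which is bounded away from zero by continuity of $\mathfrak{p}_0$ and convergence $\xi_{k_n}\to\xi$. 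Axioms~\ref{it:weyl_scaling} and~\ref{it:translation_and_scale_invariance} give the pointwise statement for arbitrary $(x,y)$ via translation/rotation/scaling, exactly as in Step 3 of Lemma~\ref{lem:uniform_lower_bound_between_points}.

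Next I would upgrade this to a uniform-in-$n$ lower bound on the crossing $\wt{D}_{h^n}^{\xi_{k_n}}(\text{across}\,\mathbb{A}_{r,2r}(0))$ as in Lemma~\ref{lem:uniform_lower_bound_across_annuli}. Cover $\partial B_r(0)$ and $\partial B_{2r}(0)$ by finitely many small Euclidean balls. Using tightness of $\wt{D}_{h^n}^{\xi_{k_n}}(\cdot,\cdot;B_{3r}(0))$ on bounded sets (Remark~\ref{rem:tightness_of_internal_metrics_general_case}), the diameters of the small covering balls can be made smaller than half of the constant $c$ from the previous step with probability close to one, so one reduces from ball-to-ball crossings to the point-to-point estimate. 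Then the positive-association argument using the coupling with space-time white noise, exactly as in the proof of Lemma~\ref{lem:uniform_lower_bound_across_annuli} (via \cite[Theorem~2.3]{dubedat2020liouville} and \cite[Lemma~4.10]{ding2020tightness}), yields a uniform lower bound on $\p[\wt{D}_{h^n}^{\xi_{k_n}}(\text{across}\,\mathbb{A}_{r,2r}(0))\ge c']$ for some $c'>0$. This step does not require continuity of the metric, only the local convergence of LFPP to $\wt{D}_h^{\xi_n}$ guaranteed by Theorem~\ref{thm:convergence_of_supercritical_lfpp}.

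The main obstacle, and where the triangle-inequality hypothesis gets used, is the $0$-$1$ law: setting $\mathcal{Z}=\{\wt{D}(\text{across}\,\mathbb{A}_{r,2r}(0))>0\}$, I would show $\p[\mathcal{Z}]\in\{0,1\}$ and then combine with the previous step to conclude $\p[\mathcal{Z}]=1$. Define $E_1=\{z:\wt{D}(z,\partial B_r(0))=0\}$ and $E_2=\{z:\wt{D}(z,\partial B_{2r}(0))=0\}$; the triangle inequality is exactly what gives $\mathcal{Z}=\{E_1\cap E_2=\emptyset\}=\{E_1\cap\partial B_{2r}(0)=\emptyset\}=\{E_2\cap\partial B_r(0)=\emptyset\}$. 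I would then adapt Lemma~\ref{lem:crossing_positive}: approximate $E_1,E_2$ by their $(r,j)$-dyadic neighborhoods $E_1^j,E_2^j$, reduce via the Markov property of the whole-plane GFF (writing $h|_{U_i}$ as a zero-boundary GFF on $U_i$ plus a harmonic function) to an independent coupling, and deduce independence of $\{E_1^j=U_1\}$ and $\{E_2^j=U_2\}$ for disjoint $(r,j)$-dyadic domains. Summing and letting $j\to\infty$ gives $\p[\mathcal{Z}]=\p[E_1\cap E_2=\emptyset]\le\p[\mathcal{Z}]^2$. The technical point to handle carefully is that convergence is only in the lower semicontinuous topology, so the step ``$E_1$ is bounded and connected'' (Lemma~\ref{lem:connectivity_lemma}) needs a replacement: here I would work instead with the $(r,j)$-dyadic hulls directly, noting that lower semicontinuity already gives closedness of $E_1,E_2$, and boundedness follows from an analogue of Lemma~\ref{lem:metric_unbounded} applied to the subsequential limit (which uses only the around-annulus scaling from \eqref{eqn:main_normalization_general_case} and Axioms~\ref{it:weyl_scaling},~\ref{it:translation_and_scale_invariance}, not continuity). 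Finally, combining the $0$-$1$ law with the positive lower bound from the previous step gives $\p[\mathcal{Z}]=1$, from which the proposition follows by the standard argument used to deduce Proposition~\ref{prop:annuli_crossing_always_positive} from Lemmas~\ref{lem:uniform_lower_bound_across_annuli} and~\ref{lem:crossing_positive}.
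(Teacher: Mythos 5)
There is a genuine gap in the step where you upgrade the point-to-point lower bound to a ball-to-ball lower bound. You claim that tightness of $\wt{D}_{h^n}^{\xi_{k_n}}(\cdot,\cdot;B_{3r}(0))$ (Remark~\ref{rem:tightness_of_internal_metrics_general_case}) implies that the $\wt{D}_{h^n}^{\xi_{k_n}}$-diameters of the small covering balls can be made small with probability close to one, uniformly in $n$. But this tightness is with respect to the topology of lower semicontinuous functions, which gives no uniform modulus of continuity and in particular no control on diameters. Worse, for $\xi_{k_n}>\xi_{\mathrm{crit}}$ every nonempty open set contains singular points almost surely (Definition~\ref{def:singular_points}), so $\sup_{u,v\in B_{r2^{-m}}(x)}\wt{D}_{h^n}^{\xi_{k_n}}(u,v;B_{3r}(0))=\infty$ a.s.; the diameter bound you want simply fails. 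This is precisely the step for which the subcritical proof of Lemma~\ref{lem:uniform_lower_bound_across_annuli} uses local uniform tightness (Theorem~\ref{thm:tightness_subcritical}), and it is the one part of that proof that does not transfer to the supercritical/critical regime.

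The paper's Lemma~\ref{lem:uniform_lower_bound_across_annuli_general_case} replaces the diameter argument by a topological one that only requires distance-to-set bounds at the deterministic centers. Using \eqref{eqn:centered_prob_bound_fixed_scale_uniform} from the proof of Proposition~\ref{prop:lqg_distances_general_case}, one gets with uniformly high probability (in $n$) that the $\wt{D}_{h^n}^{\xi_{k_n}}$-distances across $\mathbb{A}_{2^{-m},2^{-m+2}}(z)$ and around $\mathbb{A}_{2^{-m},2^{-m+1}}(z)$ decay geometrically in $m$ for $z\in\{x,y\}$. Concatenating these one obtains short paths from $x$ (resp.\ $y$) to $\partial B_{r2^{-m}}(x)$ (resp.\ $\partial B_{r2^{-m}}(y)$), together with loops of small $\wt{D}_{h^n}^{\xi_{k_n}}$-length separating those balls from their complements. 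Because any path from $B_{r2^{-m}}(x)$ to $B_{r2^{-m}}(y)$ must cross both separating loops, the triangle inequality applied to the concatenation of these loops and of the short paths to the centers yields $\wt{D}_{h^n}^{\xi_{k_n}}(B_{r 2^{-m}}(x),B_{r 2^{-m}}(y);B_{3r}(0))>c'/2$ with probability at least $A/2$, without ever invoking a supremum of distances over points in a ball. This is the piece you need to replace; the remainder of your outline (point-to-point Lemma~\ref{lem:uniform_lower_bound_between_points_general_case}, the white-noise positive association step, and the $0$--$1$ law under the triangle-inequality hypothesis via $(r,j)$-dyadic approximations and Lemmas~\ref{lem:metric_unbounded_general_case}, \ref{lem:connectivity_lemma_general_case}, \ref{lem:crossing_positive_general_case}) matches the paper.
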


Proposition~\ref{prop:annuli_crossing_always_positive_general_case} will follow from Lemmas~\ref{lem:uniform_lower_bound_across_annuli_general_case} and ~\ref{lem:crossing_positive_general_case} which are the analogues of Lemmas~\ref{lem:uniform_lower_bound_across_annuli} and ~\ref{lem:crossing_positive} respectively.  In order to prove Lemma~\ref{lem:uniform_lower_bound_across_annuli_general_case},  we need to know that the $\wt{D}_h^{\xi_{k_n}}$-distance between any two fixed points is bounded away from zero with positive probability (uniformly in $n \in \N$).  This is the content of the following lemma and it is the analogue of Lemma~\ref{lem:uniform_lower_bound_between_points}.

\begin{lemma}\label{lem:uniform_lower_bound_between_points_general_case}
Suppose that we have the same setup as in the statement of Proposition~\ref{prop:lower_semicontinuity}.  Then there exists some universal constant $r_0 \in (0,1)$ such that for all $r \in (0,r_0)$,  there exists a constant $c>0$ depending only on $r$ and the sequence $(\xi_{k_n})$ such that
\begin{align*}
\liminf_{n \to \infty} \inf_{x \in \partial B_r(0),  y \in \partial B_{2r}(0)}\left( \p\left[\wt{D}_{h^n}^{\xi_{k_n}}(x,y ; B_{3r}(0)) > c\right]\right)>0.
\end{align*}
\end{lemma}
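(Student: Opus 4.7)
The proof will follow the three-step structure of Lemma~\ref{lem:uniform_lower_bound_between_points}, with the main adaptations addressing the fact that $\wt{D}_h^{\xi_{k_n}}$ is only lower semicontinuous (rather than continuous) and may take infinite values. Crucially, the three axiomatic ingredients we used there---Axiom~\ref{it:length_space} (length space), Axiom~\ref{it:weyl_scaling} (Weyl scaling), and Axiom~\ref{it:translation_and_scale_invariance} (scaling/translation invariance)---all continue to hold in the supercritical setting by Definition~\ref{def:strong_lqg_metric_general}. The only other input used is the fact that the proof of Lemma~\ref{lem:good_event_occurs_almost_everywhere} produces a value of $\mathfrak{p}_0$ depending continuously on $M(\xi)$, so (since $M$ itself is continuous) the function $\xi \mapsto \mathfrak{p}_0(\xi)$ can be taken continuous; combined with $\xi_{k_n} \to \xi$ and $\mathfrak{p}_0(\xi) \in (0,1)$, this gives
\[
q \;:=\; \liminf_{n\to\infty} \bigl(1 - \mathfrak{p}_0(\xi_{k_n})\bigr) \;>\; 0.
\]

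For Step~2, fix $\delta \in (0,1)$ small (universally) and let $x_1,\dots,x_N$ be equally spaced points on $\partial B_{3/2}(0)$ with $\delta/3 < |x_j - x_{j+1}| < \delta/2$ and with $\delta$ chosen small enough that $B_{2|x_j - x_{j+1}|}(x_j) \subseteq \mathbb{A}_{1,2}(0)$. On the event $\{\wt{D}_h^{\xi_{k_n}}(\text{around}\,\mathbb{A}_{1,2}(0)) > 1\}$, which by our normalization has probability at least $1 - \mathfrak{p}_0(\xi_{k_n}) \geq q/2$ for $n$ large, use Axiom~\ref{it:length_space} to choose, for each fixed $\epsilon > 0$, almost-geodesic paths $P_{j,n}$ in $\mathbb{A}_{1,2}(0)$ from $x_j$ to $x_{j+1}$ whose $\wt{D}_h^{\xi_{k_n}}(\cdot,\cdot;\mathbb{A}_{1,2}(0))$-lengths are within $\epsilon/N$ of the corresponding internal distances. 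The concatenation $P_n$ disconnects $\partial B_1(0)$ from $\partial B_2(0)$, so $\sum_j \mathrm{len}(P_{j,n}) \geq \wt{D}_h^{\xi_{k_n}}(\text{around}\,\mathbb{A}_{1,2}(0)) > 1$, forcing some $j_n$ with $\mathrm{len}(P_{j_n,n}) > 1/N$, and hence $\wt{D}_h^{\xi_{k_n}}(x_{j_n},x_{j_n+1} ; B_{2|x_{j_n}-x_{j_n+1}|}(x_{j_n})) > 1/N - \epsilon/N$. A pigeonhole/union bound over $j \in \{1,\dots,N\}$ produces a deterministic $j^* = j^*(n)$ for which
\[
\p\Bigl[\wt{D}_h^{\xi_{k_n}}\bigl(x_{j^*},x_{j^*+1} ; B_{2|x_{j^*}-x_{j^*+1}|}(x_{j^*})\bigr) > \tfrac{1-\epsilon}{N}\Bigr] \;\geq\; \tfrac{q}{2N} \quad \text{for large } n.
\]

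For Step~3, apply Axiom~\ref{it:translation_and_scale_invariance} (translating by $x_{j^*}$, rotating, and rescaling by $d_n := |x_{j^*} - x_{j^*+1}|$) to convert the bound of Step~2 into a uniform (in $n$) positive probability lower bound for $\wt{D}_h^{\xi_{k_n}}(0,d_n ; B_{2 d_n}(0)) > 1/N$; note that rotation invariance follows from the fact that the whole-plane GFF (modulo an additive constant) is rotation-invariant and Axiom~\ref{it:weyl_scaling} absorbs the log correction. Then, exactly as in the chaining argument of \cite[Lemma~6.5]{ding2023tightness} reproduced in Step~3 of Lemma~\ref{lem:uniform_lower_bound_between_points}, for $r \in (0,r_0)$ with $r_0$ universal and sufficiently small, and any $x \in \partial B_r(0)$, $y \in \partial B_{2r}(0)$, find a bounded number $J \leq C(r)$ of intermediate points $x = z_1, \dots, z_J = y$ obtained from $(0, d_n)$ by translations/rotations/rescaling, with associated balls $B_{3r}(y_i) \subseteq B_{2|x_{j^*}-x_{j^*+1}|}(x_{j^*})$ (after rescaling). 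Triangle inequality for the internal metric gives
\[
\wt{D}_h^{\xi_{k_n}}(0,d_n ; B_{2 d_n}(0)) \;\leq\; \sum_{i=1}^{J-1} \wt{D}_h^{\xi_{k_n}}(z_i, z_{i+1}; B_{3r}(y_i)),
\]
yielding the required $\liminf_{n\to\infty} \inf_{x,y}\p[\wt{D}_h^{\xi_{k_n}}(x,y;B_{3r}(0)) > c] > 0$ for some $c = c(r) > 0$.

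The only mildly subtle point is ensuring that the length-space axiom suffices to produce the almost-geodesics $P_{j,n}$ even when internal distances may be infinite; this is automatic since in a length space, for any $\epsilon > 0$, a curve realizing length within $\epsilon$ of the distance always exists when the distance is finite, and we only need the bound $\sum \mathrm{len}(P_{j,n}) \geq \wt{D}_h^{\xi_{k_n}}(\text{around}\,\mathbb{A}_{1,2}(0))$ in one direction. The principal technical novelty compared to the subcritical case is verifying the uniformity $\inf_n (1 - \mathfrak{p}_0(\xi_{k_n})) > 0$, which reduces to a continuity argument.
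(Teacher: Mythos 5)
Your proposal is essentially correct and takes essentially the same route as the paper: establish $\inf_n\bigl(1-\mathfrak{p}_0(\xi_{k_n})\bigr) > 0$ and then re-run the argument of Lemma~\ref{lem:uniform_lower_bound_between_points}. Two small remarks. First, to get the uniform bound the paper does not invoke continuity of $\xi \mapsto \mathfrak{p}_0(\xi)$ (which is not explicitly established in Section~\ref{sec:tightness_general_case}); it instead notes that since $M$ is continuous hence bounded by some $M(\wt{\xi})$ on the compact interval $[\xi_*,\xi^*]$ containing the sequence, and since $\mathfrak{p}_0$ is manifestly nondecreasing in the parameter $M$, one has $\mathfrak{p}_0(\xi_{k_n}) \leq \mathfrak{p}_0(\wt{\xi}) < 1$ for all $n$ — a slightly more self-contained argument than yours. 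Second, your parenthetical that ``Axiom~\ref{it:weyl_scaling} absorbs the log correction'' in the rotation step is a misattribution: rotations carry no $\log$ correction, and the rotational invariance of $\wt{D}_h^{\xi}$ comes directly from the rotation invariance of the whole-plane GFF (cf.\ \cite[Proposition~1.9]{ding2023uniqueness}, as used in Section~\ref{sec:xi_to_infty}). Neither point affects the correctness of the overall argument.
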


\begin{proof}
Let $0<\xi_* < \xi^* < \infty$ be such that $\xi_n \in [\xi_* ,  \xi^*]$ for all $n \in \N$.  Since the function $M : (0,\infty) \to (0,\infty)$ is a continuous function,  we obtain that there exists $\wt{\xi} \in [\xi_* ,  \xi^*]$ such that $M(\xi) \leq M(\wt{\xi})$ for all $\xi \in [\xi_* ,  \xi^*]$.  Thus the definition of $\mathfrak{p}_0(\xi)$ (see \eqref{eqn:main_normalization_general_case}) implies that $\mathfrak{p}_0(\xi) \leq \mathfrak{p}_0(\wt{\xi})$ for all $\xi \in [\xi_* ,  \xi^*]$.  Then we have that
\begin{align*}
\p\left[\wt{D}_{h^n}^{\xi_{k_n}}(\text{around} \,\,  \mathbb{A}_{1,2}(0)) > 1\right]\geq  1 - \mathfrak{p}_0(\wt{\xi})>0 \quad \text{for all} \quad n \in \N.
\end{align*}
Therefore,  since $\wt{D}_{h^n}^{\xi_{k_n}}$ satisfies the axioms of a strong LQG metric for all $n \in \N$,  we obtain the statement of the lemma by arguing as in the proof of Lemma~\ref{lem:uniform_lower_bound_between_points}.
\end{proof}

\begin{lemma}\label{lem:uniform_lower_bound_across_annuli_general_case}
Let $r_0 \in (0,1)$ be the constant of Lemma~\ref{lem:uniform_lower_bound_between_points_general_case}.  Then for all $r \in (0,r_0)$,  there exists a constant $c>0$ depending only on $r$ and the sequence $(\xi_n)$ such that
\begin{align*}
\liminf_{n \to \infty} \p\left[\wt{D}_{h^n}^{\xi_{k_n}}(\partial B_r(0) ,  \partial B_{2r}(0)) > c \right] > 0.
\end{align*}
\end{lemma}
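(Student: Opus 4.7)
The argument will adapt the proof of Lemma~\ref{lem:uniform_lower_bound_across_annuli} from the subcritical to the lower semicontinuous setting. I fix $r \in (0, r_0)$, choose $m \in \N$ large, and let $X = \partial B_r(0) \cap (r 2^{-m} \Z^2)$ and $Y = \partial B_{2r}(0) \cap (r 2^{-m} \Z^2)$ so that the balls $\{B_{r 2^{-m}}(x)\}_{x \in X}$ and $\{B_{r 2^{-m}}(y)\}_{y \in Y}$ cover $\partial B_r(0)$ and $\partial B_{2r}(0)$ respectively and all lie in $B_{3r}(0)$. Then
\begin{align*}
\wt{D}_{h^n}^{\xi_{k_n}}(\partial B_r(0), \partial B_{2r}(0)) \geq \min_{(x, y) \in X \times Y} \wt{D}_{h^n}^{\xi_{k_n}}(B_{r 2^{-m}}(x), B_{r 2^{-m}}(y); B_{3r}(0)),
\end{align*}
so the task reduces to a uniform-in-$n$ lower bound on the probability of the intersection event $\bigcap_{(x,y) \in X \times Y} \{\wt{D}_{h^n}^{\xi_{k_n}}(B_{r 2^{-m}}(x), B_{r 2^{-m}}(y); B_{3r}(0)) > c_*\}$ for some $c_* > 0$ depending only on $r$.

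For each fixed $(x, y) \in X \times Y$, Lemma~\ref{lem:uniform_lower_bound_between_points_general_case} supplies a point-to-point bound $\p[\wt{D}_{h^n}^{\xi_{k_n}}(x, y; B_{3r}(0)) > c] \geq A$ uniformly in $n$. The subcritical proof transferred this to a ball-to-ball bound via the triangle inequality together with uniform tightness of diameters of small balls, which is unavailable in the supercritical regime due to singular points. I would resolve this by separating two cases for $u \in B_{r 2^{-m}}(x)$: if $u$ is singular for $\wt{D}_{h^n}^{\xi_{k_n}}$, then $\wt{D}_{h^n}^{\xi_{k_n}}(u, v; B_{3r}(0)) = \infty$ for every $v \in B_{r 2^{-m}}(y)$, so singular points cause no difficulty; otherwise, combining the length-metric property of $\wt{D}_{h^n}^{\xi_{k_n}}$ with a chained application of Proposition~\ref{prop:lqg_distances_general_case} to the dyadic annuli $\mathbb{A}_{r 2^{-k-1}, r 2^{-k}}(x)$ for $k \geq m$, and absorbing the random multiplicative factors $r^{\xi Q(\xi)} e^{\xi h_r(\cdot)}$ via the local H\"older continuity of $(z, s) \mapsto h^n_s(z)$ (uniform in $n$), yields $\wt{D}_{h^n}^{\xi_{k_n}}(x, u; B_{3r}(0)) \leq c/4$ with uniformly high probability in $n$ once $m$ is chosen large. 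A symmetric estimate around $y$ then gives $\p[\wt{D}_{h^n}^{\xi_{k_n}}(B_{r 2^{-m}}(x), B_{r 2^{-m}}(y); B_{3r}(0)) > c/2] \geq A/2$ uniformly in $n$ for every fixed $(x, y)$.

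The final step mirrors Step~3 in the proof of Lemma~\ref{lem:uniform_lower_bound_across_annuli}. I approximate $\wt{D}_{h^n}^{\xi_{k_n}}$ by the rescaled LFPP metrics $a_{\epsilon_k}^{-1} D_{h^n}^{\epsilon_k}$ via Theorem~\ref{thm:convergence_of_supercritical_lfpp}; the convergence is only in the lower semicontinuous topology, but this is sufficient because $\wt{D}_{h^n}^{\xi_{k_n}}(B, B'; U) \leq \liminf_k a_{\epsilon_k}^{-1} D_{h^n}^{\epsilon_k}(B, B'; U)$ almost surely gives the inclusion $\{\wt{D}_{h^n}^{\xi_{k_n}}(B, B'; U) \geq c\} \subseteq \{a_{\epsilon_k}^{-1} D_{h^n}^{\epsilon_k}(B, B'; U) \geq c - \eta \,\text{eventually}\}$ for every $\eta > 0$, and Fatou's lemma transfers the probability bound. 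Comparing to a white-noise LFPP metric via \cite[Lemma~4.10]{ding2020tightness} (with the error controlled by a parameter $M$ as in the subcritical proof) and invoking positive associativity via \cite[Theorem~2.3]{dubedat2020liouville} then factorize the intersection probability as a product over $(x,y) \in X \times Y$, each factor being bounded below by $A/4$ via the previous paragraph. This yields the desired uniform positive lower bound. The main technical obstacle is the revised Step~2: in the supercritical regime, one cannot simply invoke uniform diameter tightness and one must carefully split the argument into singular and non-singular contributions, verifying the $n$-uniformity of the chained annulus-crossing estimates supplied by Proposition~\ref{prop:lqg_distances_general_case}.
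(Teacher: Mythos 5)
Your overall architecture matches the paper's proof: reduce to a finite intersection over dyadic boundary points, feed each point-to-point bound from Lemma~\ref{lem:uniform_lower_bound_between_points_general_case} into a point-to-ball bound, and conclude via the LFPP approximation and positive associativity exactly as in Step~3 of the subcritical Lemma~\ref{lem:uniform_lower_bound_across_annuli}. You also correctly observe that singular points pose no obstruction for a lower bound.

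However, the point-to-ball conversion as you state it has a genuine gap. You claim that, off a small-probability event, $\wt{D}_{h^n}^{\xi_{k_n}}(x,u ; B_{3r}(0)) \leq c/4$ holds \emph{for every non-singular $u \in B_{r 2^{-m}}(x)$}. This is false in the supercritical regime: Proposition~\ref{prop:lqg_distances_general_case} gives, for each \emph{fixed} $u$, a high-probability finite bound on $\wt{D}_h^{\xi}(\{x\},\{u\})$, but it does not give a bound that holds simultaneously for all non-singular $u$ in the ball. Indeed, the distance from $x$ to a non-singular $u$ is a.s.\ finite (Lemma~\ref{lem:singular_points}) but can be arbitrarily large because $u$ may be forced to detour around nearby singular regions; there is no analogue of the subcritical uniform diameter tightness precisely for this reason. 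The paper circumvents this by never attempting to reach an arbitrary $u$. Instead, the estimate~\eqref{eqn:centered_prob_bound_fixed_scale_uniform} produces, with high probability, paths around $\mathbb{A}_{2^{-k},2^{-k+1}}(x)$ and across $\mathbb{A}_{2^{-k},2^{-k+2}}(x)$ of summably small $\wt{D}_{h^n}^{\xi_{k_n}}$-length for all $k \geq m$; since paths across intersect paths around, their union is a connected web $\Gamma_x$ accumulating at $x$ with small total length, and similarly $\Gamma_y$ near $y$. Then for any $u \in B_{r2^{-m}}(x)$, $v \in B_{r2^{-m}}(y)$, a path $P$ from $u$ to $v$ in $B_{3r}(0)$ must cross $\mathbb{A}_{r2^{-m-1},r2^{-m}}(x)$, hence must hit $\Gamma_x$ at some point $a$ with $\wt{D}_{h^n}^{\xi_{k_n}}(x,a)$ small, and likewise hit $\Gamma_y$ at $b$; consequently $\len(P) \geq \wt{D}_{h^n}^{\xi_{k_n}}(a,b) \geq \wt{D}_{h^n}^{\xi_{k_n}}(x,y) - \wt{D}_{h^n}^{\xi_{k_n}}(x,a) - \wt{D}_{h^n}^{\xi_{k_n}}(y,b)$. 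This gives the ball-to-ball lower bound without ever bounding $\wt{D}_{h^n}^{\xi_{k_n}}(x,u)$ for all $u$. You should replace the "for all non-singular $u$" claim with this web/intersection argument; the remaining parts of your proposal then go through as written.
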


\begin{proof}
Fix $r \in (0,r_0)$ and let $c'>0$ be the constant in Lemma~\ref{lem:uniform_lower_bound_between_points_general_case} corresponding to $r$.  Set 
\begin{align*}
A:=\inf_{n \in \N} \inf_{x \in \partial B_r(0),  y \in \partial B_{2r}(0)}\left( \p\left[\wt{D}_{h^n}^{\xi_{k_n}}(x,y ; B_{3r}(0)) > c' \right]\right)>0.
\end{align*}

Note that the proof of Proposition~\ref{prop:lqg_distances_general_case} and in particular \eqref{eqn:centered_prob_bound_fixed_scale_uniform} imply that there exist constants $a,A > 0$ depending only on $(\xi_{k_n})_{n \in \N}$ and $r$ such that for all $n \in \N,  z \in \partial B_r(0) \cup \partial B_{2r}(0)$ and all $N \in \N$,  it holds off an event with probability at most $A 2^{-aN}$ that
\begin{align*}
&\wt{D}_{h^n}^{\xi_{k_n}}(\text{across} \,\,  \mathbb{A}_{2^{-m},2^{-m+2}}(z)) \leq 2^{-\xi_{k_n} Q(\xi_{k_n}) m /2}\\
&\wt{D}_{h^n}^{\xi_{k_n}}(\text{around} \,\,  \mathbb{A}_{2^{-m},2^{-m+2}}(z)) \leq 2^{-\xi_{k_n} Q(\xi_{k_n}) m /2}
\end{align*}
for all $m \geq N$.  Since every path across $\mathbb{A}_{2^{-m} ,  2^{-m+2}}(z)$ intersects every path around $\mathbb{A}_{2^{-m+1},2^{-m+2}}(z)$ and every path around $\mathbb{A}_{2^{-m} ,  2^{-m+1}}(z)$,  combining with Lemma~\ref{lem:uniform_lower_bound_between_points_general_case},  we obtain that there exists $m_0 \in \N$ such that for all $m \in \N,  m \geq m_0$,  we have that
\begin{align*}
\p\left[\wt{D}_{h^n}^{\xi_{k_n}}(B_{r 2^{-m}}(x) ,  B_{r 2^{-m}}(y) ; B_{3r}(0)) > \frac{c'}{2} \right] > \frac{A}{2}
\end{align*} 
for all $n \in \N$ and all $x \in \partial B_r(0),  y \in \partial B_{2r}(0)$. Moreover setting $X:=\partial B_r(0) \cap (r 2^{-m}) \Z^2,  Y:=\partial B_{2r}(0) \cap (r 2^{-m}) \Z^2$ and arguing as in the proof of Lemma~\ref{lem:uniform_lower_bound_across_annuli},  we obtain that 
\begin{align*}
\p\left[\wt{D}_{h^n}^{\xi_{k_n}}(\partial B_r(0) ,  \partial B_{2r}(0)) > \frac{c'}{2}\right] \geq \p\left[\bigcap_{x \in X ,  y \in Y} \left\{\wt{D}_{h^n}^{\xi_{k_n}}(B_{r 2^{-m}}(x) ,  B_{r 2^{-m}}(y) ; B_{3r}(0)) > \frac{c'}{2} \right\}\right].
\end{align*}
Hence the proof of the lemma is complete by arguing as in the proof of Lemma~\ref{lem:uniform_lower_bound_across_annuli}.
\end{proof}

Before we state and prove Lemma~\ref{lem:crossing_positive_general_case},  we state and prove Lemmas~\ref{lem:metric_unbounded_general_case} and ~\ref{lem:connectivity_lemma_general_case} which we are going to use in the proof of Lemma~\ref{lem:crossing_positive_general_case} and they are the analogues of Lemmas~\ref{lem:metric_unbounded} and ~\ref{lem:connectivity_lemma} respectively.

\begin{lemma}\label{lem:metric_unbounded_general_case}
For all $r,T>0$,  it holds that
\begin{align*}
\lim_{R \to \infty} \liminf_{n \to \infty} \left(\p\left[ \wt{D}_{h^n
}^{\xi_{k_n}}(\text{across} \,\,\mathbb{A}_{r,R}(0)) > T \right] \right) = 1.
\end{align*}
\end{lemma}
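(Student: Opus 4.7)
The plan is to mimic the proof of Lemma~\ref{lem:metric_unbounded} in the subcritical case, replacing the ingredients by their general-$\xi$ analogues that are already available in the excerpt.

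First, I would establish a scale-invariant lower tail for the crossing distance across a dyadic annulus. Since $\wt{D}_{h^n}^{\xi_{k_n}}$ is a strong LQG metric with parameter $\xi_{k_n}$, Axioms~\ref{it:weyl_scaling} and \ref{it:translation_and_scale_invariance} give, for every $k\in\Z$,
\begin{align*}
\p\!\left[\wt{D}_{h^n}^{\xi_{k_n}}(\text{across}\,\,\mathbb{A}_{2^k,2^{k+1}}(0))\geq c\,2^{\xi_{k_n}Q(\xi_{k_n})k}e^{\xi_{k_n}h^n_{2^{k-1}}(0)}\right] = \p\!\left[\wt{D}_{h^n}^{\xi_{k_n}}(\text{across}\,\,\mathbb{A}_{1,2}(0))>c\,2^{\xi_{k_n}Q(\xi_{k_n})}\right].
\end{align*}
Lemma~\ref{lem:uniform_lower_bound_across_annuli_general_case} supplies $s\in(0,r_0)$ and $\wt{c}>0$ such that $\p[\wt{D}_{h^n}^{\xi_{k_n}}(\partial B_s(0),\partial B_{2s}(0))>\wt{c}]\geq \wt{c}$ for all $n$, and combined with the scaling axiom this rewrites as a uniform lower bound on $\p[\wt{D}_{h^n}^{\xi_{k_n}}(\text{across}\,\,\mathbb{A}_{1,2}(0))>\wt{c}\,s^{\xi_{k_n}Q(\xi_{k_n})}e^{\xi_{k_n}h^n_s(0)}]$. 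Since $\xi_{k_n}\to\xi\in(0,\infty)$ and $Q$ is continuous, the sequences $(\xi_{k_n})$ and $(\xi_{k_n}Q(\xi_{k_n}))$ are bounded above and below by positive constants, and the Gaussian random variable $h^n_s(0)$ has a law independent of $n$; hence there exists a deterministic $c>0$ (independent of $n,k$) for which
\begin{align*}
\p\!\left[\wt{D}_{h^n}^{\xi_{k_n}}(\text{across}\,\,\mathbb{A}_{2^k,2^{k+1}}(0))\geq c\,2^{\xi_{k_n}Q(\xi_{k_n})k}e^{\xi_{k_n}h^n_{2^{k-1}}(0)}\right]\geq c\quad\text{for all }n\in\N,\,k\in\Z.
\end{align*}

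Next I would run the independence-across-scales argument. Setting $\wt{h}^n=h^n\circ\phi$ with $\phi(z)=-1/z$ (which is again a whole-plane GFF normalized to have mean zero on $\partial\D$), the event
\begin{align*}
E_k^n=\{\wt{D}_{h^n}^{\xi_{k_n}}(\text{across}\,\,\mathbb{A}_{2^k,2^{k+1}}(0))\geq c\,2^{\xi_{k_n}Q(\xi_{k_n})k}e^{\xi_{k_n}h^n_{2^{k-1}}(0)}\}
\end{align*}
is determined by $(\wt{h}^n-\wt{h}^n_{2^{-k+1}}(0))|_{\mathbb{A}_{2^{-k-1},2^{-k}}(0)}$ by Axioms~\ref{it:locality} and \ref{it:weyl_scaling}. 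Applying \cite[Lemma~3.1]{gwynne2020local} to $(E_k^n)_{k\in[\lfloor\log_2 r\rfloor+1,\,\lfloor\log_2 R\rfloor-1]_\Z}$ then yields constants $a_1,a_3>0$, $a_2\in(0,1)$ depending only on $r$ such that the number $\mathcal{N}(R)$ of such $k$ for which $E_k^n$ occurs satisfies $\p[\mathcal{N}(R)<a_2\log_2 R]\leq a_3\exp(-a_1\log_2 R)$, uniformly in $n$.

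Finally I would sum the contributions. By the triangle inequality (which holds since each $\wt{D}_{h^n}^{\xi_{k_n}}$ is a metric),
\begin{align*}
\wt{D}_{h^n}^{\xi_{k_n}}(\text{across}\,\,\mathbb{A}_{r,R}(0))\geq \sum_{k=\lfloor\log_2 r\rfloor+1}^{\lfloor\log_2 R\rfloor-1}\wt{D}_{h^n}^{\xi_{k_n}}(\text{across}\,\,\mathbb{A}_{2^k,2^{k+1}}(0)).
\end{align*}
On $E_k^n$ the summand is at least $c\exp(\xi_{k_n}(B^n_{-(k-1)\log 2}+Q(\xi_{k_n})(k-1)\log 2))$ up to a factor $2^{-\xi_{k_n}Q(\xi_{k_n})}$, where $B^n_t=h^n_{e^{-t}}(0)$ is a standard Brownian motion. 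Since $Q(\xi_{k_n})\to Q(\xi)>0$, the laws of $\exp(\xi_{k_n}\inf_{t\geq 0}(B^n_t+Q(\xi_{k_n})t))$ are tight and bounded away from zero with high probability uniformly in $n$; combining this with the $\mathcal{N}(R)\gtrsim\log_2 R$ bound produces a lower bound tending to $\infty$ as $R\to\infty$, finishing the proof. The only mild obstacle is making sure the uniform lower bound on $\p[E_k^n]$ survives letting $\xi_{k_n}$ drift within a compact subset of $(0,\infty)$; this is handled by the continuity of $Q$ and the uniform Gaussian tail for $h^n_s(0)$, and by passing to the worst case in the compact interval $[\inf_n\xi_{k_n},\sup_n\xi_{k_n}]$ exactly as in Lemma~\ref{lem:uniform_lower_bound_between_points_general_case}.
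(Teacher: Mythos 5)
Your argument is correct and is essentially the paper's proof, which simply invokes the subcritical argument of Lemma~\ref{lem:metric_unbounded} with Lemma~\ref{lem:uniform_lower_bound_across_annuli_general_case} replacing Lemma~\ref{lem:uniform_lower_bound_across_annuli}; the scaling step, the pull-back through $z \mapsto -1/z$ to apply \cite[Lemma~3.1]{gwynne2020local}, and the tightness of $\exp(\xi_{k_n}\inf_{t\geq 0}(B_t+Q(\xi_{k_n})t))$ using that $\xi_{k_n}$ and $Q(\xi_{k_n})$ stay in a compact subset of $(0,\infty)$ are all as in the paper. (One small slip: the paper's convention is $B_t := h_{e^t}(0)$, so that $h_{2^{k-1}}(0) = B_{(k-1)\log 2}$ with $t\geq 0$; with your choice $B^n_t = h^n_{e^{-t}}(0)$ the parameter $-(k-1)\log 2$ is negative and $\inf_{t\geq 0}$ does not directly apply, though this is just a sign typo and not a gap.)
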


\begin{proof}
It follows from the same argument used to prove Lemma~\ref{lem:metric_unbounded} except that we use Lemma~\ref{lem:uniform_lower_bound_across_annuli_general_case} in place of Lemma~\ref{lem:uniform_lower_bound_across_annuli}.
\end{proof}

\begin{lemma}\label{lem:connectivity_lemma_general_case}
Fix $r>0$.  Then the set $\{x \in \C : \wt{D}(x,\partial B_r(0)) = 0\}$ is a.s.  a closed,  bounded and connected set.
\end{lemma}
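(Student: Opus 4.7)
The plan is to mirror the proof of Lemma~\ref{lem:connectivity_lemma}, but replacing continuity of the limit by lower semicontinuity and using the coupling of Section~\ref{sec:tightness_general_case} to convert the probability statement of Lemma~\ref{lem:metric_unbounded_general_case} into an a.s.\ one.  Throughout I will write $E = \{x \in \C : \wt{D}(x, \partial B_r(0)) = 0\}$.  Closedness is straightforward: if $x_k \to x$ with $x_k \in E$, then for each $k$ the compactness of $\partial B_r(0)$ together with lower semicontinuity of $y \mapsto \wt{D}(x_k, y)$ produces $y_k \in \partial B_r(0)$ with $\wt{D}(x_k, y_k) = 0$; passing to a subsequence with $y_k \to y \in \partial B_r(0)$ and invoking lower semicontinuity of $\wt{D}$ on $\C \times \C$ yields $\wt{D}(x, y) \leq \liminf_k \wt{D}(x_k, y_k) = 0$, so $x \in E$.

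For boundedness, I will fix a small rational $\delta > 0$ and observe that for every rational $R > r + \delta$ the coupling from Section~\ref{sec:tightness_general_case} gives $\wt{D}_{h^n}^{\xi_{k_n}}(\text{across}\,\mathbb{A}_{r+\delta, R}(0)) \to D(\text{across}\,\mathbb{A}_{r+\delta, R}(0))$ a.s.\ as $n \to \infty$, and by the length-space property of each $\wt{D}_{h^n}^{\xi_{k_n}}$ together with the triangle inequality this across distance is monotone nondecreasing in $R$.  The Portmanteau theorem applied to the closed set $[T, \infty)$ combined with Lemma~\ref{lem:metric_unbounded_general_case} gives $\p[D(\text{across}\,\mathbb{A}_{r+\delta, R}(0)) \geq T] \to 1$ as $R \to \infty$ for every $T > 0$, and monotone convergence then shows $\sup_R D(\text{across}\,\mathbb{A}_{r+\delta, R}(0)) = \infty$ a.s.  Hence a.s.\ there is a (random) rational $R^* > r + \delta$ with $D(\text{across}\,\mathbb{A}_{r+\delta, R^*}(0)) > 0$.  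Now suppose $x \in E$ with $|x| > R^*$; taking $y^* \in \partial B_r(0)$ achieving $\wt{D}(x, y^*) = 0$ as in the closedness argument and applying property~(ii) of Definition~\ref{def:lower_semicontinuous_topology} produces sequences $x_n \to x$ and $y_n \to y^*$ with $\wt{D}_{h^n}^{\xi_{k_n}}(x_n, y_n) \to 0$.  For all large $n$, $x_n$ and $y_n$ lie on opposite sides of $\mathbb{A}_{r+\delta, R^*}(0)$, so by Axiom~\ref{it:length_space} any near-geodesic between them crosses this annulus, forcing $\wt{D}_{h^n}^{\xi_{k_n}}(x_n, y_n) \geq \wt{D}_{h^n}^{\xi_{k_n}}(\text{across}\,\mathbb{A}_{r+\delta, R^*}(0))$; letting $n \to \infty$ then gives $D(\text{across}\,\mathbb{A}_{r+\delta, R^*}(0)) \leq 0$, contradicting the choice of $R^*$.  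Therefore $E \subseteq \overline{B_{R^*}(0)}$.

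For connectedness I will recycle the near-geodesics from the boundedness step: given $x \in E$, the length-space property furnishes continuous paths $P_n$ from $x_n$ to $y_n$ with $\wt{D}_{h^n}^{\xi_{k_n}}$-length tending to $0$.  The boundedness argument confines the $P_n$ to a common Euclidean-bounded region for large $n$, so Blaschke's selection theorem extracts a subsequential Hausdorff limit $F \subseteq \C$ which is compact, connected, and contains both $x$ and $y^*$.  For any $z \in F$, pick $z_n \in P_n$ with $z_n \to z$; then $\wt{D}_{h^n}^{\xi_{k_n}}(z_n, y_n) \leq \len(P_n; \wt{D}_{h^n}^{\xi_{k_n}}) \to 0$, so property~(i) of Definition~\ref{def:lower_semicontinuous_topology} applied to $(z_n, y_n) \to (z, y^*)$ gives $\wt{D}(z, y^*) \leq \liminf_n \wt{D}_{h^n}^{\xi_{k_n}}(z_n, y_n) = 0$, hence $z \in E$.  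Thus $F \subseteq E$ joins $x$ to $y^* \in \partial B_r(0)$ within $E$, and since $\partial B_r(0)$ is itself connected and contained in $E$ (because $\wt{D}(y, y) = 0$ for any $y$), the set $E$ is connected.

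The chief technical hurdle I anticipate is the passage from the probability estimate of Lemma~\ref{lem:metric_unbounded_general_case} to the a.s.\ tail bound on the limiting across distances used above; this is handled by combining the a.s.\ convergence of the across distances for rational annuli in the Skorokhod coupling of Section~\ref{sec:tightness_general_case} with Portmanteau and the monotonicity of the across distance in the outer radius.  A secondary subtlety is that $\wt{D}_{h^n}^{\xi_{k_n}}$ is only lower semicontinuous and can take the value $+\infty$ at singular points, so one must verify that the near-geodesics $P_n$ are Euclidean-continuous and so genuinely cross the annulus; this is guaranteed because $\wt{D}_{h^n}^{\xi_{k_n}}(x_n, y_n) \to 0$ keeps $x_n, y_n$ in a common component of finite distance, where the metric topology coincides with the Euclidean one.
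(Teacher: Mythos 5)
Your proof is correct and follows what is presumably the same route as the argument the paper defers to (the paper's proof here and in Lemma~\ref{lem:connectivity_lemma} simply cites \cite[Lemma~6.8]{ding2023tightness} with the unboundedness lemma swapped): closedness from lower semicontinuity, boundedness from the a.s.\ tail blowup of across-annulus distances via the coupling, and connectedness via Hausdorff limits of near-geodesics landing inside $E$.  Two small points of care. First, you write $r+\delta$ with $\delta$ rational, but the coupling only gives a.s.\ convergence of distances between \emph{rational} circles, and $r$ is arbitrary so $r+\delta$ need not be rational; you should instead fix a rational $\rho$ with $r<\rho<2r$ and work with the rational annuli $\mathbb{A}_{\rho,R}(0)$.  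Second, in your final paragraph the justification that the near-geodesics $P_n$ are Euclidean-continuous is phrased as ``the metric topology coincides with the Euclidean one'' on components of finite distance; this is not quite right (the $\wt{D}_{h^n}^{\xi_{k_n}}$-topology is strictly finer than the Euclidean one in the supercritical regime). What you actually need, and what is true, is the one-sided statement that the identity map from $(\C, \wt{D}_{h^n}^{\xi_{k_n}})$ to $(\C,|\cdot|)$ is continuous, so every $\wt{D}_{h^n}^{\xi_{k_n}}$-continuous path is Euclidean-continuous; this is the established property of the metrics of Theorem~\ref{thm:convergence_of_supercritical_lfpp} (and the analogous statement for the limit $\wt{D}$ is exactly Lemma~\ref{lem:identity_map_continuous_general_case}).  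Neither issue changes the substance of the argument.
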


\begin{proof}
The proof is essentially the same as the proof of Lemma~\ref{lem:connectivity_lemma} where we use Lemma~\ref{lem:metric_unbounded_general_case} in place of Lemma~\ref{lem:metric_unbounded}.
\end{proof}

\begin{lemma}\label{lem:crossing_positive_general_case}
Suppose in addition that $\wt{D}$ satisfies the triangle inequality a.s.  Fix $r>0$ and set
\begin{align*}
\mathcal{Z} = \{\wt{D}(\text{across} \,\,  \mathbb{A}_{r,2r}(0)) > 0\}.
\end{align*}
Then we have that $\p[\mathcal{Z}] \in \{0,1\}$.
\end{lemma}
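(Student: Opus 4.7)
The plan is to adapt the argument from the proof of Lemma~\ref{lem:crossing_positive} (which in turn follows \cite[Lemma~6.9]{ding2023tightness}), with care taken to handle the weaker lower semicontinuous topology and the possibly infinite values of $\wt{D}$. First I will set
\begin{align*}
E_1 = \{z \in \C : \wt{D}(z,\partial B_r(0)) = 0\}, \quad E_2 = \{z \in \C : \wt{D}(z,\partial B_{2r}(0)) = 0\},
\end{align*}
and observe that since we are now assuming $\wt{D}$ satisfies the triangle inequality a.s., the identities
\begin{align*}
\mathcal{Z} = \{E_1 \cap E_2 = \emptyset\} = \{E_1 \cap \partial B_{2r}(0) = \emptyset\} = \{E_2 \cap \partial B_r(0) = \emptyset\}
\end{align*}
hold almost surely by the same reasoning as in the subcritical setting. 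Moreover, Lemma~\ref{lem:connectivity_lemma_general_case} ensures that $E_1$ and $E_2$ are a.s.\ closed, bounded and connected, so their $(r,j)$-dyadic thickenings $E_1^j$ and $E_2^j$ are $(r,j)$-dyadic domains.

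Next I would enhance the coupling. Using Remark~\ref{rem:tightness_of_internal_metrics_general_case} together with countably many applications of Skorokhod's representation theorem along the coutable collection of $(r,j)$-dyadic domains $U$ (for all $j \in \N$), I would pass to a further subsequence so that on a common probability space we have joint a.s.\ convergence
\begin{align*}
\wt{D}_{h^n}^{\xi_{k_n}}(\cdot,\cdot ; U) \to \wt{D}_U \quad \text{as } n \to \infty
\end{align*}
in the lower semicontinuous topology on $U \times U$, for every such $U$, with $\wt{D}_{\C} = \wt{D}$. The goal is then to establish the analogue of the key identity
\begin{align}\label{eqn:plan_ind}
\p[E_1^j = U_1, E_2^j = U_2] = \p[E_1^j = U_1]\,\p[E_2^j = U_2]
\end{align}
for any two disjoint $(r,j)$-dyadic domains $U_1, U_2$ with $\dist(U_1,U_2) > r 2^{-j}$, $\partial B_r(0) \subseteq U_1$ and $\partial B_{2r}(0) \subseteq U_2$, since summing over all such pairs and letting $j \to \infty$ gives $\p[\mathcal{Z}] = \p[\mathcal{Z}]^2$ as in the subcritical case.

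To prove \eqref{eqn:plan_ind}, I would fix smooth cutoffs $\rho_{U_i}$ supported in $\C \setminus \overline{U_i}$ and apply the Markov property of the whole-plane GFF on each $U_i$ to write $h - (h,\rho_{U_i}) = h_{U_i}^0 + \Fh_{U_i}$ with $h_{U_1}^0$ and $h_{U_2}^0$ independent. Tightness of the internal metrics on each $U_i$ (via another application of Proposition~\ref{prop:lqg_distances_general_case} to the field $h_{U_i}^0$) lets me further refine the subsequence so that $\wt{D}_{h_{U_i}^0}^{\xi_{k_n}}$ converges a.s.\ in the lower semicontinuous topology to some limit $\wt{D}_{0,U_i}$; the two limits $\wt{D}_{0,U_1}$ and $\wt{D}_{0,U_2}$ are then independent. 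From these limits I would build analogues $\wh{E}_i$ of $E_i$ restricted to $U_i$ (as in Step~3 of the proof of Lemma~\ref{lem:crossing_positive}) and their dyadic thickenings $\wh{E}_i^j$, so that $\wh{E}_1^j$ and $\wh{E}_2^j$ are automatically independent.

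The main obstacle, and the step that requires genuine new work compared to the subcritical case, is showing that the $(r,j)$-dyadic thickenings built from the \emph{recentered} internal metrics $\wt{D}_{U_i}$ agree with those built from the zero-boundary metrics $\wt{D}_{0,U_i}$, so that $\wh{E}_i^j = E_i^j$ holds a.s. In the subcritical case this is forced by a bi-Lipschitz comparison between $\wt{D}_h^{\gamma_n}(\cdot,\cdot ; U_i)$ and $\wt{D}_{h_{U_i}^0}^{\gamma_n}$ coming directly from Weyl scaling, since $\Fh_{U_i}$ is bounded on compact subsets of $U_i$. In the supercritical regime this bound may fail near singular points, but since $E_i$ is concentrated near $\partial B_r(0)$ or $\partial B_{2r}(0)$, one can restrict attention to a compact subset of $U_i$ away from the boundary and the harmonic extension $\Fh_{U_i}$ is continuous (hence bounded) there. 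Hence Axiom~\ref{it:weyl_scaling} still gives a random but finite bi-Lipschitz comparison on the region where $E_i$ can live, which upgrades to the same comparison in the limit via the lower semicontinuous convergence (using that $e^{-\xi \|\Fh_{U_i}\|_{\infty}}$ is determined by $h|_{\C \setminus \overline{U_i}}$). This forces $\wh{E}_i^j = E_i^j$ a.s., and the identity \eqref{eqn:plan_ind} follows. The proof is then concluded by the $0$--$1$ law argument: summing \eqref{eqn:plan_ind} over disjoint $(r,j)$-dyadic pairs $U_1, U_2$ of the above form, sending $j \to \infty$ using the dominated convergence and the fact that $\wt{D}$ is a.s.\ a finite-valued at rational points (Lemma~\ref{lem:metric_finite_at_fixed_points}), yields $\p[\mathcal{Z}] \leq \p[\mathcal{Z}]^2$ and hence $\p[\mathcal{Z}] \in \{0,1\}$.
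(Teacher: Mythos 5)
Your proposal is correct and takes essentially the same approach as the paper's proof, which simply defers to the argument of Lemma~\ref{lem:crossing_positive} once it observes that the triangle-inequality hypothesis supplies the identity $\mathcal{Z} = \{E_1 \cap E_2 = \emptyset\}$. One small quibble: your worry about singular points breaking the bi-Lipschitz comparison is misplaced, since boundedness of $\Fh_{U_i}$ on compact subsets of $U_i$ is a fact about harmonic functions unrelated to the metric, and Axiom~\ref{it:weyl_scaling} then gives the comparison pointwise (even at points where both sides are $+\infty$); but this does not affect the validity of your argument.
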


\begin{proof}
The claim of the lemma follows from the same argument used to prove Lemma~\ref{lem:crossing_positive}.  More precisely,  as in the proof of Lemma~\ref{lem:crossing_positive},  we set
\begin{align*}
&E_1 = \{x \in \C : \wt{D}(x ,  \partial B_r(0)) = 0\},\\
&E_2 = \{x \in \C : \wt{D}(x ,  \partial B_{2r}(0) = 0\}
\end{align*}
and note that
\begin{align*}
\mathcal{Z} = \{E_1 \cap \partial B_{2r}(0) = \emptyset\} = \{E_2 \cap \partial B_r(0) = \emptyset\}.
\end{align*}
However,  there is one subtle difference in our case.  In particular,  in the subcritical regime in the proof of Lemma~\ref{lem:crossing_positive},  we know that the limiting metric satisfies the triangle inequality a.s.  since the limits are considered with respect to the local uniform topology on $\C \times \C$.  This implied that $\mathcal{Z} = \{E_1 \cap E_2 = \emptyset\}$.  In our case,  we don't know a priori whether $\wt{D}$ satisfies the triangle inequality since this property is not always preserved under limits with respect to the lower semicontinuous topology of functions on $\C \times \C$.  Therefore the assumption in the statement of the lemma guarantees that $\mathcal{Z} = \{E_1 \cap E_2 = \emptyset\}$ a.s.  and so the rest of the argument is identical with the argument given in the proof of Lemma~\ref{lem:crossing_positive}. 
\end{proof}

\begin{proof}[Proof of Proposition~\ref{prop:annuli_crossing_always_positive_general_case}]
It follows from the argument used to prove Proposition~\ref{prop:annuli_crossing_always_positive} except that we use Lemmas~\ref{lem:uniform_lower_bound_across_annuli_general_case} and ~\ref{lem:crossing_positive_general_case} in place of Lemmas~\ref{lem:uniform_lower_bound_across_annuli} and ~\ref{lem:crossing_positive} respectively.
\end{proof}

Next we show that $\wt{D}$ is non-trivial a.s.  without making the assumption that it satisfies the triangle inequality a.s.

\begin{lemma}\label{lem:metric_non_trivial}
Fix $z,w \in \C$ distinct and deterministic points.  Then it is a.s.  the case that $\wt{D}(z,w) > 0$.
\end{lemma}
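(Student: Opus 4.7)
The plan is to prove the lemma in two steps: first show that $\wt{D}(z,w) > 0$ with positive probability using Lemma~\ref{lem:uniform_lower_bound_across_annuli_general_case} together with the triangle inequality of the approximating metrics, and then upgrade to probability one by exploiting near-independence of the GFF across disjoint concentric annuli around $z$ together with Lemma~\ref{lem:good_event_at_many_scales}.

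For the first step, by translation invariance of the law of the whole-plane GFF, we may assume without loss of generality that $z = 0$. Pick a rational $r_* \in (0, r_0)$ with $2 r_* < |w|$, where $r_0$ is the universal constant from Lemma~\ref{lem:uniform_lower_bound_across_annuli_general_case}. For any pair of rational circles $O_0$ surrounding $0$ and contained in $B_{r_*}(0)$ and $O_w$ surrounding $w$ and disjoint from $\overline{B_{2 r_*}(0)}$, every continuous path from $O_0$ to $O_w$ must cross $\mathbb{A}_{r_*, 2 r_*}(0)$, so the triangle inequality for the strong LQG metric $\wt{D}_{h^n}^{\xi_{k_n}}$ yields $\wt{D}_{h^n}^{\xi_{k_n}}(O_0, O_w) \geq \wt{D}_{h^n}^{\xi_{k_n}}(\partial B_{r_*}(0), \partial B_{2 r_*}(0))$. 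Passing to the almost sure limit as $n \to \infty$ and shrinking $O_0 \downarrow 0$, $O_w \downarrow w$ in the definition of $\wt{D}(z,w)$ will give $\wt{D}(z,w) \geq D(\partial B_{r_*}(0), \partial B_{2 r_*}(0))$. Combining this with the elementary inequality $\p[X \geq c] \geq \limsup_n \p[X_n > c]$ (valid whenever $X_n \to X$ almost surely, by continuity from above) together with Lemma~\ref{lem:uniform_lower_bound_across_annuli_general_case} will yield $\p[\wt{D}(z,w) > 0] > 0$.

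For the upgrade to probability one, we apply the argument above at a sequence of pairwise disjoint, geometrically shrinking scales around $0$. Fix a rational $s \in (0, r_*/4)$ and set $r_j = r_* s^j$, $\rho_j = s^j$, and $A_j = \mathbb{A}_{r_j, 2 r_j}(0)$ for $j \in \N$, so that the first step applied at each scale will give $\wt{D}(z,w) \geq D(\text{across } A_j)$ for every $j$. For each $n, j$, we define the random-threshold event
\[
F_j^n := \bigl\{\wt{D}_{h^n}^{\xi_{k_n}}(\text{across } A_j) > c_* \rho_j^{\xi_{k_n} Q(\xi_{k_n})} \exp\bigl(\xi_{k_n} h^n_{\rho_j}(0)\bigr)\bigr\},
\]
where $c_*$ is the constant produced by Lemma~\ref{lem:uniform_lower_bound_across_annuli_general_case}. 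Axiom~\ref{it:translation_and_scale_invariance} applied at scale $\rho_j$ combined with Weyl scaling (Axiom~\ref{it:weyl_scaling}) and the scale invariance of the whole-plane GFF will imply that $\p[F_j^n]$ equals the probability appearing in the conclusion of Lemma~\ref{lem:uniform_lower_bound_across_annuli_general_case} and is therefore bounded below by some $p_* > 0$ uniformly in $j$ for $n$ sufficiently large. Axioms~\ref{it:locality} and~\ref{it:weyl_scaling} will then show that $F_j^n$ is measurable with respect to $(h^n - h^n_{\rho_j}(0))|_{\mathbb{A}_{s_1 \rho_j, s_2 \rho_j}(0)}$ for appropriate fixed $s_1, s_2 \in (0,1)$ (for instance $s_1 = r_*/2$ and $s_2 = 4 r_*$ after reducing to $r_* < 1/4$).

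For $n$ large, applying Lemma~\ref{lem:good_event_at_many_scales} with $a > 0$ and $b \in (0,1)$ chosen small enough that the lemma's threshold $p(a, b)$ lies below $p_*$ will yield $\p[\text{none of } F_1^n, \ldots, F_K^n \text{ occurs}] \leq e^{-aK}$ for every $K \in \N$. A pigeonhole argument combined with reverse Fatou will then transfer this bound to the limit: whenever $F_j^n$ occurs for infinitely many $n$, the almost sure convergences $\wt{D}_{h^n}^{\xi_{k_n}}(\text{across } A_j) \to D(\text{across } A_j)$ and $h^n_{\rho_j}(0) \to h_{\rho_j}(0)$ force $D(\text{across } A_j) \geq c_* \rho_j^{\xi Q(\xi)} e^{\xi h_{\rho_j}(0)}$, which is strictly positive almost surely. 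Thus $\p\bigl[\bigcup_{j=1}^K \{D(\text{across } A_j) > 0\}\bigr] \geq 1 - e^{-aK}$, and letting $K \to \infty$ together with $\wt{D}(z,w) \geq D(\text{across } A_j)$ will give $\wt{D}(z,w) > 0$ almost surely. The main technical obstacle will be the careful bookkeeping to ensure that the random thresholds in $F_j^n$ simultaneously preserve the uniform lower bound on $\p[F_j^n]$ via the scaling axioms and keep $F_j^n$ measurable with respect to a thin annular restriction of the GFF; the choice of recentering scale $\rho_j = r_j / r_*$ is precisely what makes both requirements compatible.
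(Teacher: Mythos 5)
Your proposal is essentially correct and follows the same multi-scale strategy as the paper's proof: produce, uniformly in $n$, a positive-probability lower bound on the crossing distance of each of a geometric family of concentric annuli around $z$, then use near-independence across disjoint scales to upgrade to probability $1 - e^{-aK}$ that at least one of the first $K$ scales is ``good'', and finally let $K \to \infty$. The difference is organizational. You first derive the clean inequality $\wt{D}(z,w) \geq D(\text{across } A_j)$ for each rational annulus $A_j$ separating $z$ from $w$; this step needs the monotonicity property $D(O_z,O_w) \leq D(O_z',O_w')$ for nested rational circles (the same monotonicity that makes \eqref{eqn:definition_of_supercritical_metric} well-defined, spelled out as \eqref{eqn:monotonicity_property_limit} in Remark~\ref{rem:metric_well_defined}), and thereafter you reason entirely at the level of the limiting random variables $D(\text{across } A_j)$. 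The paper instead works directly with a sequence $(z_n,w_n) \to (z,w)$ realizing $\wt{D}(z,w)$ from the definition of lower-semicontinuous convergence, and lower-bounds $\wt{D}_{h^{\lambda_n}}^{\xi_{\lambda_n}}(z_n,w_n)$ along a subsequence on which a good annulus event holds, which avoids having to name the limiting across-distances at all. Both routes rest on the same two inputs (Lemma~\ref{lem:uniform_lower_bound_across_annuli_general_case} plus a near-independence-across-scales estimate). One small caveat: you apply Lemma~\ref{lem:good_event_at_many_scales} by asserting that the threshold $p(a,b)$ can be driven below the fixed success probability $p_*$ by shrinking $a$ and $b$. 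This is plausible from the Chernoff-bound flavor of the estimate but is not stated in the lemma; the paper sidesteps it by citing \cite[Lemma~3.1]{gwynne2020local} directly, which holds for an arbitrary fixed $p > 0$ with $a,b$ then depending on $p$. You should either justify the claim about $p(a,b)$ or, more simply, cite the Gwynne--Miller lemma as the paper does.
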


\begin{proof}
Fix $r \in (0,1)$ sufficiently small such that $z$ (resp.  $w$) lies in the bounded (resp.  unbounded) connected component of $\C \setminus \mathbb{A}_{\frac{r}{2}{r}}(0)$.  Lemma~\ref{lem:uniform_lower_bound_across_annuli_general_case} implies that there exist $0<s_1 < s_2 < 1$ sufficiently small and $c,p \in (0,1)$ such that
\begin{align}\label{eqn:lower_bound_on_crossing_prob}
\p\left[\wt{D}_{h^n}^{\xi_{k_n}}(\text{across} \,\,  \mathbb{A}_{s_1,s_2}(0)) > c \right] \geq p \quad \text{for all} \quad n \in \N.
\end{align}

Fix $s \in (0,s_1)$ and set $r_k:=r s^k$ for all $k \in \N_0$.  We consider the event
\begin{align*}
E_{r_k}^n:=\left\{\wt{D}_{h^n}^{\xi_{k_n}}(\text{across} \,\,  \mathbb{A}_{s_1 r_k ,  s_2 r_k}(z)) > c r_k^{\xi_{k_n} Q(\xi_{k_n})} e^{\xi_{k_n} h^n_{r_k}(z)}\right\}
\end{align*}
Then Axiom~\ref{it:translation_and_scaling_gamma_to_zero} combined with \eqref{eqn:lower_bound_on_crossing_prob} imply that
\begin{align*}
\p\left[E_{r_k}^n\right] \geq p \quad \text{for all} \quad n \in \N,  k \in \N_0.
\end{align*}
Moreover the event $E_{r_k}^n$ is a.s.  determined by $(h^n - h^n_{r_k})|_{\mathbb{A}_{s_1 r_k ,  s_2 r_k}(z)}$.  Let also $\mathcal{N}^k(K)$ denote the number of $k \in [1,K]_{\Z}$ for which $E_{r_k}^n$ occurs.  Then \cite[Lemma~3.1]{gwynne2020local} implies that there exist constants $a>0,  b \in (0,1)$ and $\wt{c}>0$ depending only on $p,s_1$ and $s_2$ such that
\begin{align*}
\p\left[\mathcal{N}^n(K) < b K\right] \leq \wt{c} e^{-a K} \quad \text{for all} \quad n ,  K \in \N.
\end{align*}
In particular we have that
\begin{align}\label{eqn:limsup_bound}
\p\left[\limsup_{n \to \infty} \{\mathcal{N}^n(K) \geq b K\}\right] \geq 1 - \wt{c} e^{-aK} \quad \text{for all} \quad K \in \N.
\end{align}

We will show that
\begin{align*}
\p\left[\wt{D}(z,w) > 0\right] \geq 1 - \wt{c} e^{-aK} \quad \text{for all}\quad K \in \N,
\end{align*}
and hence letting $K \to \infty$ will complete the proof of the lemma.  Indeed fix $K \in \N$ and suppose that the event $\{\limsup_{n \to \infty} \{\mathcal{N}^n(K) \geq b K\}$ occurs.  Then there exists a subsequence $(\xi_{\lambda_n})$ of $(\xi_{k_n})$ such that $\{\mathcal{N}^{\lambda_n}(K) \geq b K \}$ occurs for all $K \in \N$.  Let $(z_n),(w_n)$ be sequences such that $z_n \to z ,  w_n \to w$ and $\wt{D}_{h^{\lambda_n}}^{\xi_{\lambda_n}}(z_n , w_n) \to \wt{D}(z,w)$ as $n \to \infty$.  

Let $n_0 \in \N$ be such that $z_n \in B_{r_{K+1}}(z)$ and $w_n \notin B_r(z)$ for all $n \geq n_0$.  Fix $n \geq n_0$ and suppose that $E_{r_k}^{\lambda_n}$ occurs for some $k \in [1,K]_{\Z}$.  Then it holds that
\begin{align*}
\wt{D}_{h^{\lambda_n}}^{\xi_{\lambda_n}}(z_n , w_n) > c r_k^{\xi_{\lambda_n} Q(\xi_{\lambda_n})} e^{\xi_{\lambda_n} h^{\lambda_n}_{r_k}(z)} \geq c r_k^{\xi_{\lambda_n} Q(\xi_{\lambda_n})} \exp\left\{\xi_{\lambda_n} \min_{k \in [1,K]_{\Z}} h^{\lambda_n}_{r_k}(z) \right\}.
\end{align*}
Note that the right hand side of the above inequality converges to 
\begin{align*}
c r_k^{\xi Q(\xi)} \exp\left\{\xi \min_{k \in [1,K]_{\Z}} h_{r_k}(z) \right\}
\end{align*}
as $n \to \infty$.  This implies that
\begin{align*}
\wt{D}(z,w) \geq c r_k^{\xi Q(\xi)} \exp\left\{\xi \min_{k \in [1,K]_{\Z}} h_{r_k}(z) \right\}.
\end{align*}
Hence we complete the proof of the lemma by combining with \eqref{eqn:limsup_bound}.
\end{proof}

\begin{proof}[Proof of Proposition~\ref{prop:limit_is_a_metric_general_case}]
The first claim in the statement of the proposition follows from Lemma~\ref{lem:metric_non_trivial}.  The second claim follows by arguing as in the proof of Proposition~\ref{prop:limit_is_a_metric} and using Proposition~\ref{prop:annuli_crossing_always_positive_general_case} in place of Proposition~\ref{prop:annuli_crossing_always_positive}.
\end{proof}

\subsection{The limit is a complete and geodesic metric: General case}
\label{subsec:limit_complete_and_geodesic_metric_general_case}
For this subsection and Subsections~\ref{subsec:weyl_scaling_general_case} and ~\ref{subsec:locality_general_case},  we assume that $\wt{D}$ satisfies the triangle inequality a.s.  The main result of this subsection is the following.

\begin{proposition}\label{prop:complete_and_geodesic_metric_general_xi}
It is a.s.  the case that the restriction of $\wt{D}$ to $\C \setminus \{\text{singular points}\}$ is a finite,  complete and geodesic metric.
\end{proposition}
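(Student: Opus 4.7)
My plan is to adapt the arguments of Lemmas~\ref{lem:complete_metric} and~\ref{lem:geodesic_metric} from the subcritical case, with the modifications needed to handle singular points and the weaker lower semicontinuous convergence. For finiteness on $\C \setminus \{\text{singular points}\}$, I will first apply Lemma~\ref{lem:metric_finite_at_fixed_points} together with countable additivity to obtain that a.s.\ $\wt{D}(z_k,z_\ell)<\infty$ for all rational points $z_k,z_\ell \in \Q^2$, which places all rationals in a single ``$\wt{D}$-finite equivalence class''. For an arbitrary non-singular point $z$ with companion $w$ satisfying $\wt{D}(z,w) < \infty$, I then show $\wt{D}(w,z_k) < \infty$ for some rational $z_k$ near $w$; this relies on a local $\wt{D}$-diameter bound on small Euclidean balls around non-singular points, obtained by combining Axiom~\ref{it:length_space} for the prelimit metrics with the lower semicontinuity of the limit, in the same spirit as the proof of Lemma~\ref{lem:tightness_around_annuli}. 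The triangle inequality then gives $\wt{D}(z,z_k)<\infty$, so all non-singular points lie in one equivalence class.

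For completeness I adapt Lemma~\ref{lem:complete_metric}, using Lemma~\ref{lem:metric_unbounded_general_case} to confine any $\wt{D}$-Cauchy sequence $(z_n) \subseteq \C \setminus \{\text{singular points}\}$ to a bounded Euclidean region, and then extracting a Euclidean convergent subsequence $z_{k_n} \to z$. I will verify $z \in \C \setminus \{\text{singular points}\}$ (otherwise lower semicontinuity together with Definition~\ref{def:lower_semicontinuous_topology}(i) forces $\wt{D}(z,z_{k_n}) \to \infty$, contradicting the Cauchy property) and $\wt{D}(z_{k_n},z) \to 0$, for which the local agreement of the $\wt{D}$-topology and Euclidean topology at non-singular points, established in the finiteness step, is the key input. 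For the geodesic property, \cite[Theorem~2.4.16]{burago2001course} reduces the problem to the midpoint property on $\C \setminus \{\text{singular points}\}$; I mimic Lemma~\ref{lem:geodesic_metric}, producing approximate midpoints $x_n$ via the length space property of $\wt{D}_{h^n}^{\xi_{k_n}}$, using Lemma~\ref{lem:metric_unbounded_general_case} to keep them Euclidean-bounded, and passing to a Euclidean limit $x$. The bound $\wt{D}(z,x) \geq \wt{D}(z,w)/2$ follows from the triangle inequality, while the matching upper bound requires combining the lower semicontinuous convergence with the convergence $\wt{D}_{h^n}^{\xi_{k_n}}(z,w) \to \wt{D}(z,w)$ afforded by our coupling (Proposition~\ref{prop:lqg_distances_general_case} gives the finite side, and Lemma~\ref{lem:metric_finite_at_fixed_points} together with LSC convergence pins down the limit).

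The main obstacle will be the midpoint argument, because the lower semicontinuous topology gives lim inf inequalities in only one direction and only guarantees \emph{existence} of approximating sequences for pointwise values via Definition~\ref{def:lower_semicontinuous_topology}(ii). Concretely, establishing $\wt{D}(z,x) \leq \wt{D}(z,w)/2$ will require selecting sequences $(\wt{z}_n,\wt{x}_n)$ and $(\wt{x}_n',\wt{w}_n)$ converging to $(z,x)$ and $(x,w)$ that simultaneously realize the correct limits along $\wt{D}_{h^n}^{\xi_{k_n}}$, which is not automatic and may force me to replace $x_n$ by nearby points in the prelimit. A secondary complication is that a point non-singular for $\wt{D}$ could be singular for some prelimit metrics, obstructing the direct use of prelimit geodesics; I expect to address this by working with the internal metric on a carefully chosen relatively compact domain that is $\wt{D}$-finitely connected and invoking the tightness estimates of Remark~\ref{rem:tightness_of_internal_metrics_general_case} together with Lemma~\ref{lem:tightness_around_annuli}-type bounds to ensure the approximate midpoints $x_n$ remain within this domain with high probability.
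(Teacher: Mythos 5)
Your proposal takes a genuinely different route than the paper, but the finiteness step contains a gap that would be hard to repair as stated. The paper's proof is a one-liner: it combines Lemma~\ref{lem:singular_points} (which already gives the finiteness claim: if two points each admit a companion at finite $\wt{D}$-distance, then they are at finite $\wt{D}$-distance from each other, so $\wt{D}$ is finite on $\C \setminus \{\text{singular points}\}$), Proposition~\ref{prop:complete_metric_general_case} (completeness), and Proposition~\ref{prop:geodesic_metric_general_case} (geodesics). You instead try to re-derive finiteness by routing through rational points and invoking a ``local $\wt{D}$-diameter bound on small Euclidean balls around non-singular points.'' This bound fails in the supercritical phase: the set of singular points is a dense subset of $\C$ (it contains the $\alpha$-thick points for $\alpha \in (Q,2)$, a dense set of positive Hausdorff dimension), so every Euclidean ball $B_\epsilon(w)$ contains singular points $s$ with $\wt{D}(w,s) = \infty$, making the $\wt{D}$-diameter of $B_\epsilon(w)$ identically $+\infty$. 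The way to connect a non-singular $w$ to the rational grid is not through a ball diameter but through concatenating paths around small annuli and paths across large ones, which is exactly what Lemma~\ref{lem:singular_points} (following \cite[Lemma~5.23]{ding2020tightness}) packages; you should invoke it directly rather than attempt to re-derive an analog.

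Your completeness argument, in turn, introduces an unnecessary dependency on the flawed finiteness step. The ``local agreement of the $\wt{D}$-topology and Euclidean topology at non-singular points'' is not needed and in fact is not available a priori. The paper's argument for Proposition~\ref{prop:complete_metric_general_case} is more elementary: given a $\wt{D}$-Cauchy sequence $(z_n)$ and a Euclidean subsequential limit $z$ of $(z_{k_m})$, the lower semicontinuity of $\wt{D}$ itself (not the LSC convergence of prelimits) gives $\wt{D}(z, z_n) \leq \liminf_m \wt{D}(z_{k_m}, z_n) \leq \epsilon$ once $n$ and $k_m$ are large, using only the Cauchy property. No topological comparison is needed.

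Your concerns about the geodesic step are legitimate: the LSC topology provides $\liminf$ inequalities only, so the upper bound $\wt{D}(z,x) \leq \wt{D}(z,w)/2$ does not follow automatically from the prelimit midpoint identities, and one does have to worry that a $\wt{D}$-non-singular point could be singular for $\wt{D}_{h^n}^{\xi_{k_n}}$ for some $n$. The paper addresses this by combining Lemma~\ref{lem:singular_points} with the subcritical argument, but its proof of Proposition~\ref{prop:geodesic_metric_general_case} is only a sketch; your identification of the precise obstructions is a useful observation, even though your proposal does not fully resolve them either. The key missing ingredient in your plan remains the failure to use Lemma~\ref{lem:singular_points}, which the paper relies on throughout this subsection.
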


Before proving Proposition~\ref{prop:complete_and_geodesic_metric_general_xi},  we state and prove the following useful lemma about the singular points of $\wt{D}$.

\begin{lemma}\label{lem:singular_points}
The following is true a.s.  Suppose that $z,z'w,w' \in \C$ are such that $z \neq z',  w \neq w',  \wt{D}(z,z') < \infty$ and $\wt{D}(w,w') < \infty$.  Then $\wt{D}(z,w) < \infty$.  In particular,  if $z,w \in \C$ are such that $\wt{D}(z,w) = \infty$,  then either $z$ or $w$ is a singular point for $\wt{D}$.
\end{lemma}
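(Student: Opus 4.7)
The plan is to build continua $\Gamma^z, \Gamma^w \subseteq \C$ in the Skorokhod coupling from Proposition~\ref{prop:lower_semicontinuity} so that $\Gamma^z \ni z, z'$ with $\wt{D}(z,u) \leq \wt{D}(z,z')$ for every $u \in \Gamma^z$ (and symmetrically for $\Gamma^w$), and then to bridge them using the uniform distance estimate of Proposition~\ref{prop:lqg_distances_general_case} between fixed deterministic sets.

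To construct $\Gamma^z$, Definition~\ref{def:lower_semicontinuous_topology}(ii) supplies sequences $z_n \to z$, $z_n' \to z'$ with $\wt{D}_{h^n}^{\xi_{k_n}}(z_n, z_n') \to \wt{D}(z,z')$. Since each $\wt{D}_{h^n}^{\xi_{k_n}}$ is a length metric by Axiom~\ref{it:length_space}, I pick a continuous path $\gamma_n^z \colon [0, L_n^z] \to \C$ from $z_n$ to $z_n'$ of length $L_n^z \leq \wt{D}_{h^n}^{\xi_{k_n}}(z_n, z_n') + 1/n$, so $\sup_n L_n^z < \infty$. Using Lemma~\ref{lem:metric_unbounded_general_case} (applied to an annulus $\mathbb{A}_{r,R}(0)$ with $r$ chosen so that $z \in B_r(0)$ and $R$ large), a path of $\wt{D}_{h^n}^{\xi_{k_n}}$-length $\leq \sup_n L_n^z + 1$ starting in $\overline{B_r(0)}$ cannot cross the annulus when the crossing distance exceeds this bound, which happens with probability tending to $1$ uniformly in $n$. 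Hence the traces $\Gamma_n^z := \gamma_n^z([0, L_n^z])$ lie in $\overline{B_R(0)}$. By Hausdorff compactness and the fact that Hausdorff limits of compact connected sets are compact and connected, extract a subsequential limit $\Gamma^z$ containing $z,z'$. For any $u \in \Gamma^z$, Hausdorff convergence yields $u_n \in \Gamma_n^z$ with $u_n \to u$ and $\wt{D}_{h^n}^{\xi_{k_n}}(z_n, u_n) \leq L_n^z \to \wt{D}(z, z')$, so Definition~\ref{def:lower_semicontinuous_topology}(i) gives $\wt{D}(z, u) \leq \wt{D}(z, z')$. Build $\Gamma^w$ analogously.

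For the bridging step, if $\Gamma^z \cap \Gamma^w \neq \emptyset$ the triangle inequality (which $\wt{D}$ satisfies by assumption) yields $\wt{D}(z,w) \leq \wt{D}(z,z') + \wt{D}(w,w') < \infty$ via any common point. Otherwise $\Gamma^z, \Gamma^w$ are disjoint compact continua of positive Euclidean diameter, so there exist deterministic disjoint open balls $B_1, B_2$ from a countable family (say rational center, rational radius) meeting $\Gamma^z$ and $\Gamma^w$ respectively. Hausdorff convergence provides $u_n \in \Gamma_n^z \cap B_1$ and $v_n \in \Gamma_n^w \cap B_2$ for $n$ large, and the triangle inequality in the approximating metric gives
\[
\wt{D}_{h^n}^{\xi_{k_n}}(z_n, w_n) \leq L_n^z + \wt{D}_{h^n}^{\xi_{k_n}}(\overline{B_1}, \overline{B_2}) + L_n^w.
\]
Proposition~\ref{prop:lqg_distances_general_case} applied to the deterministic pair $(\overline{B_1}, \overline{B_2})$ bounds the middle term by a constant $C(B_1, B_2)$ with probability tending to $1$ uniformly in $n$, and lower semicontinuity along $(z_n, w_n) \to (z, w)$ then yields $\wt{D}(z, w) \leq \wt{D}(z, z') + C(B_1, B_2) + \wt{D}(w, w') < \infty$. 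The ``in particular'' statement is the contrapositive: if $\wt{D}(z, w) = \infty$ with both $z, w$ non-singular, applying the first part to witnessing partners produces a contradiction. The main obstacle is ensuring that the Euclidean boundedness of geodesic traces, the existence of appropriate deterministic ball pairs $(B_1, B_2)$, and the Proposition~\ref{prop:lqg_distances_general_case} bounds hold simultaneously on a single event of full probability; this is arranged by taking a countable union over the rational parameters $(r, R, T)$ and the countable family of disk pairs, and intersecting with the almost sure convergence events of the Skorokhod coupling.
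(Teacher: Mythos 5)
The construction of the continua $\Gamma^z,\Gamma^w$ as Hausdorff limits of approximating near-geodesic traces is reasonable, and the observation that $\wt{D}(z,u)\leq\wt{D}(z,z')$ for $u\in\Gamma^z$ is correct. The gap is in the bridging step. The displayed inequality
\[
\wt{D}_{h^n}^{\xi_{k_n}}(z_n, w_n) \leq L_n^z + \wt{D}_{h^n}^{\xi_{k_n}}(\overline{B_1}, \overline{B_2}) + L_n^w
\]
is not a consequence of the triangle inequality. Taking $u_n\in\Gamma_n^z\cap B_1$ and $v_n\in\Gamma_n^w\cap B_2$, the triangle inequality gives $\wt{D}_{h^n}^{\xi_{k_n}}(z_n,w_n)\leq L_n^z+\wt{D}_{h^n}^{\xi_{k_n}}(u_n,v_n)+L_n^w$, and $\wt{D}_{h^n}^{\xi_{k_n}}(u_n,v_n)\geq\wt{D}_{h^n}^{\xi_{k_n}}(\overline{B_1},\overline{B_2})$; the set-to-set distance is an infimum and is a \emph{lower} bound for, not an upper bound on, the distance from the specific random points $u_n,v_n$ your construction hands you. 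In the supercritical regime this is precisely the danger: $u_n$ could be very close to a region where the metric blows up, so the pointwise distance can be much larger than the infimum, and Proposition~\ref{prop:lqg_distances_general_case} only controls the infimum.

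The standard way to close this gap (and the route followed in the cited argument from \cite{ding2020tightness}) is to replace the disjoint balls $B_1,B_2$ by disjoint rational annuli $A^z,A^w$ that the traces $\Gamma_n^z,\Gamma_n^w$ cross for all large $n$ (which is automatic since $|z-z'|,|w-w'|>0$). By the choice of the Skorokhod coupling, $\wt{D}_{h^n}^{\xi_{k_n}}(\text{around}\,A^z)\to D(\text{around}\,A^z)<\infty$ and similarly for $A^w$, so for large $n$ you have closed curves $\eta_n^z,\eta_n^w$ around the two annuli with uniformly bounded $\wt{D}_{h^n}^{\xi_{k_n}}$-length. Proposition~\ref{prop:bound_on_lqg_distances_of_sets_general}, applied to the deterministic inner circles $K_1=\partial B_{r_1}(z_0)$, $K_2=\partial B_{r_1'}(w_0)$ of $A^z,A^w$, gives a path $\sigma_n$ between them of $\wt{D}_{h^n}^{\xi_{k_n}}$-length $\leq C$ with probability close to $1$. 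Crucially, $\sigma_n$ must topologically cross both annuli, hence it \emph{intersects} both $\eta_n^z$ and $\eta_n^w$; and the traces $\Gamma_n^z,\Gamma_n^w$ also intersect $\eta_n^z,\eta_n^w$ respectively. Now each consecutive pair in the chain $z_n\to\Gamma_n^z\to\eta_n^z\to\sigma_n\to\eta_n^w\to\Gamma_n^w\to w_n$ actually shares a common point, so chaining the triangle inequality through these common points yields
\[
\wt{D}_{h^n}^{\xi_{k_n}}(z_n,w_n)\leq L_n^z+\len(\eta_n^z;\wt{D}_{h^n}^{\xi_{k_n}})+\len(\sigma_n;\wt{D}_{h^n}^{\xi_{k_n}})+\len(\eta_n^w;\wt{D}_{h^n}^{\xi_{k_n}})+L_n^w,
\]
every term of which has a finite limsup. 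This is the ingredient missing from your argument: without the separating curves around annuli, there is no way to pass from an infimum-of-distances bound to a bound involving the particular points you picked.
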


\begin{proof}
It follows from the argument in the proof of \cite[Lemma~5.23]{ding2020tightness}.
\end{proof}

Note that Lemma~\ref{lem:singular_points} implies that $\wt{D}$ is a finite metric on $\C \setminus \{\text{singular points}\}$.

Next we show that $\wt{D}$ is complete when restricted to $\C \setminus \{\text{singular points}\}$ a.s.

\begin{proposition}\label{prop:complete_metric_general_case}
Almost surely,  every $\wt{D}$-Cauchy sequence is convergent.  In particular,  the restriction of $\wt{D}$ to $\C \setminus \{\text{singular points}\}$ is complete.
\end{proposition}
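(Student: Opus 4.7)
The plan is to show any $\wt{D}$-Cauchy sequence $(z_n)$ is Euclidean bounded, then extract a Euclidean convergent subsequence whose limit serves as the $\wt{D}$-limit via lower semicontinuity. I will proceed in three steps. First, I will establish that almost surely, for every rational center $u \in \Q^2$ and rational radius $r>0$, the quantity $D(\partial B_r(u), \partial B_R(u))$ diverges to infinity as $R \to \infty$ along rational $R$. The key input is Lemma~\ref{lem:metric_unbounded_general_case}, which gives the analogous tightness for the prelimit metrics $\wt{D}_{h^n}^{\xi_{k_n}}$ (translated to center $u$ via Axiom~\ref{it:translation_and_scale_invariance}). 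The coupling constructed in Section~\ref{sec:tightness_general_case} provides the almost sure convergence $\wt{D}_{h^n}^{\xi_{k_n}}(\partial B_r(u), \partial B_R(u)) \to D(\partial B_r(u), \partial B_R(u))$, so tightness transfers to the limit. Crucially, each prelimit is a length metric (Axiom~\ref{it:length_space}), hence any path between $\partial B_r(u)$ and $\partial B_{R'}(u)$ must cross $\partial B_R(u)$ when $r<R<R'$, yielding, after passing to the limit, the monotonicity $D(\partial B_r(u), \partial B_{R'}(u)) \geq D(\partial B_r(u), \partial B_R(u))$. Combining monotonicity with a Borel-Cantelli argument along $R_k \to \infty$ with thresholds $T_k \to \infty$ chosen according to Lemma~\ref{lem:metric_unbounded_general_case}, and taking a countable intersection over rational $u, r$, gives the claim simultaneously for all rational pairs.

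Second, I will deduce Euclidean boundedness of $(z_n)$. For a rational circle $O_{z_1} = \partial B_r(u)$ with $z_1 \in B_r(u)$, the definition \eqref{eqn:definition_of_supercritical_metric} together with the analogous monotonicity of $D$ under shrinking concentric rational circles (again a length space consequence) yields the key bound
\begin{align*}
\wt{D}(z_1, w) \geq D(\partial B_r(u), \partial B_R(u))
\end{align*}
whenever $|w-u|>R$ and $R>r$ is rational. By Step 1 the right-hand side diverges as $R \to \infty$, so almost surely $\wt{D}(z_1, w) \to \infty$ as $|w| \to \infty$. The Cauchy property gives $\sup_n \wt{D}(z_1, z_n) < \infty$, and hence $(z_n)$ stays in a Euclidean bounded region. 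Third, I will extract a Euclidean convergent subsequence $z_{n_k} \to z$ and show $\wt{D}(z_n, z) \to 0$. Given $\epsilon>0$, pick $N$ such that $\wt{D}(z_n, z_m) < \epsilon$ for all $n, m \geq N$. Applying the lower semicontinuity of $\wt{D}$ to the sequence $(z_n, z_{n_k}) \to (z_n, z)$ as $k \to \infty$ (with $z_n$ fixed) gives $\wt{D}(z_n, z) \leq \liminf_k \wt{D}(z_n, z_{n_k}) \leq \epsilon$ for every $n \geq N$. Hence $\wt{D}(z_n, z) \to 0$, and by Lemma~\ref{lem:singular_points} the limit $z$ is non-singular.

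The hard part is Step 1, since lower semicontinuous convergence of metrics does not in general preserve infima such as $\wt{D}_{h^n}^{\xi_{k_n}}(\text{across} \,\, A)$. The length metric structure of each prelimit, combined with the almost sure convergence available on the countable family of rational circles already built into our coupling, is precisely what enables both the monotonicity and the transfer of tightness that drive the argument; the lower semicontinuity of $\wt{D}$ then does all the work in Step 3, since the Cauchy property supplies an arbitrarily small upper bound on $\liminf_k \wt{D}(z_n, z_{n_k})$ along the Euclidean convergent subsequence.
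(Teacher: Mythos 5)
Your proof is correct and follows the same three-step structure as the paper's: Euclidean boundedness of the Cauchy sequence via divergence of crossing distances, extraction of a Euclidean-convergent subsequence, and lower semicontinuity to identify the $\wt{D}$-limit. Your Step 1, which passes the prelimit tightness estimate of Lemma~\ref{lem:metric_unbounded_general_case} to the limit via the coupled rational-circle random variables $D(O_1,O_2)$ and the length-metric monotonicity inherited from the prelimits, makes explicit exactly the detail that the paper's terse reference to the subcritical argument of Lemma~\ref{lem:complete_metric} leaves implicit given that the convergence here is only in the lower semicontinuous topology.
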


\begin{proof}
Let $(z_n)_{n \in \N}$ be a Cauchy sequence with respect to $\wt{D}$.  Then arguing as in the proof of Lemma~\ref{lem:complete_metric} and using Lemma~\ref{lem:metric_unbounded_general_case} in place of Lemma~\ref{lem:metric_unbounded},  we obtain that a.s.  we have that
\begin{align}\label{eqn:distances_to_infty}
\inf_{z \in \C \setminus B_R(0),  w \in B_r(0)} \wt{D}(z,w) \to \infty \quad \text{as} \quad R \to \infty
\end{align}
for all $r>0$.  Hence we obtain that it is a.s.  the case that there exists $R>0$ such that $|z_n| \leq R$ for all $n \in \N$.  Also,  there exists a subsequence $(z_{k_n})_{n \in \N}$ and $z \in \overline{B_R(0)}$ such that $z_{k_n} \to z$ as $n \to \infty$.

We will show that $\wt{D}(z_n ,  z) \to 0$ as $n \to \infty$.  Indeed fix $\epsilon>0$.  Then there exists $n_0 \in \N$ such that $\wt{D}(z_n,z_m) \leq \epsilon$ for all $n,m \geq n_0$.  Thus it holds that
\begin{align*}
\wt{D}(z,z_n) \leq \liminf_{m \to \infty} \wt{D}(z_{k_m} ,  z_n) \leq \epsilon \quad \text{for all} \quad n \geq n_0,
\end{align*}
which implies that $\wt{D}(z,z_n) \to 0$ as $n \to \infty$.  This completes the proof of the Proposition.
\end{proof}

Now we prove that $\wt{D}$ restricted to $\C \setminus \{\text{singular points}\}$ is a geodesic metric a.s.

\begin{proposition}\label{prop:geodesic_metric_general_case}
Almost surely,  the restriction of $\wt{D}$ to $\C \setminus \{\text{singular points}\}$ is a geodesic metric.
\end{proposition}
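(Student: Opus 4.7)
Since $(\C \setminus \{\text{singular points}\}, \wt{D})$ is complete by Proposition~\ref{prop:complete_metric_general_case}, \cite[Theorem~2.4.16]{burago2001course} reduces the problem to verifying the midpoint property: for every distinct $z,w \in \C \setminus \{\text{singular points}\}$, there exists $x \in \C \setminus \{\text{singular points}\}$ with
\begin{align*}
\wt{D}(z,x) = \wt{D}(x,w) = \tfrac{1}{2}\wt{D}(z,w).
\end{align*}
Fix such a pair $z,w$; Lemma~\ref{lem:singular_points} and Proposition~\ref{prop:limit_is_a_metric_general_case} guarantee $\wt{D}(z,w) \in (0,\infty)$.

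By Definition~\ref{def:lower_semicontinuous_topology}(ii) applied to the convergence in Proposition~\ref{prop:lower_semicontinuity}, I would select sequences $z_n \to z$, $w_n \to w$ with $\wt{D}_{h^n}^{\xi_{k_n}}(z_n, w_n) \to \wt{D}(z,w)$. Since each $\wt{D}_{h^n}^{\xi_{k_n}}$ is a length metric by Axiom~\ref{it:length_space}, for each sufficiently large $n$ I can pick an approximate midpoint $x_n$ satisfying
\begin{align*}
\wt{D}_{h^n}^{\xi_{k_n}}(z_n, x_n) \vee \wt{D}_{h^n}^{\xi_{k_n}}(x_n, w_n) \leq \tfrac{1}{2}\wt{D}_{h^n}^{\xi_{k_n}}(z_n, w_n) + \tfrac{1}{n}.
\end{align*}

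Next, fixing $r > 0$ with $\{z, w\} \subseteq B_r(0)$, I would use Lemma~\ref{lem:metric_unbounded_general_case} together with a Borel--Cantelli diagonalization over integer thresholds $T$ and passage to a further subsequence to arrange that a.s., for every $T \in \N$ there exists $R = R(T)$ with $\wt{D}_{h^n}^{\xi_{k_n}}(\text{across}\,\,\mathbb{A}_{r,R(T)}(0)) > T$ for all sufficiently large $n$. Picking $T > 2\wt{D}(z,w) + 2$ forces $x_n \in B_{R(T)}(0)$ for all large $n$, since otherwise (as $z_n \in B_r(0)$ eventually) every path from $z_n$ to $x_n$ crosses the annulus and so $\wt{D}_{h^n}^{\xi_{k_n}}(z_n, x_n) > T$, contradicting the approximate midpoint bound. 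Extracting a convergent subsequence $x_n \to x$ with $x \in \overline{B_{R(T)}(0)}$ and applying Definition~\ref{def:lower_semicontinuous_topology}(i) gives
\begin{align*}
\wt{D}(z, x) \leq \liminf_{n \to \infty} \wt{D}_{h^n}^{\xi_{k_n}}(z_n, x_n) \leq \tfrac{1}{2}\wt{D}(z,w),
\end{align*}
and similarly $\wt{D}(x, w) \leq \tfrac{1}{2}\wt{D}(z,w)$; the triangle inequality for $\wt{D}$ then upgrades both to equalities. Since $\wt{D}$ is a.s.\ a metric by Proposition~\ref{prop:limit_is_a_metric_general_case}, we have $x \notin \{z, w\}$, and hence $\wt{D}(z,x), \wt{D}(x,w) \in (0, \infty)$, so Lemma~\ref{lem:singular_points} implies that $x$ is non-singular.

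The main obstacle is that the convergence $\wt{D}_{h^n}^{\xi_{k_n}} \to \wt{D}$ holds only in the lower semicontinuous topology rather than locally uniformly as in the subcritical case, so we cannot transfer distances along a single coupled sequence as in Lemma~\ref{lem:geodesic_metric}. Instead we must sandwich between the two halves of Definition~\ref{def:lower_semicontinuous_topology} and confine the approximate midpoints in the Euclidean metric via the across-annulus estimate of Lemma~\ref{lem:metric_unbounded_general_case}, using the triangle inequality (which we assume a.s.) to upgrade the one-sided liminf inequalities into equalities.
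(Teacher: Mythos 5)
Your overall strategy is correct and spells out what the paper's terse reference (adapt Lemma~\ref{lem:geodesic_metric} using Lemma~\ref{lem:metric_unbounded_general_case}) must mean in the lower semicontinuous setting: select approximating sequences via part (ii) of Definition~\ref{def:lower_semicontinuous_topology}, form approximate midpoints, confine them in a Euclidean ball, pass to a limit via part (i), and use the assumed triangle inequality to upgrade the one-sided inequalities to equalities. You also correctly handle non-singularity of the limit point. However, the confinement step as you justify it does not work.

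The flaw is in the ``Borel--Cantelli diagonalization'' you invoke. Lemma~\ref{lem:metric_unbounded_general_case} gives, for fixed $T$ and suitably large $R = R(T,\epsilon)$, only that $\p\bigl[\wt{D}_{h^n}^{\xi_{k_n}}(\text{across}\,\mathbb{A}_{r,R}(0)) \leq T\bigr] \leq \epsilon$ for all sufficiently large $n$. These probabilities are merely bounded by $\epsilon$; they do not decay with $n$ while $R$ is held fixed, so they are not summable along any subsequence and the first Borel--Cantelli lemma does not apply. Making the probabilities decay forces $R_n(T) \to \infty$ with $n$, which destroys the Euclidean confinement you need. So the desired a.s.\ conclusion (that for each $T$ there is a fixed $R(T)$ with $\wt{D}_{h^n}^{\xi_{k_n}}(\text{across}\,\mathbb{A}_{r,R(T)}(0)) > T$ for all large $n$) is true but cannot be obtained this way.

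The correct argument, which you already have all the ingredients for, is the contradiction argument that the paper carries out later in Lemma~\ref{lem:condition_I_satisfied_general_case} (see \eqref{eqn:crossing_uniformly_large}--\eqref{eqn:crossing_eventually_large}). Proposition~\ref{prop:complete_metric_general_case}, which you already cite, establishes the a.s.\ statement \eqref{eqn:distances_to_infty} about the limit $\wt{D}$ alone. Given that, fix the (random) $T > 2\wt{D}(z,w)+2$ and pick $R > 4r$ with $\wt{D}(\text{across}\,\mathbb{A}_{2r,R/2}(0)) > T$. If one had $\wt{D}_{h^n}^{\xi_{k_n}}(\text{across}\,\mathbb{A}_{r,R}(0)) \leq T$ for infinitely many $n$, then along a further subsequence one can extract points $x_n \in \partial B_{2r}(0)$, $y_n \in \partial B_{R/2}(0)$ lying on near-minimizing crossing paths with $\wt{D}_{h^n}^{\xi_{k_n}}(x_n,y_n) \leq T$, and after passing to a subsequence so that $x_n \to x$, $y_n \to y$, part (i) of Definition~\ref{def:lower_semicontinuous_topology} gives $\wt{D}(x,y) \leq T$, contradicting the choice of $R$. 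This yields $\wt{D}_{h^n}^{\xi_{k_n}}(\text{across}\,\mathbb{A}_{r,R}(0)) > T$ for all large $n$ a.s., and the rest of your argument then goes through as written.
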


\begin{proof}
The statement of the proposition follows from combining Lemma~\ref{lem:singular_points} with the same argument used to prove Lemma~\ref{lem:geodesic_metric} with Lemma~\ref{lem:metric_unbounded_general_case} in place of Lemma~\ref{lem:metric_unbounded}.
\end{proof}

\begin{proof}[Proof of Proposition~\ref{prop:complete_and_geodesic_metric_general_xi}]
It follows from combining Lemma~\ref{lem:singular_points},  Proposition~\ref{prop:complete_metric_general_case} and Proposition~\ref{prop:geodesic_metric_general_case}.
\end{proof}

\subsection{The limit satisfies Weyl scaling: General case}
\label{subsec:weyl_scaling_general_case}

The main goal of this subsection is to prove the following analogue of Proposition~\ref{prop:weyl_scaling}.

\begin{proposition}\label{prop:weyl_scaling_general_case}
The following is true a.s.  Let $f : \C \to \R$ be a bounded and continuous function and let $K \subseteq \C$ be a compact set.  Then the metrics $e^{\xi_{k_n} f} \cdot \wt{D}_{h^n}^{\xi_{k_n}}$ restricted to $K$ converge to $e^{\xi f} \cdot \wt{D}$ restricted to $K$ as $n \to \infty$ with respect to the topology of lower semicontinuous functions.
\end{proposition}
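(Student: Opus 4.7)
The plan is to apply Lemma~\ref{lem:conditions_for_weyl_scaling} directly with $D_n = \wt{D}_{h^n}^{\xi_{k_n}}$, $f_n = \xi_{k_n} f$, $f_\infty = \xi f$ and $D_\infty = \wt{D}$, and then to note that $e^{\xi_{k_n} f} \cdot \wt{D}_{h^n}^{\xi_{k_n}} = \wt{D}_{h^n + f}^{\xi_{k_n}}$ by Axiom~\ref{it:weyl_scaling}. Unlike in the subcritical proof of Proposition~\ref{prop:weyl_scaling}, the target topology is already the lower semicontinuous topology, so no Arzela--Ascoli upgrade is needed. The ambient hypotheses of Lemma~\ref{lem:conditions_for_weyl_scaling} hold immediately: each $(\C, D_n)$ is a length space by Axiom~\ref{it:length_space}; the lower semicontinuous convergence $D_n \to \wt{D}$ is Proposition~\ref{prop:lower_semicontinuity}; and the identity map $(\C, \wt{D}) \to (\C, |\cdot|)$ is continuous because Lemma~\ref{lem:metric_unbounded_general_case} forces $\wt{D}$-small neighborhoods of a point to sit inside Euclidean-small neighborhoods.

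For a given compact $K$, I must exhibit compact sets $K \subseteq K' \subseteq K''$ satisfying conditions~(1) and~(2) of Lemma~\ref{lem:conditions_for_weyl_scaling}. Condition~(2) (short around-distances at non-singular points) is the analogue of Lemma~\ref{lem:condition_II_satisfied}, and its proof transfers almost verbatim using the uniform-in-$n$ scale-invariant estimates~\eqref{eqn:centered_prob_bound_fixed_scale_uniform} from the proof of Proposition~\ref{prop:lqg_distances_general_case} together with \cite[Lemma~3.1]{gwynne2020local}. Concretely, for every non-singular $z \in K'$ I would pass to the coupling of Remark~\ref{rem:tightness_of_internal_metrics_general_case} and produce rational annuli $A^m$ surrounding $z$ with shrinking radii whose around-distances are bounded by a deterministic constant times their across-distances on an event of probability tending to one; since $z$ is non-singular, the limiting across-distances tend to zero, so $\lim_{m \to \infty} \lim_{n \to \infty} D_n(\text{around }A^m) = 0$.

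The main obstacle is condition~(1). In the subcritical analogue (Lemma~\ref{lem:condition_I_satisfied}) the diameter of a ball containing $K$ was uniformly bounded using finiteness and continuity of $\wt{D}$; in the supercritical regime this is unavailable because $\wt{D}$ may take the value $+\infty$ on pairs inside $K$. The saving observation is that the condition is vacuous when $D_n(z,w) = \infty$, so it suffices to control almost-geodesics only between pairs at finite $D_n$-distance. I would take $K' = \overline{B_{\wh{M}}(0)} \supseteq K$ with $\wh{M}$ large and argue that any path from $z \in K$ of length at most $D_n(z,w)+1 < \infty$ must stay in $K'$ provided $D_n(z, \partial K') > D_n(z,w) + 1$. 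A uniform-in-$n$ upper bound on finite $D_n(z,w)$ for $z,w \in K$ is obtained by extending $\{z\}$ and $\{w\}$ to non-singleton connected compacta and applying Proposition~\ref{prop:lqg_distances_general_case}, while a uniform-in-$n$ lower bound on $D_n(z, \partial K')$ is obtained by summing the across-distances of a finite family of concentric annuli separating $K$ from $\partial K'$ and invoking the lower tail inputs implicit in the proof of Proposition~\ref{prop:lqg_distances_general_case} (cf.\ also Lemma~\ref{lem:metric_unbounded_general_case}). Enlarging $\wh{M}$ on the small exceptional event, and producing $K''$ from $K'$ by the same argument with the bounded multiplicative factor $e^{\sup_{K''} \xi_{k_n} |f|}$ absorbed into the threshold, takes care of the $e^{f_n} \cdot D_n$ case.

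With conditions~(1) and~(2) verified, Lemma~\ref{lem:conditions_for_weyl_scaling} yields the lower semicontinuous convergence of $e^{\xi_{k_n} f} \cdot D_n$ to $e^{\xi f} \cdot \wt{D}$ on $K$, which is exactly the statement. The hard part, as emphasized, will be condition~(1): the singular points of $\wt{D}$ obstruct any naive uniform diameter bound, and one must instead argue pair-by-pair, leveraging the uniform annulus-crossing estimates developed in Section~\ref{sec:tightness_general_case}.
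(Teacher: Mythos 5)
Your high-level plan — verify the hypotheses of Lemma~\ref{lem:conditions_for_weyl_scaling} with $D_n = \wt{D}_{h^n}^{\xi_{k_n}}$, $f_n = \xi_{k_n}f$, $D_\infty = \wt{D}$, and then invoke it — is the same as the paper's, and your treatment of condition~(2) mirrors Lemma~\ref{lem:condition_II_satisfied_general_case}. But your verification of condition~(1) has a genuine gap.

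You propose to bound $D_n(z,w)$ from above uniformly in $n$ over all pairs $z,w\in K$ at finite $D_n$-distance, then compare against a lower bound on $D_n(z,\partial K')$. The upper bound does not exist: even for fixed $n$, the supremum of $D_n(z,w)$ over pairs at finite distance is typically $+\infty$, since by lower semicontinuity $D_n(z,w)\to\infty$ as $w$ approaches a singular point through non-singular points. Replacing $\{z\}$ and $\{w\}$ with non-singleton compacta and invoking Proposition~\ref{prop:lqg_distances_general_case} only bounds the distance between those compacta, not $D_n(z,w)$; the missing contribution (from $z$ to the nearby bulk of the compactum) is exactly the part that blows up. So ``enlarging $\wh M$ on an exceptional event'' cannot rescue the argument — no finite $\wh M$ gives the required inequality for all pairs at finite distance.

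The paper's Lemma~\ref{lem:condition_I_satisfied_general_case} avoids ever bounding $D_n(z,w)$. Fix $r$ with $K\subseteq B_r(0)$. For any $z,w\in K$: (i) the length-space property gives $D_n(z,w) \leq D_n(z,\partial B_{2r}(0)) + D_n(w,\partial B_{2r}(0)) + D_n(\text{around }\mathbb{A}_{r,2r}(0))$; and (ii) if an almost-geodesic $P$ from $z$ to $w$ exits $B_R(0)$, then $\len(P;D_n) \geq D_n(z,\partial B_{2r}(0)) + D_n(w,\partial B_{2r}(0)) + D_n(\text{across }\mathbb{A}_{2r,R}(0))$. When a finite-length path from $z$ to $w$ exists, the shared terms $D_n(z,\partial B_{2r}(0))$ and $D_n(w,\partial B_{2r}(0))$ are finite and cancel, and the whole comparison collapses to the single uniform-in-$n$ inequality $D_n(\text{around }\mathbb{A}_{r,2r}(0)) + 1 < D_n(\text{across }\mathbb{A}_{2r,R}(0))$, i.e.\ \eqref{eqn:main_ineq}. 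The around-distance is tight uniformly in $n$ (it is dominated by the $D(\text{around }A)$ limits from Section~\ref{sec:tightness_general_case}), and the across-distance diverges uniformly in $n$ as $R\to\infty$ (this is \eqref{eqn:crossing_uniformly_large}, proved by a subsequence/contradiction argument from \eqref{eqn:distances_to_infty}). This cancellation is precisely what sidesteps the singular-point obstruction you correctly identified.

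A smaller point: the continuity of the identity map $(\C,\wt D)\to(\C,|\cdot|)$ is obtained in the paper via Lemma~\ref{lem:identity_map_continuous_general_case} from the positivity of across-annulus limits (Proposition~\ref{prop:annuli_crossing_always_positive_general_case}), not from Lemma~\ref{lem:metric_unbounded_general_case}, which controls distances to infinity rather than lower-bounding finite-scale crossings.
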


We will follow the same argument used to prove Proposition~\ref{prop:weyl_scaling} in order to prove Proposition~\ref{prop:weyl_scaling_general_case}.  In particular,  we will prove that the conditions in the statement of Lemma~\ref{lem:conditions_for_weyl_scaling} hold a.s.  for the metrics $\wt{D}_{h^n}^{\xi_{k_n}}$.  We start by proving that condition~\ref{it:boundness_of_almost_geodesic_paths} holds a.s.

\begin{lemma}\label{lem:condition_I_satisfied_general_case}
The following holds a.s.  Let $f : \C \to \R$ be a bounded and continuous function.  Then for every compact set $K \subseteq \C$,  there exist compact sets $K',K'' \subseteq \C$ such that $K \subseteq K' \subseteq K''$ and condition~\ref{it:boundness_of_almost_geodesic_paths} in the statement of Lemma~\ref{lem:conditions_for_weyl_scaling} holds with $D_n = \wt{D}_{h^n}^{\xi_{k_n}} ,  f_n = \xi_{k_n} f,  f_{\infty} = \xi f$ and $D_{\infty} = \wt{D}$. 
\end{lemma}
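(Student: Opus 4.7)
The plan is to adapt the argument in the proof of Lemma \ref{lem:condition_I_satisfied}, taking $K'=\overline{B_{\wt M}(0)}$ and $K''=\overline{B_{\wh M}(0)}$ for random radii $\wt M<\wh M$ to be chosen. We work on the a.s.\ event where $\wt{D}_{h^n}^{\xi_{k_n}}\to \wt D$ in the lower semicontinuous topology, $h^n\to h$ in $H_{\mathrm{loc}}^{-1}(\C)$ with circle averages converging locally uniformly on $\C\times(0,\infty)$, the rational circle distances $\wt{D}_{h^n}^{\xi_{k_n}}(O_1,O_2)\to D(O_1,O_2)<\infty$ converge for all rational circles $O_1,O_2$, and all previously established properties of $\wt D$ hold.

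For the lower bound on $\wt{D}_{h^n}^{\xi_{k_n}}(K,\partial B_{\wt M}(0))$ (needed for $n$ large), I would use Lemma \ref{lem:metric_unbounded_general_case} to pick $\wt M$ with $\wt D(K,\partial B_{\wt M}(0))>T$, then exploit compactness of $K\times\partial B_{\wt M}(0)$ together with lower semicontinuity: any subsequence of almost-minimizing pairs $(z_{n_k},u_{n_k})$ admits a further convergent sub-subsequence, to which the defining inequality $\wt D(z,u)\le\liminf \wt{D}_{h^n}^{\xi_{k_n}}(z_n,u_n)$ of the lower semicontinuous topology applies. This yields
\begin{align*}
\liminf_{n\to\infty}\wt{D}_{h^n}^{\xi_{k_n}}(K,\partial B_{\wt M}(0))\;\ge\;\wt D(K,\partial B_{\wt M}(0))\;>\;T,
\end{align*}
so the lower bound persists for all but finitely many $n$; the remaining small $n$ are absorbed by enlarging $\wt M$ further, since each $\wt{D}_{h^n}^{\xi_{k_n}}$ is a strong LQG metric.

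For the upper bound, noting that the condition is vacuous when $\wt{D}_{h^n}^{\xi_{k_n}}(z,w)=\infty$, we need only control $\wt{D}_{h^n}^{\xi_{k_n}}(z,w)$ uniformly in $n$ over pairs $z,w\in K$ with finite distance. The plan is to cover $K$ by finitely many rational closed balls $\overline{B_{r_i}(z_i)}\subseteq \overline{B_{s_i}(z_i)}\subseteq B_M(0)$ with $z_i\in\Q^2$ and $r_i,s_i\in\Q_+$, and apply the triangle inequality
\begin{align*}
\wt{D}_{h^n}^{\xi_{k_n}}(z,w)\le \wt{D}_{h^n}^{\xi_{k_n}}(z,\partial B_{s_i}(z_i))+\wt{D}_{h^n}^{\xi_{k_n}}(\partial B_{s_i}(z_i),\partial B_{s_j}(z_j))+\wt{D}_{h^n}^{\xi_{k_n}}(\partial B_{s_j}(z_j),w).
\end{align*}
The middle term is bounded uniformly in $n$ by the a.s.\ convergence to the finite limit $D(\partial B_{s_i}(z_i),\partial B_{s_j}(z_j))$. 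For the endpoint terms, I would pass to a further subsequence and invoke Remark \ref{rem:tightness_of_internal_metrics_general_case} for the rational domain $B_{s_i}(z_i)$ to couple so that the internal metrics converge in the lower semicontinuous topology, and then combine with the uniform bound $\sup_n\wt{D}_{h^n}^{\xi_{k_n}}(\mathrm{around}\,\mathbb{A}_{r_i,s_i}(z_i))<\infty$ (from a.s.\ convergence to its finite limit $D(\mathrm{around}\,\mathbb{A}_{r_i,s_i}(z_i))$) to extract an $n$-independent random bound $C_i$ for the endpoint distance from a non-singular point in $\overline{B_{r_i}(z_i)}$ to $\partial B_{s_i}(z_i)$.

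With the resulting uniform upper bound $R$ in hand and $\wt M$ chosen so that $\wt D(K,\partial B_{\wt M}(0))>2R+2$, the contradiction argument in the proof of Lemma \ref{lem:condition_I_satisfied} carries over directly: any near-geodesic of $\wt{D}_{h^n}^{\xi_{k_n}}$-length at most $R+1$ from a pair in $K$ cannot reach $\partial B_{\wt M}(0)$. The bootstrapping to $K''$ and to the weighted metric $e^{\xi_{k_n}f}\cdot\wt{D}_{h^n}^{\xi_{k_n}}$ follows verbatim, using $\sup_n\xi_{k_n}<\infty$ to extract $A$ with $\xi_{k_n}\sup_{K''}|f|\le A$ for all $n$. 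The main obstacle is the upper bound step: lower semicontinuous convergence combined with possible singular points in $K$ prevents the direct subcritical argument based on $\sup_{B_M(0)}\wt D<\infty$, and the workaround passes through rational circles, rational annuli, and tightness of internal metrics on rational domains.
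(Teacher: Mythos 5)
Your identification of the obstacle — singular points and lower semicontinuous (rather than uniform) convergence preclude a uniform bound $\sup_{z,w\in B_M(0)}\wt D(z,w)<\infty$ — is exactly right, but the proposed workaround doesn't close the gap, and I don't think it can.

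The core issue is the step where you would "extract an $n$-independent random bound $C_i$ for the endpoint distance from a non-singular point in $\overline{B_{r_i}(z_i)}$ to $\partial B_{s_i}(z_i)$." No such bound exists, not even for a single $n$: a supercritical LQG metric $\wt{D}_{h^n}^{\xi_{k_n}}$ with $\xi_{k_n}>\xi_{\mathrm{crit}}$ has singular points $z_0$ in $B_{r_i}(z_i)$ almost surely, and by lower semicontinuity, as non-singular $z\to z_0$ one has $\wt{D}_{h^n}^{\xi_{k_n}}(z,\partial B_{s_i}(z_i))\to\infty$. The around-annulus bound controls the infimal length of a separating loop, not the distance from an interior point to the loop, so it cannot rescue the estimate. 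Consequently the restriction to "pairs $z,w\in K$ with finite distance" does not produce a uniform $R$: the set $\{(z,w)\in K^2 : D_n(z,w)<\infty\}$ has no uniform upper bound on distance, for any $n$. Without that $R$ the subsequent contradiction argument has nothing to compare against the crossing distance of $\mathbb{A}_{K,K'}$.

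The paper's proof sidesteps the need for any such bound. It fixes $r$ with $K\subseteq B_r(0)$, chooses rational radii $r_1<r_2$ in $(r,2r)$ so that the around-annulus distances $\wt{D}_{h^n}^{\xi_{k_n}}(\mathrm{around}\,\mathbb{A}_{r,2r}(0))$ are bounded by the convergent quantity $D(\mathrm{around}\,\mathbb{A}_{r_1,r_2}(0))$ uniformly in $n$, then picks $R$ so large that $\wt{D}_{h^n}^{\xi_{k_n}}(\mathrm{around}\,\mathbb{A}_{r,2r}(0))+1<\wt{D}_{h^n}^{\xi_{k_n}}(\mathrm{across}\,\mathbb{A}_{2r,R}(0))$ for all $n$ (which is where your compactness/lower-semicontinuity observation for crossing distances enters). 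If a near-geodesic $P$ from $z$ to $w$ were to exit $B_R(0)$, then
\begin{align*}
\len(P;\wt{D}_{h^n}^{\xi_{k_n}})\;\ge\;\wt{D}_{h^n}^{\xi_{k_n}}(z,\partial B_{2r}(0))+\wt{D}_{h^n}^{\xi_{k_n}}(w,\partial B_{2r}(0))+\wt{D}_{h^n}^{\xi_{k_n}}(\mathrm{across}\,\mathbb{A}_{2r,R}(0)),
\end{align*}
while always
\begin{align*}
\wt{D}_{h^n}^{\xi_{k_n}}(z,w)\;\le\;\wt{D}_{h^n}^{\xi_{k_n}}(z,\partial B_{2r}(0))+\wt{D}_{h^n}^{\xi_{k_n}}(w,\partial B_{2r}(0))+\wt{D}_{h^n}^{\xi_{k_n}}(\mathrm{around}\,\mathbb{A}_{r,2r}(0)).
\end{align*}
The potentially unbounded terms $\wt{D}_{h^n}^{\xi_{k_n}}(z,\partial B_{2r}(0))+\wt{D}_{h^n}^{\xi_{k_n}}(w,\partial B_{2r}(0))$ appear on \emph{both} sides and cancel, leaving only the comparison between the around-annulus distance plus one and the crossing distance, which is exactly what the choice of $R$ controls. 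This cancellation is the mechanism your proof is missing; try reorganizing the argument so that the distances from the endpoints to a fixed circle are carried through untouched rather than bounded.
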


\begin{proof}
We will only prove the claim of the lemma for the metrics $\wt{D}_{h^n}^{\xi_{k_n}}$ since the same argument works for the metrics $e^{\xi_{k_n} f} \cdot \wt{D}_{h^n}^{\xi_{k_n}}$.

Recall that we have shown in the proof of Proposition~\ref{prop:complete_metric_general_case} (see \eqref{eqn:distances_to_infty}) that it is a.s.  the case that
\begin{align}\label{eqn:crossing_large}
\wt{D}(\text{across} \,\,  \mathbb{A}_{r,R}(0)) \to \infty \quad \text{as} \quad R \to \infty \quad \text{for all} \quad r>0 \quad \text{a.s.}
\end{align}
Let $K \subseteq \C$ be a compact set and fix $r>0$ such that $K \subseteq B_r(0)$.  We will show that
\begin{align}\label{eqn:crossing_uniformly_large}
\inf_{n \in \N} \left\{\wt{D}_{h^n}^{\xi_{k_n}}(\text{across} \,\,  \mathbb{A}_{r,R}(0)) \right\}\to \infty \quad \text{as} \quad R \to \infty \quad \text{a.s.}
\end{align}
Indeed fix $T>0$ and let $R > 4r$ be such that 
\begin{align}\label{eqn:crossing_distance_big}
\wt{D}(\text{across} \,\,  \mathbb{A}_{2r ,  	R/2}(0)) > T. 
\end{align}
Note that \eqref{eqn:crossing_large} implies that we can always choose such $R$.  Suppose for contradiction that we have that
\begin{align*}
\wt{D}_{h^n}^{\xi_{k_n}}(\text{across} \,\,  \mathbb{A}_{r,R}(0)) \leq T
\end{align*}
for infinitely many values of $n \in \N$.  Then there exists a subsequence $(\xi_{k_n'})$ of $(\xi_{k_n})$ and points $u_n \in \partial B_r(0),  v_n \in \partial B_R(0)$ such that
\begin{align*}
\wt{D}_{h^{k_n'}}^{\xi_{k_n'}}(u_n,v_n) \leq T \quad \text{for all} \quad n \in \N.
\end{align*}
Since any path connecting $u_n$ to $v_n$ must cross the annulus $\mathbb{A}_{2r ,  R/2}(0)$,  we obtain that 
\begin{align*}
\wt{D}_{h^{k_n'}}^{\xi_{k_n'}}(\text{across} \,\,  \mathbb{A}_{2r,R/2}(0)) \leq T \quad \text{for all} \quad n \in \N
\end{align*}
which implies that there exist points $x_n \in \partial B_{2r}(0),  y_n \in \partial B_{R/2}(0)$ such that
\begin{align*}
\wt{D}_{h^{k_n'}}^{\xi_{k_n'}}(x_n,y_n) \leq T \quad \text{for all} \quad n \in \N.
\end{align*}
Possibly by passing into a subsequence,  we can assume that there exist $x \in \partial B_{2r}(0),  y \in \partial B_{R/2}(0)$ such that $x_n \to x$ and $y_n \to y$ as $n \to \infty$.  It follows that
\begin{align*}
\wt{D}(\text{across} \,\,  \mathbb{A}_{2r , R/2}(0)) \leq \wt{D}(x,y) \leq \liminf_{n \to \infty} \wt{D}_{h^{k_n'}}^{\xi_{k_n'}}(x_n ,  y_n) \leq T
\end{align*}
and that contradicts \eqref{eqn:crossing_distance_big}.  Therefore there exists $n_0 \in \N$ such that
\begin{align}\label{eqn:crossing_eventually_large}
\wt{D}_{h^n}^{\xi_{k_n}}(\text{across} \,\,  \mathbb{A}_{r,R}(0)) > T \quad \text{for all} \quad n \geq n_0.
\end{align}
Also we have that
\begin{align}\label{eqn:crossing_large_fixed_n}
\wt{D}_{h^n}^{\xi_{k_n}}(\text{across} \,\,  \mathbb{A}_{r,R}(0)) \to \infty \quad \text{as} \quad R \to \infty
\end{align}
for every fixed $n \in \N$.  Therefore combining \eqref{eqn:crossing_eventually_large} with \eqref{eqn:crossing_large_fixed_n} we obtain \eqref{eqn:crossing_uniformly_large}.

Next we let $r_1,r_2 \in \Q$ be such that $r<r_1 < r_2 < 2r$.  Then we have that
\begin{align*}
\wt{D}_{h^n}^{\xi_{k_n}}(\text{around} \,\,  \mathbb{A}_{r,2r}(0)) \leq \wt{D}_{h^n}^{\xi_{k_n}}(\text{around} \,\,  \mathbb{A}_{r_1,r_2}(0)) \quad \text{for all} \quad n \in \N
\end{align*}
and 
\begin{align*}
\wt{D}_{h^n}^{\xi_{k_n}}(\text{around} \,\,  \mathbb{A}_{r_1,r_2}(0)) \to D(\text{around} \,\,  \mathbb{A}_{r_1,r_2}(0)) < \infty.
\end{align*}
Thus combining with \eqref{eqn:crossing_uniformly_large},  we obtain that if we choose $T>0$ such that
\begin{align*}
\wt{D}_{h^n}^{\xi_{k_n}}(\text{around} \,\,  \mathbb{A}_{r,2r}(0)) + 1 < T \quad \text{for all} \quad n \in \N,
\end{align*}
then we can choose $R>2r$ such that
\begin{align}\label{eqn:main_ineq}
\wt{D}_{h^n}^{\xi_{k_n}}(\text{around} \,\,  \mathbb{A}_{r,2r}(0))+1 < \wt{D}_{h^n}^{\xi_{k_n}}(\text{across} \,\,  \mathbb{A}_{2r,R}(0)) \quad \text{for all} \quad n \in \N.
\end{align}

Fix $z,w \in K,  n \in \N$ and let $P$ be a path between $z$ and $w$ such that the $\wt{D}_{h^n}^{\xi_{k_n}}$-length of $P$ is less than $\wt{D}_{h^n}^{\xi_{k_n}}(z,w) + 1$.  Suppose that $P \not \subseteq B_R(0)$.  Then we have that the $\wt{D}_{h^n}^{\xi_{k_n}}$-length of $P$ is at least 
\begin{align*}
\wt{D}_{h^n}^{\xi_{k_n}}(z,\partial B_{2r}(0)) + \wt{D}_{h^n}^{\xi_{k_n}}(w,\partial B_{2r}(0)) + \wt{D}_{h^n}^{\xi_{k_n}}(\text{across} \,\,  \mathbb{A}_{2r,R}(0))
\end{align*}
and so combining with \eqref{eqn:main_ineq} we obtain that
\begin{align*}
\wt{D}_{h^n}^{\xi_{k_n}}(z,w) > \wt{D}_{h^n}^{\xi_{k_n}}(z,\partial B_{2r}(0)) + \wt{D}_{h^n}^{\xi_{k_n}}(w,\partial B_{2r}(0)) + \wt{D}_{h^n}^{\xi_{k_n}}(\text{around} \,\,  \mathbb{A}_{r,2r}(0)).
\end{align*}
But this is a contradiction since it is easy to see that
\begin{align*}
\wt{D}_{h^n}^{\xi_{k_n}}(z,w) \leq \wt{D}_{h^n}^{\xi_{k_n}}(z,\partial B_{2r}(0)) + \wt{D}_{h^n}^{\xi_{k_n}}(w,\partial B_{2r}(0)) + \wt{D}_{h^n}^{\xi_{k_n}}(\text{around} \,\,  \mathbb{A}_{r,2r}(0)).
\end{align*}
It follows that $P \subseteq \overline{B_R(0)}$.  The same argument works with the metrics $e^{\xi_{k_n} f } \cdot \wt{D}_{h^n}^{\xi_{k_n}}$ in place of $\wt{D}_{h^n}^{\xi_{k_n}}$.  This completes the proof of the lemma.
\end{proof}

Next we show that condition~\ref{it:short_distances_around_small_annuli} holds a.s.  as well.

\begin{lemma}\label{lem:condition_II_satisfied_general_case}
It is a.s.  the case that the following is true.  Fix $K \subseteq \C$ compact.  Then for each $z \in K$,  we can find a sequence of rational annuli $(A^m)_{m \in \N}$ as in condition~\ref{it:short_distances_around_small_annuli} such that
\begin{align*}
\lim_{m \to \infty} \lim_{n \to \infty} (\wt{D}_{h^n}^{\xi_{k_n}}(\text{around} \,\,A^m)) = 0.
\end{align*}
\end{lemma}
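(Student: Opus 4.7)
The plan is to mirror the proof of Lemma~\ref{lem:condition_II_satisfied}, replacing Lemma~\ref{lem:tightness_around_annuli} with Remark~\ref{rem:tightness_of_internal_metrics_general_case} and bounding around-distances directly via the normalization \eqref{eqn:main_normalization_general_case} rather than comparing them with across-distances (which may be infinite at singular points in the supercritical regime). First, by countably many applications of Remark~\ref{rem:tightness_of_internal_metrics_general_case} together with Skorokhod's representation theorem, I pass to a further subsequence of $(\xi_{k_n})$ along which $\wt{D}_{h^n}^{\xi_{k_n}}(\mathrm{around}\,A) \to D(A)$ a.s.\ for every rational annulus $A$, with $D(A) < \infty$ a.s.\ by Proposition~\ref{prop:lqg_distances_general_case}. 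Since the conclusion of the lemma is a statement along a subsequence, it suffices to produce the required annuli in this enriched coupling.

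Next, for $u \in \C$, $k \in \N_0$, and $n \in \N$, set
$$\wt{E}_k^n(u) := \left\{\wt{D}_{h^n}^{\xi_{k_n}}(\mathrm{around}\,\mathbb{A}_{2^{-k},2^{-k+1}}(u)) \leq 2^{-\xi_{k_n} Q(\xi_{k_n}) k}\, e^{\xi_{k_n} h^n_{2^{-k}}(u)}\right\}.$$
By Axioms~\ref{it:weyl_scaling} and \ref{it:translation_and_scale_invariance} combined with \eqref{eqn:main_normalization_general_case}, $\p[\wt{E}_k^n(u)] = \mathfrak{p}_0(\xi_{k_n})$ for every $u,k,n$; since $\xi_{k_n} \to \xi$ and $\mathfrak{p}_0$ is chosen in a continuous manner, $\inf_n \mathfrak{p}_0(\xi_{k_n}) =: \mathfrak{p}_* \in (0,1)$. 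By Axiom~\ref{it:locality} the event $\wt{E}_k^n(u)$ is determined by $(h^n - h^n_{2^{-k+1}}(u))|_{\mathbb{A}_{2^{-k-1},2^{-k+2}}(u)}$, so I can apply Lemma~\ref{lem:good_event_at_many_scales} to obtain, for any prescribed $a>0$, constants $\wt{c}>0$ such that for each $u$ and each $M \in \N$,
$$\p[\wt{E}_k^n(u)\text{ occurs for at least one } k \in [M/2,M]_{\Z}] \geq 1 - \wt{c}\, 2^{-aM}$$
uniformly in $n$.

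Fix a bounded open set $U \supseteq K$. Taking a union bound over $u \in (2^{-2M}\Z^2) \cap U$, passing to the limit $n \to \infty$ (which is legitimate since $\wt{D}_{h^n}^{\xi_{k_n}}(\mathrm{around}\,\mathbb{A}_{2^{-k},2^{-k+1}}(u))$ converges a.s.\ to $D(\mathbb{A}_{2^{-k},2^{-k+1}}(u))$ and $(z,r)\mapsto h^n_r(z)$ converges locally uniformly to $(z,r)\mapsto h_r(z)$), and then applying Borel-Cantelli over $M \in \N$ (which requires $a$ large enough that the union bound is summable), I will conclude that a.s.\ there exists $M_0 \in \N$ such that for every $M \geq M_0$ and every $u \in (2^{-2M}\Z^2) \cap U$, there is $k = k(u,M) \in [M/2,M]_{\Z}$ for which
$$D(\mathbb{A}_{2^{-k},2^{-k+1}}(u)) \leq 2^{-\xi Q(\xi) k}\, e^{\xi h_{2^{-k}}(u)}.$$
For each nonsingular $z \in K$, I choose $u_m \in (2^{-2M_m}\Z^2) \cap U$ with $z \in B_{2^{-M_m}}(u_m)$ and the associated $k_m \in [M_m/2,M_m]_{\Z}$, along a subsequence $M_m \to \infty$ sparse enough that $A^m := \mathbb{A}_{2^{-k_m},2^{-k_m+1}}(u_m)$ disconnects $A^{m+1}$ from $\infty$.

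The remaining step is to verify that $2^{-\xi Q(\xi) k_m} e^{\xi h_{2^{-k_m}}(u_m)} \to 0$ as $m \to \infty$ whenever $z$ is nonsingular; I expect this to be the main subtlety. The definition of a nonsingular point (Definition~\ref{def:singular_points}) directly controls $h_\epsilon(z)$ along circles centered at $z$, and one must transfer this control to the nearby centers $u_m$ with $|u_m - z| \leq 2^{-M_m} \leq 2^{-k_m}$. The H\"older regularity of $(z,r)\mapsto h_r(z)$ (see \cite[Proposition~1.58]{berestycki2024gaussian}) yields $h_{2^{-k_m}}(u_m) - h_{2^{-k_m}}(z) = o(k_m)$ since $|u_m - z|/2^{-k_m} \leq 2^{k_m - M_m} \leq 1$, and combining with the nonsingular condition $\limsup_{k\to\infty} h_{2^{-k}}(z)/(k\log 2) < Q(\xi)$ closes the argument. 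This completes the construction of the required annuli and hence the proof.
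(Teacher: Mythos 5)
Your proposal takes a genuinely different route from the paper, and unfortunately that route has a gap in the final step you correctly flag as "the main subtlety."

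The paper's proof simply adapts the subcritical Lemma~\ref{lem:condition_II_satisfied}: it constructs the events $E_k^n(u;C)$ which compare the around-distance to $C$ times the across-distance of $\mathbb{A}_{2^{-k},2^{-k+1}}(u)$, obtains annuli $A^m$ with $D(A^m) \leq C\,\wt{D}(\text{across}\,A^m)$ a.s., and then takes $m \to \infty$. In the supercritical setting the final limit works only at nonsingular $z$, and it does work there: since $\wt{D}$ restricted to $\C \setminus\{\text{singular points}\}$ is a geodesic metric (Proposition~\ref{prop:complete_and_geodesic_metric_general_xi}, which is available at this stage of the argument), a finite-length geodesic from $z$ to some nonsingular $w$ exists, and the portion of this geodesic from its last visit to the inner boundary of $A^m$ to its first hit of the outer boundary has $\wt{D}$-length tending to $0$. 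Thus $\wt{D}(\text{across}\,A^m) \to 0$ at nonsingular points, and the double limit follows. You deliberately bypass the across-distance comparison and bound the around-distance directly by $2^{-\xi Q(\xi) k_m} e^{\xi h_{2^{-k_m}}(u_m)}$, and this is where the gap appears.

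The problem is the claim that nonsingularity of $z$ yields $\limsup_k h_{2^{-k}}(z)/(k\log 2) < Q(\xi)$. Definition~\ref{def:singular_points} says nothing about circle averages: it just declares $z$ singular when $\wt{D}(z,w) = \infty$ for all $w \neq z$. The thick-point heuristic sketched just before that definition (and made rigorous in \cite[Proposition~5.22]{ding2020tightness}) gives the implication in one direction only — thick points with thickness $> Q$ are singular — and in any case it applies to the canonical LQG metric $D_h^\xi$, not to the subsequential limit $\wt{D}$ which at this stage has not yet been identified with $D_h^\xi$. Even granting the full thick-point characterization, nonsingularity would only force $\limsup_k h_{2^{-k}}(z)/(k\log 2) \leq Q(\xi)$, and at equality your bound $2^{-\xi Q(\xi) k_m} e^{\xi h_{2^{-k_m}}(u_m)}$ need not tend to zero. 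So a strict inequality is both unjustified and necessary for your argument. The across-distance comparison in the paper's proof exists precisely to sidestep this: it replaces a pointwise control on the field (which one does not have) with a geometric control coming from the geodesic structure of $\wt{D}$ at nonsingular points (which has already been established). The rest of your proof — the good-event construction via Lemma~\ref{lem:good_event_at_many_scales}, the union bound and Borel--Cantelli over scales, the sparse subsequence of annuli — is sound and matches the paper's mechanism, but the argument does not close without the comparison to across-distances.
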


\begin{proof}
It follows from the same argument used to prove Lemma~\ref{lem:condition_II_satisfied}.
\end{proof}

In order to prove that all of the assumptions of Lemma~\ref{lem:conditions_for_weyl_scaling} are satisfied,  it remains to prove that the identity map from $(\C ,  \wt{D})$ to $\C$ endowed with the Euclidean metric is continuous a.s.  This is the content of the following lemma.

\begin{lemma}\label{lem:identity_map_continuous_general_case}
The identity map from $(\C ,  \wt{D})$ to $(\C ,  |\cdot|)$ is continuous a.s.,  where $|\cdot|$ denotes the Euclidean metric.
\end{lemma}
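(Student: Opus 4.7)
The plan is to transfer a length-metric inequality from the approximating LQG metrics $\wt{D}_{h^m}^{\xi_{k_m}}$ to the limit $\wt{D}$ via property (ii) of Definition~\ref{def:lower_semicontinuous_topology}, combined with the positivity of crossing distances across rational annuli (Proposition~\ref{prop:annuli_crossing_always_positive_general_case}).

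First I would fix a full-measure event on which (a) the joint convergence described in Section~\ref{sec:tightness_general_case} holds, and (b) $D(\text{across}\, A) := \lim_{m \to \infty} \wt{D}_{h^m}^{\xi_{k_m}}(\text{across}\, A) > 0$ simultaneously for every annulus $A = \mathbb{A}_{r_1, r_2}(u)$ with $u \in \Q^2$ and $r_1, r_2 \in \Q \cap (0, \infty)$. Property (b) follows by a countable union from Proposition~\ref{prop:annuli_crossing_always_positive_general_case}, since that statement forces $\p[D(\text{across}\, A) > 0] = 1$ for each fixed rational annulus. On this event, given $z \in \C$ and $\epsilon > 0$, choose $u \in \Q^2$ close to $z$ and rationals $r_1 < r_2$ so that $z \in B_{r_1}(u)$ and $B_{r_2}(u) \subseteq B_\epsilon(z)$, and set $A = \mathbb{A}_{r_1, r_2}(u)$.

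The heart of the argument is the inequality
\begin{equation*}
\wt{D}(z, w) \geq D(\text{across}\, A) \quad \text{for every } w \in \C \text{ with } |w - z| \geq \epsilon.
\end{equation*}
To prove it, apply property (ii) of Definition~\ref{def:lower_semicontinuous_topology} at the point $(z, w)$ to obtain a sequence $(z_m, w_m) \to (z, w)$ in the Euclidean topology with $\wt{D}_{h^m}^{\xi_{k_m}}(z_m, w_m) \to \wt{D}(z, w)$. For all sufficiently large $m$, $z_m \in B_{r_1}(u)$ and $w_m \notin \overline{B_{r_2}(u)}$. Because $\wt{D}_{h^m}^{\xi_{k_m}}$ is a strong LQG metric, it is a length space by Axiom~\ref{it:length_space} and induces the Euclidean topology on its non-singular set; thus any near-geodesic from $z_m$ to $w_m$ has finite length, avoids singular points, is Euclidean-continuous, and must hit both $\partial B_{r_1}(u)$ and $\partial B_{r_2}(u)$ by the intermediate value theorem. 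This yields $\wt{D}_{h^m}^{\xi_{k_m}}(z_m, w_m) \geq \wt{D}_{h^m}^{\xi_{k_m}}(\text{across}\, A)$, and letting $m \to \infty$ gives the desired inequality. Consequently, the open $\wt{D}$-ball of radius $D(\text{across}\, A) > 0$ around $z$ is contained in $B_\epsilon(z)$, which is exactly continuity of the identity map at $z$.

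I expect the main technical obstacle to be the length-space step for the approximating metrics in the supercritical regime $\xi_{k_m} > \xi_{\text{crit}}$, where $\wt{D}_{h^m}^{\xi_{k_m}}$ is only lower semicontinuous and can take the value $+\infty$. There one has to argue carefully that a path of finite $\wt{D}_{h^m}^{\xi_{k_m}}$-length avoids the (closed) set of singular points, on whose complement the metric is continuous, so that the path is Euclidean-continuous and the intermediate-value argument produces a crossing of the annulus $A$. This is a standard consequence of the supercritical LQG metric axioms developed in~\cite{ding2023uniqueness}, and will be recorded as a preliminary fact, presumably in the appendix (Section~\ref{sec:apendix}).
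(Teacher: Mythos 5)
Your proposal is correct and reconstructs essentially the same argument the paper is citing: the paper's proof is a one-line reference to \cite[Theorem~1.2]{ding2023tightness} with Proposition~\ref{prop:annuli_crossing_always_positive_general_case} in place of the corresponding statement there, and that cited argument proceeds exactly as you do — deduce a.s.\ positivity of $D(\text{across}\,A)$ for a countable family of rational annuli from Proposition~\ref{prop:annuli_crossing_always_positive_general_case}, transfer the length-space crossing inequality from $\wt{D}_{h^m}^{\xi_{k_m}}$ to $\wt{D}$ via property (ii) of Definition~\ref{def:lower_semicontinuous_topology}, and conclude that small $\wt{D}$-balls sit inside small Euclidean balls. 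One small correction to your final paragraph: the fact that finite-$\wt{D}_{h^m}^{\xi_{k_m}}$-length paths avoid singular points and are Euclidean-continuous is not recorded in this paper's appendix (Section~\ref{sec:apendix} only contains Lemmas~\ref{lem:normalization_well_defined} and~\ref{lem:convergence_in_prob}); it is taken directly from the structure theory of the supercritical LQG metric in \cite{ding2023uniqueness,ding2020tightness}.
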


\begin{proof}
It follows from the argument in the proof of \cite[Theorem~1.2]{ding2023tightness} where we use Proposition~\ref{prop:annuli_crossing_always_positive_general_case} in place of \cite[Proposition~6.4]{ding2023tightness}.
\end{proof}

\begin{proof}[Proof of Proposition~\ref{prop:weyl_scaling_general_case}]
Lemmas~\ref{lem:condition_I_satisfied_general_case},  ~\ref{lem:condition_II_satisfied_general_case} and ~\ref{lem:identity_map_continuous_general_case} imply that the assumptions in the statement of Lemma~\ref{lem:conditions_for_weyl_scaling} hold a.s.  Hence the claim in the statement of the proposition follows from Lemma~\ref{lem:conditions_for_weyl_scaling}.
\end{proof}

\subsection{The limit satisfies Axiom~\ref{it:locality}: General case}
\label{subsec:locality_general_case}

Now we prove that $\wt{D}$ satisfies Axiom~\ref{it:locality}.  We will follow the same argument used to prove Proposition~\ref{prop:locality_property_subcritical}.  In particular,  we will show that the assumptions of Proposition~\ref{prop:main_prop_from_pfeffer_paper} are satisfied for $\wt{D}$.

We start by proving that $\wt{D}$ is a local $\xi$-additive metric for $h$.

\begin{proposition}\label{prop:limit_is_a_xi_additive_metric_general_case}
The metric $\wt{D}$ is a local $\xi$-additive metric for $h$ in the coupling $(h,\wt{D})$.
\end{proposition}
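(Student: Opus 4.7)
The plan is to follow the same two-step strategy used in the subcritical case (Proposition~\ref{prop:limit_is_a_xi_additive_metric_subcritical}). The trivial case $\overline{V} = \C$ is automatic, so the real work is to establish the analogue of Lemma~\ref{lem:version_of_xi_additivity} for $\wt{D}$: namely, for $r>0$, $z \in \C$, and any open $V \supseteq B_r(z)$ with $\overline{V} \neq \C$, the internal metric $\wt{D}(\cdot,\cdot;V)$ is conditionally independent of the pair $(\wh{h}, e^{-\xi h_r(z)} \wt{D}(\cdot,\cdot;\C \setminus \overline{V}))$ given $\wh{h}|_{\overline{V}}$, where $\wh{h} = h - h_r(z)$. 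Once this local version of $\xi$-additivity is in hand, the general case (where $B_r(z)$ is not assumed to lie in $V$) follows from \cite[Lemma~2.19]{dubedat2020weak} exactly as in Proposition~\ref{prop:limit_is_a_xi_additive_metric_subcritical}.

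To prove the local version, I would mimic the argument in \cite[Lemma~2.13]{Pfeffer_2024}. The key input at the discrete level is that for each $n$, the metric $\wt{D}_{h^n}^{\xi_{k_n}}$ is a $\xi_{k_n}$-additive local metric for $h^n$, which follows from Axioms~\ref{it:locality} and~\ref{it:weyl_scaling} applied to the strong LQG metric: we have $e^{-\xi_{k_n} h_r(z)} \wt{D}_{h^n}^{\xi_{k_n}} = \wt{D}_{h^n-h_r(z)}^{\xi_{k_n}}$ by Weyl scaling, and the internal metrics on $V$ and on $\C \setminus \overline{V}$ are determined by the restrictions of $h^n - h_r(z)$ to $\overline{V}$ and $\C \setminus V$ respectively. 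The Markov property of the whole-plane GFF applied to $\wh{h}^n := h^n - h_r(z)$ across $\partial V$ then gives the desired conditional independence at each finite $n$.

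The main obstacle is to pass this conditional independence to the limit, because convergence only holds in the lower semicontinuous topology and the internal metrics are themselves lower semicontinuous functions that may take infinite values. To handle this I would first apply Remark~\ref{rem:tightness_of_internal_metrics_general_case} to the open sets $V$ and (a connected open approximation of) $\C \setminus \overline{V}$ to extract, along a further subsequence, a joint coupling in which the internal metrics $\wt{D}_{h^n}^{\xi_{k_n}}(\cdot,\cdot;V)$ and $\wt{D}_{h^n}^{\xi_{k_n}}(\cdot,\cdot;\C \setminus \overline{V})$ converge simultaneously in the lower semicontinuous topology to limits $\wt{D}(\cdot,\cdot;V)$ and $\wt{D}(\cdot,\cdot;\C \setminus \overline{V})$. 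I would encode the conditional independence through testing against bounded continuous functionals of the rational-circle distances $D(O_1,O_2;V)$, $D(O_1,O_2;\C \setminus \overline{V})$ (which determine the respective internal metrics by the construction in \eqref{eqn:definition_of_supercritical_metric}) together with the values of $\wh{h}$ against smooth test functions supported in $\overline{V}$ or $\C \setminus V$, then pass to the limit using dominated convergence together with the continuity of $(z,r) \mapsto h_r(z)$ and Proposition~\ref{prop:weyl_scaling_general_case}.

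Finally, the general case of $V$ not containing $B_r(z)$ is deduced by exactly the same appeal to \cite[Lemma~2.19]{dubedat2020weak} as in the proof of Proposition~\ref{prop:limit_is_a_xi_additive_metric_subcritical}, which reduces the question to the case already treated. I expect the only genuinely delicate point to be checking that the rational-circle distances really do determine the internal metrics in the limit in a way compatible with the lower semicontinuous convergence; once this is verified, the rest of the argument is a routine transfer of the finite-$n$ Markov/Weyl-scaling identity.
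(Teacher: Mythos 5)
Your proposal follows the same structure as the paper's (very terse) proof: the paper simply invokes the argument of Proposition~\ref{prop:limit_is_a_xi_additive_metric_subcritical}, which reduces the claim to the case $B_r(z)\subseteq V$ via the analogue of Lemma~\ref{lem:version_of_xi_additivity} (itself proved by citing the argument of \cite[Lemma~2.13]{Pfeffer_2024}) and then handles the general $V$ via \cite[Lemma~2.19]{dubedat2020weak}. You identify exactly these two ingredients and in addition spell out the extra care needed to pass the finite-$n$ conditional independence to the limit under the lower semicontinuous topology — a point the paper leaves implicit inside the citation, since the framework of \cite{Pfeffer_2024} (see the remark preceding Definitions~\ref{def:local_metric} and~\ref{def:xi_additive_metric}) is already stated for lower semicontinuous metrics. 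Your elaboration is consistent with what the cited argument must be doing, so the proposal is correct and essentially matches the paper's approach.
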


\begin{proof}
It follows from the argument used to prove Proposition~\ref{prop:limit_is_a_xi_additive_metric_subcritical}.
\end{proof}

\begin{proposition}\label{prop:locality_geeneral_case}
The metric $\wt{D}$ satisfies Axiom~\ref{it:locality} in the coupling $(h,\wt{D})$.
\end{proposition}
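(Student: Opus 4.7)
The plan is to apply Proposition~\ref{prop:main_prop_from_pfeffer_paper} with $U = \C$ and verify its hypotheses for the metric $\wt{D}$ in the coupling $(h, \wt{D})$. First, Proposition~\ref{prop:complete_and_geodesic_metric_general_xi} supplies that the restriction of $\wt{D}$ to $\C \setminus \{\text{singular points}\}$ is a complete geodesic metric a.s., which in particular yields Axiom~\ref{it:length_space}. Second, Proposition~\ref{prop:limit_is_a_xi_additive_metric_general_case} shows that $\wt{D}$ is a local $\xi$-additive metric for $h$. It remains to verify translation invariance (condition~\ref{it:translation_invariance}) and tightness across scales (condition~\ref{it:tightness_across_and-around_annuli}) with $c_r = r^{\xi Q(\xi)}$, which automatically satisfies the required scaling estimate $\Lambda^{-1}\delta^{\Lambda} \leq c_{\delta r}/c_r \leq \Lambda \delta^{-\Lambda}$ for any $\Lambda > \max(\xi Q(\xi),1)$.

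For translation invariance, I would argue as follows. For each fixed deterministic $z \in \C$ and each $n \in \N$, Axiom~\ref{it:translation_and_scale_invariance} applied with $r=1$ yields $\wt{D}_{h^n}^{\xi_{k_n}}(\cdot + z, \cdot + z) = \wt{D}_{h^n(\cdot + z)}^{\xi_{k_n}}$ a.s. Since $h^n \to h$ in $H_{\mathrm{loc}}^{-1}(\C)$ a.s. and $\wt{D}_{h^n}^{\xi_{k_n}} \to \wt{D}$ in the lower semicontinuous topology a.s., passing to the limit on both sides gives the desired identity in law (indeed, a.s. in the coupling) for $\wt{D}$.

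For condition~\ref{it:tightness_across_and-around_annuli}, the scaling property from Axiom~\ref{it:translation_and_scale_invariance} gives that for each fixed $r>0$ and each $n$, the random metrics $\wt{D}_{h^n}^{\xi_{k_n}}$ and $r^{-\xi_{k_n} Q(\xi_{k_n})} e^{-\xi_{k_n} h^n_r(0)} \wt{D}_{h^n}^{\xi_{k_n}}(r \cdot, r \cdot)$ have the same law. Using the joint local uniform convergence of $(z,r) \mapsto h^n_r(z)$ together with $\xi_{k_n} \to \xi$, $Q(\xi_{k_n}) \to Q(\xi)$, and $\wt{D}_{h^n}^{\xi_{k_n}} \to \wt{D}$ in the lower semicontinuous topology, one obtains that $\wt{D}$ and $r^{-\xi Q(\xi)} e^{-\xi h_r(0)} \wt{D}(r\cdot, r\cdot)$ have the same law for each $r>0$. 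Therefore it suffices to prove that for any fixed annulus $A$, the quantities $\wt{D}(\text{across } A)$ and $\wt{D}(\text{around } A)$ lie in $(0,\infty)$ a.s.

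The main obstacle, as in the subcritical case but more delicate here because of the weaker topology and the presence of singular points, is to control the around-annulus distances. For the across direction, $\wt{D}(\text{across } A) > 0$ a.s. follows from Proposition~\ref{prop:limit_is_a_metric_general_case} (which uses the standing triangle-inequality hypothesis), while finiteness follows from Lemma~\ref{lem:metric_finite_at_fixed_points} applied to two points on the inner and outer boundary together with the triangle inequality. For $\wt{D}(\text{around } A)$, finiteness follows by considering the internal distance around $A$ as in Remark~\ref{rem:tightness_of_internal_metrics_general_case}, which provides the relevant random variable as an a.s. subsequential limit; positivity follows by fixing two points $u,v$ on the median circle of $A$ whose closed neighborhoods $\overline{B_{\rho}(u)}, \overline{B_{\rho}(v)}$ are disjoint and contained in $A$, observing that every path disconnecting the two boundary components of $A$ must intersect both $B_{\rho}(u)$ and $B_{\rho}(v)$, and applying Proposition~\ref{prop:limit_is_a_metric_general_case} to get $\wt{D}(B_{\rho}(u), B_{\rho}(v)) > 0$ a.s. Once conditions~\ref{it:translation_invariance} and \ref{it:tightness_across_and-around_annuli} are established, Proposition~\ref{prop:main_prop_from_pfeffer_paper} applies and yields Axiom~\ref{it:locality} for $\wt{D}$.
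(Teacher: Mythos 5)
Your overall plan matches the paper's: verify the hypotheses of Proposition~\ref{prop:main_prop_from_pfeffer_paper} with $U = \C$, using Propositions~\ref{prop:complete_and_geodesic_metric_general_xi} and \ref{prop:limit_is_a_xi_additive_metric_general_case} for the completeness/geodesic and $\xi$-additive local properties, deriving conditions~\ref{it:translation_invariance} and \ref{it:tightness_across_and-around_annuli} via passing the corresponding exact identities for $\wt{D}_{h^n}^{\xi_{k_n}}$ to the limit, and reducing condition~\ref{it:tightness_across_and-around_annuli} to positivity and finiteness of across/around distances for a fixed annulus. Your handling of translation and scale invariance, across-positivity, and across-finiteness (via Lemma~\ref{lem:metric_finite_at_fixed_points} plus the triangle inequality rather than the paper's rational circles) is sound.

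There is, however, a genuine gap in the around-annulus steps. For positivity, you require $\overline{B_{\rho}(u)}$ and $\overline{B_{\rho}(v)}$ to be disjoint and contained in the open annulus $A = \mathbb{A}_{r_1,r_2}(z)$, which forces $\rho < (r_2-r_1)/2$; but then a loop around $A$ staying near the inner boundary circle, away from the arcs through $u$ and $v$, need not meet $B_\rho(u)$ or $B_\rho(v)$, so the claim ``every path disconnecting the boundary components must intersect both'' is false. What is needed is $\rho \geq (r_2-r_1)/2$ (so that $\overline{B_\rho(u)}$ contains the full radial segment through $u$, hence necessarily is not contained in $A$), or, as the paper does, to use the radial segments $K_1, K_2$ directly and observe any separating loop must intersect both. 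For finiteness of $\wt{D}(\text{around}\,A)$, your appeal to Remark~\ref{rem:tightness_of_internal_metrics_general_case} is too thin: that remark produces an a.s.\ limit $D(\text{around}\,A)$ of $\wt{D}_{h^n}^{\xi_{k_n}}(\text{around}\,A)$ which is finite, but it is not established (and not obvious under lower-semicontinuous convergence) that $\wt{D}(\text{around}\,A) \leq D(\text{around}\,A)$, since $\wt{D}(\text{around}\,A)$ is an infimum of $\wt{D}$-lengths of paths, not a pointwise limit. The paper instead uses the length-space property of $\wt{D}$ together with Proposition~\ref{prop:bound_on_lqg_distances_of_sets_general} to construct concretely a path of finite $\wt{D}$-length separating the two boundary components of $A$, which sidesteps this difficulty.
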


\begin{proof}
It suffices to show that the assumptions of Proposition~\ref{prop:main_prop_from_pfeffer_paper} are satisfied for $\wt{D}$.  First we note that Proposition~\ref{prop:complete_and_geodesic_metric_general_xi} implies that $\wt{D}$ restricted to $\C \setminus \{\text{singular points}\}$ is a.s.  a finite,   complete and geodesic metric.  Moreover for every $z \in \C$ fixed,  we have that $\wt{D}_{h^n(\cdot + z)}^{\xi_{k_n}}$ converges to $\wt{D}(\cdot + z ,  \cdot + z)$ as $n \to \infty$ with respect to the topology on lower semicontinuous functions a.s.  In particular we have that the metrics $\wt{D}$ and $e^{-\xi h_1(z)} \wt{D}(\cdot + z ,  \cdot +z)$ have the same law.  Hence condition~\ref{it:translation_invariance} is satisfied.  Moreover Proposition~\ref{prop:limit_is_a_xi_additive_metric_general_case} implies that $\wt{D}$ is a $\xi$-additive local metric for $h$.  Therefore it remains to prove that condition~\ref{it:tightness_across_and-around_annuli} is satisfied.

Fix any Euclidean annulus $A$.  Then for every fixed $r>0$,  since $\wt{D}_{h^n}^{\xi_{k_n}}$ is a strong LQG metric with parameter $\xi_{k_n}$ for all $n \in \N$ and 
\begin{align*}
r^{\xi_{k_n} Q(\xi_{k_n})} e^{-\xi_{k_n} h^n_r(0)} \wt{D}_{h^n}^{\xi_{k_n}}(r \cdot ,  r \cdot) \to r^{-\xi Q(\xi)} e^{-\xi h_r(0)} \wt{D}(r \cdot ,  r \cdot) \quad \text{as} \quad n \to \infty
\end{align*}
with respect to the lower semicontinuous topology,  we obtain that the metrics $\wt{D}$ and $r^{-\xi Q(\xi)} e^{-\xi h_r(0)} \wt{D}(r \cdot ,  r \cdot)$ have the same law.  Therefore in order to prove condition~\ref{it:tightness_across_and-around_annuli},  it suffices to prove that $\wt{D}(\text{across} \,\,A) \in (0,\infty)$ and $\wt{D}(\text{around} \,\,A) \in (0,\infty)$ a.s.  

Since $\wt{D}$ is a lower semicontinuous metric,  we obtain that $\wt{D}(\text{across} \,\,A) > 0$ a.s.  Moreover we have that 
\begin{align*}
\wt{D}(\text{across} \,\,A) \leq \liminf_{n \to \infty} \wt{D}_{h^n}^{\xi_{k_n}}(\text{across} \,\,A).
\end{align*}
Let $O_1,O_2$ be two rational circles such that $O_1$ (resp.  $O_2$) is contained in the bounded (resp.  unbounded) connected component of $\C \setminus A$.  Then we have that
\begin{align*}
\wt{D}_{h^n}^{\xi_{k_n}}(\text{across} \,\,A) \leq \wt{D}_{h^n}^{\xi_{k_n}}(O_1 ,  O_2) \quad \text{for all} \quad n \in \N.
\end{align*}
It follows that
\begin{align*}
\wt{D}(\text{across} \,\,A) \leq D(O_1,O_2) < \infty \quad \text{a.s.}
\end{align*}

Now we prove that $\wt{D}(\text{around} \,\,  A) \in (0,\infty)$ a.s.  We start by proving that $\wt{D}(\text{around} \,\,  A) < \infty$ a.s.  We fix deterministic disjoint and connected compact sets $X_j ,  Y_j$ and open sets $U_j \subseteq A$ for $j=1,2$ such that $X_j \cup Y_j \subseteq U_j$ for all $j=1,2$,  and such that the union of a path from $X_1$ to $Y_1$ in $U_1$ and a path from $X_2$ to $Y_2$ in $U_2$ necessarily contains a path around $A$.  Since $\wt{D}(X_j ,  Y_j ; U_j) < \infty$ for all $j=1,2$ a.s.  by Proposition~\ref{prop:bound_on_lqg_distances_of_sets_general} and since $\wt{D}$ is a length metric,  we obtain that it is a.s.  the case that there exist paths $P_1,P_2$ in $U_1$ and $U_2$ respectively such that $P_j$ connects $X_j$ to $Y_j$ and $\len(P_j ; \wt{D}) < \infty$ for all $j=1,2$ a.s.  Let $P$ be the path obtained by concatenating $P_1$ and $P_2$.  Then we have that $P \subseteq A$ and $P$ disconnects the inner from the outer boundary of $A$,  and 
\begin{align*}
\len(P ; \wt{D}) \leq \len(P_1 ; \wt{D}) + \len(P_2 ; \wt{D}) < \infty.
\end{align*}
It follows that $\wt{D}(\text{around} \,\,  A) < \infty$ a.s.

Finally we show that $\wt{D}(\text{around} \,\,  A) > 0$ a.s.  and hence completing the proof of the proposition.  We will first show that it is a.s.  the case that $\wt{D}(K_1,K_2) > 0$ for any disjoint compact sets $K_1,K_2 \subseteq \C$.  Indeed,  fix such sets $K_1,K_2$ and let $(z_n),(w_n)$ be sequences of points in $K_1$ and $K_2$ respectively such that
\begin{align*}
\wt{D}(z_n,w_n) \to \wt{D}(K_1,K_2) \quad \text{as} \quad n \to \infty.
\end{align*}
Possibly by passing into a subsequence,  we can assume that there exist $z \in K_1 ,  w \in K_2$ such that $z_n \to z$ and $w_n \to w$ as $n \to \infty$.  Then since $\wt{D}$ is lower semicontinuous,  we have that
\begin{align*}
0< \wt{D}(z,w) \leq \liminf_{n \to \infty} \wt{D}(z_n ,  w_n) = \wt{D}(K_1 ,  K_2)
\end{align*}
since $\wt{D}$ is a metric.

Suppose that $A = \mathbb{A}_{r_1,r_2}(z)$ for some $0<r_1 < r_2 < \infty$ and $z \in \C$.  Set
\begin{align*}
&K_1 =\{w \in A : w = z + r e^{i\frac{\pi}{2}} ,  r \in [r_1 ,  r_2]\},  \\
&K_2 = \{w \in A : w = z + r e^{i \frac{3\pi}{2}} ,  r \in [r_1 ,  r_2]\}.
\end{align*}
Note that any path in $A$ disconnecting the inner from the outer boundary of $A$ must connect $K_1$ to $K_2$ and so its $\wt{D}$-length has to be at least $\wt{D}(K_1,K_2)$.  Therefore combining with the previous paragraph,  we obtain that $\wt{D}(\text{around} \,\,A) > 0$ a.s.  This completes the proof of the proposition.
\end{proof}

\subsection{The limit is the $\xi$-LQG metric: General case}
\label{subsec:limit_xi_lqg_metric_general_case}

Finally we are ready to prove Theorem~\ref{thm:main_theorem_general_case_intro}.  The main ingredient of its proof is the following theorem.

\begin{theorem}\label{thm:main_theorem_general_case}
Let $h$ be a whole-plane GFF normalized such that $h_1(0) = 0$.  For all $\xi>0$,  we let $\wt{D}_h^{\xi}$ be the metric $D_h^{\xi}$ normalized as in \eqref{eqn:normalization_for_general_xi}.  Fix $\xi>0$ and let $(\xi_n)_{n \in \N}$ be a sequence in $(0,\infty)$ such that $\xi_n \to \xi$ as $n \to \infty$.  Then the sequence of metrics $(\wt{D}_h^{\xi_n})_{n \in \N}$ is tight with respect to the lower semicontinuous topology of functions.  Moreover if $\wt{D}$ is any subsequential limit in law,  we have that $\wt{D}$ is non-trivial in the sense that $\wt{D}(z,w) \in (0,\infty)$ a.s.  for any fixed distinct and  deterministic points $z,w \in \C$.  If in addition we assume that $\wt{D}$ satisfies the triangle inequality a.s.,  then the following is true.  Let $(\xi_{k_n})_{n \in \N}$ be a subsequence of $(\xi_n)_{n \in \N}$ such that 
\begin{align*}
\wt{D}_h^{\xi_{k_n}} \to \wt{D} \quad \text{in law} \quad \text{as} \quad n \to \infty.
\end{align*}
Then there exists a $\xi$-LQG metric $h \to \wt{D}_h$ such that
\begin{align*}
\wt{D}_h^{\xi_{k_n}} \to \wt{D}_h \quad \text{in probability} \quad \text{as} \quad n \to \infty.
\end{align*}
\end{theorem}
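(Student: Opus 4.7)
The plan is to follow the same high-level strategy as in Proposition~\ref{prop:main_prop_subcritical}, with convergence in the lower semicontinuous topology replacing the local uniform topology and with the triangle inequality assumed rather than derived. First I would establish tightness by invoking Proposition~\ref{prop:lower_semicontinuity}: given any subsequence of $(\wt{D}_h^{\xi_n})$, the coupling constructed in Section~\ref{sec:tightness_general_case} produces a further subsequence that converges almost surely in the lower semicontinuous topology to some lower semicontinuous function $\wt{D}$. Non-triviality of any such subsequential limit is then immediate: Lemma~\ref{lem:metric_non_trivial} yields $\wt{D}(z,w)>0$ a.s.\ for fixed distinct deterministic $z,w$, while Lemma~\ref{lem:metric_finite_at_fixed_points} yields $\wt{D}(z,w)<\infty$ a.s.

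Now assume $\wt{D}$ satisfies the triangle inequality a.s.\ and fix a subsequence $(\xi_{k_n})$ along which $\wt{D}_h^{\xi_{k_n}}\to\wt{D}$ in law. Via Skorokhod's representation theorem I would realize this almost surely on a single probability space, passing to a further subsequence if necessary so that the joint convergence of Remark~\ref{rem:tightness_of_internal_metrics_general_case} holds simultaneously for every bounded open $U$ with rational corners. The structural properties of $\wt{D}$ are then read off from already-established results: Propositions~\ref{prop:limit_is_a_metric_general_case} and \ref{prop:complete_and_geodesic_metric_general_xi} show that $\wt{D}$ is a lower semicontinuous metric which is a finite, complete, geodesic metric on $\C\setminus\{\text{singular points}\}$; Proposition~\ref{prop:weyl_scaling_general_case} gives Axiom~\ref{it:weyl_scaling}; and Proposition~\ref{prop:locality_geeneral_case} gives Axiom~\ref{it:locality}. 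Axiom~\ref{it:length_space} is built into the geodesic statement, Axiom~\ref{it:finiteness} follows by passing to the liminf in Proposition~\ref{prop:bound_on_lqg_distances_of_sets_general}, and Axiom~\ref{it:translation_and_scale_invariance} follows by taking $n\to\infty$ in the corresponding identity for $\wt{D}_{h^n}^{\xi_{k_n}}$, exploiting the continuities $\xi_{k_n}\to\xi$, $Q(\xi_{k_n})\to Q(\xi)$ and the local uniform convergence of the circle averages. The map $h\mapsto\wt{D}_h:=\wt{D}$ is then extended from the whole-plane GFF to whole-plane GFF plus continuous function exactly as in the last paragraphs of the proof of Proposition~\ref{prop:main_prop_subcritical}: Weyl scaling for bounded continuous $f$, and for general continuous $f$ by local patching through Axiom~\ref{it:locality}. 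This produces a strong LQG metric with parameter $\xi$ in the sense of Definition~\ref{def:strong_lqg_metric_general}.

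By Theorem~\ref{thm:uniqueness_supercritical}, the LQG metric with parameter $\xi$ is unique up to a deterministic multiplicative constant. To fix this constant so that all subsequential limits agree, I exploit the normalization baked into $\wt{D}_h^{\xi_{k_n}}$: by the joint convergence,
\[
\wt{D}_{h^n}^{\xi_{k_n}}(\text{around}\,\mathbb{A}_{1,2}(0))\;\longrightarrow\;\wt{D}(\text{around}\,\mathbb{A}_{1,2}(0))\quad\text{a.s.,}
\]
and by construction $\p[\wt{D}_{h^n}^{\xi_{k_n}}(\text{around}\,\mathbb{A}_{1,2}(0))\leq 1]=\mathfrak{p}_0(\xi_{k_n})\to\mathfrak{p}_0(\xi)$. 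Provided the distribution of $\wt{D}(\text{around}\,\mathbb{A}_{1,2}(0))$ has no atom at $1$ (an analogue of Lemma~\ref{lem:normalization_well_defined}, to be established in the appendix via absolute continuity under GFF shifts), this forces $\p[\wt{D}(\text{around}\,\mathbb{A}_{1,2}(0))\leq 1]=\mathfrak{p}_0(\xi)$. Combined with the axioms and Theorem~\ref{thm:uniqueness_supercritical}, the multiplicative constant is pinned down, and $\wt{D}$ has the same law as a canonically normalized $\xi$-LQG metric $\wt{D}_h$ that depends only on $\xi$ (not on the chosen subsequence). Hence $\wt{D}_h^{\xi_n}\to\wt{D}_h$ in law.

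Finally, since both $\wt{D}_h^{\xi_n}$ and the limit $\wt{D}_h$ are a.s.\ measurable functions of the same $h$ (the former by the strong-LQG locality axiom, the latter by Proposition~\ref{prop:locality_geeneral_case} applied to the limit map), Lemma~\ref{lem:convergence_in_prob} upgrades the convergence in law to convergence in probability in the separable complete space of lower semicontinuous functions. The hard step is the normalization-stability step in the previous paragraph: Theorem~\ref{thm:uniqueness_supercritical} only identifies the limit up to a constant, so one must verify that the constant selected by the $\mathfrak{p}_0$-median normalization survives the lower semicontinuous limit, which is exactly where the atom-free statement for $\wt{D}(\text{around}\,\mathbb{A}_{1,2}(0))$ at the threshold value is needed and where the argument is genuinely more delicate than in the subcritical case.
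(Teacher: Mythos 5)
Your proposal follows the paper's proof of Theorem~\ref{thm:main_theorem_general_case} quite closely for the parts that are actually required: tightness via Proposition~\ref{prop:lower_semicontinuity}, non-triviality via Lemmas~\ref{lem:metric_finite_at_fixed_points} and~\ref{lem:metric_non_trivial}, the verification of the axioms of Definition~\ref{def:strong_lqg_metric_general} via Propositions~\ref{prop:limit_is_a_metric_general_case}, \ref{prop:complete_and_geodesic_metric_general_xi}, \ref{prop:weyl_scaling_general_case}, \ref{prop:locality_geeneral_case} and limiting arguments for Axioms~\ref{it:translation_and_scale_invariance} and~\ref{it:finiteness}, the extension to $h+f$ as in Proposition~\ref{prop:main_prop_subcritical}, and Lemma~\ref{lem:convergence_in_prob} to upgrade to convergence in probability. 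Up to there the argument is correct and mirrors the paper.

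The step you single out as ``the hard step'' --- pinning down the multiplicative constant via the $\mathfrak{p}_0$-median of $\wt{D}(\text{around}\,\mathbb{A}_{1,2}(0))$ so that \emph{all} subsequential limits coincide --- is not needed for this theorem. The statement fixes a subsequence $(\xi_{k_n})$ and only asserts that \emph{some} $\xi$-LQG metric $\wt{D}_h$ serves as its limit in probability; that metric is allowed to depend on the subsequence, and once the limit is identified as a $\xi$-LQG metric determined by $h$, Lemma~\ref{lem:convergence_in_prob} immediately upgrades the Skorokhod convergence to convergence in probability along that subsequence. The constant-pinning argument is the content of the proof of the finer Theorem~\ref{thm:main_theorem_general_case_intro}.

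Moreover, the constant-pinning argument you sketch has a genuine gap. You assert that $\wt{D}_{h^n}^{\xi_{k_n}}(\text{around}\,\mathbb{A}_{1,2}(0))\to\wt{D}(\text{around}\,\mathbb{A}_{1,2}(0))$ a.s.\ from the coupling. But the functional $d\mapsto d(\text{around}\,A)$ is not continuous with respect to the lower semicontinuous topology, which is precisely why the coupling of Section~\ref{sec:tightness_general_case} and Remark~\ref{rem:tightness_of_internal_metrics_general_case} carries \emph{separate} random variables $D(\text{around}\,A)$ as the a.s.\ limits of the around-distances: nowhere is $D(\text{around}\,A)=\wt{D}(\text{around}\,A)$ established, and Lemma~\ref{lem:condition_II_satisfied_general_case} deliberately works with $D(\text{around}\,\cdot)$ rather than $\wt{D}(\text{around}\,\cdot)$. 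The paper's actual normalization argument in the proof of Theorem~\ref{thm:main_theorem_general_case_intro} avoids this entirely by normalizing with the point-to-point distance $\wh{D}_h^{\xi_{k_n}}(0,1)$, showing via the bounds from Proposition~\ref{prop:lqg_distances_general_case} that it is uniformly well approximated by the circle-to-circle distance $\wh{D}_h^{\xi_{k_n}}(\partial B_r(0),\partial B_r(1))$ as $r\to 0$ and using both upper and lower semicontinuous one-sided convergences on these quantities. If you wanted to make your constant-pinning step rigorous, you would need either to prove $D(\text{around}\,\mathbb{A}_{1,2}(0))=\wt{D}(\text{around}\,\mathbb{A}_{1,2}(0))$ a.s.\ (which is not obvious and is not what the paper does), or to switch to the paper's point-distance normalization argument.
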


\begin{proof}
Fix $\xi > 0$ and let $(\xi_n)_{n \in \N}$ be a sequence in $(0,\infty)$ such that $\xi_n \to \xi$ as $n \to \infty$.  Proposition~\ref{prop:lower_semicontinuity} implies that $(\wt{D}_h^{\xi_n})_{n \in \N}$ is tight with respect to the lower semicontinuous topology.  Let $(\xi_{k_n})_{n \in \N}$ be a subsequence of $(\xi_n)_{n \in \N}$ and suppose that the random fields $(h^n)_{n \in \N}$ are coupled in the same probability space with some function $\wt{D}$ as in Proposition~\ref{prop:lower_semicontinuity} such that
\begin{align*}
\wt{D}_{h^n}^{\xi_{k_n}} \to \wt{D} \quad \text{as} \quad n \to \infty
\end{align*}
with respect to the topology of lower semicontinuous functions.  Then Lemmas~\ref{lem:metric_finite_at_fixed_points} and ~\ref{lem:metric_non_trivial} imply that $\wt{D}(z,w) \in (0,\infty)$ a.s.  for every fixed and distinct points $z,w \in \C$.

Suppose in addition that $\wt{D}$ satisfies the triangle inequality a.s.  Let $h$ be the limit of $h^n$ as $n \to \infty$ and recall that $h$ has the law of a whole-plane GFF normalized so that $h_1(0) = 0$.  Then Propositions~\ref{prop:complete_and_geodesic_metric_general_xi} and ~\ref{prop:locality_geeneral_case} imply that $\wt{D}$ satisfies Axioms~\ref{it:length_space} and ~\ref{it:locality} in the coupling $(h,\wt{D})$. 
Recall also that for fixed $z \in \C ,  r>0$,  Axiom~\ref{it:coordinate_change} implies that 
\begin{align*}
\wt{D}_{h^n}^{\xi_{k_n}}(r\cdot + z,  r \cdot +z) = \wt{D}_{h^n(r \cdot + z) + Q \log(r)}^{\xi_{k_n}} \quad \text{for all} \quad n \in \N \quad \text{a.s.}
\end{align*}
Moreover we have that
\begin{align*}
&\wt{D}_{h^n}^{\xi_{k_n}}(r\cdot + z,  r \cdot +z) \to \wt{D}_h(r \cdot + z,  r \cdot +z) \quad \text{as} \quad n \to \infty,  \\
& \wt{D}_{h^n(r \cdot + z) + Q \log(r)}^{\xi_{k_n}} \to \wt{D}_{h(r \cdot +z) + Q \log(r)} \quad \text{as} \quad n \to \infty
\end{align*}
with respect to the lower semicontinuous topology of functions.  Thus we obtain that $\wt{D}$ satisfies Axiom~\ref{it:coordinate_change} and Proposition~\ref{prop:lqg_distances_general_case} implies that $\wt{D}$ satisfies Axiom~\ref{it:finiteness}.

Next we will construct a $\xi$-LQG metric $h \to \wt{D}_h$ in the same probability space such that $\wt{D} = \wt{D}_h$ a.s.  Recall that Proposition~\ref{prop:weyl_scaling_general_case} implies that it is a.s.  the case that if $f : \C \to \R$ is any bounded and continuous function,  we have that $\wt{D}_{h^n + f}^{\xi_{k_n}}$ converges to $e^{\xi f} \cdot \wt{D}$ as $n \to \infty$ with respect to the lower semicontinuous topology of functions.  Then we set $\wt{D}_{h+f}:=e^{\xi f} \cdot \wt{D}$ whenever $f : \C \to \R$ is bounded and continuous.  Furthermore we can extend the definition of $\wt{D}_{h+f}$ for general continuous functions $f : \C \to \R$ in the same way as in the proof of Proposition~\ref{prop:main_prop_subcritical}.  Then arguing as in the proof of Theorem~\ref{thm:main_theorem_subcritical_intro},  we obtain that $h \to \wt{D}_h$ is indeed an LQG metric with parameter $\xi$.  Finally since $\wt{D}_h$ is a.s.  determined by $h$,  Lemma~\ref{lem:convergence_in_prob} implies that 
\begin{align*}
\wt{D}_h^{\xi_{k_n}} \to \wt{D}_h \quad \text{in probability} \quad \text{as} \quad n \to \infty.
\end{align*}
This completes the proof of the theorem.
\end{proof}

\begin{proof}[Proof of Theorem~\ref{thm:main_theorem_general_case_intro}]
Fix $\xi>0$ and let $(\xi_n)_{n \in \N}$ be a sequence in $(0,\infty)$ such that $\xi_n \to \xi$ as $n \to \infty$.  Then Theorem~\ref{thm:main_theorem_general_case} implies that there exists a subsequence $(\xi_{k_n})_{n \in \N}$ and a coupling of the random fields $(h^n)_{n \in \N}$ in the same probability space with some random function $\wt{D}$ as in Proposition~\ref{prop:lower_semicontinuity} such that a.s.
\begin{align*}
\wt{D}_{h^n}^{\xi_{k_n}} \to \wt{D} \quad \text{as} \quad n \to \infty
\end{align*}
with respect to the topology of lower semicontinuous functions.  For all $n \in \N$,  we let $\wt{\beta}(\xi_n) \in (0,\infty)$ be such that
\begin{align*}
\p\left[\wt{D}_{h^n}^{\xi_{k_n}}(0,1) \leq \wt{\beta}(\xi_n)\right] = \frac{1}{2} 
\end{align*}
and note that $\wh{D}_{h^n}^{\xi_{k_n}} = \wt{\beta}(\xi_n)^{-1} \wt{D}_{h^n}^{\xi_n}$ for all $n \in \N$.  

We claim that there exists $M \in (1,\infty)$ such that $M^{-1} \leq \wt{\beta}(\xi_{k_n}) \leq M$ for all $n \in \N$.  Indeed by Theorem~\ref{thm:main_theorem_general_case} we have that $\wt{D}(0,1)>0$ a.s.  and so there exists $M>1$ such that
\begin{align*}
\p\left[\wt{D}(0,1) > M^{-1}\right] > \frac{1}{2}.
\end{align*}
It follows that
\begin{align*}
\frac{1}{2} <  \p\left[\wt{D}(0,1) > M^{-1} \right] \leq \liminf_{n \to \infty} \p\left[\wt{D}_{h^n}^{\xi_{k_n}}(0,1) \geq M^{-1} \right]
\end{align*}
and hence $\wt{\beta}(\xi_{k_n}) \geq M^{-1}$ for all $n \in \N$ sufficiently large.  Moreover Proposition~\ref{prop:lqg_distances_general_case} implies that $(\wt{D}_{h^n}^{\xi_{k_n}}(0,1))_{n \in \N}$ is tight and so by possibly taking $M$ to be larger,  we have that $\wt{\beta}(\xi_{k_n}) \leq M$ for all $n \in \N$.  It follows that there exists $M>1$ such that
\begin{align*}
M^{-1} \leq \wt{\beta}(\xi_{k_n}) \leq M \quad \text{for all} \quad n \in \N.
\end{align*}
Hence by possibly passing into a subsequence,  we can assume that there exists $\wt{\beta} \in (0,\infty)$ such that $\wt{\beta}(\xi_{k_n}) \to \wt{\beta}$ as $n \to \infty$.  Then we have that $\wh{D}_{h^n}^{\xi_{k_n}} \to \wh{D}$ as $n \to \infty$ a.s.  with respect to the lower semicontinuous topology,  where $\wh{D} = \wt{\beta}^{-1} \wt{D}$.  Therefore Theorem~\ref{thm:main_theorem_general_case} implies that $\wh{D}(z,w) \in (0,\infty)$ a.s.  for every fixed and distinct points $z,w \in \C$.

Suppose further that $\wh{D}$ satisfies the triangle inequality a.s.  Then the same is true for $\wt{D}$ and so Theorem~\ref{thm:main_theorem_general_case} implies that there exists an LQG metric with parameter $\xi>0$ $h \to \wt{D}_h$ such that
\begin{align*}
\wt{D}_h^{\xi_{k_n}} \to \wt{D}_h \quad \text{in probability} \quad \text{as} \quad n \to \infty.
\end{align*}
Note that $h \to \wh{D}_h:=\wt{\beta}^{-1} \wt{D}_h$ is also an LQG metric with parameter $\xi$.  Therefore combining with Theorem~\ref{thm:uniqueness_supercritical},  we obtain that it suffices to show that
\begin{align}\label{eqn:normalizing_constant}
\p\left[\wh{D}_h(0,1) \leq 1 \right] = \frac{1}{2}.
\end{align}

To show \eqref{eqn:normalizing_constant},  we first note that by the convergence with respect to the lower semicontinuous topology,  we have that 
\begin{align*}
\p\left[\wh{D}_h(0,1) > 1\right] \leq \liminf_{n \to \infty} \p\left[\wh{D}_h^{\xi_{k_n}}(0,1) > 1\right] = \frac{1}{2}
\end{align*}
which implies that
\begin{align}\label{eqn:normalization_lower_bound}
\p\left[\wh{D}_h(0,1) \leq 1 \right] \geq \frac{1}{2}.
\end{align}
Recall that the proof of Proposition~\ref{prop:lqg_distances_general_case}  (see in particular \eqref{eqn:centered_prob_bound_fixed_scale} and \eqref{eqn:centered_prob_bound_fixed_scale_uniform}) implies that
\begin{align*}
\sup_{n \in \N} \p\left[|\wh{D}_h^{\xi_{k_n}}(0,1) - \wh{D}_h^{\xi_{k_n}}(\partial B_r(0) ,  \partial B_r(1))| > \epsilon \right] \to 0 \quad \text{as} \quad r \to 0
\end{align*}
for all $\epsilon > 0$.  Fix $\epsilon > 0$ and let $r \in (0,1)$ be such that
\begin{align*}
\p\left[|\wh{D}_h^{\xi_{k_n}}(0,1) - \wh{D}_h^{\xi_{k_n}}(\partial B_r(0) ,  \partial B_r(1))| > \epsilon \right] \leq \epsilon \quad \text{for all} \quad n \in \N.
\end{align*}
Then we have that
\begin{align*}
\p\left[\wh{D}_h^{\xi_{k_n}}(\partial B_r(0) ,  \partial B_r(1)) > 1 -\epsilon \right] \geq \frac{1}{2} - \epsilon \quad \text{for all} \quad n \in \N.
\end{align*}
Also we have that
\begin{align*}
\p\left[\wh{D}_h(0,1) \geq 1 - \epsilon \right] \geq \limsup_{n \to \infty} \p\left[\wh{D}_h^{\xi_{k_n}}(\partial B_r(0) ,  \partial B_r(1)) \geq 1 - \epsilon \right]
\end{align*}
and so 
\begin{align*}
\p\left[\wh{D}_h(0,1) \geq 1 - \epsilon \right] \geq \frac{1}{2} - \epsilon \quad \text{for all} \quad \epsilon > 0.
\end{align*}
Letting $\epsilon \to 0$ implies that $\p\left[\wh{D}_h(0,1) \geq 1 \right] \geq \frac{1}{2}$ and so combining with Lemma~\ref{lem:normalization_well_defined},  we obtain that
\begin{align}\label{eqn:normalization_upper_bound}
\p\left[\wh{D}_h(0,1) > 1\right] \geq \frac{1}{2}.
\end{align}
Therefore \eqref{eqn:normalizing_constant} follows from combining \eqref{eqn:normalization_lower_bound} with \eqref{eqn:normalization_upper_bound}.  This completes the proof of the theorem.
\end{proof}

\section{Tightness: $\xi \to \infty$ case}
\label{sec:xi_to_infty}

The main goal of this section is to prove Theorem~\ref{thm:main_thm_xi_to_infty_intro}.  We are going to follow the same strategy as in Section~\ref{sec:tightness_general_case}.  However since we re-normalize the metrics differently,  we will obtain different estimates.  As in Section~\ref{sec:tightness_general_case},  in order to show that the family of metrics $\wt{D}_h^{\xi}$ in Theorem~\ref{thm:main_thm_xi_to_infty_intro} is tight as $\xi \to \infty$,  we will prove a probability estimate on the $\wt{D}_h^{\xi}$-distances between any two fixed compact and connected sets which are not singletons,  which is uniform in $\xi$ as $\xi \to \infty$ (see Proposition~\ref{prop:bound_on_lqg_distances_of_sets_xi_to_infty}).  Once we prove these estimates,  we will use the argument in Section~\ref{sec:tightness_general_case} to construct subsequential limits of $\wt{D}_h^{\xi}$ as $\xi \to \infty$.  Moreover we will show that any subsequential limit is non-trivial a.s.  (see Lemmas~\ref{lem:limit_not_trivial_almost_surely} and ~\ref{lem:limit_finite_xi_to_infty}).  Finally we will show that if in addition we make the assumption that a subsequential limit satisfies the triangle inequality a.s.,  then it has to be a metric a.s.  (see Proposition~\ref{prop:limit_is_a_metric_xi_to_infty}) and hence completing the proof of Theorem~\ref{thm:main_thm_xi_to_infty_intro}.

Now we define the re-normalization of the metrics $D_h^{\xi}$ that we are going to use.  We fix $b \in (0,\frac{1}{2\sqrt{2}})$ and let $M>1$ be sufficiently large such that
\begin{align}\label{eqn:condition_for_M_xi_to_infty}
b \sqrt{M} > 1 \quad \text{and} \quad \frac{b 2 \sqrt{2} \sqrt{4+M}}{\sqrt{M}} < 1.
\end{align}
Let also $\mathfrak{p} \in (0,1)$ be the constant in the statement of Lemma~\ref{lem:good_event_occurs_almost_everywhere} corresponding to $M$.  Moreover for all $\xi \in (1,\infty)$,  we let $\alpha(\xi) \in (0,\infty)$ be such that
\begin{align}\label{eqn:main_normalization_xi_to_infty}
\p\left[D_h^{\xi}(\text{around} \,\,  \mathbb{A}_{1,2}(0)) \leq \alpha(\xi) \right] = \mathfrak{p}
\end{align}
and set 
\begin{align}\label{eqn:normalization_xi_to_infty}
\wt{D}_h^{\xi}:=(\alpha(\xi)^{-1} D_h^{\xi})^{1/\xi}. 
\end{align}
Note that $\wt{D}_h^{\xi}$ is a metric for all $\xi \geq 1$.

Let $\xi_0 \in (1,\infty)$ be such that
\begin{align}\label{eqn:condition_for_xi}
\frac{b}{\sqrt{M}}\left(2 \sqrt{2} \sqrt{4+M} - Q(\xi) + \frac{4}{\xi} \right) < 1 \quad \text{for all} \quad \xi \geq \xi_0,
\end{align}
and note that $\xi_0$ is well-defined since \cite[Proposition~2.5]{ding2020tightness} implies that $\lim_{\xi \to \infty} Q(\xi) = 0$.  Then we have the following analogue of Proposition~\ref{prop:bound_on_lqg_distances_of_sets_general}.

\begin{proposition}\label{prop:bound_on_lqg_distances_of_sets_xi_to_infty}
Suppose that we have the setup described above and fix $\xi \in [\xi_0 ,  \infty)$.  Let $U \subseteq \C$ be an open and connected set and let $K_1,K_2 \subseteq U$ be two connected,  disjoint compact sets which are not singletons.  Then for all $\mathfrak{r}>0$,  it holds with probability at least $1 - O_A(A^{-b \sqrt{M}})$ as $A \to \infty$ at a rate which depends only on $K_1,K_2$ and $U$ and it is uniform in the choice of $\mathfrak{r}$,  that 
\begin{align*}
\wt{D}_h^{\xi}(\mathfrak{r} K_1 ,  \mathfrak{r} K_2 ; \mathfrak{r} U) \leq A \mathfrak{r}^{Q(\xi)} e^{h_{\mathfrak{r}}(0)}.
\end{align*}
\end{proposition}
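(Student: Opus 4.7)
The plan is to adapt the argument of Proposition~\ref{prop:bound_on_lqg_distances_of_sets} almost verbatim, working with the unrescaled metric $D_h^\xi$ up to the penultimate step and then taking the $\tfrac{1}{\xi}$-th power, using condition~\eqref{eqn:condition_for_xi} to verify the exponent estimates remain valid uniformly in $\xi \geq \xi_0$.

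First, for each $z \in \C$, $r > 0$, and $\xi \geq \xi_0$, I would define
\[
E_r^\xi(z) := \bigl\{D_h^\xi(\text{around }\mathbb{A}_{r,2r}(z)) \leq \alpha(\xi)\,r^{\xi Q(\xi)} e^{\xi h_r(z)}\bigr\}.
\]
By Weyl scaling and the coordinate change axiom (Axioms~\ref{it:weyl_scaling} and \ref{it:translation_and_scale_invariance}) together with the normalization~\eqref{eqn:main_normalization_xi_to_infty}, $\p[E_r^\xi(z)] = \mathfrak{p}$ uniformly in $z$, $r$, and $\xi$. With the fixed $M$ from~\eqref{eqn:condition_for_M_xi_to_infty} and this $\mathfrak{p}$, Lemma~\ref{lem:good_event_occurs_almost_everywhere} gives the following off an event of probability at most $\epsilon^M$: for every $z \in B_{\mathfrak{r}\epsilon^{-M}}(0)$ there exist $w \in B_{\mathfrak{r}\epsilon^{-M}}(0) \cap (\tfrac{\epsilon^2\mathfrak{r}}{4}\Z^2)$ and $r \in [\epsilon^2\mathfrak{r},\epsilon\mathfrak{r}] \cap \{2^{-k}\mathfrak{r}\}$ with $z \in B_{\mathfrak{r}\epsilon^2/2}(w)$ and $E_r^\xi(w)$ occurring.

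Second, Lemma~\ref{lem:upper_bound_on_circle_averages} applied with $\nu = 1$, $q = 2\sqrt{2}\sqrt{4+M}$ bounds the fluctuation $|h_r(w) - h_\mathfrak{r}(0)|$ by $2\sqrt{2}\sqrt{4+M}\log(\epsilon^{-1})$ uniformly over the relevant $w,r$, off a further event with polynomial tail. Concatenating the paths around the annuli $\mathbb{A}_{r,2r}(w)$ witnessing $E_r^\xi(w)$, as in Lemma~\ref{lem:connecting_compact_sets_with_good_annuli}, yields a path from $\mathfrak{r} K_1$ to $\mathfrak{r} K_2$ in $\mathfrak{r} U$ consisting of at most $\epsilon^{-4 - o_\epsilon(1)}$ pieces, whose total $D_h^\xi$-length is at most
\[
\epsilon^{-4-o_\epsilon(1)} \sup\bigl\{\alpha(\xi)\,r^{\xi Q(\xi)} e^{\xi h_r(w)}\bigr\} \leq \alpha(\xi)\,\epsilon^{-4 + \xi Q(\xi) - \xi\, 2\sqrt{2}\sqrt{4+M} - o_\epsilon(1)} \mathfrak{r}^{\xi Q(\xi)} e^{\xi h_\mathfrak{r}(0)}.
\]
Raising both sides to the $\tfrac{1}{\xi}$-th power and invoking~\eqref{eqn:normalization_xi_to_infty} yields
\[
\wt{D}_h^\xi(\mathfrak{r} K_1, \mathfrak{r} K_2; \mathfrak{r} U) \leq \epsilon^{-4/\xi + Q(\xi) - 2\sqrt{2}\sqrt{4+M} - o_\epsilon(1)/\xi}\, \mathfrak{r}^{Q(\xi)} e^{h_\mathfrak{r}(0)}.
\]

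Finally, I would set $\epsilon = A^{-b/\sqrt{M}}$, so that $\epsilon^M = A^{-b\sqrt{M}}$. Condition~\eqref{eqn:condition_for_xi} asserts precisely that
$\tfrac{b}{\sqrt{M}}\bigl(2\sqrt{2}\sqrt{4+M} - Q(\xi) + 4/\xi\bigr) < 1$
for every $\xi \geq \xi_0$, which allows the leading exponent $\tfrac{4}{\xi} + 2\sqrt{2}\sqrt{4+M} - Q(\xi)$ to be absorbed into $A$ (the $o_\epsilon(1)/\xi$ correction is negligible as $A \to \infty$). This delivers $\wt{D}_h^\xi(\mathfrak{r} K_1, \mathfrak{r} K_2; \mathfrak{r} U) \leq A\, \mathfrak{r}^{Q(\xi)} e^{h_\mathfrak{r}(0)}$ off an event of probability $O(A^{-b\sqrt{M}})$, with the rate depending only on $K_1, K_2, U$ through the covering step. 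The main obstacle, and the reason for introducing~\eqref{eqn:condition_for_xi}, is that in the present setting $\xi$ is unbounded as $\xi \to \infty$, so unlike in the subcritical case one must verify that the $\tfrac{1}{\xi}$-powered exponent survives the limit; condition~\eqref{eqn:condition_for_M_xi_to_infty} is designed so that the $\xi = \infty$ limit of~\eqref{eqn:condition_for_xi} holds, and $\xi_0$ is chosen so that it also holds for all finite $\xi \geq \xi_0$.
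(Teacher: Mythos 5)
Your proposal is correct and follows essentially the same route as the paper's proof: define the events $E_r^\xi(z)$ in terms of the unrescaled metric $D_h^\xi$, run the covering argument from Lemma~\ref{lem:good_event_occurs_almost_everywhere} and Lemma~\ref{lem:connecting_compact_sets_with_good_annuli} exactly as in Proposition~\ref{prop:bound_on_lqg_distances_of_sets}, control circle averages via Lemma~\ref{lem:upper_bound_on_circle_averages}, take the $\tfrac{1}{\xi}$-th power at the end, and invoke~\eqref{eqn:condition_for_xi} to absorb the exponent into $A$. The only small omission is the reduction from general (possibly unbounded) $U$ to bounded $U$ via monotonicity of the internal metric, which the paper handles the same way as in the subcritical case; this does not affect the substance of the argument.
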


\begin{proof}
The argument is very similar to the argument in the proof of Proposition~\ref{prop:bound_on_lqg_distances_of_sets}.  As in the proof of Proposition~\ref{prop:bound_on_lqg_distances_of_sets},  we can assume that $U$ is bounded and let $R>1$ be such that $U \subseteq B_R(0)$.  Also for $z \in \C,  r>0$,  we let $E_r^{\xi}(z)$ be the event that
\begin{align*}
D_h^{\xi}(\text{around} \,\,  \mathbb{A}_{r,2r}(z)) \leq \alpha(\xi) r^{\xi Q(\xi)} e^{\xi h_r(z)}.
\end{align*}
Note that the choice of $\alpha(\xi)$ combined with the Axioms of a strong LQG metric with parameter $\xi$ (Definition~\ref{def:strong_lqg_metric_general}) imply that
\begin{align*}
\p[E_r^{\xi}(z)] = \mathfrak{p} \quad \text{for all} \quad z \in \C,  r >0.
\end{align*}

For all $\epsilon \in (0,1),  \mathfrak{r}>0$,  we let $F_{\mathfrak{r}}^{\epsilon}$ be the event that the following is true.  For all $z \in B_{\mathfrak{r} \epsilon^{-M}}(0)$,  there exist $r \in [\epsilon^2 \mathfrak{r} ,  \epsilon \mathfrak{r}] \cap \{2^{-k} \mathfrak{r}\}_{k \in \N},   w \in B_{\mathfrak{r} \epsilon^{-M}}(0) \cap \left(\frac{\epsilon^2 \mathfrak{r}}{4} \Z^2\right)$ such that $E_r^{\xi}(w)$ occurs and $z \in B_{\frac{\mathfrak{r} \epsilon^2}{2}}(w)$.  Then arguing as in Lemma~\ref{lem:good_event_occurs_almost_everywhere},  we obtain that
\begin{align*}
\p[F_{\mathfrak{r}}^{\epsilon}] \geq 1 - \epsilon^M \quad \text{as} \quad \epsilon \to 0,
\end{align*}
at a universal rate.  Moreover if $F_{\mathfrak{r}}^{\epsilon}$ occurs,  we let $P_{w,r}$ be a path with $D_h^{\xi}$-length at most $\alpha(\xi) r^{\xi Q(\xi)} e^{\xi h_r(0)}$ disconnecting $\partial B_r(w)$ from $\partial B_{2r}(w)$ and chosen according to some arbitrary and fixed measurable way,  for all $r \in [\epsilon^2 \mathfrak{r} ,  \epsilon \mathfrak{r}] \cap \{2^{-k} \mathfrak{r}\}_{k \in \N}$ such that $E_r^{\xi}(w)$ occurs.  Note that the argument in Lemma~\ref{lem:connecting_compact_sets_with_good_annuli} implies that if $F_{\mathfrak{r}}^{\epsilon}$ occurs,  we have that the union of paths $P_{w,r}$ for $w \in B_{\epsilon \mathfrak{r}}(\mathfrak{r} U) \cap \left(\frac{\epsilon^2 \mathfrak{r}}{4} \Z^2\right)$ and $r \in [\epsilon^2 \mathfrak{r} ,  \epsilon \mathfrak{r} ] \cap \{2^{-k} \mathfrak{r}\}_{k \in \N}$ such that $E_r^{\epsilon}(w)$ occurs contains a path from $\mathfrak{r} K_1$ to $\mathfrak{r} K_2$ which is contained in $\mathfrak{r} U$.  Henceforth we assume that this event occurs.

Note that the number of paths $P_{w,r}$ as above is at most $\epsilon^{-4 - o_{\epsilon}(1)}$ as $\epsilon \to 0$ (at a rate which depends only on $K_1,K_2$ and $U$).  Recall also that Lemma~\ref{lem:upper_bound_on_circle_averages} implies that there exists a constant $C>1$ depending only on $R$ such that for all $\epsilon \in (0,1)$,  off an event with probability at most $C \epsilon^{\frac{4+M}{1 + C \log(\epsilon^{-1})^{-1}}}$,  it holds that
\begin{align*}
\sup\left\{|h_r(w) - h_{\mathfrak{r}}(0)| : w \in B_{R \mathfrak{r}}(0) \cap \left(\frac{\epsilon^2 \mathfrak{r}}{4} \Z^2 \right),  r \in [\epsilon^2 \mathfrak{r} ,  \epsilon \mathfrak{r}]\right\} \leq 2 \sqrt{2} \sqrt{4+M} \log(\epsilon^{-1}).
\end{align*}
Combining we obtain that off an event with probability at most $\epsilon^M$ as $\epsilon \to 0$ (at a rate which depends only on $K_1,K_2$ and $U$),  we have that
\begin{align*}
D_h^{\xi}(\mathfrak{r} K_1 ,  \mathfrak{r} K_2 ; \mathfrak{r} U) &\leq \epsilon^{-4 - o_{\epsilon}(1)} \sup\left\{\len(P_{w,r} ;  D_h^{\xi}) : w \in B_{\epsilon \mathfrak{r}}(\mathfrak{r} U) \cap \left(\frac{\epsilon^2 \mathfrak{r}}{4} \Z^2\right) ,  r \in [\epsilon^2 \mathfrak{r} ,  \epsilon \mathfrak{r}] \cap \{2^{-k} \mathfrak{r}\}_{k \in \N} \right\}\\
&\leq \alpha(\xi) \epsilon^{-4 - o_{\epsilon}(1)} \sup\left\{r^{\xi Q(\xi)} e^{\xi h_r(w)} : w \in B_{\epsilon \mathfrak{r}}(\mathfrak{r} U) \cap \left(\frac{\epsilon^2 \mathfrak{r}}{4} \Z^2\right),  r \in [\epsilon^2 \mathfrak{r} , \epsilon \mathfrak{r}] \cap \{2^{-k} \mathfrak{r}\}_{k \in \N}\right\}\\
&\leq \alpha(\xi) \epsilon^{-4 +\xi Q(\xi)- \xi 2 \sqrt{2} \sqrt{4+M} - o_{\epsilon}(1)} \mathfrak{r}^{\xi Q(\xi)} e^{\xi h_{\mathfrak{r}}(0)}
\end{align*}
which implies that
\begin{align*}
\wt{D}_h^{\xi}(\mathfrak{r} K_1 ,  \mathfrak{r} K_2 ; \mathfrak{r} U) \leq \epsilon^{-\frac{4}{\xi} +Q(\xi)-2\sqrt{2} \sqrt{4+M} - o_{\epsilon}(1)} \mathfrak{r}^{Q(\xi)} e^{h_{\mathfrak{r}}(0)}.
\end{align*}

As in the proof of Proposition~\ref{prop:bound_on_lqg_distances_of_sets},  given $A>1$,  we set $\epsilon = A^{-\frac{b}{\sqrt{M}}}$.  Recall that \eqref{eqn:condition_for_xi} implies that
\begin{align*}
\epsilon^{-\frac{4}{\xi} +Q(\xi)-2\sqrt{2} \sqrt{4+M}} = A^{\frac{b}{\sqrt{M}}(2 \sqrt{2} \sqrt{4+M} -Q(\xi)+ \frac{4}{\xi})} < A
\end{align*}
and so 
\begin{align*}
\wt{D}_h^{\xi}(\mathfrak{r} K_1 ,  \mathfrak{r} K_2 ; \mathfrak{r} U) \leq A \mathfrak{r}^{Q(\xi)} e^{h_{\mathfrak{r}}(0)}
\end{align*}
if $A>1$ is sufficiently large (depending only on $K_1,K_2$,  and $U$).  This completes the proof of the proposition.
\end{proof}

Let us now use Proposition~\ref{prop:bound_on_lqg_distances_of_sets_xi_to_infty} to construct subsequential limits of $\wt{D}_h^{\xi}$.  Let $(\xi_n)_{n \in \N}$ be a sequence in $(1,\infty)$ such that $\xi_n \to \infty$ as $n \to \infty$.  By arguing as in Section~\ref{sec:tightness_general_case} and applying Proposition~\ref{prop:bound_on_lqg_distances_of_sets_xi_to_infty} countably many times,  we obtain that we can find a subsequence $(\xi_{k_n})_{n \in \N}$ of $(\xi_n)_{n \in \N}$ and a coupling of the GFFs $(h^n)_{n \in \N}$ with a GFF $h$ and random variables $D(O_1,O_2)$ for rational circles $O_1,O_2$ and random variables $D(\text{around} \,\,  A)$ for rational annuli $A$,  such that both $h^n$ and $h$ have the law of a whole-plane GFF normalized such that its average on $\partial B_1(0)$ is equal to zero and such that a.s.
\begin{align*}
&\wt{D}_{h^n}^{\xi_{k_n}}(O_1,O_2) \to D(O_1,O_2) \quad \text{for all rational circles} \quad O_1,O_2,\\
&\wt{D}_{h^n}^{\xi_{k_n}}(\text{around} \,\,A) \to D(\text{around} \,\,A) \quad \text{for all rational annuli} \quad A,
\end{align*}
and the functions $(z,r) \to h^n_r(z)$ defined on $\C \times (0,\infty)$ converge to $(z,r) \to h_r(z)$ with respect to the local uniform topology.

Then arguing as in Section~\ref{sec:tightness_general_case} ,  we can construct a lower semicontinuous function $\wt{D} : \C \times \C \to \R \cup \{\pm \infty\}$ coupled with $h$ as in \eqref{eqn:definition_of_supercritical_metric}.  

\begin{rem}\label{rem:metric_well_defined}
We note that as in  \cite[Section~5.2]{ding2020tightness},  the key observation behind the definition of $\wt{D}$ (see \eqref{eqn:definition_of_supercritical_metric}) is the following.  If $O_z,O_z'$ are rational circles surrounding $z$ and $O_w,O_w'$ are rational circles surrounding $w$ such that $O_z \cap O_w  =\emptyset$,  $O_z$ surrounds $O_z'$,  and $O_w$ surrounds $O_w'$,  then we have that
\begin{align}\label{eqn:monotonicity_property_limit}
D(O_z ,  O_w) \leq D(O_z' ,  O_w').
\end{align}
Indeed this follows since $(\wt{D}_{h^n}^{\xi_{k_n}})^{\xi_{k_n}}$ is a length metric for all $n$ and so 
\begin{align*}
(\wt{D}_{h^n}^{\xi_{k_n}}(O_z,O_w))^{\xi_{k_n}} \leq (\wt{D}_{h^n}^{\xi_{k_n}}(O_z' ,  O_w'))^{\xi_{k_n}} \quad \text{for all} \quad n \in \N
\end{align*}
which implies that
\begin{align}\label{eqn:monotonicity_property}
\wt{D}_{h^n}^{\xi_{k_n}}(O_z,O_w) \leq \wt{D}_{h^n}^{\xi_{k_n}}(O_z' ,  O_w') \quad \text{for all} \quad n \in \N.
\end{align}
Thus \eqref{eqn:monotonicity_property_limit} follows from \eqref{eqn:monotonicity_property} by taking $n \to \infty$.  Note also that $\wt{D}_{h^n}^{\xi_{k_n}}$ is not necessarily a length metric but \eqref{eqn:monotonicity_property_limit} still holds and so this allows us to define the limiting metric $\wt{D}$ as in \eqref{eqn:definition_of_supercritical_metric}.
\end{rem}

\,\,\,\,\,We have the following analogue of Proposition~\ref{prop:lower_semicontinuity}.

\begin{proposition}\label{prop:lower_semicontinuity_xi_to_infty}
In the above coupling,  we have that
\begin{align*}
\wt{D}_{h^n}^{\xi_{k_n}} \to \wt{D} \quad \text{as} \quad n \to \infty
\end{align*}
with respect to the topology on lower semicontinuous functions.  
\end{proposition}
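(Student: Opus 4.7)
The plan is to mimic the proof of Proposition~\ref{prop:lower_semicontinuity} (equivalently, \cite[Proposition~5.6]{ding2020tightness}), checking the two conditions of Definition~\ref{def:lower_semicontinuous_topology} separately. The only new twist is that $\wt{D}_h^{\xi}$ is not a length metric, since taking the $(1/\xi)$-th power destroys additivity of lengths along concatenated paths. However, the monotonicity observation recorded in Remark~\ref{rem:metric_well_defined} (together with the fact that $D_h^{\xi}$ itself is a length metric) is exactly the substitute we need, so the entire argument of Proposition~\ref{prop:lower_semicontinuity} goes through.

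For the liminf bound, I would fix $(z_n,w_n) \to (z,w)$ with $z \neq w$ and take any pair of disjoint rational circles $O_z, O_w$ surrounding $z$ and $w$, both with small radii. For $n$ sufficiently large, $z_n$ lies in the bounded component of $\C \setminus O_z$ and similarly for $w_n$ and $O_w$, so any path between them crosses both circles. Since $D_h^{\xi_{k_n}}$ is a length metric, this gives $D_{h^n}^{\xi_{k_n}}(z_n,w_n) \geq D_{h^n}^{\xi_{k_n}}(O_z,O_w)$, and raising both sides to the power $1/\xi_{k_n}$ yields $\wt{D}_{h^n}^{\xi_{k_n}}(z_n,w_n) \geq \wt{D}_{h^n}^{\xi_{k_n}}(O_z,O_w)$. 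Passing to the limit, $\liminf_{n\to\infty} \wt{D}_{h^n}^{\xi_{k_n}}(z_n,w_n) \geq D(O_z,O_w)$. The definition of $\wt{D}(z,w)$ together with the monotonicity in Remark~\ref{rem:metric_well_defined} (which shows the limit defining $\wt{D}(z,w)$ is an increasing supremum as the circles shrink) then lets us send $O_z \downarrow z$ and $O_w \downarrow w$ to conclude $\liminf_{n\to\infty} \wt{D}_{h^n}^{\xi_{k_n}}(z_n,w_n) \geq \wt{D}(z,w)$. The case $z=w$ is handled by taking $z_n = w_n$ if necessary and using $\wt{D}(z,z)=0$.

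For the existence of an approximating sequence, given $(z,w)$ with $z \neq w$, I would construct a diagonal sequence as follows. Pick a nested sequence of pairs of disjoint rational circles $(O_z^m, O_w^m)$ with radii shrinking to zero as $m \to \infty$. For each $m$, since the $D_h^{\xi_{k_n}}$-distance between the compact circles $O_z^m$ and $O_w^m$ is attained (or nearly attained), we can pick points $\wt{z}_n^m \in O_z^m, \wt{w}_n^m \in O_w^m$ such that $\wt{D}_{h^n}^{\xi_{k_n}}(\wt{z}_n^m,\wt{w}_n^m) \to D(O_z^m, O_w^m)$ as $n \to \infty$ (by the assumed a.s.\ convergence of $\wt{D}_{h^n}^{\xi_{k_n}}(O_z^m,O_w^m)$ to $D(O_z^m,O_w^m)$). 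By a standard diagonal extraction we can select $m=m(n) \to \infty$ slowly enough that $(\wt{z}_n^{m(n)}, \wt{w}_n^{m(n)}) \to (z,w)$ and $\wt{D}_{h^n}^{\xi_{k_n}}(\wt{z}_n^{m(n)},\wt{w}_n^{m(n)}) \to \lim_{m\to\infty} D(O_z^m,O_w^m) = \wt{D}(z,w)$, which is the required sequence. When $z=w$, simply taking $z_n = w_n = z$ works.

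The only step that requires any care is the choice of the near-minimizing points $\wt{z}_n^m, \wt{w}_n^m$ on the compact rational circles, which must be consistent with the almost sure limits we have fixed; this is handled by lower semicontinuity of $D_{h^n}^{\xi_{k_n}}$ combined with compactness of circles, and no new input is needed. In particular there is no serious obstacle: all heavy lifting (the uniform-in-$\xi$ upper bound of Proposition~\ref{prop:bound_on_lqg_distances_of_sets_xi_to_infty} used to construct the coupling, together with the length-metric monotonicity of $D_h^{\xi}$) has already been done before the statement of the proposition.
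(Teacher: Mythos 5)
Your proposal is correct and follows essentially the same route as the paper: the paper's proof of Proposition~\ref{prop:lower_semicontinuity_xi_to_infty} is a one-line appeal to the argument of \cite[Proposition~5.6]{ding2020tightness}, with Remark~\ref{rem:metric_well_defined} already having supplied exactly the monotonicity substitute for the length-metric property that you identify as the only new twist, and your verification of the two conditions of Definition~\ref{def:lower_semicontinuous_topology} via shrinking rational circles and a diagonal extraction is the content of that cited argument.
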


\begin{proof}
It follows from the same argument as in the proof of Proposition~\ref{prop:lower_semicontinuity}.
\end{proof}

\begin{rem}
We note that arguing as in Remark~\ref{rem:tightness_of_internal_metrics_general_case} and the proof of Proposition~\ref{prop:lower_semicontinuity_xi_to_infty},  we obtain that for every open and connected set $U \subseteq \C$,  the collection of random variables $\{\wt{D}_{h^n}^{\xi_n}(\cdot ,  \cdot ; U)\}_{n \in \N}$ is tight with respect to the lower semicontinuous topology of functions on $U$.
\end{rem}

\,\,\,\,\,\,\,Next we mention some of the properties of the limiting metric $\wt{D}$ in the coupling $(h,\wt{D})$.  First we note that the Axioms of strong LQG metrics imply that for fixed $z \in \C ,  r>0$,
\begin{align*}
\wt{D}_{h^n(\cdot + z) - h^n_1(z)}^{\xi_{k_n}} = e^{-h^n_1(z)} \wt{D}_{h^n}^{\xi_{k_n}}(\cdot + z ,  \cdot + z) \quad \text{a.s.}
\end{align*}
Also we have that the random variables $\wt{D}_{h^n}^{\xi_{k_n}}$ and $\wt{D}_{h^n(\cdot + z) - h^n_1(z)}^{\xi_{k_n}}$ have the same law and that
\begin{align*}
e^{-h^n_1(z)} \wt{D}_{h^n}^{\xi_{k_n}}(\cdot + z ,  \cdot + z) \to e^{-h_1(z)} \wt{D}(\cdot + z ,  \cdot +z) \quad \text{as} \quad n \to \infty
\end{align*}
with respect to the lower semicontinuous topology.  It follows that
\begin{align}\label{eqn:translation_invariance_of_the_limit}
\wt{D} = e^{-h_1(z)} \wt{D}(\cdot + z ,  \cdot +z) \quad \text{in law}.
\end{align}

Moreover we have that
\begin{align*}
\wt{D}_{h^n(r \cdot) - h^n_r(0)}^{\xi_{k_n}} = r^{-Q(\xi_{k_n})} e^{-h^n_r(0)} \wt{D}_{h^n}^{\xi_{k_n}}(r \cdot ,  r \cdot)
\end{align*}
and note that the random metrics $\wt{D}_{h^n(r \cdot) - h^n_r(0)}^{\xi_{k_n}}$ and $\wt{D}_{h^n}^{\xi_{k_n}}$ have the same law.  Since 
\begin{align*}
r^{-Q(\xi_{k_n})} e^{-h^n_r(0)} \wt{D}_{h^n}^{\xi_{k_n}}(r \cdot ,  r \cdot)
\to e^{-h_r(0)} \wt{D}(r \cdot ,  r \cdot) \quad \text{as} \quad n \to \infty
\end{align*}
with respect to the lower semicontinuous topology,  we obtain that
\begin{align}\label{eqn:scaling_invariance_of_the_limit}
\wt{D} = e^{-h_r(0)} \wt{D}(r \cdot ,  r \cdot) \quad \text{in law}.
\end{align}

Furthermore we fix $\theta \in [0,2\pi)$.  Then \cite[Proposition~1.9]{ding2023uniqueness} implies that
\begin{align*}
\wt{D}_{h^n}^{\xi_{k_n}}(e^{i\theta} \cdot ,  e^{i\theta} \cdot) = \wt{D}_{h^n(e^{i\theta} \cdot)}^{\xi_{k_n}} \quad \text{a.s.}
\end{align*}
Since the random fields $h^n(e^{i\theta} \cdot)$ and $h^n$ have the same law and 
\begin{align*}
\wt{D}_{h^n}^{\xi_{k_n}}(e^{i\theta} \cdot ,  e^{i\theta} \cdot) \to \wt{D}(e^{i\theta} \cdot ,  e^{i\theta} \cdot) \quad \text{as} \quad n \to \infty
\end{align*}
with respect to the lower semicontinuous topology a.s.,  it follows that
\begin{align}\label{eqn:rotational_invariance_of_the_limit}
\wt{D} = \wt{D}(e^{i\theta} \cdot ,  e^{i\theta} \cdot) \quad \text{in law}.
\end{align}

Now we focus on proving the following.

\begin{proposition}\label{prop:limit_is_a_metric_xi_to_infty}
Let $\wt{D}$ be the limit in the statement of Proposition~\ref{prop:lower_semicontinuity_xi_to_infty}.  Then if we assume that $\wt{D}$ satisfies the triangle inequality a.s.,  we have that $\wt{D}$ is a metric a.s.
\end{proposition}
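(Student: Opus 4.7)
The proof follows the template of Proposition~\ref{prop:limit_is_a_metric_general_case}. Under the triangle inequality assumption, symmetry, non-negativity, and $\wt{D}(z, z) = 0$ are immediate from \eqref{eqn:definition_of_supercritical_metric}, so it suffices to show that a.s.\ $\wt{D}(z, w) > 0$ for every pair of distinct points $z, w \in \C$. Following Lemma~\ref{lem:metric_non_trivial} and the subsequent combination of Lemmas~\ref{lem:uniform_lower_bound_across_annuli_general_case} and \ref{lem:crossing_positive_general_case}, the plan splits into three pieces: (i) a uniform lower bound $\p[\wt{D}_{h^n}^{\xi_{k_n}}(\text{across}\,\mathbb{A}_{s_1, s_2}(0)) > c] \geq p$ for some $c, p > 0$ and $0 < s_1 < s_2$, uniform in $n$; (ii) the $0$--$1$ law $\p[\wt{D}(\text{across}\,\mathbb{A}_{r, 2r}(0)) > 0] \in \{0, 1\}$ for each fixed $r > 0$; (iii) the passage from crossing-distance positivity to $\wt{D}(z, w) > 0$ for every distinct pair.

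For (iii), once (i) is in hand, the argument of Lemma~\ref{lem:metric_non_trivial} adapts directly to the $\xi \to \infty$ normalization: the scaling factor in the event $E_{r_k}^n$ becomes $r_k^{Q(\xi_n)} e^{h_{r_k}^n(z)}$, and since $Q(\xi_n) \to 0$ we have $r_k^{Q(\xi_n)} \to 1$, so the limiting lower bound $c \cdot e^{h_{r_k}(z)} > 0$ a.s.\ is inherited by $\wt{D}(z, w)$ through the second part of the lower-semicontinuous convergence (choosing $(z_n, w_n) \to (z, w)$ with $\wt{D}_{h^n}^{\xi_{k_n}}(z_n, w_n) \to \wt{D}(z, w)$). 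For (ii), the argument of Lemma~\ref{lem:crossing_positive_general_case} goes through essentially verbatim: set $E_j = \{x \in \C : \wt{D}(x, \partial B_{r_j}(0)) = 0\}$ with $r_1 = r, r_2 = 2r$; the triangle inequality identifies $\mathcal{Z} = \{E_1 \cap E_2 = \emptyset\}$; the tightness of internal metrics on rational dyadic domains (remark after Proposition~\ref{prop:lower_semicontinuity_xi_to_infty}), together with the Markov decomposition $h = h_U^0 + \Fh_U$ and the bi-Lipschitz comparison between $\wt{D}_h^{\xi}(\cdot, \cdot; U)$ and its zero-boundary analogue (which follows from applying Axiom~\ref{it:weyl_scaling} to $D_h^{\xi}$ before taking $1/\xi$-th powers, so that the Lipschitz constants are $e^{\min_U \Fh_U}$ and $e^{\max_U \Fh_U}$), delivers the independence of $\{E_1 \subseteq U_1\}$ and $\{E_2 \subseteq U_2\}$ for disjoint dyadic $U_1, U_2$ required for $\p[\mathcal{Z}] \in \{0, 1\}$.

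Step (i) is the heart of the proof and contains the main obstacle. The analogue of Lemma~\ref{lem:uniform_lower_bound_between_points_general_case}---a uniform lower bound on distances between specific pairs of points---follows from the same argument using the uniform input $\p[\wt{D}_{h^n}^{\xi_{k_n}}(\text{around}\,\mathbb{A}_{1,2}(0)) > 1] = 1 - \mathfrak{p}$. To promote this pointwise estimate to a uniform lower bound on the crossing distance, I will adapt Lemma~\ref{lem:uniform_lower_bound_across_annuli_general_case}: cover the two boundary circles of $\mathbb{A}_{s_1, s_2}(0)$ by balls of radius $s_1 2^{-m}$, bound each ball-to-ball distance from below by the pointwise estimate minus diameter terms, and assemble the intersection event via the white-noise coupling \eqref{eqn:comparison_between_regularizations} together with the positive-associativity inequality from \cite[Theorem~2.3]{dubedat2020liouville}. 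The hard part is controlling the $\wt{D}_{h^n}^{\xi_{k_n}}$-diameters of these small balls uniformly in $n$: Proposition~\ref{prop:bound_on_lqg_distances_of_sets_xi_to_infty} gives a diameter estimate scaling like $(s_1 2^{-m})^{Q(\xi_n)} e^{h_{s_1 2^{-m}}(0)}$, and since $Q(\xi_n) \to 0$ as $\xi_n \to \infty$, no fixed $m$ makes this small uniformly. I overcome this by taking $m = m(n)$ of order $Q(\xi_n)^{-2}$: then $m(n) Q(\xi_n) \log 2 \asymp \sqrt{m(n)}$ dominates the typical circle-average fluctuation $h_{s_1 2^{-m(n)}}(0) = O_{\p}(\sqrt{m(n)})$, so the diameter is at most $c/4$ (with $c$ the constant from the pointwise bound) with probability bounded below uniformly in $n$. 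Combining steps (i), (ii), and (iii) completes the proof.
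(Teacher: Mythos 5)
Your top-level decomposition (i)--(iii) matches the paper's plan, and you correctly sense that $Q(\xi_n) \to 0$ is the distinguishing difficulty of the $\xi \to \infty$ regime. But you misdiagnose where that difficulty actually bites, and the place where it matters most is one you wave through.

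The genuine gap is in your step (ii), which you say ``goes through essentially verbatim.'' It does not. To run the $0$--$1$ law, one sums the independence identity $\p[E_1^j = U_1, E_2^j = U_2] = \p[E_1^j = U_1]\p[E_2^j = U_2]$ over all pairs of disjoint $(r,j)$-dyadic domains $U_1, U_2$; this requires knowing that $E_1 = \{x : \wt{D}(x,\partial B_r(0)) = 0\}$ and $E_2$ are a.s.\ \emph{bounded} and connected, so that a.s.\ $E_1^j, E_2^j$ range over the countable family of bounded dyadic domains (cf.\ Lemma~\ref{lem:connectivity_lemma_general_case}). Boundedness is supplied by an unboundedness estimate of the form $\lim_{R\to\infty}\liminf_n \p[\wt{D}_{h^n}^{\xi_{k_n}}(\partial B_r(0),\partial B_R(0)) > T] = 1$, and this is precisely where the paper's argument for the general-$\xi$ case fails verbatim: the lower bound $\wt{D}_{h^n}^{\xi_{k_n}}(\partial B_{2^k}(0),\partial B_{2^{k+1}}(0)) \gtrsim \exp(h^n_{2^k}(0) + Q(\xi_{k_n})\log(2)\,k)$ no longer accumulates over $k$ once $Q(\xi_{k_n}) \to 0$, since the deterministic drift vanishes and $h^n_{2^k}(0)$ is a recurrent Brownian motion. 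The paper resolves this with a genuinely new ingredient (Lemma~\ref{lem:brownian_motion_lemma}), controlling the Lebesgue measure of the set of times where a Brownian bridge is above a positive level, and then building Lemma~\ref{lem:metric_unbounded_xi_to_infty_case} by intersecting the ``many good scales'' event with the event that $h^n_{r_k}(0) \geq \delta k^{1/2}$ at those scales. Your proof makes no mention of this, and without it the $0$--$1$ law as you have set it up does not close.

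The obstacle you do discuss in step (i) is real but misplaced, and your fix is shaky. You want to subtract ball diameters from a pointwise lower bound, and propose taking $m = m(n) \asymp Q(\xi_n)^{-2}$ so that the deterministic term $m(n) Q(\xi_n)\log 2$ beats the random term $h_{s_1 2^{-m(n)}}(0) = O_{\p}(\sqrt{m(n)})$. At that choice of $m$ the two terms are of the \emph{same} order, so the diameter estimate is only good with a probability that is bounded away from $1$, not close to $1$; and since $|X \times Y| \asymp 2^{2m(n)} \to \infty$, the positive-associativity product $(A/4)^{|X \times Y|}$ degenerates to zero. The paper sidesteps the whole issue by stating the pointwise input (Lemma~\ref{lem:uniform_lower_bound_between_points_xi_to_infty}) directly in terms of a \emph{fixed} ball-to-ball distance $\wt{D}_{h^n}^{\xi_{k_n}}(\partial B_{\epsilon}(x),\partial B_{\epsilon}(y);B_{3r}(0))$ with $\epsilon$ independent of $n$. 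That is made possible by a stopping-time construction: in Step~2 of Lemma~\ref{lem:uniform_lower_bound_between_points_xi_to_infty} the paper localizes the ``around''-annuli at scales $r_{j,n}$ chosen so that $h^n_{r_{j,n}}(x_j)$ is very negative (via the first passage time $\tau_{j,K}^n$ to level $-K$), making the around-annulus distance small without any diameter control. So even your step (i), as you have written it, does not reproduce a working proof; and the step you declare easy is the one that requires the new idea.
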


The proof of Proposition~\ref{prop:limit_is_a_metric_xi_to_infty} is essentially the same as the proof of Proposition~\ref{prop:limit_is_a_metric_general_case} by obtaining statements which are analogous to those of Proposition~\ref{prop:annuli_crossing_always_positive_general_case} and Lemmas~\ref{lem:uniform_lower_bound_between_points_general_case}-\ref{lem:crossing_positive_general_case}.  However there is one subtle difference in obtaining the statement which is analogous to that of Lemma~\ref{lem:metric_unbounded_general_case}.  More precisely,  by following the argument in the proof of Lemma~\ref{lem:metric_unbounded_general_case} (see also the proof of Lemma~\ref{lem:metric_unbounded}),  we obtain that there exists a constant $c>0$ such that with high probability (uniformly in $n \in \N$),  we have that
\begin{align*}
\wt{D}_{h^n}^{\xi_{k_n}}(\partial B_{2^k}(0) ,  \partial B_{2^{k+1}}(0)) > c \exp\left(h^n_{2^k}(0) + Q(\xi_{k_n}) \log(2) k\right)
\end{align*}
for many values of $k \in \N$.  Recall that 
\begin{align*}
h^n_{2^k}(0) = B^n_{\log(2) k} \quad \text{for all} \quad n, k \in \N,
\end{align*}
where $B^n$ is a one-dimensional standard Brownian motion.  Thus if we had that $\inf_{n \in \N} Q(\xi_{k_n}) > 0$,  then the proof would work in the same way as the proofs of Lemmas~\ref{lem:metric_unbounded} and ~\ref{lem:metric_unbounded_general_case}.  However we have that $Q(\xi_{k_n}) \to 0$ as $n \to \infty$ (see \cite[Section~2.5]{ding2020tightness}) and so we will follow a slightly different approach.

We start by proving the analogue of Lemma~\ref{lem:uniform_lower_bound_between_points_general_case} (see also Lemma~\ref{lem:uniform_lower_bound_between_points}).

\begin{lemma}\label{lem:uniform_lower_bound_between_points_xi_to_infty}
For all $r>0$ there exist constants $c>0 ,  \epsilon \in (0,\frac{r}{2})$ depending only on $r$ and the sequence $(\xi_n)$ such that
\begin{align*}
\liminf_{n \to \infty}\left( \inf_{x \in \partial B_r(0) ,  y \in \partial B_{2r}(0)}\left(\p\left[\wt{D}_{h^n}^{\xi_{k_n}}(\partial B_{\epsilon}(x) ,  \partial B_{\epsilon}(y) ; B_{3r}(0)) > c \right] \right) \right) > 0.
\end{align*}
\end{lemma}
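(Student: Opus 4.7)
Our argument parallels that of Lemma~\ref{lem:uniform_lower_bound_between_points_general_case}, but requires two adaptations in the $\xi \to \infty$ regime: (i) since $\wt{D}_h^{\xi}$ is the $1/\xi$-th power of the length metric $D_h^{\xi}$ and is not itself a length metric, length-additivity must be carried out in $D_h^{\xi}$ and only transferred to $\wt{D}_h^{\xi}$ at the very end via $(\cdot)^{1/\xi_{k_n}}$; (ii) since $\wt{D}_h^{\xi}$ may admit singular points, the construction must be anchored at the $\epsilon$-circles $\partial B_\epsilon(x), \partial B_\epsilon(y)$ rather than at single points. Using the scaling invariance (Axiom~\ref{it:translation_and_scale_invariance}) combined with $Q(\xi) \to 0$ as $\xi \to \infty$ (cf.\ \cite[Proposition~2.5]{ding2020tightness}), the claim for general $r>0$ reduces to the claim for $r=1$, at the price of rescaling $\epsilon$ by a factor $r$ and absorbing a log-normal factor $r^{Q(\xi_{k_n})} e^{h^n_r(0)}$ that is tight uniformly in $n$.

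Set $r=1$. By \eqref{eqn:main_normalization_xi_to_infty}, $\p[D_{h^n}^{\xi_{k_n}}(\text{around}\,\,\mathbb{A}_{1,2}(0)) > \alpha(\xi_{k_n})] = 1 - \mathfrak{p}$ for every $n$. Pick $N\in\N$ large, deterministic equispaced points $u_1,\ldots,u_N \in \partial B_{3/2}(0)$, and $\epsilon \in (0, 1/8)$ small enough that the balls $\overline{B_{3\epsilon}(u_j)}$ are pairwise disjoint and lie in $\mathbb{A}_{1,2}(0)$, and (invoking the uniform-in-$n$ tightness of $D_{h^n}^{\xi_{k_n}}(\text{around}\,\,\mathbb{A}_{\epsilon,2\epsilon}(u_j))/\alpha(\xi_{k_n})$, discussed below) also small enough that with probability at least $1 - (1-\mathfrak{p})/2$ we have $D_{h^n}^{\xi_{k_n}}(\text{around}\,\,\mathbb{A}_{\epsilon,2\epsilon}(u_j)) \leq \alpha(\xi_{k_n})/(4N)$ for every $j$. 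On the intersection with the scale-$1$ around-distance event, concatenate, for each $j$, a $D_{h^n}^{\xi_{k_n}}$-minimizing path $P_{j,n}$ from $\partial B_\epsilon(u_j)$ to $\partial B_\epsilon(u_{j+1})$ with a short connecting path inside $\mathbb{A}_{\epsilon,2\epsilon}(u_{j+1})$ of $D_{h^n}^{\xi_{k_n}}$-length at most $D_{h^n}^{\xi_{k_n}}(\text{around}\,\,\mathbb{A}_{\epsilon,2\epsilon}(u_{j+1}))$ joining the endpoint of $P_{j,n}$ to the startpoint of $P_{j+1,n}$ (obtained by routing along a minimizing loop around $\mathbb{A}_{\epsilon,2\epsilon}(u_{j+1})$ after a small perturbation of the endpoints). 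The resulting closed curve is a loop around $\mathbb{A}_{1,2}(0)$, and length-additivity in $D_{h^n}^{\xi_{k_n}}$ yields
\begin{align*}
\sum_{j=1}^{N} D_{h^n}^{\xi_{k_n}}(\partial B_\epsilon(u_j), \partial B_\epsilon(u_{j+1}); B_3(0)) > \frac{\alpha(\xi_{k_n})}{2}.
\end{align*}
Pigeonhole produces a random $j_n$ with $\wt{D}_{h^n}^{\xi_{k_n}}(\partial B_\epsilon(u_{j_n}), \partial B_\epsilon(u_{j_n+1}); B_3(0)) > (1/(2N))^{1/\xi_{k_n}}$, which tends to $1$ as $n\to\infty$; a further pigeonhole on the $N$ possible values of $j_n$ extracts a deterministic pair $(\tilde u, \tilde v)$ and a constant $c_1 > 0$ with $\p[\wt{D}_{h^n}^{\xi_{k_n}}(\partial B_\epsilon(\tilde u), \partial B_\epsilon(\tilde v); B_3(0)) > c_1] \geq (1-\mathfrak{p})/(4N)$ for all $n$ sufficiently large.

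To transfer this to arbitrary $x \in \partial B_1(0)$ and $y \in \partial B_2(0)$, we follow Step~3 of the proof of Lemma~\ref{lem:uniform_lower_bound_between_points}: chain $\tilde u$ to $\tilde v$ through $J \leq C$ intermediate points at prescribed separation $|x-y|$ (with $C$ universal), each consecutive pair a translation--rotation of $(x,y)$. By the translation and rotation invariance of the laws of $h$ and $\wt{D}_h^{\xi_{k_n}}$ (Axiom~\ref{it:translation_and_scale_invariance} and \cite[Proposition~1.9]{ding2023uniqueness}), every consecutive $\epsilon$-circle distance in the chain has the same law as $\wt{D}_{h^n}^{\xi_{k_n}}(\partial B_\epsilon(x), \partial B_\epsilon(y); B_3(0))$. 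The triangle inequality in $\wt{D}_{h^n}^{\xi_{k_n}}$ applied via specific points chosen on each $\epsilon$-circle in the chain, combined with a pigeonhole on the $J$ indices, delivers the analogous uniform lower bound for $(x,y)$; the ``connecting diameter'' contributions on each intermediate circle $\partial B_\epsilon(z_i)$ are again absorbed into the small-scale around-distance bound.

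The main obstacle is the uniform-in-$n$ tightness of the small-scale around-distances $D_{h^n}^{\xi_{k_n}}(\text{around}\,\,\mathbb{A}_{\epsilon,2\epsilon}(u_j))/\alpha(\xi_{k_n})$ used throughout, since our normalization \eqref{eqn:main_normalization_xi_to_infty} only pins down the scale-$1$ median. This tightness can be derived by applying Proposition~\ref{prop:bound_on_lqg_distances_of_sets_xi_to_infty} to two disjoint complementary arcs of $\partial B_{3\epsilon/2}(u_j)$ (compact, connected, non-singleton sets whose union contains a loop around $\mathbb{A}_{\epsilon,2\epsilon}(u_j)$), combined with Axioms~\ref{it:weyl_scaling}--\ref{it:translation_and_scale_invariance} and a Gaussian tail bound on $h^n_\epsilon(u_j)$ via Lemma~\ref{lem:upper_bound_on_circle_averages}.
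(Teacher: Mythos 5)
Your argument diverges from the paper's in the crucial first step, and that step has a genuine gap. You fix a deterministic small scale $\epsilon$ and claim that, for $\epsilon$ small enough, the events $\{D_{h^n}^{\xi_{k_n}}(\text{around}\,\mathbb{A}_{\epsilon,2\epsilon}(u_j)) \leq \alpha(\xi_{k_n})/(4N)\}$ hold for all $j$ simultaneously with probability $\geq 1-(1-\mathfrak{p})/2$, uniformly in $n$. This cannot be made to work. By Axioms~\ref{it:weyl_scaling} and~\ref{it:translation_and_scale_invariance},
\begin{align*}
\frac{D_{h^n}^{\xi_{k_n}}(\text{around}\,\mathbb{A}_{\epsilon,2\epsilon}(u_j))}{\alpha(\xi_{k_n})}
= \epsilon^{\xi_{k_n} Q(\xi_{k_n})}\, e^{\xi_{k_n} h^n_{\epsilon}(u_j)}\, Y_n,
\end{align*}
where $Y_n$ has the law of $D_h^{\xi_{k_n}}(\text{around}\,\mathbb{A}_{1,2}(0))/\alpha(\xi_{k_n})$, so $\p[Y_n\le 1]=\mathfrak{p}$. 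Two problems follow. First, the family $\{D_{h^n}^{\xi_{k_n}}(\text{around}\,\mathbb{A}_{\epsilon,2\epsilon}(u_j))/\alpha(\xi_{k_n})\}_n$ is \emph{not} tight in $n$ for fixed $\epsilon$: the exponent $\xi_{k_n} h^n_{\epsilon}(u_j)$ is Gaussian with variance $\xi_{k_n}^2(\log\epsilon^{-1}+O(1))\to\infty$. What is tight (and what Proposition~\ref{prop:bound_on_lqg_distances_of_sets_xi_to_infty} actually controls) is the $\xi_{k_n}$-th root, namely $\wt D_{h^n}^{\xi_{k_n}}(\text{around}\,\mathbb{A}_{\epsilon,2\epsilon}(u_j))=\epsilon^{Q(\xi_{k_n})} e^{h^n_\epsilon(u_j)} Y_n^{1/\xi_{k_n}}$. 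Second, and fatally, tightness does not give smallness: as $n\to\infty$ this last expression converges (in distribution, for fixed $\epsilon$) to $e^{h_\epsilon(u_j)}\cdot(\text{something with median }1)$, since $Q(\xi_{k_n})\to 0$ and $Y_n^{1/\xi_{k_n}}$ concentrates near $1$. The factor $e^{h_\epsilon(u_j)}$ has median $1$ and does \emph{not} shrink as $\epsilon\to 0$, so the probability that the around-distance is below a threshold tending to $1$ (which is what your condition $\le (1/4N)^{1/\xi_{k_n}}$ amounts to) stays bounded away from $1$, and a fortiori cannot be made $\geq 1-(1-\mathfrak{p})/2$ for $N$ roughly independent annuli simultaneously.

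The paper's Step~2 contains exactly the idea your sketch is missing: it does not work at a fixed scale but at a \emph{random} scale $r_{j,n}\in(2^{-K-\wt K}, \delta/6)$ determined by the first-passage time $\tau^n_{j,K}$ of $r\mapsto h^n_{e^{-r}}(x_j)-h^n_{1/2}(x_j)$ below $-K$. At such scales $h^n_{r_{j,n}}(x_j)<-K/2$ with controlled probability (via \eqref{eqn:event_conditions}), so the around-distance is at most $\alpha(\xi_{k_n})r_{j,n}^{\xi_{k_n}Q(\xi_{k_n})}e^{-\xi_{k_n}K/2}\le\alpha(\xi_{k_n})e^{-\xi_{k_n}K/2}$, which does go to zero, uniformly in $j$, as $K$ (and then $n$) grows. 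The near-independence-across-scales estimate in Lemma~\ref{lem:good_event_occurs_almost_everywhere} (used via the events $F^{n,\epsilon_K}_{e^{-\tau^n_{j,K}}}(x_j)$ and \eqref{eqn:high_conditional_prob}) is what makes this random-scale construction go through with probability close to $1$ for all $j$ simultaneously. Without replacing your fixed $\epsilon$ with such a random, circle-average-conditioned scale, the remainder of your argument (pigeonhole, then translation/rotation/scaling to arbitrary $(x,y)$, which roughly matches the paper's Steps~3 and~4) does not have a valid starting point.
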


\begin{proof}
\emph{Step 1.  Outline and setup.}
We will follow an argument which is similar to the argument presented in Lemmas~\ref{lem:uniform_lower_bound_between_points} and ~\ref{lem:uniform_lower_bound_between_points_general_case}.  Recall that
\begin{align}\label{eqn:lower_bound_on_around_annulus}
\p\left[(\wt{D}_{h^n}^{\xi_{k_n}})^{\xi_{k_n}}(\text{around} \,\,  \mathbb{A}_{1,2}(0)) > 1 \right] = 1 - \mathfrak{p} \quad \text{for all} \quad n \in \N
\end{align}
by the choice of our re-normalization (see \eqref{eqn:main_normalization_xi_to_infty} and \eqref{eqn:normalization_xi_to_infty}).  As in the proof of Lemma~\ref{lem:uniform_lower_bound_between_points},  we will choose points $x_1,\cdots,x_N$ on $\partial B_{\frac{3}{2}}(0)$ such that $\max_{1 \leq j \leq N} |x_j - x_{j+1}|$ is sufficiently small.  Then in Step 2,  we will combined \eqref{eqn:lower_bound_on_around_annulus} with the argument in the proof of Lemma~\ref{lem:good_event_occurs_almost_everywhere} to prove that with positive probability (uniformly in $n \in \N$),  the event in \eqref{eqn:lower_bound_on_around_annulus} holds and for all $1 \leq j \leq N$,  we can find $r_{j,n} > 0$ and paths $Q_{j,n}$ such that 
\begin{align*}
Q_{j,n} \subseteq \mathbb{A}_{r_{j,n} ,  2 r_{j,n}}(x_j) \subseteq B_{2 |x_j - x_{j+1}|}(x_j),
\end{align*}
$h_{r_{j,n}}^n(x_j)$ is small,  $Q_{j,n}$ disconnects $\partial B_{r_{j,n}}(x_j)$ from $\partial B_{2 r_{j,n}}(x_j)$ and 
\begin{align*}
\len(Q_{j,n} ; D_{h^n}^{\xi_{k_n}}) \leq \alpha(\xi_{k_n}) r_{j,n}^{\xi_{k_n} Q(\xi_{k_n})} e^{\xi_{k_n} h_{r_{j,n}}^n(x_j)}.
\end{align*}

In Step 3,  for all $1 \leq j \leq n$,  we will let $P_{j,n}$ be a path in $B_{2|x_j-x_{j+1}|}(x_j)$ such that
\begin{align*}
\len(P_{j,n} ; (\wt{D}_{h^n}^{\xi_{k_n}})^{\xi_{k_n}}) \leq  (\wt{D}_{h^n}^{\xi_{k_n}})^{\xi_{k_n}}(Q_{j,n} ,  Q_{j+1,n} ; B_{4 |x_j - x_{j+1}|}(x_j)) + \frac{1}{N^2}.
\end{align*}
Then the concatenation $P_n$ of the paths $P_{1,n},\cdots,P_{N,n},Q_{1,n},\cdots,Q_{N,n}$ disconnects $\partial B_1(0)$ from $\partial B_2(0)$.  Thus since $\len(P_n ; (\wt{D}_{h^n}^{\xi_{k_n}})^{\xi_{k_n}}) > 1$,  we have that there exists $1 \leq j \leq n$ such that
\begin{align}\label{eqn:main_local_ineq}
\wt{D}_{h^n}^{\xi_{k_n}}(Q_{j,n} ,  Q_{j+1,n} ; B_{4 |x_j - x_{j+1}|}(x_j)) \geq 
\left((\wt{D}_{h^n}^{\xi_{k_n}})^{\xi_{k_n}}(Q_{j,n} , Q_{j+1,n} ; B_{4 |x_j - x_{j+1}|}(x_j))\right)^{1/ \xi_{k_n}} > \frac{1}{2N}.
\end{align}

Finally in Step 4,  we will combine \eqref{eqn:main_local_ineq} with Axiom~\ref{it:coordinate_change} and the rotation-invariance property of $\wt{D}_{h^n}^{\xi_{k_n}}$ (see \cite[Proposition~1.9]{ding2023uniqueness}) to complete the proof of the lemma.

\emph{Step 2.  Construction of the paths $Q_{j,n}$.}
Fix $\delta \in (0,1)$ sufficiently small (to be chosen in a universal way) and let $x_1,\cdots,x_N,x_{N+1}$ be distinct points on $\partial B_{\frac{3}{2}}(0)$ ordered in the clockwise way such that $x_{N+1} = x_1$ and 
\begin{align*}
\frac{\delta}{6} < |x_j - x_{j+1}| < \frac{\delta}{4} \quad \text{for all} \quad 1 \leq j \leq N.
\end{align*}
Fix also $q \in (\mathfrak{p} ,  1)$ and let $K \in \N$ be sufficiently large (to be chosen).  Recall that for all $n \in \N$ and all $1 \leq j \leq N$,  the process $\left(h_{e^{-r}}^n(x_j) - h_{\frac{1}{2}}^n(x_j)\right)_{r \geq \log(2)}$ has the law of a standard Brownian motion.

For all $1 \leq j \leq N$,  we set
\begin{align*}
\tau_{j,K}^n:=\inf\{r \geq -\log(\delta / 6) : h_{e^{-r}}^n(x_j) - h_{\frac{1}{2}}^n(x_j) \leq -K\}
\end{align*}
and note that $\tau_{j,K}^n < \infty$ a.s.  We choose $A>0$ sufficiently large (depending only on $q$ and $N$) and $\wt{K} \in \N$ sufficiently large (depending only on $q,N$ and $K$) such that off an event with probability at most $1-q$,  we have that
\begin{align}\label{eqn:event_conditions}
\sup_{\log(2) \leq r \leq \log(2) K} \left |h_{e^{-r - \tau_{j,K}^n}}^n(x_j) - h_{e^{-\tau_{j,K}^n}}^n(x_j) \right | \leq A K^{1/2},\,\,\, \tau_{j,K}^n \leq \wt{K} \log(2),\,\,\, \left |h_{\frac{1}{2}}^n(x_j)\right | \leq A \,\,\, \text{for all} \quad 1 \leq j \leq N.
\end{align}
Recall the definition of the event
\begin{align*}
E_r^{\xi}(z) = \left\{D_h^{\xi}(\text{around} \,\,  \mathbb{A}_{r,2r}(z)) \leq \alpha(\xi) r^{\xi Q(\xi)} e^{\xi h_r(z)}\right\}
\end{align*}
in the proof of Proposition~\ref{prop:bound_on_lqg_distances_of_sets_xi_to_infty} for $z \in \C ,  r>0$.  For all $n \in \N,  \epsilon,\mathfrak{r}>0,  z \in \C$,  we  let also $F_{\mathfrak{r}}^{n,\epsilon}(z)$ denote the event that $E_r^{\xi_{k_n}}(z)$ occurs for some $r \in [\epsilon^2 \mathfrak{r} ,  \epsilon \mathfrak{r}] \cap \{2^{-k} \mathfrak{r}\}_{k \in \N}$.  Note that Axiom~\ref{it:coordinate_change} combined with the proof of Lemma~\ref{lem:good_event_occurs_almost_everywhere} imply that
\begin{align}\label{eqn:high_prob_event}
\p[F_{\mathfrak{r}}^{n,\epsilon}(z)] \geq 1 - O(\epsilon^M) \quad \text{as} \quad \epsilon \to 0 \quad \text{for all} \quad \mathfrak{r}>0,z \in \C,
\end{align}
at a universal rate.  Moreover if we set $\epsilon_K = e^{-\log(2) K / 2}$,  Axiom~\ref{it:locality} implies that $F_{e^{-\tau_{j,K}^n}}^{n,\epsilon_K}(x_j)$ is a.s.  determined by 
\begin{align*}
\left(h^n - h^n_{e^{-\tau_{j,K}^n}}(x_j)\right)|_{B(x_j,e^{-\tau_{j,K}^n})}
\end{align*}
and so combining with \eqref{eqn:high_prob_event} we obtain that
\begin{align}\label{eqn:high_conditional_prob}
\p\left[ F_{e^{-\tau_{j,K}^n}}^{n,\epsilon_K}(x_j) \giv h^n|_{\C \setminus B(x_j ,  e^{-\tau_{j,K}^n})}\right] \geq 1 - O(\epsilon_K^{M}) \quad \text{as} \quad K \to \infty \quad \text{a.s.}
\end{align}

Combining \eqref{eqn:lower_bound_on_around_annulus},  \eqref{eqn:event_conditions} and \eqref{eqn:high_conditional_prob},  we obtain that off an event with probability at most $1 - q + \mathfrak{p} + O(N \epsilon_K^M)$,  we have that $D_{h^n}^{\xi_{k_n}}(\text{around} \,\,  \mathbb{A}_{1,2}(0)) > \alpha(\xi_{k_n})$,  \eqref{eqn:event_conditions} holds and $F_{e^{-\tau_{j,K}^n}}^{n,\epsilon_K}(x_j)$ holds for all $1 \leq j \leq N$.  Suppose that we are working on this event and we choose $K \in \N$ sufficiently large (depending only on $\mathfrak{p},q,N$ and $M$) such that
\begin{align*}
1 - q + \mathfrak{p} + O(N \epsilon_K^M) < 1 - \frac{q-\mathfrak{p}}{2} < 1.
\end{align*}
Then for all $1 \leq j \leq N$,  there exists $r_{j,n} \in (2^{-K-\wt{K}} ,  \frac{\delta}{6})$ such that $E_{r_{j,n}}^{\xi_{k_n}}(x_j)$ occurs.  In particular,  we have that 
\begin{align*}
&D_{h^n}^{\xi_{k_n}}(\text{around} \,\,  \mathbb{A}_{r_{j,n},2r_{j,n}}(x_j)) \leq \alpha(\xi_{k_n}) r_{j,n}^{\xi_{k_n} Q(\xi_{k_n})} e^{\xi_{k_n} h_{r_{j,n}}^n(x_j)},\\
&h_{r_{j,n}}^n(x_j) \leq A K^{1/2} - K + A < -\frac{K}{2}
\end{align*}
for $K$ sufficiently large.  It follows that
\begin{align*}
(\wt{D}_{h^n}^{\xi_{k_n}})^{\xi_{k_n}}(\text{around} \,\,  \mathbb{A}_{r_{j,n},2r_{j,n}}(x_j)) \leq e^{-\xi_{k_n} K / 2}
\end{align*}
and let $Q_{j,n}$ be a path in $\mathbb{A}_{r_{j,n},2r_{j,n}}(x_j)$ such that
\begin{align*}
\len(Q_{j,n} ;  (\wt{D}_{h^n}^{\xi_{k_n}})^{\xi_{k_n}}) \leq e^{-\xi_{k_n} K /2}.
\end{align*}

\emph{Step 3. Constructing a path around $\mathbb{A}_{1,2}(0)$.}
Next we let $P_{j,n}$ be a path in $B_{4|x_j - x_{j+1}|}(x_j)$ such that
\begin{align*}
\len(P_{j,n} ; (\wt{D}_{h^n}^{\xi_{k_n}})^{\xi_{k_n}}) \leq  (\wt{D}_{h^n}^{\xi_{k_n}})^{\xi_{k_n}}(Q_{j,n} ,  Q_{j+1,n} ; B_{4|x_j - x_{j+1}|}(x_j)) + \frac{1}{N^2},
\end{align*}
and let $P_n$ denote the concatenation of the paths $P_{1,n},\cdots,P_{N,n},Q_{1,n},\cdots,Q_{N,n}$.  Then we can choose $\delta \in (0,1)$ sufficiently small (in a universal way) such that $P_n$ disconnects $\partial B_1(0)$ from $\partial B_2(0)$.  Also we have that
\begin{align*}
1 < \len(P_n ; (\wt{D}_{h^n}^{\xi_{k_n}})^{\xi_{k_n}}) \leq &\sum_{j=1}^N \len(P_{j,n} ; (\wt{D}_{h^n}^{\xi_{k_n}})^{\xi_{k_n}}) + \sum_{j=1}^N \len(Q_{j,n} ; (\wt{D}_{h^n}^{\xi_{k_n}})^{\xi_{k_n}})\\
&\leq N e^{-K / 2} + \sum_{j=1}^N \len(P_{j,n} ; (\wt{D}_{h^n}^{\xi_{k_n}})^{\xi_{k_n}})
\end{align*}
and hence we have that 
\begin{align*}
\sum_{j=1}^N \len(P_{j,n} ; (\wt{D}_{h^n}^{\xi_{k_n}})^{\xi_{k_n}}) > 1 - N e^{-K/2}
\end{align*}
if we take $K$ sufficiently large such that $N e^{-K/2} < \frac{1}{2}$.  It follows that there exists $1 \leq j \leq N$ such that
\begin{align*}
\len(P_{j,n} ; (\wt{D}_{h^n}^{\xi_{k_n}})^{\xi_{k_n}}) > \frac{1}{N} - e^{-K/2}.
\end{align*}
Note that
\begin{align*}
&(\wt{D}_{h^n}^{\xi_{k_n}})^{\xi_{k_n}}\left(\partial B_{2^{-K-\wt{K}}}(x_j) ,  
\partial B_{2^{-K-\wt{K}}}(x_{j+1}) ; B_{4|x_j - x_{j+1}|}(x_j)\right)\\
&\geq (\wt{D}_{h^n}^{\xi_{k_n}})^{\xi_{k_n}}(Q_{j,n} ,  Q_{j+1,n} ; B_{4 |x_j - x_{j+1}|}(x_j)) \geq \len(P_{j,n} ; (\wt{D}_{h^n}^{\xi_{k_n}})^{\xi_{k_n}}) > \frac{1}{N} - e^{-K/2} - \frac{1}{N^2}
\end{align*}
and hence
\begin{align*}
&\wt{D}_{h^n}^{\xi_{k_n}}\left(\partial B_{2^{-K-\wt{K}}}(x_j) ,  \partial B_{2^{-K-\wt{K}}}(x_{j+1}) ; B_{4|x_j - x_{j+1}|}(x_j)\right)\\
& \geq \left((\wt{D}_{h^n}^{\xi_{k_n}})^{\xi_{k_n}}\left(\partial B_{2^{-K-\wt{K}}}(x_j) ,  \partial B_{2^{-K-\wt{K}}}(x_{j+1}) ; B_{4|x_j - x_{j+1}|}(x_j)\right)\right)^{1 / \xi_{k_n}} > \frac{1}{2N}.
\end{align*}
It follows that for all $n \in \N$,  there exist deterministic points $x_{j_n},x_{j_n+1}$ such that
\begin{align}\label{eqn:main_prob_lower_bound}
\p\left[\wt{D}_{h^n}^{\xi_{k_n}}\left(\partial B_{2^{-K-\wt{K}}}(x_{j_n}) ,  \partial B_{2^{-K-\wt{K}}}(x_{j_n + 1}) ; B_{4 |x_{j_n} - x_{j_n + 1}|}(x_{j_n})\right) > \frac{1}{2N} \right] > \frac{q-\mathfrak{p}}{2N}.
\end{align}

\emph{Step 4.  Completion of the proof.}
Finally we will complete the proof of the lemma.
Set $d_n:=|x_{j_n}-x_{j_n+1}| \in (\frac{\delta}{6},\frac{\delta}{4})$.  Then \eqref{eqn:main_prob_lower_bound} combined with Axiom~\ref{it:coordinate_change} and the rotation-invariance of $\wt{D}_{h^n}^{\xi_{k_n}}$ (see \cite[Proposition~1.9]{ding2023uniqueness}) imply that
\begin{align*}
&\p\left[\wt{D}_{h^n}^{\xi_{k_n}}\left(\partial B_{2^{-K-\wt{K}}}(x_{j_n}) ,  \partial B_{2^{-K-\wt{K}}}(x_{j_n + 1}) ; B_{4 d_n}(x_{j_n})\right) > \frac{1}{2N}\right]\\
& = \p\left[\wt{D}_{h^n}^{\xi_{k_n}}\left(\partial B_{\frac{2^{-K-\wt{K}}}{d_n}}(0) ,  \partial B_{\frac{2^{-K-\wt{K}}}{d_n}}(1) ; B_4(0) \right) > \frac{1}{2N}e^{-h_{d_n}^n(0)} d_n^{-Q(\xi_{k_n})}\right] > \frac{q - \mathfrak{p}}{2N}.
\end{align*}
Thus we obtain that there exist constants $s,c \in (0,\frac{1}{2})$ such that
\begin{align}\label{eqn:lower_bound_on_distances_macroscopic_scale}
\p\left[\wt{D}_{h^n}^{\xi_{k_n}}(\partial B_s(0) ,  \partial B_s(1) ; B_4(0))>c \right] \geq \frac{q-\mathfrak{p}}{2N} \quad \text{for all} \quad n \in \N.
\end{align}

Now fix $r>0$ and $x \in \partial B_r(0),  y \in \partial B_{2r}(0)$.  Then combining Axiom~\ref{it:coordinate_change} with \cite[Proposition~1.9]{ding2023uniqueness},  and since $B_{3r}(-x) \subseteq B_{4r}(0)$ and $|y-x| \geq r$,  we obtain that if $\theta \in [0,2\pi)$ is such that $y-x = e^{i\theta} |y-x|$,  then for all $\epsilon \in (0,\frac{r}{2})$,  we have that
\begin{align}\label{eqn:lower_bound_on_distances_microscopic_scales}
&\p\left[\wt{D}_{h^n}^{\xi_{k_n}}(\partial B_{\epsilon}(x) ,  \partial B_{\epsilon}(y) ; B_{3r}(0)) > c\right] \geq \p\left[\wt{D}_{h^n(\cdot + x)}^{\xi_{k_n}}(\partial B_{\epsilon}(0) , \partial B_{\epsilon}(y-x) ; B_{4r}(0)) > c\right]\notag \\
&=\p\left[\wt{D}_{h^n(|y-x|\cdot +x)}^{\xi_{k_n}}\left(\partial B_{\frac{\epsilon}{|y-x|}}(0) ,  \partial B_{\frac{\epsilon}{|y-x|}}\left(\frac{y-x}{|y-x|}\right) ; B_{\frac{4r}{|y-x|}}(0) \right) > c \right] \notag \\
&=\p\left[\wt{D}_{h^n(|y-x| e^{i\theta} \cdot + x)}^{\xi_{k_n}}\left(\partial B_{\frac{\epsilon}{|y-x|}}(0) ,  \partial B_{\frac{\epsilon}{|y-x|}}(1) ; B_{\frac{4r}{|y-x|}}(0)\right) > c\right] \notag \\
&\geq \p\left[\wt{D}_{h^n(|y-x| e^{i\theta} \cdot + x)}^{\xi_{k_n}}\left(\partial B_{\frac{\epsilon}{|y-x|}}(0) ,  \partial B_{\frac{\epsilon}{|y-x|}}(1) ; B_{4}(0)\right) > c\right] \notag \\
&\geq \p\left[\wt{D}_{h^n(|y-x| e^{i\theta} \cdot + x)}^{\xi_{k_n}}\left(\partial B_{\frac{\epsilon}{r}}(0) ,  \partial B_{\frac{\epsilon}{r}}(1) ; B_{4}(0)\right) > c\right] \notag \\
&= \p\left[\wt{D}_{h^n(|y-x| e^{i\theta} \cdot + x) - h_{|y-x|}^n(x)}^{\xi_{k_n}}\left(\partial B_{\frac{\epsilon}{r}}(0) ,  \partial B_{\frac{\epsilon}{r}}(1) ; B_{4}(0)\right) > c e^{-h_{|y-x|}^n(x)}\right].
\end{align}
Note that the random fields $h^n(|y-x| e^{i\theta} \cdot + x) - h_{|y-x|}^n(x)$ and $h^n$ have the same law for all $n \in \N$.  Thus the proof of the lemma is complete by combining \eqref{eqn:lower_bound_on_distances_macroscopic_scale} and \eqref{eqn:lower_bound_on_distances_microscopic_scales} with $\epsilon = s r$ and the fact that $(z,t) \to h_t^n(z)$ converges to $(z,t) \to h_t(z)$ locally uniformly on $\C \times (0,\infty)$ as $n \to \infty$ a.s.  by the choice of our coupling.
\end{proof}

Next we state and prove the analogue of Lemma~\ref{lem:uniform_lower_bound_across_annuli_general_case} (see also Lemma~\ref{lem:uniform_lower_bound_across_annuli}).

\begin{lemma}\label{lem:uniform_lower_bound_across_annuli_xi_to_infty}
For all $r>0$,  there exists $c'>0$ depending only on $r$ and the sequence $(\xi_n)$ such that
\begin{align*}
\liminf_{n \to \infty} \left(\p\left[\wt{D}_{h^n}^{\xi_{k_n}}(\partial B_r(0) ,  \partial B_{2r}(0)) > c' \right] \right) > 0.
\end{align*}
\end{lemma}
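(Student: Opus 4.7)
The plan is to adapt the strategy of Lemma~\ref{lem:uniform_lower_bound_across_annuli_general_case}, using Lemma~\ref{lem:uniform_lower_bound_between_points_xi_to_infty} as the input in place of Lemma~\ref{lem:uniform_lower_bound_between_points_general_case}. First I would fix $r>0$ and invoke Lemma~\ref{lem:uniform_lower_bound_between_points_xi_to_infty} to obtain constants $c>0$ and $\epsilon \in (0,\tfrac{r}{2})$ such that for every $x \in \partial B_r(0)$ and $y \in \partial B_{2r}(0)$,
$$\liminf_{n \to \infty} \p\left[\wt{D}_{h^n}^{\xi_{k_n}}(\partial B_\epsilon(x),\partial B_\epsilon(y);B_{3r}(0)) > c\right] > 0.$$
Unlike in the proof of Lemma~\ref{lem:uniform_lower_bound_across_annuli_general_case}, Lemma~\ref{lem:uniform_lower_bound_between_points_xi_to_infty} already produces a uniform bound between small circles rather than between points, so the intermediate step of passing from pointwise bounds to set bounds via tightness of local $\wt{D}$-diameters is not required here.

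Next I would cover $\partial B_r(0)$ by the family $\{B_{\epsilon/10}(x):x\in X\}$ for a suitable finite net $X \subset \partial B_r(0)$, and likewise cover $\partial B_{2r}(0)$ by balls around a finite net $Y \subset \partial B_{2r}(0)$. Following Step~3 of the proof of Lemma~\ref{lem:uniform_lower_bound_across_annuli} from Section~\ref{sec:identifying_limit_subcritical}, I would couple each GFF $h^n$ with a space-time white noise $W$ via the approximation in \cite[Lemma~4.10]{ding2020tightness}, use Theorem~\ref{thm:convergence_of_supercritical_lfpp} to approximate $D_{h^n}^{\xi_{k_n}}$ by LFPP metrics at small but positive scale $\epsilon_k$, and then invoke the positive associativity of the mollified white-noise field together with \cite[Theorem~2.3]{dubedat2020liouville} to lower-bound the probability of the intersection event
$$\bigcap_{x \in X,\, y \in Y} \left\{\wt{D}_{h^n}^{\xi_{k_n}}(\partial B_\epsilon(x), \partial B_\epsilon(y); B_{3r}(0)) > c/2\right\}$$
by a strictly positive quantity which is uniform in $n$.

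To conclude, I would translate this internal estimate into a lower bound on the external distance $\wt{D}_{h^n}^{\xi_{k_n}}(\partial B_r(0),\partial B_{2r}(0))$. The deduction uses that, on the above intersection event, any $\wt{D}_{h^n}^{\xi_{k_n}}$-path of length less than $c/2$ from $\partial B_r(0)$ to $\partial B_{2r}(0)$ must remain inside $B_{3r}(0)$ (otherwise it would cross the annulus $\mathbb{A}_{2r,3r}(0)$, which can be handled by applying the same estimate to a slightly larger annulus). Once confined to $B_{3r}(0)$, such a path must exit some $B_\epsilon(x)$ containing its starting point and enter some $B_\epsilon(y)$ containing its endpoint, so its length is at least $\wt{D}_{h^n}^{\xi_{k_n}}(\partial B_\epsilon(x),\partial B_\epsilon(y);B_{3r}(0)) > c/2$, a contradiction.

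The main obstacle will be to ensure that the positive associativity / white-noise comparison step goes through with bounds uniform in $n$ as $\xi_{k_n} \to \infty$. Because our normalization $\wt{D}_h^{\xi} = (\alpha(\xi)^{-1} D_h^{\xi})^{1/\xi}$ raises the LQG metric to a $\xi$-dependent power, the exponential factors $e^{\xi h^*_{\epsilon_k}}$ appearing in the LFPP approximation interact nontrivially with $\xi$; the Gaussian tail of $\sup_{z \in B_2(0)}|\Phi_{0,k}(z) - h^*_{\epsilon_k}(z)|$ in \cite[Lemma~4.10]{ding2020tightness} must be compared to a threshold of order $\xi_{k_n}^{-1}$ so that, after raising to the $1/\xi_{k_n}$-th power, the error remains a fixed multiplicative constant rather than one that degenerates with $n$. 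A careful choice of the LFPP scale $\epsilon_k = \epsilon_k(n)$ tending to $0$ sufficiently fast as $n \to \infty$, together with the uniform normalization provided by \eqref{eqn:main_normalization_xi_to_infty}, should make this possible; once this uniform control is established, the remainder of the argument parallels that of Lemma~\ref{lem:uniform_lower_bound_across_annuli_general_case}.
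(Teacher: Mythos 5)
Your overall strategy is the one the paper uses: invoke Lemma~\ref{lem:uniform_lower_bound_between_points_xi_to_infty} to get a uniform lower bound $A>0$ on the probability that the internal distance between small circles around $x\in\partial B_r(0)$ and $y\in\partial B_{2r}(0)$ exceeds $c$, cover both boundary circles by a finite net, reduce the crossing estimate to the intersection event over the net, and then lower-bound that intersection probability via the white-noise coupling of \cite[Lemma~4.10]{ding2020tightness} and positive associativity as in Step~3 of Lemma~\ref{lem:uniform_lower_bound_across_annuli}. Your observation that Lemma~\ref{lem:uniform_lower_bound_between_points_xi_to_infty} already produces bounds between circles (so the intermediate tightness-of-diameters step from the subcritical proof is unnecessary here) is correct.

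However, your analysis of the ``main obstacle'' is off, and following it through would actually break the argument. You propose comparing $\sup_{z\in B_2(0)}|\Phi_{0,k}(z)-h^*_{\epsilon_k}(z)|$ against a threshold of order $\xi_{k_n}^{-1}$. With such a threshold the tail bound from \cite[Lemma~4.10]{ding2020tightness} reads $c_0 e^{-c_1 \xi_{k_n}^{-2}} + o_{\epsilon_k}(1)$, and the first term tends to $c_0$ as $n\to\infty$; there is no choice of $\epsilon_k=\epsilon_k(n)$ that can rescue this, because the $c_0 e^{-c_1 x^2}$ part is independent of $\epsilon_k$. The correct way to handle it is simpler: fix a single large $M$ (depending only on $A$ and the size of the net) so that $c_0 e^{-c_1 M^2}$ is small, and note that the error factor $e^{-2\xi_{k_n} M}$ produced by the field-swap, after taking the $\tfrac{1}{\xi_{k_n}}$-th power built into $\wt D^{\xi}_{h}$, is exactly the $n$-independent constant $e^{-2M}$. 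So one ends up with the conclusion for $c' = c\, e^{-2M}$; the normalization $\wt D_h^\xi = (\alpha(\xi)^{-1} D_h^\xi)^{1/\xi}$ automatically absorbs the degeneration you were worried about, and no $\xi$-dependent or $n$-dependent threshold (nor an $n$-dependent LFPP scale) is needed.
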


\begin{proof}
Fix $r>0$ and let $\epsilon,c$ be the constants in Lemma~\ref{lem:uniform_lower_bound_between_points_xi_to_infty} corresponding to $r$.   Set 
\begin{align*}
A:=\inf_{n \in \N} \left(\inf_{x \in \partial B_r(0) ,  y \in \partial B_{2r}(0)} \left(\p\left[\wt{D}_{h^n}^{\xi_{k_n}}(\partial B_{\epsilon}(x) ,  \partial B_{\epsilon}(y) ; B_{3r}(0)) > c \right] \right) \right).
\end{align*}
As in the proof of Lemma~\ref{lem:uniform_lower_bound_across_annuli},  we fix $m \in \N$ and set 
\begin{align*}
X:=\partial B_r(0) \cap (r 2^{-m} \Z^2),\quad Y:=\partial B_{2r}(0) \cap (r 2^{-m} \Z^2).
\end{align*}
We choose $m \in \N$ sufficiently large such that $r 2^{-m} < \epsilon$ and 
\begin{align*}
\partial B_r(0) \subseteq \bigcup_{x \in X} B_{r 2^{-m}}(x),\quad \partial B_{2r}(0) \subseteq \bigcup_{y \in Y} B_{r 2^{-m}}(y).
\end{align*}
Then we have that
\begin{align}\label{eqn:main_inclusion_ineq}
\p\left[\wt{D}_{h^n}^{\xi_{k_n}}(\partial B_r(0) ,  \partial B_{2r}(0)) > c \right] \geq \p\left[\bigcap_{x \in X ,  y \in Y} \left\{\wt{D}_{h^n}^{\xi_{k_n}}(\partial B_{r 2^{-m}}(x) ,  \partial B_{r 2^{-m}}(y) ; B_{3r}(0)) > c \right \} \right] \quad \text{for all} \quad n \in \N.
\end{align}

Note that Lemma~\ref{lem:uniform_lower_bound_between_points_xi_to_infty} implies that
\begin{align}\label{eqn:main_lower_bound}
\p\left[\wt{D}_{h^n}^{\xi_{k_n}}(\partial B_{r 2^{-m}}(x) ,  \partial B_{r 2^{-m}}(y) ; B_{3r}(0)) > c \right] \geq A > 0 \quad \text{for all} \quad (x,y) \in X \times Y.
\end{align}
Therefore the proof of the lemma is complete by using \eqref{eqn:main_inclusion_ineq},  \eqref{eqn:main_lower_bound} and arguing as in Step 3 in the proof of Lemma~\ref{lem:uniform_lower_bound_between_points}.
\end{proof}

Before stating and proving the analogue of Lemma~\ref{lem:metric_unbounded_general_case},  we state and prove the following elementary lemma about the one-dimensional standard Brownian motion that we are going to use.

\begin{lemma}\label{lem:brownian_motion_lemma}
Let $B=(B_t)_{t \in [0,1]}$ be a standard one-dimensional Brownian motion starting from zero.  Fix $p \in (0,1)$.  Then there exist $\epsilon ,  \delta \in (0,1)$ and $K_0 \in \N$ depending only on $p$ such that the following holds for all $K \in \N$ with $K \geq K_0$.  With probability at least $p$,  we have that there are at least $\epsilon K$ number of $j \in [1,K-1]_{\Z}$ such that $B_t > 
\delta$ for all $t \in [\frac{j}{K} ,  \frac{j+1}{K}]$.
\end{lemma}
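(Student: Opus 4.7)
The plan is to introduce $L(x) := \mathrm{Leb}\{t \in [0,1] : B_t > x\}$ for $x \geq 0$ and to deduce the claim from three ingredients: a lower bound on $L(2\delta)$ with high probability when $\delta$ is small, almost sure convergence of the Riemann sums associated with $\mathbf{1}\{B_t > 2\delta\}$, and uniform continuity of $B$ on scale $1/K$. The overall reasoning is that on a suitable high-probability event, every index $j \in [1,K-1]_{\Z}$ with $B_{j/K} > 2\delta$ automatically satisfies $B_t > \delta$ on $[j/K,(j+1)/K]$, and the number of such indices is close to $K L(2\delta)$.

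For the first ingredient, I would argue that for every $q \in (0,1)$ one can pick $\delta,\eta \in (0,1)$ with $\p[L(2\delta) \geq 2\eta] \geq q$. The random variable $L(0)$ is the occupation time of $(0,\infty)$ by $B$ on $[0,1]$, which follows the arcsine distribution and hence is strictly positive almost surely. Monotone convergence gives $L(2\delta) \uparrow L(0)$ as $\delta \downarrow 0$ almost surely, so $\p[L(2\delta) \geq 2\eta] \to 1$ by choosing first $\eta$ small and then $\delta$ small.

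For the second ingredient, fix such $\delta$ and set $\wt{N}_K := \sum_{j=1}^{K-1} \mathbf{1}\{B_{j/K} > 2\delta\}$. Because Brownian motion almost surely spends zero Lebesgue time at the deterministic level $2\delta$, the map $t \mapsto \mathbf{1}\{B_t > 2\delta\}$ has discontinuity set of measure zero, hence is Riemann integrable a.s., so $\wt{N}_K/K \to L(2\delta)$ almost surely as $K \to \infty$. Combined with the previous paragraph this yields $\p[\wt{N}_K \geq \eta K] \to \p[L(2\delta) \geq 2\eta] \geq q$. For the third ingredient, uniform continuity of Brownian paths gives $M_K := \sup_{|s-t| \leq 1/K,\, s,t \in [0,1]} |B_s - B_t| \to 0$ a.s., hence $\p[M_K \leq \delta] \to 1$.

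Finally, on $\{M_K \leq \delta\}$, whenever $B_{j/K} > 2\delta$ the bound $B_t \geq B_{j/K} - M_K > \delta$ holds throughout $[j/K,(j+1)/K]$, so the count of good indices is at least $\wt{N}_K$. Taking $q = (1+p)/2$ and $K_0$ large enough that $\p[\wt{N}_K \geq \eta K] \geq q - (1-p)/4$ and $\p[M_K \leq \delta] \geq 1 - (1-p)/4$ for all $K \geq K_0$, a union bound gives the desired conclusion with $\epsilon := \eta$. The only nontrivial point in the argument is the a.s.\ convergence of the Riemann sums, which ultimately rests on the classical fact that Brownian motion spends zero time at any fixed level; all other ingredients are routine.
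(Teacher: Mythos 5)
Your proposal is correct and reaches the same conclusion as the paper, following the same overall three-part structure: lower-bound the occupation time above a level, convert that to a count of good dyadic subintervals, and finish with uniform continuity of the path. The one genuine difference is in the middle step. The paper counts directly: if fewer than $\epsilon K$ intervals $[j/K,(j+1)/K]$ contain a point where $B_t>\delta$, then the set $\{B_t>\delta\}$ would be covered by too few intervals of length $1/K$, contradicting the Lebesgue-measure lower bound. You instead count only grid values $B_{j/K}>2\delta$ and appeal to a.s.\ Riemann integrability of $t\mapsto\mathbf 1\{B_t>2\delta\}$, which in turn rests on the fact that Brownian motion a.s.\ spends zero Lebesgue time at any fixed level, to conclude $\wt N_K/K\to L(2\delta)$. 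Your route is valid but invokes an extra classical fact that the covering argument sidesteps; on the other hand it gives a cleaner final inequality since you end with $B_t>\delta$ on the interval exactly rather than with $\delta/2$ and a relabeling. One small point of rigor: the claim "$\p[\wt N_K\geq\eta K]\to\p[L(2\delta)\geq 2\eta]$" should really be stated as a $\liminf$ lower bound via Fatou applied to the a.s.\ convergence $\wt N_K/K\to L(2\delta)$ on the event $\{L(2\delta)\geq 2\eta\}$ (where the limit strictly exceeds $\eta$); the conclusion you extract from it only needs this one-sided bound, so the proof stands.
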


\begin{proof}
First we note that $\text{Leb}(\{t \in [0,1] : B_t > 0\}) > 0$ a.s.  and that
\begin{align*}
\text{Leb}(\{t \in [0,1] : B_t > \delta\}) \to \text{Leb}(\{t \in [0,1] : B_t > 0\}) \quad \text{as} \quad \delta \to 0 \quad \text{a.s.}
\end{align*}
Thus there exist $\epsilon ,  \delta \in (0,1)$ depending only on $p$ such hat with probability at least $1 - \frac{1-p}{2}$,  we have that
\begin{align}\label{eqn:lower_bound_on_lebesgue_measure}
\text{Leb}(\{t \in [0,1] : B_t > \delta \}) > \epsilon.
\end{align}

Fix $K \in \N$ and suppose that \eqref{eqn:lower_bound_on_lebesgue_measure} holds.  We claim that there are at least $\epsilon K$ number of $j \in [1,K-1]_{\Z}$ with the property that there exists $t \in [\frac{j}{K} ,  \frac{j+1}{K}]$ such that $B_t > \delta$.  Indeed suppose that this is not the case.  Then the set $\{t \in [0,1] : B_t > \delta\}$ can be covered by at most $\epsilon K$ number of intervals of the form $[\frac{j}{K} ,  \frac{j+1}{K}]$ for $j \in [1,K-1]_{\Z}$.  Thus we have that
\begin{align*}
\text{Leb}(\{t \in [0,1] : B_t > \delta\}) \leq \epsilon
\end{align*}
and that contradicts \eqref{eqn:lower_bound_on_lebesgue_measure}.  This proves the claim.

By the almost sure absolute continuity of $B$,  we obtain that there exists $K_0 \in \N$ sufficiently large (depending only on $p$ and $\delta$) such that for all $K \in \N$ with $K \geq K_0$,  we have with probability at least $1 - \frac{1-p}{2}$ that
\begin{align}\label{eqn:absolute_continuity}
|B_t - B_s| < \frac{\delta}{2} \quad \text{for all} \quad t,s \in [0,1] \quad \text{with} \quad |t-s| < \frac{1}{K}.
\end{align}
Hence for all $K \in \N$ with $K \geq K_0$,  we have with probability at least $p$ that \eqref{eqn:lower_bound_on_lebesgue_measure} and \eqref{eqn:absolute_continuity} hold.  Suppose that \eqref{eqn:lower_bound_on_lebesgue_measure} and \eqref{eqn:absolute_continuity} hold and let $j \in [1,K-1]_{\Z}$ be such that $B_t > \delta$ for some $t \in [\frac{j}{K} ,  \frac{j+1}{K}]$.  Recall that the previous paragraph implies that the number of $j \in [1,K-1]_{\Z}$ with this property is at least $\epsilon K$.  Then \eqref{eqn:absolute_continuity} implies that $B_s > \frac{\delta}{2}$ for all $s \in [\frac{j}{K} ,  \frac{j+1}{K}]$.  Therefore we have completed the proof of the lemma.
\end{proof}

Next we state and prove the analogue of Lemma~\ref{lem:metric_unbounded_general_case}.

\begin{lemma}\label{lem:metric_unbounded_xi_to_infty_case}
For all $r,T>0$,  we have that
\begin{align*}
\lim_{R \to \infty} \left(\liminf_{n \to \infty} \left(\p\left[\wt{D}_{h^n}^{\xi_{k_n}}(\partial B_r(0) ,  \partial B_R(0)) > T\right] \right) \right)= 1.
\end{align*}
\end{lemma}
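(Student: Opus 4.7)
I would begin with the same elementary lower bound used in the analogous proofs of Lemmas~\ref{lem:metric_unbounded} and~\ref{lem:metric_unbounded_general_case}: since any continuous path from $\partial B_r(0)$ to $\partial B_R(0)$ has its length restricted to each dyadic annulus bounded below by the internal crossing distance of that annulus,
$$\wt D^{\xi_{k_n}}_{h^n}\!\big(\partial B_r(0),\partial B_R(0)\big)\ \geq\ \sum_{k=1}^{K-1}\wt D^{\xi_{k_n}}_{h^n}\!\big(\partial B_{r_k}(0),\partial B_{2r_k}(0);\mathbb A_{r_k,2r_k}(0)\big),\qquad r_k:=2^kr,\ R:=2^Kr.$$
Combining Axiom~\ref{it:translation_and_scale_invariance}, Axiom~\ref{it:weyl_scaling} and Lemma~\ref{lem:uniform_lower_bound_across_annuli_xi_to_infty} at the unit scale, I will produce constants $c_0>0$ and $p_0\in(0,1)$ (depending only on $r$ and the sequence $(\xi_n)$) such that for every $n$ and every $k\in\N$,
$$\p[E_k^n]\ \geq\ p_0,\qquad E_k^n:=\Big\{\wt D^{\xi_{k_n}}_{h^n}\!\big(\partial B_{r_k},\partial B_{2r_k};\mathbb A_{r_k,2r_k}\big)>c_0\,r_k^{Q(\xi_{k_n})}e^{h^n_{r_k}(0)}\Big\}.$$
By Axioms~\ref{it:locality} and~\ref{it:weyl_scaling}, each $E_k^n$ is measurable with respect to $(h^n-h^n_{r_k}(0))|_{\mathbb A_{r_k,2r_k}(0)}$, putting it in a form to which the near-independence-across-scales machinery of \cite[Lemma~3.1]{gwynne2020local} (cf.\ Lemma~\ref{lem:good_event_at_many_scales}) applies.

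The obstruction is precisely the one flagged before the statement of Lemma~\ref{lem:brownian_motion_lemma}: since $Q(\xi_{k_n})\to 0$, the deterministic factor $r_k^{Q(\xi_{k_n})}$ supplies essentially no growth in $k$, so I cannot sum these lower bounds in a geometric fashion as was done in Lemmas~\ref{lem:metric_unbounded} and~\ref{lem:metric_unbounded_general_case}. Instead, growth has to come from the Brownian factor $e^{h^n_{r_k}(0)}$. Fix a large integer $M$ (to be chosen) and partition $\{1,\dots,K\}$ into $L:=\lfloor K/M\rfloor$ consecutive blocks $\mathcal B_j$ of size $M$. The process $t\mapsto h^n_{2^tr}(0)-h^n_r(0)$ is a standard one-dimensional Brownian motion; after rescaling the time interval $[0,KM\log 2]$ to $[0,1]$, Lemma~\ref{lem:brownian_motion_lemma} applied with $p=1-\eta/3$ yields constants $\varepsilon,\delta>0$ so that, for all $K$ sufficiently large (in terms of $M$ and $\eta$) and uniformly in $n$, with probability at least $1-\eta/3$ there are at least $\varepsilon L$ blocks $\mathcal B_j$ that are ``BM-high'', meaning
$$h^n_{r_k}(0)\ \geq\ \tfrac{\delta}{2}\sqrt{KM\log 2}\qquad\text{for every }k\in\mathcal B_j.$$

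I will then splice this Brownian input with the near-independence of the $E_k^n$'s. Inside each block $\mathcal B_j$, selecting every fourth scale produces $\lfloor M/4\rfloor$ scales whose annuli are separated by a factor of at least $4$, so the events $E_k^n$ restricted to these scales are near-independent by the Markov property of the whole-plane GFF (applied conditionally on the radial/circle-average part, which does not affect the measurability of each $E_k^n$ with respect to the centered annular field). Hence the probability that no $E_k^n$ occurs in a given block is at most $(1-p_0)^{\lfloor M/4\rfloor}$ up to a harmless correction; taking a union bound over the $L$ blocks and choosing $M$ large depending only on $p_0$ and $\eta$ forces this to be at most $\eta/3$. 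Intersecting with the BM-high event, with probability at least $1-\eta$ there are at least $\varepsilon L$ scales $k$ for which both $E_k^n$ holds and $h^n_{r_k}(0)\geq\frac{\delta}{2}\sqrt{KM\log 2}$. Using $Q(\xi_{k_n})>0$ and $r_k\geq 1$ for $k$ large, each such scale contributes at least $c_0\,e^{(\delta/2)\sqrt{KM\log 2}}$ to the sum, so
$$\wt D^{\xi_{k_n}}_{h^n}\!\big(\partial B_r(0),\partial B_R(0)\big)\ \geq\ \varepsilon L\, c_0\, e^{(\delta/2)\sqrt{KM\log 2}}\ \xrightarrow[K\to\infty]{}\ \infty.$$
For any fixed $T>0$ and $\eta>0$ this gives $\p[\wt D^{\xi_{k_n}}_{h^n}(\partial B_r(0),\partial B_R(0))>T]\geq 1-\eta$ for all sufficiently large $R$, uniformly in $n$, from which the claim follows upon taking $\liminf_n$ and then $R\to\infty$.

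The main obstacle will be the final splicing step: the Brownian input is a statement about the radial/circle-average part of $h^n$, whereas the near-independence of the $E_k^n$ is a statement about the centered-on-the-annulus fields $(h^n-h^n_{r_k}(0))|_{\mathbb A_{r_k,2r_k}}$, and these two pieces are not independent of each other. The clean way to handle this is to run the union-bound argument conditionally on the radial process and verify that, conditional on the radial Brownian motion, the centered annular fields still enjoy the near-independence across scale-separated annuli required by Lemma~\ref{lem:good_event_at_many_scales}; this amounts to checking that the proof of \cite[Lemma~3.1]{gwynne2020local} goes through with the same uniform constants in the conditional setup.
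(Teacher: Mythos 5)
Your overall strategy is correctly aimed at the mechanism the paper actually uses — since $Q(\xi_{k_n})\to 0$ the deterministic factor supplies no growth, so the divergence must come from the circle-average Brownian motion via Lemma~\ref{lem:brownian_motion_lemma}. However, two of the steps as written do not hold up.

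First, the opening displayed inequality is false: $\wt D^{\xi_{k_n}}_{h^n}=(D^{\xi_{k_n}}_{h^n})^{1/\xi_{k_n}}$ is \emph{not} a length metric, and since $x\mapsto x^{1/\xi}$ is concave for $\xi>1$, the length-metric subadditivity of $D^{\xi_{k_n}}_{h^n}$ across nested annuli only gives $\wt D^{\xi_{k_n}}_{h^n}(\partial B_r,\partial B_R)\geq\max_k \wt D^{\xi_{k_n}}_{h^n}(\text{across}\,\mathbb A_{r_k,2r_k})$, not the sum. This is exactly the monotonicity recorded in Remark~\ref{rem:metric_well_defined} (see \eqref{eqn:monotonicity_property}), and it is what the paper's proof invokes. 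The error is non-fatal here — a single crossing at a single high-circle-average scale already makes the right-hand side blow up — but the inequality you wrote is not a lower bound, so the phrasing ``sum the contributions'' must be dropped.

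Second, and more seriously, the splicing step that you flag as ``the main obstacle'' is a genuine gap, and the proposed fix does not work. Conditioning on the full radial process $(h^n_t(0))_{t>0}$ determines the Brownian increment at every scale, and the annular field $(h^n-h^n_{r_k}(0))|_{\mathbb A_{r_k,2r_k}(0)}$ on which $E_k^n$ is measurable \emph{contains} the local radial increment as an additive component. Its conditional law given the radial process is therefore a different Gaussian field (a ``bridged'' GFF on the annulus), not the unconditional one, and the conditional probability of $E_k^n$ given the radial process is a nontrivial function of that increment rather than a constant bounded below by $p_0$. Verifying that the near-independence proof of \cite[Lemma~3.1]{gwynne2020local} ``goes through with the same constants'' under this stronger conditioning is not a routine check — it is precisely what the paper's argument is designed to avoid. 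The paper instead works with the $\sigma$-algebras $\mathcal F^n_{r_k}=\sigma\bigl((\wt h^n-\wt h^n_{r_k}(0))|_{\C\setminus B_{r_k}(0)}\bigr)$ for the inverted field $\wt h^n=h^n\circ\phi$ (so that the annuli shrink and the filtration is decreasing in the usual way), notes that $\wt h^n_{r_k}(0)$ is $\mathcal F^n_{r_k}$-measurable because of the normalization $\wt h^n_1(0)=0$, and then defines the good indices $n_j$ to incorporate \emph{both} the harmonic-fluctuation bound $\mathcal M^{r_{n_j}}_{s'r_{n_j}}\leq M$ and the Brownian-high condition $\wt h^n_{r_{n_j}}(0)\geq\delta n_j^{1/2}$ in a filtration-adapted way. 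Since the conditional probability of $E^n_{r_k}$ given $\mathcal F^n_{r_k}$ is bounded below by $p_M$ on $\{\mathcal M^{r_k}_{s'r_k}\leq M\}$ (via the Radon--Nikodym estimate of \cite[Lemma~3.3]{gwynne2020local}), the count of successes at the indices $n_j$ stochastically dominates a binomial by \cite[Lemma~3.5]{gwynne2020local}, and each success at a Brownian-high scale contributes $\gtrsim e^{\delta j^{1/2}}$. This filtration-adapted selection is the device that replaces your proposed conditioning on the radial process; a correct write-up should adopt it rather than attempt to run the near-independence argument under radial conditioning.
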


\begin{proof}
We will prove the claim of the lemma only in the case that $r = 1$ since the exact same argument works for general $r>0$.  The argument will be similar with the arguments in the proofs of Lemmas~\ref{lem:metric_unbounded} and ~\ref{lem:metric_unbounded_general_case}.  

Fix $q \in (0,1)$ and let $\delta ,  \epsilon \in (0,1)$ be the constants in Lemma~\ref{lem:brownian_motion_lemma} corresponding to $q$.  Note that 
Lemma~\ref{lem:uniform_lower_bound_across_annuli_xi_to_infty} implies that there exist $1<s_1<s_2,  c ,  p \in (0,1)$ such that
\begin{align}\label{eq:bounded_crossing_prob}
\p\left[\wt{D}_{h^n}^{\xi_{k_n}}(\partial B_{s_1}(0), \partial B_{s_2}(0)) > c \right] \geq p \quad \text{for all} \quad n \in \N.
\end{align}
Fix $t \in (0,s_2^{-1})$ and set $r_k:=t^k$ for all $k \in \N$.  Then \eqref{eq:bounded_crossing_prob} combined with Axioms~\ref{it:weyl_scaling} and ~\ref{it:translation_and_scale_invariance} imply that 
\begin{align}\label{eq:bounded_scaled_crossing_prob}
\p\left[\wt{D}_{h^n}^{\xi_{k_n}}\left(\partial B_{\frac{s_1}{r_k}}(0) ,  \partial B_{\frac{s_2}{r_k}}(0)\right) > c r_k^{-Q(\xi_{k_n})} \exp\left(h^n_{r_k^{-1}}(0)\right) \right] \geq p \quad \text{for all} \quad n, k  \in \N.
\end{align}
We consider the field $\wt{h}^n := h^n \circ \phi$ where $\phi(z) = -\frac{1}{z}$ for all $z \in \C \cup \{\infty\}$,  and note that $\wt{h}^n$ has the law of a whole-plane GFF normalized so that its average on $\partial B_1(0)$ is equal to zero.  Also we consider the event 
\begin{align*}
E_{r_k}^n:=\left\{\wt{D}_{h^n}^{\xi_{k_n}}\left(\partial B_{\frac{s_1}{r_k}}(0) ,  \partial B_{\frac{s_2}{r_k}}(0)\right) > c r_k^{-Q(\xi_{k_n})} \exp\left(h_{r_k^{-1}}^n(0)\right)\right\} \quad \text{for all} \quad n,k \in \N.
\end{align*}
Note that Axiom~\ref{it:locality} implies that $E_{r_k}^n$ is a.s.  determined by $\left(\wt{h}^n - \wt{h}_{r_k}^n(0)\right)|_{\mathbb{A}_{\frac{r_k}{s_2} ,  \frac{r_k}{s_1}}(0)}$ for all $n,k \in \N$.  

We will follow an argument which is similar to that in the proof of \cite[Lemma~3.1]{gwynne2020local}.  Let us introduce some notation first.  For $r>0$,  we define the $\sigma$-algebra
\begin{align*}
\mathcal{F}_r:=\sigma\left(\left(\wt{h}^n - \wt{h}_r^n(0)\right)|_{\C \setminus B_r(0)}\right)
\end{align*}
and note that \cite[Lemma~3.2]{gwynne2020local} implies that $\mathcal{F}_{r'}^n \subseteq \mathcal{F}_r^n$ for all $r' > r > 0$.  Moreover we note that for all $r>0$,  we can write 
\begin{align*}
(\wt{h}^n - \wt{h}_r^n(0))|_{\C \setminus B_r(0)} = \wt{h}^{0,n,r} + \wt{\Fh}^{n,r},
\end{align*}
where $\wt{h}^{0,n,r}$ is a zero-boundary GFF on $B_r(0)$ and $\wt{\Fh}^{n,r}$ is harmonic on $B_r(0)$.  Furthermore,  for all $0<r<R<\infty$,  we set 
\begin{align*}
\mathcal{M}_r^{n,R}:=\sup_{z \in B_r(0)} |\wt{\Fh}^{n,R}(z) - \wt{\Fh}^{n,R}(0)|.
\end{align*}
We also set $s':=\frac{1+s_1^{-1}}{2} \in (0,1)$ and for $r>0$,  we let $H_r^n$ denote the Radon-Nikodym derivative of the conditional law of $(\wt{h}^n - \wt{h}_r^n(0))|_{B_{s_1^{-1} r}(0)}$ given $\mathcal{F}_r^n$ with respect to the marginal law.  We also fix a parameter $M \geq 1$ which we will choose eventually sufficiently close to $1$,  depending only on $s_1,s_2,\epsilon,p,$ and $q$.

Note that \cite[Lemma~3.3]{gwynne2020local} implies that for all $\alpha>0$,  there exists $C \in (0,\infty)$ depending only on $\alpha,s_1,s'$ and $M$ such that a.s.  on the event that $\{\mathcal{M}_{s' r}^{n,r} \leq M\}$,  we have that
\begin{align}\label{eq:R_N_derivative_moment_bounds}
\max\left\{\E\left[(H_r^n)^{\alpha} \giv \mathcal{F}_r^n \right] ,  \E\left[(H_r^n)^{-\alpha} \giv \mathcal{F}_r^n \right] \right\} \leq C.
\end{align}
Thus combining \eqref{eq:R_N_derivative_moment_bounds} with H\"older inequality imply that there exists $p_M = p_M(s_1,s_2,p,M) \in (0,1)$ such that
\begin{align}\label{eq:conditional_prob_lower_bound}
\p\left[E_{r_k}^n \giv \mathcal{F}_{r_k}^n \right] \geq p_M \quad \text{a.s.} \quad \text{on} \quad \{\mathcal{M}_{s' r_k}^{r_k} \leq M\}.
\end{align}

For $j \in \N$,  we let $n_j$ be the $j$th smallest $k \in \N$ for which 
\begin{align*}
\mathcal{M}_{s' r_k}^{r_k} \leq M \quad \text{and} \quad \wt{h}_{r_k}^n(0) = B_{-\log(t) k} \geq \delta k^{1/2},
\end{align*}
where $B = (B_t)_{t \geq 0}$ is an one-dimensional standard Brownian motion.  Let also $\mathcal{N}_M(K)$ denote the number of $k \in [0,K-1]_{\Z}$ such that $\mathcal{M}_{s' r_k}^{r_k} \leq M$.  Then \cite[Lemma~3.4]{gwynne2020local} implies that there exists a constant $c_0 \in (0,\infty)$ depending only on $\epsilon$ and $s'$ such that if we choose $M>1$ sufficiently large (depending only on $\epsilon$ and $s'$),  we have that
\begin{align}\label{eq:many_occurences}
\p\left[\mathcal{N}_M(K) \leq \left(1-\frac{\epsilon}{2}\right) K \right] \leq c_0 e^{-K} \quad \text{for all} \quad K \in \N.
\end{align}
Moreover Lemma~\ref{lem:brownian_motion_lemma} combined with Brownian scaling imply that there exists $K_0 \in \N$ sufficiently large (depending only on $q$) such that for all $K \geq K_0$,  we have with probability at least $q$ that $B_{-\log(t) k} > \delta K^{1/2}$ for at least $\epsilon K$ number of $k \in [1,K-1]_{\Z}$.  Thus combining with \eqref{eq:many_occurences},  we obtain that there exists $j_0 \in \N$ depending only on $\epsilon$ and $q$ such that
\begin{align}\label{eq:number_of_upcrossings}
\p\left[n_j \leq \frac{2j}{\epsilon}\right ] \geq q - c_0 e^{-\frac{2j}{\epsilon}} \quad \text{for all} \quad j \geq j_0.
\end{align}

Finally for $K \in \N$,  we let $\mathcal{N}^n(K)$ be the number of $k \in [0,K-1]_{\Z}$ for which $E_{r_k}^n$ occurs.  Note also that \eqref{eq:conditional_prob_lower_bound} implies that $\mathcal{N}^n(n_j)$ stochastically dominates a binomial distribution with $j$ trials and success probability $p_M$.  Then \cite[Lemma~3.5]{gwynne2020local} implies that there exists a constant $A \in (0,\infty)$ depending only on $p_M$ such that
\begin{align}\label{eq:conditionally_many_crossings}
\p\left[\mathcal{N}^n(n_j) > \frac{j p_M}{2} \giv n_j \leq \frac{2j}{\epsilon} \right] \geq 1 -e^{-A j } \quad \text{for all} \quad j \geq j_0.
\end{align}
Therefore combining \eqref{eq:number_of_upcrossings} with \eqref{eq:conditionally_many_crossings},  we obtain that 
\begin{align}\label{eq:final_inequality}
\p\left[\mathcal{N}^n(n_j) > \frac{j p_M}{2} ,  n_j \leq \frac{2j}{\epsilon} \right] \geq \left(q - c_0 e^{-\frac{2j}{\epsilon}}\right) \left(1 - e^{-A j}\right) \quad \text{for all} \quad j \geq j_0.
\end{align}
Note that if the event in \eqref{eq:final_inequality} occurs,  then combining with \eqref{eqn:monotonicity_property} in Remark~\ref{rem:metric_well_defined} we obtain that
\begin{align*}
\wt{D}_{h^n}^{\xi_{k_n}}\left(\partial B_1(0) ,   \partial B_{t^{-\frac{2 j}{\epsilon}}}(0)\right) \geq c e^{\delta j^{1/2}}
\end{align*}
which implies that
\begin{align*}
\liminf_{j \to \infty} \left(\liminf_{n \to \infty} \left(\p\left[\wt{D}_{h^n}^{\xi_{k_n}}\left(\partial B_1(0) ,  \partial B_{\frac{2j}{\epsilon}}(0)\right) \geq c e^{\delta j^{1/2}} \right]\right)\right) \geq q.
\end{align*}
Therefore using \eqref{eqn:monotonicity_property} in Remark~\ref{rem:metric_well_defined} again,  we obtain that
\begin{align*}
\lim_{R \to \infty} \left(\liminf_{n \to \infty} \left(\p\left[\wt{D}_{h^n}^{\xi_{k_n}}(\partial B_1(0) ,  \partial B_R(0)) > T\right] \right) \right)\geq q \quad \text{for all} \quad T>0.
\end{align*}
This completes the proof of the lemma since $q \in (0,1)$ was arbitrary.
\end{proof}

\begin{proof}[Proof of Proposition~\ref{prop:limit_is_a_metric_xi_to_infty}.]
The proof is essentially the same as the proof of Proposition~\ref{prop:limit_is_a_metric_general_case}.  More precisely,  we use Lemma~\ref{lem:metric_unbounded_xi_to_infty_case} in order to obtain analogous statements of Lemmas~\ref{lem:connectivity_lemma_general_case} and ~\ref{lem:crossing_positive_general_case}.  In particular we can show by arguing as in the proof of Lemma~\ref{lem:crossing_positive_general_case} that
\begin{align}\label{eqn:crossing_positive_xi_to_infty}
\p\left[\wt{D}(\partial B_r(0) ,  \partial B_{2r}(0)) > 0 \right] \in \{0,1\} \quad \text{for all} \quad r>0.
\end{align}
Hence using Lemma~\ref{lem:uniform_lower_bound_across_annuli_xi_to_infty} and arguing as in the proof of Proposition~\ref{prop:annuli_crossing_always_positive_general_case},  we obtain that
\begin{align}\label{eqn:crossing_assymptotically_positive}
\lim_{\epsilon \to 0} \liminf_{n \to \infty} \left(\p\left[\wt{D}_{h^n}^{\xi_{k_n}}(\partial B_{r_1}(0) ,  \partial B_{r_2}(0)) > \epsilon \right]\right) = 1 \quad \text{for all} \quad 0 < r_1 < r_2 < \infty.
\end{align}
Therefore the proof of the proposition is complete by using \eqref{eqn:crossing_assymptotically_positive} and arguing as in the proof of Proposition~\ref{prop:limit_is_a_metric_general_case}.
\end{proof}

Now we focus on proving that $\wt{D}$ is non-trivial a.s.  First we will show that $\wt{D} \not \equiv 0$ a.s.  This is the content of the following lemma.

\begin{lemma}\label{lem:limit_not_trivial_almost_surely}
Fix $z,w  \in \C$ distinct and deterministic points.  Then we have that $\wt{D}(z,w) > 0$ a.s.
\end{lemma}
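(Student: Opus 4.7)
The proof will follow the template of Lemma~\ref{lem:metric_non_trivial}, adapted to the renormalization $\wt{D}_h^{\xi} = (\alpha(\xi)^{-1} D_h^{\xi})^{1/\xi}$; the new technical point is that after taking the $\xi$th root the scale exponent becomes $Q(\xi_{k_n})$ rather than $\xi_{k_n} Q(\xi_{k_n})$, which is then offset by $Q(\xi_{k_n}) \to 0$ as $n \to \infty$ (by \cite[Proposition~2.5]{ding2020tightness}); this actually simplifies the final lower bound compared with the general case.

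\textbf{Setting up good events at many scales.}  Applying Lemma~\ref{lem:uniform_lower_bound_across_annuli_xi_to_infty} with $r = 1/4$ produces constants $c, p > 0$ with $\p[\wt{D}_{h^n}^{\xi_{k_n}}(\partial B_{1/4}(0), \partial B_{1/2}(0)) > c] \geq p$ for all $n$ large enough.  Fix $s \in (0, 1/4)$ and $r_0 > 0$ small enough that $w \notin \overline{B_{r_0/2}(z)}$, set $r_k := r_0 s^k$ for $k \in \N_0$, and define
\begin{align*}
E_k^n := \bigl\{ \wt{D}_{h^n}^{\xi_{k_n}}(\text{across} \, \mathbb{A}_{r_k/4, r_k/2}(z)) > c \, r_k^{Q(\xi_{k_n})} e^{h^n_{r_k}(z)} \bigr\}.
\end{align*}
Using $(\wt{D}_h^{\xi})^{\xi} = \alpha(\xi)^{-1} D_h^{\xi}$ together with Axioms~\ref{it:weyl_scaling} and \ref{it:translation_and_scale_invariance} applied to $D_h^{\xi_{k_n}}$, one checks that $r_k^{-Q(\xi_{k_n})} e^{-h^n_{r_k}(z)} \wt{D}_{h^n}^{\xi_{k_n}}(\partial B_{r_k/4}(z), \partial B_{r_k/2}(z))$ has the same law as $\wt{D}_{h^n}^{\xi_{k_n}}(\partial B_{1/4}(0), \partial B_{1/2}(0))$; hence $\p[E_k^n] \geq p$ for each $k$ and all $n$ large.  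Using that the crossing distance is realized by paths confined to the closed annulus, Axiom~\ref{it:locality} and Weyl scaling give that $E_k^n$ is determined by $(h^n - h^n_{r_k}(z))|_{\overline{\mathbb{A}_{r_k/4, r_k/2}(z)}}$.  Since $s < 1/4$ makes the annuli pairwise disjoint, \cite[Lemma~3.1]{gwynne2020local} provides constants $a, C > 0$ and $b \in (0,1)$ such that $\p[\mathcal{N}^n(K) \geq bK] \geq 1 - Ce^{-aK}$ for all $K \in \N$ and large $n$, where $\mathcal{N}^n(K)$ denotes the number of $k \in [1, K]_{\Z}$ for which $E_k^n$ occurs; Fatou's lemma then gives $\p[\limsup_n \{\mathcal{N}^n(K) \geq bK\}] \geq 1 - Ce^{-aK}$.

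\textbf{Passing to the limit.}  Fix $K \in \N$, restrict attention to the limsup event from the previous step, and let $(\lambda_n)$ be a subsequence along which $\mathcal{N}^{\lambda_n}(K) \geq bK$.  By Definition~\ref{def:lower_semicontinuous_topology}(ii), choose $(z_m, w_m) \to (z, w)$ with $\wt{D}_{h^m}^{\xi_{k_m}}(z_m, w_m) \to \wt{D}(z, w)$.  Since $s \leq 1/4$ forces $r_{K+1} \leq r_k/4$ for every $k \in [1, K]_{\Z}$, and since $|z - w| > r_0/2 \geq r_k/2$, for $m$ large we have $z_m \in B_{r_k/4}(z)$ and $w_m \notin \overline{B_{r_k/2}(z)}$ simultaneously for every $k \in [1, K]_{\Z}$.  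Picking for each $n$ a (random) $k^*(n) \in [1, K]_{\Z}$ with $E_{k^*(n)}^{\lambda_n}$ holding, every path from $z_{\lambda_n}$ to $w_{\lambda_n}$ crosses $\mathbb{A}_{r_{k^*(n)}/4, r_{k^*(n)}/2}(z)$ and hence
\begin{align*}
\wt{D}_{h^{\lambda_n}}^{\xi_{\lambda_n}}(z_{\lambda_n}, w_{\lambda_n}) \geq c \min_{k \in [1, K]_{\Z}} \bigl( r_k^{Q(\xi_{\lambda_n})} e^{h^{\lambda_n}_{r_k}(z)} \bigr).
\end{align*}
Sending $n \to \infty$, the convergence $Q(\xi_{\lambda_n}) \to 0$ and the a.s.\ local uniform convergence $h^{\lambda_n}_{r_k}(z) \to h_{r_k}(z)$ from the coupling yield $\wt{D}(z, w) \geq c \exp\bigl(\min_{k \in [1, K]_{\Z}} h_{r_k}(z)\bigr) > 0$ almost surely on this event.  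Therefore $\p[\wt{D}(z, w) > 0] \geq 1 - Ce^{-aK}$, and letting $K \to \infty$ concludes the proof.

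\textbf{Main point of care.}  The only genuinely nonroutine step is checking the scale-invariance identity underlying the uniform lower bound $\p[E_k^n] \geq p$: the homogeneity exponent of $D_h^{\xi}$ is $\xi Q(\xi)$, and one must carefully track how taking the $\xi$th root converts this to the clean $Q(\xi)$ that appears in the definition of $E_k^n$.  Beyond that, the argument is a direct transcription of the proof of Lemma~\ref{lem:metric_non_trivial}, with the vanishing of $Q(\xi_{k_n})$ replacing its bounded-away-from-zero counterpart from the general case.
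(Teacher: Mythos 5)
Your proposal is correct and follows essentially the same route as the paper's proof: set up good crossing events for $\wt D_{h^n}^{\xi_{k_n}}$ at geometrically spaced scales around the point, use the near-independence result \cite[Lemma~3.1]{gwynne2020local} to guarantee many of them hold with high probability, and pass to the limit along the subsequence selected by the topology on lower semicontinuous functions, observing that $Q(\xi_{k_n})\to 0$ washes the power of $r_k$ out of the bound. The only cosmetic difference is that the paper first invokes the translation/scaling/rotation invariance identities~\eqref{eqn:translation_invariance_of_the_limit}--\eqref{eqn:rotational_invariance_of_the_limit} to reduce to $(z,w)=(0,1)$ before running the argument around the origin, whereas you work directly in annuli centered at $z$ — matching the structure of Lemma~\ref{lem:metric_non_trivial}, which the paper itself cites for the remainder of the argument.
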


\begin{proof}
First we note that combining \eqref{eqn:translation_invariance_of_the_limit},  \eqref{eqn:scaling_invariance_of_the_limit} and \eqref{eqn:rotational_invariance_of_the_limit},  we obtain that it suffices to show that $\wt{D}(0,1) > 0$ a.s.  From now on,  we will focus on proving the latter claim.

Note that Lemma~\ref{lem:uniform_lower_bound_across_annuli_xi_to_infty} implies that there exist $0 < s_1 < s_2 < 1,  c,p \in (0,1)$ such that
\begin{align}\label{eqn:uniform_crossing_distance}
\p\left[\wt{D}_{h^n}^{\xi_{k_n}}(\partial B_{s_1}(0) ,  \partial B_{s_2}(0)) > c \right] \geq p \quad \text{for all} \quad n \in \N.
\end{align}
Fix $t \in (0,s_1)$ and set $r_k:=t^k$ for all $k \in \N_0$.  We consider the event
\begin{align*}
E_{r_k}^n:=\left\{\wt{D}_{h^n}^{\xi_{k_n}}(\partial B_{s_1 r^k}(0) ,  \partial B_{s_2 r_k}(0)) > c r_k^{Q(\xi_{k_n})} e^{h^n_{r_k}(0)}\right\}.
\end{align*}
Then combining \eqref{eqn:uniform_crossing_distance} with the Axioms of strong LQG metrics,  we obtain that
\begin{align*}
\p\left[E_{r_k}^n\right] \geq p \quad \text{for all} \quad n,k \in \N.
\end{align*}
Let $\mathcal{N}^n(K)$ denote the number of $k \in [1,K]_{\Z}$ for which $E_{r_k}^n$ occurs.  Then arguing as in the proof of Lemma~\ref{lem:metric_non_trivial} and using \cite[Lemma~3.1]{gwynne2020local},  we obtain that there exist constants $a>0,b \in (0,1),\wt{c}>0$ such that
\begin{align}\label{eqn:limit_non_trivial_with_very_high_prob}
\p\left[\wt{D}(0,1)\right] \geq \p\left[\limsup_{n \to \infty} \{\mathcal{N}^n(K) \geq b K\}\right] \geq 1 - \wt{c} e^{-aK} \quad \text{for all} \quad K \in \N.
\end{align}
Therefore the proof of the lemma is complete by taking $K \to \infty$ in \eqref{eqn:limit_non_trivial_with_very_high_prob}.
\end{proof}

Next we show that $\wt{D} \not \equiv \infty$ a.s.

\begin{lemma}\label{lem:limit_finite_xi_to_infty}
It holds that $\wt{D} \not \equiv \infty$ a.s.
\end{lemma}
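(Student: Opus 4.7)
The plan is to exploit the $\xi$-uniform upper tail bound on $\wt{D}_h^{\xi}$-distances between disjoint compact connected sets proved in Proposition~\ref{prop:bound_on_lqg_distances_of_sets_xi_to_infty}, combined with the almost sure lower semicontinuous convergence from Proposition~\ref{prop:lower_semicontinuity_xi_to_infty}. The punchline will be that $\wt{D}(K_1, K_2) < \infty$ a.s.\ for suitably chosen deterministic disjoint compact connected sets $K_1, K_2 \subseteq \C$ which are not singletons, which immediately forces $\wt{D} \not\equiv \infty$ a.s., since a finite infimum over $K_1 \times K_2$ is realized in the limit by points at which $\wt{D}$ is finite.

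First I would fix, for instance, $K_1 = \partial B_1(0)$ and $K_2 = \partial B_3(0)$, and let $U$ be any fixed open connected set containing $K_1 \cup K_2$. Applying Proposition~\ref{prop:bound_on_lqg_distances_of_sets_xi_to_infty} with $\mathfrak{r} = 1$ to $\xi = \xi_{k_n}$, and using that $h^n_1(0) = 0$ by our normalization of the whole-plane GFF, I would obtain $n_0 \in \N$ (chosen so that $\xi_{k_n} \geq \xi_0$ for all $n \geq n_0$) and a constant $C = C(K_1, K_2, U) < \infty$ such that
\begin{align*}
\p\left[\wt{D}_{h^n}^{\xi_{k_n}}(K_1, K_2) > A\right] \leq C A^{-b \sqrt{M}} \quad \text{for all} \quad A \geq 1, \quad n \geq n_0.
\end{align*}
Recall from \eqref{eqn:condition_for_M_xi_to_infty} that $b \sqrt{M} > 1$, so this tail genuinely vanishes as $A \to \infty$.

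Next I would record a short compactness fact: if $f_n \to f$ in the lower semicontinuous topology of Definition~\ref{def:lower_semicontinuous_topology} and $L_1, L_2 \subseteq \C$ are compact, then $f(L_1, L_2) \leq \liminf_{n \to \infty} f_n(L_1, L_2)$. The argument is the usual extraction: pick near-minimizers $(z_n, w_n) \in L_1 \times L_2$ for $f_n$ on $L_1 \times L_2$, pass to a subsequence converging to some $(z, w) \in L_1 \times L_2$, and invoke property (i) of Definition~\ref{def:lower_semicontinuous_topology}. Applied almost surely in the coupling of Proposition~\ref{prop:lower_semicontinuity_xi_to_infty}, this gives $\wt{D}(K_1, K_2) \leq \liminf_n \wt{D}_{h^n}^{\xi_{k_n}}(K_1, K_2)$ a.s. Hence $\{\wt{D}(K_1, K_2) > A\}$ is contained a.s.\ in the event that $\wt{D}_{h^n}^{\xi_{k_n}}(K_1, K_2) > A$ eventually in $n$, so Fatou's lemma for events combined with the tail bound above yields $\p[\wt{D}(K_1, K_2) > A] \leq C A^{-b\sqrt{M}}$. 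Sending $A \to \infty$ gives $\wt{D}(K_1, K_2) < \infty$ a.s., which completes the proof. The only non-routine ingredient is the set-level lower semicontinuity, which is a clean one-paragraph compactness argument; I do not anticipate any substantial obstacle.
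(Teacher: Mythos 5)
Your proposal is correct and follows essentially the same route as the paper: pick fixed disjoint compact connected non-singleton sets, invoke Proposition~\ref{prop:bound_on_lqg_distances_of_sets_xi_to_infty} for a $\xi$-uniform tail bound on $\wt{D}_{h^n}^{\xi_{k_n}}(K_1,K_2)$, then pass the bound to the limit via lower semicontinuity of $f \mapsto f(K_1,K_2)$ together with Fatou. Your added care about restricting to $n$ with $\xi_{k_n}\geq\xi_0$, and your explicit compactness argument for the set-level lower semicontinuity, are both sound (and slightly more careful than the paper's write-up), but do not change the substance.
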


\begin{proof}
Fix $K_1,K_2 \subseteq \C$ disjoint compact and connected sets which are both not singletons.  Since $\wt{D}_{h^n}^{\xi_{k_n}}$ converges to $\wt{D}$ as $n \to \infty$ with respect to the lower semicontinuous topology of functions,  we obtain that
\begin{align*}
\wt{D}(K_1,K_2) \leq \liminf_{n \to \infty} \wt{D}_{h^n}^{\xi_{k_n}}(K_1,K_2).
\end{align*}

Fix $\epsilon \in (0,1)$ and note that Proposition~\ref{prop:bound_on_lqg_distances_of_sets_xi_to_infty} implies that there exists $A>0$ sufficiently large such that
\begin{align*}
\p\left[\wt{D}_{h^n}^{\xi_{k_n}}(K_1,K_2) \geq A \right] \leq \epsilon \quad \text{for all} \quad n \in \N.
\end{align*}
Thus we obtain that
\begin{align*}
\p\left[\wt{D}(K_1,K_2) \geq 2A \right] \leq \p\left[\liminf_{n \to \infty} \left\{\wt{D}_{h^n}^{\xi_{k_n}}(K_1,K_2) \geq A \right \}\right] \leq \liminf_{n \to \infty} \p\left[\wt{D}_{h^n}^{\xi_{k_n}}(K_1,K_2) \geq A \right] \leq \epsilon.
\end{align*}
Since $\epsilon \in (0,1)$ was arbitrary,  we obtain that $\wt{D}(K_1,K_2) < \infty$ a.s.  In particular,  we have that $\wt{D} \not \equiv \infty$ a.s.  This completes the proof of the lemma. 
\end{proof}

Finally we are ready to prove Theorem~\ref{thm:main_thm_xi_to_infty_intro}.

\begin{proof}[Proof of Theorem~\ref{thm:main_thm_xi_to_infty_intro}]
Fix a sequence $(\xi_n)_{n \in \N}$ in $[1,\infty)$ such that $\xi_n \to \infty$ as $n \to \infty$.  Then Proposition~\ref{prop:lower_semicontinuity_xi_to_infty} implies that $(\wt{D}_h^{\xi_n})_{n \in \N}$ is a tight sequence of metrics with respect to the lower semicontinuous topology of functions.  Let $\wt{D}$ be any subsequential limit in law of $(\wt{D}_h^{\xi_n})_{n \in \N}$.  Then combining Lemmas~\ref{lem:limit_not_trivial_almost_surely} and ~\ref{lem:limit_finite_xi_to_infty},  we obtain that $\wt{D}(z,w) > 0$ a.s.  for any fixed distinct and deterministic points $z,w \in \C$,  and $\wt{D} \not \equiv \infty$ a.s.  Finally Proposition~\ref{prop:limit_is_a_metric_xi_to_infty} implies that if in addition we assume that $\wt{D}$ satisfies the triangle inequality a.s.,  then we have that $\wt{D}$ is a metric a.s.   This completes the proof of the theorem.
\end{proof}

\section{Appendix}
\label{sec:apendix}

In this section,  we record two facts about LQG metrics and convergence of random variables taking values on complete and separable metric spaces.  Lemma~\ref{lem:normalization_well_defined} shows that the way that we have re-normalized the metrics $D_h^{\xi}$ for $\xi>0$ in Theorems~\ref{thm:main_theorem_subcritical_intro}-\ref{thm:main_thm_xi_to_infty_intro} is well-defined.  Lemma~\ref{lem:convergence_in_prob} allows us to deduce convergence in probability of random variables under certain assumptions.

\begin{lemma}\label{lem:normalization_well_defined}
Let $h$ be a whole-plane GFF normalized such that $h_1(0) = 0$.  Fix $\xi$ and let $D_h^{\xi}$ denote the limit in probability as $\epsilon \to 0$ of the random metrics $a_{\epsilon}^{-1} D_h^{\epsilon}$ as in Theorem~\ref{thm:convergence_of_supercritical_lfpp}.  Then for all $x>0$,  we have that
\begin{align*}
 \p\left[D_h^{\xi}(\text{around} \,\,  \mathbb{A}_{1,2}(0)) = x \right] = \p\left[D_h^{\xi}(0,1) = x\right] = 0.
\end{align*}
\end{lemma}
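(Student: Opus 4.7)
The plan is to establish that each of $R_1:=D_h^\xi(\mathrm{around}\,\mathbb{A}_{1,2}(0))$ and $R_2:=D_h^\xi(0,1)$ has a non-atomic distribution on $(0,\infty)$ by a Cameron--Martin perturbation argument. For each $i\in\{1,2\}$ I would pick a non-negative $\phi_i\in C_c^\infty(\C)$ with $(\phi_i)_1(0)=0$, supported so that $\phi_i>0$ on an open set through which every competing path is forced to pass. For $i=1$ let $\phi_1$ be supported in a Euclidean tubular neighborhood of a radial segment $\{re^{i\theta_0}:r\in[1.1,1.9]\}$, disjoint from $\partial B_1(0)\cup\partial B_2(0)$, so that every loop in $\mathbb{A}_{1,2}(0)$ separating $\partial B_1(0)$ from $\partial B_2(0)$ must cross the tube by connectivity; for $i=2$ let $\phi_2$ be supported in $B_{1/4}(0)$ with $\phi_2>0$ on $B_{1/8}(0)$, so that any path from $0$ to $1$ must exit $B_{1/8}(0)$ and hence spend positive $D_h^\xi$-time in $\{\phi_2>0\}$.

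Let $G\sim N(0,1)$ be independent of $h$ and set $\tilde h:=h+G\phi_i$; since $(\phi_i)_1(0)=0$, one has $\tilde h_1(0)=0$. The function $\phi_i$ is smooth, compactly supported, and lies in the Cameron--Martin space of the whole-plane GFF, so that the covariance of $\tilde h$ differs from that of $h$ by the rank-one operator $\phi_i\otimes\phi_i$; the Feldman--H\'ajek theorem then yields equivalence of the two Gaussian laws. Consequently $\p[R_i(h)=x]=0$ if and only if $\p[R_i(\tilde h)=x]=0$. By Axiom~\ref{it:weyl_scaling},
\begin{align*}
R_i(\tilde h)\ =\ \inf_P \int_0^{\len(P;D_h^\xi)} e^{\xi G\,\phi_i(P(s))}\,ds,
\end{align*}
where the infimum is over the class $\mathcal{P}_i$ of disconnecting loops in $\mathbb{A}_{1,2}(0)$ (for $i=1$) or paths from $0$ to $1$ (for $i=2$), each parameterized by $D_h^\xi$-length. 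If the right-hand side is a.s.\ a strictly monotone continuous function of $G$, then the conditional law of $R_i(\tilde h)$ given $h$ is the image of a non-degenerate Gaussian under a strictly monotone continuous map, hence non-atomic; integrating yields $\p[R_i(\tilde h)=x]=0$ for every $x>0$.

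The main obstacle is the strict monotonicity in $G$, since the infimum over $\mathcal{P}_i$ is not in general attained and a priori a minimizing sequence could have vanishing $\int\phi_i(P(s))\,ds$, absorbing the effect of the perturbation. This is prevented by the geometry of $\phi_i$: there exist $\phi_{\min}>0$ and an open set $U\subset\{\phi_i\geq\phi_{\min}\}$ through which every element of $\mathcal{P}_i$ must pass. Lower semicontinuity of $D_h^\xi$ together with the metric-separation property (positivity of distances between disjoint compact sets) imply that $\delta(h):=D_h^\xi(\mathrm{across}\,U)$ (for $i=1$) or $D_h^\xi(0,\partial B_{1/8}(0))$ (for $i=2$) is a.s.\ strictly positive, and any $P\in\mathcal{P}_i$ spends $D_h^\xi$-time at least $\delta(h)$ inside $U$. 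Therefore for every $P\in\mathcal{P}_i$ and every $G>0$,
\begin{align*}
\int_0^{\len(P;D_h^\xi)} e^{\xi G\,\phi_i(P(s))}\,ds\ \geq\ \len(P;D_h^\xi)+(e^{\xi G\phi_{\min}}-1)\,\delta(h),
\end{align*}
which upon taking the infimum over $P$ gives $R_i(\tilde h)\geq R_i(h)+(e^{\xi G\phi_{\min}}-1)\,\delta(h)$, strict for $G>0$. Running the same inequality with $h+G_1\phi_i$ in place of $h$ upgrades this to strict monotonicity of $G\mapsto R_i(\tilde h)$ everywhere on $\mathbb{R}$, completing the plan.
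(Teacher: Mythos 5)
Your overall strategy (a Cameron--Martin perturbation plus a strict-monotonicity argument turning the conditional law of $R_i$ into the image of a non-degenerate Gaussian under an injective map) is genuinely different from the paper's. The paper instead argues by contradiction: it perturbs $h$ by a compactly supported $\phi$ that is \emph{constant} on $\mathbb{A}_{1,2}(0)$, uses locality (Axiom~\ref{it:locality}) to say $D_h^\xi(\text{around}\,\mathbb{A}_{1,2}(0))$ depends only on $h|_{\mathbb{A}_{1,2}(0)}$, applies Weyl scaling to get the exact identity $D_{h+\phi}^\xi(\text{around}\,\mathbb{A}_{1,2}(0)) = e^{\xi z}D_h^\xi(\text{around}\,\mathbb{A}_{1,2}(0))$, and combines local absolute continuity of $h|_A$ and $(h+\phi)|_A$ with the fact that a CDF has at most countably many jumps. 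The paper's route is shorter and does not need any geometric control on minimizing paths.

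For $R_2 = D_h^\xi(0,1)$, your construction essentially works: a bump $\phi_2$ supported in $B_{1/4}(0)$ satisfies $(\phi_2)_1(0)=0$ automatically, every path from $0$ to $1$ must exit $B_{1/8}(0)$, and the $D_h^\xi$-time spent in a ball where $\phi_2$ is bounded below is at least $D_h^\xi(0,\partial B_{1/8}(0))>0$. (You should require $\phi_2>0$ on a \emph{closed} ball, say $\overline{B_{1/8}(0)}$, so that $\phi_{\min}>0$.) This is in fact a nice feature, since the paper's remark that ``the exact same argument works'' for $D_h^\xi(0,1)$ is a little terse there, given that $D_h^\xi(0,1)$ is not determined by $h$ restricted to a fixed bounded set.

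However, the $R_1$ case has a genuine gap. Your geometric claim, that every loop in $\mathbb{A}_{1,2}(0)$ separating $\partial B_1(0)$ from $\partial B_2(0)$ must cross a tubular neighborhood of $\{re^{i\theta_0}:r\in[1.1,1.9]\}$ that is \emph{disjoint} from $\partial B_1(0)\cup\partial B_2(0)$, is false: a circle of radius $1.05$ (or $1.95$) is a disconnecting loop that avoids the tube entirely. What is true is that every disconnecting loop must cross the \emph{full} radial segment $\{re^{i\theta_0}:r\in(1,2)\}$, but this segment has endpoints on the boundary circles, so any open neighborhood of it reaches $\partial B_1(0)$. This is incompatible with the two constraints you impose on $\phi_1$: non-negativity (needed for monotonicity) and $(\phi_1)_1(0)=0$ (needed for the Feldman--H\'ajek equivalence, since otherwise $\tilde h_1(0)$ is a non-degenerate Gaussian and the laws of $h$ and $\tilde h$ are mutually singular). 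Together these force $\phi_1\equiv 0$ on $\partial B_1(0)$, so $\phi_1<\phi_{\min}$ on a neighborhood of $\partial B_1(0)$, and disconnecting loops confined to that neighborhood spend zero $D_h^\xi$-time in $\{\phi_1\geq\phi_{\min}\}$. You have not shown that such near-boundary loops cannot be (near-)minimizers, so the strict inequality $R_1(\tilde h)>R_1(h)$ for $G>0$ does not follow, and the argument for $R_1$ breaks down. The paper's approach avoids this entirely, since a constant shift on the annulus multiplies the length of every admissible loop by the same factor regardless of where the loop lives.
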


\begin{proof}
Fix $x>0$.  We will only show that 
\begin{align*}
\p\left[D_h^{\xi}(\text{around} \,\,  \mathbb{A}_{1,2}(0)) = x \right] = 0,
\end{align*}
since the exact same argument works for $D_h^{\xi}(0,1)$.  We will argue by contradiction and suppose that
\begin{align}\label{eqn:contradiction_hypothesis}
\p\left[D_h^{\xi}(\text{around} \,\,  \mathbb{A}_{1,2}(0)) = x \right] > 0.
\end{align}

Fix any $y>0$ and let $z \in \R$ be such that $x e^{-\xi(\gamma) z} = y$.  Let also $\phi$ be a smooth and compactly supported function such that $\phi|_{\mathbb{A}_{1,2}(0)} \equiv z$.  Then since the random variables $D_h^{\xi}(\text{around}\,\,\mathbb{A}_{1,2}(0))$ and $D_{h+\phi}^{\xi}(\text{around} \,\,\mathbb{A}_{1,2}(0))$ are a.s.  determined by $h|_{\mathbb{A}_{1,2}(0)}$ and $(h+\phi)|_{\mathbb{A}_{1,2}(0)}$ respectively and $h+\phi = h+ z$ on $\mathbb{A}_{1,2}(0)$,  we obtain by combining with Weyl's scaling that
\begin{align}\label{eqn:change_of_fields}
\p\left[D_{h+\phi}^{\xi}(\text{around} \,\,\mathbb{A}_{1,2}(0)) = x \right] = \p\left[D_h^{\xi}(\text{around} \,\,\mathbb{A}_{1,2}(0)) =y\right].
\end{align}

Note that \cite[Proposition~3.4]{ms2016imag1} implies that the laws of $h|_{\mathbb{A}_{1,2}(0)}$ and $(h+\phi)|_{\mathbb{A}_{1,2}(0)}$ are mtutually absolutely continuous and so combining with \eqref{eqn:contradiction_hypothesis} and \eqref{eqn:change_of_fields},  we obtain that
\begin{align*}
\p\left[D_h^{\xi}(\text{around} \,\,\mathbb{A}_{1,2}(0)) = y \right] > 0.
\end{align*}
In particular,  we have that the function
\begin{align*}
F(y):=\p\left[D_h^{\xi}(\text{around} \,\,\mathbb{A}_{1,2}(0)) \leq y\right]
\end{align*}
is discontinuous for all $y>0$.  But that's a contradiction since $F$ is non-decreasing and the set of discontinuity points of a non-decreasing function is at most countable.  It follows that
\begin{align*}
\p\left[D_h^{\xi}(\text{around}\,\,\mathbb{A}_{1,2}(0)) = x \right] = 0 \quad \text{for all} \quad x >0.
\end{align*}
This completes the proof of the lemma.
\end{proof}

\begin{lemma}(\cite[Lemma~1.3]{dubedat2020weak})
\label{lem:convergence_in_prob}
Let $(\Omega_1,d_1)$ and $(\Omega_2,d_2)$ be complete separable metric spaces.  Let $X$ be a random variable taking values in $\Omega_1$ and let $\{Y^n\}_{n \in \N}$ and $Y$ be random variables taking values in $\Omega_2$,  all defined on the same probability space,  such that $(X,Y^n) \to (X,Y)$ as $n \to \infty$ in law.  If $Y$ is a.s.  determined by $X$,  then $Y^n \to Y$ as $n \to \infty$ in probability.
\end{lemma}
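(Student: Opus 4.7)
The plan is to reduce the statement to the fact that convergence in law to a constant is equivalent to convergence in probability. Concretely, I will first argue that the triple $(X,Y^n,Y)$ converges in law to $(X,Y,Y)$ on $\Omega_1\times\Omega_2\times\Omega_2$, and then apply the continuous map $(y,y')\mapsto d_2(y,y')$ to conclude that $d_2(Y^n,Y)\to 0$ in law, hence in probability.

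The first step is a tightness-plus-subsequence argument. The laws of $X$ and $Y$ are each tight (single Borel probability measures on Polish spaces), and $\{Y^n\}$ is tight because $Y^n\to Y$ in law (the marginal of the hypothesis $(X,Y^n)\to(X,Y)$). Consequently the family of joint laws of $(X,Y^n,Y)$ is tight on the product space, so by Prokhorov's theorem every subsequence admits a further subsequence along which $(X,Y^{n_k},Y)$ converges in law to some random triple $(X^*,U,V)$. The two nontrivial marginals satisfy $(X^*,U)\sim(X,Y)$ and $(X^*,V)\sim(X,Y)$, the former by hypothesis and the latter because the joint law of $(X,Y)$ is the same for every $n$.

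The key step is the identification of the limit. Since $Y$ is a.s.\ determined by $X$, we may fix a measurable $f:\Omega_1\to\Omega_2$ with $Y=f(X)$ a.s. The event $\{y=f(x)\}$ is Borel in $\Omega_1\times\Omega_2$ and has full measure under $\mathcal{L}(X,Y)$, so the equalities of joint laws above force $U=f(X^*)$ a.s.\ and $V=f(X^*)$ a.s., hence $U=V$ a.s. Thus every subsequential limit of $(X,Y^n,Y)$ is of the form $(X^*,f(X^*),f(X^*))$, which in distribution equals $(X,Y,Y)$. Since every subsequence has a further subsequence with the same limit in law, the whole sequence $(X,Y^n,Y)$ converges in law to $(X,Y,Y)$.

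To finish, the map $(y,y')\mapsto d_2(y,y')$ from $\Omega_2\times\Omega_2$ to $[0,\infty)$ is continuous, so the continuous mapping theorem gives $d_2(Y^n,Y)\to d_2(Y,Y)=0$ in law. Convergence in law to a deterministic constant is equivalent to convergence in probability, so $d_2(Y^n,Y)\to 0$ in probability, i.e.\ $Y^n\to Y$ in probability. The only place that requires any care is the identification step in the previous paragraph; I expect no real obstacle, since it reduces to the tautology that the a.s.\ relation $Y=f(X)$ is a property of the joint law of $(X,Y)$ and therefore transfers automatically to any pair with the same joint distribution.
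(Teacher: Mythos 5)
Your argument is correct. The paper itself does not prove this lemma --- it simply cites \cite[Lemma~1.3]{dubedat2020weak} --- and your proof is essentially the standard one for this statement: tightness of the joint laws of $(X,Y^n,Y)$, identification of the two factors in any subsequential limit via the Borel graph $\{(x,y):y=f(x)\}$ (which has full measure under $\mathcal{L}(X,Y)$ and hence under any pair with the same joint law), and then the continuous mapping theorem applied to $d_2$ together with the fact that convergence in law to a deterministic constant implies convergence in probability. The only point deserving explicit mention is the one you flagged yourself, namely that the graph of a Borel map $f:\Omega_1\to\Omega_2$ between Polish spaces is Borel in the product; this is because $(x,y)\mapsto d_2\bigl(y,f(x)\bigr)$ is jointly measurable and the graph is its zero set, so the transfer of the a.s.\ relation $Y=f(X)$ across equality of joint laws is legitimate. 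No gap.
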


\bibliographystyle{abbrv}
\bibliography{biblio}

\end{document}